\theoremstyle{plain}
\newtheorem*{maintheorem*}{Main Theorem}
\newtheorem*{thmshcurve}{Theorem 6.8}
\newtheorem*{thmstable}{Theorem 5.10}
\newtheorem*{thmdiv}{Theorem 1.2}
\newtheorem*{thmclosed}{Theorem 1.1}
\newtheorem*{thmd*}{Theorem 1.5}
\newtheorem*{thme*}{Theorem 1.6}
\newtheorem*{remark*}{Remark}
\newtheorem*{conjecture*}{Conjecture}
\newtheorem*{prop*}{Proposition}
\newtheorem{thm}{Theorem}[section]
\newtheorem{cor}[thm]{Corollary}
\newtheorem{lem}[thm]{Lemma}
\newtheorem{prop}[thm]{Proposition}
\newtheorem{example}[thm]{Example}
\theoremstyle{definition}
\newtheorem*{proofc*}{Proof of Theorem C}
\newtheorem{conjecture}[thm]{Conjecture}
\newtheorem{definition}[thm]{Definition}
\newtheorem{remark}[thm]{Remark}
\newtheorem{notation}[thm]{Notation}
\newtheorem{claim}[thm]{Claim}
\DeclareMathOperator{\CAT}{CAT}
\DeclareMathOperator{\Teich}{Teich}
\DeclareMathOperator{\Mod}{Mod}
\DeclareMathOperator{\grad}{grad}
\DeclareMathOperator{\WP}{WP}
\DeclareMathOperator{\base}{base}
\DeclareMathOperator{\tw}{tw}
\DeclareMathOperator{\inj}{inj}
\DeclareMathOperator{\diam}{diam}
\DeclareMathOperator{\hull}{hull}
\DeclareMathOperator{\Ax}{Ax}
\DeclareMathOperator{\nonannular}{non-annular}
\DeclareMathOperator{\lift}{lift}
\numberwithin{equation}{section}
\begin{document}


\title[Prescribing the behavior of WP geodesics]{Prescribing the behavior of Weil-Petersson geodesics in the moduli space of Riemann surfaces}

\date{\today}


\author{Babak Modami}
\address{Department of Mathematics, Yale University, 10 Hillhouse Ave, New Haven, CT, 06511}
\email{babak.modami@yale.edu}

\thanks{The author was partially supported by NSF grant DMS-1005973}


\subjclass[2010]{Primary 30F60, 32G15, Secondary 37D40} 
\keywords{Teichm\"{u}ller space, Weil-Petersson metric, Geodesic, Ending lamination, Symbolic coding}


\begin{abstract}
We study Weil-Petersson (WP) geodesics with {\it narrow end invariant} and develop techniques to control length-functions and twist parameters along them and prescribe their itinerary in the moduli space of Riemann surfaces. This class of  geodesics is rich enough to provide for examples of closed WP geodesics in the thin part of the moduli space, as well as divergent WP geodesic rays with minimal filling ending lamination. 

Some ingredients of independent interest are the following: A strength version of Wolpert's Geodesic Limit Theorem proved in $\S$\ref{sec : bddwpgeodseg}. The stability of hierarchy resolution paths between narrow pairs of partial markings or laminations in the pants graph proved in $\S$\ref{sec : stablehrpath}. A kind of symbolic coding for laminations in terms of subsurface coefficients presented  in $\S$\ref{sec : preendinv}.
\end{abstract}

\maketitle

\setcounter{tocdepth}{3}
\tableofcontents

\section{Introduction}

The Weil-Petersson (WP) metric on the moduli space of Riemann surfaces is a Riemannian metric with negative sectional curvatures. The metric is incomplete and its sectional curvatures approach to both $0$ and $-\infty$   asymptotic near the completion. These features prevent applying most of the standard techniques available in the study of the global geometry and dynamics of complete Riemannian metrics with negatively pinched curvatures (see e.g. \cite{behaviorneg}, \cite{ebergoedflownegcurv}, \cite[Part 4]{KatokDyn}) to the WP metric. The main theme of this paper and the pioneer work of Brock, Masur and Minsky in \cite{bm},\cite{bmm1},\cite {bmm2} is to apply combinatorial techniques from surface theory to study the global behavior of WP geodesics. 
 
In \cite{bmm1} Brock, Masur and Minsky introduced a notion of {\it ending lamination} for WP geodesic rays. They showed that the ending lamination determines the strong asymptotic class of a WP geodesic ray recurrent to a compact subset of the moduli space. In \cite{bmm2} a more explicit connection between the combinatorics of the ending laminations of a WP geodesic and its behavior was established: A necessary and sufficient combinatorial condition for a WP geodesic to stay in the compact part of the moduli space was proved. In this paper we prove the following two results about the behavior of WP geodesics in the moduli space of Riemann surfaces:
\begin{thm}\label{thm : closed}\textnormal{(Closed geodesics in the thin part)}
Given any compact subset of the moduli space $\mathcal{K}$, there are infinitely many closed Weil-Petersson geodesics which do not intersect $\mathcal{K}$. 
\end{thm}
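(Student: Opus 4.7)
The plan is to realize each closed WP geodesic as the image in $\mathcal{M}(S)$ of the axis of a pseudo-Anosov mapping class whose invariant lamination pair fits the narrow end invariant framework developed in the body of the paper. Since $\mathcal{K}$ is compact, one can fix $\epsilon>0$ so that the $\epsilon$-thin part of $\mathcal{M}(S)$ is disjoint from $\mathcal{K}$; it then suffices to produce infinitely many non-conjugate pseudo-Anosovs $\phi_n\in\Mod(S)$ whose axes $\Ax(\phi_n)\subset\Teich(S)$ lie entirely in the $\epsilon$-thin part of $\Teich(S)$.

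The core step is the construction of a single such $\phi$. I would arrange a $\phi$-invariant pair of minimal filling laminations $(\nu^-,\nu^+)$ that is narrow in the sense of the paper, together with a proper essential subsurface $Y\subset S$ whose $\phi$-orbit $\{\phi^k(Y)\}_{k\in\mathbb{Z}}$ is a periodic family, for which the subsurface projection coefficient $d_Y(\nu^-,\nu^+)$ can be taken as large as desired. The length-function control theorem for narrow end invariants (Theorem 5.11) would then give $\ell_{\partial Y}<\epsilon$ on an interval of $\Ax(\phi)$ whose length grows with $d_Y(\nu^-,\nu^+)$; by $\phi$-invariance, the same conclusion for $\partial\phi^k(Y)$ holds on the $\phi^k$-translate of that interval. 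Choosing the projection size large enough relative to the WP-translation length of $\phi$ makes consecutive $\phi^k$-translated intervals overlap so as to cover all of $\Ax(\phi)$, placing the whole axis in the $\epsilon$-thin part. The pants-graph stability of hierarchy resolution paths between narrow pairs, announced in the abstract, is the bridge that converts this combinatorial control into genuine WP-geometric control along the axis.

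To produce such $\phi$ I envision a Thurston- or Penner-type recipe: take a partial pseudo-Anosov $\psi$ supported in a subsurface $\Sigma\subset S$, pre- or post-compose with an element $\rho\in\Mod(S)$ that moves $\Sigma$ off itself so that no curve is fixed, and insert sufficiently high powers of Dehn twists along the $\partial\Sigma$-translates to simultaneously force pseudo-Anosov-ness on $S$ and enforce narrowness of the invariant lamination pair. Varying twist exponents, or the choice of $\psi$ and $\rho$, yields infinitely many distinct conjugacy classes (distinguished, for instance, by WP-translation length), and hence infinitely many distinct closed geodesics in the $\epsilon$-thin part of $\mathcal{M}(S)$.

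The main obstacle I anticipate is the simultaneity of three conditions on $\phi$: the invariant laminations $(\nu^-,\nu^+)$ must be narrow; the witness $Y$ must have $d_Y(\nu^-,\nu^+)$ large enough that the thin-part interval on the axis is at least as long as the WP-translation length of $\phi$; and an infinite-parameter variability must be preserved without sacrificing either of the first two. Narrowness is a delicate subsurface-projection constraint that restricts which Thurston/Penner inputs are admissible, and the quantitative comparison between projection size and translation length is exactly the estimate one needs to extract from the length-function and twist-parameter control results developed earlier in the paper.
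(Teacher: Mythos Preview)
Your high-level strategy—produce a pseudo-Anosov with narrow invariant laminations, apply the Short Curve Theorem to a periodic family of large subsurfaces, and use $\phi$-invariance to propagate the conclusion along the axis—is exactly the skeleton of the paper's argument. (The result you call ``Theorem~5.11'' is the stability theorem; the length-function control you want is Theorem~1.5, restated as Theorem~\ref{thm : bdryzsh}.)

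The substantive gap is your coverage mechanism. You propose to make $d_Y(\nu^-,\nu^+)$ so large that the $\epsilon$-short intervals for $\partial(\phi^kY)$ overlap. This does not work: when the $\phi$-orbit of $Y$ is the only family with large coefficient, both the short-curve interval and the WP translation length of $\phi$ scale like $d_Y(\nu^-,\nu^+)$, so the gap between consecutive short intervals does not close. Concretely, Theorem~\ref{thm : bdryzsh} gives $\ell_{\partial Y}\leq\epsilon$ only on the WP image of the shrunk interval $[m'+\bar w,\,n'-\bar w]$; hence between the short interval for $\phi^kY$ and that for $\phi^{k+1}Y$ there is a hierarchy gap of size at least $2\bar w$ (plus whatever separates $J_{\phi^kY}$ from $J_{\phi^{k+1}Y}$). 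Since $\bar w=\bar w(A,R,R',\epsilon)$ is independent of $d_Y$, this gap persists no matter how large you take the coefficient, and on that gap your argument gives no short curve.

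The paper closes this gap by a different device: the periodic family is built (Scheme~I in \S\ref{subsec : pre1}) from four partial pseudo-Anosovs on subsurfaces $X_0=S\setminus\{\alpha,\beta\}$, $X_1=S\setminus\alpha$, $X_2=S\setminus\{\alpha,\beta\}$, $X_3=S\setminus\beta$, so that every pair of \emph{consecutive} large domains $Z_i$ and $Z_{i+1}$ shares a boundary component $\delta_i=\partial Z_i\cap\partial Z_{i+1}$. Then $\ell_{\delta_i}\leq\epsilon$ at both $t_i$ and $t_{i+1}$, and convexity of length-functions along WP geodesics forces $\ell_{\delta_i}\leq\epsilon$ on all of $[t_i,t_{i+1}]$, independently of any overlap of intervals. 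This shared-boundary-plus-convexity trick, not a comparison of projection size to translation length, is what actually covers the axis; it is the missing idea in your outline.
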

\begin{thm}\label{thm : div}\textnormal{ (Divergent geodesics)}
Starting from any point in the moduli space there are uncountably many divergent WP geodesic rays with minimal filling ending lamination.
\end{thm}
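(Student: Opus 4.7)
The plan is to produce, for a given basepoint $X$ in moduli space, an uncountable family of narrow end invariants that are minimal filling as laminations but whose corresponding WP geodesic rays leave every compact set of $\mathcal{M}_{g,n}$. The starting observation is that in the WP metric, pinching a simple closed curve takes \emph{finite} time, so divergence is equivalent to producing a curve (or a sequence of curves) whose lengths tend to zero along the ray. Thus the task is to encode, in a narrow combinatorial end invariant, data that forces pinching while simultaneously generating a minimal filling limit lamination.

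The construction I would carry out proceeds by a nested combinatorial tower. Choose a proper essential subsurface $W_0$ whose boundary $\partial W_0$ consists of curves one intends to pinch. Using uncountable parameter choices (e.g. unbounded twist powers drawn from arbitrary sequences $(t_n)\in \mathbb{N}^{\mathbb{N}}$), I build a sequence of partial markings $(\mu_n,\nu_n)$ such that the transition from $\mu_n$ to $\nu_n$ is supported on a subsurface $W_n$, the $W_n$ nest combinatorially so that $\bigcup_n W_n$ fills the surface, and each consecutive pair $(\mu_n,\nu_n)$ is narrow in the sense of the paper. The hierarchy between the glued data has unbounded projection to $\partial W_0$-adjacent annuli and to the successive $W_n$, but all competing non-annular projections are controlled by the narrowness hypothesis. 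Taking the inverse limit of the markings yields a lamination $\lambda$, and distinct parameters yield distinct laminations, giving continuum-many end invariants.

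Next I would invoke the pants-graph stability of hierarchy resolutions between narrow pairs (Theorem 5.11 as advertised in the abstract) to approximate the WP geodesic ray from $X$ to $\lambda$ by the pants path determined by the hierarchy. The length-function and twist-parameter estimates developed in the paper then transfer pinching in the combinatorial model to pinching along the WP geodesic: the curves at the top of the tower have lengths decaying to zero in finite WP time, proving divergence. One must also verify that $\lambda$ is indeed the ending lamination of the constructed ray, which follows by combining the narrowness of the end invariant with the Brock--Masur--Minsky characterization of ending laminations of WP rays recurrent to the subsurface where the geodesic lives before the final pinching regime.

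The minimality and filling of $\lambda$ are built into the combinatorial construction: filling is explicit from $\bigcup_n W_n = S$, and minimality follows because the twist powers $t_n$ can be chosen to grow fast enough that no proper sublamination can carry the support, a standard diagonal argument given the nested data. I expect the main technical obstacle to be reconciling two competing demands: the nesting must be rich enough to force both filling and minimality, yet simultaneously the $\partial W_0$ curves must pinch in finite WP time, which requires that most of the combinatorial complexity of the tower occur \emph{before} the geodesic reaches the boundary stratum. Balancing these two scales, and ensuring the narrow-pair hypothesis survives the passage to the infinite tower so that the stability theorem actually applies on the entire ray, is where the bulk of the work will lie.
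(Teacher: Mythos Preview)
Your mechanism for divergence is incompatible with the conclusion you want. If the ending lamination $\lambda$ is minimal and filling, then no simple closed curve can have length tending to zero along the ray: a pinching curve would be a closed leaf of the ending lamination, contradicting minimality. So the picture of ``$\partial W_0$ curves pinching in finite WP time'' and the ray approaching a boundary stratum cannot occur. The paper's divergence mechanism is different and more delicate: one builds a sequence of \emph{distinct} curves $\delta_i$ (each $\delta_i$ the common boundary component of two consecutive large subsurfaces $Z_i$, $Z_{i+1}$) together with times $t_i\to\infty$ so that $\ell_{\delta_i}$ is at most $\epsilon_{\lfloor i/2\rfloor}$ on the entire interval $[t_i,t_{i+1}]$, using convexity of length-functions. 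The intervals cover $[0,\infty)$ and $\epsilon_j\to 0$, so the systole decays even though no individual curve pinches.

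The combinatorial input is accordingly rather different from your nested tower. The subsurfaces $Z_i$ are not nested; they are all \emph{large} (complement consisting only of annuli and three-holed spheres), they overlap, and consecutive ones share a boundary curve. They are produced by iterated powers of partial pseudo-Anosov maps supported on three fixed large subsurfaces $S\setminus\alpha$, $S\setminus\beta$, $S\setminus\{\alpha,\beta\}$, and the powers $e_i$ are the free parameters that give uncountability. Narrowness of $(\mu_I,\mu_T)$ is a global statement about the pair, proved by bounding \emph{all} subsurface coefficients that are not among the $Z_i$ (Proposition~\ref{prop : subsurfub}); your proposal to make ``each consecutive pair $(\mu_n,\nu_n)$ narrow'' does not give this. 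Finally, note that the Short Curve theorem you would invoke requires $(R,R')$-\emph{bounded} combinatorics on the relevant interval, including bounded annular coefficients; deliberately making annular projections to $\partial W_0$ unbounded would block its application rather than help.
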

 The WP volume of the moduli space is finite, so by the Poincar\'{e} recurrence theorem almost every WP geodesic ray is recurrent to a compact subset of the moduli space. However the second theorem above exhibits the abundance of WP geodesic rays divergent in the moduli space. These geodesics diverge in the moduli space by getting closer and closer to a chain of completion strata. 
 \medskip

 Given a WP geodesic $g:(a,b) \to \Teich(S)$ denote the {\it end invariant} associated to its forward trajectory by $\nu^{+}$ and the one associated to its backward trajectory by $\nu^{-}$ ($\nu^{-}$ and $\nu^{+}$ are either laminations or (partial) markings) (see $\S$\ref{subsec : WPendinv}).

 To each essential subsurface $Z \subseteq S$ that is not a three holed sphere, there is an associated {\it subsurface coefficient} denoted by
 $$d_{Z}(\nu^{-},\nu^{+})$$
 which is the distance in the curve complex of $Z$ between the projections of $\nu^{-}$ and $\nu^{+}$ (see $\S$\ref{sec : ccplx}). 

Subsurface coefficients are an analogue of continued fraction expansions which provide for a coding of geodesics on the modular surface which is the moduli space of one hold tori $\mathcal{M}(S_{1,1})$ (see e.g. \cite{modsurf}).  Conjecturally these coefficients provide for extensive information about the behavior of Weil-Petersson geodesics in the moduli space. The following conjecture was proposed in \cite{bmm1}:
 
 \begin{conjecture}\label{conj : coeffsh}
 Let $g$ be a Weil-Petersson geodesic with end invariant $(\nu^{-},\nu^{+})$, we have
 \begin{enumerate}\label{con : shcoeff}
 \item For every $\epsilon>0$ there is an $A>0$, such that if $d_{Z}(\nu^{-}, \nu^{+})>A$ then $\inf_{t}\ell_{\alpha}(g(t))\leq\epsilon$ for every $\alpha\in\partial{Z}$.
  \item For every $A>0$ there is an $\epsilon>0$, such that if $\inf_{t}\ell_{\alpha}(g(t))\leq\epsilon$ then there is a subsurface $Z\subsetneq S$ such that $\alpha\in\partial{Z}$ and $d_{Z}(\nu^{-},\nu^{+})>A$.
 \end{enumerate}
 \end{conjecture} 
  Furthermore, it would be very useful to have an estimate for the length of the time interval that a curve is short (has length less than a given positive $\epsilon$) along a WP geodesic in terms of its end invariant and the associated subsurface coefficients. 
 
 The WP metric exhibits different features in the thick and thin parts of the Teichm\"{u}ller space (see $\S$\ref{sec : WPmetric}). For instance in the thick part the sectional curvatures are all bounded away from both 0 and $-\infty$, while in the thin part the WP metric is almost a product metric with sectional curvatures asypmtotic to both $0$ and $-\infty$. Therefore, to answer questions about the global geometry and dynamics of the WP metric often one needs to determine the {\it itinerary of geodesics}. By itinerary of a geodesic $g$ we mean the thin parts of the Teichm\"{u}ller space that $g$ visits, the order that $g$ visits these parts and the time that $g$ spends in each one of these parts.

 After the work of Masur-Minsky \cite{mm1},\cite{mm2} and Rafi \cite{rshteich},\cite{rteichhyper}, the first part of Conjecture \ref{conj : coeffsh} and a variation of the second part of the conjecture hold for Teichm\"{u}ller geodesics and provides for a complete picture of the itinerary of Teichm\"{u}ller geodesics in the moduli space. These results heavily rely on the explicit description of the Riemann surfaces along Teichm\"{u}ller geodesics in terms of contraction and expansion of measured foliations. But such a description does not exist for Weil-Petersson geodesics. 

The underlying machinery to control the behavior of WP geodesics is the Masur-Minsky machinery of hierarchies and their resolutions in the pants and marking graphs introduced in \cite{mm2}. Given a pair of partial markings or laminations a {\it hierarchy (resolution) paths} $\rho: [m,n]\to P(S)$ ($[m,n]\subseteq\mathbb{Z}\cup\{\pm\infty\}$) is a quasi-geodesic between the pair in the pants graph with certain properties encoded in the pair and their subsurface coefficients. For example corresponding to any subsurface $Z$ with big enough subsurface coefficient, there is a subinterval of $[m,n]$ denoted by $J_{Z}$ so that $\partial{Z}\subset\rho(i)$, for every $i\in J_{Z}$. In Theorem \ref{thm : hrpath} some of the properties are listed. 
  
By a result of Jeff Brock (Theorem \ref{thm : brockqisom}) the Bers pants decompositions along a WP geodesic $g$ trace a quasi-geodesic $Q(g)$ in the pants graph of the surface. When $Q(g)$ and a hierarchy path $\rho$ fellow travel (see Definition \ref{def : fellowtr}) there is a correspondence between the parameters of the hierarchy path and the parameters of the WP geodesic $g$, which roughly speaking is the nearest point correspondence of fellow traveling paths. In this situation we use the hierarchy machinery to determine the itinerary of WP geodesics. For example, let $U_{\epsilon}(\partial{Z})$ be the thin part of the Teichm\"{u}ller space where all of the boundary curves of a subsurface $Z$ are shorter than $\epsilon$. Then we show that $g$ visits $U_{\epsilon}(\partial{Z})$ over a suitably shrunk subinterval of $g$ that corresponds to $J_{Z}$. 
\medskip

An $\it{A-narrow}$ condition on the end invariant $(\nu^{-}, \nu^{+})$ is a constraint on the set of subsurfaces with big subsurface coefficient. More precisely, the pair is $A-$narrow if every non-annular subsurface $Z \subseteq S$ with
$$d_{Z}(\nu^{-}, \nu^{+}) > A$$
is a {\it large subsurface} i.e. the complement of $Z$ consists of only annuli and three holed spheres. 

The $A-$narrow condition on the end invariant implies uniform fellow traveling, depending only on $A$, of the Bers curves along a WP geodesic segment and a hierarchy path between the pair (Theorme \ref{thm : stability}). Heuristically hierarchy paths with narrow end invariant avoid quasi-flats in the pants graph corresponding to separating multi-curves on a surface, and WP geodesics with narrow end invariant avoid asymptotic flats in the completion of the WP metric which correspond to pinching separating multi-curves on the surface. In this paper we develop a control for length-functions and twist parameters along geodesics with narrow end invariant and show that their itinerary mimic combinatorial properties of hierarchy paths.  
In order to prove our main technical results we introduce the following notions:
 
Let $Z \subseteq S$ be an essential subsurface with $d_{Z}(\nu^{-},\nu^{+})$ sufficiently big. We say that $Z$ has {\it $(R,R')-$bounded combinatorics} over a subinterval $[m',n'] \subset J_{Z}$ if for every non-annular subsurface $Y \subsetneq Z$, 
 $$ d_{Y}(\rho(m'),\rho(n'))\leq R,$$
 and for every annular subsurface $A(\gamma)$ with core curve $\gamma$ inside $Z$, 
 $$ d_{\gamma}(\rho(m'),\rho(n'))\leq R'.$$
 This condition is a local version of the bounded combinatorics of the end invariant in \cite{bmm2}. A WP geodesic with bounded combinatorics end invariant stays in the thick part of the moduli space, \cite{bmm2}.  In the direction of Conjecture \ref{conj : coeffsh} we prove:
\begin{thmshcurve}\textnormal{ (Short Curve)} 
Given $A,R,R'>0$ and a sufficiently small $\epsilon>0$, there is a constant $\bar{w}=\bar{w}(A, R, R',\epsilon)$ with the following property. Let $g:[a,b] \to \Teich(S)$ be a WP geodesic segment with $A-$narrow end invariant $(\nu^{-},\nu^{+})$. Let $\rho:[m,n] \to P(S)$ be a hierarchy path between $\nu^{-}$ and $\nu^{+}$. Suppose that $Z$ a large component domain of $\rho$ has $(R,R')-$bounded combinatorics over an interval $[m',n']\subset J_{Z}$. 

If $n'-m' \geq 2\bar{w}$, then for every $\alpha \in \partial{Z}$ we have
$$\ell_{\alpha}(g(t)) \leq  \epsilon$$
for every $t \in [a',b']$, where $a'$ and $b'$ are the corresponding times to $m'+\bar{w}$ and $n'-\bar{w}$, respectively.
\end{thmshcurve}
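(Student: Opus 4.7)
The plan is to combine the uniform fellow traveling between $g$ and $\rho$ coming from the $A$-narrow condition with quantitative length-function estimates along the WP geodesic.

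First, by Brock's quasi-isometry theorem, the map sending $t$ to a Bers pants decomposition of $g(t)$ gives a quasi-geodesic in $P(S)$ with endpoints coarsely equal to $\nu^\pm$. By the stability result for hierarchy paths with narrow endpoints (Theorem 5.11), this quasi-geodesic uniformly fellow travels $\rho$ with constant $D=D(A)$. This produces a coarsely monotone correspondence $t\mapsto i(t)$ between the WP parameter and the hierarchy parameter, and in particular an interval $[a_*,b_*]\subset[a,b]$ corresponding to $[m',n']$. Because $\partial Z\subset\rho(i)$ for every $i\in J_Z$, the Bers decomposition at $g(t)$ is within $D$ pants moves of one containing $\partial Z$ throughout $[a_*,b_*]$, yielding the a priori bound $\ell_\alpha(g(t))\leq L_0=L_0(A)$ for every $\alpha\in\partial Z$ and every $t\in[a_*,b_*]$.

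Second, I would exploit the bounded combinatorics inside $Z$. The hypothesis $d_Y(\rho(m'),\rho(n'))\leq R$ for $Y\subsetneq Z$ non-annular and $d_\gamma(\rho(m'),\rho(n'))\leq R'$ for annuli in $Z$, combined with the fact that $\partial Z$ sits in every $\rho(i)$ for $i\in[m',n']$ and that $S\setminus Z$ consists only of annuli and pairs of pants, implies that $\rho|_{[m',n']}$ is a thick quasi-geodesic in the internal pants graph $P(Z)$. The key quantitative step is then a length-based estimate of the following form: there exists $W=W(\epsilon,R,R',A)$ such that whenever $\ell_\alpha(g(t))\geq\epsilon$ throughout a WP-sub-interval $I\subset[a_*,b_*]$ for some $\alpha\in\partial Z$, the hierarchy parameter $i(t)$ can advance over $I$ by at most $W$. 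This rests on (i) Wolpert's convexity and the gradient bound $\|\grad\sqrt{\ell_\alpha}\|_{\WP}\leq\sqrt{2/\pi}$ along WP geodesics, and (ii) Brock's quasi-isometry between the WP metric and the pants graph on a thick part for $\alpha$, so that when $\ell_\alpha\geq\epsilon$, WP displacement of $g$ quantitatively controls pants-graph displacement of the Bers decomposition. Setting $\bar w:=W+D$ then forces that for any $t$ with $i(t)\in[m'+\bar w,n'-\bar w]$, one must have $\ell_\alpha(g(t))\leq\epsilon$, giving the desired $[a',b']$.

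The main obstacle is the quantitative length estimate described above. The WP product-region structure that makes the comparison between length functions and pants-graph motion clean is essentially only available once $\partial Z$ is already short, which is the conclusion we are trying to reach. Handling this circularity will require a careful combination of Wolpert's convexity (to propagate shortness inward from the endpoints of the thin interval once it occurs) with the $(R,R')$-bounded combinatorics control on the WP geometry internal to $Z$, so that the Bers decomposition and the family $\{\ell_\beta(g(t))\}_{\beta\in\partial Z}$ can be tracked simultaneously through the transition between the thick and thin parts.
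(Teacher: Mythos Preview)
Your setup is right, but both the a-priori bound and the ``key quantitative step'' have real gaps. First, being within $D$ pants moves of a pants decomposition containing $\partial Z$ does \emph{not} by itself bound $\ell_\alpha(g(t))$: pants-graph distance gives no length control on curves not in the Bers system of $g(t)$. The paper obtains the rough upper bound $\ell_\alpha\leq l$ (Lemma~\ref{lem : roughub}) only after first proving the companion \emph{lower} bound $\ell_\gamma(g(t))\geq\bar\epsilon$ for every $\gamma\notin\partial Z$ on $[a',b']$; that lower bound comes from the annular-coefficient comparison (Lemma~\ref{lem : anncoeffcomp}) together with Corollary~\ref{cor : shtw}, and the upper bound then follows from a compactness/geodesic-limit argument. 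You never establish this lower bound for non-boundary curves, and without it the rest cannot proceed.

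Second, your proposed step ``if $\ell_\alpha\geq\epsilon$ on $I$ then the hierarchy advance over $I$ is at most $W$'' is not true for a single $\alpha$. Wolpert's strict convexity estimate $\ddot\ell_\alpha\geq c(\epsilon)\ell_\alpha$ requires $\inj(g(t))\geq\epsilon$, i.e.\ the \emph{whole} surface thick, not just $\alpha$ long; other components of $\partial Z$ may already be short, destroying the estimate. This is precisely the circularity you flag, and the paper resolves it not by a product-region comparison but by an induction on $|\partial Z|$ (Lemma~\ref{lem : bdryzsh}): using $\ell_\gamma\geq\bar\epsilon$ for $\gamma\notin\partial Z$, the convexity estimate applies whenever \emph{all} boundary curves are $\geq\epsilon$, so over a long enough interval some $\alpha\in\partial Z$ must drop below $\epsilon$; a pigeonhole argument then produces one $\alpha$ that stays $\leq\epsilon'$ on a long subinterval; one projects to the $\alpha$-stratum via the $\CAT(0)$ nearest-point map (the projection is $\sqrt{2\pi\epsilon'}$-close to $g$), checks that the same length bounds hold for $Z'=Z\cup A(\alpha)$ along the projected geodesic, and invokes the inductive hypothesis with $|\partial Z'|=|\partial Z|-1$. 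Your outline is missing both the non-boundary lower bound and this stratum-projection induction, and the single-curve estimate you propose in their place does not hold.
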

Here the time correspondence comes from fellow traveling of the Bers curves along a WP geodesic with narrow end invariant $(\nu^{-},\nu^{+})$ and a hierarchy path between $\nu^{-}$ and $\nu^{+}$ (see $\S$\ref{subsec : fellowtr}).

In $\S$\ref{sec : itinerarywpgeod} we use bounded combinatorics intervals to isolate annular subsurface along a hierarchy path. The twist parameter buildup about an {\it isolated annular subsurface} along a WP geodesic is comparable to the one along a fellow traveling hierarchy path. This together with the length-function versus twist parameter controls over uniformly bounded length WP geodesic segments we develop in $\S$\ref{sec : bddwpgeodseg}  are the main technical tools in $\S$\ref{sec : itinerarywpgeod} where we prove the above theorem.  

Using the control we develop on length-functions along WP geodesic segments and by extracting limits of WP geodesic segments with narrow end invariant we construct WP geodesic rays with prescribed itinerary in the moduli space (Theorem \ref{thm : inftyray}). Itineraries of these rays mimic the combinatorial properties of hierarchy paths encoded in the end points and the associated subsurface coefficients. In $\S$\ref{sec : preendinv} we construct pairs of laminations or markings with prescribed list of subsurface coefficients. This is a kind of symbolic coding for laminations in terms of subsurface coefficients, an analogue of continued fraction expansions. The geodesic rays with prescribed itinerary corresponding to these laminations are used in $\S$\ref{sec : examples} to construct examples of WP geodesics with prescribed behaviors in the moduli space. These constructions could be considered as a kind of symbolic coding for WP geodesics. 

The fellow traveling property of hierarchy paths is a crucial part of the combinatorial frame work to control length-functions along WP geodesics. In $\S$\ref{sec : stablehrpath} we prove the following stability result for hierarchy paths in the pants graph of surfaces:   
 \begin{thmstable}\textnormal{ (Stable hierarchy path)}
 Given $A>0$ there is a function $d_{A}$ such that any hierarchy path with $A-$narrow end points is $d_{A}-$stable in the pants graph. 
  \end{thmstable}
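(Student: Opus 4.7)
The plan is to combine the Masur--Minsky distance formula for the pants graph with the $A$-narrow hypothesis: the former reduces stability to a statement about subsurface projections, and the latter restricts which non-annular subsurfaces can have large projection between $\nu^-$ and $\nu^+$, producing exactly the rigidity needed to avoid the quasi-flats in $P(S)$ that obstruct stability in general.

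Fix $(K,C)$ and a $(K,C)$-quasi-geodesic $\sigma:[0,L]\to P(S)$ with $\sigma(0)=\nu^-$ and $\sigma(L)=\nu^+$. To show that $\sigma$ lies in a $d_A(K,C)$-neighborhood of $\rho$, it suffices to produce, for each $\mu=\sigma(s)$, a parameter $\tau(s)\in[m,n]$ with $d_Y(\mu,\rho(\tau(s)))$ bounded (by a constant depending only on $A,K,C$) for every non-annular subsurface $Y\subseteq S$. By the pants-graph distance formula
\[
d_{P(S)}(\mu,\rho(\tau(s)))\asymp\sum_{Y\text{ non-annular}}\lfloor d_Y(\mu,\rho(\tau(s)))\rfloor_T,
\]
such bounds yield the desired pants-graph bound; reversing the roles of $\rho$ and $\sigma$ would give the other Hausdorff direction.

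To define $\tau(s)$, I would first invoke the Behrstock inequality and the bounded geodesic image theorem together with the fact that $\sigma$ is a quasi-geodesic in $P(S)$: if a non-annular subsurface $Y$ has both $d_Y(\mu,\nu^-)$ and $d_Y(\mu,\nu^+)$ larger than a threshold $K_0(K,C)$, then $d_Y(\nu^-,\nu^+)>A$, so by the $A$-narrow hypothesis $Y$ must be large. Hence the subsurfaces in which $\mu$ has made significant progress away from both endpoints form a finite collection $\mathcal{L}(\mu)$ consisting only of large subsurfaces. Along a hierarchy with narrow endpoints the large domains appear in coarsely linear order on the main geodesic in $C(S)$, and the intervals $J_Z$ they determine in $[m,n]$ are correspondingly linearly ordered. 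I would then choose $\tau(s)$ to be a point in $[m,n]$ whose position in this linear order matches the collection $\mathcal{L}(\mu)$, concretely by aligning the $C(Z)$-progress of $\rho(\tau(s))$ with that of $\mu$ for each $Z\in\mathcal{L}(\mu)$.

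The main obstacle is organizing $\mathcal{L}(\mu)$ into this linear order so that $\tau(s)$ is coarsely well-defined; this is where the narrow hypothesis does the essential work, since non-large non-annular subsurfaces with overlapping incomparable structure are precisely what create quasi-flats in $P(S)$. For two elements $Z,Z'\in\mathcal{L}(\mu)$, Behrstock-type inequalities applied to $\mu$ and to the endpoints force a coarse order on their $C(S)$-projections; through the hierarchy machinery this translates into a coarse time ordering of $J_Z$ and $J_{Z'}$. Once $\tau(s)$ is defined, verification proceeds by case analysis: if $Y\in\mathcal{L}(\mu)$ the $d_Y$-bound is immediate from the construction; if $Y\notin\mathcal{L}(\mu)$ then $\pi_Y(\mu)$ is close to one of $\pi_Y(\nu^\pm)$, and the hierarchy's behavior on $Y$ forces the same for $\pi_Y(\rho(\tau(s)))$. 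Substituting these bounds into the distance formula yields the stability function $d_A(K,C)$.
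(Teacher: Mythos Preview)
Your approach has a genuine gap at the step where you claim: ``if a non-annular subsurface $Y$ has both $d_Y(\mu,\nu^-)$ and $d_Y(\mu,\nu^+)$ larger than a threshold $K_0(K,C)$, then $d_Y(\nu^-,\nu^+)>A$.'' This implication is false for points $\mu$ on an arbitrary $(K,C)$-quasi-geodesic in $P(S)$, and neither the Behrstock inequality nor the Bounded Geodesic Image theorem supplies it. Concretely, let $\alpha$ be a separating curve with $S\setminus\alpha = Y\sqcup Y^c$ where both pieces have complexity $\geq 1$; then $Y$ is not large, so $d_Y(\nu^-,\nu^+)\leq A$ by the narrow hypothesis. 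But the product region $P(Y)\times P(Y^c)$ sits quasi-isometrically in $P(S)$, and a $(K,C)$-quasi-geodesic from $\nu^-$ to $\nu^+$ is free to detour into it, moving $\pi_Y(\mu)$ arbitrarily far from $\pi_Y(\nu^\pm)$ and then returning. The quasi-geodesic constraint bounds the \emph{total} distance-formula sum, not any individual term. Thus $\mathcal{L}(\mu)$ may contain non-large subsurfaces, your proposed ordering on $\mathcal{L}(\mu)$ breaks down, and the construction of $\tau(s)$ fails. In effect the implication you need is a no-backtracking property for $\sigma$ in every subsurface, which is essentially equivalent to the stability you are trying to prove.

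The paper avoids this circularity by not tracking individual points of $\sigma$ at all. Instead it (i) shows the hierarchy path is Hausdorff-close to the $\Sigma_\epsilon$-hull of $(\nu^-,\nu^+)$ (Theorem~\ref{thm : narrowhull}), and (ii) proves that the coarse nearest-point projection $\Pi:P(S)\to\Sigma_\epsilon(\nu^-,\nu^+)$ has the contraction property (Theorem~\ref{thm : contraction}), via an inequality $d_Z(P,Q)\geq d_Z(P,\Pi P)-q$ valid for \emph{all} $Z$ whenever $d(\Pi P,\Pi Q)$ is large (Lemma~\ref{lem : contsubsurfs}). The narrow hypothesis enters here: when $d(\Pi P,\Pi Q)$ is large, the distance formula forces some subsurface $W$ with $d_W(\nu^-,\nu^+)>A$, hence $W$ is large, and then a case analysis on $W\pitchfork Z$, $W\subsetneq Z$, $Z\subsetneq W$ (the case $W,Z$ disjoint being excluded precisely because $W$ is large) yields the inequality. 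Stability then follows from the standard Morse-lemma argument for contracting subsets. The point is that contraction is a statement about \emph{pairs} $P,Q$ and the projection, and can be established without ever assuming that a quasi-geodesic stays close in any particular subsurface.
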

 In the above theorem $d_{A}:\mathbb{R}^{\geq 1}\times \mathbb{R}^{\geq 0}\to \mathbb{R}^{\geq 0}$ is the quantifier of the stability (see Definition \ref{def : stable}).

\subsection{Outline of the paper} 

Section \ref{sec : ccplx} is devoted to the background about curve complexes and some important notions and results in the setting of pants graphs and marking graphs. In this section we recall hierarchical structures of pants and marking graphs and their resolutions introduced by Masur and Minsky in \cite{mm2}. The properties of hierarchy resolution paths are listed in Theorem \ref{thm : hrpath}. We also recall the $\Sigma-$hulls and their projections from \cite{bkmm}. In Section \ref{sec : WPmetric} we provide some background about the WP metric and synthetic properties of WP geodesics. 
 
 In Section \ref{sec : bddwpgeodseg} we prove refined versions of some of the key results in \cite{wols} and \cite{bmm2}. The proofs are mainly based on compactness arguments in the WP completion of the Teichm\"{u}ller space. These results give us a kind of control on buildup of Dehn twists versus change of length-functions along uniformly bounded length WP geodesic segments. 
 
 In Section \ref{sec : stablehrpath} we prove that hierarchy paths between narrow pairs are stable. The proof will be through $\Sigma-$hulls and their stability properties. 

 In Section \ref{sec : itinerarywpgeod}, we develop some new techniques to control length-functions and twist parameters along WP geodesics fellow traveling hierarchy paths. 
 
 In Section \ref{sec : preendinv} we construct pairs of markings or laminations with a prescribed list of subsurface coefficients. 
  
In Section \ref{sec : examples} we prove Theorems \ref{thm : closed} and \ref{thm : div}.
\medskip

\noindent{\bf Acknowledgment:} I am so grateful to my thesis advisor Yair Minsky for so many invaluable discussions through which this work evolved. I would like to thank Jeffery Brock who generously contributed some of the ideas in Section \ref{sec : bddwpgeodseg}. I am also grateful to Scott Wolpert for willingly answering my questions about the Weil-Petersson metric. 
 
  \section{Curve complexes and hierarchical structures} \label{sec : ccplx}
  In this paper we use the following notation
  \begin{notation} Given $K \geq 1$ and $C\geq 0$. Let $f,g:X\to \mathbb{R}^{\geq 0}$ be two functions on a set $X$. Then $f \asymp_{K,C}g$ means that for every $x\in X$ we have 
$$\frac{1}{K}g(x)-C \leq f(x) \leq Kg(x)+C$$ 
\end{notation}

\noindent{\bf The curve complex of a surface:} Let $S=S_{g,b}$ be a connected, compact, oriented surface with genus $g$ and $b$ boundary components. The {\it topological type} of $S$ refers to $g$ and $b$ which determines the surface up to homeomorphism.  We define $\xi(S)=3g-3+b$ to be the {\it complexity} of the surface $S$. 
 
 An {\it essential curve} is a closed curve which is not isotopic to a boundary component of $S$ or a point. We do not distinguish between a curve and its isotopy class. The curve complex of $S$, denoted by $\mathcal{C}(S)$, serves to organize the isotopy classes of essential, simple closed curves on $S$. Let $S$ be a surface with $\xi(S)\geq 1$. To the isotopy class of each essential simple closed curve on $S$ is associated a vertex ($0-$simplex) in $\mathcal{C}(S)$. When $\xi(S)>1$, an edge is associated to disjoint pair of isotopy classes of curves. Similarly, a $k-$simplex is associated to any $k+1$ pairwise disjoint isotopy classes of simple closed curves. Two isotopy classes are disjoint if there are curves in each of them which are disjoint on $S$. We denote the $k-$skeleton of $\mathcal{C}(S)$ by $\mathcal{C}_{k}(S)$. When $\xi(S)=1$, $S$ is either a one holed torus or a four holed sphere. Then $1-$simplices (edges) correspond to curves intersecting respectively once and twice, which are the minimum possible intersection number of curves on $S_{1,1}$ and $S_{0,4}$, respectively. 
 
  A {\it multi-curve} is a collection of pair-wise disjoint essential simple closed curves. If $\xi(S)>1$, then a multi-curve consists of the vertices of a simplex in $\mathcal{C}(S)$. 
 
 We equip the curve complex with a distance by making each simplex Euclidean with side lengths $1$, and denote the distance $d_{\mathcal{C}(S)}$ by $d_{S}$. By the main result of  Masur-Minsky in \cite{mm1} $\mathcal{C}(S)$ equipped with $d_{S}$ is a $\delta-$hyperbolic space in the sense of Gromov and $\delta$ depends only on the topological type of $S$. 
 
A subsurface of $S$ is an embedded, connected, compact surface inside $S$. We do not distinguish between a subsurface and its isotopy class. An {\it essential subsurface} is a subsurface $Y\subseteq S$ so that each boundary curve of $Y$ is either an essential curve or a boundary curve of $S$, and $Y$ itself is not a $3-$holed sphere. In this paper unless is specified subsurfaces understood to be essential.
 
 An {\it annular subsurface} is an annulus $Y$ with essential core curve in $S$. The purpose of defining complexes for annuli is to keep track of Dehn twists about their core curves. These complexes are quasi-isometric to $\mathbb{Z}$. Let $Y$ be an annulus with core curve $\alpha$. Let $\widetilde{Y}$ be the annular cover of $S$ to which $Y$ lifts homeomorphically. There is a natural compactification of $\widetilde{Y}$ to a closed annulus $\widehat{Y}$ obtained in the usual way from the compactification of the universal cover $\mathbb{D}^{2}$ (Poincar\'{e} disk) by the closed disk. A vertex of $\mathcal{C}(Y )$ is a path connecting the two boundary components of $\widehat{Y}$ modulo isotopies that fix the endpoints (isotopy classes of arcs relative to the boundary). An edge is associated to two vertices which have representatives with disjoint interiors. The curve complex of an annular subsurface can be equipped with a metric by assigning length $1$ to each edge. We write $A(\alpha)=Y$ and $\mathcal{C}(\alpha)=\mathcal{ C}(Y)$. 
 
Let $Y$ be an essential subsurface. We denote the diameter of a subset $A\subset\mathcal{C}_{0}(Y)$ by $\diam_{Y}(A)$.
  \medskip
 
\noindent{\bf Overlap:}   If there are representatives of the isotopy classes of curves $\alpha$ and $\beta$ which are disjoint, then $\alpha$ and $\beta$ are disjoint. Otherwise, $\alpha$ and $\beta$ {\it overlap} each other which is denoted by $\alpha \pitchfork \beta$. Two multi-curves $\sigma$ and $\sigma'$ are disjoint if any pair of curves $\alpha \in \sigma$ and $\alpha' \in \sigma'$ are disjoint, otherwise $\sigma$ and $\sigma'$ overlap, denoted by $\sigma\pitchfork\sigma'$.

A curve or lamination {\it intersects a subsurface $Y\subset S$ essentially}(overlaps) if it can not be homotoped to the complement of $Y$ inside $S$. Let $\sigma$ be a multi-curve, then $\sigma$ overlaps $Y$, denoted by $\sigma \pitchfork Y$, if at least one of the curves in $\sigma$ overlaps $Y$. Let $Y,Z\subseteq S$. If $\partial{Y}\pitchfork Z$ and $\partial{Z}\pitchfork Y$, we say the subsurfaces $Y$ and $Z$ overlap, and denote it by $Y \pitchfork Z$. 
\medskip

\noindent {\bf Laminations:} Let $S$ be a surface equipped with a complete hyperbolic metric. A {\it geodesic lamination} on $S$ is a closed subset $\lambda$ of $S$ consisting of disjoint, complete, simple geodesics.  Let $\widetilde{S}=\mathbb{D}^{2}$ be the universal cover of $S$. Denote the boundary at infinity of the Poincar\'{e} disk $\mathbb{D}^{2}$ by $\widetilde{S}_{\infty}$. Let 
$$M_{\infty}(S)=(\widetilde{S}_{\infty}\times \widetilde{S}_{\infty}\backslash \Delta)/\sim,$$
 where $\Delta=\{(x,x):x\in S^{1}_{\infty}\}$ and $\sim$ is the equivalence relation generated by $(x,y)\sim(y,x)$. Since the geodesics in $\mathbb{D}^{2}$ are parametrized by points of $M_{\infty}(S)$ the preimage of a geodesic lamination determines a closed subset of $M_{\infty}(S)$ which is invariant under the action of $\pi_{1}(S)$.
We denote the space of geodesic laminations on the surface $S$ equipped with the Hausdorff topology of closed subsets of $M_{\infty}(S)$ by $\mathcal{GL}(S)$. The space $\mathcal{GL}(S)$ is a compact space. For more detail see e.g. \cite[\S I.4]{notesonthurston}. 

A geodesic lamination $\lambda$ can be equipped with transverse measures. A {\it transverse measure} $m$ supported on $\lambda$ is a measure on arcs in $S$ which is invariant under isotopies of the surface $S$ which preserve $\lambda$. The fact that the measure is supported on $\lambda$ means that 
\begin{itemize}
\item an arc $a$ with $a\cap\lambda\neq\emptyset$ and transversal to $\lambda$ has positive measure,
\item an arc $a$ with $a\subset \lambda$ or $a\cap\lambda=\emptyset$ has measure $0$.
\end{itemize}
 The pull back of a transverse measure on $\lambda$ determines a measure on $M_{\infty}(S)$ supported on the preimage of $\lambda$ in $M_{\infty}(S)$. A {\it measured (geodesic) lamination} $\mathcal{L}=(\lambda,m)$ is the pair of a geodesic lamination $\lambda$ and $m$ a transverse measure supported on $\lambda$. We denote the space of measured geodesic laminations of $S$ equipped with the weak$^{*}$ topology by $\mathcal{ML}(S)$. For more detail see \cite{bonlam}, \cite{phtraintr}. 

The group $\mathbb{R}^{+}$ acts on $\mathcal{ML}(S)$ by rescaling of measures, that is $s(\lambda,m)=(\lambda,sm)$. Then the quotient, $\mathcal{PML}(S):=\mathcal{ML}(S)-\{0\}/\mathbb{R}^{+}$ is the space of projective classes of measured laminations. The projective class of a measured lamination $\mathcal{L}$ is denoted by $[\mathcal{L}]$. We equip $\mathcal{PML}(S)$ with the quotient topology of the weak$^{*}$ topology on $\mathcal{ML}(S)$. 

A geodesic lamination $\lambda$ fills $S$ if the connected components of $S\backslash \lambda$ are only ideal polygons or cusped ideal polygons with sides the leaves of $\lambda$. Note that if $\lambda$ is filling, then any simple closed curve and $\lambda$ intersect essentially. The lamination $\lambda$ is {\it minimal} if any half leaf of $\lambda$ is dense in $\lambda$. Given $[\mathcal{L}]\in\mathcal{PML}(S)$, let $|\mathcal{L}|$ be the support of $\mathcal{L}$. Then taking the quotient  
$$\mathcal{PML}(S)/|.|$$
of $\mathcal{PML}(S)$ by forgetting the measure, the {\it ending lamination space}
 $$\mathcal{EL}(S)\subset\mathcal{PML}(S)/|.|$$
 is the image of projective measured laminations with minimal filling support equipped with the quotient topology of the topology of $\mathcal{PML}(S)$. 

The Gromov boundary of a $\delta-$hyperbolic space has a standard topology (see e.g. \cite[$\S$III.H.3]{bhnpc}). Klarreich in \cite{bdrycc} proves that
\begin{thm}\label{thm : ccbdry}
There is a homeomorphism $\Phi$ from the Gromov boundary of $\mathcal{C}(S)$ to $\mathcal{EL}(S)$.
\end{thm}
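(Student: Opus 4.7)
The plan is to construct $\Phi$ and its inverse as limit maps through $\mathcal{PML}(S)$, using the intersection-number estimate $d_{S}(\alpha,\beta)\leq 2\log_{2}i(\alpha,\beta)+C$ of Masur--Minsky as the bridge between the coarse geometry of $\mathcal{C}(S)$ and the topology of laminations.

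First I would define $\Phi$. Fix a basepoint $\alpha_{0}\in\mathcal{C}_{0}(S)$. A point $\xi\in\partial\mathcal{C}(S)$ is represented by a sequence $\{\alpha_{n}\}\subset\mathcal{C}_{0}(S)$ with Gromov products $(\alpha_{n}\mid\alpha_{m})_{\alpha_{0}}\to\infty$. After rescaling each $\alpha_{n}$ by $1/i(\alpha_{n},\alpha_{0})$, compactness of $\mathcal{PML}(S)$ lets me pass to a subsequence with $[\alpha_{n}]\to[\mathcal{L}]$, and I set $\Phi(\xi)$ equal to the support $\lambda$ of $\mathcal{L}$. The first substantial task is to verify that $\lambda$ is minimal and filling. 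If some essential simple closed curve $\gamma$ were disjoint from $\lambda$, then $i(\alpha_{n},\gamma)/i(\alpha_{n},\alpha_{0})\to 0$, and the intersection-to-distance bound would keep $d_{S}(\alpha_{n},\gamma)$ uniformly bounded, contradicting $d_{S}(\alpha_{n},\alpha_{0})\to\infty$. Applying the same idea to a transverse measure supported on a proper sublamination rules out non-minimal support.

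Next I would prove that $\Phi$ is well-defined, injective, and surjective. Two sequences representing the same boundary point have eventually bounded pairwise Gromov products, hence bounded distance in $\mathcal{C}(S)$ after large indices; combined with the intersection estimate this forces their $\mathcal{PML}$-subsequential limits to have the same support, so $\Phi$ is well-defined. For injectivity, distinct minimal filling laminations $\lambda\neq\lambda'$ admit curves separating them in $\mathcal{C}(S)$ at arbitrarily large distance (again via the intersection bound applied to curves close to $\lambda$ and $\lambda'$ in $\mathcal{PML}(S)$), and this separation propagates to the Gromov products. For surjectivity, given $\lambda\in\mathcal{EL}(S)$ with a chosen transverse measure $\mathcal{L}$, approximate $\mathcal{L}$ in $\mathcal{ML}(S)$ by weighted simple closed curves $t_{n}\alpha_{n}$; minimality and filling of $\lambda$ force $i(\alpha_{n},\beta)\to\infty$ for every essential curve $\beta$, so $d_{S}(\alpha_{n},\alpha_{0})\to\infty$, and $\delta$-hyperbolicity together with a diagonal argument converts this into a Gromov-divergent sequence defining a point of $\partial\mathcal{C}(S)$ sent by $\Phi$ to $\lambda$.

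Finally I would address continuity in both directions. Continuity of $\Phi$ follows from the construction: boundary points close in the Gromov topology admit representative sequences whose common Gromov product is large, hence their $\mathcal{PML}$-limits are close, hence their supports are close in $\mathcal{EL}(S)$. The main obstacle will be continuity of $\Phi^{-1}$, because the topology on $\mathcal{EL}(S)$ identifies all projective measured laminations with the same filling support, whereas the $\mathcal{PML}(S)$ topology is finer. To overcome this I would use train track neighborhoods of $\lambda$: any simple closed curve carried by a sufficiently small such neighborhood must have uniformly bounded $\mathcal{C}(S)$-distance to a canonical quasi-geodesic ray representing $\Phi^{-1}(\lambda)$, so that if $\lambda_{k}\to\lambda$ in $\mathcal{EL}(S)$, diagonally chosen approximating curves produce a Gromov-convergent sequence in $\partial\mathcal{C}(S)$ with limit $\Phi^{-1}(\lambda)$. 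Verifying that the identification by support is exactly the right equivalence to make this work, uniformly over $\lambda$, is the delicate point.
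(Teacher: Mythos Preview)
The paper does not prove this theorem; it is stated as a cited result of Klarriech (\cite{bdrycc}) and used as a black box. There is therefore no ``paper's own proof'' to compare your proposal against.

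That said, a brief remark on your sketch: the overall architecture---define $\Phi$ via subsequential $\mathcal{PML}(S)$-limits of Gromov-convergent sequences, show the support is minimal filling, then check well-definedness, bijectivity, and bicontinuity---is indeed the shape of Klarriech's argument. One point deserves care: your treatment of continuity of $\Phi^{-1}$ correctly identifies the delicate issue, namely that $\mathcal{EL}(S)$ carries the quotient topology identifying all measures on the same support, but your proposed fix via train-track neighborhoods is vague about uniformity. Klarriech's actual argument (and Hamenst\"adt's later reproof) leans more heavily on the coarse-geometric side: one shows that if $\lambda_{k}\to\lambda$ in $\mathcal{EL}(S)$, then any sequence of curves $\alpha_{k}$ approximating $\lambda_{k}$ has Gromov product with a fixed sequence approximating $\lambda$ tending to infinity, using that quasi-geodesic rays in $\mathcal{C}(S)$ with the same limiting lamination must be asymptotic. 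The intersection-number bound alone is not quite enough to close this loop; one needs the converse direction (bounded intersection implies bounded curve-complex distance is false in general, so the argument must go through the hyperbolicity of $\mathcal{C}(S)$ more directly).
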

Furthermore Klarreich describes a relation between the point in the Gromov boundary of $\mathcal{C}(S)$ to which a sequence of curves converges and the accumulation points of the sequence of curves in $\mathcal{PML}(S)$.

\begin{thm}\cite[Theorem 1.4 ]{bdrycc}\label{thm : gobdrycc}
Suppose that $\{\alpha_{i}\}_{i=1}^{\infty}$ is a sequence of vertices in $\mathcal{C}_{0}(S)$ that converges to a point $x$ in the Gromov boundary of $\mathcal{C}(S)$. Then regarding each $\alpha_{i}$ as a projective measured lamination every accumulation point of the sequence $\{\alpha_{i}\}_{i=1}^{\infty}$ in $\mathcal{PML}(S)$ is supported on $\Phi(x)$.
\end{thm}

\begin{remark}
Klarreich states and proves the above theorem in the setting of measured foliations. Using the main result of \cite{fol=lam} the theorem is equivalent to what we stated above.
\end{remark}

 The following notions of subsurface projection and subsurface coefficient are basic in the Masur-Minsky machinery of curve complexes and hierarchical structures on the pants and marking graphs. 
\medskip

\noindent {\bf Subsurface projection:} 
For an essential non-annular subsurface $Y \subseteq S$ define the {\it subsurface projection map}
 $$\pi_{Y}:\mathcal{GL}(S)\to \mathcal{P(C}_{0}(Y)) $$
 where $\mathcal{P(C}_{0}(Y))$ is the power set of $\mathcal{C}_{0}(S)$ as follows: Fix a complete hyperbolic metric on $S$. Given $\lambda \in \mathcal{GL}(S)$ realize $\lambda$ and $\partial{Y}$ geodesically. If $\lambda$ does not intersect $Y$ then $\pi_{Y}(\lambda)=\emptyset$. When $\lambda$ intersects $Y$ consider all of the properly embedded arcs (arcs with end points either on $\partial{Y}$ or at a cusp) and curves in the intersection locus $\lambda\cap Y$ (we ignore infinite leaves of $\lambda$ that are contained completely in the interior of $Y$, except the ones going between two cusps). Identify any two arcs or curves which are isotopic to each other inside $Y$. Through the isotopy the end points of arcs are allowed to move within the boundary of $Y$. Then $\pi_{Y}(\lambda)$ consists of the curves in the boundary of a regular neighborhood of $a\cup \partial{Y}$, for any arc $a$ we obtained above together with all of the closed curves we obtained above. Note that the diameter of $\pi_{Y}(\lambda)$ viewed as a subset of $\mathcal{C}(Y)$ is at most $2$.

Since $\mathcal{C}_{0}(S) \subset \mathcal{GL}(S)$, the above projection restricts to a projection
 $$\pi_{Y}:\mathcal{C}_{0}(S)\to \mathcal{P(C}_{0}(Y)).$$
 
The projection for an annular subsurface $Y$ is defined as follows: If $\lambda \in \mathcal{GL}(S)$ crosses $\alpha$ the core of $Y$ essentially, then the lift of $\lambda$ to $\widehat{Y}$ (the compactified annular cover to which $Y$ lifts homeomorphivcally) has at least one component that connects the two boundaries of $\widehat{Y}$. These components together make up a set of diameter $1$ in $\mathcal{C}(Y )$.  $\pi_{Y}(\lambda)$ is this set. If $\lambda$ does not intersect $Y$ essentially (including the case that $\lambda$ is the core of $Y$) then $\pi_{Y}(\gamma) = \emptyset$. 

The projection of a multi-curve $\sigma$ to a subsurface $Y$ is the union of $\pi_{Y}(\alpha)$ for all $\alpha\in \sigma$. 
\medskip
 
\noindent{\bf Subsurface coefficient:} Let $\lambda$ and $\lambda'$ be two multi-curves or laminations. Let $Y \subseteq S$ be an essential subsurface. We consider the following notion of distance
 \begin{equation}\label{eq : subsurfcoeff}d_{Y}(\lambda, \lambda'):=\min \{ d_{Y}(\gamma,\gamma') : \gamma\in \pi_{Y}(\lambda), \gamma'\in \pi_{Y}(\lambda')\} \end{equation}
 which provides for a useful notion of complexity of  $\lambda$ and $\lambda'$ from the point of view of the subsurface $Y$. We call $d_{Y}(\lambda,\lambda')$ the {\it Y subsurface coefficient} of $\lambda$ and $\lambda'$. 
 
Suppose that $\lambda$ or $\lambda'$ has a component that is minimal and fills the subsurface $Y$. By Theorem \ref{thm : ccbdry} the component determines a point in the Gromov boundary of $\mathcal{C}(Y)$, in particular the $Y$ subsurface projection of $\lambda$ or $\lambda'$ is not defined. In this case for convention we let $d_{Y}(\lambda,\lambda')=\infty$.
 
For an annular subsurface $Y$ with core curve $\alpha$ we denote $\pi_{\alpha}:=\pi_{Y}$ and define the annular subsurface coefficients of geodesic laminations or multi-curves $\lambda$ and $\lambda'$ using the formula (\ref{eq : subsurfcoeff}) and denote it by $d_{\alpha}$.

Subsurface coefficients play the role of continued fraction expansions for coding of laminations (see $\S$\ref{sec : preendinv}).

\begin{lem}\cite[Lemma 2.1]{mm1}\label{lem : di}
Let $W\subseteq S$ be an essential subsurface and $\alpha,\beta\in \mathcal{C}_{0}(S)$. We have 
$$d_{W}(\alpha,\beta)\leq 2i(\alpha,\beta)+1.$$
\end{lem}
\medskip

\noindent{\bf Filling curves:} We say that a collection of curves or arcs $\Gamma\subset S$ fill an essential subsurface $Y\subseteq S$ if any curve in $Y$ intersects $\Gamma$ essentially. 

Let $\alpha,\beta\in\mathcal{C}_{0}(S)$. Suppose that $d_{Y}(\alpha,\beta)> 4$, then $\alpha$ and $\beta$ fill $Y$. To see this, recall the surgery map which assigns to any arc $a$ with end points on $\partial{Y}$ the boundary curves of a regular neighborhood of $a\cup\partial{Y}$. As is shown in \cite[Lemma 2.2]{mm2} this a $2-$Lipschitz map from the arc complex of $Y$ to $\mathcal{PC}_{0}(Y)$. Then since $d_{Y}(\alpha,\beta)>4$, given curves arcs $a$ in $\alpha\cap Y$ and $b$ in $\beta\cap Y$ with end points on $\partial{Y}$, the distance in the arc complex of $Y$ between $a$ and $b$ is at least $3$. But this implies that $a$ and $b$ fill the subsurface $Y$ and therefore $\alpha$ and $\beta$ fill $Y$. For otherwise there is an arc or curve disjoint from $a$ and $b$ implying that the distance of $a$ and $b$ is at most $2$. But this contradicts the lower bound $3$ for the distance of $a$ and $b$ in the arc complex of $Y$.  

A similar argument shows that if $d_{Y}(\alpha,\beta)> 2$, then $\alpha\pitchfork\beta$.
\medskip

\noindent{\bf Hausdorff limit:} Here we provide two useful propositions regarding Hausdorff limits of geodesic laminations.

\begin{prop}\label{prop : hlimwk*lim}
Let $\mathcal{L}_{i}=(\lambda_{i},m_{i})$ be a sequence of measured laminations. Suppose that $\mathcal{L}_{i}$ converge to $\mathcal{L}=(\lambda,m)$ in the weak$^{*}$ topology. Let $\xi$ be the limit of a subsequence of $\lambda_{i}$ (an accumulation point of laminations $\lambda_{i}$) in the Hausdorff topology of $M_{\infty}(S)$. Then $\lambda \subseteq \xi$.  
\end{prop}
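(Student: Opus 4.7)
The plan is to argue by contradiction: suppose there is a point $p \in \lambda \setminus \xi$. Since $\xi$ is closed (as a Hausdorff limit of closed subsets of $S$), one can choose $\epsilon > 0$ such that the open ball $B(p,\epsilon)$ is disjoint from $\xi$. Let $\lambda_{i_k}$ denote the subsequence with $\lambda_{i_k} \to \xi$ in the Hausdorff topology. For $k$ large enough, $\lambda_{i_k} \subset N_{\epsilon/2}(\xi)$, so in particular $\lambda_{i_k} \cap B(p,\epsilon/2) = \emptyset$.

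The next step is to pick a short geodesic transversal arc $\tau$ to the leaf of $\lambda$ through $p$, contained in $B(p,\epsilon/2)$ and with endpoints off $\lambda$ (a generic perturbation of the arc inside $B(p,\epsilon/2)$ achieves this, since the set of arcs with an endpoint on $\lambda$ is meager). Because $p$ lies in the support $\lambda$ of the transverse measure $m$, by the definition of support every such transversal has positive $m$-measure, so $m(\tau) > 0$. On the other hand, for $k$ large, the support of $m_{i_k}$ is $\lambda_{i_k}$, which misses $\tau$ entirely by the previous paragraph, so $m_{i_k}(\tau) = 0$.

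Finally, the weak$^{*}$ convergence $\mathcal{L}_i \to \mathcal{L}$ in $\mathcal{ML}(S)$ implies that for any transversal arc whose endpoints are disjoint from all the laminations under consideration, the mass on the arc varies continuously in the limit; this continuity applies to $\tau$ because its endpoints lie off $\lambda$ by construction and off $\lambda_{i_k}$ for all large $k$ (as $\tau$ is itself disjoint from $\lambda_{i_k}$). Thus $m_{i_k}(\tau) \to m(\tau)$, forcing $0 \geq m(\tau) > 0$, the desired contradiction. Hence $\lambda \subseteq \xi$.

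\textbf{Main obstacle.} The delicate point is not the contradiction scheme itself but verifying that the evaluation $\mathcal{L} \mapsto m_\mathcal{L}(\tau)$ is continuous at $\mathcal{L}$ in the weak$^{*}$ topology; this is the standard fact that this pairing is continuous provided $\partial\tau$ meets none of the $\lambda_{i_k}$ or $\lambda$, and the Hausdorff separation of $\tau$ from $\lambda_{i_k}$ at large $k$ supplies exactly what is needed. The only other care point is the generic choice of $\tau$ so that its endpoints avoid $\lambda$; this is achieved by foliating $B(p,\epsilon/2)$ by short transversals and using that $m$ is a Radon measure to pick one with full endpoints off the support.
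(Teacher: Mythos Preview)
Your proof is correct and follows essentially the same approach as the paper: both arguments pick a short transversal arc near a point of $\lambda$, note it carries positive $m$-measure, and use weak$^{*}$ convergence to force the approximating supports $\lambda_{i_k}$ to meet the arc. The paper phrases this directly (for any neighborhood $U$ of $x\in\lambda$ the $\lambda_i$ eventually enter $U$), whereas you run the contrapositive, but the content is the same; your extra care about the endpoints of $\tau$ is a mild refinement the paper leaves implicit.
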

\begin{proof} Think of laminations $\lambda_{i}$ and $\lambda$ as closed subset of $M_{\infty}(S)$, and measures $m_{i}$ and $m$ as measures supported on $\lambda_{i}$ and $\lambda$ respectively. Let $x\in \lambda$. Let $I\subset M_{\infty}(S)$ be an open neighborhood of $x$. By the weak$^{*}$ convergence $m_{i}(I)\to m(I)$ as $i\to \infty$. Moreover, $m(I)>0$. Thus for all $i$ sufficiently large $m_{i}(I)>0$. Then $\lambda_{i}\cap I\neq \emptyset$, for otherwise $m_{i}(I)=0$. So there is $x_{i}\in \lambda_{i}$ so that $x_{i}\in I$. Therefore $x$ is in the limit of any convergent subsequence of laminations $\lambda_{i}$ in the Hausdorff  topology of $M_{\infty}(S)$.
\end{proof}

\begin{lem}\label{lem : Hdistdprojbd}
Let $X\subseteq S$ be an essential subsurface and $\lambda$ a multi-curve or lamination that intersects $X$ essentially. There is an $\epsilon>0$ so that if the Hasudorff distance of a curve $\gamma\in \mathcal{C}_{0}(S)$ and $\lambda$ is less than or equal to $\epsilon$, then 
$$\diam_{X}(\pi_{X}(\lambda)\cup\pi_{X}(\gamma))\leq 4.$$
\end{lem}
\begin{proof}
Fix a complete hyperbolic metric on $S$ and realize all the curves and laminations geodesically. Suppose that $X$ is a non-annular subsurface. Let $a$ be an arc in $\lambda\cap X$ with end points on $\partial{X}$. Let $\epsilon$ be so that that the $\epsilon$ neighborhood of $a\cup\partial{X}$ is a regular neighborhood in $X$. Denote the neighborhood by $\mathcal{N}$. Moreover suppose that there is a curve $\alpha$ in the boundary of $\mathcal{N}$ which is essential in the subsurface $X$. Then $\alpha$ is in $\pi_{X}(\lambda)$. Since $\gamma$ is within the $\epsilon$ Hausdorff distance of $\lambda$ the neighborhood $\mathcal{N}$ is also a regular neighborhood of an arc in $\gamma\cap X$ with end points on $\partial{X}$. Thus the essential curve $\alpha$ is in $\pi_{X}(\gamma)$. So $\pi_{X}(\gamma)\cap\pi_{X}(\lambda)\neq \emptyset$. Then the stated bound of the lemma follows from the fact that the diameter of $\pi_{X}(\gamma)$ and $\pi_{X}(\lambda)$ are bounded by $2$. 

Now suppose that $X$ is an annular subsurface. Let $\widehat{X}$ be the compactified annular cover of $S$ to which $X$ lifts homeomorphically. For $\epsilon$ sufficiently small the assumption that $\gamma$ is within the $\epsilon$ Hausdorff distance of $\lambda$ implies that there are lifts of $l$ a leaf of $\lambda$ and $\gamma$ to $\widehat{X}$ so that the interior of the lifts intersect at most once. This implies that $d_{X}(\gamma,\lambda)\leq 2$. Then again the sated bound of the lemma follows from the fact that the diameter of $\pi_{X}(\gamma)$ and $\pi_{X}(\lambda)$ are bounded by $1$.
\end{proof}

As straightforward consequence of Lemma \ref{lem : Hdistdprojbd} is the following proposition.
\begin{prop}\label{prop : Hlimsubsurfcoeff}  Suppose that a sequence of curves $\alpha_{k}$ converges to a lamination $\xi$ in the Hausdorff topology of $M_{\infty}(S)$. Then given an essential subsurface $Z$, for any $\lambda'\in \mathcal{GL}(S)$ we have  
 $$d_{Z}(\lambda',\xi) \asymp_{1,4} d_{Z}(\lambda',\alpha_{k}),$$
  for all $k$ sufficiently large. 
  \end{prop}
 
\subsection{The pants graph and marking graph}\label{subsec : pants-marking}

A {\it pants decomposition} $P$ on a surface $S$ is a maximal collection of pairwise disjoint simple closed curves.  A {\it (partial) marking} $\mu$ consists of a pants decomposition which is  the base of the marking and is denoted by $\base(\mu)$ together with $t_{\alpha}$ a diameter $1$ subset of $\mathcal{C}(\alpha)$  for (some) every $\alpha \in \base{\mu}$. A {\it clean marking} is a marking such that each $t_{\alpha}$ is represented by a curve on $S$ which does not intersect any curve in $P-\alpha$ and has minimal intersection number $1$ or $2$ with $\alpha$ depending on whether the subsurface $S\backslash \{P-\alpha\}$ is a one holed torus or a four holed sphere.
\medskip

\noindent{\bf Pants graph:} Each vertex of the pants graph of $S$ is a pants decomposition of $S$. Each edge in the graph corresponds to two pants decompositions which differ by an elementary move. Two pants decomposition differ by an elementary move if one is obtained from the other one by an elementary move. A pants decomposition $P'$ is obtained from a pants decomposition $P$ by an elementary move if $P'$ is obtained from $P$ by replacing one curve $\alpha\in P$ with a curve $\alpha'$ in the curve complex of $S\backslash\{P-\alpha\}$ with minimal intersection number (1 or 2) with $\alpha$ and fixing the rest of curves in $P$. Assigning length $1$ to each edge defines the distance $d$ on $P(S)$ and makes $P(S)$ a metric graph. 
\medskip 

\noindent{\bf Marking graph:} Each vertex of the marking graph is a marking, each edge corresponds to a pair of markings which differ by an elementary move. An elementary move on a marking roughly speaking is either an elementary move on the base of the marking or is an interchange of a curve in the base and its transversal curve. For more detail see $\S$2.5 of \cite{mm2}. Assigning length one to each edge defines the distance $\tilde{d}$ on the marking graph and makes it a metric graph.
 \medskip

\noindent{\bf Projection of (partial) markings:} Let $\mu$ be a partial marking. Suppose that $Y$ is an essential non-annular subsurface, then $\pi_{Y}(\mu)=\pi_{Y}(\base(\mu))$. The projection of $\mu$ to an annular subsurface with core curve $\alpha$ is the following: If $\alpha\in \base(\mu)$ then $\pi_{\alpha}(\mu)=t_{\alpha}$. Otherwise, $\pi_{\alpha}(\mu)=\pi_{\alpha}(\base(\mu))$. Given two partial markings or laminations $\mu,\mu'$ the subsurface coefficient 
$$d_{Y}(\mu,\mu')$$
is defined by the formula (\ref{eq : subsurfcoeff}). Also in the case that $\mu$ or $\mu'$ has  a component that is minimal and fills $Y$ we let $d_{Y}(\mu,\mu')=\infty$.

\begin{lem}\label{lem : diamproj}
Let $S$ be a surface with $\xi(S)\geq 1$. Let $Y\subseteq S$ be an essential subsurface.  Given a multi-curve or a (partial) marking $\mu$ on $S$ so that $\pi_{Y}(\mu)\neq \emptyset$, we have 
$$\diam_{Y}(\mu)\leq 2.$$
Let $X$ be an essential subsurface. Suppose that $Y\subseteq X$ and $\pi_{X}(\mu)$ contains a curve which overlaps $Y$, then
$$\diam_{Y}(\pi_{Y}(\mu)\cup\pi_{Y}(\pi_{X}(\mu)))$$
is uniformly bounded.
\end{lem}
\begin{proof}
For a multi-curve the first bound of the lemma is \cite[Lemma 2.3]{mm2}. Suppose that $\mu$ is a (partial) marking. Let $\alpha$ be the core curve of $Y$. If $\alpha$ is a curve in $\base(\mu)$, then the bound follows since $\pi_{Y}(\mu)=t_{\alpha}$ and $\diam_{Y}(t_{\alpha})\leq 1$. If $\alpha\not\in\base(\mu)$, then $\pi_{Y}(\mu)=\pi_{Y}(\base(\mu))$ and the bound follows from the bound on the diameter of the projection of multi-curves. The first bound is proved. The second bound is part of \cite[Lemma 2.12]{bkmm}.
\end{proof}

\noindent{\bf Twist parameter:} Let $\alpha\in \mathcal{C}_{0}(S)$. Let $\mu,\mu'$ be two laminations or partial markings. Denote the components of the lift of $\mu$ to the compactification of the annular cover $\widehat{Y}$ going between the two boundary components by $\lift_{\alpha}(\mu)$. The {\it twist parameter} is the subset of $\mathbb{Z}$
$$tw_{\alpha}(\mu,\mu'):=\min\{a.b : a\in \lift_{\alpha}(\mu),b\in \lift_{\alpha}(\mu')\}.$$
where $a.b$ denotes the algebraic intersection number of $a$ and $b$. The arcs $a$ and $b$ are oriented so that intersect the lift of $\alpha$ homotopic to the core of $\widehat{Y}$ in the same direction. Note that the diameter of $tw_{\alpha}(\mu,\mu')$ is at most $2$.

 The twist parameter satisfies
$$|tw_{\alpha}(\mu,\mu')|\asymp d_{\alpha}(\mu,\mu').$$

\noindent{\bf The triangle inequality:} Let $Y\subseteq S$ be a subsurface and $\mu$ be a multi-curve, lamination or a partial marking on surface $S$. Denote by $\diam_{Y}(\mu)$ the diameter of $\pi_{Y}(\mu)$ viewed as a subset of $\mathcal{C}(Y)$.

Let $\mu_{1},\mu_{2},\mu_{3}$ be multi-curves, laminations or partial markings. Suppose that $d_{Y}(\mu_{1},\mu_{2})=d_{Y}(\alpha,\beta)$ and $d_{Y}(\mu_{2},\mu_{3})=d_{Y}(\beta',\gamma)$, where $\alpha\in \pi_{Y}(\mu_{1})$, $\beta,\beta'\in\pi_{Y}(\mu_{2})$ and $\gamma\in\pi_{Y}(\mu_{3})$. Then by the triangle inequality in $\mathcal{C}(Y)$ we have that 
$$d_{Y}(\alpha,\beta)+d_{Y}(\beta,\beta')+d_{Y}(\beta',\gamma)\geq d_{Y}(\alpha,\gamma).$$
 Now since $d_{Y}(\beta,\beta')\leq \diam_{Y}(\mu_{2})$ and $d_{Y}(\alpha,\gamma)\geq d_{Y}(\mu_{1},\mu_{2})$ we conclude that
$$d_{Y}(\mu_{1},\mu_{2})+d_{Y}(\mu_{2},\mu_{3})+\diam_{Y}(\mu_{2})\geq d_{Y}(\mu_{1},\mu_{3}).$$ 
In the interest of brevity in this paper we refer to this inequality also as the triangle inequality.
\medskip

The following theorem plays an important role in the organization of the so called tight geodesics in curve complexes of a surface and its subsurfaces and the inductive construction of hierarchies in the pants and marking graphs of a surface.
 
 \begin{thm}\label {thm : bddgeod}\textnormal{(Bounded Geodesic Image)}\cite[Theorem 3.1]{mm2}
 There exists a constant $G>0$ depending only on the topological type of $S$ with the following property. Let $Y \subsetneq S$ be an essential subsurface and let $g:I \to \mathcal{C}(S)$ be a geodesic such that for every $i \in I$, $\pi_{Y}(g(i)) \neq \emptyset$ i.e. $g(i) \pitchfork Y$. Then we have
 $$\diam_{Y}(\{\pi_{Y}(g(i)) : i \in I\}) \leq G.$$
 \end{thm}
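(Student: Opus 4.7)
The plan is to combine a short-range Lipschitz estimate for the projection $\pi_Y$ with the $\delta$-hyperbolicity of $\mathcal{C}(S)$ established by Masur--Minsky in \cite{mm1}. For the first ingredient, I would show that if $\alpha,\beta \in \mathcal{C}_0(S)$ are disjoint and both overlap $Y$ essentially, then $d_Y(\pi_Y(\alpha),\pi_Y(\beta))\leq 2$: realizing $\alpha$ and $\beta$ disjointly in $S$ makes the arcs and closed curves of $\alpha\cap Y$ and $\beta\cap Y$ simultaneously disjoint in $Y$, and the boundaries in $Y$ of thin regular neighborhoods of one such arc together with $\partial Y$ yield vertices of $\mathcal{C}(Y)$ lying in $\pi_Y(\alpha)$ and $\pi_Y(\beta)$ respectively which share a disjoint representative in $Y$ and hence are at distance at most $2$. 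The annular case is analogous. Applied to consecutive (hence disjoint) vertices $g(i),g(i+1)$ of the geodesic, this gives $d_Y(g(i),g(i+1))\leq 2$, so a violation of the desired bound $G$ would force $g$ to have length at least $G/2$.

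To upgrade the Lipschitz estimate into the claimed absolute bound, I would argue by contradiction using hyperbolicity of $\mathcal{C}(S)$. Fix a component $\gamma\subset\partial Y$ and view it as a vertex of $\mathcal{C}(S)$. The hypothesis that every $g(i)$ overlaps $Y$ means that either $g(i)\subset Y$, in which case $g(i)\in\pi_Y(g(i))$ and $d_S(g(i),\gamma)\leq 1$, or $g(i)$ crosses a component of $\partial Y$. The key geometric input is the uniform quasi-convexity in the $\delta$-hyperbolic space $\mathcal{C}(S)$ of the link
\[
\mathcal{N}(\partial Y)=\{v\in\mathcal{C}_0(S): v\text{ is disjoint from some component of }\partial Y\},
\]
which is exactly the set on which $\pi_Y$ has ``small'' value. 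Combining $\delta$-thin triangles with quasi-convexity of $\mathcal{N}(\partial Y)$, a geodesic $g$ whose endpoints have far-apart $\pi_Y$-values must either remain in a bounded neighborhood of $\mathcal{N}(\partial Y)$ throughout -- in which case the Lipschitz step applied between each $g(i)$ and a close vertex of $\mathcal{N}(\partial Y)$ bounds $\diam_Y(\{g(i)\})$ -- or it must enter $\mathcal{N}(\partial Y)$ at some interior vertex, producing a ``gate'' disjoint from $\partial Y$ across which projections on either side are individually bounded. In either situation one deduces $\diam_Y(\{g(i)\}_{i\in I})\leq G$ with $G$ depending only on $\xi(S)$ and the hyperbolicity constant $\delta(S)$.

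The principal obstacle is extracting the \emph{uniform} constant $G=G(S)$ from this qualitative hyperbolic picture: merely knowing that $\mathcal{C}(S)$ is $\delta$-hyperbolic and that $\mathcal{N}(\partial Y)$ is coarsely convex does not, by itself, yield such a constant, because quasi-geodesics in $\mathcal{C}(S)$ can wander arbitrarily far from true geodesics. The proof in \cite{mm2} handles this by introducing \emph{tight} geodesics, in which successive vertices are forced to lie on the boundary of a subsurface filled by their immediate neighbors; this rigidifies how a geodesic can interact with $\mathcal{N}(\partial Y)$ and is exactly what makes $G$ depend only on the topological type of $S$. Since the statement is quoted rather than reproved in the present excerpt, the above is a sketch of the Masur--Minsky strategy rather than an independent argument.
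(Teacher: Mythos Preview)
The paper does not prove this theorem; it is quoted verbatim from Masur--Minsky \cite{mm2} (their Theorem~3.1) and used as a black box throughout. You correctly recognize this in your final paragraph, so there is nothing to compare against here.

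A brief comment on your sketch for its own sake. The Lipschitz step (disjoint curves project to diameter $\leq 2$ sets in $\mathcal{C}(Y)$) is correct and standard. Your second step, however, is not quite the Masur--Minsky argument and as written does not close. The set $\mathcal{N}(\partial Y)$ you introduce is just the union of $1$-balls about the components of $\partial Y$; its quasi-convexity is trivial but does not by itself produce the dichotomy you describe, and the phrase ``either remain in a bounded neighborhood of $\mathcal{N}(\partial Y)$ throughout'' is not a consequence of hyperbolicity alone for a geodesic all of whose vertices cut $Y$. The original proof in \cite{mm2} instead goes through the hierarchy machinery and an induction on complexity (via tight geodesics and the Large Link analysis in their \S6). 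There are now shorter proofs --- e.g.\ Bowditch shows that if no vertex of $g$ lies within distance $1$ of $\partial Y$ then the nearest-point projection of $g$ to $\partial Y$ in $\mathcal{C}(S)$ has bounded diameter, and then one relates this to $\pi_Y$ --- but your outline conflates these two routes. Your closing diagnosis, that extracting a \emph{uniform} $G$ is the real content, is exactly right.
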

 \medskip
 
 In the rest of this subsection we recall some results in the context of pants and marking graphs which we use often in this paper.
 \medskip

 \noindent{\bf  Distance formula:} Let $A\geq 0$, define the cut-off function $\{.\}_{A}:\mathbb{R}\to \mathbb{R}^{\geq 0}$ by $$\{a\}_{A} = \Big\lbrace 
   \begin{array}{cl}
  a & \text{if}\;a \geq A\\
  0 & \text{if }a < A \\
  \end{array}.$$ 
  Masur and Minsky \cite[Theorem 6.12]{mm2} prove the following quasi distance formula for the pants and marking graph distance.  

There exists a constant  $M_{1}>0$ such that for any $A\geq M_{1}$, there are constants $K\geq1$ and $C\geq 0$, such that the distance between any two pants decompositions $P$ and $Q$ staifies
\begin{equation} \label{eq : dsf} d(P, Q) \asymp_{K,C} \sum_{\substack{ Y\subseteq S\\ \nonannular}}\{ d_{Y}(P, Q) \}_{A}. \end{equation}
 Note that the sum is only over essential non-annular subsurfaces. We call $A$ the threshold constant and say that $K$ and $C$ are the constants corresponding to the threshold constant $A$.
  \begin{remark}
  Given partial markings $\mu$ and $\mu'$ if on the right hand side of (\ref{eq : dsf}) the sum is over all subsurface coefficients, including the annular subsurface coefficients, then (\ref{eq : dsf}) holds for the marking distance of $\mu$ and $\mu'$.
 \end{remark}
  
  \begin{thm}\textnormal{ (Behrstock Inequality)}\cite{beh} \label{thm : behineq}
 There exists a constant $B_{0}>0$ with the property that given two subsurfaces $Y,Z \subseteq S$ with $Y \pitchfork Z$ and a partial marking or lamination $\mu$ such that $\mu\pitchfork Y$ and $\mu\pitchfork Z$ we have
  $$\min\{d_{Y}(\partial{Z},\mu),d_{Z}(\partial{Y},\mu)\} \leq B_{0}.$$
  \end{thm}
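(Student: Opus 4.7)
The plan is to argue by contradiction using the Bounded Geodesic Image theorem (Theorem \ref{thm : bddgeod}). Let $G$ be the constant of that theorem, and choose $B_0$ slightly larger than $G$ (for instance $B_0 = G+4$). Suppose for contradiction that both $d_Y(\partial Z, \mu) > B_0$ and $d_Z(\partial Y, \mu) > B_0$ simultaneously. Because $Y \pitchfork Z$ there is a component $\beta$ of $\partial Y$ with $\beta \pitchfork Z$; because $\mu \pitchfork Z$ there is a curve $\alpha \in \mu$ with $\alpha \pitchfork Z$. Let $g$ be a $\mathcal{C}(S)$-geodesic joining $\beta$ to $\alpha$.

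I would apply the dichotomy provided by Bounded Geodesic Image to $g$ with respect to $Z$. If every vertex of $g$ meets $Z$ essentially then $\diam_{\mathcal{C}(Z)}(\pi_Z(g)) \le G$, and combined with the standard bound $\diam_{\mathcal{C}(Z)}(\pi_Z(\partial Y)) \le 1$ this gives $d_Z(\partial Y, \mu) \le G + 2 \le B_0$, contradicting the lower bound on $d_Z(\partial Y, \mu)$. Hence some vertex $v$ of $g$ must be disjoint from $Z$; therefore $v \cup \partial Z$ is a multi-curve on $S$ and $d_{\mathcal{C}(S)}(v, \partial Z) \le 1$. I then pass to the sub-geodesic of $g$ from $v$ to $\alpha$ and run the same dichotomy with respect to $Y$: either every vertex meets $Y$ essentially, in which case Bounded Geodesic Image gives $d_Y(v, \alpha) \le G$, which combined with $v$ disjoint from $\partial Z$ and $\diam_{\mathcal{C}(Y)}(\pi_Y(\partial Z)) \le 1$ yields $d_Y(\partial Z, \mu) \le G+2 \le B_0$, contradicting the other hypothesis; or some later vertex $w$ of $g$ is disjoint from $Y$, putting $w$ within $\mathcal{C}(S)$-distance one of $\partial Y$.

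The main obstacle is the remaining subcase, where both dichotomies fall to the second alternative: a vertex $v$ of $g$ is disjoint from $Z$ and a subsequent vertex $w$ of $g$ is disjoint from $Y$. To close the argument I would exploit that the sub-geodesic of $g$ from $v$ to $w$ has endpoints within $\mathcal{C}(S)$-distance one of the disjoint curve systems $\partial Z$ and $\partial Y$ respectively, while $\partial Y \pitchfork \partial Z$ is forced by $Y \pitchfork Z$; using these distance-one detours one replaces $g$ by a strictly shorter path joining $\beta$ to $\alpha$ unless the sub-segment $[v,w]$ is already of bounded length. Re-running Bounded Geodesic Image on the now short segments adjacent to $v$ and $w$ then violates at least one of the two large lower bounds on subsurface coefficients, completing the contradiction. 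Finally, the annular case (when $Y$ or $Z$ is an annulus with core $\gamma$) is handled in parallel: the dichotomy is replaced by the observation that a curve disjoint from $\gamma$ has empty projection to $\mathcal{C}(\gamma)$, while any transverse curve projects with diameter at most one, so the same extraction of contradicting constants goes through with $\mathcal{C}(\gamma)$ in place of $\mathcal{C}(Y)$ or $\mathcal{C}(Z)$.
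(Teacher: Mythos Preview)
The paper does not give its own proof of this statement; Theorem~\ref{thm : behineq} is simply cited from \cite{beh} and used as a black box throughout. So there is nothing to compare against on that front, and the question is whether your argument stands on its own.

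It does not. The scheme via Bounded Geodesic Image has a genuine gap in the ``remaining subcase,'' and in fact the earlier steps already leak. First, the curve $\alpha\in\mu$ is chosen only to satisfy $\alpha\pitchfork Z$; nothing guarantees $\alpha\pitchfork Y$, so the second BGI application (with respect to $Y$) need not even have well-defined endpoints. Second, and more seriously, once you find a vertex $v$ on $g$ disjoint from $Z$, the projection $\pi_Y(v)$ may be empty, so the sub-geodesic from $v$ to $\alpha$ has an endpoint missing $Y$ and BGI gives no information about $d_Y$. Your workaround is to find a further vertex $w$ disjoint from $Y$; but then, since $\beta\in\partial Y$ and $w$ are disjoint, geodesicity of $g$ forces $w$ to sit at position at most $1$ from $\beta$, hence $v=w$ is a single vertex disjoint from both $Y$ and $Z$. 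At this point you have a curve with empty projection to both subsurfaces, and ``re-running BGI on the now short segments'' does nothing: every segment you can form has an endpoint missing the relevant subsurface, so BGI never applies. The ``strictly shorter path'' replacement does not produce a contradiction with either of the assumed lower bounds on $d_Y(\partial Z,\mu)$ or $d_Z(\partial Y,\mu)$.

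The known proofs do not go through BGI. Behrstock's original argument uses asymptotic cones; the short elementary argument (due to Leininger, written up for instance in Mangahas's work) is pure surface topology: assuming $d_Y(\partial Z,\mu)$ is large, one shows directly that every arc of $\mu\cap Y$ must cross every arc of $\partial Z\cap Y$, and this crossing pattern forces the arcs of $\mu\cap Z$ to be guided by $\partial Y\cap Z$, giving $d_Z(\partial Y,\mu)$ small. That local intersection argument is what is missing from your approach.
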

 
 We recall Consistency Theorem of Behrstock and Minsky from \cite{bkmm}. There the theorem is stated and proved for markings. It is straightforward to verify that all of their arguments go through for pants decompositions, considering only non-annular subsurfaces (excluding all annular subsurfaces).
 
 \begin{thm} \label {thm : consistency} \textnormal{ (Consistency)}\cite[Theorem 4.3]{bkmm}
 Given $F_{1},F_{2} \geq 1$, there is a constant $F>0$ with the following property. Let $(x_{Y})_{Y \subseteq S}$ be a tuple where $x_{Y} \in\mathcal{C}_{0}(Y)$ for each essential non-annular subsurface $Y\subseteq S$. Suppose that  $(x_{Y})_{Y \subseteq S}$ satisfies the following two consistency conditions
  \begin{enumerate}[1)]
 \item If $Y \pitchfork Z$ then $\min\{d_{Y}(x_{Y}, \partial{Z}),d_{Z}(x_{Z}, \partial{Y})\} \leq F_{1}$, and
 \item If $Y \subseteq Z$ and $d_{Z}(x_{Z},\partial{Y})>F_{2}$, then $d_{Y}(x_{Y},\pi_{Y}(x_{Z})) \leq F_{1}.$ 
 \end{enumerate}
 Then there is $P \in P(S)$ such that for every essential non-annular subsurface $Y\subseteq S$ we have
  $$d_{Y}(P, x_{Y})\leq F.$$
   \end{thm}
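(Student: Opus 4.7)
The plan is to proceed by induction on the complexity $\xi(S)$. The base case $\xi(S)=1$ is immediate: the only non-annular essential subsurface of $S$ other than pairs of pants is $S$ itself, so the tuple reduces to the single datum $x_S \in \mathcal{C}_{0}(S)$, and $P := \{x_S\}$ satisfies $d_S(P, x_S) = 0$, so any $F \geq 0$ works.

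For the inductive step, I would take $\alpha := x_S$ and cut along $\alpha$ to form the subsurface $S' := S \setminus \alpha$ of strictly smaller complexity (possibly disconnected if $\alpha$ separates). The restricted tuple $(x_Y)_{Y \subseteq S'}$ inherits conditions (1) and (2) with the same constants $F_1, F_2$, since every non-annular $Y \subseteq S'$ is also a non-annular subsurface of $S$. Applying the inductive hypothesis to $S'$, or to each of its components separately, produces a pants decomposition $P'$ of $S'$ with $d_Y(P', x_Y) \leq F'$ for every non-annular $Y \subseteq S'$, where $F' = F(F_1, F_2, \xi(S)-1)$ is the inductive constant. I would then set $P := P' \cup \{\alpha\}$, a pants decomposition of $S$, and verify that $P$ achieves the desired bound.

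The verification splits into three cases according to the position of $Y$ relative to $\alpha$. If $Y = S$, the equality $\alpha = x_S \in P$ immediately yields $d_S(P, x_S) = 0$. If $Y \subsetneq S$ is disjoint from $\alpha$ (i.e.\ $Y \subseteq S'$), the inclusion $\pi_Y(P') \subseteq \pi_Y(P)$ combined with the inductive bound yields $d_Y(P, x_Y) \leq d_Y(P', x_Y) \leq F'$. The crucial case is $Y \pitchfork \alpha$ with $Y \subsetneq S$: here $\emptyset \neq \pi_Y(\alpha) \subseteq \pi_Y(P)$, and one obtains
\[
d_Y(P, x_Y) \;\leq\; d_Y(\alpha, x_Y) \;=\; d_Y(x_S, x_Y) \;\leq\; F_2,
\]
where the last inequality is precisely condition (2) applied to the nesting $Y \subseteq S$. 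Setting $F := \max\{F', F_2\}$ closes the induction.

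The main obstacle is the case $Y \pitchfork \alpha$, where the inductive hypothesis offers no direct information about $Y$; the bound must instead be extracted from the hypotheses of the theorem by exploiting the fact that $\alpha$ was chosen as the top-level coordinate $x_S$, so that condition (2) directly controls the distance $d_Y(\alpha, x_Y)$. A secondary technical point is the separating case, where the induction must be applied component by component before concatenating the resulting pants decompositions with $\alpha$. Condition (1) of the hypotheses, though not visibly used in this top-level case analysis, is essential at lower recursion depths to ensure that the restricted tuple on $S'$ continues to satisfy the full set of consistency conditions, so that the induction can be carried through.
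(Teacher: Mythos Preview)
The paper does not supply its own proof of this theorem: it is quoted from \cite{bkmm} (Theorem 4.3 there), with only the remark that the marking-graph version adapts verbatim to the pants-graph setting by discarding annular subsurfaces. So there is no in-paper argument to compare against.

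Your inductive argument is correct and is in fact the standard one: cut along $\alpha = x_S$, recurse on the pieces, and for any proper $Y$ meeting $\alpha$ invoke condition~(2) with $Z=S$ to bound $d_Y(P,x_Y)\le d_Y(\alpha,x_Y)\le F_2$. The three-case analysis is exhaustive (``$Y\pitchfork\alpha$'' here should be read as ``$\pi_Y(\alpha)\neq\emptyset$'', covering both the case where $\alpha$ crosses $\partial Y$ and where $\alpha$ lies inside $Y$).

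One remark worth making: your closing sentence about condition~(1) being ``essential at lower recursion depths'' is not accurate for this argument. Tracing through, condition~(1) is never invoked: at every level of the recursion the overlapping case is handled by the nesting condition~(2) applied with $Z$ equal to the current ambient surface. Indeed, unrolling the recursion shows $F=F_2$ suffices. This is possible only because the paper states condition~(2) in unconditional form; in the original \cite{bkmm} formulation, condition~(2) carries the extra hypothesis $d_Z(x_Z,\partial Y)>F_2$, and condition~(1) is then genuinely needed. So your proof is correct for the statement as written here, but you should be aware that the hypothesis~(2) you are using is stronger than the standard one.
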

  \begin{example}\label{ex : order}
 Given a pants decomposition $P$, consider the tuple $(x_{Y})_{Y\subseteq S}$ where $x_{Y}$ is a vertex in $\pi_{Y}(P)$.  Then by Theorem \ref{thm : behineq} and the second part of Lemma \ref{lem : diamproj} there is a suitable constant $F_{1}$ so that the tuple satisfies Conditions (1) and (2) of Theorem \ref{thm : consistency} for $F_{1}$ and $F_{2}=1$.
 \end{example}
 
 We recall the following relation on subsurfaces of $S$ from \cite{bkmm}.
 \begin{definition}\textnormal{ (Partial order in the pants and marking graph)} \label{def : partialorder} Given $F_{1},F_{2}\geq 1$ with $F_{1}>\max\{F_{2},B_{0},G\}$ ($B_{0}$ is the constant from Theorem \ref{thm : behineq} and $G$ is from Theorem \ref{thm : bddgeod}). Let $(x_{Y})_{Y \subseteq S}$, where $x_{Y}\in \mathcal{C}_{0}(S)$, be a tuple satisfying the consistency conditions in Theorem \ref{thm : consistency} for constants $F_{1}$ and $F_{2}$. We define the following two relations on proper essential subsurfaces of $S$:
  \begin{enumerate}[(i)]
 \item Let $k\in\mathbb{N}$, the relation $Y \prec_{k} Z$ holds, if $Y\pitchfork Z$ and $d_{Y}(x_{Y}, \partial{Z})\geq k(F_{1}+4)$.
 \item  Let $k\in\mathbb{N}$, the relation $Y \ll_{k} Z$ holds, if $Y\pitchfork\partial{Z}$ and $d_{Y}(x_{Y}, \partial{Z})\geq k(F_{1}+4) $ 
  \end{enumerate}
  \end{definition}
Suppose that $Y\pitchfork Z$, then $\partial{Y} \pitchfork Z$ and $\partial{Z}\pitchfork Y$, so it is immediate from the definition that $Y \prec_{k} Z \Longrightarrow Y \ll_{k} Z$, but not the other way round. Moreover, if $k>l$ then $Y \prec_{k} Z \Longrightarrow Y \prec_{l} Z$, and similarly $Y \ll_{k} Z \Longrightarrow Y \ll_{l} Z$.

These notions of partial order are closely related to the {\it partial order of subsurface} in Definition \ref{def : ordersubsurf}. The following theorem provides for some useful transitivity properties of the partial order defined above. 

 \begin{thm}\cite[Lemma 4.4]{bkmm} \label{thm : partialorder}
 
 Let $(x_{Y})_{Y \subseteq S}$ be a tuple satisfying the consistency conditions and consider the partial order defined by it.  Let $U,V$ and $W$ be essential subsurfaces such that $x_{U},x_{V}, x_{W} \neq \emptyset $.  Given an integer $k > 1$ we have 
 \begin{enumerate}
 \item If $U \prec_{k} V$ and $V \ll_{2} W$ then $U \prec_{k-1} W$, also if $\mu$ is a partial marking, and if $ U \prec_{k} V$ and $V \ll_{2} \mu$, then $U \prec_{k-1} \mu$.
\item If $U \ll_{k} V$ and $V \ll_{2} W$ then $U \ll_{k-1} W$, also if $\mu$ is a partial marking, and if $U \ll_{k} V$ and $V \ll_{2} \mu$ then $U \ll_{k-1} \mu$.
\item If $U \pitchfork V$ and both $U \ll_{k} \mu$ and $V \ll_{k} \mu$ for a partial marking $\mu$, then $U$ and $V$ are $\prec_{k-1}-$ordered,  that is, either $U \prec_{k-1} V$ or $V \prec_{k-1} U$.
\end{enumerate}
 \end{thm}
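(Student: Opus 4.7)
The plan is to prove all three parts by the same mechanism: chain the consistency condition (Theorem \ref{thm : consistency}), the Behrstock inequality (Theorem \ref{thm : behineq}), and the triangle inequality in curve complexes. The recurring idea I would use is that a large subsurface coefficient $d_Y(x_Y, \partial Z)$ forces its dual $d_Z(x_Z, \partial Y)$ to be small by consistency, and then a large distance in one complex can be transported to a controlled large distance in another by applying Behrstock to a well-chosen common test object. Each such transport will lose one factor of $F_1+4$, which accounts for the drop from $k$ to $k-1$ in the conclusion.

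For part (1), I would first use $U \prec_k V$ and the consistency condition on the pair $(U,V)$ to extract $d_V(x_V, \partial U) \leq F_1$, since the $U$-coefficient already exceeds $F_1$. Combined with $d_V(x_V, \partial W) \geq 2(F_1+4)$ from $V \ll_2 W$, the triangle inequality in $\mathcal{C}(V)$ gives $d_V(\partial U, \partial W) > B_0$, after enlarging $F_1$ past $B_0$ if necessary. Behrstock applied to the pair $U, V$ with test object $\partial W$ then forces $d_U(\partial V, \partial W) \leq B_0$, and a final triangle inequality in $\mathcal{C}(U)$ will produce
\[
d_U(x_U, \partial W) \;\geq\; k(F_1+4) - B_0 - O(1) \;\geq\; (k-1)(F_1+4).
\]
The overlap $U \pitchfork W$ needed for $U \prec_{k-1} W$ would be verified separately from the nonemptiness of the projections involved. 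Replacing $\partial W$ by $\mu$ throughout handles the second assertion, and part (2) is the same argument requiring only the weaker overlap $U \pitchfork \partial W$ in the conclusion. For part (3), $U \pitchfork V$ and consistency give $\min\{d_U(x_U, \partial V), d_V(x_V, \partial U)\} \leq F_1$; say $d_V(x_V, \partial U) \leq F_1$. Combined with $d_V(x_V, \mu) \geq k(F_1+4)$, the triangle inequality yields $d_V(\partial U, \mu) > B_0$, Behrstock then forces $d_U(\partial V, \mu) \leq B_0$, and a last triangle inequality together with $d_U(x_U, \mu) \geq k(F_1+4)$ gives $d_U(x_U, \partial V) \geq (k-1)(F_1+4)$. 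The symmetric case yields $V \prec_{k-1} U$.

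The hard part will not be the distance arithmetic but the bookkeeping of overlap and projection nonemptiness. The proof must handle the corner cases where the subsurfaces are nested rather than transverse (for instance $V \subseteq U$), where Behrstock does not literally apply and consistency condition (2) --- the estimate $d_V(x_V, \pi_V(x_U)) \leq F_2$ --- must be substituted for consistency condition (1). Once these case distinctions are dispatched, the remaining estimates reduce to routine arithmetic in $F_1$, $F_2$, $B_0$, and the uniformly bounded projection diameters of $\partial V$ and $\partial W$.
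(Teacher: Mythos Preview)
The paper does not supply its own proof of this statement; it is quoted verbatim from \cite{bkmm} (Lemma~4.1 there) and invoked as a black box. So there is no in-paper argument to compare against. That said, your sketch is the standard proof, and the arithmetic you outline---consistency to bound the dual coefficient, triangle inequality in $\mathcal{C}(V)$, Behrstock to flip to $\mathcal{C}(U)$, then triangle inequality again---is exactly what \cite{bkmm} does, with the same loss of one unit of $F_1+4$ at the Behrstock step.

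One point to tighten in the nested case you flag at the end: for part~(2) with $V\subsetneq U$, substituting consistency condition~(2) for condition~(1) gives you $d_V(\pi_V(x_U),\partial W)$ large, but that alone does not bound $d_U(x_U,\partial W)$ from below. You also need the Bounded Geodesic Image Theorem (Theorem~\ref{thm : bddgeod}) to conclude that any geodesic in $\mathcal{C}(U)$ from $x_U$ to $\partial W$ passes within distance~$1$ of $\partial V$, whence $d_U(x_U,\partial W)\ge d_U(x_U,\partial V)-O(1)$. This is routine once noticed, and the assumption $F_1>G$ (the paper writes $F_1>\min\{F_2,B_0,G\}$, which should be $\max$) is there precisely to make the constants work. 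For parts~(1) and~(3) the nested case does not arise, since $U\pitchfork V$ is part of the hypothesis; the overlap $U\pitchfork W$ in part~(1) follows, as you indicate, from $d_V(\partial U,\partial W)\ge 2$ forcing $\partial U$ and $\partial W$ to intersect.
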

  
 \subsection{Hierarchies and their resolutions in the pants graph and the marking graph} \label{subsec : hierarchy}

The {\it hierarchies of tight geodesics} in curve complexes of subsurfaces of a surface were introduced by Masur and Minsky in \cite{mm2}. See also \cite{elc1}, \cite{bkmm}. Hierarchy paths are quasi-geodesics in the pants and marking graph of a surface, with quantifiers depending only on the topological type of $S$, obtained by resolving hierarchies. Although in most of this paper we use only hierarchy resolution paths and their properties given in Theorem \ref{thm : hrpath}, a good understanding of the structure of hierarchies themselves will be very useful for the reader to follow our arguments. Given a geodesic $h$ in the curve complex of a subsurface $Y$, we refer to $Y$ as the support of $h$ and denote it by $D(h)$. Let $(\mu^{-},\mu^{+})$ be a pair of partial makings or laminations in $S$,  a {\it complete Hierarchy of tight geodesics} $H$ inductively associates to the pair $(\mu^{-},\mu^{+})$ a collections of tight geodesics in the curve complex of $S$ and the curve complexes of the subsurfaces of $S$. A tight geodesic in $\mathcal{C}(Y)$, is a geodesic with the property that any three of its consecutive vertices fill the surface $Y$. Here we list the properties which describe the inductive and layered structure of a hierarchy $H$, for more detail see the references mentioned above.

\begin{enumerate}
\item There is a unique main geodesic $g_{S}$ with $D(g_{S}) = S$, whose endpoints lie on $\base(\mu^{-})$ and $\base(\mu^{+})$.
\item For any geodesic $h\in H$ other than $g_{S}$, there exists another geodesic $k\in H$ such that, for some simplex $v\in k$, $D(h)$ is either a component of $D(k)\backslash v$, or an annulus whose core is a component of $v$. We say that $D(h)$ is a {\it component domain} of $k$.
\item An essential subsurface $Y\subseteq S$ can occur as the component domain of at most one geodesic in $H$.
\end{enumerate}

\noindent{\bf Infinite hierarchies:} An infinite hierarchy is a hierarchy $H$ with associated infinite tight geodesic rays or lines in the curve complexes of subsurfaces. Here each of the end points $\mu^{\pm}$ is a union of minimal, filling laminations supported on disjoint subsurfaces. Each lamination is the point at infinity of an infinite geodesic of the hierarchy in the curve complex of the subsurface supporting the geodesic (see Theorem \ref{thm : ccbdry}). More precisely, we consider laminations $\mu$ such that the restriction of $\mu$ to any subsurface $Y$ in the complement of the closed leaves of $\mu$ in $S$ with $\xi(Y)>1$ is minimal and fills $Y$. In \cite[\S 5]{elc1} the existence of infinite hierarchies between a pair of partial markings or laminations as above $(\mu^{-},\mu^{+})$ with no common infinite leaf is proved.
\medskip
 
\noindent{\bf Hierarchy resolution paths:}  These are path which comprise a set of transitive quasi-geodesics in the pants (marking) graphs of the surfaces $S$ with constants depending only on the topological type of the surface. In $\S$\ref{sec : stablehrpath} we prove a condition for stability of hierarchy resolution paths in the pants graph. 

A pants graph resolution of a hierarchy $H$ is a path in the pants graph denoted by $\rho :[m,n] \to P(S)$, where $[m,n] \subseteq \mathbb{Z}\cup\{\pm \infty\}$. For any $i \in [m,n]$, $\rho(i+1)$ is obtained from $\rho(i)$ by an elementary move ($\rho(i+1)$ and $\rho(i)$ have pants distance $1$). Similarly a marking graph resolution of $H$ is a path of clean markings such that any two consecutive markings differ by an elementary move. Given a hierarchy path $\rho$ we denote 
$$|\rho|=\{\rho(i) : i \in [m,n]\}$$
 which is a subset of the pants graph. A resolution path of a hierarchy consists of {\it slices of the hierarchy}. Each slice is the union of vertices of geodesics in curve complexes of subsurfaces of $S$. The supporting subsurfaces of these geodesics consist a tower of nested component domains of $H$. In the following theorem we list some of the properties of these quasi-geodesics (see also \cite{bmm2}, \cite{bm}) which we will use frequently in this paper. The main feature of these properties is that they are encoded in the subsurface coefficients of the pair. 
\begin{thm} \label {thm : hrpath}\textnormal{ (Properties of pants hierarchy resolution paths)} 

There are positive constants $M_{1},M_{2}, M_{3}$ and $M_{4}$ depending only on the topological type of $S$ so that: Given partial markings or laminations $\mu^{-}$ and $\mu^{+}$ on a surface $S$, there are hierarchy (resolution) paths $\rho:[m,n] \to P(S)$ $([m,n] \subseteq \mathbb{Z}\cup \{\pm \infty\})$ with $\rho(m)=\mu^{-}$ and $\rho(n)=\mu^{+}$ satisfying the following list of properties. 
\begin{enumerate}
\item \label{h : J} Let $Y$ be a component domain of $\rho$. There is a connected interval $J_{Y}\subseteq [m,n]$ and a geodesic $g_{Y} \subset \mathcal{C}(Y)$ such that for each $j \in J_{Y}$,  $\partial{Y} \subset \rho(j)$ and there is a vertex $v\in g_{Y}$ such that $v\in\rho(j)$ $(v=g_{Y}\cap\rho(j))$.
\item \label{h : largelink} An essential non-annular subsurface $Y \subseteq S$ with $d_{Y}(\mu^{-}, \mu^{+})> M_{1}$ is a component domain of $\rho$.
\item (Monotonicity) \label{h : monoton} Let $i,j \in J_{Y}$, and let vertices  $v=\rho(i)\cap g_{Y}$ and $w=\rho(j)\cap g_{Y}$. Then $i\leq j$ if and only if $v\leq w$ as vertices along $g_{Y}$.
\item (Bounded projection) \label{h : lrbddproj} Let $Y$ be a component domain of $\rho$ and let $J_{Y}=[j^{-},j^{+}]$. If $i>j^{+}$ then $d_{Y}(\rho(i),\rho(j^{+})) \leq M_{3}$, and if $i<j^{-}$ then $d_{Y}(\rho(i), \rho(j^{-})) \leq M_{3}$.
\item (Hausdorff distance bound) \label{h : hausd} Let $W\subseteq S$ be a subsurface. Let $\hull_{W}(\mu^{-},\mu^{+})$ be the convex hull of $\pi_{W}(\mu^{-})$ and $\pi_{W}(\mu^{+})$ in $\mathcal{C}(W)$. For any $i \in [m,n]$ there is an $x \in  \hull_{W}(\mu^{-},\mu^{+})$ such that
 $$d_{W}(\rho(i), x)\leq M_{4}.$$ 
 Also for any $x \in  \hull_{W}(\mu^{-},\mu^{+})$ there is an $i \in [m,n]$ such that the above inequality holds. In other words, the Hausdorff distance of $\hull_{W}(\mu^{-},\mu^{+})$ and $\pi_{W}(|\rho|)$ in $\mathcal{C}(W)$ is less than $M_{4}$. Note that here we do not necessarily assume that $W$ is component domain of $\rho$. 
 \item (No backtracking) \label{h : nobacktrack} Let $i,j,k\in [m,n]$ with $i\leq j\leq k$. Then for any subsurafce $Y\subseteq S$, 
 $$d_{Y}(\rho(i),\rho(k))+M_{4}\geq d_{Y}(\rho(i),\rho(j))+d_{Y}(\rho(j),\rho(k)).$$
\end{enumerate}
\end{thm}

In this theorem $J_{Y}\subset [m,n]$ consists of all $j\in [m,n]$ such that $\rho(j)$ is a slice of $H(\mu^{-},\mu^{+})$ containing $(v,g_{Y})$, where $g_{Y}\in H$ is the tight geodesic supported on $Y$ and $v\in g_{Y}$ (see \cite[\S 5]{mm2}). The fact that $J_{Y}$ is an interval was proved in Lemma 4.9 of \cite{elc2}. This explains property (\ref{h : J}). Property (\ref{h : largelink}) is  Lemma 6.2 (Large Link Lemma) in \cite{mm2}. Property (\ref{h : lrbddproj}) is a consequence of the Bounded Geodesic Image Theorem and is established in Lemma 6.1 of \cite{mm2}. Property (\ref{h : hausd}) is a consequence of Lemmas 6.1 and 6.9 in \cite{mm2} and is established in the proof of Lemma 5.14 in \cite{elc1}. Property (\ref{h : monoton}) is a consequence of the definition of slices of a hierarchy and the partial order of them defined using the partial order of pointed geodesics of $H$ in $\S 5$ of \cite{mm2}. Property (\ref{h : nobacktrack}) for a component domain of $\rho$ follows from properties (\ref{h : J}), (\ref{h : monoton}) and (\ref{h : lrbddproj}). For other subsurfaces it follows from properties (\ref{h : largelink}).

\begin{remark}
The constants $M_{1},M_{2},M_{3}$ and $M_{4}$ are related to each other. For example we can choose $M_{1}=2M_{2}$ and $M_{4}=2M_{1}+2M_{3}+2$. But these relations are not playing any role in this paper.
\end{remark}

\begin{remark}
Let $(\mu^{-},\mu^{+})$ be a pair of (partial) markings or laminations. A marking hierarchy resolution path $\tilde{\rho} : [m,n] \to M(S)$ $([m,n]\subseteq\mathbb{Z}\cup\{\pm\infty\})$ between $\mu^{-}$ and $\mu^{+}$ satisfies the same list of properties as pants hierarchy resolution paths, besides in properties (1)-(6) subsurfaces $Y$ and $W$ can be annular subsurfaces as well. 
\end{remark}

Partial order on subsurfaces along hierarchies is introduced in \cite[\S 5]{mm2}. It roughly correspond to the order that the intervals $J_{Y}$ appear along resolutions of the hierarchy. In this paper we mainly need the following weaker version of the partial order. 

Set the constant $M=M_{1}+M_{2}+M_{3}+M_{4}+B_{0}+4$. 

\begin{lem}\label{lem : ordersubsurf}
Given (partial) markings or laminations $\mu,\mu'$. Let $Y,W\subseteq S$ be essential subsurfaces. Suppose that $Y \pitchfork W$, $d_{Y}(\mu,\mu')>4M$ and $d_{W}(\mu,\mu')>4M$. Then in the following either (a) or (b) and not both holds: 
\begin{enumerate}[(a)]
\item$d_{Y}(\mu,\partial{W})\geq 2M$ and $d_{W}(\mu',\partial{Y})\geq 2M$.
\item $d_{W}(\mu,\partial{Y})\geq 2M$ and $d_{Y}(\mu',\partial{W})\geq 2M$.
\end{enumerate}
\end{lem}
\begin{proof}
 First assume that the first inequality of (a) holds. Then by Theorem \ref{thm : behineq}(Behrstock inequality) and since $2M>B_{0}$, we get 
$$d_{W}(\mu,\partial{Y})\leq B_{0}< M.$$  
Moreover by the assumption of the lemma, $d_{W}(\mu,\mu')> 4M$. These two inequalities combined by the triangle inequality give us 
$$d_{W}(\mu',\partial{Y})> 3M-\diam_{W}(\partial{Y})\geq 3M-2>2M,$$
which is the second inequality in (a). Similarly assuming that the second inequality of (a) holds we can show that the first inequality of (a) holds. So the two inequalities in (a) are equivalent. Also similarly we can show that the two inequalities in (b) are equivalent. 

Since $d_{Y}(\mu,\mu')>4M$, by the triangle inequality either $d_{Y}(\mu,\partial{W})\geq 2M$ or $d_{Y}(\mu',\partial{W})\geq 2M$. The inequality $d_{Y}(\mu,\partial{W})\geq 2M$ is the first inequality of (a) which as we saw above implies that $d_{W}(\mu,\partial{Y})\leq B_{0}< M$. Thus the first inequality of (b) does not hold and therefore (b) does not hold.

The inequality $d_{Y}(\mu',\partial{W})\geq 2M$ is the first inequality of (b) and similar to the above argument we can show that when it holds, (a) does not hold.
 \end{proof}
 
 \begin{definition}\label{def : ordersubsurf}\textnormal{ (Order of subsurfaces)}
Given (partial) markings or laminations $\mu,\mu'$. Let $Y$ and $W$ be essential subsurfaces. Suppose that $Y \pitchfork W$, $d_{Y}(\mu,\mu')>4M$ and $d_{W}(\mu,\mu')>4M$. If (a) in Lemma \ref{lem : ordersubsurf} holds we denote $Y<W$. If (b) in the lemma holds we denote $W<Y$.  
\end{definition}

\begin{prop}\label{prop : ordersubsurf} 
The relation $<$ is transitive. Suppose that $W<Y$, $j\in J_{Y}-J_{W}$ and $i \in J_{W}-J_{Y}$. Then $i\leq j$.
\end{prop}
\begin{proof}
To prove the transitivity of $<$, let $W<Y$ and $Y<Z$. Then by Lemma \ref{lem : ordersubsurf}, $d_{Y}(\mu',\partial{W})\geq 2M$ and $d_{Z}(\mu',\partial{Y})\geq 2M$. The second inequality and Theorem \ref{thm : behineq}(Behrstock inequality) imply that $d_{Y}(\mu',\partial{Z})\leq M$. This and the first inequality combined by the triangle inequality imply that 
\begin{equation}\label{eq : dYbZbW}d_{Y}(\partial{W},\partial{Z})\geq M-2.\end{equation} 
By the set up of $M$, $M-2> 2$. Thus $d_{Y}(\partial{W},\partial{Z})> 2$. Then as we saw in $\S$\ref{sec : ccplx}, $\partial{Y}\pitchfork \partial{Z}$. Thus $Y\pitchfork Z$. Moreover, $M-2>B_{0}$ so by the inequality (\ref{eq : dYbZbW}), we also have that $d_{Y}(\partial{W},\partial{Z})>B_{0}$. Then by Behrstock inequality, $d_{W}(\partial{Y},\partial{Z})\leq B_{0}$. Furthermore, since $W<Y$, $d_{Y}(\partial{W},\mu')>2M$, then by the Behrstock inequality $d_{W}(\partial{Y},\mu')\leq B_{0}$. Then by the triangle inequality we have 
\begin{eqnarray*}
d_{W}(\mu,\partial{Z})&\geq& d_{W}(\mu,\mu')-d_{W}(\partial{Y},\mu')-d_{W}(\partial{Y},\partial{Z})\\
&-&\diam_{W}(\partial{Y})-\diam_{W}(\partial{Z})\\
&\geq& 4M-2B_{0}-4>2M.
\end{eqnarray*}
This finishes proving that $W<Z$.

We proceed to prove the second assertion of the proposition. Since $W<Y$, $d_{W}(\partial{Y},\mu)\geq 2M$. So by Behrstock inequality, $d_{Y}(\partial{W},\mu)\leq M$. Moreover, $i\in J_{W}$ so $\partial{W}\subset\rho(i)$ (Theorem \ref{thm : hrpath}(\ref{h : J})). Thus 
\begin{equation}\label{eq : dYrimu}d_{Y}(\rho(i),\mu)\leq M.\end{equation}
Now suppose that $i>j$. Then since $i\in J_{W}-J_{Y}$, $i$ is greater than the right end point of the interval $J_{Y}$. Then by Theorem \ref{thm : hrpath}(\ref{h : lrbddproj}) we have that $d_{Y}(\rho(i),\mu')\leq M$. This inequality and the inequality $d_{Y}(\mu,\mu')>4M$ combined with the triangle inequality imply that 
 $$d_{Y}(\rho(i),\mu)> 3M-2>M,$$ 
 which contradicts the upper bound (\ref{eq : dYrimu}). Thus $i\leq j$ as was desired.
\end{proof}

In this paper we deduce almost all of the properties of hierarchy paths we need from Theorem \ref{thm : hrpath} and Proposition \ref{prop : ordersubsurf}. In a couple of occasions we need some finer properties of hierarchies and their resolutions where we provide a reference.
 
 \subsection{$\Sigma-$hulls and their projections}\label{subsec : sigmahull}

In this section we recall $\Sigma-$hulls in the pants graph and their projections introduced in \cite{bkmm}. Variations of the projection are used in \cite{beh},\cite{bm},\cite{bmm2}. Let $(\mu^{-},\mu^{+})$ be a pair of partial markings or laminations. Let $Y\subseteq S$ be an essential subsurface. Suppose that $\pi_{Y}(\mu^{-})\neq\emptyset$ and $\pi_{Y}(\mu^{+})\neq\emptyset$. Then let $\hull_{Y}(\mu^{-},\mu^{+})$ be the set of all geodesics in $\mathcal{C}(Y)$ that connect $\pi_{Y}(\mu^{-})$ and $\pi_{Y}(\mu^{+})$. Now suppose that either $\pi_{Y}(\mu^{-})=\emptyset$ or $\pi_{Y}(\mu^{+})=\emptyset$. If the restriction of $\mu^{\pm}$ to $Y$ is in $\mathcal{EL}(Y)$ then the restriction $\mu^{\pm}|_{Y}$ determines a point in the Gromov boundary of $\mathcal{C}(Y)$ (Theorem \ref{thm : ccbdry}). Suppose that $\mu^{+}|_{Y}$ is in $\mathcal{EL}(Y)$. Then let $\hull_{Y}(\mu^{-},\mu^{+})$ be the set of geodesics with one end point on $\pi_{Y}(\mu^{-})$ and asymptotic to the point corresponding to $\mu^{+}|_{Y}$ in the Gromov boundary of $\mathcal{C}(Y)$. The definition of $\hull_{Y}(\mu^{-},\mu^{+})$ when $\mu^{-}|_{Y}$ is in $\mathcal{EL}(Y)$, or both $\mu^{-}|_{Y}$ and $\mu^{+}|_{Y}$ are in $\mathcal{EL}(Y)$ is similar.

Note that since $\mathcal{C}(Y)$ is $\delta_{Y}-$hyperbolic all of the geodesics connecting $\pi_{Y}(\mu^{-})$ or $\mu^{-}|_{Y}$ and $\pi_{Y}(\mu^{+})$ or $\mu^{+}|_{Y}$ uniformly fellow travel.

Now given $\epsilon>0$ define the {\it $\Sigma_{\epsilon}-$hull of the pair $(\mu^{-},\mu^{+})$} as follows
\begin{eqnarray*}
\Sigma_{\epsilon}(\mu^{-},\mu^{+}):=\{ P \in P(S):d_{Y}(P,\hull_{Y}(\mu^{-},\mu^{+}))\leq\epsilon\\
\;\text{for every non-annular subsurface}\;\; Y\subseteq S\}.
\end{eqnarray*}

\begin{thm}\cite[Proposition 5.2]{bkmm}\label{thm : projsigmahull}
There is a constant $F>0$ depending only on the topological type of the surface such that for every $\epsilon>F$ there is a coarse (projection) map
$$\Pi : P(S) \to \Sigma_{\epsilon}(\mu^{-},\mu^{+}) $$
with the following properties:
\begin{enumerate}
\item For every non-annular subsurface $Y \subseteq S$ we have
$$d_{Y}(\Pi P, \hull_{Y}(\mu^{-},\mu^{+})) \leq F$$
\item $\Pi\big|_{\Sigma_{\epsilon}(\mu^{-},\mu^{+})}$ is uniformly close to the identity.
\item $\Pi$ is coarse-Lipschitz.
\end{enumerate}
\end{thm}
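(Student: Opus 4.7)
The plan is to construct $\Pi$ by invoking the Consistency Theorem (Theorem \ref{thm : consistency}). Given $P\in P(S)$, for each non-annular subsurface $Y\subseteq S$ define $x_Y^P\in \mathcal{C}(Y)$ to be a coarse nearest-point projection of $\pi_Y(P)$ onto the subset $\hull_Y(\mu^-,\mu^+)$. Since $\mathcal{C}(Y)$ is $\delta$-hyperbolic (with $\delta$ depending only on the topological type of $S$) and $\hull_Y(\mu^-,\mu^+)$ is a uniformly thin neighbourhood of any geodesic from $\pi_Y(\mu^-)$ to $\pi_Y(\mu^+)$, this nearest-point projection is well defined up to a uniform additive error. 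The candidate $\Pi(P)$ will be the pants decomposition produced by the Consistency Theorem applied to the tuple $(x_Y^P)_{Y}$.

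The main work is to verify that $(x_Y^P)_Y$ satisfies the two consistency hypotheses with constants depending only on $S$. The nesting condition $Y\subseteq Z$ is the easier one: because $\pi_Y$ coarsely factors through $\pi_Z$ whenever $\pi_Z(\cdot)$ stays away from $\partial Y$, an application of the Bounded Geodesic Image Theorem (Theorem \ref{thm : bddgeod}) to the portion of the hull in $\mathcal{C}(Z)$ relevant to $x_Y^P$ gives the required bound $d_Y(x_Y^P,\pi_Y(x_Z^P))\leq F_2$. The transverse condition $Y\pitchfork Z$ is the heart of the argument. Here I would apply Behrstock's inequality (Theorem \ref{thm : behineq}) three times, to each of the pairs $(P,\mu^-)$, $(P,\mu^+)$ and $(\mu^-,\mu^+)$, and combine the resulting estimates with the following standard $\delta$-hyperbolic fact: if $d_Y(P,\partial Z)$ is large then the nearest-point projection of $\pi_Y(P)$ onto the hull is close to the nearest-point projection of $\pi_Y(\mu^-)$ or $\pi_Y(\mu^+)$ onto the hull, so $d_Y(x_Y^P,\partial Z)$ is controlled by the Behrstock inequality for $(\mu^\pm,Z)$; a dual argument from the $Z$ side then forces one of $d_Y(x_Y^P,\partial Z)$, $d_Z(x_Z^P,\partial Y)$ to be uniformly bounded.

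Once consistency is verified, Theorem \ref{thm : consistency} yields a pants decomposition $\Pi(P)$ with $d_Y(\Pi(P),x_Y^P)\leq F$ for every non-annular $Y$; together with $x_Y^P\in \hull_Y(\mu^-,\mu^+)$ this gives property (1), and in particular $\Pi(P)\in\Sigma_\epsilon(\mu^-,\mu^+)$ whenever $\epsilon>F$. For property (2), if $P\in \Sigma_\epsilon(\mu^-,\mu^+)$ then $d_Y(\pi_Y(P),x_Y^P)\leq\epsilon$ for all $Y$, so both $P$ and $\Pi(P)$ are admissible outputs for the tuple $(x_Y^P)_Y$; plugging this into the distance formula \eqref{eq : dsf} with threshold $A$ slightly larger than $\epsilon+F$ gives a uniform bound on $d(P,\Pi(P))$. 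For property (3), an elementary move from $P$ to $P'$ changes every $\pi_Y(P)$ by a uniformly bounded amount, hence by coarse Lipschitzness of nearest-point projection in a hyperbolic space also changes every $x_Y^P$ by a uniformly bounded amount; invoking the distance formula a second time, applied to $\Pi(P)$ and $\Pi(P')$, converts these pointwise bounds into a uniform bound on $d(\Pi(P),\Pi(P'))$.

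The principal obstacle is the transverse consistency check: one needs to execute the hyperbolic nearest-point argument sketched above with enough care that all dependencies on $\mu^\pm$ and $P$ are absorbed into universal constants, and in particular that the threshold above which $d_Y(P,\partial Z)$ forces $x_Y^P$ to lie near $\pi_Y(\mu^\pm)$ is controlled only by $\delta$ and Behrstock's constant $B_0$. A secondary technical point is choosing the various constants ($F_1$, $F_2$ feeding into the Consistency Theorem, and the threshold $A$ used in the distance formula for (2) and (3)) in a compatible order so that the final $F$ depends only on the topological type of $S$.
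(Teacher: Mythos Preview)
Your proposal is correct and follows essentially the same route as the paper (and the original argument in \cite{bkmm}): the paper explicitly states that the main ingredient is to show the tuple $(x_Y)_Y$ of nearest points to $\pi_Y(P)$ on $\hull_Y(\mu^-,\mu^+)$ satisfies the consistency conditions of Theorem~\ref{thm : consistency}, and then to invoke that theorem to produce $\Pi P$. Your identification of the transverse consistency check as the main obstacle, handled via Behrstock's inequality and $\delta$-hyperbolic nearest-point arguments, is exactly right.
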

The set of vertices of the pants graph consists a $1-$dense subset of  $P(S)$. The coarse Lipschitz in part (3) means that $\Pi$ is defined on the set of vertices and is Lipschitz on this set.

 \begin{remark} 
In \cite{bkmm} the notion of $\Sigma-$hulls and their projections are introduced in the context of marking graphs. Moreover Theorem \ref{thm : projsigmahull} is stated and proved in the context of the marking graph. But it is straightforward to verify that their arguments go through in the context of pants graph excluding all annular subsurfaces. 
\end{remark}

The main ingredient of the proof of Theorem \ref{thm : projsigmahull} is the fact that there are positive constants $F_{1}$ and $F_{2}$, depending only on the topological type of the surface, such that the tuple $(x_{Y})_{Y\subseteq S}$, where each $x_{Y} \in \mathcal{C}(Y)$ is a nearest point to $\pi_{Y}(P)$ on $\hull_{Y}(\mu^{-},\mu^{+})$, satisfies the consistency conditions of Consistency Theorem (Theorem \ref{thm : consistency}). Then Consistency Theorem provides a constant $F>0$ and a pants decomposition $\Pi P$ such that 
$$d_{Y}(\Pi P, \hull_{Y}(\mu^{-},\mu^{+}))\leq F$$
 for every essential non-annular subsurface $Y\subseteq S$.


\section{The Weil-Petersson metric and its synthetic properties}\label{sec : WPmetric}

We start with some basic facts about Teichm\"{u}ller theory and the Weil-Petersson (WP) metric, and through out will set up our notation. Let $S=S_{g,b}$ be a surface with genus $g$ and $b$ boundary components. A point in $\Teich(S)$ the {\it Teichm\"{u}ller space} of $S$ is a complete, finite area hyperbolic surface $x$ equipped with a diffeomorphism $h:S\to x$. We say that $h$ is a marking of $x$. Two marked hyperbolic surfaces $h_{1}:S\to x_{1}$ and $h_{2}:S\to x_{2}$ define the same point in $\Teich(S)$ if and only if $h_{2} \circ h_{1}^{-1}:x_{1}\to x_{2}$ is isotopic to an isometry. The {\it mapping class group} of the surface $S$, denoted by $\Mod(S)$, is the group of isotopy classes of orientation preserving self diffeomorphisms of $S$. The group $\Mod(S)$ acts on $\Teich(S)$ by remarking as follows: an element  $f \in \Mod(S)$ maps a marked surface $h:S \to x$ to the marked surface $h\circ f:S \to x$. The quotient $\Teich(S)/\Mod(S)$ is the {\it moduli space} of $S$ denoted by $\mathcal{M}(S)$. Given a point $f:S\to x$ in the Teichm\"{u}ller space we usually drop the marking map and denote it by $x$. We denote the point in the moduli space corresponding to the $\Mod(S)$ orbit of $x\in \Teich(S)$ by $\hat{x}$.

let $\epsilon >0$, the $\epsilon-$thick part of the Teichm\"{u}ller space is the subset of the Teichm\"{u}ller space $\{x \in \Teich(S) : \inj(x) \geq \epsilon\}$.
 Here $\inj(x)$ is the injectivity radius of the hyperbolic surface $x$. The $\epsilon-$thin part of the Teichm\"{u}ller space is the subset $\{x \in \Teich(S) : \inj(x) \leq \epsilon\}$. The $\epsilon-$thick part and $\epsilon-$thin part of the moduli space are defined similarly.
 \medskip
 
 \noindent{\bf Weil-Petersson metric:} Given holomorphic quadratic differentials $\varphi, \psi \in T_{x}^{*}\Teich(S)$ the Weil-Petersson $L^{2}$ co-product is defined by
 $$\mathcal{R}e(\int_{x}\varphi \overline{\psi} \rho^{-2} ),$$
 where $\rho(z)^{2}|dz|^{2}$ is the hyperbolic metric of the marked hyperbolic surface $x$. This co-product induces a norm on Beltrami differentials via the standard pairing of quadratic differentials and measurable Beltrami differentials on $x$ defined by $\int_{x}\varphi \mu$. Any measurable Beltrami differential presents a vector in $T_{x}\Teich(S)$ and the Weil-Petersson metric on the Teichm\"{u}ller space is defined by the polarization of the induced norm. In this paper we study the global behavior of geodesics of this metric.
 
 The Weil-Petersson metric is a Riemannian metric with negative sectional curvatures which is invariant under the action of the mapping class group of the surface. The WP metric is an incomplete metric, however it is geodesically convex. The negative curvature and the geodesic convexity imply that the completion of the Teichm\"{u}ller space equipped with the WP metric $\overline{\Teich(S)}$ is a $\CAT(0)$ space. For background about $\CAT(0)$ spaces see e.g. \cite{bhnpc}.
 
 In the rest of this subsection we recall some properties of the Weil-Petersson metric and its geodesics which will be used in this paper. References for these material are \cite{wols}, \cite{wolb},\cite{wol} see them also for further references. 
\medskip

\noindent{\bf Length-functions:} Let $\alpha \in \mathcal{C}_{0}(S)$ the $\alpha-$length-function
$$\ell_{\alpha}:\Teich(S) \to \mathbb{R}^{+}$$
assigns to $x \in \Teich(S)$ the length of the geodesic representative of $\alpha$ on the marked hyperbolic surface $x$.

The notion of length-function has a natural extension to the space of measured geodesic laminations (see \cite{bonlam}). Let $\mathcal{L}$ be a measured geodesic lamination, we denote the $\mathcal{L}-$length-function by $\ell_{\mathcal{L}}(.)$. 
\medskip

\noindent{\bf Fenchel-Nielsen coordinates:} Let $P$ be a pants decomposition on a surface $S$, a Fenchel-Nielsen (FN) coordinate system corresponding to $P$, $(\ell_{\gamma},\theta_{\gamma})_{\gamma \in P}$ maps $\Teich(S)$ to $\prod_{\gamma \in P}\mathbb{R}_{\gamma}^{+} \times \mathbb{R}_{\gamma}$. The first coordinate of $\mathbb{R}_{\gamma}^{+} \times \mathbb{R}_{\gamma}$ is the $\gamma-$length-function and the second coordinate is a twist parameter about $\gamma$. For more detail see $\S3$ of \cite{buser}. We denote the positive Dehn twist about a curve $\gamma$ by $\mathcal{D}_{\gamma}$ which is defined as follows:  Let $x\in \Teich(S)$. Let $P$ be a pants decomposition with $\gamma \in P$. Fix a FN coordinate system corresponding to $P$. Then $\mathcal{D}_{\gamma}(x)$ is the point in $\Teich(S)$ with all coordinates equal to that of $x$ except $\theta_{\gamma}(\mathcal{D}_{\gamma}(x))=\theta_{\gamma}(x)+2\pi$.
\medskip

\noindent{\bf The Weil-Petersson metric completion of the Teichm\"{u}ller space and the completion strata:} The incompleteness of the Weil-Petersson metric is due to existence of finite length paths in the Teichm\"{u}ller space along which length of a curve converges to zero, \cite{wol}. Masur \cite{maswp} gives a concrete description of the completion as the {\it augmented Teichm\"{u}ller space}. The augmented Teichm\"{u}ller space consists of strata: Let  $\sigma$ be a possibly empty multi-curve, a point in the $\sigma-$stratum is a collection of marked hyperbolic metrics of connected components of $S\backslash \sigma$, where for each curve in $\sigma$ a pair of cusps is introduced. The topology is described via extended Fenchel-Nielsen coordinate systems as follows: Given a pants decomposition $P$, the FN coordinate system maps $\Teich(S)$ to $\prod_{\gamma \in P}\mathbb{R}\times \mathbb{R}^{+}$. We extend the FN coordinate system $(\ell_{\gamma},\theta_{\gamma})_{\gamma \in P}$ to allow length-functions take value $0$ as well. Now take the quotient of $\prod_{\gamma \in P}\mathbb{R}\times \mathbb{R}^{+}$ by identifying $(0,\theta) \sim (0,\theta')$ in each $\mathbb{R}^{+}\times\mathbb{R}$ factor. Let $\sigma \subset P$ then the topology near any point of the $\sigma-$stratum is such that the map defined by the FN coordinate system is a homeomorphism near that point. 
 
 In this topology each stratum $\mathcal{S}(\sigma)$ is the product of the lower dimensional Teichm\"{u}ller spaces of the connected components of $S\backslash \sigma$. 
 \medskip
 
\noindent{\bf Continuity of length-functions:} The following theorem is a consequence of the fact that the topology induced by the Weil-Petersson metric on the Teichm\"{u}ller space and the Chaubaty topology of the Teichm\"{u}ller space coincide. The Chaubaty topology is defined using the fact that each point in $\Teich(S)$ is the conjugacy class of a representation of the fundamental group of the surface $\pi_{1}(S)$ into $PSL_{2}(\mathbb{R})$. For more detail see the beginning of \cite[\S4]{wolb}.

\begin{thm}\textnormal{ (Continuity of length-functions)}\label{thm : continuitylf}
Suppose that a sequence of points $x_{n}\to x$ as $n\to \infty$ in $\overline{\Teich(S)}$. Then for every $\alpha\in\mathcal{C}_{0}(S)$, 
$$\ell_{\alpha}(x_{n})\to \ell_{\alpha}(x)$$
 as $n\to\infty$.
\end{thm}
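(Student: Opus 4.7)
The plan is to invoke the identification, recalled immediately before the theorem statement, between the WP topology on the completion $\overline{\Teich(S)}$ and the Chabauty topology on conjugacy classes of discrete faithful representations $\rho:\pi_1(S)\to PSL_2(\mathbb{R})$; once this is in place the continuity of length will follow from continuity of traces in the matrix entries.

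First I would pin down the correspondence: each $x\in \Teich(S)$ is represented by a discrete faithful $\rho_x$ whose action on $\mathbb{H}^2$ has quotient equal to the marked hyperbolic surface $x$, and each point $x\in \mathcal{S}(\sigma)$ in the completion corresponds to a representation of $\pi_1(S\setminus \sigma)$ that sends each component of $\sigma$ (regarded in the fundamental group of the adjacent piece) to a parabolic element. The equivalence of the WP and Chabauty topologies then translates $x_n\to x$ into the statement that, after a consistent normalization of matrix representatives of the conjugacy classes, $\rho_{x_n}(a)\to \rho_x(a)$ in $PSL_2(\mathbb{R})$ for every fixed $a\in \pi_1(S\setminus \sigma)$.

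Given $\alpha\in \mathcal{C}_0(S)$, fix $a\in \pi_1(S)$ representing its free homotopy class. In the generic case where $\alpha$ is disjoint from $\sigma$ (including $\sigma=\emptyset$, when $x\in \Teich(S)$), the element $\rho_x(a)$ is hyperbolic and $\ell_\alpha$ is computed by the trace formula $2\cosh(\ell_\alpha(y)/2)=|\mathrm{tr}\,\rho_y(a)|$. Continuity of trace in the matrix entries, together with continuity of $\cosh^{-1}$, immediately yields $\ell_\alpha(x_n)\to \ell_\alpha(x)$. When $\alpha$ is a component of $\sigma$, the same formula applies with $\rho_x(a)$ parabolic, so $|\mathrm{tr}\,\rho_{x_n}(a)|\to 2$ and $\ell_\alpha(x_n)\to 0=\ell_\alpha(x)$, consistent with the convention that pinched curves have vanishing length on the stratum.

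The case that I expect to require the most care is when $\alpha$ overlaps $\sigma$ essentially: then no representative of $\alpha$ lies in $\pi_1(S\setminus \sigma)$, so the Chabauty argument does not apply directly. Adopting the natural extension $\ell_\alpha(x)=+\infty$ on such strata, I would argue from the Collar Lemma: any closed geodesic on $x_n$ crossing a curve $\gamma\in \sigma$ with $\ell_\gamma(x_n)\to 0$ must traverse the standard collar about $\gamma$, whose width grows like $\log(1/\ell_\gamma(x_n))$, forcing $\ell_\alpha(x_n)\to \infty$ as required. These three regimes together exhaust all $\alpha\in \mathcal{C}_0(S)$ and give the asserted continuity of $\ell_\alpha$ on $\overline{\Teich(S)}$.
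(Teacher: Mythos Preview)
Your approach is correct and matches the paper's own justification: the paper does not give a proof but simply records the theorem as a consequence of the equivalence between the WP topology on $\overline{\Teich(S)}$ and the Chabauty topology on conjugacy classes of representations into $PSL_2(\mathbb{R})$, referring to \S4 of Wolpert's book for details. You have supplied the trace-formula argument and case analysis that the paper leaves implicit; in particular your treatment of the overlapping case via the Collar Lemma is a reasonable extension, though the paper itself never invokes the theorem in that regime and does not explicitly adopt the convention $\ell_\alpha=+\infty$ there.
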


\noindent{\bf Non-refraction property of completion strata:}
The following {\em non-refraction property} of the Wiel-Petersson completion strata is a consequence of the expansion of the WP metric near the completion strata. This is an expansion to product of the WP metric on a stratum and copies of a model metric on the punctured disk. The expansion first appeared in \cite{maswp} and was improved by Yamada \cite{yamadawp} and further improved by Daskalopoulos-Wentworth \cite{dwwp} and Wolpert \cite{wols}.
\begin{thm}\textnormal{(Non-refraction)}\label{thm : nonrefraction}\cite{dwwp}, \cite{wols} 
Let  $\zeta:[0,T] \to \overline{\Teich(S)}$ be a WP geodesic segment, and let $\sigma^{-}$ and $\sigma^{+}$ be the maximal simplices in $\mathcal{C}(S)$ such that $\zeta(0) \in \mathcal{S}(\sigma^{-})$ and $\zeta(T)\in \mathcal{S}(\sigma^{+})$ then 
$$int(\zeta([0,T])) \subset  \mathcal{S}(\sigma^{-} \cap \sigma^{+}).$$
\end{thm}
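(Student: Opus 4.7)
The plan is to prove the inclusion in two stages. First I show that the whole geodesic lies in the closed stratum $\overline{\mathcal{S}(\sigma^{-}\cap\sigma^{+})}$ by invoking Wolpert's convexity of length-functions. Fix any $\alpha\in\sigma^{-}\cap\sigma^{+}$ and set $f(t):=\ell_{\alpha}(\zeta(t))$; this function is continuous on $[0,T]$ by Theorem \ref{thm : continuitylf}, convex along the WP geodesic, nonnegative, and satisfies $f(0)=f(T)=0$. A convex nonnegative function vanishing at both endpoints of an interval vanishes identically, so $\ell_{\alpha}\circ\zeta\equiv 0$. Since this holds for every $\alpha\in\sigma^{-}\cap\sigma^{+}$, every point of $\zeta$ lies in $\overline{\mathcal{S}(\sigma^{-}\cap\sigma^{+})}=\bigcup_{\sigma\supseteq\sigma^{-}\cap\sigma^{+}}\mathcal{S}(\sigma)$.

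The second stage upgrades this to the statement about the open stratum. I argue by contradiction: suppose some $\zeta(t_{0})$ with $t_{0}\in(0,T)$ lies in a strictly deeper stratum $\mathcal{S}(\tau)$, $\tau\supsetneq\sigma^{-}\cap\sigma^{+}$. Picking any $\alpha\in\tau\setminus(\sigma^{-}\cap\sigma^{+})$ I may assume $\alpha\notin\sigma^{-}$, so $\ell_{\alpha}(\zeta(0))>0$ while $\ell_{\alpha}(\zeta(t_{0}))=0$. Passing to Wolpert's root-length coordinate $u_{\alpha}:=\sqrt{\ell_{\alpha}}$, the WP metric admits a smooth extension across the stratum $\mathcal{S}(\alpha)=\{u_{\alpha}=0\}$, and the geodesic equation for $\zeta$ becomes a smooth second-order ODE in $u_{\alpha}$ (coupled to the remaining coordinates). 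Since $u_{\alpha}\geq 0$ attains its minimum value $0$ at the interior point $t_{0}$, smoothness forces $u_{\alpha}'(t_{0})=0$; uniqueness of solutions to this ODE with vanishing initial data at $t_{0}$ then gives $u_{\alpha}\equiv 0$, contradicting $u_{\alpha}(0)>0$.

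The main obstacle is this second stage. Convexity alone cannot rule out a geodesic grazing a closed convex set at an isolated interior point---e.g.\ a Euclidean line tangent to a disk---so non-refraction is a genuinely Riemannian statement about the WP metric, not a consequence of its $\CAT(0)$ structure. The essential input is Wolpert's local expansion of the Weil-Petersson metric in the square-root-of-length coordinates, equivalently the smooth extension of the WP Levi-Civita connection across the stratum $\mathcal{S}(\alpha)$ in those coordinates, which converts the desired contradiction into a standard ODE uniqueness argument.
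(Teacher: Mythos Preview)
The paper does not give its own proof of this theorem; it is stated with citations to Daskalopoulos--Wentworth, Wolpert, and Yamada, and then used as a black box. So there is no ``paper's proof'' to compare against. Your sketch is in the spirit of Wolpert's argument, and the two stages you identify are the right ones.

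A few comments on your write-up. Stage one is fine once you note that convexity of $\ell_\alpha$ along geodesics in $\overline{\Teich(S)}$ follows by approximating the geodesic by geodesics in $\Teich(S)$ (where Wolpert's convexity applies directly) and passing to the limit using Theorem~\ref{thm : continuitylf}; alternatively one can appeal to convexity of the closed stratum $\overline{\mathcal{S}(\alpha)}$ and the CAT(0) distance function.

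Stage two needs a little more care in how you invoke ODE uniqueness. Knowing only $u_\alpha(t_0)=u_\alpha'(t_0)=0$ does not by itself pin down the solution of a coupled second-order system; what you are really using is that the stratum $\{u_\alpha=0\}$ is totally geodesic for the (extended) WP connection. The clean way to say it is via the tangent-cone description (Proposition~\ref{prop : tangentcone} in this paper): if the root-length initial derivative vanishes at $t_0$, then the geodesic through $t_0$ with that initial direction lies entirely in $\{\ell_\alpha=0\}$, and uniqueness of geodesics with prescribed initial direction in a CAT(0) space forces $\zeta$ to coincide with it. This is exactly the content of the last sentence of Proposition~\ref{prop : tangentcone}, which in turn rests on Wolpert's local expansion of the metric---precisely the ``essential input'' you correctly flag at the end.
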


\noindent{\bf Closed Weil-Petersson geodesics in the moduli space:} Using the non-refraction property Daskalopoulos-Wentworth and Wolpert show that any pseudo-Anosov element of the mapping class group $f$ has an axis in the Teichm\"{u}ller space equipped with the WP metric. The axis is a bi-infinite WP geodesic $Ax_{f} \subset \Teich(S)$ such that 
$$d_{\WP}(x, fx)=\inf_{y \in \Teich(S)} d_{\WP}(y,fy)$$
for every $x \in \Ax_{f}$. Then the axis of each pseudo-Anosov map projects to a closed geodesic in the moduli space.
\medskip

\noindent{\bf Bers pants decomposition and Bers marking:} By a result of Bers (see e.g. \cite[\S 5]{buser}) given a surface $S$ with negative Euler characteristic, there is a constant $L_{S}>0$ ({\it Bers constant}) depending only on the topological type of $S$ such that any complete finite area hyperbolic metric on $S$ has a pants decomposition ({\it Bers pants decomposition}) with the property that the geodesic representative of any curve in the pants decomposition has length at most $L_{S}$. We call any curve in a Bers pants decomposition  a {\it Bers curve}. By the Collar Lemma there are finitely many Bers curves and therefore finitely many Bers pants decompositions on a complete hyperbolic surface.  Given $x\in \overline{\Teich(S)}$, suppose that $x\in \mathcal{S}(\sigma)$ ($\sigma$ is a possibly empty multi-curve). Then a Bers pants decomposition of $x$ is the union of Bers pants decompositions of each of the connected components of $S\backslash \sigma$ and $\sigma$. A {\it Bers marking} is a (partial) marking obtained from a Bers pants decomposition by adding transversal curves with representatives at $x$ of minimal length.  We denote a Bers marking of $x$ by $\mu(x)$. The partial marking of $x$ does not have any transversal to the curves in $\sigma$.

The following theorem of Brock assigns to any WP geodesic a quasi geodesic in the pants graph. As a result the hierarchies in the pants and marking graphs and their resolutions play an essential role in our study of the global behavior of WP geodesics.
\begin{thm}\textnormal{ (Quasi-isometric model)}\cite{br}\label{thm : brockqisom}
There are constants $K_{\WP}\geq1$ and $C_{\WP}\geq 0$ depending only on the topological type of $S$, such that the coarsely defined map 
 $$Q: \overline{\Teich(S)} \to P(S)$$
which assigns to $x\in \overline{\Teich(S)}$ a Bers pants decomposition $Q(x)$ is a $(K_{\WP},C_{\WP})-$quasi-isometry.
\end{thm}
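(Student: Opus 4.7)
The plan is to establish both directions of the quasi-isometry inequality for the map $Q$, after first verifying that $Q$ is coarsely well defined. For well-definedness, I would observe that by the Collar Lemma and the Bers constant $L_{S}$, any hyperbolic surface carries only uniformly finitely many curves of length at most $L_{S}$; hence any two Bers pants decompositions of the same $x$ lie in a subset of $P(S)$ of uniformly bounded diameter (every two short pants decompositions can be connected by elementary moves among curves in this finite family). Extension to the completion is handled using the product structure of strata and the continuity of length-functions (Theorem \ref{thm : continuitylf}).

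For the upper bound $d\bigl(Q(x),Q(y)\bigr) \leq K_{\WP}\, d_{\WP}(x,y) + C_{\WP}$, I would subdivide a WP geodesic segment from $x$ to $y$ into $O(d_{\WP}(x,y))$ pieces of uniformly bounded WP length. The main analytic input is Wolpert's inequality $|\grad \sqrt{\ell_{\alpha}}|_{\WP} \leq 1/\sqrt{2\pi}$, which in particular guarantees that length-functions are uniformly Lipschitz in $\sqrt{\ell}$ along WP geodesics. Consequently the set of curves whose length stays below $L_{S}$ can change only in a controlled fashion on each short sub-segment; replacing such curves one at a time gives a path of bounded length in $P(S)$ from one Bers pants decomposition to the next, summing to the required linear bound.

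For the lower bound $d_{\WP}(x,y) \leq K_{\WP}\, d\bigl(Q(x),Q(y)\bigr) + C_{\WP}$, the goal is to build efficient WP paths realizing pants-graph moves. Given pants decompositions $P_{1},P_{2}$ differing by an elementary move that exchanges $\alpha$ for $\alpha'$, one needs to connect a point $x_{1}$ with Bers pants $P_{1}$ to a point $x_{2}$ with Bers pants $P_{2}$ by a WP path of uniformly bounded length. The natural construction uses the strata of $\overline{\Teich(S)}$: first pinch $\alpha$ to reach the stratum $\mathcal{S}(P_{1}-\alpha)$, which by the Masur estimate on WP distance to the frontier requires bounded WP length; then move within the low-complexity factor of this stratum (either a once-punctured torus or four-holed sphere component) to reset the transversal; then open $\alpha'$ to exit the stratum and reach $x_{2}$. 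The non-refraction theorem (Theorem \ref{thm : nonrefraction}) and the $\CAT(0)$ structure of the completion keep the concatenated path close to a WP geodesic of bounded length. Iterating along any $P(S)$-geodesic from $Q(x)$ to $Q(y)$ gives the desired linear bound.

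The hard step is the lower bound: one must produce the bounded-length WP arcs realizing each elementary move, uniformly over all choices of $\alpha$, $\alpha'$ and of the short multi-curve $P_{1}\cap P_{2}$. The uniform control requires Masur's asymptotic product formula for the WP metric near the frontier together with the fact that the relevant sub-Teichm\"uller space (of a once-punctured torus or four-holed sphere) has finite WP diameter modulo its mapping class group; the WP completion's stratified geometry is essential here, and this is exactly the point where the $\CAT(0)$ and non-refraction results of Daskalopoulos-Wentworth, Wolpert, and Yamada must be invoked to guarantee that the concatenated sub-paths remain quasi-geodesic.
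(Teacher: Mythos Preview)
The paper does not prove this theorem at all: it is stated as a result of Brock and cited to \cite{br}, with no argument given. So there is no ``paper's own proof'' to compare against; you are sketching (your understanding of) Brock's original argument.

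As a sketch of Brock's theorem your outline is in the right spirit but departs from the actual proof in the lower-bound direction. Brock does not build WP paths by going into strata and invoking non-refraction. Instead he works with the Bers regions $V(P)=\{x:\ell_\alpha(x)<L_S\text{ for all }\alpha\in P\}$, shows they cover Teichm\"uller space with bounded WP diameter (via Wolpert's gradient estimates), and observes that if $P_1$ and $P_2$ differ by an elementary move then $V(P_1)\cap V(P_2)\neq\emptyset$. Hence any edge-path in $P(S)$ is shadowed by a WP path of length linear in the edge count, and conversely any WP geodesic meets a chain of overlapping Bers regions. Your proposed detour through $\mathcal{S}(P_1-\alpha)$ and the appeal to non-refraction/$\CAT(0)$ geometry is unnecessary and would require additional uniformity arguments (controlling the WP geometry of the complexity-one factor uniformly in the lengths of the other pants curves) that you have not supplied; the Bers-region overlap argument sidesteps this entirely. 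Your upper-bound sketch is essentially correct.
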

\medskip

\noindent{\bf Gradient of length-functions:}  Wolpert gives the following estimate for the pairing of the gradients of length-functions:

\begin{lem}\cite{wolb}\label{lem : gradlf}
The WP pairing of length-function gradients of curves $\alpha,\beta$ with disjoint geodesic representatives satisfies
\begin{center}$0<\langle \grad\ell_{\alpha},\grad\ell_{\beta}\rangle- \frac{2}{\pi}\ell_{\alpha}\delta_{\alpha\beta}=O(\ell_{\alpha}^{2}\ell_{\beta}^{2})$ \end{center}
where the constant of the $O$ notation depends only on $c_{0}>0$ with $\ell_{\alpha}, \ell_{\beta} \leq c_{0}$. 
\end{lem}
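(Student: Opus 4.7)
The plan is to start from Riera's explicit formula for the Weil-Petersson pairing of length-function gradients, which writes the pairing as a Poincar\'e-type sum indexed by the double coset space $\langle \alpha \rangle \backslash \Gamma / \langle \beta \rangle$, where $\Gamma < PSL_{2}(\mathbb{R})$ is the Fuchsian group uniformizing $x$. Concretely, one expects
\[
\langle \grad \ell_{\alpha}, \grad \ell_{\beta} \rangle \;=\; \frac{2}{\pi}\left( \ell_{\alpha}\delta_{\alpha\beta} \;+\; \sum_{c}\Bigl( u_{c}\log\tfrac{u_{c}+1}{u_{c}-1} - 2 \Bigr) \right),
\]
where $c$ ranges over nontrivial double cosets and $u_{c} = \cosh d_{c}$ is the hyperbolic cosine of the distance between chosen lifts of $\alpha$ and $\beta$ corresponding to $c$. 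The first step of the proof is to derive (or invoke) this formula by differentiating $\ell_{\alpha}$ twice along Weil-Petersson variations: each gradient pairs against a quadratic differential built from a theta series supported on the axis of the relevant curve, and the $L^{2}$ pairing of two such series unfolds into a sum of integrals indexed by the double cosets.

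With the formula in hand, the lower inequality reduces to showing that the function $f(u) = u\log\tfrac{u+1}{u-1} - 2$ is strictly positive for $u > 1$. Expanding $\log\tfrac{u+1}{u-1} = 2\sum_{k\geq 0}(2k+1)^{-1}u^{-(2k+1)}$ yields $f(u) = 2\sum_{k\geq 1}\tfrac{1}{(2k+1)u^{2k}} > 0$, and also provides the key asymptotic $f(u) = \tfrac{2}{3u^{2}} + O(u^{-4})$ as $u \to \infty$. This reduces the upper estimate to a counting/decay statement: each summand is comparable to $\operatorname{sech}^{2} d_{c}$, so one must control $\sum_{c}\operatorname{sech}^{2} d_{c}$ uniformly when $\ell_{\alpha},\ell_{\beta} \leq c_{0}$.

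To execute the counting, I would pass to the annular cover of $x$ associated with $\alpha$ and organize the lifts of $\beta$ there by the action of $\langle \alpha \rangle$. Because $\alpha$ and $\beta$ have disjoint geodesic representatives, every lift of $\beta$ stays outside the standard collar of $\alpha$, whose width is roughly $\log(1/\ell_{\alpha})$ by the Collar Lemma. The exponential decay of $\operatorname{sech}^{2} d$ across the collar produces one factor of $\ell_{\alpha}^{2}$ (the squared hyperbolic sine-type weight crossing the collar), while summing the remaining contributions along $\beta$ and over translates by $\langle \alpha \rangle$ produces a factor of $\ell_{\alpha}\ell_{\beta}$ from the lengths of the axes entering the Poincar\'e series, and an additional factor $\ell_{\beta}$ from the decay across $\beta$'s own collar. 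Assembling these factors yields the claimed $O(\ell_{\alpha}^{2}\ell_{\beta}^{2})$ bound with a constant depending only on $c_{0}$.

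The main obstacle is the last step: getting precisely the exponent $2$ in each of $\ell_{\alpha}$ and $\ell_{\beta}$ requires a careful double accounting, one factor coming from the depth of each collar that a lift of $\beta$ must penetrate to contribute nontrivially, and the other factor from the density of translates under $\langle\alpha\rangle$ and $\langle\beta\rangle$ in the Poincar\'e sum. A crude estimate using only the Collar Lemma gives only $O(\ell_{\alpha}\ell_{\beta})$; the sharp quadratic decay relies on Wolpert's finer asymptotic analysis of $f(u)$ together with the fact that crossing each collar contributes a full factor $\ell_{\alpha}$ (respectively $\ell_{\beta}$) to $\operatorname{sech} d_{c}$, not merely its square root.
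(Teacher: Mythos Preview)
The paper does not prove this lemma; it is quoted with a citation to Wolpert's book \cite{wolb} and used as a black box (only the case $\alpha=\beta$ is needed, in the proof of Corollary~\ref{cor : gradlfestimate}). So there is no ``paper's own proof'' to compare against.

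That said, your outline is exactly the argument Wolpert gives: Riera's double-coset formula for $\langle \grad\ell_{\alpha},\grad\ell_{\beta}\rangle$, positivity of each term via the expansion $u\log\tfrac{u+1}{u-1}-2 = 2\sum_{k\geq 1}(2k+1)^{-1}u^{-2k}$, and then a collar/counting estimate on $\sum_{c}u_{c}^{-2}$ to extract the $\ell_{\alpha}^{2}\ell_{\beta}^{2}$ decay. Your identification of the ``main obstacle'' is accurate: the point is that each factor of $\operatorname{sech} d_{c}$ already picks up one power of $\ell_{\alpha}$ and one power of $\ell_{\beta}$ from crossing the two collars, so $u_{c}^{-2}\asymp \operatorname{sech}^{2}d_{c}$ gives the quadratic decay in each variable; the remaining sum over double cosets is then controlled by the lengths with a constant depending only on $c_{0}$. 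If you want to turn your sketch into a full proof, following Wolpert's treatment (Lemma~3.12 and the surrounding discussion in \cite{wolb}) is the cleanest route.
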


\begin{cor}\label{cor : gradlfestimate}
Given $c_{0}>0$, there is a two variable function ${\bf d}$ with the following property. Let $l,a\in [0,c_{0}]$ be such that $l>a$. Let $x,x'\in \Teich(S)$ be such that $\ell_{\alpha}(x) \leq l-a$ and  $\ell_{\alpha}(x') \geq l$. Then $d_{\WP}(x',x) \geq {\bf d}(l,a)$. 
\end{cor}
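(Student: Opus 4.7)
The plan is to bound the change of $\ell_{\alpha}$ along a WP geodesic from $x$ to $x'$ by integrating the tangential derivative of the length-function, and then to invoke Lemma 3.x (gradlf) to control the integrand in terms of $l$.

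First, setting $\alpha=\beta$ in Lemma 3.x yields $\|\grad\ell_{\alpha}\|^{2}=\tfrac{2}{\pi}\ell_{\alpha}+O(\ell_{\alpha}^{4})$ whenever $\ell_{\alpha}\leq c_{0}$, with the implicit constant depending only on $c_{0}$. Hence there is a constant $C=C(c_{0})>0$ such that, for every $0<l\leq c_{0}$ and every $y\in\Teich(S)$ with $\ell_{\alpha}(y)\leq l$,
$$\|\grad\ell_{\alpha}(y)\|\;\leq\;M(l):=\sqrt{\tfrac{2l}{\pi}+Cl^{4}}.$$

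Next, I would parametrize a unit-speed WP geodesic $\gamma:[0,T]\to\Teich(S)$ joining $x$ to $x'$, where $T=d_{\WP}(x,x')$. Such a geodesic exists by geodesic convexity of $\overline{\Teich(S)}$, and by non-refraction (Theorem 3.x) applied with $\sigma^{-}=\sigma^{+}=\emptyset$ its interior remains in $\Teich(S)$. In particular $f(t):=\ell_{\alpha}(\gamma(t))$ is smooth on $(0,T)$ and continuous on $[0,T]$. Since $f(0)\leq l-a<l\leq f(T)$, by the intermediate value theorem I can choose times $0\leq t_{0}<t_{1}\leq T$ with $f(t_{0})=l-a$, $f(t_{1})=l$, and $f([t_{0},t_{1}])\subseteq [l-a,l]$.

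On $[t_{0},t_{1}]$ one has $f\leq l\leq c_{0}$, so the gradient bound applies, and the fundamental theorem of calculus together with the Cauchy-Schwarz inequality gives
$$a=f(t_{1})-f(t_{0})=\int_{t_{0}}^{t_{1}}\langle\grad\ell_{\alpha}(\gamma(t)),\gamma'(t)\rangle\,dt\;\leq\;M(l)(t_{1}-t_{0})\;\leq\;M(l)\,d_{\WP}(x,x'),$$
where we used $\|\gamma'(t)\|=1$. Rearranging yields $d_{\WP}(x,x')\geq a/M(l)$, so the choice $\mathbf{d}(l,a):=a/\sqrt{\tfrac{2l}{\pi}+Cl^{4}}$ works; the hypothesis $l>a\geq 0$ forces $l>0$, hence $M(l)>0$. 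No substantive obstacle is expected: the only point requiring care is the regularity of $f$ on $(t_{0},t_{1})$, and this is immediate from non-refraction as just noted.
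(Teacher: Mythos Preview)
Your proof is correct and follows essentially the same argument as the paper: bound $\|\grad\ell_{\alpha}\|$ via Lemma~\ref{lem : gradlf} on the portion of the geodesic where $\ell_{\alpha}\leq l$, and integrate to compare the change in $\ell_{\alpha}$ with the WP length traversed. The paper simply takes $t^{*}$ to be the first time $\ell_{\alpha}=l$ along the geodesic from $x$ and integrates from $0$ to $t^{*}$, rather than isolating an interval $[t_{0},t_{1}]$ as you do; your explicit invocation of non-refraction for interior regularity is a nice clarification the paper leaves implicit.
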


\begin{proof}
By Lemma \ref{lem : gradlf} at $y\in \Teich(S)$ with $\ell_{\alpha}(y)\leq c_{0}$, 
\begin{equation}\label{eq : gradla}||\grad\ell_{\alpha}(y)|| \leq (\frac{2}{\pi}\ell_{\alpha}(y) +O(\ell_{\alpha}(y))^{4})^{1/2}\end{equation} 
where the $O$ notation constant depends only on $c_{0}$. Let $u:[0,T]\to\overline{\Teich(S)}$ be the WP geodesic segment connecting $x$ to $x'$ parametrized by arc-length. Let $t^{*}=\inf\{t\in[0,T]:\ell_{\alpha}(u(t))\geq l\}$. Then $\ell_{\alpha}(u(t)) \leq l$ for every $t \in[0,t^{*}]$. Using this bound and integrating both sides of (\ref{eq : gradla}) we get
\begin{eqnarray*}
a\leq |\ell_{\alpha}(u(t^{*}))-\ell_{\alpha}(u(0))| &\leq& \int_{0}^{t^{*}}||\grad\ell_{\alpha}(u(t))||dt\\ 
&\leq& (\frac{2}{\pi}l+O(l)^{4})^{1/2}t^{*}.
\end{eqnarray*}
Define the function ${\bf d}(l,a)=\frac{a}{\big(\frac{2}{\pi}l+O(l^{4})\big)^{1/2}}$. Then we have that $t^{*}\geq {\bf d}(l,a)$. Now since $d_{\WP}(x,x')\geq t^{*}$ the lemma follows.
 \end{proof}
 Wolpert also gives the following estimate for the distance of a point in the Teichm\"{u}ller space and a completion stratum.  
\begin{prop}\label{prop : diststr}\cite[Corollary 4.10]{wolb}
 Let $x\in \Teich(S)$ and $\sigma$ be a multi-curve, then 
 $$d_{\WP}(x,\mathcal{S}(\sigma))\leq \big( 2\pi\sum_{\alpha\in \sigma}\ell_{\alpha}(x)\big)^{1/2}.$$
\end{prop}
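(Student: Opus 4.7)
The plan is to exhibit an explicit curve from $x$ to a point in $\overline{\mathcal{S}(\sigma)}$ whose Weil-Petersson length realizes the claimed bound. Set $F:\Teich(S)\to\mathbb{R}_{\geq 0}$ to be $F=\sqrt{\sum_{\alpha\in\sigma}\ell_\alpha}$. The first step is to observe that $\|\grad F\|_{\WP}^{2}\geq 1/(2\pi)$ everywhere on $\Teich(S)$: since the curves in $\sigma$ are pairwise disjoint, Lemma \ref{lem : gradlf} yields
$$\sum_{\alpha,\beta\in\sigma}\langle\grad\ell_\alpha,\grad\ell_\beta\rangle_{\WP}\;\geq\;\sum_{\alpha\in\sigma}\frac{2}{\pi}\ell_\alpha\;=\;\frac{2}{\pi}F^2,$$
by combining the diagonal terms with the positivity of the off-diagonal pairings; dividing by $4F^{2}$ gives the stated lower bound.

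Next I would integrate the reparametrized anti-gradient flow
$$\dot\phi(s)=-\frac{\grad F(\phi(s))}{\|\grad F(\phi(s))\|_{\WP}^{2}},\qquad \phi(0)=x.$$
The chain rule gives $\frac{d}{ds}F(\phi(s))=-1$, hence $F(\phi(s))=F(x)-s$, so the flow runs for $s\in[0,F(x))$. Its Weil-Petersson length is
$$\int_{0}^{F(x)}\|\dot\phi(s)\|_{\WP}\,ds\;=\;\int_{0}^{F(x)}\frac{1}{\|\grad F(\phi(s))\|_{\WP}}\,ds\;\leq\;\sqrt{2\pi}\,F(x)\;=\;\sqrt{2\pi\sum_{\alpha\in\sigma}\ell_\alpha(x)}.$$
Finiteness of the length forces $\phi$ to extend continuously to $s=F(x)$ in the completion $\overline{\Teich(S)}$, and by continuity of length-functions (Theorem \ref{thm : continuitylf}) the endpoint has $\ell_\alpha=0$ for every $\alpha\in\sigma$, hence lies in $\overline{\mathcal{S}(\sigma)}$. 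The claimed bound on $d_{\WP}(x,\mathcal{S}(\sigma))$ follows.

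The main obstacle is showing that the integral curve $\phi$ is defined on the whole interval $[0,F(x))$ and stays in the region where the pairing estimate is applied. I would handle this by observing that the positivity part of Lemma \ref{lem : gradlf} also gives $\frac{d}{ds}\ell_\beta(\phi(s))=-\langle\grad\ell_\beta,\grad F\rangle_{\WP}/\|\grad F\|_{\WP}^{2}\leq 0$ for each $\beta\in\sigma$, so every $\ell_\beta$ is monotonically non-increasing along $\phi$; thus the flow remains in the compact region $\{\ell_\beta\leq\ell_\beta(x):\beta\in\sigma\}$, where Wolpert's estimates apply with uniform constants, and the above length estimate, combined with the $\CAT(0)$ geometry of $\overline{\Teich(S)}$, supplies the continuous extension of $\phi$ to the endpoint.
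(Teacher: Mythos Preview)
The paper does not give a proof of this proposition; it is simply quoted as Corollary~4.10 of Wolpert's book \cite{wolb}. Your strategy---flow along the rescaled anti-gradient of a root-length function and bound the Weil--Petersson speed via Lemma~\ref{lem : gradlf}---is essentially Wolpert's own, and your gradient estimate $\|\grad F\|_{\WP}^{2}\geq 1/(2\pi)$ together with the length computation are correct.

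There is, however, a gap in your final paragraph. The set $\{\ell_\beta\leq\ell_\beta(x):\beta\in\sigma\}$ is \emph{not} compact in $\Teich(S)$: it constrains only the curves in $\sigma$ and says nothing about other curves. The real obstruction to $\phi$ existing on all of $[0,F(x))$ is that some curve $\gamma\notin\sigma$ could pinch along the flow, so that $\phi$ exits $\Teich(S)$ at a time $s_*<F(x)$ into a stratum $\mathcal{S}(\tau)$ with $\tau\not\supseteq\sigma$; at that endpoint $F(\phi(s_*))=F(x)-s_*>0$, so you have not reached $\overline{\mathcal{S}(\sigma)}$ and the vector field is no longer defined. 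Your sentence about the $\CAT(0)$ extension produces an endpoint in the completion but does not show it lies in the correct stratum.

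This is fixable. If $\gamma$ intersects some $\alpha\in\sigma$, the Collar Lemma together with your monotonicity of $\ell_\alpha$ bounds $\ell_\gamma$ away from $0$. If $\gamma$ is disjoint from all of $\sigma$, Lemma~\ref{lem : gradlf} gives $\langle\grad\ell_\gamma,\grad\ell_\alpha\rangle=O(\ell_\gamma^{2}\ell_\alpha^{2})$ once $\ell_\gamma$ is small, so along the flow $\tfrac{d}{ds}\ell_\gamma=O(\ell_\gamma^{2})$, which cannot reach $0$ in finite~$s$. Alternatively, one can argue by induction on complexity: if the flow terminates at $p\in\mathcal{S}(\tau)$ with $\tau$ disjoint from $\sigma$, apply the statement to $\Teich(S\backslash\tau)$ and concatenate. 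Either addition completes the argument.
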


\noindent{\bf Tangent cones of the Weil-Petersson completion of the Teichm\"{u}ller space:} The completion of the Teichm\"{u}ller space with the Weil-Petersson metric is a $\CAT(0)$ space. Assigned to any point $p$ in a $\CAT(0)$ space there is the Alexandrov tangent cone $AC_{p}$ consisting of equivalence classes of geodesic rays $\zeta$ starting at the point $p$. Two geodesics $\zeta$ and $\zeta'$ starting at $p$ are equivalent if their angle at $p$ in the sense of Alexandrov is equal to $0$. For more detail about tangent cones see \cite{bhnpc}.

Let $\sigma$ be a multi-curve on $S$, and $\chi$ be a full marking on $S\backslash \sigma$. Let $p\in\mathcal{S}(\sigma)$. Then for a geodesic ray $\zeta:[0,a) \to \overline{\Teich(S)}$ with $\zeta(0)=p$ let
$$\mathcal{L}(\zeta(t))=(\ell_{\alpha}^{1/2}(\zeta(t)),\ell_{\beta}^{1/2}(\zeta(t)))_{\alpha \in \sigma, \beta \in \chi}.$$
Then define $\Lambda:AC_{p}\to\mathbb{R}_{\geq 0}^{|\sigma|} \times T_{p} \Teich(S\backslash \sigma)$ by
$$\Lambda(\zeta)= (2 \pi)^{1/2}\frac{d}{dt} \Big |_{t=0} \mathcal{L}(\zeta(t)).$$
The following description of the WP Alexandrov tangent cone of the Teichm\"{u}ller space at the point $p\in\mathcal{S}(\sigma)$ is obtained by Wolpert.
\begin{prop}\textnormal{ (The Weil-Petersson tangent cone)}\label{prop : tangentcone}\cite[Theorem 4.18]{wolb}
The map $\Lambda$ from the tangent cone of the WP metric at $p$ to $\mathbb{R}_{\geq 0}^{|\sigma|} \times T_{p} \Teich(S\backslash \sigma)$ is an isometry of tangent cones with restriction of inner products. A WP geodesic $\zeta$ with $\zeta(0)=p$ and root length-function initial derivative $\frac{d}{dt}\big|_{t=0} \ell_{\alpha}^{1/2}(\zeta(t))$ vanishing is contained in the stratum $\{\ell_{\alpha}=0\}$, $\mathcal{S}(\sigma) \subset \{\ell_{\alpha}=0\}$.
\end{prop}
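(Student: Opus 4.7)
The plan is to first establish the tangent cone isometry via Wolpert's gradient pairing formula (Lemma \ref{lem : gradlf}), then deduce the second statement from this isometry together with CAT(0) uniqueness of geodesics. The key computational input is that for $\alpha\in\sigma$ the chain rule together with Lemma \ref{lem : gradlf} gives $\|\grad\ell_\alpha^{1/2}\|^2=(4\ell_\alpha)^{-1}\|\grad\ell_\alpha\|^2=\frac{1}{2\pi}+O(\ell_\alpha^{3})$, while cross pairings satisfy $\langle\grad\ell_\alpha^{1/2},\grad\ell_\beta^{1/2}\rangle=O((\ell_\alpha\ell_\beta)^{3/2})$ for $\alpha\neq\beta$. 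Thus $(\sqrt{2\pi}\,\ell_\alpha^{1/2})_{\alpha\in\sigma}$ behaves like an asymptotically orthonormal coordinate system transverse to $\mathcal{S}(\sigma)$, while on the stratum factor the WP metric limits onto the intrinsic WP metric of $\Teich(S\backslash\sigma)$.

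First I would verify that $\Lambda$ is well-defined on equivalence classes in $AC_p$. Corollary \ref{cor : gradlfestimate} gives a quantitative comparison between $\ell_\alpha^{1/2}(\zeta(t))$ and $t$ near $p$ for any WP geodesic $\zeta$ starting at $p$, so the right derivative of $\ell_\alpha^{1/2}\circ\zeta$ at $0$ exists (this can be strengthened to genuine one-sided differentiability using Wolpert's convexity of $\ell_\alpha^{1/2}$ along WP geodesics). Two geodesics with Alexandrov angle zero at $p$ satisfy $d_{\WP}(\zeta_1(t),\zeta_2(t))=o(t)$, and since $\ell_\alpha^{1/2}$ is asymptotically $(2\pi)^{-1/2}$-Lipschitz near the stratum, their $\Lambda$-values must coincide, so $\Lambda$ descends to $AC_p$.

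Next I would compute the inner product. For two geodesics $\zeta_1,\zeta_2$ starting at $p$, the Alexandrov inner product is $\lim_{s,t\to 0^+}(s^2+t^2-d_{\WP}(\zeta_1(s),\zeta_2(t))^2)/(2st)$. Using Proposition \ref{prop : diststr} to control approach to the stratum, and expanding the WP metric in the transverse coordinates $(\sqrt{2\pi}\,\ell_\alpha^{1/2})_{\alpha\in\sigma}$ together with extended Fenchel--Nielsen coordinates on $\Teich(S\backslash\sigma)$, the asymptotic expansion makes the limit split as the Euclidean inner product of the $\alpha$-components plus the intrinsic inner product on $T_p\Teich(S\backslash\sigma)$. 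Surjectivity of $\Lambda$ follows by approximating a prescribed $(v,w)\in\mathbb{R}_{\geq 0}^{|\sigma|}\times T_p\Teich(S\backslash\sigma)$ by initial data of geodesics in $\Teich(S)$ whose limits converge in $\overline{\Teich(S)}$ to a WP geodesic realizing the prescribed $\Lambda$-value, justified by completeness of $\overline{\Teich(S)}$ and continuity of $\ell_\alpha$ under WP convergence (Theorem \ref{thm : continuitylf}).

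For the second statement, given a geodesic $\zeta$ with $\frac{d}{dt}|_{t=0}\ell_\alpha^{1/2}(\zeta(t))=0$, I construct inside the WP-completion of the substratum $\{\ell_\alpha=0\}$ (itself a lower-dimensional CAT(0) space by the non-refraction Theorem \ref{thm : nonrefraction}) an auxiliary geodesic $\tilde\zeta$ with the same image under $\Lambda$: its $\alpha$-component vanishes (because $\tilde\zeta$ lies in $\{\ell_\alpha=0\}$) and its remaining data matches that of $\zeta$. By the isometry of Step~2, $[\zeta]=[\tilde\zeta]$ in $AC_p$, so the Alexandrov angle between them at $p$ is zero; CAT(0) uniqueness of geodesics emanating from a fixed point in a prescribed direction then forces $\zeta=\tilde\zeta\subset\{\ell_\alpha=0\}$, and the inclusion $\mathcal{S}(\sigma)\subset\{\ell_\alpha=0\}$ is automatic since $\alpha\in\sigma$. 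The main obstacle is the inner-product computation in Step~2: the WP metric degenerates in the transverse directions, so the asymptotic expansion has to be pushed to sufficient order to separate the diagonal terms from the $O((\ell_\alpha\ell_\beta)^{3/2})$ cross-terms and to combine them cleanly with the intrinsic stratum metric, uniformly as the approximating geodesics are pushed toward the stratum.
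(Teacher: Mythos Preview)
The paper does not prove this proposition: it is stated as Theorem 4.18 of \cite{wolb} (Wolpert's book) and quoted without argument. So there is no ``paper's own proof'' to compare against.

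That said, your sketch follows the line of Wolpert's original proof: the asymptotic orthonormality of the frame $\{\grad\ell_\alpha^{1/2}\}_{\alpha\in\sigma}$ coming from the pairing estimates (your computation $\|\grad\ell_\alpha^{1/2}\|^2=\tfrac{1}{2\pi}+O(\ell_\alpha^3)$ and the cross-term bound are correct consequences of Lemma~\ref{lem : gradlf}), the resulting product structure of the metric expansion near the stratum, and the CAT(0) uniqueness of geodesics in a given direction for the second assertion. One point to tighten: in Step~2 you should also control the pairing of $\grad\ell_\alpha^{1/2}$ with vectors tangent to the stratum (i.e., with $\grad\ell_\beta$ for $\beta\in\chi$), not just with the other root-length gradients for $\alpha'\in\sigma$; Wolpert handles this with the same gradient-pairing estimates extended to the relative length basis. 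Without that, the claimed splitting of the Alexandrov inner product into the $\mathbb{R}_{\geq 0}^{|\sigma|}$ part and the $T_p\Teich(S\backslash\sigma)$ part is not yet justified.
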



\subsection{End invariant}\label{subsec : WPendinv}

In this subsection we recall the notion of end invariant for WP geodesics introduced by Brock, Masur and Minsky in \cite{bmm2}.

\begin{thm}\textnormal{ (Convexity of length-functions)}\cite{wolb} \label{thm : convlf}
Given $\epsilon>0$ there is a constant $c=c(\epsilon)$ with the following property. Let $g:(a,b) \to \Teich(S)$ be a WP geodesic parametrized by arc-length and $\alpha \in \mathcal{C}_{0}(S)$. If for some $t\in (a,b)$  $\inj(g(t))\geq\epsilon$ ($g(t)$ is a point in the $\epsilon-$thick part of the Teichm\"{u}ller space), then 
\begin{equation} \label{eq : convlf}\ddot{\ell}_{\alpha}(g(t))\geq c  \ell_{\alpha}(g(t)).\end{equation}
 Similar inequality holds for the length of any measured lamination $\mathcal{L}$, 
 $$\ddot{\ell}_{\mathcal{L}}(g(t)) \geq c \ell_{\mathcal{L}}(g(t)).$$

\end{thm}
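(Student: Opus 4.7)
The plan is to reduce the inequality to a uniform statement about the Weil--Petersson Hessian of $\ell_\alpha$ and then combine Wolpert's explicit Hessian formula with a compactness argument over the $\epsilon$-thick part of moduli space. Since $g$ is parametrized by arc-length, $\ddot\ell_\alpha(g(t)) = \operatorname{Hess}\ell_\alpha|_{g(t)}(\dot g,\dot g)$, so the claim amounts to
\[
\operatorname{Hess}\ell_\alpha|_x(V,V) \;\geq\; c(\epsilon)\,\ell_\alpha(x)\,\|V\|_{\WP}^{2}
\]
for every $x\in\Teich(S)$ with $\inj(x)\geq\epsilon$, every $V\in T_x\Teich(S)$, and every $\alpha\in\mathcal{C}_0(S)$. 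Both sides are homogeneous of degree $2$ in $V$ and invariant under the diagonal action $(x,V,\alpha)\mapsto(\phi x,d\phi V,\phi\alpha)$ of $\Mod(S)$, so the ratio
\[
R(x,V,\alpha)\;=\;\frac{\operatorname{Hess}\ell_\alpha|_x(V,V)}{\ell_\alpha(x)\,\|V\|_{\WP}^{2}}
\]
descends to a $\Mod(S)$-invariant positive function, continuous in $(x,V)$ for each fixed isotopy class~$\alpha$, and what has to be shown is that $R\geq c(\epsilon)>0$ uniformly on the locus $\{\inj(x)\geq\epsilon\}$.

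I would split the argument according to the length of $\alpha$ on $x$. Fix a constant $L=L(\epsilon)$ larger than the Bers constant. For curves with $\ell_\alpha(x)\leq L$ and $x$ in the $\epsilon$-thick part, the Mumford compactness of the $\epsilon$-thick part of $\mathcal{M}(S)$ together with the finiteness of homotopy classes of simple closed geodesics of bounded length on a compact hyperbolic surface shows that the set of triples $(x,V,\alpha)$ (with $\|V\|_{\WP}=1$), modulo $\Mod(S)$, is compact. Wolpert's Hessian formula (\cite{wolb}) gives an explicit positive integral representation of $\operatorname{Hess}\ell_\alpha$ that is strictly positive on nonzero tangent vectors, so $R$ is continuous and strictly positive on this compact set and therefore bounded below by a positive constant depending only on~$\epsilon$.

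The main obstacle is the remaining regime where $\ell_\alpha(x)$ can be arbitrarily large, because the compactness argument above fails. Here I would exploit the pointwise nature of Wolpert's Hessian formula: both $\ell_\alpha(x)$ and $\operatorname{Hess}\ell_\alpha|_x(V,V)$ are given by integrals of local densities along the hyperbolic geodesic representative of $\alpha$ on $x$, and the density appearing in the Hessian is controlled from below, at each point of the geodesic, by a positive quantity depending only on the local injectivity radius and the value of $V$ sensed there. On the $\epsilon$-thick part this local lower bound is uniform in the point of $\alpha$, so integrating along $\alpha$ gives $\operatorname{Hess}\ell_\alpha(V,V)\geq c(\epsilon)\,\|V\|^{2}\ell_\alpha(x)$ with a constant that no longer depends on the choice or length of $\alpha$. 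Combining the two regimes yields the stated inequality with $c=c(\epsilon)$.

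Finally, to promote the inequality from simple closed curves to measured laminations, I would use that the length function $\mathcal{L}\mapsto\ell_{\mathcal{L}}$ and its Hessian are continuous on $\mathcal{ML}(S)$ and that weighted simple closed curves are dense; passing to a limit in the inequality $\operatorname{Hess}\ell_{t\alpha}\geq c\,\ell_{t\alpha}$ (which is homogeneous under rescaling of the measure) produces $\ddot\ell_{\mathcal{L}}(g(t))\geq c\,\ell_{\mathcal{L}}(g(t))$ with the same constant $c(\epsilon)$.
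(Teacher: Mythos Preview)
The paper does not prove this theorem; it is quoted from Wolpert's book \cite{wolb} and used as a black box. There is therefore no proof in the paper to compare your proposal against.

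As for the proposal itself, the overall architecture is sensible: rewrite $\ddot\ell_\alpha$ as the Hessian, use $\Mod(S)$-invariance, treat short curves on the thick part by compactness, and pass to measured laminations by density. The gap is in the long-curve step. You assert that the Hessian density along $\alpha$ is bounded below at each point by a quantity depending only on the local injectivity radius and ``the value of $V$ sensed there,'' and then integrate to get $c(\epsilon)\|V\|_{\WP}^2\,\ell_\alpha(x)$. But the harmonic Beltrami differential representing $V$ has no pointwise lower bound in terms of its $L^2$ norm; it can be small or vanish on large portions of $\alpha$, so a naive pointwise-density argument does not produce a lower bound proportional to $\ell_\alpha(x)\|V\|_{\WP}^2$. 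Wolpert's actual proof uses the explicit Hessian formula (the Riera-type expression involving sums over double cosets) together with uniform estimates for harmonic Beltrami differentials on the thick part; the correct comparison is not a pointwise one but comes from the global structure of that formula. If you want to repair the argument, you should invoke that formula directly rather than a generic local-density bound.
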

\begin{remark}
The above estimates are local and only depend on the injectivity radius of the surface $g(t)$.
\end{remark}

\begin{definition}\textnormal{ (Ending measured lamination)}\label{def : endmeaslam}
The  weak$^{*}$ limit in $\mathcal{ML}(S)$ of any weighted sequence of infinitely many distinct Bers curves along a WP geodesic ray $r$ is an ending measured lamination of $r$. 
\end{definition}
 
 In \cite{bmm2} the following notion of {\it ending lamination} for WP geodesic rays is introduced, its existence relies on the convexity of length-functions along WP geodesics and properties of $\CAT(0)$ spaces.  
 
Let $r:[0,a)\to\overline{\Teich(S)}$ be a WP geodesic ray. A curve $\alpha\in\mathcal{C}_{0}(S)$ is pinching along $r$ if $\ell_{\alpha}(r(t))\to 0$ as $t\to a$.

\begin{definition}\textnormal{ (Ending lamination)}\label{def : endlamwp}
The union of pinching curves along a WP geodesic ray $r$ and the geodesic laminations arising as supports of all ending measured laminations of $r$ is the ending lamination of $r$.
\end{definition}

\begin{definition}\textnormal{ (End invariant of Weil-Petersson geodesics)} To each open end of a WP geodesic $g:(a,b) \to \overline{\Teich(S)}$ we associate an end invariant which is a partial marking or a lamination. If the forward trajectory $g|_{[0,b)}$ can be extended to $b$ such that $g(b) \in \overline{ \Teich(S)}$ then the forward end invariant $\nu^{+}(g)$ is any Bers marking $\mu(g(b))$ ( there are finitely many of them). Otherwise, $\nu^{+}(g)$ is the ending lamination of the forward trajectory ray $g|_{[0,b)}$ which was defined above. We define the backward end invariant $\nu^{-}(g)$ similarly by considering the backward trajectory $g|_{(a,0]}$. We call the pair $(\nu^{-},\nu^{+})$ the end invariant of $g$.
\end{definition}
Here we recall two properties of  the ending measured laminations proved in \cite[\S 2]{bmm1}:
\begin{lem}\textnormal{ (Decrease of length-functions along WP geodesic rays)}\label{lem : dlendmeas}
Let $\mathcal{L}$ be any ending measured lamination of a WP geodesic ray $r$, then $\ell_{\mathcal{L}}(r(t))$ is a decreasing function.
\end{lem}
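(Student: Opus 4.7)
The plan is to combine the convexity of length-functions along WP geodesics with the definition of the ending measured lamination as a weak$^{*}$ limit of weighted Bers curves. Write $\mathcal{L} = \lim_{n\to\infty} w_n \beta_n$ in $\mathcal{ML}(S)$, where $\{\beta_n\}$ are distinct Bers curves of $r(t_n)$ with $t_n \to b^-$ (here $[0,b)$ is the maximal interval of definition of $r$, with $b \leq \infty$). My first observation is that $w_n \to 0$: if some subsequence had $w_n \geq w_0 > 0$, then pairing $w_n \beta_n$ against any fixed curve $\gamma$ with $i(\gamma,\mathcal{L})>0$ would force the integer intersection numbers $i(\beta_n,\gamma)$ to remain bounded, hence to take only finitely many values, contradicting that the $\beta_n$ are distinct simple closed curves with $w_n \beta_n \to \mathcal{L} \neq 0$.

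Next, I use Wolpert's strict convexity of length-functions along WP geodesics: for each $n$, the function $t \mapsto \ell_{\beta_n}(r(t))$ is convex, hence so is $t \mapsto w_n \ell_{\beta_n}(r(t))$. By continuity of the length pairing $\mathcal{ML}(S)\times \Teich(S) \to \mathbb{R}$, one has the pointwise convergence
\[
w_n \ell_{\beta_n}(r(t)) \;\longrightarrow\; \ell_{\mathcal{L}}(r(t))
\]
for every $t\in [0,b)$. Since pointwise limits of convex functions are convex, $\ell_{\mathcal{L}}(r(\cdot))$ is convex on $[0,b)$.

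Now I argue by contradiction. Suppose $\ell_{\mathcal{L}}(r(\cdot))$ is not non-increasing, so there exist $s_1<s_2$ in $[0,b)$ with $\ell_{\mathcal{L}}(r(s_1))<\ell_{\mathcal{L}}(r(s_2))$. Set $c := (\ell_{\mathcal{L}}(r(s_2))-\ell_{\mathcal{L}}(r(s_1)))/(s_2-s_1)>0$. By convexity of $w_n \ell_{\beta_n}\circ r$, for any $t_n>s_2$ we have
\[
w_n \ell_{\beta_n}(r(t_n)) \;\geq\; w_n \ell_{\beta_n}(r(s_2)) \;+\; \frac{w_n\ell_{\beta_n}(r(s_2))-w_n\ell_{\beta_n}(r(s_1))}{s_2-s_1}\,(t_n-s_2).
\]
As $n\to\infty$ the right-hand side converges (along any subsequence for which $t_n \to b^-$) to $\ell_{\mathcal{L}}(r(s_2)) + c\,(b^- - s_2)$, which is a strictly positive number (finite if $b<\infty$, $+\infty$ if $b=\infty$). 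On the other hand, at time $t_n$ the curve $\beta_n$ is Bers, so $\ell_{\beta_n}(r(t_n)) \leq L_S$, giving $w_n\ell_{\beta_n}(r(t_n)) \leq w_n L_S \to 0$. This is the desired contradiction, showing $\ell_{\mathcal{L}}(r(t))$ is monotone non-increasing.

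The main obstacle is the verification that $w_n \to 0$, which I would frame as the topological fact that distinct simple closed curves in $\mathcal{ML}(S)$ can only converge to a nonzero lamination under rescaling by weights that tend to $0$. The rest is bookkeeping with convex functions; the case $b<\infty$ (the ray ends at a boundary stratum in finite WP length) is handled uniformly since the lower bound on the right-hand side above is positive already for any $s_2<t<b$.
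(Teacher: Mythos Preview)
Your argument is essentially the standard one (the paper itself only cites \cite{bmm1} without proof): convexity of $t\mapsto \ell_{\beta_n}(r(t))$ combined with the Bers bound $\ell_{\beta_n}(r(t_n))\le L_S$ forces the limit $\ell_{\mathcal L}\circ r$ to be non-increasing. The contradiction you set up is correct and handles both $b<\infty$ and $b=\infty$, though in fact only the infinite case is relevant (a ray with infinitely many distinct Bers curves cannot have finite WP length, by Brock's quasi-isometry).

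There is one genuine, if small, gap. In your proof that $w_n\to 0$ you argue: bounded intersection of $\beta_n$ with a \emph{single} curve $\gamma$ implies the $\beta_n$ take only finitely many values. This implication is false --- infinitely many simple closed curves can have $i(\cdot,\gamma)=1$. The fix is immediate: either pair against a \emph{filling} collection of curves (bounded intersection with all of them does force finitely many isotopy classes), or simply note that weight-one simple closed curves form a discrete subset of $\mathcal{ML}(S)$, so if $w_n\to w>0$ along a subsequence then $\beta_n\to \mathcal L/w$ forces $\beta_n$ to be eventually constant. Alternatively, you can bypass $w_n\to 0$ entirely: evaluating at $t=0$ gives $w_n\le \ell_{\mathcal L}(r(0))/(2\operatorname{inj} r(0))+1$ for large $n$, so $w_n$ is bounded; then in the convex-combination form
\[
w_n\ell_{\beta_n}(r(s_2))\le \tfrac{t_n-s_2}{t_n-s_1}\,w_n\ell_{\beta_n}(r(s_1))+\tfrac{s_2-s_1}{t_n-s_1}\,w_n\ell_{\beta_n}(r(t_n)),
\]
the second term already tends to $0$ (since $t_n\to\infty$ and $w_n\ell_{\beta_n}(r(t_n))\le w_n L_S$ is bounded), yielding $\ell_{\mathcal L}(r(s_2))\le \ell_{\mathcal L}(r(s_1))$ directly.
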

\begin{lem} \label{lem : limraylimmeas} Let $r_{n} \to r$ be a convergent sequence of WP geodesics rays in the WP visual sphere at a point $x\in\Teich(S)$. Let $\mathcal{L}_{n}$ be an ending measured lamination or a weighted pinching curve of $r_{n}$. Then any representative $\mathcal{L} \in \mathcal{ML}(S)$ of the limit $[\mathcal{L}]$ of the projective classes $[\mathcal{L}_{n}]$ in $\mathcal{PML}(S)$ has bounded length along the ray $r$.
\end{lem}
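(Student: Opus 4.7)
The plan is to combine the monotonicity of length of ending measured laminations (Lemma \ref{lem : dlendmeas}) with continuity of length-functions in the joint variables $(\text{point},\text{lamination})$, after an appropriate rescaling. The statement is about a projective limit, so I first need genuine weak$^*$ convergence of the measures: using that $[\mathcal{L}_n]\to [\mathcal{L}]$ in $\mathcal{PML}(S)$, I pick positive scaling factors $c_n$ with $c_n \mathcal{L}_n \to \mathcal{L}$ in $\mathcal{ML}(S)$. Because $\ell_{\mathcal{L}}$ is homogeneous in the transverse measure, $c_n\mathcal{L}_n$ is still an ending measured lamination (respectively weighted pinching curve) of $r_n$ and the function $t\mapsto \ell_{c_n\mathcal{L}_n}(r_n(t))$ is still decreasing. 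After this normalization I assume outright $\mathcal{L}_n\to \mathcal{L}$ in $\mathcal{ML}(S)$.

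Next, fix any $t$ in the domain of $r$. Convergence $r_n\to r$ in the WP visual sphere at $x$ means convergence of initial unit tangent vectors, so continuous dependence of the WP geodesic flow on its initial data (a consequence of the $\mathrm{CAT}(0)$ geometry of $\overline{\Teich(S)}$) gives $r_n(t)\to r(t)$ in $\overline{\Teich(S)}$ for all $n$ large enough that $r_n(t)$ is defined. Combining this with Theorem \ref{thm : continuitylf} and the standard continuity of $\ell$ in the measured-lamination variable yields the joint convergence
$$\ell_{\mathcal{L}_n}(r_n(t))\longrightarrow \ell_{\mathcal{L}}(r(t)).$$

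Now I invoke Lemma \ref{lem : dlendmeas} at time $0$: for each $n$,
$$\ell_{\mathcal{L}_n}(r_n(t))\le \ell_{\mathcal{L}_n}(r_n(0))=\ell_{\mathcal{L}_n}(x).$$
Letting $n\to\infty$ on both sides gives a $t$-independent bound
$$\ell_{\mathcal{L}}(r(t))\le \ell_{\mathcal{L}}(x)<\infty,$$
which is exactly the claim. For the case when $\mathcal{L}_n$ is a weighted pinching curve of $r_n$ rather than a genuine ending measured lamination, the argument is unchanged since such weighted curves have length decreasing to $0$ along $r_n$ (by convexity of length-functions on $\overline{\Teich(S)}$ and vanishing at the terminating stratum), so the monotonicity input still applies.

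The main point to be careful about is not conceptual but a book-keeping matter: for the fixed $t$ in the domain of $r$, one needs $r_n$ to be defined on $[0,t]$ for all sufficiently large $n$. This is where one uses the semicontinuity properties of the WP exponential map at $x$, together with the fact that $r_n(0)=x$ for every $n$, to rule out the $r_n$ terminating in $\overline{\Teich(S)}$ strictly earlier than $r$ does in the limit; a standard $\mathrm{CAT}(0)$ compactness argument suffices. Once this is in place, the three ingredients, namely rescaling, joint continuity of the length-function, and the monotonicity of Lemma \ref{lem : dlendmeas}, combine directly to give the desired uniform bound.
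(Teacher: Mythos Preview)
The paper does not give its own proof of this lemma; it is explicitly recalled from \cite{bmm1} (see the sentence ``Here we recall two properties of the ending measured laminations proved in \cite{bmm1}'' immediately preceding Lemmas~\ref{lem : dlendmeas} and~\ref{lem : limraylimmeas}). So there is no in-paper argument to compare against.

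That said, your argument is correct and is essentially the standard one: rescale to upgrade projective convergence to genuine weak$^*$ convergence, use the monotonicity of Lemma~\ref{lem : dlendmeas} (or convexity for pinching curves) to get $\ell_{\mathcal{L}_n}(r_n(t))\le \ell_{\mathcal{L}_n}(x)$, and pass to the limit via joint continuity of the length pairing together with the $\CAT(0)$ pointwise convergence $r_n(t)\to r(t)$. The domain caveat you raise is genuine but harmless in the paper's applications (e.g.\ in the proof of Lemma~\ref{lem : inftyray}, where the segments $[x,c_n]$ have lengths tending to infinity, so $r_n(t)$ is eventually defined for each fixed $t$).
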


\section{Length-function control along Weil-Petersson geodesic segments}\label{sec : bddwpgeodseg}

In this section we study length-functions and twist parameters along sequences of bounded length WP geodesic segments in the WP completion of the Teichm\"{u}ller space.

In $\S$\ref{subsec : bmmtwlf} we will prove a modified version of Lemma 4.5 in \cite{bmm2} about the buildup of Dehn twists along sequences of uniformly bounded length WP geodesic segments (Theorem \ref{thm : shtw}). Corollaries \ref{cor : shtw} and \ref{cor : twsh} are somewhat quantified versions of this theorem which provide us with a kind of twist parameter versus length-function control along WP geodesic segments. This control plays an important role in $\S$\ref{sec : itinerarywpgeod} where we study the itinerary of WP geodesics fellow traveling hierarchy paths.

The proof of Theorem \ref{thm : shtw} uses  Wolpert's characterization of limits of sequences of uniformly bounded length WP geodesic segments in the Weil-Petersson completion of the Teichm\"{u}ller space. In $\S$\ref{subsec : geodlimit} we state Wolpert's Geodesic Limit Theorem and using suggestions of Jeffrey Brock will give an improved version of this theorem (Theorem \ref{thm : geodlimit}). This improved version is crucial to prove our results in $\S$\ref{subsec : bmmtwlf}.

\subsection{Limits of sequences of uniformly bounded length WP geodesic segments}\label{subsec : geodlimit}
In this subsection we provide a modified version of Wolpert's Geodesic Limit Theorem. Given a multi-curve $\sigma$, denote by $\tw(\sigma)$ the subgroup of $\Mod(S)$ generated by positive Dehn twists about the curves in $\sigma$. Using the non-refraction property of the Weil-Petersson completion strata (Theorem \ref{thm : nonrefraction}) and the fact that the quotient of $U_{\epsilon}(\sigma)$ (the subset of $\Teich(S)$ where all the curves in $\sigma$ have length less than $\epsilon$) by the action of $\tw(\sigma)$ is compact, Wolpert gives the following characterization of the limits of uniformly bounded length WP geodesic segments in the Teichm\"{u}ller space. See also \cite{bmm2}.

\begin{thm}\label {thm : geodlimit'}\cite{wols}
Given $T>0$. Let $\zeta_{n}: [0,T] \to \overline{\Teich(S)}$ be a sequence of unit speed parametrized WP geodesic segments parametrized by arc-length of length $T$ in the WP completion of the Teichm\"{u}ller space. After possibly passing to a subsequence there exist a partition of the interval $[0, T]$ by $0=t_{0}<t_{1}<t_{2}<...<t_{k}<t_{k+1}=T$, multi-curves $\sigma_{0},...,\sigma_{k+1}$ where $\sigma_{0}$ and $\sigma_{k+1}$ are possibly empty, and possibly empty multi-curves $\tau_{i}=\sigma_{i-1}\cap \sigma_{i}$, $i=1,...,k+1$, where $\sigma_{i}\subsetneq \tau_{i}$ for each $1\leq i\leq k$, and a piecewise geodesic 
$$\hat{\zeta}:[0,T]\to \overline{\Teich(S)}$$
with the following properties
\begin{enumerate}
\item $\hat{\zeta}((t_{i-1}, t_{i})) \subset \mathcal{S}(\tau_{i})$, for $i=1,..., k+1$,
\item $\hat{\zeta}(t_{i}) \in \mathcal{S}(\sigma_{i})$, for $i=0,...,k+1$,
\item There are elements $\psi_{n}\in\Mod(S)$, $n\in\mathbb{N}$, and $\mathcal{T}_{i,n} \in \tw(\sigma_{i}- \tau_{i} \cup \tau_{i+1})$, for $i=1,..,k$ and $n\in\mathbb{N}$, such that after possibly passing to a subsequence $\psi_{n}(\zeta_{n}|_{[0,t_{1}]})\to\hat{\zeta}|_{[0,t_{1}]}$, and for each $i=1,..,k$,
$$\mathcal{T}_{i,n} \circ ... \circ \mathcal{T}_{1,n} \circ \psi_{n}(\zeta_{n}|_{[t_{i},t_{i+1}]})\to\hat{\zeta}|_{[t_{i},t_{i+1}]}$$
as $n \to \infty$ in the sense of unit speed parametrized geodesics. For convenience for each $i=0,1,...,k+1$ we define 
\begin{equation} \label{eq : phiin'} \varphi_{i,n}=\mathcal{T}_{i,n} \circ ... \circ \mathcal{T}_{1,n} \circ \psi_{n}.\end{equation}
\item The elements $\psi_{n}$ are either trivial or unbounded and the elements $\mathcal{T}_{i,n}$ are unbounded.
\item The piecewise geodesic $\hat{\zeta}$ is the minimal length path in $\overline{\Teich(S)}$ joining $\hat{\zeta}(0)$ to $\hat{\zeta}(T)$ and intersecting the strata $\mathcal{S}(\sigma_{1}), \mathcal{S}(\sigma_{2}), ..., \mathcal{S}(\sigma_{k})$ in order.
\end{enumerate}
\end{thm}

The following two lemmas which were suggested to us by Jeff Brock help us to considerably improve the above picture of limits of uniformly bounded length WP geodesic segments (see Theorem \ref{thm : geodlimit}). 

\begin{lem}\label{lem : singlestratum}
Given a sequence of WP geodesic segments $\zeta_{n}:[0,T] \to \Teich(S)$, let the multi-curves $\tau_{i},\; i=1,...,k+1,$ be as in Theorem \ref{thm : geodlimit'}. Then $\tau_{1}=...=\tau_{k+1}$. We denote 
\begin{equation}\label{eq : tau}\hat{\tau}\equiv \tau_{i},\;\; i=1,...,k+1. \end{equation}

\end{lem}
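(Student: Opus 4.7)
The plan is to argue by contradiction. Suppose that for some $i\in\{1,\ldots,k\}$ we have $\tau_i\neq \tau_{i+1}$, and choose a curve in the symmetric difference; without loss of generality, pick $\alpha\in \tau_i\setminus\tau_{i+1}$. Thus $\alpha\in \sigma_{i-1}\cap\sigma_i$ but $\alpha\notin\sigma_{i+1}$. The goal is to exploit this asymmetry, together with the twist data from Theorem \ref{thm : geodlimit'}, to force $\hat\zeta$ restricted to $[t_{i-1},t_{i+1}]$ to violate the non-refraction property of Theorem \ref{thm : nonrefraction}.

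The key observation is that $\mathcal{T}_{i,n}$ preserves $\ell_\alpha$. Indeed, $\alpha\in\tau_i\subseteq\sigma_i$ and the curves in the simplex $\sigma_i$ are pairwise disjoint, so $\alpha$ is disjoint from every core curve of the Dehn twists comprising $\mathcal{T}_{i,n}\in\tw(\sigma_i-\tau_i\cup\tau_{i+1})$. Hence $\ell_\alpha(\varphi_{i,n}\zeta_n(t))=\ell_\alpha(\varphi_{i-1,n}\zeta_n(t))$ for every $t$ and every $n$. Combining this identity with the convergence statements in Theorem \ref{thm : geodlimit'}(3) and the continuity of length-functions (Theorem \ref{thm : continuitylf}), I would deduce that
\[
f_n(t):=\ell_\alpha(\varphi_{i-1,n}\zeta_n(t))\longrightarrow f(t):=\ell_\alpha(\hat\zeta(t))
\]
pointwise for every $t\in[t_{i-1},t_{i+1}]$. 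By property (1) of Theorem \ref{thm : geodlimit'} and the hypotheses on $\alpha$, we have $f\equiv 0$ on $[t_{i-1},t_i]$, while $f(t)>0$ on $(t_i,t_{i+1}]$ since $\alpha\notin\tau_{i+1}$.

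To produce the contradiction I would combine this with Wolpert's tangent cone description (Proposition \ref{prop : tangentcone}). Applied to the WP geodesic $\hat\zeta|_{[t_i,t_{i+1}]}$ starting at $\hat\zeta(t_i)\in\mathcal{S}(\sigma_i)$, and using that this geodesic does not remain in the $\alpha$-pinching stratum (because $\alpha\notin\tau_{i+1}$), Proposition \ref{prop : tangentcone} yields a strictly positive right derivative $\tfrac{d}{dt}\sqrt{f(t)}\big|_{t=t_i^+}>0$; on the other hand the left derivative is zero. Thus $\sqrt{f}$ has a genuine corner at $t_i$. I would then reapply the geodesic limit theorem (Theorem \ref{thm : geodlimit'}) to the sequence $\varphi_{i-1,n}\zeta_n|_{[t_{i-1},t_{i+1}]}$ itself, passing to a subsequence and applying further mapping class group elements if necessary, to obtain a piecewise WP geodesic limit $\tilde\zeta$ in $\overline{\Teich(S)}$ on $[t_{i-1},t_{i+1}]$. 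On $[t_{i-1},t_i]$ no further twist is needed so $\tilde\zeta=\hat\zeta$, while the invariance of $f$ from the previous paragraph forces $\ell_\alpha(\tilde\zeta(t))=f(t)$ throughout. In particular $\tilde\zeta$ starts at $\hat\zeta(t_{i-1})\in\mathcal{S}(\sigma_{i-1})$ (where $\alpha$ is pinched) and has $\ell_\alpha>0$ on $(t_i,t_{i+1}]$; the Non-refraction Theorem \ref{thm : nonrefraction} applied to $\tilde\zeta$ then shows $\alpha$ cannot reopen along a single WP geodesic without reaching a stratum missing $\alpha$, which together with the partition obtained for $\tilde\zeta$ contradicts the convexity of $\ell_\alpha$ along the geodesic pieces.

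The hard part will be making the last step fully rigorous. The subtlety is that $\varphi_{i-1,n}\zeta_n(t)$ may fail to converge in $\overline{\Teich(S)}$ on $(t_i,t_{i+1}]$ because $\mathcal{T}_{i,n}$ is unbounded, so the path-level limit on the full interval $[t_{i-1},t_{i+1}]$ can only be extracted after reapplying Theorem \ref{thm : geodlimit'}; one must then track carefully the simplices produced on $(t_i,t_{i+1}]$ by the secondary limit and check that they are compatible with $\alpha\notin\sigma_{i+1}$ yet $\ell_\alpha\equiv 0$ on a stratum containing $\hat\zeta(t_i)$. I expect that making the bookkeeping precise, in particular verifying that the secondary limit simplices nest correctly, is where the bulk of the argument lies.
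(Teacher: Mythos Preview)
Your setup is sound up to the point where you identify the ``corner'' of $\sqrt{f}$ at $t_i$: with $\alpha\in\tau_i\setminus\tau_{i+1}$ you correctly get $f\equiv 0$ on $[t_{i-1},t_i]$ and $f>0$ on $(t_i,t_{i+1})$, and the tangent cone description indeed forces the right derivative of $\sqrt{f}$ at $t_i^+$ to be strictly positive. The problem is what you do with this. A corner in $\sqrt{f}$ is not, by itself, a contradiction: $\hat\zeta$ is only \emph{piecewise} geodesic, so nothing so far prevents $\sqrt{\ell_\alpha}$ from having different one-sided derivatives at $t_i$. Your attempt to manufacture a contradiction by reapplying the geodesic limit theorem and then invoking non-refraction and convexity does not work: non-refraction applied to the single geodesic piece $\hat\zeta|_{[t_i,t_{i+1}]}$ says precisely that $\ell_\alpha>0$ on the interior (since $\alpha\in\sigma_i$ but $\alpha\notin\tau_{i+1}$), which is consistent with what you already have; and convexity of $\ell_\alpha$ holds along each geodesic piece separately, with no constraint across the junction. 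Your secondary limit $\tilde\zeta$ will again be piecewise geodesic with the same junction behaviour, so the bookkeeping you anticipate does not resolve into a contradiction.

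The missing ingredient is property (5) of Theorem~\ref{thm : geodlimit'}: $\hat\zeta$ is the \emph{minimal length} path through the chain of strata. Wolpert shows (page 328 of \cite{wolb}) that this minimality forces, for every $\alpha\in\sigma_i$, the equality of one-sided derivatives
\[
\frac{d}{dt}\Big|_{t=t_i^+}\ell_\alpha^{1/2}(\hat\zeta)=-\frac{d}{dt}\Big|_{t=t_i^-}\ell_\alpha^{1/2}(\hat\zeta).
\]
With this in hand the argument is one line: since the left derivative vanishes (as $f\equiv 0$ on $[t_{i-1},t_i]$), the right derivative vanishes too, and then Proposition~\ref{prop : tangentcone} forces $\hat\zeta|_{[t_i,t_{i+1}]}\subset\{\ell_\alpha=0\}$, i.e.\ $\alpha\in\tau_{i+1}$, a contradiction. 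The paper runs exactly this argument (starting instead from $\alpha\in\tau_{i+1}$ and concluding $\alpha\in\tau_i$, then symmetrising). No secondary limit, no non-refraction, no convexity is needed once you use the first-variation consequence of minimality.
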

\begin{proof} Let the piecewise geodesic path $\hat{\zeta}:[0,T]\to \Teich(S)$, the partition $0=t_{0}<t_{1}<...<t_{k+1}=T$, multi-curves $\sigma_{i},\; i=0,...,k+1$ and $\tau_{i},\;\ i=1,...,k+1$ be as in Theorem \ref{thm : geodlimit'}.
 Let $\delta<\min_{i=1,...,k+1}\frac{t_{i}-t_{i-1}}{2}$. Fix $1\leq i\leq k$. Let $\alpha\in\tau_{i}$. Recall that $\tau_{i}=\sigma_{i-1}\cap\sigma_{i}$, so $\tau_{i}\subseteq \sigma_{i}$. Thus $\alpha\in \sigma_{i}$. 
 
By Theorem \ref{thm : geodlimit'}(5) the concatenation of the geodesic segments $\hat{\zeta}|_{[t_{i}-\delta,t_{i}]}$ and $\hat{\zeta}|_{[t_{i},t_{i}+\delta]}$ is the distance minimizing path in $\overline{\Teich(S)}$ joining $\hat{\zeta}(t_{i}-\delta)$ to $\hat{\zeta}(t_{i}+\delta)$ and intersecting $\mathcal{S}(\sigma_{i})$. Then as Wolpert shows on page 328 of \cite{wolb} the following equality of the one-sided derivatives of the square root of the $\alpha-$length-function (and any other curve in $\sigma_{i}$) holds at $t=t_{i}$,
\begin{equation} \label{eq : lfderatstr}\frac{d}{dt}\Big |_{t=t_{i}^{+}}\ell_{\alpha}^{1/2}(\hat{\zeta}|_{(t_{i},t_{i}+\delta]})= -\frac{d}{dt}\Big|_{t=t_{i}^{-}} \ell_{\alpha}^{1/2}(\hat{\zeta}|_{[t_{i}-\delta,t_{i})}).\end{equation}
 By Theorem \ref{thm : geodlimit'}(1) we have $\hat{\zeta}|_{[t_{i}-\delta,t_{i})} \subset \mathcal{S}(\tau_{i})$, so $\ell_{\alpha}^{1/2}(\hat{\zeta}(t))=0$ for all $t \in [t_{i}-\delta,t_{i})$. Therefore 
$$\frac{d}{dt}|_{t=t_{i}^{-}}\ell_{\alpha}^{1/2}(\hat{\zeta}|_{[t_{i}-\delta,t_{i})})=0.$$
Then by (\ref{eq : lfderatstr}), $\frac{d}{dt}|_{t=t_{i}^{+}}\ell_{\alpha}^{1/2}(\hat{\zeta}|_{(t_{i},t_{i}+\delta]})=0$. So by Proposition \ref{prop : tangentcone}, $\hat{\zeta}((t_{i},t_{i}+\delta]) \subset \overline{\mathcal{S}(\alpha)}$. Moreover, by Theorem \ref{thm : geodlimit'}(1), $\hat{\zeta}((t_{i},t_{i}+\delta])\subset \mathcal{S}(\tau_{i+1})$. The inclusions of the geodesic segment $\hat{\zeta}((t_{i},t_{i}+\delta])$ in the strata $\mathcal{S}(\tau_{i})$ and $\overline{\mathcal{S}(\alpha)}$ imply that $\alpha \in \tau_{i+1}$. This holds for every $\alpha\in \tau_{i}$, so we conclude that $\tau_{i}\subseteq \tau_{i+1}$. Exchanging the role of $\tau_{i}$ and $\tau_{i+1}$ in the above argument we can show that $\tau_{i+1}\subseteq \tau_{i}$. Thus $\tau_{i}=\tau_{i+1}$. 

Since $1\leq i\leq k+1$ was arbitrary we conclude that $\tau_{1}=...=\tau_{k+1}$, as was desired.
\end{proof}

\begin{lem}\label{lem : highdehntwisteachgamma}
Let $\zeta_{n}:[0,T] \to \Teich(S)$ be a sequence of WP geodesic segments parametrized by arc-length. Let the multi-curves $\sigma_{i},\; i=0,...,k+1,$ and $\mathcal{T}_{i,n}\in \tw(\sigma_{i}),\; i=1,...,k,$ be as in Theorem \ref{thm : geodlimit'} and the multi-curve $\hat{\tau}$ be as in (\ref{eq : tau}). Let $1\leq i \leq k$. Suppose that $ \sigma_{i}-\hat{\tau}\neq \emptyset$ and $\gamma \in \sigma_{i}-\hat{\tau}$, then the power of $\mathcal{D}_{\gamma}$ in $\mathcal{T}_{i,n}$ goes to $\infty$ as $n\to \infty$.
\end{lem}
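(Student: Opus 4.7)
The plan is to establish the blow-up of $|k_{\gamma,n}|$ by tracking the twist coordinate $\theta_\gamma$ along $\varphi_{i-1,n}\zeta_n$ across the near-pinching time $t\approx t_i$. First, fix a pants decomposition of $S$ containing $\sigma_i$ and work in the associated Fenchel--Nielsen coordinate system. Since the curves of $\sigma_i$ are pairwise disjoint their Dehn twists commute, so we may factor
$$\mathcal{T}_{i,n}=\mathcal{D}_\gamma^{k_{\gamma,n}}\,\mathcal{S}_n,\qquad \mathcal{S}_n\in \tw(\sigma_i-\hat{\tau}-\{\gamma\}).$$
A Dehn twist about a curve disjoint from $\gamma$ preserves both $\ell_\gamma$ and $\theta_\gamma$, so for every $x\in\Teich(S)$,
$$\theta_\gamma(\mathcal{T}_{i,n}x)=\theta_\gamma(x)+2\pi k_{\gamma,n},\qquad \ell_\gamma(\mathcal{T}_{i,n}x)=\ell_\gamma(x).$$

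Fix $\delta>0$ small. The convergences $\varphi_{i-1,n}\zeta_n(t_i-\delta)\to\hat{\zeta}(t_i-\delta)$ and $\varphi_{i,n}\zeta_n(t_i+\delta)\to\hat{\zeta}(t_i+\delta)$, together with $\ell_\gamma(\hat{\zeta}(t_i\pm\delta))>0$ (since $\hat{\zeta}(t_i\pm\delta)\in\mathcal{S}(\hat{\tau})$ and $\gamma\notin\hat{\tau}$) and the continuity of length-functions (Theorem \ref{thm : continuitylf}), imply that $\theta_\gamma(\varphi_{i-1,n}\zeta_n(t_i-\delta))$ and $\theta_\gamma(\varphi_{i,n}\zeta_n(t_i+\delta))$ both converge to finite limits. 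In view of the twist identity
$$\theta_\gamma(\varphi_{i-1,n}\zeta_n(t_i+\delta))=\theta_\gamma(\varphi_{i,n}\zeta_n(t_i+\delta))-2\pi k_{\gamma,n},$$
it suffices to prove that the twist change $|\theta_\gamma(\varphi_{i-1,n}\zeta_n(t_i+\delta))-\theta_\gamma(\varphi_{i-1,n}\zeta_n(t_i-\delta))|\to\infty$ along the WP geodesic segment $\varphi_{i-1,n}\zeta_n|_{[t_i-\delta,t_i+\delta]}$.

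This twist blow-up reflects the near-pinch of $\gamma$: since $\varphi_{i-1,n}\zeta_n(t_i)\to\hat{\zeta}(t_i)\in\mathcal{S}(\sigma_i)$ and $\gamma\in\sigma_i$, we have $\ell_\gamma(\varphi_{i-1,n}\zeta_n(t_i))\to 0$, so $\ell_{\min,n}:=\min_{t\in[t_i-\delta,t_i+\delta]}\ell_\gamma(\varphi_{i-1,n}\zeta_n(t))\to 0$. Using Wolpert's asymptotic expansion of the WP metric near the pinching stratum $\{\ell_\gamma=0\}$ (cf.\ Lemma \ref{lem : gradlf} and the tangent-cone description of Proposition \ref{prop : tangentcone}), schematically of the form $ds^2\sim (\pi/2)\,d\ell_\gamma^2/\ell_\gamma + c\,\ell_\gamma^3\,d\theta_\gamma^2+\dots$, a unit-speed WP geodesic whose $\ell_\gamma$-coordinate dips to a minimum value $\ell_{\min}$ sweeps out a twist displacement about $\gamma$ of order $1/\ell_{\min}$. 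Applied to $\varphi_{i-1,n}\zeta_n|_{[t_i-\delta,t_i+\delta]}$ with $\ell_{\min,n}\to 0$, this yields the required blow-up, and hence $|k_{\gamma,n}|\to\infty$.

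The main obstacle is making the $1/\ell_{\min}$ lower bound on the twist displacement rigorous in the full setting, where several length-functions $\ell_\alpha$ ($\alpha\in\sigma_i-\hat{\tau}$) degenerate simultaneously along $\varphi_{i-1,n}\zeta_n$ near $t_i$ and their twist directions can interact; this requires refined versions of Wolpert's asymptotic expansions of the WP metric, or, equivalently, a careful use of the tangent-cone structure of Proposition \ref{prop : tangentcone}.
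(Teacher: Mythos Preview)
Your approach is genuinely different from the paper's and, as you yourself concede in the final paragraph, incomplete. The central assertion---that a unit-speed WP geodesic whose $\ell_\gamma$-coordinate dips to $\ell_{\min}$ must sweep out a twist displacement of order $1/\ell_{\min}$---is plausible in a two-dimensional warped-product model but is not established here in the full Teichm\"uller space, where several curves in $\sigma_i-\hat\tau$ degenerate simultaneously and the asymptotic expansions have error terms that must be controlled uniformly. Until that step is carried out, the argument is a heuristic sketch rather than a proof, and the difficulty you identify is real: this would require a substantially finer analysis of WP geodesics near multiply-degenerate strata than anything cited.

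The paper's proof sidesteps this analytic difficulty entirely by an elementary convexity argument. One chooses a curve $\beta$ in $S\setminus\hat\tau$ that intersects $\gamma$ minimally (once or twice) and avoids the other curves of $\sigma_i-\{\gamma\}$, with $\ell_\beta$ controlled at $\varphi_{i-1,n}\zeta_n(t_i-\delta)$. If the power $m_i$ of $\mathcal{D}_\gamma$ in $\mathcal{T}_{i,n}$ were bounded, then by a direct geometric estimate in the collar of $\gamma$ the length $\ell_\beta$ would also be bounded at $\varphi_{i-1,n}\zeta_n(t_i+\delta)$. But $\ell_\beta(\varphi_{i-1,n}\zeta_n(t_i))\to\infty$ by the Collar Lemma, since $\beta\pitchfork\gamma$ and $\ell_\gamma\to 0$ there. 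This contradicts the strict convexity of $\ell_\beta$ along the WP geodesic $\varphi_{i-1,n}\zeta_n$. The argument uses nothing beyond convexity of length-functions and collar estimates---no control of the WP geodesic equations near the stratum is needed.
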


\begin{proof}
 For $i=1,...,k+1$ let $\varphi_{i,n}\in \Mod(S)$ be as in (\ref{eq : phiin'}). For $i=1,...,k$ define multi-curves
$$\sigma_{i,n}=\varphi_{i-1,n}^{-1}(\sigma_{i}) $$
also define the geodesic segments
\begin{center}$\zeta_{i,n}(t)=\varphi_{i-1,n}(\zeta_{n}(t))$ for $t\in [t_{i-1},t_{i+1}]$\end{center}
where $0=t_{0}<t_{1}<...<t_{i}<t_{i+1}<...<t_{k+1}=T$ is the partition from Theorem \ref{thm : geodlimit'}. We claim that 

\begin{claim} \label{claim : zetain}
 There are $\epsilon_{1},\epsilon_{2}>0$ and $\delta>0$ depending only on the sequence $\zeta_{n}$ such that for each $i=1,...,k$ and every $n\in\mathbb{N}$ sufficiently large 
\begin{enumerate}[(i)]
\item $\ell_{\gamma}(\zeta_{i,n}(t_{i}\pm\delta)) \leq \epsilon_{2}$ for every $\gamma \in \sigma_{i}$, and  
\item  The injectivity radius of the points $\zeta_{i,n}(t_{i}\pm\delta)$ outside the collars of the curves in $\hat{\tau}$ is bounded below by $\epsilon_{1}$.
\end{enumerate} 
\end{claim}
Let  $\delta< \min _{i=1,...,k+1} \frac{t_{i}-t_{i-1}}{2}$. Fix $1\leq i\leq k$. By Lemma \ref{lem : singlestratum} the two points $\hat{\zeta}(t_{i} \pm \delta)$ are in the stratum $\mathcal{S}(\hat{\tau})$ and by Theorem \ref{thm : geodlimit'}(2), $\hat{\zeta}(t_{i})\in \mathcal{S}(\sigma_{i})$. Recall that at any point in $\mathcal{S}(\sigma_{i})$ every curve in $\sigma_{i}$ has length $0$, also at any point in $\mathcal{S}(\hat{\tau})$ every curve in $\hat{\tau}$ has length $0$. Then by continuity of length functions there are $\epsilon'_{1},\epsilon'_{2}>0$ such that:
\begin{enumerate}[(i')]
\item  $ \ell_{\gamma}(\hat{\zeta}(t_{i}\pm\delta)) \leq \epsilon'_{2}$ for every $\gamma \in \sigma_{i}$, and 
\item The injectivity radius of the points $\zeta_{i,n}(t_{i}\pm \delta))$ is bounded below by $\epsilon'_{1}$ away from the collars of the curves in $\hat{\tau}$.
\end{enumerate}
By Theorem \ref{thm : geodlimit'}(3) for any $t\in [t_{i-1},t_{i}]$, $\zeta_{i,n}(t)\to \hat{\zeta}(t)$ as $n\to \infty$. Then continuity of length-functions and (i') imply that for $\epsilon_{2}=2\epsilon'_{2}$, (i) holds at $\zeta_{i,n}(t_{i}-\delta)$. Similarly the continuity of length-functions and (ii') imply that for $\epsilon_{1}=\frac{\epsilon'_{1}}{2}$, (ii) holds at  $\zeta_{i,n}(t_{i}-\delta)$. 

Since $\varphi_{i,n}=\mathcal{T}_{i,n} \circ \varphi_{i-1,n}$, by Theorem \ref{thm : geodlimit'}(3), for any $t\in [t_{i},t_{i+1}]$, $\mathcal{T}_{i,n}(\zeta_{i,n}(t))\to\hat{\zeta}(t)$ as $n\to \infty$. Then the continuity of length-functions and the bound (i') imply that 
\begin{equation}\label{eq : lgTinzinti}\ell_{\gamma}(\mathcal{T}_{i,n}(\zeta_{i,n}(t_{i}+\delta)))\leq \epsilon_{2},\end{equation}
for every $\gamma \in \sigma_{i}$ ($\epsilon_{2}=2\epsilon'_{2}$). It follows from Theorem \ref{thm : geodlimit'}(3) and Lemma \ref{lem : singlestratum} that for each $n$, $\mathcal{T}_{i,n}\in \tw (\sigma_{i}-\hat{\tau})$. Thus applying $\mathcal{T}_{i,n}$ does not change the length and the isotopy class of any curve in $\sigma_{i}-\hat{\tau}$ or disjoint from $\sigma_{i}-\hat{\tau}$. Each $\gamma\in\sigma_{i}$ is either in $\sigma_{i}-\hat{\tau}$ or is disjoint from $\sigma_{i}-\hat{\tau}$. Thus the bound (i) at $\zeta_{i,n}(t_{i}+\delta))$ follows from the bound (\ref{eq : lgTinzinti}).
 
As the previous paragraph for any $t\in [t_{i},t_{i+1}]$, $\mathcal{T}_{i,n}(\zeta_{i,n}(t))\to\hat{\zeta}(t)$ as $n\to \infty$. Then the continuity of length functions and the bound (ii') imply that the injectivity radius of the surface $\mathcal{T}_{i,n}(\zeta_{i,n}(t_{i}+\delta)))$ is bounded below by $\epsilon'_{1}$ outside the collars of the curves in $\hat{\tau}$. 
 Decreasing $\delta$ if is necessary, we may assume that $\epsilon'_{2}$ is small enough so that by the Collar Lemma no curve intersecting $\sigma_{i}$ realizes the injectivity radius of the surface $\mathcal{T}_{i,n}(\zeta_{i,n}(t_{i}+\delta))$. Thus the injectivity radius of the surface $\mathcal{T}_{i,n}(\zeta_{i,n}(t_{i}+\delta))$ outside collars of the curves in $\hat{\tau}$ is realized by a curve in $\sigma_{i}$ or disjoint from $\sigma_{i}$. Then the bound (ii) at $\zeta_{i,n}(t_{i}+\delta)$ follows because $\mathcal{T}_{i,n}$ does not change the length of any curve contained in or disjoint from $\sigma_{i}-\hat{\tau}$. The proof of the claim is complete.
\medskip

 We proceed to prove the lemma. Fix $\gamma \in \sigma_{i}-\hat{\tau}$. Let $h_{1}:S\to \zeta_{i,n}(t_{i}-\delta)$ be the marking map of the surface $\zeta_{i,n}(t_{i}-\delta)$. Let $\beta \in \mathcal{C}_{0}(S\backslash \hat{\tau})$ be a curve so that $h_{1}(\beta)$ has minimal length on $\zeta_{i,n}(t_{i}-\delta)$ which has minimal intersection number (1 or 2) with $\gamma$ and does not intersect any curve in $\sigma_{i}-\gamma$ (see Figure \ref{fig : int}). 
 
 \begin{figure}
\centering 
\scalebox{0.2}{\includegraphics{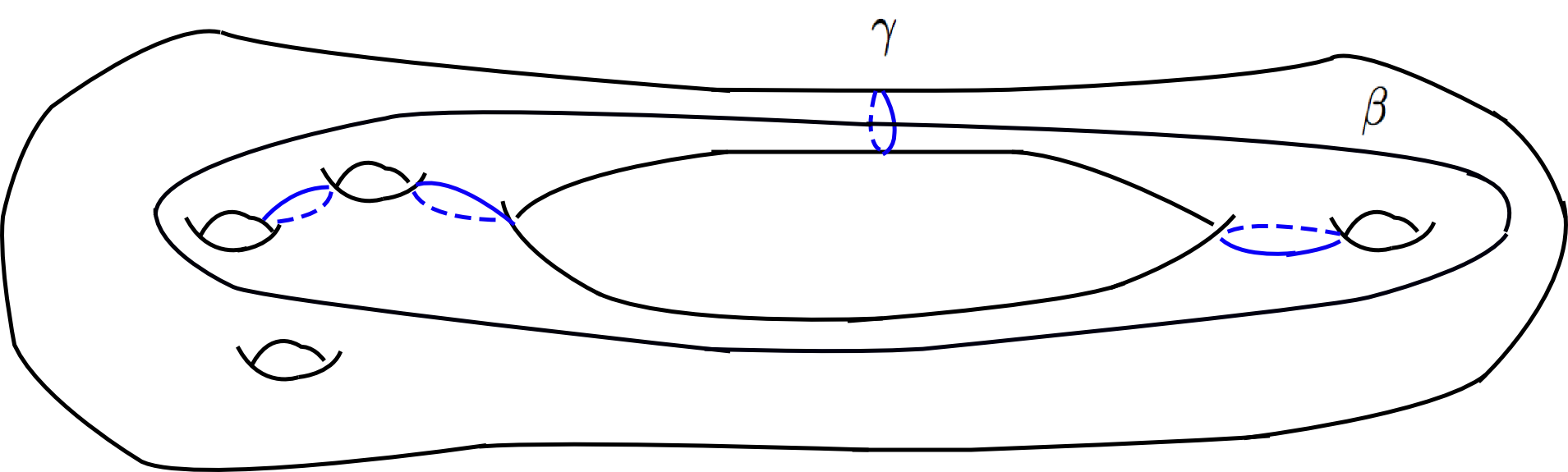}}
\caption{ The curve $\beta\in \mathcal{C}_{0}(S\backslash \hat{\tau})$ is a curve with minimal length on the surface $\zeta_{i,n}(t_{i}-\delta)$ which has minimal intersection number (1 or 2) with $\gamma$ and does not intersect any curve in $\sigma_{i}-\gamma$ ($\sigma_{i}$ consists of the blue curves).}
\label{fig : int}
\end{figure}

 Realize the curves $h_{1}(\beta)$ and $h_{1}(\gamma)$ as geodesics on the surface $\zeta_{i,n}(t_{i}-\delta)$. Denote the collar of $h_{1}(\gamma)$ by $C(h_{1}(\gamma))$. Denote the length of each component of the boundary of the collar by $C$. Furthermore, let $w$ be the width of the collar. Lifting the picture to the universal cover as in Figure \ref{fig : dehn} we can see that the length of $h_{1}(\beta)\cap C(h_{1}(\gamma))$ is bounded above by $w+2C$. Besides, the length of $h_{1}(\beta)$ outside $C(h_{1}(\gamma))$ is bounded above by $d$ the diameter of $\zeta_{i,n}(t_{i}-\delta)$ outside the collars of the curves in $\sigma_{i}$. To see this, let $g_{1}$ be an arc in the collar between the two boundaries of the collar with minimal length. Since diameter of the hyperbolic surface $\zeta_{i,n}(t_{i}-\delta)$ out side the collars of the curves in $\sigma_{i}$ is less than $d$, there is an arc $g_{2}$ in the complement of the collars of the curves in $\sigma_{i}$ that connects the end points of $g_{1}$ on the boundary of the collar and has length less than $d$. The concatenation of $g_{1}$ and $g_{2}$ is a closed curve freely homotopic to $\beta$, so the length of $\beta$ is less than that of $g$. The length of $g_{1}$ is less than the length of $\beta$ intersection with the collar.  Thus the length of $\beta$ out side the collar is less than $d$, as was desired. Then the length of $h_{1}(\gamma)$ is bounded above by $w+2C+d$. By a compactness argument $d$ is bounded above by a constant depending only on a lower bound for the injectivity radius of the surface outside the collars of the curves in $\sigma_{i}$ and $C$. Claim \ref{claim : zetain}(ii) provides the lower bound $\epsilon_{1}$ for the length of $\gamma$ and therefore an upper bound for $C$ and $w$.  Claim \ref{claim : zetain}(i), provides the lower bound $\epsilon_{2}$ for the injectivity radius of the surface outside the collars of the curves in $\hat{\tau}$. Then since $\hat{\tau}\subseteq \sigma_{i}$, in particular we have the lower bound $\epsilon_{2}$ for injectivity radius of the surface outside the collars of the curves in $\sigma_{i}$. Therefore we have an upper bound for $d$. Thus there is an $L>0$ depending only on $\epsilon_{1},\epsilon_{2}$ so that
 \begin{equation} \label{eq : lbn-}\ell_{\beta}(\zeta_{i,n}(t_{i}-\delta)) \leq L. \end{equation}
 Let $h_{2}:S\to \zeta_{i,n}(t_{i}+\delta)$ be the marking of $\zeta_{i,n}(t_{i}+\delta)$. Then $h_{2}(\beta)=h_{1}\circ\mathcal{T}_{i,n}(\beta)$ and $h_{2}(\gamma)=h_{1}(\gamma)$. Denote by $\mathcal{D}_{\gamma}$ the positive Dehn twist about $\gamma$. Let $m_{i}$ be the power of $\mathcal{D}_{\gamma}$ in $\mathcal{T}_{i,n}$. Realize $h_{2}(\gamma)$ and $h_{2}(\mathcal{D}_{\gamma}^{m_{i}}(\beta))$ as geodesics. Lifting the picture to the universal cover as in Figure \ref{fig : dehn} we can see that the length of $\mathcal{D}_{\gamma}^{m_{i}}(\beta)\cap C(h_{2}(\gamma))$ is bounded above by $|m_{i}|\ell_{\gamma}(\zeta_{i,n}(t_{i}+\delta))+w+2C$. Besides, the length of $\mathcal{D}_{\gamma}^{m_{i}}(\beta)$ outside the collar is bounded above by $d$ the diameter of the surface $\zeta_{i,n}(t_{i}+\delta)$ outside the collars of the curves in $\sigma_{i}$. This fact follows from an argument similar to the previous paragraph given to bound the length of  $\beta$ outside the collar. The only difference is that here we consider $g_{1}$ an arc inside the collar between the boundaries of the collar with $m_{i}$ twists about $\gamma$ with minimal length. Then the length of $\mathcal{D}_{\gamma}^{m_{i}}(\beta)$ is bounded above by 
$$|m_{i}|\ell_{\gamma}(\zeta_{i,n}(t_{i}+\delta))+w+2C+d.$$ 
 Similar to the previous paragraph, $w,C$ and $d$ are bounded above by constants depending only on $\epsilon_{1}$ and $\epsilon_{2}$. Moreover assuming that $|m_{i}|\leq N$ for some $N\in \mathbb{N}$, and using the fact that by Claim \ref{claim : zetain}(i), $\ell_{\gamma}(\zeta_{i,n}(t_{i}+\delta))\leq \epsilon_{2}$, we have an upper bound for the first term of the above sum. Thus there is an $L'>0$ depending only on $\epsilon_{1},\epsilon_{2}$ and $N$ so that
\begin{equation} \label{eq : lbn+} \ell_{\beta}(\zeta_{i,n}(t_{i}+\delta)) \leq L'.\end{equation}
 On the other hand, since $\ell_{\gamma}(\zeta_{i,n}(t_{i})) \to 0$ as $n \to \infty$ and $\beta \pitchfork \gamma$, by the Collar Lemma (\cite[\S 4.1] {buser}) we have that
\begin{equation} \label{eq : lbn0}\ell_{\beta}(\zeta_{i,n}(t_{i})) \to \infty \end{equation}
as $n \to \infty$.

 \begin{figure}
\centering 
\scalebox{0.2}{\includegraphics{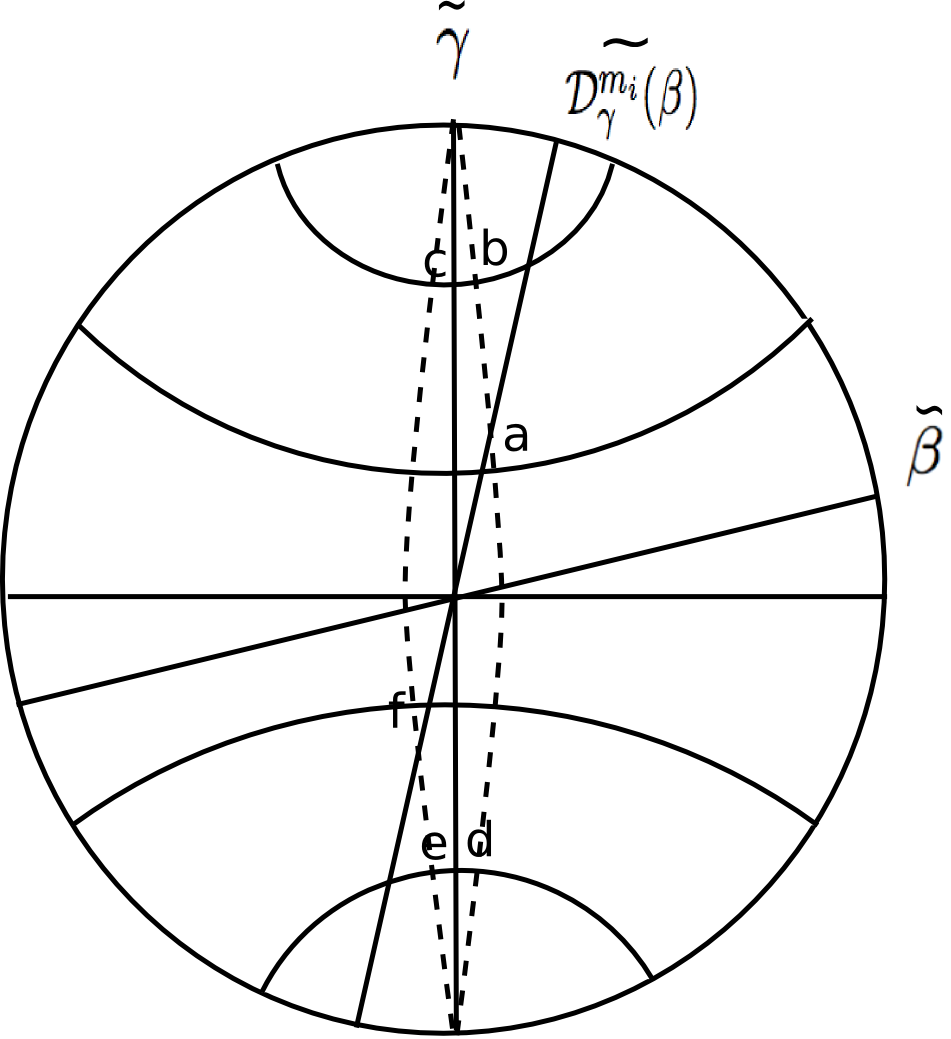}}
\caption{ The lift of the collar of $h_{2}(\gamma)$ and the lift of $h_{2}(\mathcal{D}_{\gamma}^{m_{i}}(\beta))$ to $\mathbb{D}^{2}$ the universal cover of the surface are showen. The length of the intersection of $h_{2}(\mathcal{D}_{\gamma}^{m_{i}}(\beta))$ and the collar of $h_{2}(\gamma)$ is bounded above by the length of the path $abcdef$. The segment $cd$ has length $|m_{i}|\ell_{\gamma}(\zeta_{i,n}(t_{i}+\delta))$. The length of the segments $ab$ and $ef$ are bounded by $C$ the length of each component of the boundary of the collar. The length of the segments $bc$ and $de$ is equal to $w$ the width of the collar.}
\label{fig : dehn}
\end{figure}

The bound (\ref{eq : lbn0}) implies that for any $n$ sufficiently large,
$$\ell_{\beta}(\zeta_{i,n}(t_{i}))> \max\{L,L'\}.$$
 But $t_{i}-\delta<t_{i}<t_{i}+\delta$, so the above bound together with the bounds (\ref{eq : lbn-}) and (\ref{eq : lbn+}) violate the convexity of the $\beta-$length-function along the WP geodesic segment $\zeta_{i,n}$. This contradiction shows that the power of $\mathcal{D}_{\gamma}$ in $\mathcal{T}_{i,n}$ is unbounded. The proof of the lemma is complete.
\end{proof}

Here for the purpose of reference in this paper and in future works we state the following strength version of the picture of the limits of bounded length WP geodesic segments in the completion of the Teichm\"{u}ller space which essentially is the picture from Theorem \ref{thm : geodlimit'} modified to incorporate Lemmas \ref{lem : singlestratum} and \ref{lem : highdehntwisteachgamma}.

\begin{thm} \label{thm : geodlimit}\textnormal{ (Geodesic Limit)}
Given $T>0$. Let $\zeta_{n}: [0,T] \to \overline{\Teich(S)}$ be a sequence of unit speed parametrized WP geodesic segments of length $T$. After possibly passing to a subsequence there exists a partition of the interval $[0, T]$ by $0=t_{0}<t_{1}<...<t_{k}<t_{k+1}=T$, and multi-curves $\sigma_{0},...,\sigma_{k+1}$ where $\sigma_{0}$ and $\sigma_{k+1}$ are possibly empty and the possibly empty multi-curve $\hat{\tau}$ such that $\sigma_{i} \cap \sigma_{i+1}=\hat{\tau}$, for $i=0,...,k$, and a piecewise geodesic 
$$\hat{\zeta}:[0,T]\to \overline{\Teich(S)}$$
with the following properties
\begin{enumerate}
\item \label{gl : mainstr}$\hat{\zeta}((t_{i-1}, t_{i})) \subset \mathcal{S}(\hat{\tau})$, for $i=1,..., k+1$, 
\item \label{gl : hitstr}$\hat{\zeta}(t_{i}) \in \mathcal{S}(\sigma_{i})$, for $i=0,...,k+1$,
\item \label{gl : conv}There are elements $\psi_{n}\in \Mod(S)$ and $\mathcal{T}_{i,n}\in\tw(\sigma_{i}-\hat{\tau})$, for $i=1,..,k$, such that after possibly passing to a subsequence $\psi_{n}(\zeta_{n}|_{[0,t_{1}]})\to\hat{\zeta}|_{[0,t_{1}]}$, and for each $i=1,..,k$, 
$$\mathcal{T}_{i,n} \circ ... \circ \mathcal{T}_{1,n} \circ \psi_{n}(\zeta_{n}|_{[t_{i},t_{i+1}]})\to\hat{\zeta}|_{[t_{i},t_{i+1}]}$$
as $n \to \infty$ in the sense of unit speed parametrized geodesics. For convenience for every $i=0,...,k+1$ we define 
\begin{equation} \label{eq : phiin} \varphi_{i,n}=\mathcal{T}_{i,n} \circ ... \circ \mathcal{T}_{1,n} \circ \psi_{n} \end{equation}
\item \label{gl : unbddpower}The elements $\psi_{n}$ are either trivial or unbounded. Moreover, for any $i=1,...,k$ and $\gamma\in \sigma_{i}$ the power of $\mathcal{D}_{\gamma}$ in the element $\mathcal{T}_{i,n}$ goes to $\infty$ as $n\to \infty$.
\item \label{gl : minlen}The piecewise geodesic $\hat{\zeta}$ is the minimal length path in $\overline{\Teich(S)}$ joining $\hat{\zeta}(0)$ to $\hat{\zeta}(T)$ and intersecting the strata $\mathcal{S}(\sigma_{1}), \mathcal{S}(\sigma_{2}), ..., \mathcal{S}(\sigma_{k})$ in order.
\end{enumerate}
\end{thm}

By Lemma \ref{lem : singlestratum}, $\hat{\tau}\equiv \sigma_{i}\cap \sigma_{i+1}$ for $i=0,...,k$. Part (1) follows from this fact and part (1) of Theorem \ref{thm : geodlimit'}. Part (4) follows from part (4) of Theorem \ref{thm : geodlimit'} and Lemma \ref{lem : highdehntwisteachgamma}.

\subsection{Length-function versus twist parameter control} \label{subsec : bmmtwlf}

 In this subsection we show that, roughly speaking, provided a lower bound for the length of a curve $\gamma$ at the end points of a uniformly bounded length WP geodesic segment $\zeta$, the higher Dehn twist about $\gamma$ forces $\gamma$ to get shorter along $\zeta$ (Corollary \ref{cor : twsh}). Moreover, in Corollary \ref{cor : shtw} we show that the shorter $\gamma$ gets along $\zeta$ the higher Dehn twist builds up about $\gamma$. 

The main technical part of this subsection is the following modification of Lemma 4.5 in \cite{bmm2}. In the following theorem $\mu(x)$ denotes a Bers marking of the point $x\in\overline{\Teich(S)}$ (see $\S$\ref{sec : WPmetric}).
  
\begin{thm}\label{thm : shtw}\textnormal{ (Length-function versus annuler coefficient control)}
Given $\epsilon_{0},T$ and $s$ positive. Let $\zeta_{n}: [0,T_{n}] \to \Teich(S)$ be a sequence of Weil-Petersson geodesic segments parametrized by arc-length of length $2s\leq T_{n}\leq T$. Let $\alpha_{n}$ be a sequence of curves, we have the following

\begin{enumerate}
\item If there are subintervals $J_{n} \subseteq [s,T_{n}-s]$ such that 
\begin{enumerate}
\item $\sup_{t \in J_{n}}\ell_{\alpha_{n}}(\zeta_{n}(t)) \geq \epsilon_{0} $, and
\item $\inf_{t \in J_{n}}\ell_{\alpha_{n}}(\zeta_{n}(t))\to 0$ as $n \to \infty$, 
 \end{enumerate}
 then after possibly passing to a subsequence
$$d_{\alpha_{n}}(\mu(\zeta_{n}(0)),\mu(\zeta_{n}(T_{n})))\to \infty$$ 
as $n \to \infty$.

\item If
\begin{enumerate}
\item $\sup_{t\in[0,T_{n}]}\ell_{\alpha_{n}}(\zeta_{n}(t))\geq \epsilon_{0}$, and
\item $d_{\alpha_{n}}(\mu(\zeta_{n}(0)),\mu(\zeta_{n}(T_{n})))\to \infty$ as $n\to \infty$,
\end{enumerate}
 then after possibly passing to a subsequence
$$\inf_{t \in [0,T_{n}]}\ell_{\alpha_{n}}(\zeta_{n}(t))\to 0$$ 
as $n \to \infty$. 
\end{enumerate}
\end{thm}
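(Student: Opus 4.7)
My plan is to prove both parts by contradiction using the Geodesic Limit Theorem (Theorem \ref{thm : geodlimit}). Applied to $\zeta_n$, after a subsequence it produces a piecewise WP geodesic $\hat\zeta:[0,T_\infty]\to\overline{\Teich(S)}$ with partition $0=t_0<t_1<\cdots<t_{k+1}=T_\infty$, simplices $\sigma_i\supseteq\hat\tau$, and mapping class elements $\psi_n$, $\mathcal{T}_{i,n}\in\tw(\sigma_i-\hat\tau)$ and $\varphi_{i,n}=\mathcal{T}_{i,n}\circ\cdots\circ\mathcal{T}_{1,n}\circ\psi_n$. The key dictionary is that a curve pinching along $\zeta_n$ corresponds, after conjugation by $\varphi_{i_0-1,n}$, to a curve $\gamma\in\sigma_{i_0}-\hat\tau$ whose $\mathcal{D}_\gamma$-power in $\mathcal{T}_{i_0,n}$ tends to infinity (Theorem \ref{thm : geodlimit}(\ref{gl : unbddpower})), whereas a curve that stays bounded away from $0$ in length along $\zeta_n$ corresponds, after conjugation, to a fixed curve disjoint from every $\sigma_i-\hat\tau$. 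I would prove Part (2) first and then use it as an ingredient in Part (1).

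\textbf{Part (2).} Suppose for contradiction $\inf_{t\in[0,T_n]}\ell_{\alpha_n}(\zeta_n(t))\geq c>0$ along a subsequence. Using the equality $\ell_{\varphi_{j,n}(\alpha_n)}(\varphi_{j,n}(\zeta_n(t)))=\ell_{\alpha_n}(\zeta_n(t))$, the convergence $\varphi_{j,n}(\zeta_n(t))\to\hat\zeta(t)$, and continuity of length functions, the curves $\varphi_{j,n}(\alpha_n)$ have uniformly bounded length on the fixed limit surface $\hat\zeta(t)$; finiteness of curves of bounded length on a hyperbolic surface then gives, after a further subsequence, that $\varphi_{j,n}(\alpha_n)=\alpha_j$ is a fixed curve. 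The relation $\alpha_{j+1}=\mathcal{T}_{j+1,n}(\alpha_j)$ together with the growing Dehn-twist powers in $\mathcal{T}_{j+1,n}$ forces $\alpha_j$ to be disjoint from every $\delta\in\sigma_{j+1}-\hat\tau$, for otherwise the curves $\mathcal{D}_\delta^{k_{\delta,n}}(\alpha_j)$ would vary with $n$. Inductively $\alpha_j=\alpha$ is a common curve disjoint from all $\sigma_i-\hat\tau$, so every $\mathcal{T}_{i,n}$ acts trivially on $\mathcal{C}(A(\alpha))$, yielding
\[d_{\alpha_n}(\mu(\zeta_n(0)),\mu(\zeta_n(T_n)))=d_\alpha(\bar\mu^-,\bar\mu^+)+O(1)\]
for the eventually fixed Bers markings $\bar\mu^\pm$ of $\hat\zeta(0)$ and $\hat\zeta(T_\infty)$, contradicting $d_{\alpha_n}\to\infty$.

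\textbf{Part (1).} Suppose for contradiction $d_{\alpha_n}(\mu(\zeta_n(0)),\mu(\zeta_n(T_n)))\leq B$ along a subsequence. Pick $t_n^*\in J_n$ with $\ell_{\alpha_n}(\zeta_n(t_n^*))\to 0$ from (b) and $t_n'\in J_n$ with $\ell_{\alpha_n}(\zeta_n(t_n'))\geq\epsilon_0$ from (a); after a subsequence $t_n^*\to t^*$, $t_n'\to t'$ both in $[s,T_\infty-s]$. If $t^*\in(t_j,t_{j+1})$, continuity of length functions forces $\varphi_{j,n}(\alpha_n)=\gamma\in\hat\tau$ eventually; but every curve in $\hat\tau$ is pinched throughout $\hat\zeta$ and is fixed by each $\mathcal{T}_{i,n}$, so $\ell_{\alpha_n}(\zeta_n(t_n'))\to 0$ would follow, contradicting (a). Hence $t^*=t_{i_0}$ for some $i_0\in\{1,\ldots,k\}$, and the same exclusion gives $\varphi_{i_0-1,n}(\alpha_n)=\gamma\in\sigma_{i_0}-\hat\tau$ eventually; Theorem \ref{thm : geodlimit}(\ref{gl : unbddpower}) then yields $|k_n|\to\infty$ for the $\mathcal{D}_\gamma$-power in $\mathcal{T}_{i_0,n}$.

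\textbf{Main obstacle and its resolution.} The delicate step is converting $|k_n|\to\infty$ into divergence of $d_\gamma$ of the conjugated endpoint Bers markings, since the twists $\mathcal{T}_{j,n}$ for $j\neq i_0$ may involve Dehn twists about curves crossing $\gamma$ (the simplices $\sigma_{i_0}$ and $\sigma_j$ intersect only in $\hat\tau$), which can shift $\pi_\gamma$ by unbounded amounts. I would resolve this by localizing to the middle segment $[t_{i_0-1},t_{i_0+1}]$, where the geodesic limit has a single transition at $t_{i_0}$ with twist $\mathcal{T}_{i_0,n}$; all curves in the simplex $\sigma_{i_0}$ are pairwise disjoint, so only the $\mathcal{D}_\gamma^{-k_n}$ factor of $\mathcal{T}_{i_0,n}^{-1}$ affects $\pi_\gamma$, giving
\[d_\gamma\bigl(\bar\mu^-,\mathcal{T}_{i_0,n}^{-1}\bar\mu^+\bigr)\geq|k_n|-d_\gamma(\bar\mu^-,\bar\mu^+)-O(1)\to\infty\]
for the limiting Bers markings $\bar\mu^\pm$ at $\hat\zeta(t_{i_0-1})$ and $\hat\zeta(t_{i_0+1})$, whence $d_{\alpha_n}(\mu(\zeta_n(t_{i_0-1})),\mu(\zeta_n(t_{i_0+1})))\to\infty$. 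Convexity of length functions (Theorem \ref{thm : convlf}) and the positive limits $\ell_\gamma(\hat\zeta(t_{i_0\pm 1}))>0$ keep $\ell_{\alpha_n}$ bounded away from $0$ on the flanking intervals $[0,t_{i_0-1}]$ and $[t_{i_0+1},T_n]$, so Part (2) bounds $d_{\alpha_n}$ on these pieces, and the triangle inequality for subsurface projections transfers the divergence to $d_{\alpha_n}(\mu(\zeta_n(0)),\mu(\zeta_n(T_n)))$, contradicting our assumption.
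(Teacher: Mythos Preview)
Your overall framework (pass to the geodesic limit via Theorem~\ref{thm : geodlimit} and analyze the twist structure) matches the paper's, but there is a genuine gap in your Part~(2), and your Part~(1) takes an unnecessary detour around a non-obstacle.

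\textbf{The gap in Part (2).} You claim that from $\inf_t\ell_{\alpha_n}(\zeta_n(t))\geq c>0$ and the convergence $\varphi_{j,n}(\zeta_n(t))\to\hat\zeta(t)$ one gets that the curves $\varphi_{j,n}(\alpha_n)$ have \emph{uniformly bounded} length on $\hat\zeta(t)$, and hence stabilize by finiteness. But the hypotheses give only a \emph{lower} bound $\ell_{\varphi_{j,n}(\alpha_n)}\geq c$; nothing prevents $\ell_{\alpha_n}$ from being arbitrarily large, so $\varphi_{j,n}(\alpha_n)$ need not lie in any finite set and the stabilization step fails as written. What the lower bound \emph{does} give is that $\varphi_{j,n}(\alpha_n)\notin\sigma_j$ for each $j$ (since curves in $\sigma_j$ are pinched at $\hat\zeta(t_j)$), i.e.\ $\alpha_n\notin\sigma_{j,n}$. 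The paper then argues directly from this: the pullback markings $\mu_{j,n}^\pm$ both contain $\sigma_{j,n}$ in their base and differ by an element of $\tw(\sigma_{j,n})$; since $\alpha_n\notin\sigma_{j,n}$, either $\alpha_n$ meets some $\delta\in\sigma_{j,n}$ (and then $\pi_{\alpha_n}(\delta)$ is common to both projections), or $\alpha_n$ is disjoint from $\sigma_{j,n}$ (and then the multitwist acts trivially on $\mathcal C(\alpha_n)$). Either way $d_{\alpha_n}(\mu_{j,n}^-,\mu_{j,n}^+)$ is bounded, and chaining via bounded-distance full markings of $S\setminus\tau_{j,n}$ yields the contradiction. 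Your argument can be repaired this way, but not via the stabilization you propose.

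\textbf{The detour in Part (1).} You identify as the ``main obstacle'' that the twists $\mathcal T_{j,n}$ for $j\neq i_0$ might involve curves crossing $\gamma$, and you work around it by localizing to $[t_{i_0-1},t_{i_0+1}]$ and invoking Part~(2) on the flanks. In fact this is not an obstacle at all: the paper's Claim~\ref{claim : dgmnbd1} shows directly that $d_{\gamma_n}(\mu_{j,n}^-,\mu_{j,n}^+)$ is bounded for every $j\neq i_0$, by exactly the same dichotomy as above (the key input being $\gamma_n\notin\sigma_{j,n}$, which follows from convexity of length functions and $\gamma\notin\hat\tau$). Combined with the bounded steps $d_{\gamma_n}(\mu_{j,n}^+,\mu_{j+1,n}^-)$ (Claim~\ref{claim : dgmnbd2}) and the single divergent step at $j=i_0$ (Claim~\ref{claim : dgmnubd}), a triangle-inequality chain gives $d_{\gamma_n}(\mu(\zeta_n(0)),\mu(\zeta_n(T_n)))\to\infty$ directly. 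Your localize-and-flank strategy does work once Part~(2) is correctly proved---the convexity argument for the flanks is fine, and the middle estimate goes through because Bers markings have uniformly bounded diameter in any annular complex---but it is a longer route to the same conclusion, and it makes Part~(1) depend on Part~(2), whereas the paper proves the key twisting estimate~(\ref{eq : twgamnunbdd}) once and uses it for both parts.
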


\begin{proof}

 Trimming the intervals $[0,T_{n}]$ slightly and changing the parameters $s$ and $T$ we may assume that $T_{n} \equiv T$ for some $T\geq 2s$. After possibly passing to a subsequence by Theorem \ref{thm : geodlimit}, there exist a partition of $[0,T]$ with $0=t_{0}<t_{1}<...<t_{k+1}=T$, multi-curves $\sigma_{i}$ for $i=0,...,k+1$, a multi-curve $\hat{\tau}$, and a piecewise geodesic path 
$$\hat{\zeta}:[0,T]\to \overline{\Teich(S)}$$
so that $\hat{\zeta}([t_{i},t_{i+1}])$ is a geodesic segment in $\overline{\mathcal{S}(\hat{\tau})}$ joining the stratum $\mathcal{S}(\sigma_{i})$ to $\mathcal{S}(\sigma_{i+1})$. Let $\varphi_{i,n}=\mathcal{T}_{i,n}\circ...\circ \mathcal{T}_{i,1}\circ\psi_{n}$ be as in (\ref{eq : phiin}), where $\mathcal{T}_{i,n}\in \tw(\sigma_{i}-\hat{\tau})$ and $\psi_{n}\in \Mod(S)$ is either trivial or unbounded.
\medskip

We start by setting up some notation. For each $i=0,...,k+1$ and $n\in\mathbb{N}$ let
$$\sigma_{i,n}=\varphi_{i,n}^{-1}(\sigma_{i})=\varphi_{i-1,n}^{-1}(\sigma_{i})$$
be the pull backs of $\sigma_{i}$ to the $\zeta_{n}$ picture. For each $i=1,...,k$ and $n\in\mathbb{N}$ let
$$\tau_{i,n}=\varphi_{i-1,n}^{-1}(\hat{\tau}).$$
For each $i = 0,...,k+1$, choose a partial marking $\mu_{i}$ such that
\begin{enumerate}
\item $\sigma_{i} \subset \base(\mu_{i})$, and
\item $\mu_{i}$ restricts to a full marking of each connected component $Y \subseteq S\backslash \hat{\tau}$ with $\xi(Y)\geq 1$.
\end{enumerate}

For each $i=1,...,k+1$ and $n\in\mathbb{N}$ define the pullback marking
$$\mu_{i,n}^{-} =\varphi_{i-1,n}^{-1}(\mu_{i}),$$	
and for each $i=0,...,k$ and $n\in\mathbb{N}$ the pullback marking
$$\mu_{i,n}^{+} =\varphi_{i,n}^{-1}(\mu_{i}).$$	
Let $1\leq i\leq k$ and let $\gamma \in \sigma_{i}-\hat{\tau}$. In the following three claims we will measure the twisting of these markings relative to $\gamma_{n}=\varphi_{i,n}^{-1}(\gamma)$ and prove that 
\begin{equation} \label{eq : twgamnunbdd}d_{\gamma_{n}}(\mu(\zeta_{n}(0)),\mu(\zeta_{n}(T)))\to \infty \end{equation}
as $n \to \infty$. 

\begin{claim}\label{claim : dgmnbd1} 
$ d_{\gamma_{n}}(\mu^{-}_{j,n}, \mu^{+}_{j,n})$ is bounded for any $j=1,...,k$ with $j\neq i$ and $n\in\mathbb{N}$.
\end{claim}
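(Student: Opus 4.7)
The plan is to use equivariance of annular subsurface coefficients under the mapping class group to translate the claim into a statement about how $\mathcal{T}_{j,n}$ acts on annular projections. Setting $\tilde\gamma_{j,n} = \varphi_{j-1,n}(\gamma_n) = \varphi_{j-1,n}\varphi_{i,n}^{-1}(\gamma)$ and using that $\mu^+_{j,n} = \varphi_{j-1,n}^{-1}\mathcal{T}_{j,n}^{-1}(\mu_j)$, we get
$$d_{\gamma_n}(\mu^-_{j,n},\mu^+_{j,n}) = d_{\tilde\gamma_{j,n}}\bigl(\mu_j,\mathcal{T}_{j,n}^{-1}(\mu_j)\bigr).$$
Since $\mathcal{T}_{j,n}\in \tw(\sigma_j-\hat\tau)$ and $\sigma_j\subset \base(\mu_j)$, the partial marking $\mathcal{T}_{j,n}^{-1}(\mu_j)$ differs from $\mu_j$ only by twisting the transversals to curves in $\sigma_j-\hat\tau$.

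Next, I would observe that the desired bound follows once I show that $\tilde\gamma_{j,n}$ is disjoint from every curve in $\sigma_j-\hat\tau$ for all sufficiently large $n$. Indeed, if $\tilde\gamma_{j,n}$ is disjoint from $\sigma_j-\hat\tau$, then every Dehn twist factor of $\mathcal{T}_{j,n}^{-1}$ can be realized with support in a regular neighborhood of $\sigma_j-\hat\tau$ that is disjoint from a regular neighborhood of $\tilde\gamma_{j,n}$; hence $\mathcal{T}_{j,n}^{-1}$ acts trivially on the annular cover of $\tilde\gamma_{j,n}$ and fixes $\pi_{\tilde\gamma_{j,n}}(\mu_j)$ pointwise, so the distance above is bounded by $\diam_{\tilde\gamma_{j,n}}(\mu_j) + \diam_{\tilde\gamma_{j,n}}(\mathcal{T}_{j,n}^{-1}\mu_j)$, a universal constant.

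The hard part is establishing this disjointness. My strategy is to produce a uniform upper bound for $\ell_{\gamma_n}(\zeta_n(t_j))$ and combine it with the Collar Lemma applied at $\zeta_n(t_j)$. Note that the curves $\varphi_{j-1,n}^{-1}(\sigma_j-\hat\tau)$ pinch at $\zeta_n(t_j)$: by Theorem \ref{thm : geodlimit}(\ref{gl : conv}), $\varphi_{j-1,n}\zeta_n(t_j)\to \hat\zeta(t_j)\in \mathcal{S}(\sigma_j)$, so for every $\delta\in\sigma_j-\hat\tau$ the length $\ell_\delta(\varphi_{j-1,n}\zeta_n(t_j))=\ell_{\varphi_{j-1,n}^{-1}(\delta)}(\zeta_n(t_j))\to 0$ by continuity of length-functions (Theorem \ref{thm : continuitylf}). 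Thus once $\ell_{\gamma_n}(\zeta_n(t_j))$ is bounded above by a constant $L$ independent of $n$, the Collar Lemma rules out transverse intersections between $\gamma_n$ and any curve of $\varphi_{j-1,n}^{-1}(\sigma_j-\hat\tau)$ for large $n$, and applying $\varphi_{j-1,n}$ gives the required disjointness of $\tilde\gamma_{j,n}$ from $\sigma_j-\hat\tau$.

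To produce the length bound I would exploit convexity of $t\mapsto\ell_{\gamma_n}(\zeta_n(t))$ along the WP geodesic $\zeta_n$, together with the observation that on the interval $[t_{i-1},t_{i+1}]$ we have $\ell_{\gamma_n}(\zeta_n(t))=\ell_\gamma(\varphi_{i,n}\zeta_n(t))\to \ell_\gamma(\hat\zeta(t))$ (using that $\mathcal{T}_{i,n}$ fixes $\gamma$ since $\gamma\in\sigma_i-\hat\tau$ lies in the simplex $\sigma_i$). This provides uniform upper bounds on $\ell_{\gamma_n}$ at one or two reference times; convexity then propagates these bounds to $t_j$ for any fixed $j$, provided $t_j$ lies in the interval where the two-sided reference bounds pin the convex function down. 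The main technical obstacle will be dealing with times $t_j$ that are far from $t_i$ where only a one-sided reference bound is available, and here I expect to supplement the convexity argument by using the analogous convergence at the other end ($\varphi_{0,n}\zeta_n\to\hat\zeta$ on $[0,t_1]$ or $\varphi_{k,n}\zeta_n\to\hat\zeta$ on $[t_k,T]$) after checking that the mapping class group element conjugating $\gamma_n$ to a curve that behaves well under the relevant convergence produces a controlled length.
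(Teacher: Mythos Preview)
Your central reduction—showing that $\tilde\gamma_{j,n}$ is disjoint from every curve in $\sigma_j-\hat\tau$—asks for a statement that is false in general, and your length-bound strategy breaks exactly where it fails. Nothing in the geodesic limit picture forces curves of $\sigma_i-\hat\tau$ to be disjoint from curves of $\sigma_j-\hat\tau$ for $j\ne i$; the condition $\sigma_l\cap\sigma_{l+1}=\hat\tau$ only says the simplices share no curves, not that their curves are mutually nonintersecting. Concretely, take $j=i-1$: then $\tilde\gamma_{j,n}=\varphi_{i-2,n}\varphi_{i,n}^{-1}(\gamma)=\mathcal{T}_{i-1,n}^{-1}\mathcal{T}_{i,n}^{-1}(\gamma)=\mathcal{T}_{i-1,n}^{-1}(\gamma)$, and if $\gamma\in\sigma_i-\hat\tau$ crosses some $\delta\in\sigma_{i-1}-\hat\tau$ then $\tilde\gamma_{j,n}$ still crosses $\delta$ (Dehn twists about $\delta$ preserve intersection number with $\delta$). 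Correspondingly, since $\varphi_{i-1,n}(\gamma_n)=\mathcal{T}_{i,n}^{-1}(\gamma)=\gamma$ and $\varphi_{i-1,n}\zeta_n(t_{i-1})\to\hat\zeta(t_{i-1})\in\mathcal{S}(\sigma_{i-1})$, one has $\ell_{\gamma_n}(\zeta_n(t_{i-1}))\to\ell_\gamma(\hat\zeta(t_{i-1}))=\infty$ whenever $\gamma$ crosses $\sigma_{i-1}$. No convexity or endpoint patch can help, because the function you want to bound is itself unbounded at $t_{i-1}$.

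The paper's argument avoids this by proving only the weaker statement $\gamma_n\notin\sigma_{j,n}$ (equivalently $\tilde\gamma_{j,n}\notin\sigma_j$), via a short convexity argument: if $\gamma_n\in\sigma_{j,n}$ its length would tend to $0$ at both $\zeta_n(t_i)$ and $\zeta_n(t_j)$, hence on the entire interval between them, forcing $\gamma\in\hat\tau$, a contradiction. This weaker fact already suffices. In the case you overlooked—$\tilde\gamma_{j,n}$ crosses some $\delta\in\sigma_j-\hat\tau$ without equaling it—the curve $\delta$ lies in $\sigma_{j,n}\subset\base(\mu^{\pm}_{j,n})$, so $\pi_{\gamma_n}(\delta)$ is a common subset of $\pi_{\gamma_n}(\mu^{-}_{j,n})$ and $\pi_{\gamma_n}(\mu^{+}_{j,n})$ and the annular distance is bounded by a diameter constant. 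Your argument via trivial action on the annular cover then handles the remaining (genuinely disjoint) case.
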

First note that $\base(\mu^{+}_{j,n})$ and $\base(\mu^{-}_{j,n})$ both contain 
$$\sigma_{j,n} = \varphi_{j,n}^{-1}(\sigma_{j}) = \varphi_{j-1,n}^{-1}(\sigma_{j}).$$
Now we verify that
\begin{equation} \label{eq : gaminsig} \gamma_{ n} \notin \sigma_{ j,n},\;\; \text{for any $j\neq i$}.\end{equation}
For otherwise, the length of $\gamma_{n}$ would converge to $0$ at $\zeta_{n}(t_{i})$ and $\zeta_{n}(t_{ j})$, and hence by the convexity of length-functions along WP geodesics on the interval $[t_{i-1},t_{i}]$ or $[t_{i},t_{i+1}]$ (the first if $j < i$ and the second if $ j > i$). So by Theorem \ref{thm : geodlimit}(\ref{gl : mainstr}), $\gamma\in \hat{\tau}$. But this contradicts the choice of $\gamma \in \sigma_{i}-\hat{\tau}$. Thus (\ref{eq : gaminsig}) holds. 

The partial markings $\mu_{ j,n}^{-}$ and $\mu_{j,n}^{+}$ on $S$ restrict to full markings on $S\backslash \sigma_{j,n}$. So by (\ref{eq : gaminsig}) $\gamma_{n}$ intersects $\mu_{ j,n}^{-}$ and $\mu_{ j,n}^{+}$ nontrivially, and therefore $\pi_{\gamma_{n}}(\mu_{j,n}^{\pm})\neq\emptyset$. Let  
$$\widetilde{\mathcal{T}}_{j,n}=\varphi_{j,n}^{-1}\circ \mathcal{T}_{j,n}\circ \varphi_{j,n}.$$
 Note that $\mu_{j,n}^{-}=\widetilde{\mathcal{T}}_{j,n}(\mu_{j,n}^{+})$. Moreover since $\mathcal{T}_{j,n}\in \tw(\sigma_{j})$, $\widetilde{\mathcal{T}}_{j,n}$ is an element of $\tw(\sigma_{j,n})$. So $\mu_{j,n}^{+}$ differs from $\mu_{j,n}^{-}$ by composition of positive Dehn twists about the curves in $\sigma_{j,n}$. Then by the definition of annular subsurface coefficients of partial markings (see $\S$\ref{sec : ccplx}) the subsurface coefficient of a sequence of curves intersecting the markings $\mu^{-}_{j,n}$ and $\mu^{+}_{j,n}$ goes to $\infty$, only if after possibly passing to a subsequence each curve in the sequence is a curve in $\sigma_{j,n}$.  But by (\ref{eq : gaminsig}) it is not the case. The claimed bound follows from this contradiction.
\begin{claim}\label{claim : dgmnbd2}  
$d_{\gamma_{n}}(\mu^{+}_{j,n}, \mu^{-}_{j+1,n})$ is bounded for any $j=1,...,k$ and $n\in\mathbb{N}$.
\end{claim}
The partial markings $\mu_{j}$ and $\mu_{j+1}$ restrict to full markings of $S\backslash \hat{\tau}$, where their marking distance is some finite number. Hence we may connect them with a finite length connected path in the marking graph of $S\backslash\hat{\tau}$. Applying $\varphi_{j,n}^{-1}$ to this path we obtain a path of the same length connecting $\mu_{j,n}^{+}$ to $\mu_{j+1,n}^{-}$ in the marking graph of $S\backslash \tau_{j+1,n}$. Moreover, $\gamma_{n} \notin \tau_{j+1,n}$ (because $\gamma \notin \hat{\tau}$), so all of the markings in the connecting path intersect $\gamma_{n}$ nontrivially. Any two consecutive markings in the path differ by an elementary move and each elementary move increases the $A(\gamma_{n})$ subsurface coefficient by at most one. Thus the claimed bound follows.
 \begin{claim} \label{claim : dgmnubd} 
  \begin{equation} \label{eq : reltwistubd} d_{\gamma_{n}}(\mu^{-}_{i,n},\mu^{+}_{i,n}) \to \infty\;\; \text{as $n \to \infty$}.\end{equation} 
\end{claim}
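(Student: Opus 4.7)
The plan is to compute $d_{\gamma_{n}}(\mu_{i,n}^{-}, \mu_{i,n}^{+})$ directly by pulling everything back to the unconjugated $\mu_{i}$ picture, where the relative twisting is governed by $\mathcal{T}_{i,n}$ itself. Unwinding the definitions,
\begin{equation*}
\mu_{i,n}^{+} = \varphi_{i,n}^{-1}(\mu_{i}) = \varphi_{i-1,n}^{-1}\circ\mathcal{T}_{i,n}^{-1}(\mu_{i}), \qquad \mu_{i,n}^{-} = \varphi_{i-1,n}^{-1}(\mu_{i}),
\end{equation*}
and $\gamma_{n} = \varphi_{i,n}^{-1}(\gamma) = \varphi_{i-1,n}^{-1}(\gamma)$, because the Dehn twists composing $\mathcal{T}_{i,n}$ are about curves in the pants decomposition $\sigma_{i}$ and therefore fix $\gamma$ setwise. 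Equivariance of subsurface projections under the mapping class group action then gives
\begin{equation*}
d_{\gamma_{n}}(\mu_{i,n}^{-}, \mu_{i,n}^{+}) = d_{\gamma}(\mu_{i}, \mathcal{T}_{i,n}^{-1}(\mu_{i})).
\end{equation*}

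Next I would decompose $\mathcal{T}_{i,n} = \prod_{\alpha \in \sigma_{i} - \hat{\tau}}\mathcal{D}_{\alpha}^{m_{\alpha,n}}$, which is legitimate because $\mathcal{T}_{i,n} \in \tw(\sigma_{i} - \hat{\tau})$ and the Dehn twists about disjoint curves commute. Every $\alpha \in \sigma_{i} - \hat{\tau}$ with $\alpha \neq \gamma$ is disjoint from $\gamma$, so $\mathcal{D}_{\alpha}$ fixes every arc in the annular cover of $\gamma$ up to isotopy and therefore acts trivially on $\pi_{\gamma}$. Hence
\begin{equation*}
d_{\gamma}(\mu_{i}, \mathcal{T}_{i,n}^{-1}(\mu_{i})) = d_{\gamma}(\mu_{i}, \mathcal{D}_{\gamma}^{-m_{\gamma,n}}(\mu_{i})),
\end{equation*}
and the standard computation of the effect of a Dehn twist on an annular projection shows this quantity equals $|m_{\gamma,n}|$ up to a bounded additive error, provided $\pi_{\gamma}(\mu_{i}) \neq \emptyset$.

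The remaining point, and really the only subtle one, is to verify this non-emptiness; this is where the design of $\mu_{i}$ matters. Since $\gamma \notin \hat{\tau}$, $\gamma$ is essential inside some connected component $Y$ of $S\backslash\hat{\tau}$, and since only components of positive complexity carry essential simple closed curves, $Y$ has $\xi(Y) \geq 1$. By the choice of $\mu_{i}$, its restriction to $Y$ is a full marking; combined with $\sigma_{i}\subset \base(\mu_{i})$ this forces $\gamma$ to appear in the base of $\mu_{i}|_{Y}$ equipped with a transversal curve, so $\pi_{\gamma}(\mu_{i})\neq \emptyset$. Combining this with Theorem \ref{thm : geodlimit}(\ref{gl : unbddpower}), which asserts $|m_{\gamma,n}|\to\infty$, we conclude $d_{\gamma_{n}}(\mu_{i,n}^{-},\mu_{i,n}^{+})\to\infty$ as desired. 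The argument is essentially bookkeeping once the mapping class group equivariance is invoked; the only step requiring care is the transversal verification, which is precisely why $\mu_{i}$ was set up to contain $\sigma_{i}$ in its base and restrict to a full marking on each nontrivial component of $S\backslash\hat{\tau}$.
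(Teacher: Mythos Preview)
Your proof is correct and follows essentially the same route as the paper: both apply a conjugating map ($\varphi_{i-1,n}$ in your case, $\varphi_{i,n}$ in the paper) to reduce to $d_{\gamma}(\mu_{i},\mathcal{T}_{i,n}^{\pm 1}\mu_{i})$, then use that $\mu_{i}$ contains $\gamma$ with a transversal and that the power of $\mathcal{D}_{\gamma}$ in $\mathcal{T}_{i,n}$ is unbounded. You simply fill in two details the paper leaves implicit---that the other twists in $\mathcal{T}_{i,n}$ act trivially on $\mathcal{C}(\gamma)$, and why $\gamma$ carries a transversal in $\mu_{i}$---but the argument is the same.
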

Note that $\varphi_{i,n}(\mu^{-}_{i,n} ) = \mathcal{T}_{i,n}(\mu_{i})$, so after applying $\varphi_{i,n}$ to all of the curves in the subsurface coefficient in (\ref{eq : reltwistubd}) we get 
$$d_{\gamma}(\mathcal{T}_{i,n}(\mu_{i}), \mu_{i})$$
Now $\mu_{i}$ is a fixed marking which contains $\gamma$ as well as a transversal curve for $\gamma$. By Theorem \ref{thm : geodlimit}(\ref{gl : unbddpower}) $\mathcal{T}_{i,n}$ contains an arbitrarily large power of $\mathcal{D}_{\gamma}$. Then the claimed bound follows.
\medskip

Combining the bounds established in Claims \ref{claim : dgmnbd1}, \ref{claim : dgmnbd2} and \ref{claim : dgmnubd} with the triangle inequality the bound (\ref{eq : twgamnunbdd}) follows. Having this bound in hand we continue by proving our theorem.
\medskip

{\it Proof of part (1):} We show that after possibly passing to a subsequence there is an $1\leq i\leq k$ and a curve $\gamma \in \sigma_{i}-\hat{\tau}$, such that $\alpha_{n}=\varphi_{i,n}^{-1}(\gamma)$. Part (1) then follows from (\ref{eq : twgamnunbdd}).

Let $J_{n}\subset [s,T-s]$ be the subintervals in the statement of part (1). After possibly passing to a subsequence we may assume that the intervals $J_{n}$ converge to an interval $J$. Since each $J_{n}\subset [s,T-s]$ we have that $J\subset [s,T-s]$. By Theorem \ref{thm : geodlimit}(\ref{gl : conv}), for each $j=0,...,k+1$, 
$$\varphi_{j,n}(\zeta_{n}|_{[t_{j},t_{j+1}]})\to\hat{\zeta}|_{[t_{j},t_{j+1}]}$$
as $n\to\infty$. Moreover $\varphi_{j,n}$ is an isometry of the WP metric. So the length of geodesics segments $\zeta_{n}(J_{n})$ converge to the length of $\hat{\zeta}(J)$. Then since $\zeta_{n}$ ($n\in\mathbb{N}$) and $\hat{\zeta}$ are piece-wise geodesics parametrized by arc-length it follows that the length of the intervals $J_{n}$ converge to the length of the interval $J$. Now we show that the lengths of intervals $J_{n}$ are uniformly bounded below. For each $n\in\mathbb{N}$, by 1(a), $\ell_{\alpha_{n}}(\zeta_{n}(t))$ achieves the value $\epsilon_{0}$ in $J_{n}$ and by 1(b), for $n$ sufficiently large 
$$\inf_{t\in J_{n}}\ell_{\alpha_{n}}(\zeta_{n}(t))\leq \frac{\epsilon_{0}}{2}.$$
 Thus by Corollary \ref{cor : gradlfestimate} the length of $J_{n}$ is at least ${\bf d}(\epsilon_{0},\frac{\epsilon_{0}}{2})>0$. This uniform lower bound for the length of $J_{n}$ for all $n$ sufficiently large and the convergence of the length of intervals $J_{n}$ to the length of $J$ imply that the length of $J$ is bounded below by ${\bf d}(\epsilon_{0},\frac{\epsilon_{0}}{2})$. 

For each $n\in\mathbb{N}$, let $t^{*}_{n}\in J_{n}$ be so that $\ell_{\alpha_{n}}(\zeta_{n}(t^{*}_{n}))=\inf_{t \in J_{n}} \ell_{\alpha_{n}}(\zeta_{n}(t))$. There is an $0\leq i\leq k$ so that after possibly passing to a subsequence $t_{n}^{*}$ converge to some $t^{*}\in J\cap [t_{i},t_{i+1}]$. Moreover since $t^{*}_{n}\in J_{n}$ and $J_{n}\subseteq [s,T_{n}-s]$, $t^{*}\neq t_{0}$ and $t_{k+1}$. 

Suppose that $t^{*} \neq t_{i}$ and $t_{i+1}$. In what follows we get a contradiction. By 1(b), $\ell_{\alpha_{n}}(\zeta_{n}(t^{*}_{n}))\to 0$ as $n\to \infty$, so applying $\varphi_{i,n}$ to $\ell_{\alpha_{n}}(\zeta_{n}(t_{n}^{*}))$ we have 
\begin{equation}\label{eq : lphia}\ell_{\varphi_{i,n}(\alpha_{n})}(\varphi_{i,n}(\zeta_{n}(t_{n}^{*})))\to 0\;\text{as}\; n\to \infty.\end{equation} 
Moreover by Theorem \ref{thm : geodlimit}(\ref{gl : conv}), $\varphi_{i,n}(\zeta_{n}(t^{*}_{n}))\to\hat{\zeta}(t^{*})$ as $n\to\infty$. Since $t\neq t_{i}$ and $t_{i+1}$, the point $\hat{\zeta}(t^{*})$ is a marked hyperbolic surface with pinched curves at $\hat{\tau}$. 

Suppose that $\hat{\tau}=\emptyset$. Then by continuity of length functions there is a neighborhood $V$ of $\hat{\zeta}(t^{*})$ where there is a positive lower bound for the length of every curve on any surface in $V$. On the other hand, since $\varphi_{i,n}(\zeta_{n}(t^{*}_{n}))\to \hat{\zeta}(t^{*})$ as $n\to\infty$, for $n$ sufficiently large $\varphi_{i,n}(\zeta_{n}(t_{n}^{*}))$ is in $V$. Then (\ref{eq : lphia}) gives a contradiction to the lower bound for the length of curves on surfaces in $V$.  If $\hat{\tau}\neq\emptyset$, then by continuity of length-functions there is a neighborhood $V$ of $\hat{\zeta}(t^{*})$ and an $\epsilon>0$ so that the only curves on a surface in $V$ with length less than $\epsilon$ are the curves in $\hat{\tau}$. Since $\varphi_{i,n}(\zeta_{n}(t^{*}_{n}))\to\hat{\zeta}(t^{*})$ as $n\to\infty$ for all $n$ sufficiently large $\varphi_{i,n}(\zeta_{n}(t^{*}_{n}))$ is in $V$. Thus by (\ref{eq : lphia}) after possibly passing to a subsequence $\varphi_{i,n}(\alpha_{n})=\beta$ for some $\beta\in\hat{\tau}$.

For each $l=0,...,k+1$, $\hat{\tau}\subseteq \sigma_{l}$, so $\beta\in \sigma_{l}$. Let $0\leq j\leq k+1$. The map $\varphi_{j,n}\circ \varphi_{i,n}^{-1}$ is a composition of powers of positive Dehn twists about curves in $\sigma_{l}$ where $l=j+1,...,i$ if $i>j$ and $l=i+1,...,j$ if $j>i$, and is the identity map if $j=i$ (see (\ref{eq : phiin})). So $\varphi_{j,n}\circ \varphi_{i,n}^{-1}$ preserves $\beta$. Thus $\varphi_{j,n}(\alpha_{n})=\beta$. 

 Given $t\in[0,T]$ we have that $t\in[t_{j},t_{j+1}]$ for some $0\leq j\leq k$. By Theorem \ref{thm : geodlimit}(\ref{gl : conv}), $\varphi_{j,n}(\zeta_{n}(t))\to \hat{\zeta}(t)$ as $n\to \infty$. Moreover, $\ell_{\beta}(\hat{\zeta}(t))\equiv 0$ for all $t\in [0,T]$. Thus the continuity of length-functions implies that $\ell_{\beta}(\varphi_{j,n}(\zeta_{n}(t)))\to 0$ as $n\to \infty$. As we saw above $\varphi_{j,n}(\alpha_{n})=\beta$, so 
 $$\ell_{\varphi_{j,n}(\alpha_{n})}(\varphi_{j,n}(\zeta_{n}(t)))\to 0$$
  as $n\to \infty$, then applying $\varphi_{j,n}^{-1}$ we have that $\ell_{\alpha_{n}}(\zeta_{n}(t))\to 0$ as $n\to \infty$. Thus for any $t\in[0,T]$, $\ell_{\alpha_{n}}(\zeta_{n}(t))\to 0$ as $n\to\infty$. But this contradicts 1(a). 

Therefore, the parameters $t^{*}_{n}$ converge to either $t_{i}$ or $t_{i+1}$. As we saw earlier the end point to which $t^{*}_{n}$ converges is not $t_{0}$ or $t_{k+1}$, so $\sigma_{i}$ and $\sigma_{i+1}$ are not empty. Let $t^{*}_{n}\to t_{i}(t_{i+1})$ as $n\to \infty$. Then $\varphi_{i,n}(\zeta_{n}(t))\to \hat{\zeta}(t_{i})$ ($\varphi_{i,n}(\zeta_{n}(t))\to \hat{\zeta}(t_{i+1})$) as $n\to \infty$. Note that the only curves with length $0$ at $\hat{\zeta}(t_{i}) (\hat{\zeta}(t_{i+1}))$ are the ones in $\sigma_{i}(\sigma_{i+1})$. Thus (\ref{eq : lphia}) and the convergence of length-functions imply that $\alpha_{n}=\varphi_{i,n}^{-1}(\gamma) (\varphi_{i+1,n}^{-1}(\gamma))$ for some $\gamma \in \sigma_{i}-\hat{\tau}(\sigma_{i+1}-\hat{\tau})$, as was desired. 
\medskip


{\it Proof of part (2):} Suppose that there is an $0\leq i\leq k+1$ with $\sigma_{i}\neq \emptyset$ and a sequence $\{n_{r}\}_{r=1}^{\infty}$ so that, $\alpha_{n_{r}}\in \sigma_{i,n_{r}}$. Applying $\varphi_{i,n_{r}}$ to $\ell_{\alpha_{n_{r}}}(\zeta_{n_{r}}(t_{i}))$ we get $\ell_{\varphi_{i,n_{r}}(\alpha_{n_{r}})}(\varphi_{i,n_{r}}(\zeta_{n_{r}}(t_{i})))$. By Theorem \ref{thm : geodlimit}(\ref{gl : conv}) we have that $\varphi_{i,n_{r}}(\zeta_{n_{r}}(t_{i})) \to \hat{\zeta}(t_{i})$ as $r\to\infty$. Since $\alpha_{n_{r}}\in\sigma_{i,n_{r}}$ and $ \sigma_{i,n_{r}}=\varphi_{i,n_{r}}^{-1}(\sigma_{i})$, we have that $\varphi_{i,n_{r}}(\alpha_{n_{r}})\in \sigma_{i}$. Furthermore, the length of every curve in $\sigma_{i}$ is $0$ at $\hat{\zeta}(t_{i})$. So the continuity of length-functions implies that $\ell_{\varphi_{i,n_{r}}(\alpha_{n_{r}})}(\varphi_{i,n_{r}}(\zeta_{n_{r}}(t_{i})))\to 0$ as $r\to\infty$. Therefore $\ell_{\alpha_{n_{r}}}(\zeta_{n_{r}}(t_{i}))\to 0$ as $r\to\infty$. So the proof of part (2) would be complete if we show that there is an $i$ with $\sigma_{i}\neq\emptyset$ and a subsequence $\{n_{r}\}_{r=1}^{\infty}$ so that , $\alpha_{n_{r}}\in\sigma_{i,n_{r}}$. 

Suppose that such a subsequence does not exist. Then for all $n$ sufficiently large and all $i=1,...,k$, $\alpha_{n}\not\in\sigma_{i,n}$. Then $\alpha_{n}$ intersects $\mu_{i,n}^{-}$ for $i=1,...,k+1$ ($\mu_{i,n}^{-}$ is a full marking of $S\backslash\sigma_{i,n}$), and $\mu_{i,n}^{+}$ for $i=0,...,k$ ($\mu_{i,n}^{+}$ is a full marking of $S\backslash\sigma_{i,n}$). For each $i$, let $\widetilde{\mathcal{T}}_{i,n}=\varphi_{i,n}^{-1}\circ \mathcal{T}_{i,n}\circ \varphi_{i,n}$, as before. As we saw earlier $\widetilde{\mathcal{T}}_{i,n}\in\tw(\sigma_{i,n})$ and $\mu_{i,n}^{-}=\widetilde{\mathcal{T}}_{i,n}(\mu_{i,n}^{+})$. So the only annular subsurfaces for which the coefficients of $\mu_{i,n}^{-}$ and $\mu_{i,n}^{+}$ grow as $n\to \infty$ are the ones with core curves in $\sigma_{i,n}$. Thus 
$$d_{\alpha_{n}}(\mu_{i,n}^{-},\mu_{i,n}^{+})$$
 is uniformly bounded for $i=1,...,k$ and all $n$ sufficiently large.  

Moreover, as we saw in the proof of Claim \ref{claim : dgmnbd2} the fact that for each $i=0,...,k+1$ and $n$ sufficiently large, $\alpha_{n}$ intersects $\mu_{i,n}^{+}$ and $\mu_{i,n}^{-}$ implies that 
$$d_{\alpha_{n}}(\mu_{i,n}^{+},\mu_{i,n}^{-})$$
 is uniformly bounded for $i=1,...,k$ and all $n$ sufficiently large. 

Combining the subsurface coefficient bounds from the above two paragraphs by the triangle inequality we conclude that 
$$d_{\alpha_{n}}(\mu_{0,n}^{-},\mu_{k,n}^{+})$$
 is uniformly bounded above for all $n$ sufficiently large. But $\mu_{0,n}^{-}$ is a Bers marking of $\zeta_{n}(0)$ and $\mu_{k,n}^{+}$ is a Bers marking of $\zeta_{n}(T)$, so the above bound contradicts assumption 2(b). 
\end{proof}

We close this section by proving the following two corollaries of Theorem \ref{thm : shtw}. These corollaries provide us with a kind of length-function versus twist parameter bounds over uniformly bounded length WP geodesic segments which often will be used in $\S$\ref{sec : itinerarywpgeod}.

\begin{cor} \label{cor : twsh}\textnormal{ (Large twist $\Longrightarrow$ Short curve)}
Given $T, \epsilon_{0}$ and $\epsilon<\epsilon_{0}$ positive, there is an $N\in\mathbb{N}$ with the following property. Let  $\zeta: [0, T'] \to \Teich(S)$ be a WP geodesic segment of length $T'\leq T$ such that
$$\sup_{t \in [0,T']}\ell_{\gamma}(\zeta(t))\geq \epsilon_{0}.$$ 
If $d_{\gamma}(\mu(\zeta(0)), \mu(\zeta(T'))) >N$, then we have
  $$\inf_{t \in [0,T']} \ell_{\gamma}(\zeta(t))\leq \epsilon.$$
\end{cor}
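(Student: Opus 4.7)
The plan is to argue by contradiction, using Theorem~\ref{thm : shtw} part (2) as the key input. Suppose the corollary fails for some positive constants $T, \epsilon_{0}, N$. Then for every $\epsilon \in (0, \epsilon_{0})$ there exists a WP geodesic segment $\zeta: [0, T']\to \Teich(S)$ of length $T' \leq T$ and a curve $\gamma$ satisfying the two hypotheses of the corollary (i.e.\ $\sup_t \ell_\gamma(\zeta(t))\geq \epsilon_0$ and $d_\gamma(\mu(\zeta(0)),\mu(\zeta(T')))>N$) together with $\inf_t \ell_\gamma(\zeta(t)) > \epsilon$. The main step is to use this hypothesis to extract a single sequence of counterexamples $(\zeta_n,\gamma_n,T_n')$ along which both (i) the infimum $\inf_t \ell_{\gamma_n}(\zeta_n(t))$ stays uniformly bounded below by a positive constant, and (ii) the twist $d_{\gamma_n}(\mu(\zeta_n(0)),\mu(\zeta_n(T_n')))$ diverges to $+\infty$.

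To achieve (i), I apply the negation at the fixed scale $\epsilon=\epsilon_0/2$ and select, for each $n$, a counterexample $(\zeta_n,\gamma_n,T_n')$ with $\inf_t\ell_{\gamma_n}(\zeta_n(t))>\epsilon_0/2$. To achieve (ii) on the same sequence, I argue as follows: consider the set $\mathcal{B}$ of all WP geodesic segments of length at most $T$ together with a curve $\gamma$ such that the two hypotheses and the bound $\inf_t\ell_\gamma(\zeta(t))>\epsilon_0/2$ all hold, and let $D=\sup\{d_\gamma(\mu(\zeta(0)),\mu(\zeta(T')))\}$ over $\mathcal{B}$. If $D$ were finite, then for any bad example with $d_\gamma > D$ one would necessarily have $\inf_t\ell_\gamma(\zeta(t))\leq \epsilon_0/2$, so the choice $\epsilon=\epsilon_0/2$ would already witness the corollary at the threshold $D$; combining this with the failure of the corollary at all $\epsilon<\epsilon_0$ (together with Wolpert's Geodesic Limit Theorem~\ref{thm : geodlimit} applied to rule out bounded-twist pinching) forces a contradiction at scales $\epsilon<\epsilon_0/2$. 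Hence $D=+\infty$, and we can extract from $\mathcal{B}$ a sequence $(\zeta_n,\gamma_n,T_n')$ with $d_{\gamma_n}(\mu(\zeta_n(0)),\mu(\zeta_n(T_n')))\to\infty$.

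With this sequence in hand, Theorem~\ref{thm : shtw} part (2) applies: hypothesis (a) is the bound $\sup_t \ell_{\gamma_n}(\zeta_n(t))\geq \epsilon_0$ built into each counterexample, and hypothesis (b) is the divergence of twists arranged in the extraction. The conclusion $\inf_t \ell_{\gamma_n}(\zeta_n(t))\to 0$ then directly contradicts the uniform lower bound $\inf_t \ell_{\gamma_n}(\zeta_n(t))>\epsilon_0/2$, giving the required contradiction. The main obstacle in this plan is the extraction step producing a sequence on which the twist diverges while the infimum stays away from zero; this relies on combining the negation of the corollary across all scales $\epsilon$ with the compactness/limit machinery of Theorem~\ref{thm : geodlimit} and the analysis of twist growth carried out in Claims~4.2 and~4.3 of the proof of Theorem~\ref{thm : shtw}.
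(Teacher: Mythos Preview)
Your overall strategy matches the paper's: argue by contradiction and invoke Theorem~\ref{thm : shtw}(2). The paper's proof is only a few lines: assume the corollary fails, assert the existence of a sequence $(\zeta_n,\gamma_n)$ with $\sup_t\ell_{\gamma_n}\geq\epsilon_0$, $d_{\gamma_n}\to\infty$, and $\inf_t\ell_{\gamma_n}$ bounded away from $0$, then observe that this contradicts Theorem~\ref{thm : shtw}(2).

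Your extraction argument, however, is over-engineered and the crucial step does not work as written. The claim that finiteness of $D$ ``forces a contradiction at scales $\epsilon<\epsilon_0/2$'' is not justified: knowing that $\epsilon_0/2$ witnesses the corollary at threshold $D$ says nothing about threshold $N$ when $N<D$, and the failure of the corollary at some small $\epsilon<\epsilon_0/2$ only produces a counterexample with $\inf>\epsilon$, which may well have $\inf\leq\epsilon_0/2$ and hence lie outside $\mathcal{B}$. There is no contradiction to extract here. Invoking the Geodesic Limit Theorem at this point is a red herring --- that theorem is already the engine behind Theorem~\ref{thm : shtw} and nothing further from it is needed.

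The quantifier issue you noticed is real: the literal negation (for a fixed $N$, no $\epsilon<\epsilon_0$ works) only yields counterexamples with twist $>N$, not twist $\to\infty$, and the paper's proof glosses over this. The clean resolution is to read the corollary as the natural quantified form of Theorem~\ref{thm : shtw}(2) --- which is exactly how it is applied later in the paper --- namely: given $T,\epsilon_0$ and any $\epsilon\in(0,\epsilon_0)$, there exists $N$ so that twist $>N$ forces $\inf\leq\epsilon$. Negating \emph{that} gives, for some fixed $\epsilon>0$, counterexamples with arbitrarily large twist and $\inf>\epsilon$, exactly the sequence Theorem~\ref{thm : shtw}(2) forbids. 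No auxiliary set $\mathcal{B}$ or supremum $D$ is needed.
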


\begin{proof}
The proof is by contradiction. Suppose that the corollary does not hold. Then there is a sequence of WP geodesic segments $\zeta_{n}: [0,T_{n}] \to \Teich(S)$ parametrized by arc-length with lengths $T_{n}\leq T$ and curves $\gamma_{n} \in \mathcal{C}_{0}(S)$, such that 
\begin{enumerate}[(a)]
\item $\sup_{t \in [0,T_{n}]}\ell_{\gamma_{n}}(\zeta_{n}(t))\geq \epsilon_{0}$  for every $n$, 
\item $d_{\gamma_{n}}(\mu(\zeta_{n}(0)), \mu(\zeta_{n}(T_{n}))) \to \infty $ as $n \to \infty$,
\end{enumerate}
and $\inf_{t \in [0,T_{n}]} \ell_{\gamma_{n}}(\zeta_{n}(t))> \epsilon$ for every $n$. But this contradicts Theorem \ref{thm : shtw}(2).
\end{proof}


\begin{cor} \label{cor : shtw}\textnormal{ (Short curve $\Longrightarrow$ Large twist)}

Given $\epsilon_{0},T,s$ positive with $T>2s$ and $N\in\mathbb{N}$, there is an $\epsilon<\epsilon_{0}$ with the following property. Let $\zeta: [0, T'] \to \Teich(S)$ be a WP geodesic segment parametrized by arc-length of length $T'\in [2s,T]$. Let $J \subseteq [s,T'-s]$ be a subinterval. Suppose that for some $\gamma\in\mathcal{C}_{0}(S)$ we have
$$\sup_{t \in [0,T']} \ell_{\gamma}(\zeta(t)) \geq \epsilon_{0}.$$
If $\inf_{t \in J} \ell_{\gamma}(\zeta(t)) \leq \epsilon$, then 
$$d_{\gamma}(\mu(\zeta(0)),\mu(\zeta(T')))>N.$$
\end{cor}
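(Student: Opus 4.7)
The plan is to proceed by contradiction, in direct parallel with the proof of Corollary \ref{cor : twsh} just completed, swapping the roles of parts (2) and (1) of Theorem \ref{thm : shtw}. Specifically, I will assume the corollary fails and extract a sequence of WP geodesic segments $\zeta_n : [0, T'_n] \to \Teich(S)$ parametrized by arc-length with $T'_n \in [2s, T]$, curves $\gamma_n \in \mathcal{C}_0(S)$, and subintervals $J_n \subseteq [s, T'_n - s]$ satisfying
\begin{enumerate}[(a)]
\item $\sup_{t \in [0, T'_n]} \ell_{\gamma_n}(\zeta_n(t)) \geq \epsilon_0$,
\item $\inf_{t \in J_n} \ell_{\gamma_n}(\zeta_n(t)) \to 0$ as $n\to\infty$,
\end{enumerate}
while keeping $d_{\gamma_n}(\mu(\zeta_n(0)), \mu(\zeta_n(T'_n)))$ uniformly bounded. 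To distill these properties out of the bare negation of the corollary, a diagonal argument is needed: among counterexamples with twist $\leq N$, select those achieving the smallest infimum on $J$; were these infima to remain bounded away from zero along every such diagonal, one could apply the Geodesic Limit Theorem \ref{thm : geodlimit} together with continuity of length-functions (Theorem \ref{thm : continuitylf}) to extract a limit geodesic, and from its structure deduce a finite $N$ that does satisfy the corollary.

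The next step is to bridge the gap between the hypothesis we obtain ($\sup$ on all of $[0, T'_n]$) and the hypothesis required by Theorem \ref{thm : shtw}(1) ($\sup$ on $J_n$). For this I would invoke the convexity of length-functions along WP geodesics (Theorem \ref{thm : convlf}): convexity of $\ell_{\gamma_n}(\zeta_n(\cdot))$ forces the supremum on $[0, T'_n]$ to be attained at an endpoint, so by the intermediate value theorem there is a time $r^*_n \in [0, T'_n]$ with $\ell_{\gamma_n}(\zeta_n(r^*_n)) = \epsilon_0$ located on the same side of the minimum of $\ell_{\gamma_n}$ as a point of $J_n$ where $\ell_{\gamma_n} \leq \epsilon$. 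Enlarging $J_n$ to the convex hull $\tilde J_n$ of $\{r^*_n\} \cup J_n$, and if necessary shrinking the buffer from $s$ to some $s' > 0$ so that $\tilde J_n \subseteq [s', T'_n - s']$, produces a subinterval satisfying both $\sup_{\tilde J_n} \ell_{\gamma_n}(\zeta_n) \geq \epsilon_0$ and $\inf_{\tilde J_n} \ell_{\gamma_n}(\zeta_n) \to 0$.

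Applying Theorem \ref{thm : shtw}(1) with parameter $s'$ and subintervals $\tilde J_n$ then yields $d_{\gamma_n}(\mu(\zeta_n(0)), \mu(\zeta_n(T'_n))) \to \infty$, which contradicts the uniform bound on $d_{\gamma_n}$ arranged at the outset. This contradiction establishes the existence of the desired $N$.

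The main obstacle is the bookkeeping required to carry out the two enhancements simultaneously: arranging $\inf_{J_n} \to 0$ together with bounded twist from the bare negation of the corollary, and then upgrading the $\sup$ from $[0, T'_n]$ to a buffered subinterval without sacrificing the conditions needed to invoke Theorem \ref{thm : shtw}(1). The convexity-based extension step is the delicate place where one must verify that $r^*_n$ is indeed separated from the endpoints by some uniform positive distance $s'$, which in turn rests on the gradient estimate for length-functions in Corollary \ref{cor : gradlfestimate}.
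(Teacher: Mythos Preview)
Your overall strategy—argue by contradiction and invoke Theorem~\ref{thm : shtw}(1)—is exactly the paper's. The paper's proof is three sentences: it asserts that the negation yields a sequence $\zeta_n,\gamma_n,J_n$ with $\sup_{t\in J_n}\ell_{\gamma_n}(\zeta_n(t))\geq\epsilon_0$, $\inf_{t\in J_n}\ell_{\gamma_n}(\zeta_n(t))\to 0$, and $d_{\gamma_n}(\mu(\zeta_n(0)),\mu(\zeta_n(T_n)))\leq N$ for a fixed $N$, and observes that this contradicts Theorem~\ref{thm : shtw}(1). It does not spell out how these properties are extracted from the negation, and in particular it writes $\sup_{J_n}$ where the stated hypothesis only supplies $\sup_{[0,T'_n]}$.

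You are right to notice both subtleties, but your repairs do not work as written. For the first: the bare negation of the corollary with $\epsilon$ \emph{fixed} only produces, for each $N$, a counterexample with $\inf_{J}\leq\epsilon$ and $d_\gamma\leq N$; it does not give $\inf\to 0$ together with bounded twist. Your proposed diagonal—select infimum-minimizing counterexamples and, if the infima stay away from zero, pass to a geodesic limit to manufacture a valid $N$—does not close the loop: a limiting piecewise geodesic says nothing about \emph{all} admissible $(\zeta,\gamma,J)$ with $\inf_J\leq\epsilon$, so it cannot certify that some finite $N$ satisfies the corollary. For the second: the convexity bridge fails at the buffer step. The point $r^*_n$ where $\ell_{\gamma_n}=\epsilon_0$ can lie arbitrarily close to an endpoint (take $\ell_{\gamma_n}(\zeta_n(0))$ only slightly above $\epsilon_0$); Corollary~\ref{cor : gradlfestimate} bounds below the time needed for $\ell_\gamma$ to \emph{drop} from $\epsilon_0$ to $\epsilon$, which controls the distance from $r^*_n$ to a point of $J_n$, not from $r^*_n$ to $0$. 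So no uniform $s'>0$ is available, and Theorem~\ref{thm : shtw}(1) cannot be invoked on the enlarged intervals. In short, your plan coincides with the paper's terse sketch, and the two gaps you identify are real; but the extra machinery you propose does not fill them.
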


\begin{proof}
The proof is again by contradiction. Suppose that the corollary does not hold. Then there is a sequence of WP geodesic segments $\zeta_{n}: [0,T_{n}] \to \Teich(S)$ parametrized by arc-length of length $2s\leq T_{n}\leq T$ , $\gamma_{n} \in \mathcal{C}(S)$, subintervals $J_{n} \subset [s,T_{n}-s]$ such that 
\begin{enumerate}[(a)]
\item $\sup_{t \in J_{n}} \ell_{\gamma_{n}}(\zeta_{n}(t)) \geq \epsilon_{0}$ for every $n$, 
\item $\inf_{t\in J_{n}}\ell_{\gamma_{n}}(\zeta_{n}(t))\to 0$ as $n \to \infty$,
\end{enumerate}
and $d_{\gamma_{n}}(\mu(\zeta_{n}(0)),\mu(\zeta_{n}(T_{n}))) \leq N $ for every $n$. But this contradicts Theorem \ref{thm : shtw}(1).
\end{proof}


\section{Stable hierarchy paths}\label{sec : stablehrpath}

In this section we show that a certain class of hierarchy paths are {\it stable} in the pants graph of surfaces.

\begin{definition}\textnormal{ ($d-$stable subset)}\label{def : stable}
Given a function $d:\mathbb{R}^{\geq 1} \times \mathbb{R}^{\geq 0} \to \mathbb{R}^{\geq 0}$ a subset $\mathcal{Y}$ of a metric space $\mathcal{X}$ is $d-$stable if  for any $K \geq 1$ and $C \geq 0$ every $(K,C)-$quasi-geodesic $h$ with end points in $\mathcal{Y}$ is contained in the $d(K,C)$ neighborhood of $\mathcal{Y}$. We call the function $d$ the quantifier of the stability.
\end{definition}

Here we summarize some of the results about stability of subsets of pants graph of surfaces: Brock and Masur in \cite{bm} prove that when $\xi(S)=3$ the pants graph of $S$ is strongly relatively hyperbolic with respect to the quasi-flats corresponding to separating curves. The main ingredient of their proof is that given a hierarchy path $\rho:[m,n]\to P(S)$ the subset of the pants graph $X(\rho)=|\rho|\cup_{W} \{P(W)\times P(W^{c})\}$, where $W$ or $W^{c}=S\backslash W$ is a component domain of $\rho$, is a stable subset of the pants graph. Behrstock, Drutu and Mosher in \cite{behthickmetric} study thick metric spaces. These are metric spaces with rank at least $2$ where any two quasi-flats are connected through a chain of quasi-flats with the property that any two consecutive quasi-flats in the chain have coarse intersection of infinite diameter. They show that thick metric spaces fail to be relatively hyperbolic with respect to any collection of quasi-flats. Moreover, they observe that $P(S)$ for $\xi(S)>3$ is a thick metric space and therefore is not relatively hyperbolic with respect to any collection of quasi-flats. In \cite{bmm2} it is proved that hierarchy paths with bounded combinatorics end points are stable. 
\medskip

In this section we show that restriction of the subsurfaces for which a pair of partial markings or laminations have subsurface coefficient bigger than a given $A>0$ to {\it large subsurfaces} implies the stability of any hierarchy path $\rho$ between the pair in the pants graph. We call such a pair $A-$narrow. Heuristically, these hierarchy paths avoid quasi-flats in the pants graph corresponding to separating multi-curves on the surface. 
\medskip

To be able to save considerable amount of work using results in the context of $\Sigma-$hulls (see $\S$\ref{subsec : sigmahull}) and present our results in a more general setting we prove that for any $\epsilon$ sufficiently large the $\Sigma_{\epsilon}-$hull of an $A-$narrow pair is stable (Theorem \ref{thm : contraction}). Then the stability of hierarchy paths between the $A-$narrow pair follows from the fact that the Hausdorff distance of a hierarchy path between an $A-$narrow pair and the $\Sigma_{\epsilon}-$hull of the pair is bounded depending only on $A$ and $\epsilon$. The last fact is proved in Theorem \ref{thm : narrowhull}. 

\subsection{Narrow pairs}\label{subsec : narrowpair}

In this subsection first we introduce the notion of an {\it A-narrow} pair of partial makings or laminations. Then we will show that any hierarchy path between a narrow pair and the $\Sigma_{\epsilon}-$hull of the pair ($\epsilon$ is sufficiently large) have finite Hausdorff distance depending only on $A$ and $\epsilon$. 

\begin{definition}\textnormal{ (Large subsurface)}
An essential subsurface $Z \subseteq S$ is called large if any connected component of $S\backslash Z$ is either an annulus or a three holed sphere. 
\end{definition}

\begin{definition}\textnormal{ ($A-$narrow)}
A pair of partial markings or laminations $(\mu^{-},\mu^{+})$ is called $A-$narrow if every non-annular essential subsurface $Z \subseteq S$ with the property that
$$ d_{Z}(\mu^{-},\mu^{+}) > A$$
is a large subsurface of $S$. 
\end{definition}

Recall the constants $M_{1},M_{2}, M_{3}$ and $M_{4}$ from Theorem \ref{thm : hrpath} and $B_{0}$ from Theorem \ref{thm : behineq}. We fix the constant $M=M_{1}+M_{2}+M_{3}+M_{4}+B_{0}+4$ in this subsection.

\begin{thm}\textnormal{(The $\Sigma-$hull of a narrow pair)} \label{thm : narrowhull}

Given $\epsilon>M$ and $A>4M+2\epsilon+20$, there is a constant $\Delta=\Delta(A,\epsilon)$ with the following property. Given an $A-$narrow pair $(\mu^{-},\mu^{+})$, the Hasudorff distance of the $\Sigma_{\epsilon}-$hull $\Sigma_{\epsilon}(\mu^{-},\mu^{+})$ and any hierarchy path between $\mu^{-}$ and $\mu^{+}$ is less than $\Delta$.

\end{thm}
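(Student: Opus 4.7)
The forward inclusion $|\rho| \subseteq \Sigma_\epsilon(\mu^-,\mu^+)$ is immediate from Theorem \ref{thm : hrpath}(\ref{h : hausd}): for every $i \in [m,n]$ and every non-annular subsurface $W$, $d_W(\rho(i),\hull_W(\mu^-,\mu^+)) \leq M_2 \leq M < \epsilon$, so every $\rho(i)$ already lies in $\Sigma_\epsilon(\mu^-,\mu^+)$ and nothing needs to be proved in this direction.

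For the reverse inclusion I fix $P \in \Sigma_\epsilon(\mu^-,\mu^+)$ and plan to produce $i^* \in [m,n]$ with $d(P,\rho(i^*)) \leq \Delta$ by applying the distance formula \eqref{eq : dsf} at a threshold $A' = A'(A,\epsilon)$ large enough to force every summand to vanish. The $A$-narrow condition divides the non-annular subsurfaces into two classes. If $d_Y(\mu^-,\mu^+) \leq A$, then $\diam_Y(\hull_Y(\mu^-,\mu^+)) \leq A+1$, and combining $P,\rho(i^*) \in \Sigma_\epsilon$ with the triangle inequality automatically gives $d_Y(P,\rho(i^*)) \leq 2\epsilon + A + O(1)$ regardless of the choice of $i^*$. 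For the remaining class of large subsurfaces $Z$ with $d_Z(\mu^-,\mu^+) > A$, I must choose $i^*$ carefully. Any two large subsurfaces overlap (each complement consists only of annuli and three-holed spheres) and $A > 4M$, so Proposition \ref{prop : ordersubsurf} linearly $<$-orders them as $Z_1 < \cdots < Z_k$, with the intervals $J_{Z_j}$ appearing in this weak order along $[m,n]$. The $<$-ordering together with Theorem \ref{thm : behineq} applied to the triples $(\mu^\pm,\partial Z_j,\partial Z_{j'})$ forces $\pi_{Z_j}(\partial Z_{j'})$ to lie within $B_0$ of $\pi_{Z_j}(\mu^+)$, and $\pi_{Z_{j'}}(\partial Z_j)$ within $B_0$ of $\pi_{Z_{j'}}(\mu^-)$, whenever $j<j'$.

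I now classify each $Z_j$ by the position of $\pi_{Z_j}(P)$: type $+$ if $d_{Z_j}(P,\mu^+) \leq D := 2B_0 + M_2 + \epsilon + O(1)$, type $-$ if $d_{Z_j}(P,\mu^-) \leq D$, and type $0$ otherwise. The hypothesis $A > 4M + 2\epsilon + 12$ gives $A > 2D$, so the three alternatives are mutually exclusive. Applying Theorem \ref{thm : behineq} to $(P,\partial Z_j,\partial Z_{j'})$ for $j<j'$ gives either $d_{Z_j}(P,\partial Z_{j'}) \leq B_0$, which combined with the key consequence above forces $Z_j$ to be type $+$, or $d_{Z_{j'}}(P,\partial Z_j) \leq B_0$, which forces $Z_{j'}$ to be type $-$. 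Hence the types along the $<$-order form a prefix of $+$'s, at most one $0$ (say $Z_{j^*}$), and a suffix of $-$'s. If $Z_{j^*}$ exists, pick $i^* \in J_{Z_{j^*}}$ via Theorem \ref{thm : hrpath}(\ref{h : hausd}) so that $d_{Z_{j^*}}(\rho(i^*) \cap g_{Z_{j^*}},\pi_{Z_{j^*}}(P)) \leq M_2 + \epsilon$, with the membership $i^* \in J_{Z_{j^*}}$ coming from type $0$ combined with Theorem \ref{thm : hrpath}(\ref{h : lrbddproj}); otherwise let $j^*$ index the first type $-$ subsurface and take $i^*$ at the right endpoint of $J_{Z_{j^*-1}}$. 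In either case, for any $j$ different from the distinguished index, $\rho(i^*)$ contains $\partial Z_{j^*}$ or $\partial Z_{j^*-1}$, and $\pi_{Z_j}(\rho(i^*))$ therefore includes the projection of this boundary, which by the $<$-ordering consequence lies within $B_0 + O(1)$ of the end of $g_{Z_j}$ matching the $\pm$ type of $Z_j$.

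The bounds then give $d_Z(P,\rho(i^*)) = O(\epsilon + B_0)$ for large subsurfaces of large coefficient and $\leq 2\epsilon + A + O(1)$ for all other non-annular subsurfaces; choosing $A'$ larger than both bounds makes every summand of \eqref{eq : dsf} vanish, so $d(P,\rho(i^*))$ is bounded by the additive constant $C(A')$ and we set $\Delta = \Delta(A,\epsilon) := C(A')$. The main obstacle is the bookkeeping behind the type classification: the threshold $A > 4M + 2\epsilon + 12$ must sustain the chain of Behrstock-based comparisons so that type $+$ and type $-$ regions are disjoint while still absorbing the $\epsilon$, $B_0$, and $M_2$ errors; overlapping intervals $J_{Z_j} \cap J_{Z_{j'}}$ turn out to be harmless because at any overlap point $\rho(i)$ already contains both $\partial Z_j$ and $\partial Z_{j'}$, so its projections to the remaining large subsurfaces are pinned down via the $<$-ordering.
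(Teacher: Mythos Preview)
Your forward inclusion and your treatment of subsurfaces $Y$ with $d_Y(\mu^-,\mu^+)\le A$ are fine, and your type-classification idea for the overlapping large subsurfaces is correct and elegant. The gap is the parenthetical claim that \emph{any two large subsurfaces overlap}. This is false: if $\alpha,\beta$ are disjoint curves with $S\setminus\alpha$, $S\setminus\beta$, and $S\setminus\{\alpha,\beta\}$ all large (precisely the setup of \S\ref{subsec : pre1}), then $S\setminus\{\alpha,\beta\}\subsetneq S\setminus\alpha$ and both are large. More generally, two large subsurfaces are never disjoint, but one can properly contain the other; the correct trichotomy is $Y\pitchfork W$, $Y\subsetneq W$, or $W\subsetneq Y$. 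Proposition~\ref{prop : ordersubsurf} only applies under $Y\pitchfork W$, so your linear $<$-ordering of \emph{all} large subsurfaces does not exist, and the Behrstock step you use to propagate types requires $\partial Z_j\pitchfork Z_{j'}$ and $\partial Z_{j'}\pitchfork Z_j$, which fails in the nested case.

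The paper's proof confronts exactly this. Rather than selecting $i^*$ directly, it introduces for each non-annular $X$ an interval $E_X=\{i:d_X(P,\rho(i))\le 2A\}$ and shows $\bigcap_X E_X\neq\emptyset$ via Helly's theorem in one dimension. The pairwise check $E_Y\cap E_W\neq\emptyset$ then splits into the overlap case (handled, as you do, by Behrstock) and the nesting case $W\subsetneq Y$, where the argument instead uses the Bounded Geodesic Image Theorem applied to geodesics in $\mathcal C(Y)$ and the footprint $\phi_{g_Y}(W)$. Your approach could likely be repaired by separately analysing how a nested chain of large subsurfaces interacts with the $J$-intervals and with $P$, but as written it omits this case entirely.
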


\begin{proof} 
Let $\rho:[m,n]\to P(S)$ be a hierarchy path between $\mu^{-}$ and $\mu^{+}$. Let $i \in [m,n]$. Let $X \subseteq S$ be an essential non-annular subsurface. By Theorem \ref{thm : hrpath}(\ref{h : hausd}), there is a vertex $y \in \hull_{X}(\mu^{-},\mu^{+})$ such that $d_{X}(\rho(i),y) \leq M_{4}< \epsilon$ (by the assumption of the theorem on the value of $\epsilon$, $\epsilon>M_{4}$). Then by the definition of the $\Sigma_{\epsilon}-$hull we have that $\rho(i)\in \Sigma_{\epsilon}(\mu^{-},\mu^{+})$. This implies that 
$$|\rho| \subset \Sigma_{\epsilon}(\mu^{-},\mu^{+}).$$ 
We proceed to prove that $\Sigma_{\epsilon}(\mu^{-},\mu^{+})$ is contained in a neighborhood of $|\rho|$.

\begin{lem}\label{lem : nhullsubsurfbd}
 Given $\epsilon>M$ and $A>4M+2\epsilon+20$, there is a constant $d=d(A,\epsilon)$ with the property that for any $P \in \Sigma_{\epsilon}(\mu^{-},\mu^{+})$ there is a $j_{P} \in [m,n]$ such that for any essential non-annular subsurface $X \subseteq S$ we have
\begin{equation} \label{eq : dXrjpbd}d_{X}(\rho(j_{P}),P) \leq d.\end{equation}
\end{lem}
\begin{proof} We start by setting up some parameters and intervals along $[m,n]$ and establishing some inequalities which help us to roughly locate $P$ with respect to $\rho$.

Let $X$ be an essential non-annular subsurface. We have $P\in \Sigma_{\epsilon}(\mu^{-},\mu^{+})$, so there is a point $x_{X}\in \hull_{X}(\mu^{-},\mu^{+})$ such that $d_{X}(P,x_{X})\leq \epsilon$. By Theorem \ref{thm : hrpath}(\ref{h : hausd}) there is a vertex $x'_{X}\in \pi_{X}(|\rho|)$ ($|\rho|=\{\rho(i):i\in [m,n]\}$) with $d_{X}(x_{X},x'_{X})\leq M$.  So by the triangle inequality, $d_{X}(x'_{X},P)\leq M+\epsilon$. Let $j\in [m,n]$ be such that $ x'_{Y}\in\pi_{X}(\rho(j))$, then 
\begin{equation}\label{eq : dXrjP}d_{X}(\rho(j),P)\leq M+\epsilon.\end{equation}

 Let $e=3A$. Define the subset of parameters of $\rho$ assigned to $P$ by
 $$\mathcal{E}_{X}(P)=\{i \in [m,n] : d_{X}(P, \rho(i)) \leq e\}.$$
 This subset is non-empty. Because any $j$ as above is in $\mathcal{E}_{X}$. Denote the minimum and maximum of the set $\mathcal{E}_{X}(P)$ by $e_{X}^{-}$ and $e_{X}^{+}$ respectively. Let the subinterval of $[m,n]$,
 $$E_{X}(P)=[e_{X}^{-},e_{X}^{+}].$$
 When there is no ambiguity we drop the reference to $P$ and denote $E_{X}(P)$ by $E_{X}$. Recall the interval $J_{X}$ from Theorem \ref{thm : hrpath}(\ref{h : J}). 

\begin{claim} \label{claim : EXJX}Suppose that $X$ is a component domain of $\rho$. Then $\mathcal{E}_{X}\cap J_{X}\neq\emptyset$. In particular,
\begin{equation} \label{eq : EXJX}E_{X}\cap J_{X}\neq \emptyset. \end{equation}
\end{claim}
Let $j$ be as in (\ref{eq : dXrjP}). Since $\epsilon+M<e$, we have $j\in\mathcal{E}_{X}$. If $j\in J_{X}$ we are done. Otherwise, let $J_{X}=[j_{X}^{-},j_{X}^{+}]$. By (\ref{eq : dXrjP}), $d_{X}(\rho(j),P)\leq\epsilon+M$. Moreover, by Theorem \ref{thm : hrpath}(4), $\pi_{X}(\rho(j))$ is within distance $M$ of either $\pi_{X}(\rho(j_{X}^{-}))$ or $\pi_{X}(\rho(j_{X}^{+}))$. Suppose that $d_{X}(\rho(j_{X}^{-}),\rho(j))\leq M$. Then by the triangle inequality, 
$$d_{X}(\rho(j_{X}^{-}),P)\leq \epsilon+2M+2\leq e.$$
 Thus $j_{X}^{-}\in E_{X}$. If $d_{X}(\rho(j_{X}^{+}),\rho(j))\leq M$ holds, similarly we get that $j_{X}^{+}\in E_{X}$. The claim is proved.

 \begin{claim}\label{claim : JPdist}
Given $D>0$. Let $j_{1},j_{2}\in J_{X}$. Let $j_{1}\leq j\leq j_{2}$. Suppose that $d_{X}(\rho(j_{1}),P)\leq D$ and $d_{X}(\rho(j_{2}),P)\leq D$. Then $d_{X}(\rho(j),P)\leq 2D+5$.
 \end{claim}
 Let the vertices $u=\rho(j_{1})\cap g_{X}$, $v=\rho(j)\cap g_{X}$ and $w=\rho(j_{2})\cap g_{X}$ be as in Theorem \ref{thm : hrpath}(\ref{h : J}). Then by Theorem \ref{thm : hrpath}(\ref{h : monoton}) (Monotonicity), $u\leq v\leq w$ as vertices along $g_{X}$. We have $u\in\rho(j_{1})$ and $\diam_{X}(\rho(j_{1}))\leq 2$, then since $d_{X}(P,\rho(j_{1}))\leq D$, 
 $$d_{X}(P,u)\leq D+\diam_{X}(\rho(j_{1}))\leq D+2.$$ 
Similarly $w\in\rho(j_{2})$ and $\diam_{X}(\rho(j_{2}))\leq 2$, then since $d_{X}(P,\rho(j_{2}))\leq D$, $d_{X}(P,w)\leq D+2$. Then by the triangle inequality 
$$d_{X}(u,w)\leq 2D+4+\diam_{X}(P)\leq 2D+4+2.$$ 
Thus either $d_{X}(u,v)\leq D+3$, or $d_{X}(v,w)\leq D+3$. Suppose that $d_{X}(u,v)\leq D+3$. As we saw above $d_{X}(P,u)\leq D+2$, so by the triangle inequality we get 
$$d_{X}(P,v)\leq 2D+5.$$
Now since $v\in\rho(j)$, the claim follows. Assuming that $d_{W}(v,w)\leq D+3$, similarly we get the bound. The claim is proved.
\medskip

Suppose that $X$ is a component domain of $\rho$. Let $J_{X}=[j_{X}^{-},j_{X}^{+}]$.

Suppose that $j^{-}_{X}\leq e_{X}^{-}\leq e_{X}^{+}\leq j_{X}^{+}$. Since $d_{X}(\rho(e_{X}^{\pm}),P)\leq e$, by Claim \ref{claim : JPdist}, for any $j\in E_{X}$,
$$d_{X}(\rho(j),P)\leq 2e+5.$$

Suppose that $e_{X}^{-}\leq j_{X}^{-}\leq e_{X}^{+}\leq j_{X}^{+}$. By Theorem \ref{thm : hrpath}(4), $d_{X}(\rho(j_{X}^{-}),\rho(e_{X}^{-}))\leq M$. Moreover, $d_{X}(\rho(e_{X}^{-}),P)\leq e$. Thus by the triangle inequality $d_{X}(\rho(j_{X}^{-}),P)\leq M+e+2$. Furthermore, $d_{X}(\rho(e_{X}^{+}),P)\leq e$. Then by Claim \ref{claim : JPdist}, for any $j_{X}^{-}\leq j\leq e_{X}^{+}$, 
$$d_{X}(\rho(j),P)\leq 2M+2e+9.$$
 Moreover by Theorem \ref{thm : hrpath}(4), for any $e^{-}_{X}\leq j\leq j^{-}_{X}$, $d_{X}(\rho(j),\rho(e_{X}^{-}))\leq M$. Also $d_{X}(\rho(e_{X}^{-}),P)\leq e$. Then by the triangle inequality 
 $$d_{X}(\rho(j),P)\leq e+M+2.$$
  By the above two inequalities for any $j\in E_{X}$,
$$d_{X}(\rho(j),P)\leq 2M+2e+9.$$
Assuming that $j_{X}^{-}\leq e_{X}^{-}\leq j_{X}^{+}\leq e_{X}^{+}$ similarly we get the above inequality. 

Suppose that $e_{X}^{-}\leq e_{X}^{+}\leq j_{X}^{-}\leq j_{X}^{+}$. Then for any $j\in E_{X}$ we have $j\leq j_{X}^{-}$. Then by Theorem \ref{thm : hrpath}(4), $d_{X}(\rho(j),\rho(j_{X}^{+}))\leq M$. Moreover $d_{X}(\rho(e_{X}^{+}),P)\leq e$. These two inequalities and $d_{X}(\rho(e_{X}^{+}),\rho(j_{X}^{+}))\leq M$, combined by the triangle inequality give us
$$d_{X}(P,\rho(j))\leq 2M+e+4.$$
Assuming $j_{X}^{-}\leq j_{X}^{+}\leq e_{X}^{-}\leq e_{X}^{+}$ similarly we get the above inequality.

Therefore for any $j\in E_{X}$ we have
\begin{equation}\label{eq : dXrjPcomp'}d_{X}(\rho(j),P)\leq 2e+2M+9.\end{equation}

Now suppose that $X$ is not a component domain of $\rho$. Then by Theorem \ref{thm : hrpath}(\ref{h : largelink}) we have
$$d_{X}(\mu^{-},\mu^{+})\leq M$$
 The above bound and the fact that $\diam_{X}(\mu^{\pm})\leq 2$ imply that the distance between any two points on $\hull_{X}(\mu^{-},\mu^{+})$ is at most $2M+10$. By Theorem \ref{thm : hrpath}(\ref{h : hausd}) the Hausdorff distance of $\pi_{X}(|\rho|)$ and $\hull_{X}(\mu^{-},\mu^{+})$ is bounded above by $M$. So there is a point on the hull within distance $M$ of $\pi_{X}(\rho(j))$. Moreover there is a point on the hull within distance $\epsilon$ of $\pi_{X}(P)$. Therefore 
\begin{equation}\label{eq : dXrjP'}d_{X}(\rho(j),P)\leq \epsilon+3M+10.\end{equation}
\medskip

To prove the lemma it suffices to show that
\begin{equation}\label{eq : intEX}\bigcap_{\substack{X \subseteq S\\ \text{non-annular}}} E_{X} \neq \emptyset.\end{equation}
To see this, let $j_{P} \in \bigcap_{\substack{X \subseteq S\\ \text{non-annular}}} E_{X}$. Then by the inequalities (\ref{eq : dXrjPcomp'}) and (\ref{eq : dXrjP'}) and since $2e+2M+9>\epsilon+3M+10$, we have 
$$d_{X}(\rho(j_{P}),P)\leq 2e+2M+9,$$
 which is the desired bound in (\ref{eq : dXrjpbd}).
 \medskip

Our strategy to prove that (\ref{eq : intEX}) holds is to verify that for any two essential non-annular subsurfaces $Y$ and $W$ we have
\begin{equation}\label{eq : EYEW}E_{Y} \cap E_{W} \neq \emptyset.\end{equation}
Then Helly's Theorem in one dimension (see \cite{helly}) implies that the intersection of all of the intervals $E_{X}$ where $X\subseteq S$ is a non-annular subsurface is nonempty. 
\medskip

If $d_{Y}(\mu^{-},\mu^{+})\leq A$, then the distance of any two points on $\hull_{Y}(\mu^{-},\mu^{+})$ is at most $2A+10$. We have $P\in \Sigma_{\epsilon}(\mu^{-},\mu^{+})$, so there is a vertex $x_{Y}\in \hull_{Y}(\mu^{-},\mu^{+})$ with $d_{Y}(P,x_{Y}) \leq\epsilon$. By Theorem \ref{thm : hrpath}(\ref{h : hausd}) for every $i \in [m,n]$ there is a vertex $y\in \hull_{Y}(\mu^{-},\mu^{+})$ such that $d_{Y}(\rho(i), y) \leq M$. Therefore
$$d_{Y}(\rho(i),P) \leq \epsilon+M+2A+10\leq e.$$
Thus $E_{Y}=[m,n]$, which obviously intersects $E_{W}$. If $d_{W}(\mu^{-},\mu^{+})\leq A$, similarly we can conclude that $E_{W}=[m,n]$, which implies that $E_{W}\cap E_{Y}\neq \emptyset$. Therefore, in the rest of the proof we will assume that 
\begin{eqnarray}\label{eq : dYdW>A}
d_{Y}(\mu^{-},\mu^{+})&>&A,\;\text{and}\\
d_{W}(\mu^{-},\mu^{+})&>&A.\nonumber
\end{eqnarray}
 We consider the following collection of essential subsurfaces:
$$\mathfrak{L}:=\mathfrak{L}_{A}(\mu^{-},\mu^{+})=\{ X\subseteq S\;\;\text{non-annular} : d_{X}(\mu^{-},\mu^{+})> A\}.$$
Note that since $A>M$ any $X\in\mathfrak{L}$ is a component domain of $\rho$. Moreover by (\ref{eq : dYdW>A}), $Y,W\in\mathfrak{L}$.

For each $X \in \mathfrak{L}$ define the parameters
\begin{eqnarray*}
i_{X}^{-}&=&\max\{i \in [m,n] : d_{X}(\rho(i),\mu^{-})\leq M\},\;\text{and}\\
i_{X}^{+}&=&\min\{ i \in [m,n] : d_{X}(\rho(i),\mu^{+})\leq M\}.
\end{eqnarray*}
\begin{claim}\label{claim : IXJX}
Let $J_{X}=[j_{X}^{-},j_{X}^{+}]$. Then $j_{X}^{-}\leq i_{X}^{-}\leq i_{X}^{+}\leq j_{X}^{+}$ and we may write
\begin{equation}\label{eq : IXJX}[i_{X}^{-},i_{X}^{+}]\subseteq J_{X}.\end{equation}
\end{claim}

 By Theorem \ref{thm : hrpath}(\ref{h : lrbddproj}), $d_{X}(\mu^{-},\rho(j_{X}^{-}))\leq M$, then since $i_{X}^{-}$ is the maximal time so that $d_{X}(\rho(i),\mu^{-})\leq M$ we have $i_{X}^{-}\geq j_{X}^{-}$. Moreover, $d_{X}(\mu^{+},\rho(j_{X}^{+}))\leq M$, then since $i_{X}^{+}$ is the minimal time so that $d_{X}(\rho(i),\mu^{+})\leq M$ we have $i_{X}^{+}\leq j_{X}^{+}$. Then since $J_{X}$ is an interval, $i_{X}^{-},i_{X}^{+}\in J_{X}$. Now we show that $i_{X}^{-}\leq i_{X}^{+}$. As we said above $d_{X}(\rho(j_{X}^{-}),\mu^{-})\leq M$. Moreover by the set up of $i_{X}^{-}$, $d_{X}(\rho(i_{X}^{-}),\mu^{-})\leq M$. The last two inequalities combined by the triangle inequality imply that 
\begin{equation} \label{eq : dXri-rj-}d_{X}(\rho(i^{-}_{X}),\rho(j_{X}^{-}))\leq 2M+2. \end{equation}
 Similarly we have that 
\begin{equation}\label{eq : dXri+rj+}  d_{X}(\rho(i^{+}_{X}),\rho(j_{X}^{+}))\leq 2M+2.\end{equation}
We have $X\in\mathfrak{L}$, so $d_{X}(\mu^{-},\mu^{+})> A> 6M+20$. Moreover by Theorem \ref{thm : hrpath}(\ref{h : lrbddproj}), $d_{X}(\mu^{-},\rho(j_{X}^{-}))\leq M$ and $d_{X}(\mu^{+},\rho(j_{X}^{+}))\leq M$. These three inequalities combined by the triangle inequality give us
\begin{equation}\label{eq : dXrj-rj+}d_{X}(\rho(j_{X}^{-}),\rho(j_{X}^{+}))>4M+16.\end{equation}
 Let the vertices $u=\rho(j_{X}^{-})\cap g_{X}$, $w=\rho(j_{X}^{+})\cap g_{X}$, $v=\rho(i_{X}^{-})\cap g_{X}$ and $v'=\rho(i_{X}^{+})\cap g_{X}$ be as in Theorem \ref{thm : hrpath}(\ref{h : J}). We have $u\in\rho(j_{X}^{-})$ and $v\in\rho(i_{X}^{-})$. Moreover by Lemma \ref{lem : diamproj}, $\diam_{X}(\rho(i_{X}^{-}))\leq 2$ and $\diam_{X}(\rho(j_{X}^{-}))\leq 2$. Then by (\ref{eq : dXri-rj-}) we have that 
 $$d_{X}(u,v)\leq d_{X}(\rho(i_{X}^{-}),\rho(j_{X}^{-}))+\diam_{X}(\rho(i_{X}^{-}))+\diam_{X}(\rho(j_{X}^{-}))\leq 2M+6.$$ 
Similarly by (\ref{eq : dXri+rj+}) we have $d_{X}(w,v')\leq 2M+6$, and by (\ref{eq : dXrj-rj+}) we have $d_{X}(u,w)> 4M+16$. Then since $u< w$ as vertices along $g_{X}$ the last three inequalities of $\mathcal{C}(X)$ distances imply that $v< v'$. Then Theorem \ref{thm : hrpath}(\ref{h : monoton}) (Monotonicity) implies that $i_{X}^{-}\leq i_{X}^{+}$. 

 \begin{claim}\label{claim : dXimu}
 Let $X\in \mathfrak{L}$. If $i\in [m,i_{X}^{-}]$, then $d_{X}(\rho(i), \mu^{-}) \leq 2M+5$. If $i\in[i_{X}^{+},n]$, then $d_{X}(\rho(i),\mu^{+})\leq 2M+5$.
 \end{claim}
We  prove the first part of the claim (The proof of the second part is similar). Let $J_{X}=[j_{X}^{-},j_{X}^{+}]$. By Claim \ref{claim : IXJX}, $j_{X}^{-}\leq i_{X}^{-}$. If $i\leq j_{X}^{-}$ then by Theorem \ref{thm : hrpath}(\ref{h : lrbddproj}) we have
$$d_{X}(\mu^{-},\rho(i))\leq M<2M+3,$$
 which is the desired bound. Otherwise, $j_{X}^{-}< i\leq i_{X}^{-}$. Note that $j_{X}^{-},i,i_{X}^{-}\in J_{X}$. Let the vertices $u=\rho(j_{X}^{-})\cap g_{X}$, $v=\rho(i)\cap g_{X}$ and $w=\rho( i_{X}^{-})\cap g_{X}$ be as in  Theorem \ref{thm : hrpath}(\ref{h : J}). Since $u\in\rho(j_{X}^{-})$ and $w\in\rho(i_{X}^{+})$, by (\ref{eq : dXri-rj-}) we have
\begin{equation}\label{eq : dXuw}d_{X}(u,w)\leq 2M+2+\diam_{X}(\rho(j_{X}^{-}))+\diam_{X}(\rho(i_{X}^{-}))\leq 2M+6.\end{equation} 
Moreover, since $j_{X}^{-}< i\leq i_{X}^{-}$, by Theorem \ref{thm : hrpath}(\ref{h : monoton}) (Monotonicity) $u\leq v\leq w$ as vertices along the geodesic $g_{X}\subset \mathcal{C}(X)$. So by the inequality (\ref{eq : dXuw}) either $d_{X}(u,v)\leq M+3$ or $d_{X}(v,w)\leq M+3$. 

First suppose that $d_{X}(u,v)\leq M+3$. Then since $u\in\rho(j_{X}^{-})$ and $v\in\rho(i)$ we have
$$d_{X}(\rho(j_{X}^{-}),\rho(i))\leq M+3.$$
 The above inequality and the inequality $d_{X}(\rho(j_{X}^{-}),\mu^{-})\leq M$ (Theorem \ref{thm : hrpath}(\ref{h : lrbddproj})) combined by the triangle inequality imply that 
$$d_{X}(\rho(i),\mu^{-})\leq 2M+3+\diam_{X}(\rho(j_{X}^{-}))\leq 2M+5$$
as was desired. 

Now suppose that $d_{X}(v,w)\leq M+3$. Then 
$$d_{X}(\rho(i),\rho(j_{X}^{-}))\leq M+3.$$
 The above inequality and the inequality $d_{X}(\mu^{-},\rho(j_{X}^{-}))\leq M$ (Theorem \ref{thm : hrpath}(\ref{h : lrbddproj})) combined by the triangle inequality imply that $d_{Y}(\rho(i),\mu^{-})\leq 2M+5$ as was desired. The proof of the first part of the claim is complete.
\medskip

Given $Y,W \in \mathfrak{L}$ by the $A-$narrow condition either 
\begin{enumerate}
\item $Y \pitchfork W$, 
\item $W \subsetneq Y$, or 
\item $Y \subsetneq W$.
 \end{enumerate}
holds. In what follows we discuss these three cases and in each case verify that (\ref{eq : EYEW}) holds.
\medskip

\noindent{\bf Case 1:} $Y \pitchfork W$.

The subsurfaces $Y$ and $W$ are in $\mathfrak{L}$. Moreover the constant $A> 4M$. So $d_{Y}(\mu^{-},\mu^{+})> 4M$ and $d_{W}(\mu^{-},\mu^{+})> 4M$. Then by Definition \ref{def : ordersubsurf} either $Y<W$ or $W<Y$ (not both). Suppose that $Y<W$ ($W<Y$ can be treated similarly). Then by Definition \ref{def : ordersubsurf} the following two inequalities hold 
\begin{eqnarray}
d_{Y}(\mu^{-},\partial{W})&>&M,\;\text{and}\label{eq : wy1}\\
d_{W}(\mu^{+},\partial{Y})&>&M\label{eq : wy2}.
\end{eqnarray}
We proceed to discuss the following three subcases depending on the range of values of $d_{Y}(P,\mu^{-})$ and $d_{Y}(P,\mu^{+})$. In each case we verify that (\ref{eq : EYEW}) holds. 
\medskip

\noindent {\bf Case 1.1:} The inequality
 \begin{equation}\label{eq : dYPmu-}d_{Y}(P,\mu^{-}) \leq 3M+\epsilon+7\end{equation}  
 holds.

First we show that the following inclusion of intervals holds
\begin{equation}\label{eq : EYsubset}[m,i_{Y}^{-}]\subseteq E_{Y} .\end{equation}
Let $i\in [m,i^{-}_{Y}]$, then by the first part of Claim \ref{claim : dXimu} we have 
$$d_{Y}(\rho(i),\mu^{-})\leq 2M+5.$$
 The above inequality and (\ref{eq : dYPmu-}) combined by the triangle inequality give us 
$$d_{Y}(P,\rho(i)) \leq 5M+\epsilon+12+\diam_{Y}(\mu^{-})\leq 5M+\epsilon+14.$$ 
Moreover, $e=3A>5M+\epsilon+14$, so by the set up of the interval $E_{Y}$ we have $i \in E_{Y}$. So we conclude that (\ref{eq : EYsubset}) holds.

Now we show that the inclusion of intervals
\begin{equation}\label{eq : EWsubset}[m, i_{W}^{-}] \subseteq E_{W} \end{equation}
holds. To see this, note that since $Y \in \mathfrak{L}$ we have 
$$d_{Y}(\mu^{-},\mu^{+})>A=4M+2\epsilon+20.$$
 By (\ref{eq : wy2}) and Theorem \ref{thm : behineq}(Behrstock inequality) we have
 $$d_{Y}(\partial{W},\mu^{+}) \leq M.$$
  The last two inequalities and (\ref{eq : dYPmu-}) combined by the triangle inequality imply that $d_{Y}(P,\partial{W})>\epsilon> M$. Thus by the Behrstock inequality we have 
\begin{equation}\label{eq : dWbdYp}d_{W}(\partial{Y}, P) \leq M.\end{equation}
By (\ref{eq : wy1}) and the Behrstock inequality we have that $d_{W}(\partial{Y},\mu^{-})\leq M$. This inequality and (\ref{eq : dWbdYp}) combined by the triangle inequality give us 
\begin{equation}\label{eq : dWP,nu-}d_{W}(P,\mu^{-}) \leq 2M +2\end{equation}
Let $i\in [m,i_{W}^{-}]$, by the first part of Claim \ref{claim : dXimu}, 
$$d_{W}(\mu^{-},\rho(i))\leq 2M+5.$$
 Combining the above inequality and (\ref{eq : dWP,nu-}) by the triangle inequality we get 
$$d_{W}(P,\rho(i))\leq 4M+9.$$ 
Now since $e>4M+9$, by the set up of the interval $E_{W}$, $i\in E_{W}$. So we conclude that (\ref{eq : EWsubset}) holds. 
\medskip 

The inclusions of intervals (\ref{eq : EYsubset}) and (\ref{eq : EWsubset}) together imply that $E_{W} \cap E_{Y} \neq \emptyset$.
\medskip

\noindent{\bf Case 1.2:} The inequality
\begin{equation}\label{eq : dYPmu}\min\{d_{Y}(P,\mu^{-}),d_{Y}(P,\mu^{+})\}>3M+\epsilon+7\end{equation}
holds.

We show that 
\begin{equation}\label{eq : EYI}E_{Y} \cap [i_{Y}^{-},i_{Y}^{+}] \neq \emptyset.\end{equation}
To see this, note that $P\in \Sigma_{\epsilon}(\mu^{-},\mu^{+})$, then by (\ref{eq : dXrjP}), there is an $i \in [m,n]$ such that 
\begin{equation}\label{eq : dYPi}d_{Y}(P,\rho(i))\leq M+\epsilon.\end{equation}
 Now since $e>M+\epsilon$, by the set up of the interval $E_{Y}$, $i\in E_{Y}$. We proceed to show that $i \in [i_{Y}^{-},i_{Y}^{+}]$. By (\ref{eq : dYPmu}), 
 $$d_{Y}(P,\mu^{-})>3M+\epsilon+7.$$
  Combining the above inequality and (\ref{eq : dYPi}) by the triangle inequality we have 
$$d_{Y}(\rho(i),\mu^{-})>2M+7-\diam_{Y}(\rho(i))\geq 2M+5.$$
 Then by the contrapositive of the first part of Claim \ref{claim : dXimu} we conclude that $i>i_{Y}^{-}$. Furthermore by (\ref{eq : dYPmu}), $d_{Y}(P,\mu^{+})>3M+\epsilon+7$. Combining this inequality and (\ref{eq : dYPi}) by the triangle inequality we obtain 
$$d_{Y}(\rho(i),\mu^{+})>2M+5.$$
 Then by the contrapositive of the second part of Claim \ref{claim : dXimu} we conclude that $i<i_{Y}^{+}$. Thus $i\in[i_{Y}^{-},i_{Y}^{+}]$. The proof of (\ref{eq : EYI}) is complete.

Now we show that the inclusion of intervals
\begin{equation}\label{eq : EWsubset'}[m,i_{W}^{-}]\subseteq E_{W}\end{equation} 
holds. By (\ref{eq : wy2}) and Theorem \ref{thm : behineq}(Behrstock inequality), $d_{Y}(\partial{W},\mu^{+})\leq M$. Moreover, by (\ref{eq : dYPmu}), $d_{Y}(P,\mu^{+})>3M+\epsilon+7$. The last two inequalities combined by the triangle inequality give us 
$$d_{Y}(P,\partial{W}) > 2M+\epsilon+7-\diam_{Y}(\partial{W})>M.$$
 Therefore, by the Behrstock inequality 
\begin{equation}\label{eq : dWPbdY}d_{W}(P,\partial{Y})\leq M.\end{equation}
Having (\ref{eq : dWPbdY}) the rest of the proof of the inclusion of intervals (\ref{eq : EWsubset'}) is similar to the argument given after (\ref{eq : dWbdYp}) to prove the inclusion of intervals (\ref{eq : EWsubset}).

\begin{claim}\label{claim : iY+<iW-}
We have that $i_{Y}^{+} \leq i_{W}^{-}$.
\end{claim}
 By (\ref{eq : wy2}) and the Behrstock inequality, $d_{Y}(\partial{W},\mu^{+})\leq M$. By (\ref{eq : IXJX}) we have $i_{W}^{+} \in J_{W}$. Then by Theorem \ref{thm : hrpath}(\ref{h : J}), $\partial{W}\subset\rho(i_{W}^{+})$. Thus 
 $$d_{Y}(\rho(i_{W}^{-}),\mu^{+})\leq M.$$
  Then since $i_{Y}^{+}$ is the minimal time such that $d_{Y}(\rho(i), \mu^{+})\leq M$ holds, we have that $i_{Y}^{+}\leq i_{W}^{-}$.
  \medskip

By Claim \ref{claim : iY+<iW-}, $[i_{Y}^{-},i_{Y}^{+}]\subseteq [m,i_{W}^{-}]$. Then by (\ref{eq : EWsubset'}), $[i_{Y}^{-},i_{Y}^{+}]\subseteq E_{W}$. Then by (\ref{eq : EYI}) we may conclude that $E_{Y}\cap E_{W}\neq\emptyset$.
\medskip

\noindent {\bf Case 1.3:} The inequality
 \begin{equation}\label{eq : dYPmu+}d_{Y}(P,\mu^{+})\leq 3M+\epsilon+7\end{equation}
 holds.

The inequality (\ref{eq : dYPmu+}) and the second part of Claim \ref{claim : dXimu} using an argument similar to the one for the proof of (\ref{eq : EYsubset}) in Case 1.1 imply that the inclusion of intervals
\begin{equation}\label{eq : EYiY+n}[i_{Y}^{+},n]\subseteq E_{Y},\end{equation}
holds. Let $j\in J_{W}$. By Theorem \ref{thm : hrpath}(\ref{h : J}) we have  $\partial{W}\subset\rho(j)$. By (\ref{eq : wy2}) and the Behrstock inequality, $d_{Y}(\partial{W},\mu^{+})\leq M$. Therfeore $d_{Y}(\rho(j),\mu^{+})\leq M$. Then since $i_{Y}^{+}$ is the minimal parameter such that $d_{Y}(\rho(i),\mu^{+})\leq M$ holds, we have $j\geq i_{Y}^{+}$. Therefore $j\in[i_{Y}^{+},n]$. Thus we have $J_{W}\subseteq [i_{Y}^{+},n]$. Then by (\ref{eq : EYiY+n}), $J_{W} \subseteq E_{Y}$. Moreover, by (\ref{eq : EXJX}), $J_{W} \cap E_{W}\neq \emptyset$. Thus we conclude that $E_{W} \cap E_{Y} \neq \emptyset$.
\medskip

\noindent {\bf Case 2:} $W \subsetneq Y$.
 
 Recall the constant $e=3A$. We consider the following two subcases depending on the value of $d_{Y}(\partial{W},P)$.
\medskip

\noindent {\bf Case 2.1:} $d_{Y}(\partial{W},P) \leq e$.

Let $i \in J_{W}$. By Theorem \ref{thm : hrpath}(\ref{h : J}) $\partial{W} \subset \rho(i)$, so 
$$d_{Y}(\rho(i),P)\leq d_{Y}(\partial{W},P)\leq e.$$
Then by the set up of the interval $E_{Y}$ we have that $i\in E_{Y}$. Thus $J_{W}\subseteq E_{Y}$. Moreover, by (\ref{eq : EXJX}), $J_{W}\cap E_{W}\neq\emptyset$. Thus $E_{W}\cap E_{Y}\neq\emptyset$.
\medskip

\noindent {\bf Case 2.2:}
  \begin{equation}\label{eq : dYbdWP}d_{Y}(\partial{W},P) > e.\end{equation}

The pants decomposition $P\in\Sigma_{\epsilon}(\mu^{-},\mu^{+})$ and the subsurface $Y$ is a component domain of $\rho$. Let $l\in J_{Y}\cap E_{Y}$ be form Claim \ref{claim : EXJX}. Let $x'_{Y}=\rho(l)\cap g_{Y}$. As we saw in the proof of the claim,
 \begin{equation}\label{eq : dYx'YP}d_{Y}(x'_{Y},P)\leq 2M+\epsilon+4.\end{equation} 
 Let $h$ be a geodesic in $\mathcal{C}(Y)$ connecting $\pi_{Y}(P)$ to $x'_{Y}$. 
 \begin{claim}\label{claim : hintnbhdbdW}
 The geodesic segment $h$ does not intersect the $1-$neighborhood of $\pi_{Y}(\partial{W})$. 
 \end{claim}
 Otherwise, there is a vertex $z\in h$ with $d_{Y}(z,\partial{W})\leq 1$ (see Figure \ref{fig : case2.2ftr}). Then we have 
\begin{eqnarray}\label{eq : dYPbdW}
d_{Y}(P,\partial{W})&\leq& d_{Y}(P,z)+d_{Y}(z,\partial{W})\\
&\leq& d_{Y}(P,x'_{Y})+1\leq (2M+\epsilon+4)+1.\nonumber
\end{eqnarray} 
The first inequality is the triangle inequality and the second inequality follows from the inequalities $d_{Y}(P,z)\leq d_{Y}(P,x'_{Y})$ (see Figure \ref{fig : case2.2ftr}) and $d_{Y}(z,\partial{W})\leq 1$. The third inequality holds by (\ref{eq : dYx'YP}). 

But then the upper bound (\ref{eq : dYPbdW}) contradicts the lower bound (\ref{eq : dYbdWP}) given as the assumption of Case 2.2. The proof of the claim is complete.
\begin{figure}
\centering
\scalebox{0.2}{\includegraphics{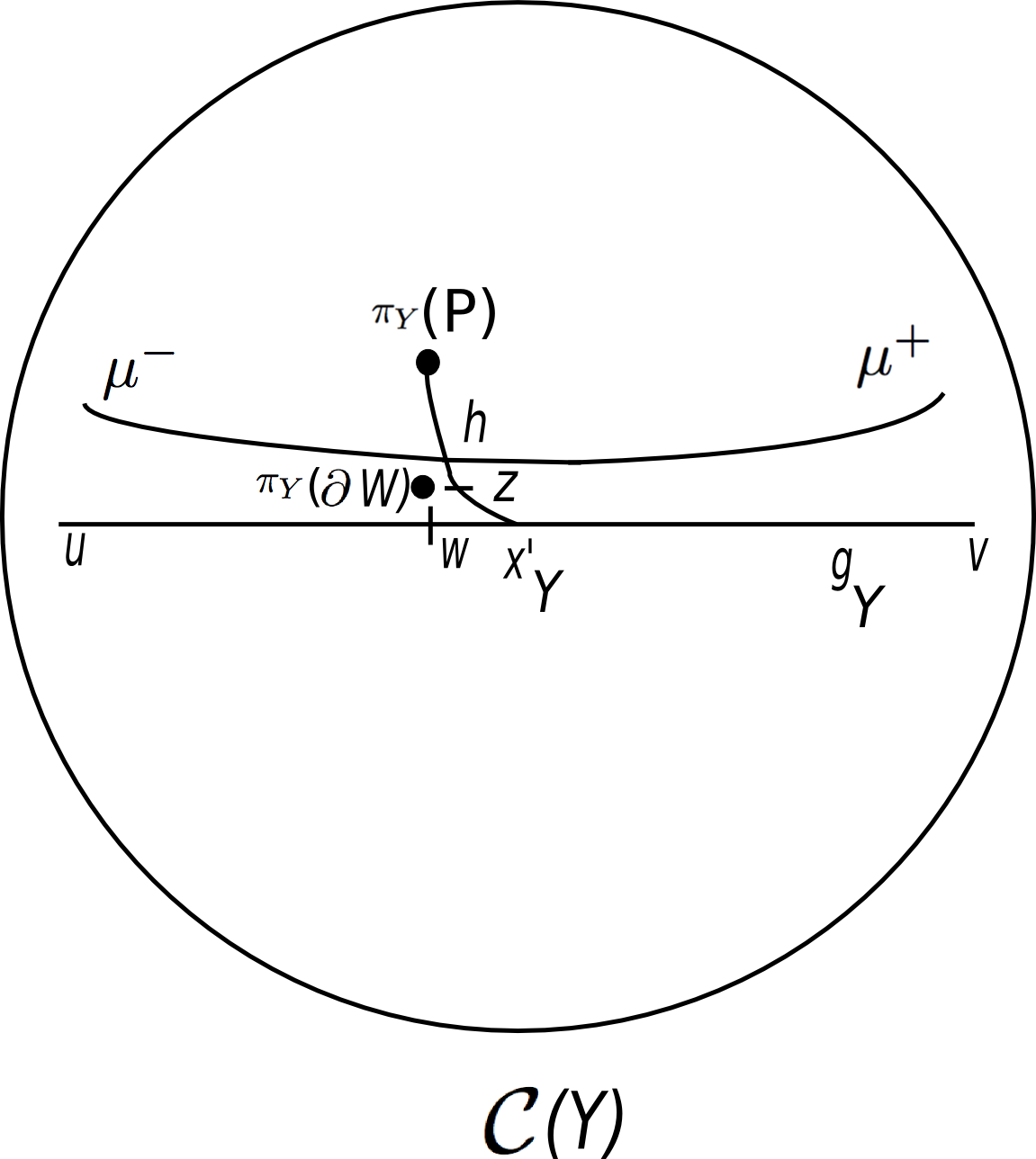}}
\caption{  {\bf Case 2.2:} If $h$ intersects the $1-$neighborhood of $\pi_{Y}(\partial{W})$, then the distance between $\pi_{Y}(\partial{W})$ and $\pi_{Y}(P)$ would be less than the lower bound in the assumption of Case 2.2.}
\label{fig : case2.2ftr}
\end{figure}
\medskip

By Claim (\ref{claim : hintnbhdbdW}), $\partial{W}$ intersects every vertex of $h$, so Theorem \ref{thm : bddgeod}(Bounded Geodesic Image) implies that 
\begin{equation}\label{eq : dWpx'Y}d_{W}(P,x'_{Y})\leq G.\end{equation}

We have $W\subsetneq Y$ and both $W$ and $Y$ are component domains of $\rho$. Let $\phi_{g_{Y}}(W)$ be the footprint of $W$ on $g_{Y}$ which consists of the vertices of $g_{Y}$ that do not intersect $W$. For more detail about footprints see $\S 4$ of \cite{mm2}, in particular Definition 4.9. By Lemma 4.10 of \cite{mm2}, $\phi_{g_{Y}}(W)$ is a sequence of $1,2$ or $3$ consecutive vertices of $g_{Y}$.
 
 If $x'_{Y}\in \phi_{g_{Y}}(W)$, then $x'_{Y}$ does not intersect $\partial{W}$, so $d_{Y}(\partial{W}, x'_{Y})\leq 1$. Moreover by (\ref{eq : dYx'YP}), $d_{Y}(x'_{Y},P)\leq 2M+\epsilon+4$. These two inequalities combined by the triangle inequality imply that 
 $$d_{Y}(\partial{W},P)\leq \epsilon+2M+5.$$
  But the above upper bound contradicts the lower bound (\ref{eq : dYbdWP}), because $\epsilon+2M+5<e$. So we conclude that $x'_{Y}\not\in \phi_{g_{Y}}(W)$.

Therefore, either  $x'_{Y}<\min\phi_{g_{Y}}(W)$ or $x_{Y}'>\max\phi_{g_{Y}}(W)$ as vertices on the geodesic $g_{Y}\subset\mathcal{C}(S)$. We proceed to discuss these two cases. Let $u$ be the initial and $v$ be the final vertex of the geodesic $g_{Y}$.
\medskip

\noindent {\bf Case 2.2.1:} $x'_{Y}< \min \phi_{g_{Y}}(W)$. 

Then $\partial{W}$ intersects every vertex of $g_{Y}$ between $u$ and $x'_{Y}$. Hence by Theorem \ref{thm : bddgeod} we have
 \begin{equation}\label{eq : dWx'Yu}d_{W}(x'_{Y},u)\leq G.\end{equation}
 Let $j_{W}^{-}$ be the initial parameter of the interval $J_{W}$. By condition $S3$ of slices of hierarchies at the beginning of \cite[\S 5]{mm2} there is a vertex $w\in \phi_{g_{Y}}(W)$ such that $w\subset \rho(j_{W}^{-})$. Let $i\in J_{Y}$ be such that $u\subset\rho(i)$. Then since $u\leq w$ on $g_{Y}$, Theorem \ref{thm : hrpath}(\ref{h : monoton}) (Monotonicity) implies that $i\leq j_{W}^{-}$. Then Theorem \ref{thm : hrpath}(\ref{h : lrbddproj}) implies that 
 \begin{equation}\label{eq : dWum-}d_{W}(u,\mu^{-})\leq M.\end{equation}
   Combining inequalities (\ref{eq : dWx'Yu}), (\ref{eq : dWum-}) and (\ref{eq : dWpx'Y}) by the triangle inequality we get
\begin{equation}\label{eq : dWPmu-}d_{W}(P,\mu^{-})\leq M+2G \leq 2M.\end{equation}
 The second inequality above follows from the fact that $M\geq 2G$. To see this fact note that $M_{1},M_{2}\geq G$ (see Lemma 6.1 (Sigma projection) in \cite{mm2}) and $M=M_{1}+M_{2}+M_{3}+M_{4}+B_{0}+4$.

Let $i\in [m,i_{W}^{-}]$. Then by the first part of Claim \ref{claim : dXimu}, $d_{W}(\rho(i),\mu^{-})\leq 2M+5$. This inequality and the inequality (\ref{eq : dWPmu-}) combined by the triangle inequality give us 
$$d_{W}(\rho(i),P)\leq 4M+5+2.$$
 Now since $e>4M+7$, by the set up of the interval $E_{W}$ we have $i\in E_{W}$, so
\begin{equation}\label{eq : EWsubseteq''}[m,i^{-}_{W}]\subseteq E_{W}.\end{equation}

By (\ref{eq : IXJX}), $i_{W}^{-}\in J_{W}$. So by condition $S3$ of slices of hierarchies in \cite[\S 5]{mm2}, as before, there is a vertex $w\in \phi_{g_{Y}}(W)$ such that $w\subset \rho(i_{W}^{-})$. Let $j\in J_{Y}$ be so that $x_{Y}'\subset\rho(j)$. Then $x_{Y}'\leq w$ as vertices along $g_{Y}$. Hence by Theorem \ref{thm : hrpath}(\ref{h : monoton}) (Monotonicity) we have that $j \leq i_{W}^{-}$. So by (\ref{eq : EWsubseteq''}), $j\in E_{W}$. Furthermore, by (\ref{eq : dYx'YP}) we have
$$d_{Y}(\rho(j),P)\leq d_{Y}(x_{Y}',P)\leq \epsilon+M$$
 Then since $e>\epsilon+M$, by the set up of the interval $E_{Y}$ any $j$ as above is in the interval $E_{Y}$. Therefore $E_{W} \cap E_{Y} \neq \emptyset$.
\medskip

\noindent {\bf Case 2.2.2:}  $x'_{Y}>\max \phi_{g_{Y}}(W)$.

In this case similar to Case 2.2.1 we can first show that $d_{W}(P,\mu^{+})\leq 2M$, which again using a similar argument implies that $[i_{W}^{+},n]\subseteq E_{W}$. Then by a similar proof we can conclude that $E_{W}\cap E_{Y}\neq \emptyset$.
\medskip

Finally Case 3 can be treated similar to Case (2) interchanging the role of subsurfaces $Y$ and $W$.
\medskip

In summary in all of the above cases we verified that (\ref{eq : EYEW}) holds. Thus as we explained earlier the lemma follows from Helly's Theorem in dimension one.
\end{proof}
We can now complete the proof of Theorem \ref{thm : narrowhull}. Lemma \ref{lem : nhullsubsurfbd} provides $d>0$ depending on $\epsilon$ and $A$, so that for any $P\in \Sigma_{\epsilon}(\mu^{-},\mu^{+})$ there is a $j_{P}\in [m,n]$ such that 
$$d_{X}(\rho(j_{P}),P)\leq d$$ 
 for every essential non-annular subsurface $X\subseteq S$. Let the threshold constant in the distance formula (\ref{eq : dsf}) be $\max\{M_{1},d\}$. Let $\Delta$ be the additive constant in the distance formula corresponding to this threshold constant. Then by the above inequality the distance formula gives us
$$d(P,\rho(j_{P})) \leq \Delta.$$ 
Therefore $\Sigma_{\epsilon}(\mu^{-},\mu^{+})$ is contained in the $\Delta$ neighborhood of $|\rho|$. We earlier proved that $|\rho|\subset \Sigma_{\epsilon}(\mu^{-},\mu^{+})$. These two facts together imply that the Hausdorff distance of $|\rho|$ and $\Sigma_{\epsilon}(\mu^{-},\mu^{+})$ is bounded by $\Delta$. Note that $d$ depends only on $A$ and $\epsilon$, so $\Delta$ depends only on $A$ and $\epsilon$. 

\end{proof}

\subsection{Stability}\label{subsec : stab}

First we recall the notion of a contracting subset of a metric space. Let $\mathcal{X}$ be a metric space.

\begin{definition}\textnormal{ (Contraction property)} \label{def : contraction}
 Given $R,B\geq 0$ and $0<\eta\leq 1$ a subset $\mathcal{Y}\subset\mathcal{X}$ is $(R,B,\eta)-$contracting if there is a map $\Pi : \mathcal{X} \to \mathcal{Y}$ with the following property. Suppose that $x,y\in\mathcal{X}$ and $d(x,\Pi x) > R$. Then $d(x,y)\leq \eta d(x,\Pi y)\Longrightarrow d(\Pi x,\Pi y)\leq B.$
  \end{definition}
If the map $\Pi:\mathcal{X}\to \mathcal{Y}$ satisfies the contraction property, the coarse Lipschitz property, and coarsely preserves $\mathcal{Y}$, then for any $K\geq 1$ and $C\geq 0$ the standard Morse lemma argument as is in the proof of \cite[Lemma 7.1]{mm1}, gives a $d>0$, such that a $(K,C)-$quasi-geodesic with end points in $\mathcal{Y}$ stays in the $d-$neighborhood of $\mathcal{Y}$. In this way we get a quantifier function $d:\mathbb{R}^{\geq 1}\times \mathbb{R}^{\geq 0}\to \mathbb{R}^{\geq 0}$, depending only on $R,B$ and $\eta$ such that $\mathcal{Y}$ is $d-$stable in $\mathcal{X}$ (see Definition \ref{def : stable}). 
\medskip

The following theorem is the main result of this subsection

\begin{thm} \textnormal{(Stable hierarchy resolution path)}\label{thm : stability}
Given $A>0$ there is a quantifier function $d_{A}:\mathbb{R}^{\geq0}\times \mathbb{R}^{\geq1}\to \mathbb{R}^{\geq0}$ such that any hierarchy path with $A-$narrow end points $\mu^{-}$ and $\mu^{+}$ is $d_{A}-$stable in the pants graph. 
 \end{thm}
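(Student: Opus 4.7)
The strategy, foreshadowed in the paragraph preceding the statement, is to first prove $d_A$-stability of the $\Sigma$-hull $\Sigma_\epsilon(\mu^-,\mu^+)$ and then transfer it to an arbitrary hierarchy path $\rho$ via Theorem 5.7. Concretely, I fix $\epsilon$ satisfying both the hypothesis $\epsilon>F$ of Theorem 2.16 and the hypothesis $\epsilon>M$ of Theorem 5.7, and (without loss of generality) enlarge $A$ so that $A>4M+2\epsilon+12$. Theorem 5.7 then supplies a constant $\Delta=\Delta(A,\epsilon)$ bounding the Hausdorff distance in $P(S)$ between $|\rho|$ and $\Sigma_\epsilon(\mu^-,\mu^+)$. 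Assuming a quantifier $\tilde d_A$ is in hand for $\Sigma_\epsilon(\mu^-,\mu^+)$, any $(K,C)$-quasi-geodesic with end points in $|\rho|$ can be extended by two pants-graph geodesic segments of length at most $\Delta$ to a $(K,C+2\Delta)$-quasi-geodesic with end points in $\Sigma_\epsilon(\mu^-,\mu^+)$, which by hypothesis lies in the $\tilde d_A(K,C+2\Delta)$-neighborhood of $\Sigma_\epsilon$, hence in the $(\tilde d_A(K,C+2\Delta)+\Delta)$-neighborhood of $|\rho|$. Setting $d_A(K,C):=\tilde d_A(K,C+2\Delta)+\Delta$ finishes the reduction.

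For the core step I would prove stability of $\Sigma_\epsilon(\mu^-,\mu^+)$ using the coarse Lipschitz retraction $\Pi:P(S)\to \Sigma_\epsilon(\mu^-,\mu^+)$ from Theorem 2.16. Given a $(K,C)$-quasi-geodesic $h:[0,L]\to P(S)$ with end points in $\Sigma_\epsilon$ and any vertex $P=h(t)$, the distance formula (2.2) combined with the fact that $d_Y(\Pi P,\hull_Y(\mu^-,\mu^+))\leq F$ reduces the task to exhibiting a function $\Theta(K,C,A)$ with
\begin{equation*}
d_Y(P,\hull_Y(\mu^-,\mu^+))\leq \Theta(K,C,A)
\end{equation*}
for every non-annular subsurface $Y\subseteq S$. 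I would then split the analysis in two cases.

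If $d_Y(\mu^-,\mu^+)>A$, the narrowness hypothesis forces $Y$ to be a large subsurface. Since $\mathcal{C}(Y)$ is $\delta_Y$-hyperbolic and $\pi_Y$ is coarse Lipschitz from the pants graph to $\mathcal{C}(Y)$, the Bounded Geodesic Image Theorem (Theorem 2.8) promotes $\pi_Y\circ h$ to an unparameterized quasi-geodesic in $\mathcal{C}(Y)$ with constants depending only on $K,C$; a Morse-lemma argument in the hyperbolic space $\mathcal{C}(Y)$ then bounds its detours from the $\mathcal{C}(Y)$-geodesic $\hull_Y(\mu^-,\mu^+)$ by a function of $K$, $C$ and $\delta_Y$. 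If instead $d_Y(\mu^-,\mu^+)\leq A$, then $\hull_Y(\mu^-,\mu^+)$ has diameter at most $A+2$ and both end points of $h$ project within $\epsilon$ of it. A large value of $d_Y(P,\hull_Y(\mu^-,\mu^+))$ would therefore force at least one of $d_Y(P,h(0))$ or $d_Y(P,h(L))$ to be large, and one uses the Behrstock inequality (Theorem 2.10), the Consistency Theorem (Theorem 2.12) and the $<$-order of Proposition 2.15 to arrange the offending non-annular subsurfaces so that only those large subsurfaces whose coefficient has already been controlled in the first case can lie above $Y$; this propagates the previous bound down to $Y$.

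The main obstacle is the second case and, more broadly, the interaction between different subsurfaces: nothing a priori prevents the projection of the quasi-geodesic to a low-coefficient $Y$ from taking arbitrarily long excursions, since the low-coefficient subsurfaces are not themselves hyperbolic playgrounds like the large ones. The device that rescues the argument is the narrowness hypothesis, which restricts the entire collection of subsurfaces with large end-point coefficient to large subsurfaces, so that high-coefficient behaviour only occurs inside hyperbolic curve complexes where Morse-type stability is available, while low-coefficient subsurfaces are pinned down combinatorially through the Behrstock–Consistency machinery against the (already-controlled) large ones. Once $\Theta(K,C,A)$ is in hand, the distance formula yields the desired bound on $d(P,\Pi P)$, completing stability of $\Sigma_\epsilon(\mu^-,\mu^+)$ and, via the reduction above, of every hierarchy path between the $A$-narrow pair.
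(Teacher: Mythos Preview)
Your reduction to stability of $\Sigma_\epsilon(\mu^-,\mu^+)$ via Theorem 5.7 is correct and is exactly how the paper proceeds (in fact slightly simpler there, since $|\rho|\subset\Sigma_\epsilon$ already, so no extension of the quasi-geodesic is needed).

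The core step, however, has a genuine gap. In your Case 1 you assert that for a large subsurface $Y$ the composition $\pi_Y\circ h$ is an unparametrized quasi-geodesic in $\mathcal{C}(Y)$, and you attribute this to the Bounded Geodesic Image Theorem. That theorem says nothing of the sort: it controls the $\mathcal{C}(Y)$-diameter of a $\mathcal{C}(S)$-geodesic that misses $\partial Y$, not the shape of the $\mathcal{C}(Y)$-projection of an arbitrary $(K,C)$-quasi-geodesic in the pants graph. The map $\pi_Y:P(S)\to\mathcal{C}(Y)$ is coarse Lipschitz, but a coarse-Lipschitz image of a quasi-geodesic from a non-hyperbolic space need not be an unparametrized quasi-geodesic; nothing prevents $h$ from making long detours through regions of $P(S)$ where $\pi_Y$ backtracks. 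The statement you want (projections of $P(S)$-quasi-geodesics are unparametrized quasi-geodesics in every $\mathcal{C}(Y)$) is essentially equivalent to the stability you are trying to prove, so invoking it here is circular. Your Case 2 is too vague to evaluate, but it depends on Case 1 anyway.

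The paper avoids this circularity by a different mechanism: it does not bound the excursions of $h$ in each $\mathcal{C}(Y)$ directly, but instead establishes a \emph{contraction property} for the projection $\Pi$ of Theorem 2.16 (Definition 5.9 and Theorem 5.14). The key technical statement is Lemma 5.15: whenever $d(\Pi P,\Pi Q)$ is large, one has $d_Z(P,Q)\geq d_Z(P,\Pi P)-q$ for every non-annular $Z$. Its proof is a case analysis on the relative position of $Z$ and an auxiliary subsurface $W$ with large $d_W(\Pi P,\Pi Q)$; narrowness is used precisely to guarantee that $W$ is large, hence cannot be disjoint from $Z$, which is what makes the case analysis close. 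Once contraction is established, the standard Morse-lemma argument (as in Lemma 7.1 of \cite{mm1}) produces the quantifier $d_A$.
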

 
\begin{proof} 
First note that by Theorem \ref{thm : narrowhull} the Hausdorff distance of $\Sigma_{\epsilon}(\mu^{-},\mu^{+})$ ($\epsilon>M$) and a hierarchy path $\rho$ between $\mu^{-}$ and $\mu^{+}$ is bounded by the constant $\Delta$ depending only on $A$ and $\epsilon$. Therefore, if $\Sigma_{\epsilon}(\mu^{-},\mu^{+})$ is $d-$stable, then the hierarchy path $\rho$ is stable with quantifier function $d+\Delta$. Thus it suffices to prove that $\Sigma_{\epsilon}(\mu^{-},\mu^{+})$ is stable.

The stability of the $\Sigma_{\epsilon}-$hull follows from the fact that for any $\epsilon$ sufficiently large the projection map $\Pi$ onto the $\Sigma_{\epsilon}-$hull defined in Theorem \ref{thm : projsigmahull} has the contraction property. We will prove the last fact in Theorem \ref{thm : contraction}.
\end{proof}

We borrow the following properties of $\delta-$hyperbolic spaces which are not necessarily locally compact (e.g. the curve complex) from \cite{mhlpcc}. These properties will be used in the proof of Lemma \ref{lem : contsubsurfs} which is the main part of the proof of Theorem \ref{thm : contraction}(Narrow hulls are contracting). Let $\mathcal{X}$ be a $\delta-$hyperbolic space which is not necessarily locally compact.

\begin{prop}\label{prop : treelike}
 Let $\zeta$ be a geodesic in $\mathcal{X}$. Given points $x,y$ in $\mathcal{X}$ or the Gromov boundary of $\mathcal{X}$. Let $x',y'$ be nearest points to $x$ and $y$ on $\zeta$, respectively.  When $x$ is at the boundary, let $x_{i}\in \mathcal{X}$ be a sequence of points with $x_{i}\to x$ as $i\to \infty$ and let $x'$ be the limit of nearest points to the points $x_{i}$ on $\zeta$. The same for $y$ and $y'$. Then
\begin{enumerate}[(i)]
\item A geodesic $[x,w]$ connecting $x$ to a point $w$ on $\zeta$ intersects the $3\delta$ neighborhood of $x'$.
\item We have $d(x',y')\leq d(x,y)+12\delta$.
\item (Tree like) Let $K=14\delta$ and $\delta'=24\delta$. Suppose that $d(x',y')> K$ then we have 
\begin{equation} \label {eq : tree}d(x,x')+d(x',y')+d(y',y) \leq d(x,y)+\delta' .\end{equation}
\noindent There is a function $b(a,\delta)$ increasing in both $a$ and $\delta$ with the following property. Let $[x,x']$ and $[y,y']$ be geodesics connecting $x$ to $x'$ and $y$ to $y'$, respectively. Given $a>0$ and $\mathcal{Z}\subset \mathcal{X}$, denote the $a-$neighborhood of $\mathcal{Z}$ by $\mathcal{N}_{a}(\mathcal{Z})$.
\item If $\mathcal{N}_{a}([x,x'])\cap \mathcal{N}_{a}([y,y'])\neq \emptyset$, then $d(x',y')\leq b$.
\end{enumerate}
 \end{prop}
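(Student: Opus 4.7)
Each of (i)--(iv) is a standard manipulation of the $\delta$-thin triangle axiom together with the defining property that $x'$ and $y'$ are closest-point projections onto $\zeta$. The plan is to prove (i) first, then to derive (ii) and (iii) from (i) plus iterated slimness, and finally to obtain (iv) from (ii), (iii), and the projection identity.

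For (i), I would form the triangle with vertices $x$, $x'$, and $w$. Its side $[x,x']$ lies in the $\delta$-neighborhood of $[x,w]\cup [x',w]$; a point of $[x,x']$ within $\delta$ of a point of $\zeta\supset[x',w]$ far from $x'$ would give a point of $\zeta$ closer to $x$ than $x'$, contradicting minimality. So the point of $[x,x']$ at distance roughly $3\delta$ from $x'$ must be $\delta$-close to $[x,w]$, yielding the claim. When $x$ lies on the boundary I would apply the argument to the approximating sequence $\xi_i$ and take a limit, using that $3\delta$-closeness is a closed condition.

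For (ii), I would apply (i) to the geodesic $[x,y']$ to produce $p\in[x,y']$ with $d(p,x')\le 3\delta$, and symmetrically produce $q\in[y,x']$ with $d(q,y')\le 3\delta$. From $d(x,p)+d(p,y')=d(x,y')\le d(x,y)+d(y,y')$ and $d(x,p)\ge d(x,x')-3\delta$, the triangle inequality $d(x',y')\le d(x',p)+d(p,y')$ collapses to $d(x',y')\le d(x,y)+d(y,y')-d(x,x')+6\delta$; averaging with the symmetric version gives $d(x',y')\le d(x,y)+6\delta$. For (iii), I would consider the geodesic quadrilateral on $x,x',y',y$; two iterations of slim triangles place $[x,y]$ in the $2\delta$-neighborhood of $[x,x']\cup[x',y']\cup[y',y]$. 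The hypothesis $d(x',y')>14\delta$ forces a nontrivial ``middle'' portion of $[x,y]$ that shadows $[x',y']\subset\zeta$ strictly between the initial and terminal portions shadowing $[x,x']$ and $[y',y]$; quantifying the transitions between these three shadowing regions yields $d(x,y)\ge d(x,x')+d(x',y')+d(y',y)-24\delta$.

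The main obstacle will be (iv). Given $p\in[x,x']$ and $q\in[y,y']$ with $d(p,q)\le 2a$, I would first record the projection identity: for any $z\in\zeta$,
$$d(p,z)\ge d(x,z)-d(x,p)\ge d(x,x')-d(x,p)=d(p,x'),$$
so $x'$ is itself a closest point of $\zeta$ to $p$, and symmetrically $y'$ is a closest point to $q$. Part (ii) applied to $p,q$ then yields $d(x',y')\le 2a+12\delta$. Converting this into the stated bound on $d(x,x')$ requires splitting cases on whether $d(x',y')$ exceeds $14\delta$: in the large case I would apply (iii) to $p,q$ with projections $x',y'$ to pin a tree-like decomposition of $[p,q]$ along $\zeta$, and combine with $d(p,q)\le 2a$ to force both $d(p,x')$ and $d(q,y')$ to be bounded by an explicit function of $a$ and $\delta$, after which $d(x,x')\le d(x,p)+d(p,x')$ is controlled via the structure of $[x,x']$ as a geodesic realizing the projection distance; in the small case I would close the loop $x\to p\to q\to y'\to x'$ with direct triangle inequalities. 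The delicate bookkeeping in this case split, and in particular the passage from the tree-like inequality to an honest bound on $d(x,x')$ rather than on $d(p,x')$, is what I expect will require the most care.
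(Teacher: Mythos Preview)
Your approaches to (i)--(iii) are sound and standard. The paper in fact only proves (ii) in detail, deferring (i) and (iii) to \cite{mhlpcc} and asserting that (iv) follows by a slight modification of the proof of (iii) there. Your argument for (ii) is a mild variant of the paper's: both begin by applying (i) to $[x,y']$ and $[y,x']$ to find points $3\delta$-close to $x'$ and $y'$, but where the paper runs a contradiction argument to reach $d(x',y')\le d(x,y)+12\delta$, you average two symmetric inequalities and obtain the sharper $d(x',y')\le d(x,y)+6\delta$. Either route is fine.

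The difficulty you anticipate in (iv) is textual, not mathematical: the printed conclusion $d(x,x')\le b$ is a typo and is false as stated --- take $y=x$, so that $[x,x']=[y,y']$ and the $a$-neighborhoods trivially meet, yet $d(x,x')$ can be arbitrarily large. The intended conclusion is $d(x',y')\le b(a,\delta)$, and this is precisely how the paper uses (iv) in Case~2.2 of the proof of Lemma~\ref{lem : contsubsurfs}, where the extracted bound is $d_Z(m,x')\le b$ with $m$ and $x'$ playing the roles of the two projections $y'$ and $x'$. Your own argument already proves this correct version: the observation that $x'$ and $y'$ are also closest-point projections of $p\in[x,x']$ and $q\in[y,y']$ onto $\zeta$, followed by (ii), gives $d(x',y')\le d(p,q)+12\delta\le 2a+12\delta$, so $b(a,\delta)=2a+12\delta$ suffices. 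The subsequent case split and the ``delicate bookkeeping'' you were bracing for may be discarded; they were an attempt to prove a false statement.
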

  \begin{proof}
First suppose that $x,y\in\mathcal{X}$. Parts (i) and (iii) are respectively Propositions 3.2 and 3.4 of \cite{mhlpcc}. Part (iv) can be proved by a slight modification of the proof given for Proposition 3.4 in \cite{mhlpcc}. 
 
{\it Proof of part (ii):} By part (i) there is a point $z$ on $[x,y']$ with 
\begin{equation}\label{eq : X1}d(z,x')\leq 3\delta\end{equation}
 and there is a point $z'$ on $[y,x']$ with 
 \begin{equation}\label{eq : X2}d(z',y')\leq 3\delta.\end{equation} 
 Without loss of generality suppose that 
 \begin{equation}\label{eq : X3}d(x,z)\geq d(y,z')\end{equation}
  We claim that 
 \begin{equation}\label{eq : dzz'dxy}d(z,z')\leq d(x,y)+6\delta.\end{equation}
  Otherwise, 
  \begin{equation}\label{eq : X4}d(z,z')>d(x,y)+6\delta.\end{equation}
   Now we have 
 \begin{eqnarray*}
  d(x,y')&\leq& d(x,y)+d(y,z')+d(z',y')<d(z,z')-6\delta+d(x,z)+3\delta\\
  &\leq& d(z,z')+d(x,z)-d(z',y')\leq d(y',z)+d(x,z)=d(x,y')
  \end{eqnarray*}
The first inequality is the triangle inequality.  The second inequality follows from inequalities (\ref{eq : X2}), (\ref{eq : X3}) and (\ref{eq : X4}). The third inequality follows from (\ref{eq : X2}). The fourth inequality is the triangle inequality. The last equality holds because $z$ is on the geodesic segment $[x,y']$. But then we have $d(x,y')< d(x,y')$, which is contradictory and we obtain the claimed inequality. 

Finally, we have that
$$d(x',y')\leq d(x',z)+d(z,z')+d(z',y')\leq d(x,y)+12\delta,$$
 where the first inequality is the triangle inequality and the second one follows from inequalities (\ref{eq : X1}), (\ref{eq : dzz'dxy}) and (\ref{eq : X2}).

Now suppose that $x$ ($y$) is at infinity of $\mathcal{X}$. Let the points $x_{i}$ in $\mathcal{X}$ be so that $x_{i}\to x$ as $i\to\infty$ (similarly the points $y_{i}\to y$). For part (i) consider the geodesic segments $[x_{i},w]$, as we saw above $[x_{i},w]$ passes through $3\delta$ neighborhood of $x'_{i}$ (a nearest point to $x_{i}$ on $\zeta$). Then since $x_{i}'$ converges to a nearest point to $x$ on $\zeta$ as $i\to\infty$, we conclude that part (i) holds for $x$.

Similarly the conclusion of each part (ii)-(iv) follows from applying that to each $x_{i}$ ($y_{i}$) proved above and then taking limit.
 \end{proof}
  The constants $K$ and $\delta'$ in Proposition \ref{prop : treelike} depend only on $\delta$ the hyperbolicity constant of the metric space $\mathcal{X}$. For any essential non-annular subsurface $Y\subseteq S$ denote by $K_{Y}$ and $\delta'_{Y}$ the corresponding constants of $\mathcal{C}(Y)$, respectively. These constants depend only on $\delta_{Y}$ and thus the topological type of $Y$. 
 
 We will also need the following elementary lemmas.
 \begin{lem}\label{lem : orth}
 Given a point $z$ and a geodesic $\zeta$ in $\mathcal{X}$. Let $z'$ be a nearest point to $z$ on $\zeta$. Let $x$ be a point on $\zeta$ and $x'$ be a nearest point to $x$ on an infinite geodesic containing $[z,z']$. Then $d(z',x')\leq 6\delta$.
 \end{lem}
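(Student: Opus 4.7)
The strategy is to apply Proposition~\ref{prop : treelike}(i) with the roles of the ``geodesic $\zeta$'' and of ``$x'$'' in that proposition taken to be the segment $[z,z']$ and the nearest-point projection of $x$ onto $[z,z']$, respectively. That proposition then asserts that, for any $w \in [z,z']$, every geodesic from $x$ to $w$ intersects the $3\delta$-neighborhood of $x'$. I would specialize to $w=z'$. Since $x$ and $z'$ both lie on the (original) geodesic $\zeta$, I can choose the geodesic from $x$ to $z'$ to be the sub-arc of $\zeta$ between $x$ and $z'$. This furnishes a point $y$ on $\zeta$ with $d(x',y)\le 3\delta$.

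Now invoke the defining property of $z'$: because $y\in\zeta$ and $z'$ is a nearest point on $\zeta$ to $z$, one has $d(z,z')\le d(z,y)$. Combining with the triangle inequality,
\[
d(z,z')\ \le\ d(z,y)\ \le\ d(z,x')+d(x',y)\ \le\ d(z,x')+3\delta.
\]
Since $x'\in [z,z']$, the additivity $d(z,x')+d(x',z')=d(z,z')$ holds, and subtracting gives $d(z',x')\le 3\delta$, which is in fact stronger than the claimed bound $6\delta$.

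The only non-automatic point is that Proposition~\ref{prop : treelike}(i) be applicable to the geodesic $[z,z']$ (a finite segment in the ambient space) rather than to a distinguished bi-infinite geodesic; but that proposition is formulated for an arbitrary geodesic $\zeta\subset\mathcal X$ and an arbitrary point together with its nearest-point projection onto $\zeta$, so no adjustment is required. Accordingly, there is no substantive obstacle: the lemma is essentially a one-step consequence of the projection property (i) combined with the minimality that defines $z'$.
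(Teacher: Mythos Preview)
Your proof is correct and shares the same opening move as the paper's: both apply Proposition~\ref{prop : treelike}(i) to the segment $[z,z']$ with external point $x$, taking $w=z'$. The paper obtains a point $f$ on a geodesic $[x,z']$ with $d(f,x')\le 3\delta$, then argues (using the minimality of $x'$ as nearest point to $x$ on $[z,z']$) that also $d(f,z')\le 3\delta$, and concludes $d(x',z')\le d(x',f)+d(f,z')\le 6\delta$. Your finishing step is different and sharper: by choosing the geodesic from $x$ to $z'$ to be the sub-arc of $\zeta$ itself, you force the intermediate point $y$ to lie on $\zeta$; then the minimality of $z'$ (nearest to $z$ on $\zeta$) combined with additivity along $[z,z']$ yields $d(x',z')\le 3\delta$ directly. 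So the approaches coincide up to the last step, where your choice of geodesic buys a better constant.
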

 \begin{proof} By Proposition \ref{prop : treelike}(i) any geodesic segment $[x,z']$ connecting $x$ and $z'$ intersects the $3\delta$ neighborhood of $x'$ at a point $u$. We claim that $d_{Z}(u,x')\leq 3\delta$. For otherwise the path $[x,u]\cup [u,z']$ would have length less than the length of $[x,x']$ which contradicts the fact that $[x,x']$ minimizes the distance between $x$ and $x'$. Then by the triangle inequality 
 $$d_{Z}(x',z')\leq d(x',u)+d(u,z')\leq 3\delta+3\delta=6\delta.$$
 \end{proof}

 \begin{lem}\label{lem : symm}
 Given a point $z$ and a geodesic $\zeta$ in $\mathcal{X}$. Let $z'$ be a nearest point to $z$ on $\zeta$. Then for any $p$ on $\zeta$, $d(p,z)+6\delta\geq d(p,z')$.
 \end{lem}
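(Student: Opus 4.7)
The plan is to reduce the claim to the already-established Lemma \ref{lem : orth} by swapping the roles of the geodesic and the segment $[z,z']$. More concretely, I would apply Lemma \ref{lem : orth} to the point $p\in \zeta$ viewed now as a point on the geodesic $\zeta$ whose closest projection to $[z,z']$ I want to understand. That lemma, applied with $\zeta$ as the geodesic and $p$ playing the role of the point on $\zeta$, yields a point $p'$ on $[z,z']$ that is a nearest point to $p$ on $[z,z']$, together with the bound $d(z',p')\leq 6\delta$.

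Once $p'$ is in hand, the rest is bookkeeping via the triangle inequality. Since $z$ is an endpoint of the geodesic segment $[z,z']$ and $p'$ is a closest point on that segment to $p$, one has $d(p,p')\leq d(p,z)$ for free. Combining this with $d(p',z')\leq 6\delta$ through the triangle inequality gives
\[
d(p,z')\;\leq\; d(p,p')+d(p',z')\;\leq\; d(p,z)+6\delta,
\]
which is exactly the asserted inequality.

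There is essentially no obstacle here: the only subtlety is making sure the hypotheses of Lemma \ref{lem : orth} are applied with the correct geodesic/point pairing (the segment $[z,z']$ here plays the role of the segment $[z,z']$ in that lemma, and $p\in\zeta$ plays the role of the point on the geodesic). Once that is set up, the conclusion is automatic from the triangle inequality and the trivial inequality $d(p,p')\leq d(p,z)$ coming from $z$ being an endpoint of $[z,z']$.
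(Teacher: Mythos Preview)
Your proof is correct and is actually cleaner than the paper's. You invoke Lemma~\ref{lem : orth} directly: with $x=p$ there, you get a nearest point $p'$ to $p$ on $[z,z']$ satisfying $d(z',p')\le 6\delta$, and then $d(p,z')\le d(p,p')+d(p',z')\le d(p,z)+6\delta$ since $z\in[z,z']$. This is a two-line argument.

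The paper takes a different route: it extends $\zeta$ to a bi-infinite geodesic $\bar\zeta$, uses Proposition~\ref{prop : treelike}(ii) to bound the diameter of the set of nearest points to $z$ on $\bar\zeta$ by $12\delta$, and then reflects $p$ across that set to a point $p'$ with $d(p,z)=d(p',z)$ by symmetry. Combining $2d(p,z)\ge d(p,p')$ with $d(p,p')+12\delta\ge 2d(p,z')$ and dividing by $2$ gives the result. Your approach has the advantage of reusing Lemma~\ref{lem : orth} as a black box and avoiding the extension to an infinite geodesic; the paper's symmetry argument is self-contained relative to Proposition~\ref{prop : treelike} but is slightly more roundabout.
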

\begin{proof}
Extend $\zeta$ to an infinite geodesic $\bar{\zeta}$. Let $\mathcal{Z}$ be the set of nearest points to $z$ on $\bar{\zeta}$. By Proposition \ref{prop : treelike}(ii) the diameter of $\mathcal{Z}$ is at most $12\delta$. Let $p'$ be the point on $\bar{\zeta}$ so that the distance of $p'$ and $\mathcal{Z}$ is the same as the distance of $p$ and $\mathcal{Z}$.  By symmetry $d(p,z)=d(p',z)$ and by the triangle inequality $d(p,z)+d(z,p')\geq d(p,p')$. Thus $2d(p,z)\geq d(p,p')$. Moreover $d(p,p')+\diam(\mathcal{Z})\geq 2d(p,z')$. So we get 
$$2d(p,z)+12\delta\geq 2d(p,z').$$ 
Dividing both sides of the above inequality by $2$ we get the desired inequality.
 \end{proof}
 
  \begin{lem}\label{lem : ftrnearp}
  Let geodesics $f$ and $f'$, $D$ fellow travel in $\mathcal{X}$. Let $u$ and $\hat{u}$ be nearest points to a point $p$ on $f$ and $f'$ respectively. Then $d(\hat{u},u)\leq 3D+6\delta$.
 \end{lem}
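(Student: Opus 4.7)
The plan is to use the $D$-fellow-traveling hypothesis to transfer $\hat u\in f'$ to a nearby point on $f$, and then control the displacement along $f$ using the nearest-point property of $u$. By the fellow-traveling hypothesis, choose $w\in f$ with $d(w,\hat u)\le D$ and $w'\in f'$ with $d(w',u)\le D$. The triangle inequality gives $d(u,\hat u)\le d(u,w)+D$, so it suffices to show $d(u,w)\le 2D+6\delta$.

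Next, I compare distances from $p$ along the two geodesics. Since $u$ is nearest to $p$ on $f$, we have $d(p,u)\le d(p,w)\le d(p,\hat u)+D$ (the second inequality by triangle with $w$ and $\hat u$). Since $\hat u$ is nearest to $p$ on $f'$, we have $d(p,\hat u)\le d(p,w')\le d(p,u)+D$. Chaining these gives
\[
d(p,w)-d(p,u)\le 2D.
\]

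Now both $u$ and $w$ lie on the geodesic $f$, with $u$ the foot of $p$. Applying Lemma \ref{lem : symm} with $\zeta=f$, $z=p$, $z'=u$ and point $w\in f$ (equivalently, invoking the tree-like property of Proposition \ref{prop : treelike}(iii) applied to the triangle $p,u,w$ on $f$) upgrades the coarse triangle inequality to the almost-additive form $d(p,u)+d(u,w)\le d(p,w)+6\delta$. Combined with the previous step, this yields $d(u,w)\le 2D+6\delta$, and hence $d(u,\hat u)\le d(u,w)+D\le 3D+6\delta$.

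The main point of the argument is the last step: the naive triangle inequality $d(u,w)\le d(p,w)+d(p,u)$ would give a bound that blows up with $d(p,f)$, so one must exploit the fact that $u$ is the nearest point of $p$ on $f$. The tree-like geometry of a $\delta$-hyperbolic space promotes this into a subtractive bound $d(u,w)\le d(p,w)-d(p,u)+O(\delta)$, which is exactly what pairs with the distance comparison from the previous step. This is the only step where the hyperbolicity of $\mathcal{X}$ enters essentially; the rest of the argument is just the triangle inequality combined with the two nearest-point properties.
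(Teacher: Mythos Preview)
Your argument is essentially correct and follows the same strategy as the paper's proof (the paper works dually on $f'$ by projecting $u$ to $u'\in f'$ and bounding $d(\hat u,u')$, while you project $\hat u$ to $w\in f$ and bound $d(u,w)$; the underlying idea is identical).

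There is, however, a citation error at the key step. Lemma~\ref{lem : symm} with $\zeta=f$, $z=p$, $z'=u$, and point $w\in f$ gives only $d(w,u)\le d(w,p)+6\delta$, which is not the almost-additive inequality you need. What you actually want follows from Proposition~\ref{prop : treelike}(i): since $u$ is a nearest point to $p$ on $f$ and $w\in f$, the geodesic $[p,w]$ contains a point $z$ with $d(z,u)\le 3\delta$; hence
\[
d(p,u)+d(u,w)\le \bigl(d(p,z)+3\delta\bigr)+\bigl(d(z,w)+3\delta\bigr)=d(p,w)+6\delta,
\]
exactly the inequality you state. Your parenthetical reference to Proposition~\ref{prop : treelike}(iii) would also work, but only under the threshold hypothesis $d(u,w)>14\delta$ and with the larger constant $24\delta$ rather than $6\delta$ (the case $d(u,w)\le 14\delta$ would then need to be handled separately). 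Citing part~(i) instead gives the clean $6\delta$ bound without any case distinction.
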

 \begin{proof}Let $u'$ be a nearest point to $u$ on $f'$. Since $f$ and $f'$, $D$ fellow travel, $d(p,u')\leq d(p,u)+d(u,u')\leq d(p,u)+D$. Moreover, $\hat{u}$ is a nearest point to $p$ on $f'$ so $d(p,\hat{u})\leq d(u,p)+D$. Similarly $d(p,u)\leq d(\hat{u},p)+D$. The first and third inequalities imply that
$$d(u',p)\leq d(\hat{u},p)+2D.$$
 Let $u'$ be a nearest point to $u$ on $f'$, then $d(u',u)\leq D$. Then 
 $$d(p,u')\leq d(p,u)+d(u,u')\leq d(p,u)+D.$$
  Now we prove that $d(u,u')\leq 3\delta+2D$. By Proposition \ref{prop : treelike}(i) there is a point $z$ on $[p,u']$ such that  
  $$d(z,\hat{u})\leq 3\delta.$$
   Then by the triangle inequality 
  $$d(z,p)\geq d(p,\hat{u})-d(\hat{u},z)\geq d(p,\hat{u})-3\delta.$$
   Thus 
 \begin{eqnarray*}
 d(z,u')&=&d(p,u')-d(p,z)\leq d(p,u')-(d(p,\hat{u})-3\delta)\\
 &=&d(p,u')-d(p,\hat{u})+3\delta \leq 2D+3\delta.
 \end{eqnarray*}
  Then by the triangle inequality 
  $$d(u',\hat{u})\leq d(u',z)+d(z,\hat{u})\leq 6\delta+2D.$$
   Finally 
   $$d(u,\hat{u})\leq d(\hat{u},u')+d(u',u)\leq 6\delta+2D+D= 6\delta+3D.$$
\end{proof}
Let $F$ be the constant from Theorem \ref{thm : projsigmahull}.

\begin{thm}\textnormal{ (Narrow hulls are contracting)}\label{thm : contraction}
 Given $A>0$ and $\epsilon>F$, there are $R,B>0$ and $0<\eta \leq 1$ so that the following holds. Let $(\mu^{-},\mu^{+})$ be an $A-$narrow pair, then $\Sigma_{\epsilon}(\mu^{-},\mu^{+})$ has the contraction property with constants $R,B$ and $\eta$.
\end{thm}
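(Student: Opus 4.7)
The plan is to verify the contraction property (Definition~5.3) for the projection $\Pi:P(S)\to\Sigma_\epsilon(\mu^-,\mu^+)$ of Theorem~2.10 by working coordinate-by-coordinate in the curve complexes and transferring the resulting estimates to the pants graph via the distance formula (2.1). The $A$-narrow hypothesis will be used to confine the non-trivial subsurface coordinates to large subsurfaces, where hyperbolicity of the relevant curve complexes is available.

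First I would recall from the proof of Theorem~2.10 the explicit consistency-theorem construction: for every non-annular subsurface $W\subseteq S$ and every pants decomposition $P$, the projection $\pi_W(\Pi P)$ lies within $F$ of a nearest point $p_W(P)\in\hull_W(\mu^-,\mu^+)$ to $\pi_W(P)$. Thus in each curve-complex coordinate $\Pi$ is coarsely the nearest-point projection onto the quasi-convex set $\hull_W$ in the $\delta_W$-hyperbolic space $\mathcal{C}(W)$.

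Next, fix $x,y\in P(S)$ with $d(x,\Pi x)>R$ and $d(x,y)\le\eta\, d(x,\Pi y)$, where $R$ and $\eta\in(0,1)$ are to be chosen. I will bound $d_W(\Pi x,\Pi y)$ uniformly over non-annular $W$. If $d_W(\mu^-,\mu^+)\le A$, then $\diam_W\hull_W\le A+2$, and so $d_W(\Pi x,\Pi y)\le A+2+2F$ automatically. Otherwise $d_W(\mu^-,\mu^+)>A$, and the $A$-narrow hypothesis forces $W$ to be a large subsurface, in which case $\hull_W$ has large diameter and behaves like a long quasi-geodesic in $\mathcal{C}(W)$. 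On such a $W$, set $p=p_W(x)$, $q=p_W(y)$, and suppose $d_W(\Pi x,\Pi y)>B_W$ for a threshold $B_W$ chosen well above $K_W+2F$, so that $d_W(p,q)>K_W$. Proposition~5.5(iii) applied to the pair $(\pi_W(x),\pi_W(y))$, and then to the pair $(\pi_W(x),q)$ (whose nearest points on $\hull_W$ are $p$ and $q$ itself), yields
\[
d_W(x,y)\ge d_W(\pi_W(x),p)+d_W(p,q)+d_W(\pi_W(y),q)-\delta'_W
\]
and
\[
d_W(\pi_W(x),q)\ge d_W(\pi_W(x),p)+d_W(p,q)-\delta'_W.
\]
Since $\pi_W(\Pi y)$ is $F$-close to $q$, the second inequality upgrades to a lower bound for $d_W(x,\Pi y)$; combined with the first it gives the familiar hyperbolic contraction estimate
\[
d_W(x,y)\ge d_W(x,\Pi y)+d_W(\pi_W(y),q)-O(F+\delta'_W).
\]

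The distance formula transfers this to the pants graph. A single-term lower bound from (2.1) gives $d(x,y)\gtrsim d_W(x,y)$, while an upper bound on $d(x,\Pi y)$ is obtained by summing per-subsurface contributions. Using the partial order on large subsurfaces from Proposition~2.16 to control simultaneous large coefficients, one shows that an overly large $d_W(\Pi x,\Pi y)$ on any single large $W$ would force $d(x,y)>\eta\,d(x,\Pi y)$, contradicting the hypothesis once $R$ is taken large enough that the term $d_W(\pi_W(x),p)$ can absorb the additive errors coming from the bound $d_W(x,\Pi x)\le d(x,\Pi x)$. Consequently $d_W(\Pi x,\Pi y)\le B_0$ for every non-annular $W$, and a final application of the distance formula with threshold $B_0$ bounds $d(\Pi x,\Pi y)$ by a constant $B=B(A,\epsilon)$. \textbf{The main obstacle} is this last transfer: a single-subsurface hyperbolic contraction inequality does not automatically imply a pants-graph inequality, since pants distance is a sum over subsurfaces; the narrow hypothesis, via Proposition~2.16 and the resulting order on large subsurfaces, is precisely what keeps the remaining terms in the distance-formula sum comparable to the contribution of the dominant subsurface, allowing the single-coordinate estimate to propagate to the whole pants graph.
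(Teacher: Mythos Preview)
Your outline points in the right direction---using hyperbolicity of the curve complexes coordinate-by-coordinate and the distance formula to lift to $P(S)$---but the step you flag as ``the main obstacle'' is indeed where your argument stalls, and you have not supplied what is needed to close it. Your plan is to bound $d_W(\Pi x,\Pi y)$ for each non-annular $W$, deriving a contradiction from the hypothesis $d(x,y)\le\eta\,d(x,\Pi y)$ whenever some single $d_W(\Pi x,\Pi y)$ is large. But the pants-graph inequality $d(x,y)\le\eta\,d(x,\Pi y)$ does \emph{not} decompose into per-subsurface inequalities: $d(x,\Pi y)$ may be carried by many subsurfaces $V\neq W$, and a single large $d_W(x,y)$ lower bound gives you no grip on those. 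Your appeal to Proposition~2.16 to ``keep the remaining terms comparable'' is not substantiated; the order relation controls overlapping large subsurfaces pairwise, but does not by itself imply that the distance-formula sum is dominated by one term.

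The paper avoids this difficulty by inverting the logic. Rather than bounding $d_W(\Pi P,\Pi Q)$ one $W$ at a time, it proves (Lemma~5.10) that the single global hypothesis $d(\Pi P,\Pi Q)>B$ forces the \emph{uniform} per-subsurface inequality
\[
d_Z(P,Q)\ \ge\ d_Z(P,\Pi P)-q\qquad\text{for every non-annular }Z\subseteq S,
\]
with $q=q(A)$. This inequality is summable, so the distance formula immediately yields $d(P,Q)\ge\eta'\,d(P,\Pi P)-c$, which is the contrapositive of contraction. The proof of this lemma is where the $A$-narrow hypothesis does real work: if $d_Z(\Pi P,\Pi Q)$ is itself large, the tree-like estimate in $\mathcal C(Z)$ gives the inequality (your Case~1 idea). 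If $d_Z(\Pi P,\Pi Q)\le A'$ but $d(\Pi P,\Pi Q)>B$, the distance formula produces a \emph{different} subsurface $W$ with $d_W(\Pi P,\Pi Q)>A'$; narrowness forces $W$ to be large, so $Z$ and $W$ cannot be disjoint, and one must handle the three remaining cases $W\pitchfork Z$, $W\subsetneq Z$, $Z\subsetneq W$ separately, using respectively the partial-order transitivity (Theorem~2.8), and the Bounded Geodesic Image Theorem in both nested directions. This case analysis is the substance of the argument and is absent from your sketch.
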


\begin{proof}
We prove that the projection map $\Pi:P(S)\to\Sigma_{\epsilon}(\mu^{-},\mu^{+})$ defined in Theorem \ref{thm : projsigmahull} has the contraction property. Note that to have a projection onto the hull the theorem requires that $\epsilon>F$. Indeed we prove the contrapositive of the contraction property which asserts the following: Suppose that $d(P,\Pi P)>R$. Then $d(\Pi P,\Pi Q)>B \Longrightarrow d(P,\Pi P)> \eta d(P,Q).$  
 
\begin{lem} \label{lem : contsubsurfs}
 Given $A>0$ there are constants $B>0$ and $q>0$ with the following properties. Suppose that $\epsilon>F$, let $(\mu^{-},\mu^{+})$ be an $A-$narrow pair and $\Pi: P(S)\to \Sigma_{\epsilon}(\mu^{-},\mu^{+})$ be the projection map onto the $\Sigma_{\epsilon}-$hull. Let $P,Q\in P(S)$ be such that $d(\Pi P,\Pi Q)>B$. Then for every essential non-annular subsurface $Z\subseteq S$ we have 
\begin{equation} \label{eq : consubs}d_{Z}(P,Q) \geq d_{Z}(P,\Pi P)-q \end{equation}
\end{lem}
The above lemma implies the contraction property. To see this, suppose that (\ref{eq : consubs}) holds for every non-annular subsurface $Z \subseteq S$. Then by the distance formula (\ref {eq : dsf}) we have 
\begin{eqnarray*}
d(P, Q) &\asymp_{K,C}&\sum_{\substack{ Z\subseteq S\\ \nonannular}}\{d_{Z}(P, Q)\}_{A} \\
&\geq& \sum_{\substack{ Z\subseteq S\\ \nonannular}}\{d_{Z}(P, \Pi P)-q\}_{A}.
\end{eqnarray*}
Recalling the definition of the cut-off function $\{.\}_{A}$, for any term in the last sum above we have
$$ \{d_{Z}(P, \Pi P)-q\}_{A} \geq \frac{A}{A+q} \{ d_{Z}(P, \Pi P) \}_{A+q}.$$
Moreover, for the threshold constant $A_{1}=A+q$ there are constants $K_{1},C_{1}$ such that the distance formula (\ref{eq : dsf}) is written as
$$ \sum_{\substack{ Z\subseteq S\\ \nonannular}}\{d_{Z}(P, \Pi P)\}_{A+q} \asymp_{K_{1},C_{1}}d(P, \Pi P).$$
Therefore, we obtain 
$$d(P,Q)>\eta'd(P, \Pi P)-c$$
 for $\eta'=\frac{A}{K(A+q)K_{1}}$ and $c=\frac{AC}{K_{1}(A+q)K}+C_{1}$.

Now let $R$ be large enough such that  $\eta=\eta'-\frac{c}{R} >0$. Then for any $P \in P(S)$ such that $d(P, \Pi P)>R$, and any $Q\in P(S)$, we have that
\begin{eqnarray*}
d(P,Q)&>&\eta'd(P, \Pi P)-c 
= [(\eta' -\frac{c}{R}) d(P, \Pi P) ]-c+( \frac{c}{R}) d(P, \Pi P) \\
&>& \eta d(P, \Pi P).
\end{eqnarray*}
This shows that for $R$ and $\eta$ as above the projection map $\Pi:P(S) \to \Sigma_{\epsilon}(\mu^{-},\mu^{+})$ provided that $\epsilon>F$ and $(\mu^{-},\mu^{+})$ is $A-$narrow satisfies the contrapositive of the contraction property. So $\Pi$ has the contraction property.
\end{proof}

\begin{proof}[Proof of Lemma \ref {lem : contsubsurfs}]
 We set 
\begin{equation}\label{eq : a'}A'=A+12\delta+\delta' +\bar{K}+2F+2M+2(F_{1}+4)+G+2(3D+6\delta)+2(D_{1}+1),\end{equation}
as the threshold constant of the distance formula (\ref{eq : dsf}) in this proof. Note that $A'>M_{1}$.

The constants on the right hand side of (\ref{eq : a'}) are the following: $\delta =\max_{Y\subseteq S}\delta_{Y}$, $\bar{K}=\max_{Y\subseteq S}K_{Y}$ and $\delta'=\max_{Y\subseteq S}\delta'_{Y}$, where $\delta_{Y}$ is the hypebolicity constant of $\mathcal{C}(Y)$. Moreover $K_{Y}$ and $\delta'_{Y}$ are the constants from Proposition \ref{prop : treelike}(iv) for $\mathcal{C}(Y)$. Note that $\delta_{Y}, K_{Y}$ and $\delta'_{Y}$ depend only on the topological type of $Y$, so the above maxima exist. The constant $G$ is the bound from Theorem \ref{thm : bddgeod}(Bounded Geodesic Image). $M=M_{1}+M_{2}+M_{3}+M_{4}+B_{0}+4$, where $M_{i}, i=1,2,3,4$, are the constants from Theorem \ref{thm : hrpath} and $B_{0}$ is the constant from Theorem \ref{thm : behineq}(Behrstock inequality). As we mentioned after Theorem \ref{thm : projsigmahull} there are constants $F_{1},F_{2}\geq 1$ so that for any $P\in P(S)$, the tuple $(x_{Y})_{Y\subseteq S}$, where $x_{Y}$ is a nearest point to $\pi_{Y}(P)$ on $\hull_{Y}(\mu^{-},\mu^{+})$ satisfies the consistency conditions of Theorem \ref{thm : consistency}. Moreover, we let $F_{1}$ be greater than the constant we set up in Example \ref{ex : order}. Then by Theorem \ref{thm : consistency} there is a corresponding constant $F$ for which Theorem \ref{thm : projsigmahull} holds. 

The constant $D=\max_{Y\subseteq S}D_{Y}$, where $D_{Y}$ is the fellow traveling distance of two geodesics in $\mathcal{C}(Y)$ with end points within distance $2$ of each other. In particular, since the diameter of the projection of a (partial) marking or a lamination to a subsurface is at most $2$ (Lemma \ref{lem : diamproj}), for any subsurface $Y\subseteq S$, $D$ is the fellow traveling distance of any two geodesics in the convex hull of a pair of partial markings or laminations in $\mathcal{C}(Y)$. The constant $D_{1}$ is the maximum over all subsurfaces $Y$ of the fellow traveling distance of two geodesics with end points within distance $F+(3D+6\delta)$. For the fellow traveling property of geodesics in $\delta-$hyperbolic spaces see part III.H of \cite{bhnpc}. 
\medskip

We start by establishing some inequalities. Let $P\in P(S)$ and $X\subseteq S$ be a non-annular essential subsurface.

Let the vertex $p_{X}\in\pi_{X}(P)$ and the vertex $\hat{v}$ on $\hull_{X}(\mu^{-},\mu^{+})$ realize the distance between $\pi_{X}(P)$ and $\hull_{X}(\mu^{-},\mu^{+})$. Then by Theorem \ref{thm : projsigmahull} and the discussion after the theroem about its proof, we have that
 \begin{equation} \label{eq : dPPv^} d_{X}(\Pi P,\hat{v})\leq F.\end{equation}
    Let $f$ be a geodesic in $\hull_{X}(\mu^{-},\mu^{+})$ and let $v$ be a nearest point to $p_{X}$ on $f$. The point $\hat{v}$ lies on a geodesic $\hat{f}$ in the hull. Then Lemma \ref{lem : ftrnearp} applied to the geodesics $f$ and $\hat{f}$ and the point $p_{X}$ implies that
    \begin{equation}\label{eq : dvv^}d_{X}(v,\hat{v})\leq 3D+6\delta.\end{equation} 
    The inequalities (\ref{eq : dPPv^}) and (\ref{eq : dvv^}) combined by the triangle inequality give us
    \begin{equation}\label{eq : dvPiP}d_{X}(v,\Pi P)\leq 3D+6\delta+F.\end{equation}
Let $Q\in P(S)$. Let vertices $q_{X}\in\pi_{X}(Q)$ and $\hat{z}$ on $\hull_{X}(\mu^{-},\mu^{+})$ realize the distance between $\pi_{X}(Q)$ and $\hull_{X}(\mu^{-},\mu^{+})$. Let $f$ be a geodesic in $\hull_{X}(\mu^{-},\mu^{+})$. Let $v$ and $z$ be nearest points to $p_{X}$ and $q_{X}$ on $f$, respectively. Then by the triangle inequality and the inequalities (\ref{eq : dvv^}) and (\ref{eq : dPPv^}) we have that 
\begin{eqnarray}\label{eq : dvz>PPPQ}
 d_{X}(v,z)&\geq&d_{X}(\Pi P,\Pi Q)-d_{X}(\Pi P,\hat{v})-d_{X}(\Pi Q,\hat{z})-d_{X}(\hat{z},z)-d_{X}(\hat{v},v)\nonumber\\
&\geq& d_{X}(\Pi P,\Pi Q)-2F-2(3D+6\delta).
\end{eqnarray}

\noindent First note that if 
$$d_{Z}(P, \Pi P) \leq A'+5(F_{1}+4)$$ 
then for $q=A'+5(F_{1}+4)$, (\ref{eq : consubs}) holds. Thus in the rest of the proof we will assume that $Z$ is a subsurface with
\begin{equation}\label{eq : 1}d_{Z}(P,\Pi P)>A'+5(F_{1}+4).\end{equation} 
We proceed to discuss the following two cases depending on the range of the value of $d_{Z}(\Pi P,\Pi Q)$.
\medskip

\noindent{\bf Caes 1:} 
 \begin{equation}\label {eq : dZPiPPiQ>A'}d_{Z}(\Pi P,\Pi Q) > A'.\end{equation}

 Let $p_{Z}$ and $\hat{u}$ realize the distance between $\pi_{Z}(P)$ and $\hull_{Z}(\mu^{-},\mu^{+})$, and $q_{Z}$ and $\hat{w}$ realize the distance between $\pi_{Z}(Q)$ and the hull. Let $f$ be a geodesic in $\hull_{Z}(\mu^{-},\mu^{+})$. Let $u$ and $w$ be nearest points to $p_{Z}$ and $q_{Z}$ on $f$, respectively. Then by (\ref{eq : dvz>PPPQ}) and (\ref{eq : dZPiPPiQ>A'}) we have that
$$ d_{Z}(u,w)\geq A'-2F-2(3D+6\delta).$$
Then by the choice of $A'$ in (\ref{eq : a'}), $A'-2F-2(3D+6\delta)>K>K_{Z}$. Thus by the tree like property (\ref{eq : tree}) we have that 
$$d_{Z}(p_{Z},q_{Z})\geq d_{Z}(p_{Z},u)-\delta'_{Z}.$$
Then since $p_{Z}\in\pi_{Z}(P)$ and $q_{Z}\in\pi_{Z}(Q)$ we have
 \begin{equation}\label{eq : dZPQdZPu}d_{Z}(P,Q) \geq d_{Z}(P,u)-\delta_{Z}'-\diam_{Z}(P)-\diam_{Z}(Q)\geq d_{Z}(P,u)-\delta'-4.\end{equation}
Moreover, by the triangle inequality and the inequality (\ref{eq : dvPiP}) we have
\begin{eqnarray*}
d_{Z}(P,u)&\geq& d_{Z}(P,\Pi P)-d_{Z}(\Pi P,u)\\
&\geq& d_{Z}(P,\Pi P)-F-(3D+6\delta).
\end{eqnarray*} 
The above inequality and the inequality (\ref{eq : dZPQdZPu}) imply that
$$d_{Z}(P,Q) \geq d_{Z}(P,\Pi P)-\delta'-F-(3D+6\delta)-4.$$
Thus (\ref{eq : consubs}) holds for $q=\delta'+F+(3D+6\delta)+4$.
\medskip

 \noindent{\bf Case 2:} 
  \begin{equation}\label{eq : 2}d_{Z}(\Pi P,\Pi Q) \leq A'.\end{equation}

 Since the threshold constant of the distance formula is $A'$ the subsurface coefficient $d_{Z}(\Pi P,\Pi Q)$ has no contribution to $d(\Pi P,\Pi Q)$. Let $K'$ and $C'$ be the constants in the distance formula corresponding to the threshold constant $A'$. Set the constant
 \begin{equation}\label{eq : B}B=K'+C',\end{equation}
 in the statement of the lemma, also for the projection bound in the statement of Theorem \ref{thm : contraction}. By the assumption of the lemma we have that
  $$d(\Pi P,\Pi Q) > K'+C'.$$ 
Then by the distance formula (\ref{eq : dsf}) there is a subsurface $W\neq Z$ so that 
 \begin{equation}\label{eq : 3}d_{W}(\Pi P,\Pi Q) > A'.\end{equation}
 Let $p_{W}$ and $\hat{v}$ realize the distance between $\pi_{W}(P)$ and $\hull_{W}(\mu^{-},\mu^{+})$, and $q_{W}$ and $\hat{z}$ realize the distance between $\pi_{W}(Q)$ and the hull. Let $f$ be a geodesic in $\hull_{W}(\mu^{-},\mu^{+})$ between $\pi_{W}(\mu^{-})$ or $\mu^{-}|_{W}$ and $\pi_{W}(\mu^{+})$ or $\mu^{+}|_{W}$. Let $v$ and $z$ be nearest points to $p_{W}$ and $q_{W}$ on $f$, respectively. The inequalities (\ref{eq : dvz>PPPQ}) and (\ref{eq : 3}) imply that 
\begin{equation}\label{eq : dWvz}d_{W}(v,z)\geq A'-2F-2(3D+6\delta).\end{equation} 
Moreover $v,z$ are both on the geodesic $f$ which connects $\pi_{W}(\mu^{-})$ or $\mu^{-}|_{W}$ and $\pi_{W}(\mu^{+})$ or $\mu^{+}|_{W}$. Thus $d_{W}(\mu^{-},\mu^{+})\geq d_{W}(v,z)$. Then by (\ref{eq : dWvz}) we get
  \begin{equation}\label{eq : dWmu+-}d_{W}(\mu^{-},\mu^{+})\geq  A'-2F-2(3D+6\delta).\end{equation}
 By the choice of $A'$ in (\ref{eq : a'}), $A'-2F-2(3D+6\delta)>A$, so by (\ref{eq : dWmu+-}) we have
$$d_{W}(\mu^{-},\mu^{+})>A.$$
By the assumption of the lemma the pair $(\mu^{-},\mu^{+})$ is $A-$narrow, so the above inequality implies that the subsurface $W$ is a large subsurface. This excludes the possibility that $W$ and $Z$ be disjoint subsurfaces. Thus we need to discuss the following three subcases: 
\begin{enumerate}[(2.1)]
\item$W \pitchfork Z$, 
\item$W \subsetneq Z$ and 
\item$Z \subsetneq W$.
\end{enumerate}

\noindent {\bf Case 2.1:}  $W \pitchfork Z$.

Let $k:=k(P)= \lfloor \frac{d_{Z}(P, \Pi P)-2}{F_{1}+4} \rfloor$ and $k'= \lfloor \frac{A'}{F_{1}+4} \rfloor$. Here  $\lfloor x\rfloor$ is the floor function, which assigns to any $x\in \mathbb{R}$ the largest integer less than or equal to $x$.  
 
 Dividing both sides of the inequality (\ref{eq : 1}) by $F_{1}+4$ and subtracting $\frac{2}{F_{1}+2}$ from both sides we get 
 $$\frac{d_{Z}(P,\Pi P)-2}{F_{1}+4}\geq \frac{A'}{F_{1}+4}+5-\frac{2}{F_{1}+4}.$$
  Now since $0<\frac{2}{F_{1}+4}\leq 1$, taking the floor of both sides of the above inequality we have that
 \begin{equation}\label{eq : kk'}k\geq k'+4.\end{equation}
 We claim that 
\begin{equation}\label{eq : dZpipbdryW}d_{Z}(\Pi P, \partial{W}) \leq (k'+2)(F_{1}+4).\end{equation}
 Otherwise,  
  \begin{equation}\label{eq : dZpipbdryW>}d_{Z}(\Pi P, \partial{W}) > (k'+2)(F_{1}+4).\end{equation} 
 Let $(x_{Y})_{Y\subseteq S}$ be the tuple where for any non-annular subsurface $Y$, $x_{Y}$ is a vertex in $\pi_{Y}(\Pi P)$. Since $F_{1}$ is greater than the constant we set up in Example \ref{ex : order} and $F_{2}\geq 1$, the tuple $(x_{Y})_{Y\subseteq S}$ satisfies the consistency conditions of Theorem \ref{thm : consistency} for $F_{1}$ and $F_{2}$. So we may use the constants $F_{1},F_{2}$ and the tuple $(x_{Y})_{Y\subseteq S}$ to define a partial order as in Definition \ref{def : partialorder}. Then the inequality (\ref{eq : dZpipbdryW>}) can be written as
 \begin{equation} \label{eq : zw} Z\ll_{k'+2}W.  \end{equation} 
 Moreover, by (\ref{eq : 3}) and since $A'>2(F_{1}+4)$ we have
 \begin{equation} \label{eq : wpiq} W \ll_{2} \Pi Q. \end{equation} 
 Having (\ref{eq : zw}) and (\ref{eq : wpiq}), by the transitivity property of $\ll$ (Theorem \ref{thm : partialorder}(2)) we deduce that 
 $$Z\ll_{k'+1} \Pi Q,$$
 which means that
 $$d_{Z}(\Pi Q, \Pi P) \geq (k'+1)(F_{1}+4) > A'.$$
 But this lower bound contradicts (\ref{eq : 2}) and our claim follows. Therefore, in the rest of Case 2.1 we may assume that (\ref{eq : dZpipbdryW}) holds. 
\medskip

By the choice of $k$ we have
$$d_{Z}(\Pi P, P)\geq  k(F_{1}+4)+2.$$
The above inequality and (\ref{eq : dZpipbdryW}) combined by the triangle inequality imply that
\begin{equation} \label{eq : pbw} d_{Z}(P,\partial{W})\geq (k-k'-2)(F_{1}+4)+2-\diam_{Z}(\partial{W})\geq (k-k'-2)(F_{1}+4).\end{equation} 
Now use the constants $F_{1},F_{2}$ and the tuple $(x_{Y})_{Y\subseteq S}$, where for any non-annular subsurface $Y$, $x_{Y}$ is a vertex in $\pi_{Y}(P)$, to define a partial order. Then the inequality (\ref{eq : pbw}) can be written as
  \begin{equation} \label{eq : zkw} Z \ll_{k-k'-2} W \end{equation} 
note that by (\ref{eq : kk'}), $k-k'-2\geq 1$. 

Let $f$ be a geodesic in $\hull_{W}(\mu^{-},\mu^{+})$. Let $v$ and $z$, as before, be nearest points to $p_{W}$ and $q_{W}$ on $f$, respectively. Then by (\ref{eq : dWvz}) and the choice of $A'$, 
$$d_{W}(v,z)\geq A'-2F-2(3D+6\delta)>\bar{K}>K_{W}.$$
 Therefore, the tree like property (\ref{eq : tree}) implies that  
$$d_{W}(p_{W},q_{W})\geq d_{W}(v,z)-\delta'\geq A' -2F-\delta'-2(3D+6\delta).$$
Then since $p_{W}\in\pi_{W}(P)$ and $q_{W}\in\pi_{W}(Q)$ we have
\begin{eqnarray*}
d_{W}(P,Q)&\geq& d_{W}(p_{W},q_{W})-\diam_{W}(P)-\diam_{W}(Q)\\
&\geq& A' -2F-\delta'-2(3D+6\delta)-4.
\end{eqnarray*}
Thus we have
 \begin{equation} \label{eq : wq} W \ll_{m}Q \end{equation}
for $m=\lfloor \frac{A'-2F-\delta'-2(3D+6\delta)-4}{F_{1}+4} \rfloor$. Note that by the choice of $A'$ in (\ref{eq : a'}), $m\geq2$.
\medskip

Having (\ref{eq : zkw}) and (\ref{eq : wq}), by the transitivity property of $\ll$ (Theorem \ref{thm : partialorder}(2)) we deduce that
  $$Z\ll_{k-k'-3}Q,$$
where by (\ref{eq : kk'}), $k-k'-3\geq 1$. Therefore, 
\begin{eqnarray*}
d_{Z}(P,Q) &\geq& (\lfloor \frac{d_{Z}(P, \Pi P)-2}{F_{1}+4}\rfloor -k'-3)(F_{1}+4)\\
& \geq& d_{Z}(P, \Pi P)-(k'+4)(F_{1}+4)-2.
 \end{eqnarray*}
 So the inequality (\ref{eq : consubs}) holds for $q=(k'+4)(F_{1}+4)+2$. 
\medskip

\noindent{\bf Case 2.2:}   $W \subsetneq Z$.

 Let $\rho'$ be a hierarchy path between $P$ and $\Pi P$. By (\ref{eq : 1}) we have that $d_{Z}(P, \Pi P)>A'>M_{1}$, so by Theorem \ref{thm : hrpath}(\ref{h : largelink}) $Z$ is a component domain of $\rho'$. Let the vertices $q'\in\pi_{Z}(Q)$ and $x$ on $\hull_{Z}(P,\Pi P)$ realize the distance between $\pi_{Z}(Q)$ and $\hull_{Z}(P,\Pi P)$. By Theorem \ref{thm : hrpath}(\ref{h : hausd}) there is $T$ a slice of $\rho'$ such that 
 \begin{equation}\label{eq : dZTx}d_{Z}(T,x)\leq M_{4}.\end{equation} 
 Let $h$ be a geodesic in $\mathcal{C}(Z)$ connecting $\pi_{Z}(Q)$ and $\pi_{Z}(T)$ (see the left diagram of Figure \ref{fig : case2.2}).
  \begin{figure}
\centering
\scalebox{0.2}{\includegraphics{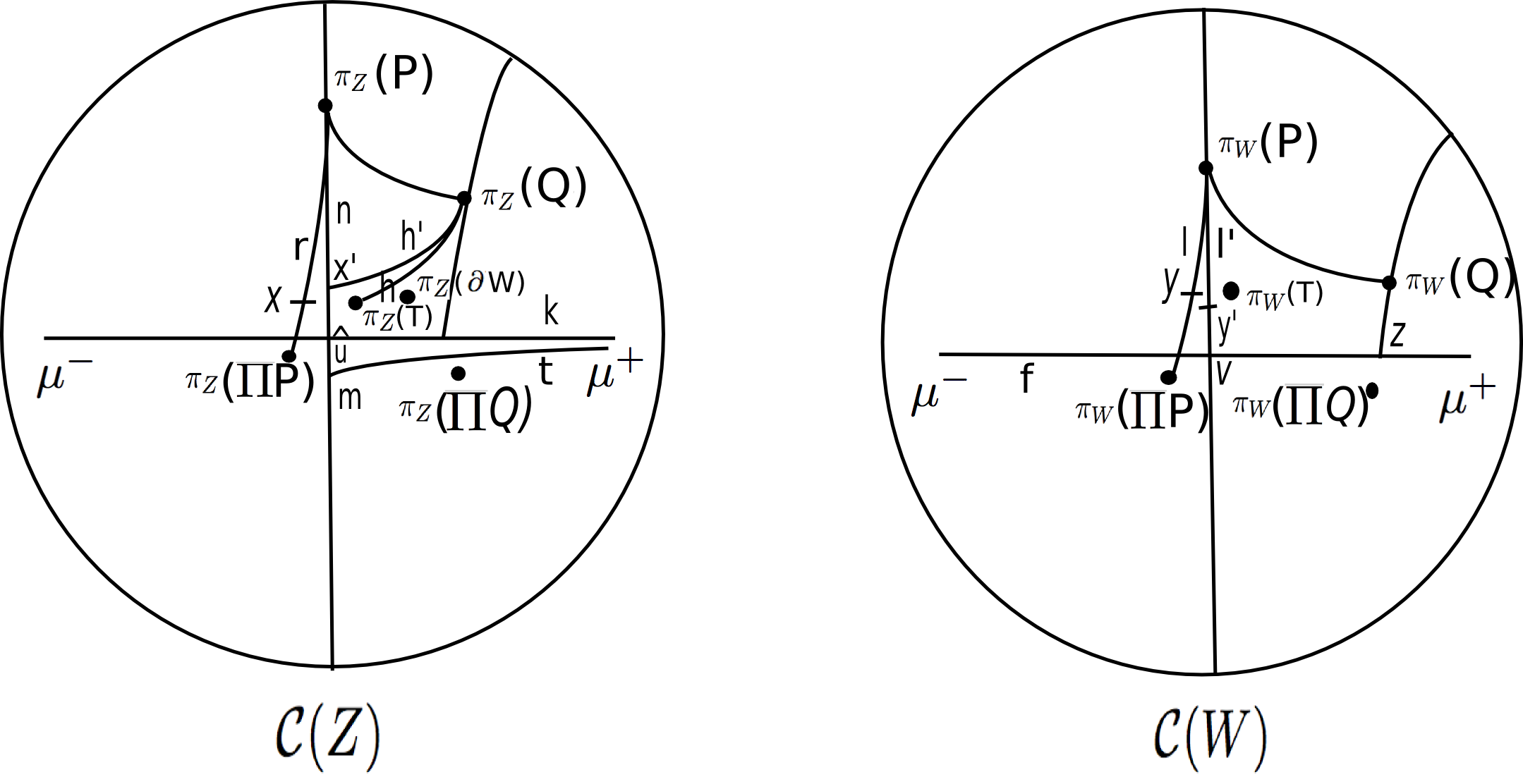}}
\caption{ {\bf Case 2.2:} Left diagram: The vertices $x$ and $q'$ realize the distance between $\pi_{Z}(Q)$ and $\hull_{Z}(P,\Pi P)$. The pants decomposition $T$ is a slice of $\rho'$ (a hierarchy path between $P$ and $\Pi P$) with $d_{Z}(T,x)\leq M_{4}$. $\pi_{Z}(\partial{W})$ is in the $1-$neighborhood of the geodesic $h$ connecting $\pi_{Z}(Q)$ to $\pi_{Z}(T)$ and any geodesic $k$ in $\hull_{Z}(\mu^{-},\mu^{+})$. Right diagram: The point $y$ on $\hull_{W}(P,\Pi P)$ is such that $d_{W}(y,T)\leq M_{4}$ and $y'$ is a nearest point to $y$ on the geodesic $l'$ connecting $\pi_{W}(P)$ to $v$ a nearest point to $\pi_{W}(P)$ on the geodesic $f$.}
\label{fig : case2.2}
\end{figure}
 
Since $T\in |\rho'|$, by Theorem \ref{thm : hrpath}(\ref{h : hausd}) there is a point $y \in \hull_{W}(P, \Pi P)$ such that  
\begin{equation}\label{eq : dWty}d_{W}(T ,y) \leq M_{4}\leq M\end{equation}
(see the right diagram of Figure \ref{fig : case2.2}). 

The point $y$ is on a geodesic $l$ connecting $\pi_{W}(P)$ to $\pi_{W}(\Pi P)$. The bound (\ref{eq : dvPiP}) guarantees that the end points of $l$ and any geodesic $l'$ connecting $p_{W}$ and $v$ are within $(3D+6\delta)+F$ distance of each other. Where $p_{W}\in\pi_{W}(P)$ is a nearest point to $\hull_{W}(\mu^{-},\mu^{+})$ and $v$ is a nearest point to $p_{W}$ on  given geodesic $f$ in the hull. Therefore $l$ and $l'$, $D_{1}$ fellow travel in $\mathcal{C}(W)$. Let $y'$ be a nearest point to $y$ on $l'$, then by the fellow traveling of $l$ and $l'$ we have that
$$d_{W}(y,y')\leq D_{1}$$ 
(see the right diagram of Figure \ref{fig : case2.2}). Moreover, $v$ and $z$ are nearest points to the points $y'$ and $q_{W}$ on the geodesic $f$, respectively. So by Proposition \ref{prop : treelike}(ii) we have that
 $$d_{W}(y',q_{W})\geq d_{W}(v,z)-12\delta_{W}\geq d_{W}(v,z)-12\delta.$$
 Combing the above two inequalities with the triangle inequality we get
  $$d_{W}(q_{W},y) \geq d_{W}(v,z)-12\delta-D_{1}$$
(see the right diagram of Figure \ref{fig : case2.2}). By (\ref{eq : dWvz}),
$$d_{W}(v,z)\geq A'-2F-2(3D+6\delta),$$
The above two inequalities give us 
$$d_{W}(q_{W},y)\geq A'-2F-2(3D+6\delta)-12\delta -D_{1}.$$
Then using the fact that $q_{W}\in\pi_{W}(Q)$ and $\diam_{W}(Q)\leq 2$ we have 
 \begin{equation}\label{eq : dWqy}d_{W}(Q,y)\geq A'-2F-2(3D+6\delta)-12\delta -D_{1}-2.\end{equation} 
 Now combining (\ref{eq : dWty}) and (\ref{eq : dWqy}) by the triangle inequality we get  
  $$d_{W}(Q ,T)\geq A'-2F-2(3D+6\delta)-D_{1}-12\delta-2-M.$$
   Then by the choice of $A'$ in (\ref{eq : a'}) we have that
 \begin{equation}\label{eq : dWqt}d_{W}(Q ,T)>G. \end{equation}
 Note that since $W\subsetneq Z$ we have $\partial{W}\pitchfork Z$. We claim that 
\begin{claim}\label{claim : bwh}
$\pi_{Z}(\partial{W})$ is in the $1-$neighborhood of $h$ in $\mathcal{C}(Z)$.
\end{claim}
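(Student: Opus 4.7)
The plan is a contradiction argument built on Theorem~\ref{thm : bddgeod} (Bounded Geodesic Image). Suppose toward a contradiction that $\pi_Z(\partial W)$ were not contained in the $1$-neighborhood of $h$ in $\mathcal{C}(Z)$. Then every vertex $v$ of $h$ would satisfy $d_Z(v, \partial W) \geq 2$, which forces $v$ to essentially cross $\partial W$ inside $Z$. Since $W \subsetneq Z$, any curve in $Z$ that crosses $\partial W$ essentially must enter $W$ in an essential arc, so $\pi_W(v) \neq \emptyset$ for every vertex $v$ of $h$.

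Under this hypothesis, Theorem~\ref{thm : bddgeod} applied to the geodesic $h \subset \mathcal{C}(Z)$ and the subsurface $W \subsetneq Z$ yields
$$\operatorname{diam}_W\bigl(\pi_W(|h|)\bigr) \leq G.$$
The endpoints of $h$ are $\pi_Z(Q)$ and $\pi_Z(T)$. Because $W$ is nested in $Z$, there is a standard coarse equality between $\pi_W \circ \pi_Z$ and $\pi_W$ with a universal additive error, so applying it at both endpoints and combining with the diameter estimate produces an upper bound of the form
$$d_W(Q, T) \leq G + O(1)$$
with a universal additive constant.

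This contradicts the lower bound (\ref{eq : dWqt}). Indeed, the additive terms absorbed into the choice of $A'$ in (\ref{eq : a'}) were deliberately chosen to include the constants from the nesting property of subsurface projections; in particular, the gap in (\ref{eq : dWqt}) strengthens without difficulty to $d_W(Q, T) > G + O(1)$, exceeding the error above. This contradiction establishes the claim. The only bookkeeping item is the universal constant in $\pi_W \circ \pi_Z \approx \pi_W$ for nested $W \subsetneq Z$, which is a standard property of subsurface projections and not a genuine obstacle; the substance of the argument is the direct application of the Bounded Geodesic Image Theorem to the geodesic $h$.
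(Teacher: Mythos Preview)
Your proof is correct and follows the same approach as the paper: a contradiction via the Bounded Geodesic Image Theorem applied to the geodesic $h$. The paper's version is terser---it simply asserts that $\diam_W(h)\leq G$ contradicts (\ref{eq : dWqt})---while you explicitly invoke the coarse identity $\pi_W\circ\pi_Z\approx\pi_W$ for nested $W\subsetneq Z$ to pass from the endpoints of $h$ (which lie in $\pi_Z(Q),\pi_Z(T)$) back to $d_W(Q,T)$; this step is implicit in the paper and your added care is justified.
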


 Otherwise, $\partial{W}$ would intersect every vertex of the geodesic $h$, which connects $\pi_{Z}(T)$ and $\pi_{Z}(Q)$. Then by Theorem \ref{thm : bddgeod}(Bounded Geodesic Image) $\diam_{W}(h)\leq G$ and therefore 
 $$d_{W}(Q,T)\leq G.$$
  This upper bound contradicts the lower bound (\ref{eq : dWqt}) and the claim follows.

 Let  $k \subset \hull_{Z}(\mu^{-},\mu^{+})$ be any geodesic connecting $\pi_{Z}(\mu^{-})$ or $\mu^{-}|_{Z}$ and $\pi_{Z}(\mu^{+})$ or $\mu^{+}|_{Z}$. We claim that 
 \begin{claim}\label{claim : bwk}
 $\pi_{Z}(\partial{W})$ is in the $1-$neighborhood of $k$.
 \end{claim}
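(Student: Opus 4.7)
The plan is to mimic the argument just used for Claim \ref{claim : bwh}, applying the Bounded Geodesic Image Theorem this time to the geodesic $k$ in $\hull_Z(\mu^{-},\mu^{+})$ rather than to the geodesic $h$. The key point is that we already know, from (\ref{eq : dWmu+-}) and the definition of $A'$ in (\ref{eq : a'}), that $d_W(\mu^{-},\mu^{+})$ is very large compared with the Bounded Geodesic Image constant $G$, so any geodesic in $\mathcal{C}(Z)$ connecting $\pi_Z(\mu^{-})$ to $\pi_Z(\mu^{+})$ must be forced near $\partial W$.

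I would argue by contradiction. Suppose that $\pi_Z(\partial W)$ is at distance strictly greater than $1$ from every vertex of $k$ in $\mathcal{C}(Z)$. Since two vertices of $\mathcal{C}(Z)$ are within distance $1$ iff they are disjoint, this forces every vertex of $k$ to overlap $\partial W$ essentially, and since $W \subsetneq Z$, every such vertex projects nontrivially to the subsurface $W$. Theorem \ref{thm : bddgeod} (Bounded Geodesic Image) then yields
$$\diam_W(\pi_W(k)) \leq G.$$

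The next step is to convert this diameter bound into an upper bound on $d_W(\mu^{-},\mu^{+})$. The endpoints of $k$ are chosen from $\pi_Z(\mu^{-})$ and $\pi_Z(\mu^{+})$, and the standard nested-projection estimate (the one encoded as the consistency constant $F_2 = 1$ in the remark following Theorem \ref{thm : consistency}) gives $d_W(\pi_W(\pi_Z(\mu^{\pm})), \pi_W(\mu^{\pm})) \leq 1$. Combining this with the $G$-diameter bound and the triangle inequality, and absorbing the diameters of the subsurface projections (each at most $2$), I obtain $d_W(\mu^{-},\mu^{+}) \leq G + 6$. This contradicts the lower bound (\ref{eq : dWmu+-}), since the definition of $A'$ in (\ref{eq : a'}) explicitly arranges $A'-2F-2(3D+6\delta)-2$ to exceed $G+6$ comfortably. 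The contradiction establishes the claim. I do not anticipate any serious obstacle; the only mildly delicate ingredient is the nested-projection comparison for $\mu^{\pm}$ versus $\pi_Z(\mu^{\pm})$, but this is standard and has already been built into the machinery of $\S$\ref{sec : ccplx}.
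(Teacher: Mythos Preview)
Your proof is correct and follows essentially the same route as the paper: argue by contradiction, apply the Bounded Geodesic Image Theorem to the geodesic $k$, and contradict the lower bound (\ref{eq : dWmu+-}). The only difference is that you are more explicit about the nested-projection comparison $\pi_W\circ\pi_Z$ versus $\pi_W$, which costs you an extra additive constant (your $G+6$ versus the paper's $G$); the paper absorbs this step silently, but the choice of $A'$ in (\ref{eq : a'}) easily accommodates either version.
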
 
  Otherwise, $\partial{W}$ would intersect every vertex of $k$, so Theorem \ref{thm : bddgeod} implies that 
 $$d_{W}(\mu^{-},\mu^{+})\leq G.$$
 On the other hand,  by (\ref{eq : dWmu+-}) and the choice of $A'$ in (\ref{eq : a'}) we have 
 $$d_{W}(\mu^{-},\mu^{+})\geq A'-2F-2(3D+6\delta)>G.$$
  But this lower bound contradicts the upper bound above and the claim follows. 
  \medskip
 
As before let $p_{Z}\in\pi_{Z}(P)$ and $\hat{u}$ realize the distance between $\pi_{Z}(P)$ and $\hull_{Z}(\mu^{-},\mu^{+})$. Suppose that $\hat{u}$ is on the geodesic $k$ in the hull. Let $n$ be an infinite geodesic containing a geodesic segment $[p_{Z},\hat{u}]$. Let $m$ be a nearest point to $\pi_{Z}(\mu^{+})$ on the geodesic $n$. When $\mu^{+}|_{Z}$ is in $\mathcal{EL}(Z)$, let $x_{i}$ be a sequence of points on $k$ so that $x_{i}\to \mu^{+}|_{Z}$ as $i\to \infty$ and let $m$ be the limit of the nearest points to the points $x_{i}$ on the geodesic $n$. See the left diagram of Figure \ref{fig : case2.2}. Then Lemma \ref{lem : orth} applied to the point $p_{Z}$, the geodesic $k$ and the point $\pi_{Z}(\mu^{+})$ on $k$ (points $x_{i}$ on $k$) implies that
\begin{equation}\label{eq : dZmu^}d_{Z}(\hat{u},m)\leq  6\delta_{Z}\leq 6\delta.\end{equation}
 The above inequality guarantees that $k$ and any geodesic $t$ between $m$ and $\pi_{Z}(\mu^{+})$ or $\mu^{+}|_{Z}$, $D_{1}$ fellow travel (see the left diagram of Figure \ref{fig : case2.2}). By Claim \ref{claim : bwk} there is a point on $k$ within distance $1$ of $\pi_{Z}(\partial{W})$. Then the $D_{1}$ fellow traveling of the geodesics $k$ and $t$ implies that $\pi_{Z}(\partial{W})$ is within distance $D_{1}+1$ of $t$. 
 \medskip

Recall the point $q'\in\pi_{Z}(Q)$ and the point $x$ on $\hull_{Z}(P,\Pi P)$ that realize the distance between $\pi_{Z}(Q)$ and $\hull_{Z}(P,\Pi P)$. Let the vertex $x'$ be a nearest point to $q'$ on $n$. Let $h'$ be a geodesic connecting $q'$ to $x'$ (see the left diagram of Figure \ref{fig : case2.2}). Let $r$ be the geodesic in $\hull_{Z}(P,\Pi P)$ between $\pi_{Z}(P)$ and $\pi_{Z}(\Pi P)$ on which $x$ lies. We have that $d_{Z}(\Pi P,\hat{u})\leq F$, so the end points of $n$ and $r$ are within distance $F+2$ of each other. Therefore, the geodesics $n$ and $r$, $D_{1}$ fellow travel. Moreover $x$ is a nearest point to $q'$ on the geodesic $r$ and $x'$ is a nearest point to $q'$ on the geodesic $n$. Then Lemma \ref{lem : ftrnearp} applied to the point $q'$ and the geodesics $n$ and $r$ implies that 
\begin{equation}\label{eq : dZxx'}d_{Z}(x,x')\leq 3D_{1}+6\delta.\end{equation}
 The inequalities (\ref{eq : dZxx'}) and (\ref{eq : dZTx}) combined by the triangle inequality imply that
$$d_{Z}(T,x')\leq d_{Z}(T,x)+d_{Z}(x,x')\leq M+3D_{1}+6\delta.$$
This bound implies that the end points of $h$ and $h'$ are within distance $M+3D_{1}+6\delta+2$ of each other (because $\diam_{Z}(T)\leq 2$). Therefore, $h$ and $h'$, $D_{2}$ fellow travel. Here $D_{2}$ is the maximum over all subsurfaces $Y\subseteq S$ of the fellow traveling distance of two geodesics in $\mathcal{C}(Y)$ with end points within distance $M+3D_{1}+6\delta+2$ of each other. By Claim \ref{claim : bwh} there is a point on $h$ within distance $1$ of $\pi_{Z}(\partial{W})$. Then the fellow traveling of $h$ and $h'$ implies that $\pi_{Z}(\partial{W})$ is within distance $D_{2}+1$ of $h'$.
\medskip

 Let $a=\max\{D_{1}+3,D_{2}+3\}$. By the conclusions of the above two paragraphs any point in $\pi_{Z}(\partial{W})$ is in the $a$ neighborhood of the geodesics $t$ and $h'$ (see the left diagram of Figure \ref{fig : case2.2}). Therfeore Proposition \ref{prop : treelike}(iv) applied to the point $q'$, a vertex in $\pi_{Z}(\mu^{+})$ realizing the distance between $\pi_{Z}(\mu^{+})$ and $n$ or each point point $x_{i}$ (when $\mu^{+}|_{Z}$ is in $\mathcal{EL}(Z)$ the points $x_{i}$ on $k$ converge to $\mu^{+}|_{Z}$ in the Gromov boundary of $\mathcal{C}(Z)$) and the geodesic $n$ implies that 
 \begin{equation}\label{eq : dZmx'}d_{Z}(m,x')\leq b(\delta_{Z},a)\leq b(\delta ,a).\end{equation} 
 
 The inequalities (\ref{eq : dZmu^}), (\ref{eq : dZxx'}), (\ref{eq : dZmx'}) and (\ref{eq : dPPv^}) combined by the triangle inequality give us
  \begin{equation} \label{eq : xpip}d_{Z}(x, \Pi P) \leq (3D_{1}+6\delta)+F+b+6\delta.\end{equation}
 The vertex $x$ is a nearest point to $\pi_{Z}(Q)$ on the geodesic $r\subset\hull_{Z}(P,\Pi P)$. Then Lemma \ref{lem : symm} applied to the geodesic $r$ and the point $q'$ and any point $j\in\pi_{Z}(P)$ which is on $r$ implies that
 $$d_{Z}(q',j)+6\delta\geq d_{Z}(j,x).$$
 Then since $q'\in\pi_{Z}(Q)$ and $j\in\pi_{Z}(P)$ we have
\begin{equation}\label{eq : dZPQ>dZPx}d_{Z}(P,Q)+6\delta\geq d_{Z}(P,x)-\diam_{Z}(P)-\diam_{Z}(Q)\geq d_{Z}(P,x)-4.\end{equation}
Now we have 
\begin{eqnarray*}
d_{Z}(P,Q)&\geq& d_{Z}(P,x)-6\delta-4 \geq d_{Z}(P,\Pi P)-d_{Z}(\Pi P,x)-6\delta-4\\
&\geq&d_{Z}(P,\Pi P)-(3D_{1}+6\delta)-F-b-6\delta-6\delta-6 
\end{eqnarray*}
The first inequality is (\ref{eq : dZPQ>dZPx}). The second one is the triangle inequality. The third one follows from inequality (\ref{eq : xpip}). 

By the above inequality (\ref{eq : consubs}) holds for $q=F+(3D_{1}+6\delta)+12\delta+b+6$.
\medskip

\noindent {\bf Case 2.3:}   $Z \subsetneq W$.   

Note that $\partial{Z} \pitchfork W$. We claim that 
\begin{claim}\label{claim : bzppip}
$\pi_{W}(\partial{Z})$ is in the $1-$neighborhood of any geodesic $l \subset \hull_{W}(P, \Pi P)$ connecting $\pi_{W}(P)$ to $\pi_{W}(\Pi P)$.
\end{claim}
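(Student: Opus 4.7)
The plan is to argue by contradiction in direct analogy with Claims \ref{claim : bwh} and \ref{claim : bwk} immediately above, now invoking Theorem \ref{thm : bddgeod} (Bounded Geodesic Image) with respect to the subsurface $Z$ rather than $W$. Suppose that $\pi_W(\partial Z)$ is not within distance $1$ of any vertex of $l$. Then every vertex $v$ of $l$ satisfies $d_W(v,\pi_W(\partial Z)) \geq 2$, so $v$ and $\partial Z$ intersect essentially in $W$, and hence $\pi_Z(v) \neq \emptyset$ for every vertex $v$ of $l$. The Bounded Geodesic Image Theorem then yields
$$\diam_Z\bigl(\pi_Z(|l|)\bigr) \leq G.$$

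The next step is to compare the endpoints of $\pi_Z(|l|)$ with $\pi_Z(P)$ and $\pi_Z(\Pi P)$ themselves. The endpoints of $l$ in $\mathcal{C}(W)$ are $\pi_W(P)$ and $\pi_W(\Pi P)$. Since $Z \subsetneq W$, the remark following Theorem \ref{thm : consistency} asserts that the tuples $x_Y = \pi_Y(P)$ and $x_Y = \pi_Y(\Pi P)$ satisfy condition (2) of the consistency conditions with $F_2 = 1$, which gives
$$d_Z\bigl(\pi_Z(P),\pi_Z(\pi_W(P))\bigr) \leq 1 \qquad\text{and}\qquad d_Z\bigl(\pi_Z(\Pi P),\pi_Z(\pi_W(\Pi P))\bigr) \leq 1.$$
Combining these two bounds with the diameter estimate via the triangle inequality produces
$$d_Z(P,\Pi P) \leq G + 2.$$

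This contradicts the standing hypothesis (\ref{eq : 1}), namely $d_Z(P,\Pi P) > A' + 5(F_1 + 4)$, since the definition of $A'$ in (\ref{eq : a'}) includes $G$ as a summand and hence $A' + 5(F_1+4) \gg G+2$. The contradiction forces $\pi_W(\partial Z)$ to lie in the $1$-neighborhood of $l$, as claimed. The only delicate point I would need to verify is that $\pi_Z(\pi_W(P))$ and $\pi_Z(\pi_W(\Pi P))$ are nonempty so that the consistency bound is actually applied meaningfully: this follows from the hypothesis that $d_Z(P,\Pi P)$ is large (so that $P$ and $\Pi P$ each cross $Z$ essentially) combined with $Z \subsetneq W$, which ensures that the arcs realizing $\pi_Z(P)$ and $\pi_Z(\Pi P)$ persist in $\pi_W(P)$ and $\pi_W(\Pi P)$.
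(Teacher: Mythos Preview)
Your proof is correct and takes essentially the same approach as the paper: contradict the claim, apply the Bounded Geodesic Image Theorem to $l$ with respect to $Z$, and contradict the lower bound (\ref{eq : 1}). The paper's proof is terser---it writes $d_Z(P,\Pi P)\leq G$ directly without spelling out the passage from $\diam_Z(\pi_Z(|l|))\leq G$ via the coarse equality $\pi_Z\circ\pi_W\approx\pi_Z$ for $Z\subsetneq W$---but your explicit invocation of the consistency bound is a correct way to justify that step.
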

 
Otherwise, $\partial{Z}$ would intersect every vertex of $l$. Then Theorem \ref{thm : bddgeod} implies that $d_{Z}(P, \Pi P) \leq G<A'$, but this contradicts (\ref{eq : 1}).
 
 Let $f$ be a geodesic in $\hull_{W}(\mu^{-},\mu^{+})$. Let $v$ and $z$ be nearest points to $p_{W}$ and $q_{W}$ on $f$, respectively. Here $p_{W}\in\pi_{W}(P)$ and $q_{W}\in\pi_{W}(Q)$ are nearest point to $\hull_{W}(\mu^{-},\mu^{+})$. Let  $k$ be a geodesic connecting $\pi_{W}(Q)$ and $\pi_{W}(\Pi Q)$, and $k'$ be a geodesic connecting $q_{W}$ and $z$. Let $l$ be a geodesic connecting $p_{W}$ and $\pi_{W}(\Pi P)$, and $l'$ be a geodesic connecting $p_{W}$ and $v$ (see the left diagram of Figure \ref{fig : Case2.3}). By (\ref{eq : dvPiP}) the end points of $k$ and $k'$ are within distance $F+(3D+6\delta)$ of each other. So $k$ and $k'$, $D_{1}$ fellow travel. Similarly $l$ and $l'$, $D_{1}$ fellow travel. By Claim \ref{claim : bzppip} and the $D_{1}$ fellow traveling of $l$ and $l'$ there is a point $e$ on $l'$ so that 
 \begin{equation}\label{eq : dWbdZp^}d_{W}(\partial{Z},e)\leq 1+D_{1}.\end{equation} 
 
 By (\ref{eq : dWvz}) and the choice of $A'$ in (\ref{eq : a'}) we have
 $$d_{W}(v,z)\geq A'-2F-2(3D+6\delta)-4> K_{W}.$$
  Then the tree like property (\ref{eq : tree}) implies that for any point $i$ on $k'$,
 $$d_{W}(e,i)\geq d_{W}(v,z)-\delta'_{W}.$$
  Again by (\ref{eq : dWvz}) and the choice of $A'$ we have
  $$d_{W}(v,z)-\delta'_{W}\geq A'-2F-2(3D+6\delta)-4-\delta'>2D_{1}+3.$$
From the above two inequalities we obtain
  $$d_{W}(e,i)>2D_{1}+3.$$
  Since the point $i$ on $k'$ was arbitrary we may conclude that 
  $$d_{W}(e,k')>2D_{1}+3.$$
   The above inequality and (\ref{eq : dWbdZp^}) imply that 
   $$d_{W}(\partial{Z},k')>D_{1}+2.$$
    Finally, the above inequality and the $D_{1}$ fellow traveling of $k$ and $k'$ imply that 
   $$d_{W}(\partial{Z},k)>2.$$
    Therefore, $\partial{Z}$ intersects every vertex of $k$. So Theorem \ref{thm : bddgeod} implies that
 \begin{equation}\label{eq : dZQPiQ}d_{Z}(Q, \Pi Q) \leq G.\end{equation} 
Now by the triangle inequality 
$$d_{Z}(P,Q)\geq d_{Z}(P,\Pi Q)-d_{Z}(\Pi Q, Q)-2.$$
The above inequality and the inequality (\ref{eq : dZQPiQ}) imply that  
\begin{equation}\label{eq : dZPQ}d_{Z}(P,Q)\geq d_{Z}(P,\Pi Q)-G-2.\end{equation}
 Let $f$ be a geodesics in $\hull_{Z}(\mu^{-},\mu^{+})$. Let $u$ and $w$ be nearest points to respectively $p_{Z}$ and $q_{Z}$ on $f$. Since $u$ is a nearest point to $p_{Z}$ on $f$ and $w$ is a point on $f$ we have that
 $$d_{Z}(p_{Z},w)\geq d_{Z}(p_{Z},u).$$
 Then since $\diam_{Z}(P)\leq 2$ (Lemma \ref{lem : diamproj}) we get
 \begin{equation}\label{eq : dZPw>dZPu}d_{Z}(P,w)\geq d_{Z}(P,u)-2\end{equation}
(see the right digram of Figure \ref{fig : Case2.3}).

 Now by the triangle inequality and the inequalities (\ref{eq : dZPw>dZPu}) and (\ref{eq : dvPiP}) we have
  \begin{eqnarray}\label{eq : dZPPQ}
  d_{Z}(P,\Pi Q)&\geq& d_{Z}(P,w)-d_{Z}(w,\Pi Q)-\diam_{Z}(\Pi Q)\\
  &\geq& d_{Z}(P,u)-F-(3D+6\delta)-4.\nonumber
  \end{eqnarray}
Moreover, by the triangle inequality and the bound (\ref{eq : dvPiP}) we have
  \begin{eqnarray}\label{eq : dZPu}
  d_{Z}(P,u)&\geq& d_{Z}(P,\Pi P)-d_{Z}(\Pi P,u)\\
  &\geq& d_{Z}(P,\Pi P)-F-(3D+6\delta)-4.\nonumber
  \end{eqnarray}
  By inequalities (\ref{eq : dZPu}) and (\ref{eq : dZPPQ}) we have 
 \begin{equation}\label{eq : dZPPQ>dZPPP} d_{Z}(P, \Pi Q) \geq d_{Z}(P,\Pi P)-2F-2(3D+6\delta)-4. \end{equation}
  Then the inequalities (\ref{eq : dZPPQ>dZPPP}) and (\ref{eq : dZPQ}) imply that
$$d_{Z}(P, Q)\geq d_{Z}(P, \Pi P)-G-2F-2(3D+6\delta)-6.$$
 Thus (\ref{eq : consubs}) holds for $q=G+2F+2(3D+6\delta)+6$.

\begin{figure}
\centering
\scalebox{0.2}{\includegraphics{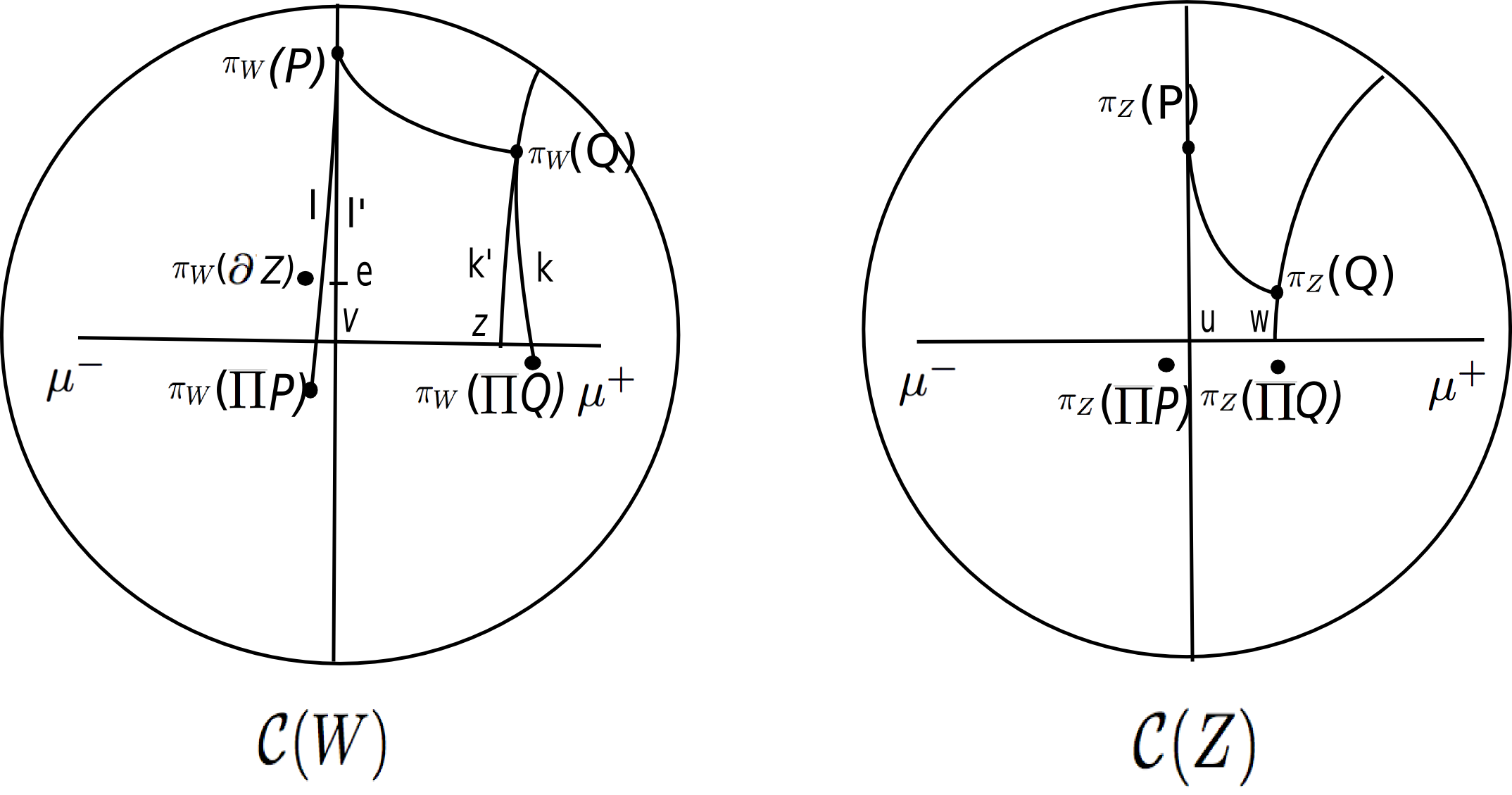}}
\caption{ {\bf Case 2.3:} Left diagram: $\pi_{W}(\partial{Z})$ is in the $1-$neighborhood of $l$. The geodesics $l$ and $l'$, and $k$ and $k'$, respectively, $D_{1}-$fellow travel, and $d_{W}(v,z)> 2D_{1}+3+G+\delta_{W}'$. It follows that $d_{W}(\partial{Z},k)>2$, and thus $\partial{Z}$ intersects every vertex of the geodesic $k$ connecting $\pi_{W}(Q)$ to $\pi_{W}(\Pi Q)$. Then Bounded Geodesic Image Theorem implies that $d_{Z}(Q,\Pi Q)\leq G$.}
\label{fig : Case2.3}
\end{figure}
\medskip

Establishing the inequality (\ref{eq : consubs}) in Cases (1) and (2) we may conclude that the inequality holds for $q$ the maximum of the constants $q$ we obtained in these two cases. This finishes the proof of the lemma.
\end{proof}
\begin{remark}
For the parameters $R,\eta$ and $B$ in the above theorem we have that $R \to \infty$, $\eta \to 0$ and $B \to \infty$, as $A \to \infty$. So applying the Morse lemma argument we get $\Sigma-$hulls with worse and worse stability property. More precisely, there are $K\geq 1$ and $C\geq 0$ such that $d_{A}(K,C)\to \infty$ as $A\to \infty$. 
\end{remark}


\subsection{Fellow traveling}\label{subsec : fellowtr} We start by the definition of fellow traveling of parametrized quasi-geodesics in a metric space.
   
  \begin{definition} \label{def : fellowtr}\textnormal{ (Fellow traveling)} 
  Given $D\geq 0$. Let $h_{1}:I_{1} \to \mathcal{X}$ and $h_{2}:I_{2} \to  \mathcal{X}$ be two parametrized quasi-geodesics. We say that $h_{1}$ and $h_{2}$, $D-$fellow travel if the following holds: For each $i \in I_{1}$ there is an $i' \in I_{2}$ such that $d(h_{1}(i), h_{2}(i')) \leq  D$ and vice versa. In other words, the Hausdorff distance of $h_{1}(I_{1})$ and $h_{2}(I_{2})$ is bounded by $D$.
 
Given a subinterval $I'_{1} \subset I_{1}$, we say that $h_{1}$, $D-$fellow travels $h_{2}$ over $I'_{1}$ if there is a subinterval $I'_{2} \subset I_{2}$ such that $h_{1}|_{I'_{1}}$ and $h_{2}|_{I'_{2}}$, $D-$fellow travel as above.
 \end{definition}
  
Let $h_{1}$ be a $(K_{1},C_{1})-$quasi-geodesic and $h_{2}$ be a $(K_{2},C_{2})-$quasi-geodesic. Let 
$$N_{h_{1},h_{2}}:I_{1}\to I_{2},$$
 be the coarse map which assigns to each $i \in I_{1}$ any $h_{2}(i')$ where $d(h_{1}(i),h_{2}(i')) \leq D$. Then $N_{h_{1},h_{2}}$ is a reparametrization of $h_{2}$ and for any $i,j\in I_{1}$, 
$$|N_{h_{1},h_{2}}(i),N_{h_{1},h_{2}}(j)|\asymp_{K,C}|i-j|$$
 The constants $K=K_{1}K_{2}$ and $C=\max\{K_{2}C_{1}+C_{2}+2K_{2}D,K_{1}C_{2}+C_{1}+2K_{1}D\}$. The above quasi-equality in particular implies that the diameter of $N_{h_{1},h_{2}}(i)$ is bounded above by $C$. Also the same holds exchanging $h_{1}$ and $h_{2}$. 
  
  \begin{thm} \label{thm : fellowtr}
  Given $A>0$ there is a constant $D$ with the following property. Let $g:[a,b]\to\Teich(S)$ be a WP geodesic segment with $A-$narrow end invariant $(\nu^{-},\nu^{+})$, and let $\rho$ be a hierarchy path  between $\nu^{-}$ and $\nu^{+}$. Then $\rho$ and $Q(g)$, $D-$fellow travel.
  \end{thm}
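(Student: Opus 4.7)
The plan is to combine Brock's quasi-isometric model theorem (Theorem~\ref{thm : brockqisom}) with the stability result just established (Theorem~\ref{thm : stability}), thereby reducing fellow traveling of $Q(g)$ with $\rho$ to the Morse-like containment provided by stability.

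First, by Theorem~\ref{thm : brockqisom}, the coarsely defined Bers map $Q:\overline{\Teich(S)}\to P(S)$ is a $(K_{\WP},C_{\WP})$-quasi-isometry. Since $g$ is a WP geodesic parametrized by arc length, the composition $t\mapsto Q(g(t))$ is a $(K',C')$-quasi-geodesic in $P(S)$ with constants depending only on the topological type of $S$. By construction the endpoints $Q(g(a))$ and $Q(g(b))$ are Bers pants decompositions of $g(a)$ and $g(b)$, so they coincide (up to the finitely many Bers choices, a bounded distance in $P(S)$) with $\base(\nu^{-})=\rho(m)$ and $\base(\nu^{+})=\rho(n)$ when $\nu^{\pm}$ are Bers markings. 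The $A$-narrowness of $(\nu^{-},\nu^{+})$ then lets me invoke Theorem~\ref{thm : stability}: the quantifier $d_{A}$ applied to the $(K',C')$-quasi-geodesic $Q(g)$, whose endpoints lie within $O(1)$ of $|\rho|$, gives
$$ Q(g([a,b]))\;\subset\; N_{D_{0}}(|\rho|),\qquad D_{0}:=d_{A}(K',C')+O(1). $$

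For genuine fellow traveling I also need the reverse containment $|\rho|\subset N_{D_{1}}(Q(g([a,b])))$. Here I would use that $\rho$ is itself a $(K_{0},C_{0})$-quasi-geodesic with uniform constants. The one-sided bound assigns to every $t\in [a,b]$ an index $i(t)\in[m,n]$ with $d_{P(S)}(Q(g(t)),\rho(i(t)))\le D_{0}$; since $Q(g(t))$ and $Q(g(t'))$ differ by $O(|t-t'|+1)$ and $\rho$ is a quasi-geodesic, the map $t\mapsto i(t)$ is coarsely Lipschitz. Near the endpoints, $i(t)$ takes values close to $m$ and $n$ (by the endpoint matching of the previous paragraph). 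A coarse intermediate-value argument then shows that every $i_{0}\in[m,n]$ is within a bounded (depending only on $K_{0},C_{0},D_{0}$) amount of some $i(t)$, giving $d_{P(S)}(\rho(i_{0}),Q(g(t)))\le D_{1}=D_{1}(A)$. Setting $D:=\max\{D_{0},D_{1}\}$ and appealing to Definition~\ref{def : fellowtr} concludes the proof.

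The main obstacle is the infinite case, in which $\nu^{\pm}$ is an ending lamination rather than a marking, so $\rho:[m,n]\to P(S)$ is an infinite hierarchy path and $Q(g)$ does not literally share endpoints with $\rho$ (the stability statement as formulated requires a quasi-geodesic with endpoints \emph{on} $|\rho|$). I would handle this by exhausting $g$ by finite subsegments $g|_{[a_{k},b_{k}]}$ whose Bers-marking end invariants $A$-approximate $(\nu^{-},\nu^{+})$, applying the finite-endpoint case with a \emph{uniform} constant $D=D(A)$, and passing to a diagonal limit using the compactness arguments of the geodesic limit picture (Theorem~\ref{thm : geodlimit}); uniformity of $D$ in $k$ is what makes the limiting step valid.
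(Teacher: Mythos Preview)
Your approach is correct and is essentially the paper's own argument: use Theorem~\ref{thm : brockqisom} to make $Q(g)$ a $(K_{\WP},C_{\WP})$-quasi-geodesic in $P(S)$ with endpoints on $|\rho|$, then invoke Theorem~\ref{thm : stability}. The paper is in fact terser than you---it simply asserts that stability yields the Hausdorff bound $D=d_{A}(K_{\WP},C_{\WP})$, whereas you correctly observe that Definition~\ref{def : stable} only gives the inclusion $Q(g([a,b]))\subset N_{D_{0}}(|\rho|)$ and then supply the standard connectedness/coarse-Lipschitz argument for the reverse inclusion. That extra paragraph is a genuine detail the paper glosses over, and your treatment of it is fine.

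Your final paragraph about the infinite case is unnecessary here. The theorem is stated for a segment $g:[a,b]\to\Teich(S)$, so by the definition of end invariant in \S\ref{subsec : WPendinv} the pair $(\nu^{-},\nu^{+})$ consists of Bers markings of $g(a)$ and $g(b)$, and $\rho$ is a finite hierarchy path sharing those endpoints (up to $O(1)$). Infinite hierarchy paths and ending laminations enter only later in \S\ref{sec : examples}, where the paper works directly with finite subsegments, so no limiting argument is needed at this point.
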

   \begin{proof}
The path $Q(g)$ is a $(K_{\WP}, C_{\WP})-$quasi geodesic in $P(S)$ where $K_{\WP}$ and $C_{\WP}$ depend only on the topological type of $S$ (Theorem \ref{thm : brockqisom}). The hierarchy path $\rho$ is a $(k,c)-$quasi-geodesic in $P(S)$ where $k$ and $c$ depend only on the topological type of $S$ (see $\S$\ref{sec : ccplx}).  By Theorem \ref{thm : stability}, $\rho$ is $d_{A}-$stable. So the Hausdorff distance of $Q(g)$ and $|\rho|$ is bounded by $D=d_{A}(K_{WP},C_{WP})$ in $P(S)$. Then by Definition \ref{def : fellowtr}, $\rho$ and $Q(g)$, $D$ fellow travel.  
\end{proof}

Let $g:[a,b]\to\Teich(S)$ be a WP geodesic segment and $\rho:[m,n]\to P(S)$ be a hierarchy path. Suppose that $Q(g)$ and $\rho$, $D-$fellow travel. Given $i \in [m,n]$ let $I_{i}\subseteq[a,b]$ be the smallest interval that contains every $t \in [a,b]$ such that $d(Q(g(t)),\rho(i))\leq D$. Then define the coarse map 
$$N_{\rho, g}:[m,n]\to [a,b],$$ 
so that $N_{\rho, g}(i)$ is any $t\in I_{i}$. The following proposition is a straightforward consequence of the definition of $N_{\rho,g}$.
 
  \begin{prop}\label{prop : parmap} The coarse map $N_{\rho,g}:[m,n]\to [a,b]$ has the following properties:
  \begin{itemize}
  \item Let $i\in[m,n]$. Let $r,s\in N_{\rho,g}(i)$, then $|r-s|\leq K_{\WP}(2D+C_{\WP})$.
   \item Let $i\in[m,n]$ and $r\in N_{\rho,g}(i)$, then $d(\rho(i),Q(g(r)))\leq D_{1}$ where $D_{1}=K_{\WP}(2DK_{\WP}+K_{\WP}C_{\WP})+C_{\WP}+D$.
  \item $\bigcup_{i\in [m,n]}N_{\rho,g}(i)$ covers $[a,b]$.
\item There are constants $K\geq 1$ and $C\geq 0$ depending on $D$ such that for any $i,j\in[m,n]$ we have
 $$|N_{\rho,g}(i)-N_{\rho,g}(j)| \asymp_{K,C} |i-j|.$$ 
 \end{itemize}
 \end{prop}
 By Theorem \ref{thm : fellowtr} this proposition in particular applies to a WP geodesic segment with narrow end invariant and a hierarchy path with the same end points.


\section{Itinerary of Weil-Petersson geodesic segments}\label{sec : itinerarywpgeod}

The itinerary of a WP geodesic $g$ in the Teichm\"{u}ller space refers to the list of short curves (curves with length less than a sufficiently small $\epsilon>0$), the time intervals along $g$ that each curve is short and the order that these intervals appear along $g$.

In this section we present our results on the control of length-functions and twist parameters along WP geodesics with narrow end invariant. 

Our main result Theorem \ref{thm : bdryzsh} asserts the following: Suppose that over an interval all of the subsurface coefficients are bounded, except possibly those of some annular subsurfaces whose core curves together consist the boundary of a large subsurface. Then the length of these curves are arbitrary short over a suitably shrunk subinterval of the interval. We prove this theorem at the end of $\S$\ref{subsec : lfcontrol}. This theorem describes a partial itinerary for WP geodesic segments with narrow end invariant. 

\subsection{Isolated annular subsurfaces}\label{subsec : comblem}

In this subsection we prove some combinatorial lemmas which together with the fellow traveling property of hierarchy paths between narrow pairs (Theorem \ref{thm : fellowtr}) provide us with a combinatorial frame work in which we will be able to control length-functions along WP geodesics. We also introduce the notion of an {\it isolated annular subsurface}, Definition \ref{def : isolatedann}.
\medskip

\noindent{\bf Bounded combinatorics:}
Given $R, R'>0$ and a subsurface $Z$, we say that $Z$ has $(R,R')-$bounded combinatorics between a pair of partial markings or laminations $\mu_{1}$ and $\mu_{2}$ if for any proper essential non-annular subsurface $Y \subsetneq Z$,
$$d_{Y}(\mu_{1}, \mu_{2}) \leq R,$$
and for any annular subsurface with core curve $\gamma \in \mathcal{C}_{0}(Z)$ which intersects both $\mu_{1}$ and $\mu_{2}$,
$$d_{\gamma}(\mu_{1}, \mu_{2}) \leq R'.$$
If only the first bound holds we say that the $Z$ has non-annular $R-$bounded combinatorics and if only the second bound holds we say that $Z$ has annular $R'-$bounded combinatorics.

\begin{lem}\textnormal{ (No backtracking of hierarchy paths)}
Let $\rho:[m,n]\to P(S)$ be a hierarchy path. Let $[i_{1},i_{2}]\subseteq [m,n]$ and $i, j\in [i_{1},i_{2}]$ with $i<j$. Then for every subsurface $Y\subseteq S$ we have that 
\begin{equation}\label{eq : nbtrack}d_{Y}(\rho(i_{1}),\rho(i_{2}))\geq d_{Y}(\rho(i),\rho(j))-2M_{2} .\end{equation}
\end{lem}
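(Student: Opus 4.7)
The plan is to derive this lemma as a direct consequence of the ``no backtracking'' property (\ref{h : nobacktrack}) of hierarchy resolution paths already listed in Theorem \ref{thm : hrpath}, applied twice. Recall that property (\ref{h : nobacktrack}) states that whenever $p \leq q \leq r$ in $[m,n]$ and $Y \subseteq S$ is a subsurface, one has
$$d_Y(\rho(p),\rho(r)) + M_2 \geq d_Y(\rho(p),\rho(q)) + d_Y(\rho(q),\rho(r)).$$
Since subsurface projection distances are nonnegative, this says in particular that inserting an intermediate point into a pair can only decrease the pairwise distance by at most $M_2$.

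First, I apply property (\ref{h : nobacktrack}) to the triple $i_1 \leq i \leq j$ (with $Y$ still fixed). Dropping the nonnegative term $d_Y(\rho(i_1),\rho(i))$ on the right-hand side yields
$$d_Y(\rho(i_1),\rho(j)) \geq d_Y(\rho(i),\rho(j)) - M_2.$$

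Next, I apply property (\ref{h : nobacktrack}) to the triple $i_1 \leq j \leq i_2$. Again dropping the nonnegative term $d_Y(\rho(j),\rho(i_2))$ on the right-hand side gives
$$d_Y(\rho(i_1),\rho(i_2)) \geq d_Y(\rho(i_1),\rho(j)) - M_2.$$

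Chaining the two inequalities produces
$$d_Y(\rho(i_1),\rho(i_2)) \geq d_Y(\rho(i_1),\rho(j)) - M_2 \geq d_Y(\rho(i),\rho(j)) - 2M_2,$$
which is exactly the claim. There is no genuine obstacle here: the lemma is a packaging of two back-to-back uses of the triangle-type inequality (\ref{h : nobacktrack}), convenient for later arguments in $\S$\ref{sec : itinerarywpgeod} that need to compare projections at arbitrary interior parameters to projections at the endpoints of a subinterval of a hierarchy path.
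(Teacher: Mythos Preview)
Your proof is correct and essentially the same as the paper's: both apply property (\ref{h : nobacktrack}) twice, once to the triple $i_1\le i\le j$ and once to $i_1\le j\le i_2$, then discard the nonnegative terms to obtain the desired inequality. The paper merely chains the two applications in a single displayed inequality before dropping the extra terms, whereas you drop them step by step, but the argument is identical.
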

\begin{proof}
We have that
 \begin{eqnarray}\label{eq : dYri1ri2}
 &&d_{Y}(\rho(i_{1}),\rho(i))+d_{Y}(\rho(i),\rho(j))+d_{Y}(\rho(j),\rho(i_{2}))\\
 &\leq&d_{Y}(\rho(i_{1}),\rho(j))+d_{Y}(\rho(j),\rho(i_{2}))+M_{2}\nonumber\\
 &\leq& d_{Y}(\rho(i_{1}),\rho(i_{2}))+2M_{2}.\nonumber
 \end{eqnarray}
 The first inequality follows from Theorem \ref{thm : hrpath}(\ref{h : nobacktrack}) (no backtracking) considering $i_{1}<i<j$. The second inequality follows from no backtracking considering $i<j<i_{2}$. All of the terms in the first sum of (\ref{eq : dYri1ri2}) are non negative, thus 
 $$d_{Y}(\rho(i_{1}),\rho(i_{2}))\geq d_{Y}(\rho(i),\rho(j))-2M_{2},$$
  as was desired.    
  \end{proof}
 Now let $Z$ be a component domain of $\rho$ and $[i_{1},i_{2}]\subset J_{Z}$. If $Z$ has $(R,R')-$bounded combinatorics between $\rho(i_{1})$ and $\rho(i_{2})$ then by (\ref{eq : nbtrack}) for any $i,j\in[i_{1},i_{2}]$ and any subsurface $Y\subseteq S$,
 $$d_{Y}(\rho(i),\rho(j))\leq R+2M_{2}$$ 
 and for any $\gamma\in \mathcal{C}_{0}(Z)$
 $$d_{\gamma}(\rho(i),\rho(j))\leq R'+2M_{2}.$$
  In this situation we say that $Z$ has $(R,R')-$bounded combinatorics over the interval $[i_{1},i_{2}]$.
 \medskip
 
 In the following lemma we show that over a portion of a hierarchy path where a large subsurface $Z$ has non-annular bounded combinatorics $\pi_{Z}\circ \rho$ ($\pi_{Z}$ is the $Z$ subsurface projection) is a parametrization of the geodesic $g_{Z}\subset \mathcal{C}(Z)$ (see Theorem \ref{thm : hrpath}(\ref{h : J})) as a quasi-geodesic with constants depending only on $R$.

 \begin{lem} \label{lem : bddcomb} 
 
 Given $R>0$, there are $K_{R}\geq 1$ and $C_{R} \geq 0$ with the following properties. Let $\rho :[m,n] \to P(S)$ be a hierarchy path and let $Z$ be a large, non-annular component domain with non-annular $R-$bounded combinatorics over $[i_{1},i_{2}] \subset J_{Z}\subset [m,n]$. Then for any  $i,j \in [i_{1}, i_{2}] $ we have
 $$d(\rho(i),\rho(j))\asymp_{K_{R},C_{R}} d_{Z}(\rho(i),\rho(j)).$$
 \end{lem}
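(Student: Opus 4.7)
The plan is to reduce the pants distance to a single term in the Masur-Minsky distance formula (\ref{eq : dsf}), namely the $Z$-term, by showing that all other non-annular subsurface projections are uniformly bounded over $[i_1,i_2]$.

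For the lower bound $d_Z(\rho(i),\rho(j)) \leq K_R d(\rho(i),\rho(j)) + C_R$, I would observe that a single elementary move in the pants graph changes a non-annular subsurface projection by a universally bounded amount (since two pants decompositions differing by an elementary move share all but one curve, and the new curve has minimal intersection with the old), so $\pi_Z$ is coarsely Lipschitz. This half requires no bounded combinatorics hypothesis.

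For the upper bound $d(\rho(i),\rho(j)) \leq K_R d_Z(\rho(i),\rho(j)) + C_R$, the key claim is that for every non-annular $Y\subseteq S$ with $Y\neq Z$, $d_Y(\rho(i),\rho(j))$ is bounded by a constant depending only on $R$ and the topological type of $S$. I would divide into cases by the position of $Y$ relative to $Z$:

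\begin{itemize}
\item $Y \subsetneq Z$: The non-annular $R$-bounded combinatorics hypothesis gives $d_Y(\rho(i_1),\rho(i_2)) \leq R$, and the no-backtracking inequality (\ref{eq : nbtrack}) upgrades this to $d_Y(\rho(i),\rho(j)) \leq R + 2M_2$ for $i,j\in [i_1,i_2]$.

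\item $Y \supsetneq Z$ or $Y \pitchfork Z$: Since $Z$ is large, every non-annular essential subsurface of $S$ must meet $Z$, ruling out $Y$ disjoint from $Z$. In the remaining configurations, $\pi_Y(\partial Z)$ is non-empty (in the $Y \pitchfork Z$ case by definition, in the $Y \supsetneq Z$ case because some curve of $\partial Z$ lies in the interior of $Y$), and has diameter bounded by a universal constant $C_0$ depending only on the topological type of $S$. By Theorem \ref{thm : hrpath}(\ref{h : J}), $\partial Z \subset \rho(k)$ for every $k \in J_Z \supseteq [i_1,i_2]$, so $\pi_Y(\partial Z) \subseteq \pi_Y(\rho(i)) \cap \pi_Y(\rho(j))$, whence $d_Y(\rho(i),\rho(j)) \leq C_0$.
\end{itemize}

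Choosing the threshold constant $A'=\max\{R+2M_2, C_0, M_1\}+1$ in the distance formula (\ref{eq : dsf}) makes every term with $Y \neq Z$ vanish, leaving $d(\rho(i),\rho(j)) \asymp_{K',C'} \{d_Z(\rho(i),\rho(j))\}_{A'} \leq d_Z(\rho(i),\rho(j))$, with $K', C'$ the distance formula constants associated to the threshold $A'$. Combined with the lower bound this yields the claimed quasi-isometry with constants $K_R, C_R$ depending only on $R$.

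The main conceptual point — and the only real obstacle — is the casework in the second bullet: one must use the largeness of $Z$ in an essential way to prevent a non-annular $Y$ from being disjoint from $Z$ (in which case neither the hypothesis nor the intersection with $\partial Z$ would control $d_Y$), and then verify that $\pi_Y(\partial Z)$ is non-empty and uniformly bounded in the remaining topological configurations. Everything else is bookkeeping with the distance formula.
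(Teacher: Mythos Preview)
Your proof is correct and follows the same strategy as the paper: bound every non-$Z$ term in the distance formula uniformly in $R$ (via bounded combinatorics and no-backtracking for $Y\subsetneq Z$, and via $\partial Z\subset\rho(i),\rho(j)$ otherwise), then choose the threshold above that bound. Your case split is in fact slightly more thorough than the paper's, which names only the cases $Y\subseteq Z$ and $Y\pitchfork Z$ and tacitly absorbs $Y\supsetneq Z$ (including $Y=S$) into the latter even though that case does not satisfy the paper's definition of $\pitchfork$; the same bound applies since $\pi_Y(\partial Z)\neq\emptyset$ there as well.
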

 
 \begin{proof}
 Given a threshold constant $A\geq M_{1}$, we write the distance formula (\ref {eq : dsf})
 \begin{equation} \label{eq : distf} d(\rho(i),\rho(j))\asymp_{K,C}\sum_{\substack{ Y\subseteq S\\ \nonannular}}\{ d_{Y}(\rho(i),\rho(j))\}_{A}.\end{equation}
Note that since the connected components of $S\backslash Z$ are only annuli and three holed spheres, every subsurface $Y$ contributing to the above sum either is a subsurface of $Z$ or overlaps $Z$. Suppose that $Y \pitchfork Z$.  Since $i,j \in J_{Z}$, by Theorem \ref{thm : hrpath}(\ref{h : J}), $\partial{Z}\subset \rho(i)$ and $\partial{Z}\subset\rho(j)$. Thus $\pi_{Y}(\rho(i))\cap\pi_{Y}(\rho(j))\neq\emptyset$. Then
 \begin{equation}\label{eq : YintZbd}d_{Y}(\rho(i),\rho(j))\leq 2.\end{equation} 
Now suppose that $Y \subsetneq Z$. With out loss of generality we may assume that $i_{1}<i<j<i_{2}$. Then the non-annular $R-$bounded combinatorics of $Z$ over $[i_{1},i_{2}]$ and no backtracking of hierarchy paths (\ref{eq : nbtrack}) imply that 
\begin{equation}\label{eq : YinZbd}d_{Y}(\rho(i),\rho(j)) \leq d_{Y}(\rho(i_{1}), \rho(i_{2}))+2M_{2}\leq R+2M_{2}.\end{equation}
Having the bounds (\ref{eq : YintZbd}) and (\ref{eq : YinZbd}) on the subsurface coefficients contributing to the sum (\ref{eq : distf}) if we let the threshold constant be $A_{R}=\max \{M_{1},R+2M_{2},2\}$ we get
$$d(\rho(i),\rho(j))\asymp_{K_{R},C_{R}} d_{Z}(\rho(i),\rho(j)),$$
where $K_{R}$ and $C_{R}$ are the constants corresponding to the threshold constant $A_{R}$ in the distance formula.
 \end{proof}
 
 An elementary move on a pants decomposition $P$, replaces a curve $\alpha \in P$ with a curve $\alpha'$ with distance $1$ in the complexity $1$ subsurface which $\alpha$ and $\alpha'$ fill and does not change any curve in $P-\alpha$. Thus the distance in the curve complex of a non-annular subsurface between the projections of $P$ and a pants decomposition obtained from $P$ by an elementary move is at most $1$. Then for any $P,Q \in P(S)$ and any non-annular subsurface $Y\subseteq S$ we have that
  \begin{equation} \label{eq : ddsubs}d_{Y}(P,Q) \leq d(P,Q).\end{equation}
 Let $\rho$ and $Q(g)$, $D-$fellow travel and $N=N_{g,\rho}$ be the parameter map from Proposition \ref{prop : parmap}. Given $i,j \in [m,n]$ let $r \in N(i)$ and $s\in N(j)$. Then by the second bullet of the proposition there is $D_{1}$ depending on $D$ so that $d(\rho(i),Q(g(r)))\leq D_{1}$ and $d(\rho(j),Q(g(s)))\leq D_{1}$. Then by (\ref{eq : ddsubs}), $d_{Y}(\rho(i),Q(g(r))\leq D$ and  $d_{Y}(\rho(j),Q(g(s)))\leq D$. So by the triangle inequality we obtain the quasi-equality of subsurface coefficients 
 $$d_{Y}(\rho(i),\rho(j)) \asymp_{1,2D_{1}} d_{Y}(Q(g(r)),Q(g(s))).$$
 But such a quasi-equality a-priori does not hold for annular subsurfaces, so we consider isolated annular subsurfaces along hierarchy paths. In Lemma \ref{lem : anncoeffcomp} we prove a quasi-equality for the subsurface coefficient of an isolated annular subsurface along $\rho$ and a $D-$fellow traveling WP geodesic depending only on $D$.  

 \begin{definition}\textnormal{ (Isolated annular subsurface)}\label{def : isolatedann}
Given $w,r,R>0$. Let $\rho:[m,n] \to P(S)$ be a hierarchy path. We say that an annular subsurface $A(\gamma)$ with core curve $\gamma$ is $(w,r,R)-$isolated at $i \in [m,n]$ if the following holds: There is a pants decomposition $\widehat{Q}$ such that $\gamma\in\widehat{Q}$ and $d(\widehat{Q},\rho(i))\leq r$. There are large, non-annular component domains of $\rho$, $Z_{1}$ and $Z_{2}$ with $\gamma\notin\partial{Z_{1}}$ and $\gamma\notin\partial{Z_{2}}$ and intervals $I_{1}\subseteq J_{Z_{1}}$ with $|I_{1}|\geq w$, and $I_{2}\subseteq J_{Z_{2}}$ with $|I_{2}|\geq w$ such that $\max I_{1}< i < \min I_{2}$. Moreover $Z_{1}$ and $Z_{2}$ have non-annular $R-$bounded combinatorics over $I_{1}$ and $I_{2}$, respectively. For the illustration of isolation see Figure \ref{fig : anncoeffbd}.
\end{definition}

\begin{lem} \label{lem : anncoeffcomp}\textnormal{(Annular coefficient comparison)}
Given $D,r$ and $R$ positive, there are constants ${\bf w}={\bf w}(D,r,R)$ and $B=B(D)$ with the following properties. Let $\rho :[m,n] \to P(S)$ be a hierarchy path. Let $A(\gamma)$ be $({\bf w},r,R)-$isolated at $i$. Let the intervals $I_{1},I_{2}\subset [m,n]$ be as in the definition of isolated annular subsurface. Moreover let $i_{1},i_{2}\in[m,n]$ be so that $i_{1}< \min  I_{1}$ and $i_{2}> \max  I_{2}$. Let $g:[a,b] \to \Teich(S)$ be a WP geodesic parametrized by arc-length such that $Q(g)$ and $\rho$, $D-$fellow travel. Let $t_{1}\in N(i_{1})$ and $t_{2}\in N(i_{2})$, where $N=N_{\rho,g}$ is the parameter map from Proposition \ref{prop : parmap}. Then we have the quasi-equality for annular subsurface coefficients
\begin{equation}\label{eq : anncoeffcomp}d_{\gamma}(Q(g(t_{1})),Q(g(t_{2})) \asymp_{1,B} d_{\gamma}(\rho(i_{1}),\rho(i_{2})).\end{equation}
Furthermore, we have the following lower bound for the length of $\gamma$,
\begin{equation}\label{eq : lnbzlb}\min \{ \ell_{\gamma}(g(t_{1})),\ell_{\gamma}(g(t_{2}))\} \geq \omega(L_{S}).\end{equation}
Here $\omega(l)$ denotes the width of the collar of a simple closed geodesic with length $l$ on a complete hyperbolic surface.
\end{lem}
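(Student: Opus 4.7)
The plan is to first establish the length lower bound (\ref{eq : lnbzlb}) at both endpoints using the isolation structure and the Collar Lemma, and then use this bound together with the length-versus-twist controls of Corollaries \ref{cor : twsh} and \ref{cor : shtw} to derive the annular coefficient comparison (\ref{eq : anncoeffcomp}). The topological driver is that since $Z_1$ is large and $\gamma \notin \partial Z_1$, the curve $\gamma$ must lie essentially in the interior of $Z_1$ (and similarly for $Z_2$); via Lemma \ref{lem : bddcomb}, the non-annular $R$-bounded combinatorics over $I_j$ ensures $\pi_{Z_j}(\rho|_{I_j})$ is a quasi-geodesic in $\mathcal{C}(Z_j)$ with constants depending only on $R$. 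By choosing $\textbf{w}$ sufficiently large relative to $D$, $r$, and $R$, one can then find $j_1\in I_1$ (resp.\ $j_2\in I_2$) for which $\rho(j_1)$ contains a curve $\beta\subset Z_1$ of controlled intersection with $\gamma$.

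For the length lower bound at $g(t_1)$, I would argue by contradiction: assume $\ell_\gamma(g(t_1))<\omega(L_S)$. The Collar Lemma then forces every curve crossing $\gamma$ to have length exceeding $L_S$ at $g(t_1)$, so no Bers curve of $g(t_1)$ crosses $\gamma$. Using the $D$-fellow traveling of $Q(g)$ and $\rho$, the pants decomposition $Q(g(s_1))$ for $s_1\in N(j_1)$ lies within $D$ elementary moves of $\rho(j_1)$, and must therefore contain a Bers curve transverse to $\gamma$. Invoking convexity of length-functions along WP geodesics (Theorem \ref{thm : convlf}) together with the Collar Lemma at $g(s_1)$ then produces a contradiction via the quantitative twist-to-short-curve estimates of Theorem \ref{thm : shtw}. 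A symmetric argument applied to $Z_2$ and $I_2$ handles $g(t_2)$.

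Given the endpoint length bound, the annular coefficient comparison follows by partitioning $g|_{[t_1,t_2]}$ into uniformly bounded WP-length sub-segments via Proposition \ref{prop : parmap}, and applying Corollaries \ref{cor : twsh} and \ref{cor : shtw} on each sub-segment to relate twisting about $\gamma$ to the minimum length of $\gamma$ there. The isolation ensures that both the hierarchy's $d_\gamma$-budget (by bounded combinatorics in $I_1$, $I_2$) and the WP geodesic's $\gamma$-twisting (by length lower bounds outside a neighborhood of $N(i)$, via Corollary \ref{cor : shtw}) concentrate in a uniformly bounded window around $i$, so matching the two yields (\ref{eq : anncoeffcomp}) with $B$ depending only on $D$. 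The main obstacle is precisely this localization: since the WP length of $g|_{[t_1,t_2]}$ may grow without bound as $\textbf{w}$ grows, a naive summation over sub-segments produces an error scaling with the number of sub-segments, and only the isolation—which confines both the hierarchy's and the geodesic's relevant twisting to a bounded window around $i$—prevents this blow-up and yields an additive constant depending solely on $D$.
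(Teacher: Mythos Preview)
Your approach departs substantially from the paper's, and it has a genuine gap.

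The paper's argument is purely combinatorial and never touches the WP-geometric twist-length controls. The key observation is this: take any pants decomposition $P$ on a geodesic in $P(S)$ from $\rho(i_1)$ to $Q(g(t_1))$ (a path of length $\le D$ by fellow-traveling). Using Lemma \ref{lem : bddcomb} and no-backtracking, one shows
\[
d_{Z_1}(P,\widehat{Q}) \;\ge\; \tfrac{1}{K_R}\bigl(\tfrac{1}{k}|I_1|-c\bigr)-C_R-2M_2 - r - D,
\]
so for $\textbf{w}$ chosen large enough (depending only on $D,r,R$) this is $\ge 3$. Since $\gamma\in\widehat{Q}$ and $\gamma\notin\partial Z_1$, the curve $\gamma$ lies in $Z_1$, and $d_{Z_1}(P,\gamma)\ge 3$ forces every curve of $P$ to intersect $\gamma$. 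Hence $\pi_\gamma$ is defined at every vertex of the path, and as elementary moves change $d_\gamma$ by at most $1$, one gets $d_\gamma(Q(g(t_1)),\rho(i_1))\le D$. Symmetrically at $i_2$, and the triangle inequality yields (\ref{eq : anncoeffcomp}) with $B=2D$. The length bound (\ref{eq : lnbzlb}) is then immediate: $\gamma$ intersects a Bers curve of $g(t_1)$, so the Collar Lemma gives $\ell_\gamma(g(t_1))\ge\omega(L_S)$.

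Your proposal instead tries to route through Theorem \ref{thm : shtw} and Corollaries \ref{cor : twsh}, \ref{cor : shtw}, partitioning $g|_{[t_1,t_2]}$ into bounded-length pieces. The problem is that those results relate $d_\gamma$ of Bers markings to lengths of $\gamma$ \emph{along the WP geodesic}; they give no mechanism for comparing $d_\gamma(Q(g(\cdot)))$ with $d_\gamma(\rho(\cdot))$. Your localization claim---that both the hierarchy's and the geodesic's $\gamma$-twisting concentrate near $i$---is exactly what needs to be proved, and you have not supplied an argument for the WP side that does not already presuppose (\ref{eq : anncoeffcomp}). The summation over sub-segments you describe would, as you note, accumulate error proportional to the number of pieces, and nothing in your sketch prevents this; the isolation hypothesis controls the hierarchy path's annular coefficients but says nothing directly about those of $Q(g)$. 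The paper avoids this entirely by working at the two endpoints $i_1,i_2$ only and exploiting the filling property in $\mathcal{C}(Z_j)$ to make $\pi_\gamma$ Lipschitz along the short connecting path in $P(S)$.
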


\begin{proof}
Let ${\bf w}(D,r,R)=kK_{R}\big(D_{1}+r+C_{R}+7+2M_{2}\big)+kc$, where $K_{R}$ and $C_{R}$ are the constants from Lemma \ref{lem : bddcomb} depending on $R$ and $k,c$ are the constants for the quasi-geodesic $\rho$ depending only on the topological type of $S$. Moreover $M_{2}$ is the constant in (\ref{eq : nbtrack}) and $D_{1}$ is the constant from the second bullet of Proposition \ref{prop : parmap}. Since $\gamma$ is $({\bf w},r,R)-$isolated at $i$, there is a pants decomposition $\widehat{Q}$ such that $\gamma \in \widehat{Q}$ and $d(\widehat{Q},\rho(i))\leq r$. Then by (\ref{eq : ddsubs}) for any non-annular subsurface $Y\subseteq S$ we have
\begin{equation}\label{eq : dYQri}d_{Y}(\widehat{Q},\rho(i))\leq r.\end{equation}
By the second bullet of Proposition \ref{prop : parmap} we have $d(\rho(i_{1}),P)\leq D_{1}$ for every $P$ on a geodesic connecting $\rho(i_{1})$ and $Q(g(t_{1}))$. Then by (\ref{eq : ddsubs}) for any non-annular subsurface $Y\subseteq S$ we have
\begin{equation}\label{eq : dYQgt1ri1}d_{Y}(P,\rho(i_{1}))\leq D_{1}.\end{equation}
 Let $P$ be a pants decomposition on a geodesic in $P(S)$ connecting $\rho(i_{1})$ to $Q(g(t_{1}))$ (see Figure \ref{fig : anncoeffbd}). Let $I_{1}=[j,j']$. The hierarchy path $\rho$ is a $(k,c)-$quasi-geodesic, so 
 $$d(\rho(j),\rho(j'))\geq \frac{1}{k}|I_{1}|-c.$$
  Then since the subsurface $Z_{1}$ has $R$ non-annular bounded combinatorics over $I_{1}$, by Lemma \ref{lem : bddcomb} we have 
$$d_{Z_{1}}(\rho(j), \rho(j'))\geq \frac{1}{K_{R}}d(\rho(j), \rho(j'))-C_{R}\geq\frac{1}{K_{R}}(\frac{1}{k}|I_{1}|-c)-C_{R}.$$
 Then by the no backtracking property of hierarchy paths (\ref{eq : nbtrack}) for the times $i_{1},j,j'$ and $i$ we have 
 \begin{equation}\label{eq : dZ1riri1}d_{Z_{1}}(\rho(i), \rho(i_{1}))\geq \frac{1}{K_{R}}(\frac{1}{k}|I_{1}|-c)-C_{R}-2M_{2}.\end{equation}
  Then by the triangle inequality we have
\begin{eqnarray*}
d_{Z_{1}}(P, \widehat{Q}) &\geq& d_{Z_{1}}(\rho(i_{1}), \rho(i)) - d_{Z_{1}}(\widehat{Q},\rho(i)) - d_{Z_{1}}(\rho(i_{1}),P)-4 \\
&\geq& \frac{1}{K_{R}}\big(\frac{1}{k}|I_{1}|-c\big) -2M_{2}- C_{R}- r-D_{1}-4\geq 3.
\end{eqnarray*}
The second inequality follows from the inequalities (\ref{eq : dYQri}), (\ref{eq : dYQgt1ri1}) and (\ref{eq : dZ1riri1}). Since $\gamma$ is $({\bf w},r,R)-$isolated at $i$ and $I_{1}$ is the isolating interval on the left of $i$, $|I_{1}|\geq {\bf w}$. Then the last inequality follows from the choice of ${\bf w}$.

Now since $\gamma\in\widehat{Q}$ and $\gamma\notin\partial{Z_{1}}$, the above inequality implies that for any curve $\alpha\in P$, $d_{Z_{1}}(\alpha,\gamma)\geq 3$. Thus as we saw in $\S$\ref{sec : ccplx} the curves $\alpha$ and $\gamma$ intersect each other. Then by (\ref{eq : ddsubs}) the projection of the geodesic connecting $\rho(i_{1})$ and $Q(g(t_{1}))$ to $A(\gamma)$ has diameter bounded above by $D_{1}$. So we have
$$d_{\gamma}(Q(g(t_{1})),\rho(i_{1}))\leq D_{1}.$$
 Replacing $i_{1}$ by $i_{2}$, $t_{1}$ by $t_{2}$, $Z_{1}$ by $Z_{2}$ and $I_{1}$ by $I_{2}$ an argument similar to the above argument gives us
 $$d_{\gamma}(Q(g(t_{2})),\rho(i_{2}))\leq D_{1}.$$ 
 The above two inequalities and the triangle inequality imply that the quasi-equality for annular coefficients (\ref{eq : anncoeffcomp}) holds for $B=2D_{1}$.

As we saw above $\gamma \pitchfork Q(g(t_{1}))$ and $\gamma \pitchfork Q(g(t_{2}))$. Now since every curve in the pants decompositions $Q(g(t_{1}))$ and $Q(g(t_{2}))$ has length at most the Bers constant $L_{S}$, the lower bound for the length of $\gamma$ in (\ref{eq : lnbzlb}) follows from the Collar Lemma (see \cite[\S 4.1]{buser}).
\begin{figure}
\centering
\scalebox{0.2}{\includegraphics{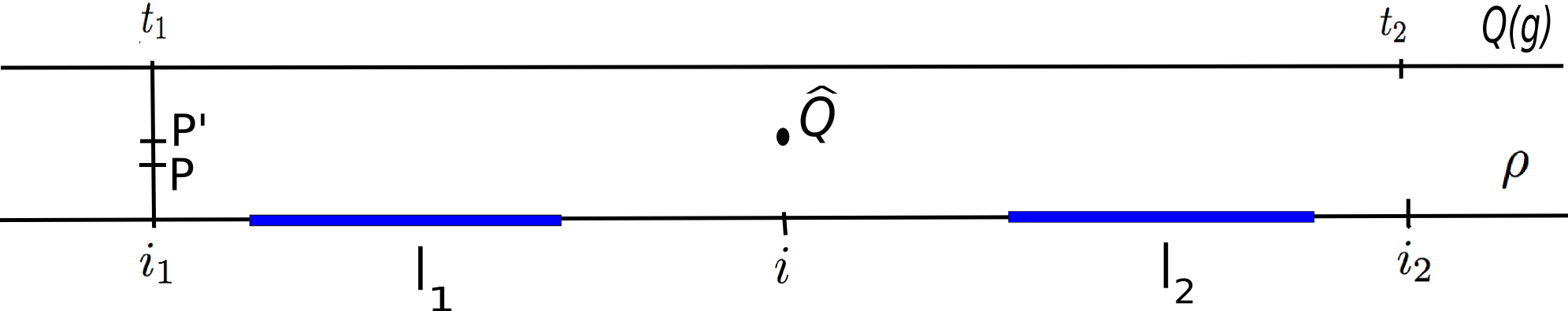}}
\caption{Isolated annular subsurface: The curve $\gamma\in\widehat{Q}$ and $d(\widehat{Q},\rho(i))\leq r$. Non-annular, large component domains $Z_{1}$ and $Z_{2}$ have non-annular $R-$bounded combinatorics on the left and the right blue subintervals, respectively, also $\gamma\notin \partial{Z}_{1}$ and $\gamma\notin\partial{Z}_{2}$. Moreover, each blue interval has length at least $w$. Then $A(\gamma)$ is $(w,r,R)-$isolated at $i$.} 
\label{fig : anncoeffbd}
\end{figure}

\end{proof}


\subsection{Length-function control}\label{subsec : lfcontrol}
 Corollaries \ref{cor : twsh} and \ref{cor : shtw} in $\S$\ref{sec : bddwpgeodseg} provide a local control on buildup of Dehn twists about a curve versus the change of its length along WP geodesic segments. The parameters of the control depend on the length of the geodesic segment and the supremum of the length-function along the geodesic segment. Lemma \ref{lem : anncoeffcomp} provides a quasi-equality for the subsurface coefficient of an isolated annular subsurface between two points of a hierarchy path and the corresponding points of a fellow traveling WP geodesic. The constant of the quasi-equality depends only on the fellow traveling distance. Using the quasi-equality we may pull back the annular coefficient along the geodesic to the hierarchy path and use the combinatorial properties of the hierarchy to control the length-functions along the geodesic. As we will see in $\S$\ref{sec : examples} the control of length-functions leads to the control of the global behavior of WP geodesics.
 
 \begin{lem} \label{lem : roughub}\textnormal{(Rough bounds)}

Given $D,R,R'>0$, there are constants $w=w(D,R)$, $\bar{\epsilon}=\bar{\epsilon}(D,R,R')$ and $l=l(D,R)$ with the following properties. Let $\rho:[m,n] \to P(S)$ be a hierarchy path. Suppose that a large component domain $Z$ has $(R,R')-$bounded combinatorics over $[m',n']\subset J_{Z}$. Let $g:[a,b] \to \overline{\Teich(S)}$ be a WP geodesic parametrized by arc-length such that $\rho$ and $Q(g)$, $D-$fellow travel. If $m'-n'>2w$ then
 \begin{enumerate}
\item $\ell_{\alpha}(g(t)) \leq l$ for every $\alpha \in \partial{Z}$ and
 \item $\ell_{\gamma}(g(t)) > \bar{\epsilon}$ for every $\gamma \notin \partial{Z}$
 \end{enumerate}
  for every $t \in [a', b']$, where $a' \in N(m'+w)$ and $b' \in N(n'-w)$. Here $N=N_{\rho,g}$ is the parameter map from Proposition \ref{prop : parmap}. 
 \end{lem}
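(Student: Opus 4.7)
Write $N := N_{\rho,g}$ for the parameter map of Proposition \ref{prop : parmap} and set $\mathbf{w} := \mathbf{w}(D, D, R+2M_2)$ for the isolation constant from Lemma \ref{lem : anncoeffcomp}. Fix a small $s_0 > 0$ to be used as the buffer in Corollary \ref{cor : shtw}, and choose $w \geq \mathbf{w} + s_0 K_N + C_N$, where $K_N, C_N$ are the quasi-isometric distortion constants of $N$. The lemma has two parts that are essentially independent. Part (1) is a direct length estimate via fellow-traveling plus the WP/pants quasi-isometry (Theorem \ref{thm : brockqisom}). Part (2) splits into: curves that overlap $\partial Z$, handled by the Collar Lemma together with part (1); and curves inside $Z$, handled by isolating $A(\gamma)$ with $Z_1 = Z_2 = Z$ and feeding Lemma \ref{lem : anncoeffcomp} into Corollary \ref{cor : shtw}.

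\textbf{Part (1).} For $t \in [a', b']$ pick $i \in [m'+w, n'-w] \subset J_Z$ with $t \in N(i)$, using that $\bigcup_i N(i)$ covers $[a,b]$. Theorem \ref{thm : hrpath}(\ref{h : J}) gives $\partial Z \subset \rho(i)$. Build $x \in \Teich(S)$ by placing every curve of $\rho(i)$ at length $L_S/2$ with arbitrary twists, so $Q(x) = \rho(i)$ and $\ell_\alpha(x) \leq L_S$ for every $\alpha \in \partial Z$. The $D$-fellow traveling of Theorem \ref{thm : fellowtr} gives $d(Q(g(t)), \rho(i)) \leq D$, and inverting the quasi-isometry of Theorem \ref{thm : brockqisom} yields $d_{\WP}(g(t), x) \leq K_{\WP}(D + C_{\WP})$. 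The universal WP Lipschitz bound $\|\nabla \ell_\alpha^{1/2}\|_{\WP} \leq \sqrt{2/\pi}$ (the global form of Lemma \ref{lem : gradlf}, obtained from Wolpert's asymptotic expansion of the WP metric near the strata) then gives
\[
\sqrt{\ell_\alpha(g(t))} \leq \sqrt{L_S} + \sqrt{2/\pi}\,K_{\WP}(D + C_{\WP}),
\]
whose square is the desired $l = l(D)$.

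\textbf{Part (2).} Fix $\gamma \notin \partial Z$. If $\gamma \pitchfork \partial Z$, then any $\alpha \in \partial Z$ crossing $\gamma$ must traverse the collar of $\gamma$ at $g(t)$, so the Collar Lemma combined with part (1) yields $\ell_\gamma(g(t)) \geq 2\,\mathrm{arcsinh}(1/\sinh(l(D)/2))$, a positive constant depending only on $D$. Assume therefore $\gamma \in \mathcal{C}_0(Z) \setminus \partial Z$ and argue by contradiction: suppose $\ell_\gamma(g(t_0)) \leq \bar\epsilon$ at some $t_0 \in [a', b']$ with $\bar\epsilon$ below the Bers threshold, so that $\gamma$ lies in $\widehat Q := Q(g(t_0))$. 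For any $i_0$ with $t_0 \in N(i_0)$, fellow traveling gives $d(\widehat Q, \rho(i_0)) \leq D$. The choice of $w$ lets us pick $i_1, i_2 \in [m', n']$ with $i_1 = i_0 - \mathbf{w}$, $i_2 = i_0 + \mathbf{w}$, so that $I_1 = [i_1, i_0-1]$ and $I_2 = [i_0+1, i_2]$ sit in $J_Z$ and have length $\geq \mathbf{w}$; their non-annular combinatorics is bounded by $R + 2M_2$ via Theorem \ref{thm : hrpath}(\ref{h : nobacktrack}). Thus $A(\gamma)$ is $(\mathbf{w}, D)$-isolated at $i_0$ with $Z_1 = Z_2 = Z$, and Lemma \ref{lem : anncoeffcomp} gives, for $t_j \in N(i_j)$,
\[
d_\gamma(Q(g(t_1)), Q(g(t_2))) \leq d_\gamma(\rho(i_1), \rho(i_2)) + B \leq R' + 2M_2 + B,
\]
together with $\min\{\ell_\gamma(g(t_1)), \ell_\gamma(g(t_2))\} \geq \omega(L_S) =: \epsilon_0$. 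Passing from pants to Bers markings only alters $d_\gamma$ by a universal additive constant (since $\gamma \notin Q(g(t_j))$), so the marking annular coefficient is bounded by some $N_0 = N_0(D, R')$. The choice of $w$ also ensures $|t_0 - t_j| \geq s_0$ via Proposition \ref{prop : parmap}, and $|t_2 - t_1|$ is bounded above by some $T$ depending only on $D, R$. With $\epsilon_0, s_0, T$ fixed, Corollary \ref{cor : shtw} yields $N(\bar\epsilon) \to \infty$ as $\bar\epsilon \to 0$; choose $\bar\epsilon$ so that $N(\bar\epsilon) > N_0$, forcing $d_\gamma(\mu(g(t_1)), \mu(g(t_2))) > N_0$ and contradicting our upper bound. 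Taking $\bar\epsilon$ also below the Collar bound from the $\gamma \pitchfork \partial Z$ case completes (2).

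\textbf{Main obstacle.} The delicate step is the simultaneous calibration of three quantitative inputs that enter circularly: the isolation constant $\mathbf{w}(D, D, R+2M_2)$ from Lemma \ref{lem : anncoeffcomp}, the twist threshold $N(\epsilon, s_0, T, \epsilon_0)$ from Corollary \ref{cor : shtw}, and the quasi-isometric distortion of $N_{\rho,g}$ from Proposition \ref{prop : parmap}. One must first fix $s_0$, derive $T$ from $\mathbf{w}$, deduce $N_0$ from $D$ and $R'$, and \emph{only then} select $\bar\epsilon$ small enough to beat $N_0$; concurrently, $w$ must exceed $\mathbf{w}$ by enough slack that $t_0$ lies at distance at least $s_0$ from $t_1$ and $t_2$. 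A secondary annoyance is the small additive discrepancy between pants-graph and marking-graph annular coefficients in the passage from Lemma \ref{lem : anncoeffcomp} to Corollary \ref{cor : shtw}, which is absorbed into $N_0$ but must be tracked.
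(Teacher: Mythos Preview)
Your argument for part~(2) is essentially the paper's: isolate $A(\gamma)$ inside the bounded-combinatorics interval with $Z_1=Z_2=Z$, invoke Lemma~\ref{lem : anncoeffcomp} for both the annular coefficient comparison and the length lower bound $\omega(L_S)$ at the flanking times, and feed these into Corollary~\ref{cor : shtw} to force a contradiction with the $R'$ bound. The paper does not separate out the case $\gamma\pitchfork\partial Z$ via the Collar Lemma as you do (that case is absorbed into the general argument since $\partial Z\subset\rho(i)$ is a Bers curve), but this is a minor organizational difference.

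Your part~(1), however, takes a shortcut that is not supported by the paper's toolkit. You invoke a \emph{global} WP Lipschitz bound $\|\nabla\ell_\alpha^{1/2}\|_{\WP}\leq\text{const}$ and describe it as ``the global form of Lemma~\ref{lem : gradlf}''. But Lemma~\ref{lem : gradlf} (and its Corollary~\ref{cor : gradlfestimate}) is explicitly local: the estimate $\langle\grad\ell_\alpha,\grad\ell_\alpha\rangle=\tfrac{2}{\pi}\ell_\alpha+O(\ell_\alpha^4)$ is stated only for $\ell_\alpha\leq c_0$, and the $O$--constant depends on $c_0$. To integrate your gradient bound along the geodesic $[x,g(t)]$ you would need control of $\|\grad\ell_\alpha^{1/2}\|$ precisely in the regime where $\ell_\alpha$ is large, which is exactly what you are trying to bound. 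Nothing in the paper supplies this, so as written your part~(1) has a gap. (A global uniform bound of this type does exist in Wolpert's work, but it is not derived from the asymptotic expansion near the strata alone, and the paper does not cite it.)

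The paper's route for part~(1) is accordingly much more indirect. It first proves part~(2), then fixes $t_0$, builds a specific $x\in V_{L_S}(\rho(i_0))$ at definite WP distance from $g(t_0)$, and studies the auxiliary WP segment $u=[x,g(t_0)]$. The bulk of the work is to re-run a version of the part~(2) argument \emph{along $u$} to get a uniform lower bound $\epsilon_2$ on $\ell_\gamma$ for every $\gamma\notin\partial Z$ along $u$ (this is where the dependence $l=l(D,R)$, not just $l(D)$, enters). Only then does a compactness argument via the Geodesic Limit Theorem~\ref{thm : geodlimit} yield the upper bound $l$. If you can independently justify the global Lipschitz bound, your approach is dramatically simpler and gives the stronger conclusion $l=l(D)$; otherwise you should follow the paper's compactness route.
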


\begin{proof}
We begin by establishing the lower bound (2). Let the constants $K$ and $C$ of the parameter map $N$ be as in Proposition \ref{prop : parmap}. Let $D_{1}$ be the constant from the second bullet of the proposition. Let $d_{1}=K_{\WP}D_{1}+K_{\WP}C_{\WP}$ and $D'=K_{\WP}d_{1}+C_{\WP}$. Let $w={\bf w}(D,D_{1},R)$, where ${\bf w}$ is the constant from Lemma \ref{lem : anncoeffcomp}. Let $T=Kw+C$ and $s=\frac{1}{K}w-C$. Suppose that $n'-m'>2w$. Let $i_{0}\in [m'+w,n'-w]$ and $t_{0}\in N(i_{0})$. Let $t_{0}^{+}\in N(i_{0}+w)$ and $t_{0}^{-}\in N(i_{0}-w)$. Then  $t_{0}^{+}-t_{0}\in[s,T]$ and $t_{0}-t_{0}^{-}\in[s,T]$. See Figure \ref{fig : fellowtraveling}. We denote by $\mu(g(t_{0}))$ a (partial) Bers marking of the surface $g(t_{0})$. Similarly, we denote (partial) Bers markings of $g(t_{0}^{-})$ and $g(t_{0}^{+})$ by $\mu(g(t_{0}^{-}))$ and $\mu(g(t_{0}^{+}))$, respectively. 
 
Let $\gamma\notin\partial{Z}$. If $\ell_{\gamma}(g(t_{0}))\geq L_{S}$ then we already have the lower bound (2) for the length of $\gamma$ at $g(t_{0})$. Otherwise, there is $Q_{0}$ a Bers pants decomposition of $g(t_{0})$ such that $\gamma\in Q_{0}$. Moreover by the second bullet of Proposition \ref{prop : parmap} we have $d(Q_{0},\rho(i_{0}))\leq D_{1}<D'$. Thus according to Definition \ref{def : isolatedann}, by the assumptions of the lemma, $A(\gamma)$ is $(w,D',R)$-isolated at $i_{0}$ where the non-annular bounded combinatorics component domain on both sides is $Z$. Then by Lemma \ref{lem : anncoeffcomp}(\ref{eq : lnbzlb}) we have the lower bound
\begin{equation}\label{eq : lgt-t+}\min \{ \ell_{\gamma}(g(t_{0}^{-})),\ell_{\gamma}(g(t_{0}^{+})) \} \geq \omega(L_{S}).\end{equation} 

 By the choice of $t_{0}^{-}$ and $t_{0}^{+}$ we have $|t_{0}^{+}-t_{0}^{-}|\in[2s,2T]$. By the bound (\ref{eq : lgt-t+}), Corollary \ref{cor : gradlfestimate} implies that there is an $s_{0}>0$ with $s_{0}\leq s$ so that for any $r\in[t_{0}^{-},t_{0}^{-}+s_{0}]$ and any $r\in[t_{0}^{+}-s_{0},t_{0}^{+}]$ we have 
 $$\ell_{\gamma}(u(r))\geq\frac{\omega(L_{S})}{2}.$$
  Then by Corollary \ref{cor : shtw} there is an $\bar{\epsilon}\leq L_{S}$ depending on $\omega(L_{S}),T,s_{0}$ and the lower bound in (\ref{eq : dgammaglb}) such that: If 
$$\inf_{t\in [t_{0}^{-},t_{0}^{+}]} \ell_{\gamma}(g(t)) \leq \bar{\epsilon},$$
 then 
\begin{equation} \label{eq : dgammaglb}d_{\gamma}(\mu(g(t_{0}^{-})),\mu(g(t_{0}^{+})))> R'+2M_{2}+B,\end{equation}
Here $B=B(D)$ is the constant from Lemma \ref{lem : anncoeffcomp}(\ref{eq : anncoeffcomp}) and $M_{2}$ is the constant in (\ref{eq : nbtrack}).

The annular subsurface $A(\gamma)$ is $(w,D',R)$-isolated at $i_{0}$, and $\rho$ and $Q(g)$, $D-$fellow travel. So by Lemma \ref{lem : anncoeffcomp}(\ref{eq : anncoeffcomp}) there is $B$ depending on $D$ so that
  $$ d_{\gamma}(\rho(i_{0}-w),\rho(i_{0}+w))\geq  d_{\gamma}(\mu(g(t_{0}^{-})),\mu(g(t_{0}^{+})))-B.$$
  Since $m'\leq i_{0}-w\leq i_{0}+w\leq n'$ by the no backtracking property of hierarchy paths (\ref{eq : nbtrack}) we have 
$$d_{\gamma}(\rho(m'),\rho(n'))\geq d_{\gamma}(\rho(i_{0}-w),\rho(i_{0}+w))-2M_{2}.$$
The above two inequalities imply that
$$d_{\gamma}(\rho(m'),\rho(n'))\geq d_{\gamma}(\mu(g(t_{0}^{-})),\mu(g(t_{0}^{+})))-B-2M_{2}.$$
The above inequality and (\ref{eq :  dgammaglb}) imply that $d_{\gamma}(\rho(m'),\rho(n'))>R'$. But this contradicts the assumption of the lemma that the $\gamma$ annular subsurface coefficient is bounded above by $R'$. The lower bound $\bar{\epsilon}$ for the length of $\gamma$ at $g(t_{0})$ follows from this contradiction. 

Now by Proposition \ref{prop : parmap}, $\bigcup_{i\in[m'+w,n'-w]}N(i)$ covers $[a',b']$. Thus $\ell_{\gamma}(g(t)) > \bar{\epsilon}$ for any $t \in [a',b']$. This is the desired lower bound (2). Note that $\bar{\epsilon}$ depends only on $R',R,D$. In particular $\bar{\epsilon}$ is uniform over $[a',b']$ i.e. does not depend on the value of the parameter $t_{0}$.
\medskip

We proceed to establish the rough upper bound (1). Fix $\alpha\in \rho(i_{0})-\partial{Z}$. Let $U_{L_{S}}(\rho(i_{0}))$ be the Bers region of $\rho(i_{0})$, consisting of all marked surfaces at which the length of every curve in $\rho(i_{0})$ is at most $L_{S}$. We choose a hyperbolic surface $x\in U_{L_{S}}(\rho(i_{0}))$ so that
\begin{itemize} 
\item $\ell_{\alpha'}(x)=L_{S}$ for every $\alpha' \in \rho(i_{0})-\alpha$, and 
\item $\ell_{\alpha}(x)=\frac{L_{S}}{2}$ if $\ell_{\alpha}(g(t_{0}))\geq \frac{3L_{S}}{4}$, and $\ell_{\alpha}(x)=L_{S}$ if $\ell_{\alpha}(g(t_{0}))<\frac{3L_{S}}{4}$.
\end{itemize}
 We observe that 
\begin{enumerate}[(a)]
\item $\rho(i_{0})$ is a Bers pants decomposition of $x$. 

\noindent Let ${\bf d}$ be the function in Corollary \ref{cor : gradlfestimate}, let $d_{0}=\min\{{\bf d}(L_{S},\frac{L_{S}}{4}),{\bf d}(\frac{3L_{S}}{4},\frac{L_{S}}{4})\}$. Then we have 
\item $d_{\WP}(g(t_{0}),x) \geq d_{0}$. 

\noindent Also observe that by changing the twist parameters about the curves in $\rho(i_{0})$ we can obtain $x$ such that
\item  a Bers marking of $x$, which we denote by $\mu_{x}$, is equal to a marking slice of any hierarchy $H(\mu^{-},\mu^{+})$ and $\base(\mu_{x})=\rho(i_{0})$. Here $\mu^{-}$ and $\mu^{+}$ are the end points of $\rho$. There may be several such surfaces and we choose one.
\item It follows from (a) that $x$ has injectivity radius $\inj(x)>0$ depending only on $L_{S}$.
\end{enumerate}

Let as before $D_{1}$ be the constant from the second bullet of Proposition \ref{prop : parmap} and $d_{1}=K_{\WP}D_{1}+K_{\WP}C_{\WP}$. Let $[x,g(t_{0})]$ be the WP geodesic connecting $x$ to $g(t_{0})$. Let $u:[0,d]\to\Teich(S)$ be the parametrization of $[x,g(t_{0})]$ by arc-length such that $u(0)=x$ and $u(d)=g(t_{0})$. Then by Theorem \ref{thm : brockqisom} the length of $u$ is bounded above by $d_{1}$. Thus $d\leq d_{1}$. On the other hand, by (b) we have $d\geq d_{0}$.

To get a rough upper bound for the length of the boundary components of $Z$ at $g(t_{0})$ first we establish a lower bound for the length of every $\gamma \notin \partial{Z}$ along $u$. Then the rough upper bound will follow from a compactness argument.
\medskip

 Let $\bar{\epsilon}=\bar{\epsilon}(D,R,R')$ be the lower bound (2) we established earlier for $\ell_{\gamma}(g(t))$ for each $\gamma\notin\partial{Z}$ and every $t\in[a'+T,b'-T]$. Here we make the following two choices:

\begin{figure}
\centering
\scalebox{0.2}{\includegraphics{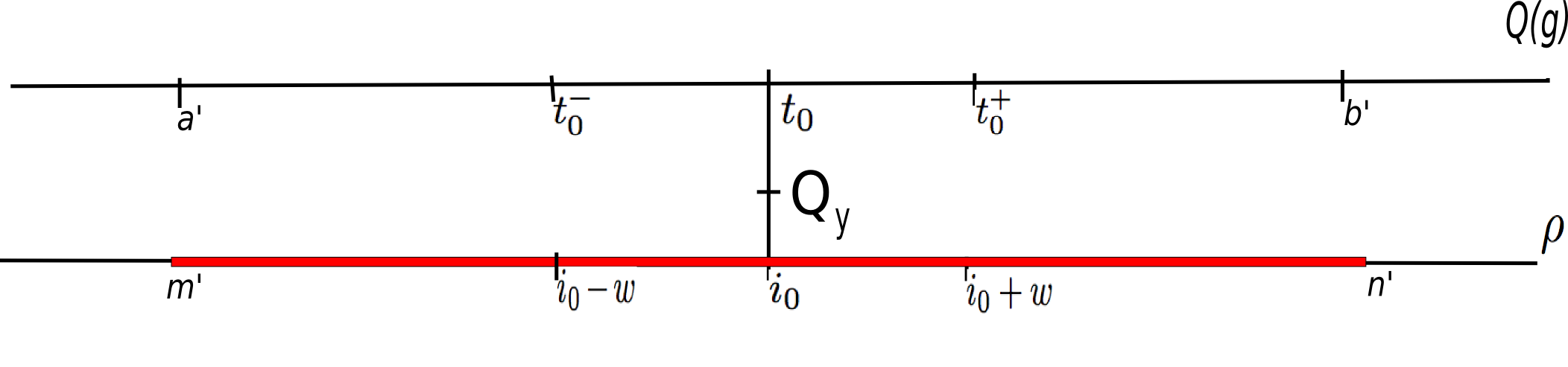}}
\caption{ The component domain $Z$ has $(R,R')-$bounded combinatorics over the red interval. At the point $y$ on $[x,g(t)]$ the infimum of the length of $\gamma$ is realized. Thus there is $Q_{y}$ a Bers pants decomposition of $y$ so that $\gamma\in Q_{y}$. Then $A(\gamma)$ is $(w,D',R)-$isolated at $i_{0}$.}
\label{fig : fellowtraveling}
\end{figure}

 \begin{enumerate}[(i)]
\item \label{ch : N0}Let $I^{-}= [t_{0}^{-},t_{0}]$ and $I^{+}=[t_{0},t_{0}^{+}]$. By (\ref{eq : lgt-t+}), $\ell_{\gamma}(g(t_{0}^{-}))\geq \omega(L_{S})$. Thus $\sup_{t\in I^{-}}\ell_{\gamma}(g(t))\geq \omega(L_{S})$. Moreover, the length of the interval $I^{-}$, $|I^{-}|\in[s,T]$. Then Corollary \ref{cor : twsh} applied to the geodesic segment $g|_{I^{-}}$ implies that there is an $N_{0}\in\mathbb{N}$ depending only on $T,\omega(L_{S})$ and $\bar{\epsilon}$ such that: If 
$$d_{\gamma}(\mu(g(t_{0}^{-})),\mu(g(t_{0})))>N_{0},$$ 
then 
$$\inf_{t \in I^{-}} \ell_{\gamma}(g(t))\leq\bar{\epsilon}.$$ 
Similarly, $\sup_{t\in I^{+}}\ell_{\gamma}(g(t))\geq \omega(L_{S})$ and $|I^{+}|\in[s,T]$. Then Corollary \ref{cor : twsh} applied to the geodesic segment $g|_{I^{+}}$ implies that: If 
$$d_{\gamma}(\mu(g(t_{0}^{+})),\mu(g(t_{0})))>N_{0},$$
 then 
$$\inf_{t \in I^{+}} \ell_{\gamma}(g(t)) \leq \bar{\epsilon}.$$
 
 \item\label{ch : ep2} Let $\gamma\notin \partial{Z}$. By the lower bound (2), $\ell_{\gamma}(g(t_{0}))\geq\bar{\epsilon}$ and by (d), $\ell_{\gamma}(x)\geq 2\inj(x)$. Moreover by (b) the length of $u$ is at least $d_{0}$. Let $e=\min\{\inj (x), \frac{\bar{\epsilon}}{2}\}$. Then by Corollary \ref{cor : gradlfestimate} there is an $s_{0}$ with $2s_{0}<d_{0}<d$ so that for any $r\in[0,s_{0}]$ and any $r\in [d-s_{0},d]$ we have
 $$\ell_{\gamma}(u(r))\geq e.$$  
 By what we just said $\sup_{r \in [0,d]} \ell_{\gamma}(u(r)) \geq e$. Then Corollary \ref{cor : shtw} applied to $u$ implies that there is an $\epsilon_{2}<\min\{e,L_{S}\}$ depending on $d,s_{0}$ and $e$ and the lower bound in (\ref{eq : dgammalargeu}) such that: If 
 \begin{equation} \label{eq : infgamma}\inf_{r \in [s_{0},d-s_{0}]}\ell_{\gamma}(u(r))\leq\epsilon_{2},\end{equation}
 then
 \begin{equation} \label{eq : dgammalargeu}d_{\gamma}( \mu_{x}, \mu(g(t_{0})))>N_{0}+R'+2M_{2}+B+4.\end{equation}
\end{enumerate}

In what follows we prove that the lower bound
 $$\inf_{t\in [0,d]}\ell_{\gamma}(u(t))>\epsilon_{2},$$
 for every $\gamma\notin\partial{Z}$, where $\epsilon_{2}$ is the constant we chose in (\ref{ch : ep2}) holds. Indeed, we rule out the possibility that $\inf_{r \in [0,d]}\ell_{\gamma}(u(t)) \leq \epsilon_{2}$ for some $\gamma \notin \partial{Z}$. 

The proof is by contradiction. First note that $\mu_{x}$ is a Bers marking of $u(0)=x$ and $\mu(g(t_{0}))$ is a Bers marking of $u(d)=g(t_{0})$. Moreover, by (\ref{eq : infgamma}) there is a point $y \in u([s_{0},d-s_{0}])$ such that $\gamma$ is in $Q_{y}$ a Bers pants decomposition of $y$. 
  
Let $D'=K_{\WP}d_{1}+C_{\WP}$ and $w={\bf w}(D,D',R)$ be as before. Since the length of $u$ is less than $d_{1}$ by Theorem \ref{thm : brockqisom} we have $d(Q_{y},\rho(i_{0}))\leq D'$. Then by Definition \ref{def : isolatedann}, $\gamma$ is $(w,D',R)-$isolated at $i_{0}$.

 Moreover $\rho$ and $Q(g)$, $D-$fellow travel. Then as we saw in the proof of Lemma \ref{lem : anncoeffcomp} there is a constant $B=B(D)$ such that
\begin{equation}\label{eq : dgammabdghr}d_{\gamma}(\rho(i_{0}-w),\mu(g(t_{0}^{-}))\leq B. \end{equation} 
By (c), $\base(\mu_{x})=\rho(i_{0})$. Then (\ref{eq : nbtrack}) and the assumption of the lemma that the annular coefficients are bonded over $[m',n']$ imply that
\begin{equation}\label{eq : dgammabdhr}d_{\gamma}(\mu_{x},\rho(i_{0}-w)) \leq R'+2M_{2}.\end{equation}
The inequalities (\ref{eq : dgammalargeu}), (\ref{eq : dgammabdghr}) and (\ref{eq : dgammabdhr}) combined by the triangle inequality give us
   $$d_{\gamma}(\mu(g(t_{0})), \mu(g(t_{0}^{-})))> N_{0}.$$
So the choice of $N_{0}$ in (\ref{ch : N0}) implies that $\inf_{t \in [t_{0}^{-},t_{0}]} \ell_{\gamma}(g(t)) \leq \bar{\epsilon}$. Then since $[t_{0}^{-},t_{0}]\subset[t_{0}^{-},t_{0}^{+}]$, $\inf_{t \in [t_{0}^{-},t_{0}^{+}]}\ell_{\gamma}(g(t)) \leq \bar{\epsilon}$. Then by (\ref{eq : dgammaglb}) we have
 $$d_{\gamma}(\mu(g(t_{0}^{-})),\mu(g(t_{0}^{+})))>R'+B+2M_{2}.$$
Then again since $\gamma$ is $(w,D',R)-$isolated at $i_{0}$ by Lemma \ref{lem : anncoeffcomp},
$$d_{\gamma}(\rho(i_{0}-w),\rho(i_{0}+w))>R'+2M_{2}.$$
 Then since $m'<i_{0}-w<i_{0}+w<n'$ the no backtracking property of hierarchy paths (\ref{eq : nbtrack}) and the above inequality imply that 
$$d_{\gamma}(\rho(m'),\rho(n'))>R'.$$
 This lower bound contradicts the assumption of the lemma that the $\gamma$ annular coefficient over $[m',n']$ is bounded above by $R'$. 
 
 \begin{figure}
\centering
\scalebox{0.2}{\includegraphics{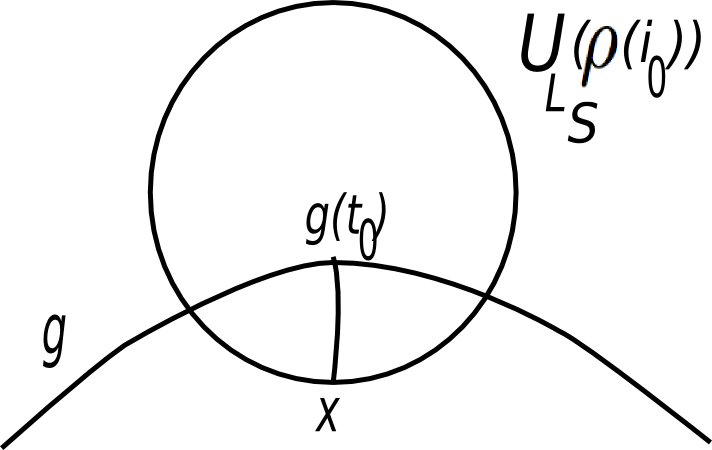}}
\caption{The closed set $U_{L_{S}}(\rho(i_{0}))$ is the Bers region of $\rho(i_{0})$ in $\Teich(S)$. The geodesic $g$ intersects this region. By (b) there is a point $x\in U_{L_{S}}(\rho(i_{0}))$ so that $d_{\WP}(x,g(t_{0}))>d_{0}$. Thus the length of the WP geodesic segment connecting $x$ and $g(t_{0})$ is at least $d_{0}$. Moreover we have the lower bounds for the length of any $\gamma\notin\partial{Z}$ at $g(t_{0})$ from part (2) of lemma and at $x$ from (d) independent of $t_{0}$. Thus we may apply Corollary \ref{cor : gradlfestimate} to the geodesic segment $u$ to choose $s_{0}$ with $2s_{0}<d_{0}$ in (ii) independent of $t_{0}$ (uniformly along $g$).}
\label{fig : choicex}
\end{figure}

 Note that $\epsilon_{2}$ depends only on $R,R'$ and $D$. In particular it is uniform along $g$.
 \medskip

We will finish establishing of the upper bound (2) by a compactness argument. 

Let $u_{n}:[0,d_{n}]\to \Teich(S)$ be a sequence of WP geodesic segments parametrized by arc-length with $d_{0}\leq d\leq d_{1}$.  Suppose that subsurfaces $Z_{n}$ are such that the length of any curve in $\partial{Z}_{n}$ at $u_{n}(0)$ is $L_{S}$ and $\inf_{t\in [0,d_{n}]}\ell_{\gamma}(u_{n}(t))\geq\epsilon_{2}$ for every $\gamma\notin\partial{Z}_{n}$. Moreover suppose that there are times $t^{*}_{n}\in [0,d_{n}]$ and curves $\hat{\alpha}_{n}\in\partial{Z}_{n}$ such that $\ell_{\hat{\alpha}_{n}}(u_{n}(t^{*}_{n}))\to \infty$ as $n\to \infty$. After possibly passing to a subsequence and remarking (applying elements of the mapping class group) we may assume that there is a subsurface $Z$ such that  the length of any curve in $\partial{Z}$ at $u_{n}(0)$ is $L_{S}$ and $\inf_{t\in [0,d_{n}]}\ell_{\gamma}(u_{n}(t))\geq\epsilon_{2}$  for every $\gamma\notin\partial{Z}$. Moreover, there is a curve $\hat{\alpha}\in\partial{Z}$ such that $\ell_{\hat{\alpha}}(u_{n}(t^{*}_{n}))\to\infty$ as $n\to \infty$.

 We proceed to get a contradiction. Let $\hat{u}:[0,d]\to \overline{\Teich(S)}$ be the geodesic limit of geodesics $u_{n}$ as in Theorem \ref{thm : geodlimit}(Geodesic Limit Theorem). Let the partition $0=t_{0}<t_{1}<...<t_{k+1}=T$, multi-curves $\sigma_{0},...,\sigma_{k+1}$, the multi-curve $\hat{\tau}$ and the elements of $\Mod(S)$, $\psi_{n}$, $\mathcal{T}_{i,n}$ and $\varphi_{i,n}=\mathcal{T}_{i,n}\circ...\circ\mathcal{T}_{i,n}\circ \psi_{n}$ be as in the theorem. We claim that
\begin{itemize}
\item $\ell_{\alpha}(\psi_{n}(x))=\ell_{\alpha}(x)$ for every $\alpha \in \partial{Z}$,
\item $\sigma_{i} \subseteq \partial{Z}$ for $i=1,...,k+1$.
 \end{itemize}
 In the proof of the Geodesic Limit Theorem $\psi_{n}$ is applied to keep $u_{n}(0)$ in a compact subset of the Teichm\"{u}ller space. Now since the length of any curve in $\partial{Z}$ at $u_{n}(0)$ is $L_{S}$ we may choose each $\psi_{n}$ to be supported on $S\backslash\partial{Z}$ i.e. $\psi_{n}$ is the identity map in a regular neighborhood of every curve in $\partial{Z}$. We have the first bullet. We proceed to prove the second bullet by an induction. In the Geodesic Limit Theorem for each $i=1,...,k+1$, $\sigma_{i}$ is the multi-curve determining the stratum that the limit of $\varphi_{i-1,n}(u_{n}|_{[t_{i-1},d]})$ after possibly passing to a subsequence intersects. Then the lower bound on the length of each $\gamma \notin \partial{Z}$ along $u_{n}([0,d])$ and the fact that $\psi_{n}$ does not change the isotopy class of any curve in $\partial{Z}$ ($\psi_{n}$ is supported on $S\backslash\partial{Z}$) the limit of $\psi_{n}(u_{n}|_{[0,d]})$ intersects the stratum of a multi-curve $\sigma_{1}\subseteq \partial{Z}$. Now let $i> 1$. Suppose that $\sigma_{j}\subseteq \partial{Z}$ for every $0\leq j\leq i$. Then since $\mathcal{T}_{j,n}\in \tw(\sigma_{j}-\hat{\tau})$ for each $j=1,...,i$, the composition of $\mathcal{T}_{j,n}$'s does not change the isotopy class of any curve in $\partial{Z}$. Thus $\varphi_{i,n}$ does not change the isotopy class of any curve in $\partial{Z}$. Moreover the length of each $\gamma\notin\partial{Z}$ along $u_{n}([t_{i},d])$ is bounded below. Thus we may conclude that the limit of $\varphi_{i,n}(u_{n}|_{[t_{i},d]})$ as $n\to \infty$ intersects the stratum of a multi-curve $\sigma_{i+1}\subseteq\partial{Z}$. 

The above claim and the Geodesic Limit Theorem guarantee that any curve whose length is $0$ at some time along $\hat{u}$ is a boundary curve of $Z$. The curve $\hat{\alpha}$ does not intersect $\partial{Z}$. So the length of $\hat{\alpha}$ along $\hat{u}$ is bounded above  by some $l_{0}>0$. By the second bullet above, each $\varphi_{i,n}$ is the composition of $\psi_{n}$ and powers of Dehn twists about some of the curves in $\partial{Z}$. So the isotopy class of each curve in $\partial{Z}$ is preserved by $\varphi_{i,n}$. After possibly passing to a subsequence we may assume that $t^{*}_{n}\in [t_{i},t_{i+1}]$ for all $n$. Then by the third part of the Geodesic Limit Theorem, $\varphi_{i,n}(u_{n}(t^{*}_{n}))\to \hat{u}(t^{*})$ as $n\to \infty$ for some $t^{*}\in[t_{i},t_{i+1}]$. So for all $n$ sufficiently large $\ell_{\hat{\alpha}}(\varphi_{i,n}(u_{n}(t^{*}_{n})))\leq 2l_{0}$. But $\ell_{\hat{\alpha}}(u_{n}(t^{*}_{n}))=\ell_{\hat{\alpha}}(\varphi_{i,n}(u_{n}(t^{*}_{n})))$ ($\hat{\alpha}\in \partial{Z}$ and $\varphi_{i,n}$ preserves the isotopy class of each curve in $\partial{Z}$). Thus $\ell_{\hat{\alpha}}(u_{n}(t^{*}_{n}))\leq 2l_{0}$. This contradicts the assumption that $\ell_{\hat{\alpha}}(u_{n}(t^{*}_{n}))\to\infty$ as $n\to \infty$. Therefore the upper bound stated in part (2) of the lemma exists. Moreover the upper depends only on $\epsilon_{2}$ and therefore $R,R'$ and $D$. 

\end{proof}
\begin{prop}\label{prop : rec} 
Given $A,R,R'>0$, there are constants $w=w(A,R)$ and $\bar{\epsilon}=\bar{\epsilon}(A,R,R')$ with the following properties. Let $g:[a,b] \to \overline{\Teich(S)}$ be a WP geodesic segment parametrized by arc-length with $A-$narrow end invariant $(\nu^{-},\nu^{+})$. Let $\rho:[m,n]\to P(S)$ be a hierarchy path between $\nu^{-}$ and $\nu^{+}$. Suppose that $S$ has $(R,R')-$bounded combinatorics over $[m',n']\subseteq  [m,n]$ and $m'-n'>2w$, then
$$ \inj(g(t)) \geq\frac{\bar{\epsilon}}{2} $$
for every $t \in [a',b']$, where $a'\in N(m'+w)$ and $b'\in N(n'-w)$.
\end{prop}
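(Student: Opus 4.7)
The plan is to derive this proposition as an immediate consequence of the Rough Bounds lemma (Lemma \ref{lem : roughub}) applied with the full surface $Z = S$. First I would invoke Theorem \ref{thm : fellowtr} to obtain a fellow traveling constant $D = D(A)$ such that $\rho$ and $Q(g)$ are $D$-fellow traveling in $P(S)$; this is where the $A$-narrow hypothesis enters and is the only role it plays in this proof.

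Next I would observe that $S$ itself qualifies as a large domain in the sense of the definition preceding Lemma \ref{lem : roughub}, since $\partial S = \emptyset$ so that $S \setminus S$ trivially has no components other than annuli or three-holed spheres. Moreover, $S$ is the support of the main geodesic $g_S$ of any hierarchy $H(\nu^-, \nu^+)$, so $S$ is a component domain of $\rho$ with $J_S = [m,n]$. Thus the hypothesis that $S$ has $(R, R')$-bounded combinatorics over $[m', n'] \subseteq [m,n]$ is exactly the hypothesis of Lemma \ref{lem : roughub} applied to $Z = S$, and $[m', n'] \subseteq J_Z$ holds automatically.

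Setting $w = w(D, R) = w(A, R)$ and $\bar{\epsilon} = \bar{\epsilon}(D, R') = \bar{\epsilon}(A, R')$ as furnished by the lemma, conclusion (1) becomes vacuous (there are no curves in $\partial S$), while conclusion (2) reads
\[
\ell_\gamma(g(t)) > \bar{\epsilon} \quad \text{for every } \gamma \in \mathcal{C}_0(S) \text{ and every } t \in [a', b'],
\]
where $a' \in N(m' + w)$ and $b' \in N(n' - w)$. Since the shortest closed geodesic on a complete finite area hyperbolic surface is always simple, the systole of $g(t)$ is strictly greater than $\bar{\epsilon}$, and hence
\[
\inj(g(t)) \geq \frac{\bar{\epsilon}}{2}
\]
for every such $t$, yielding the desired conclusion.

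There is no real obstacle here; the proposition is essentially a corollary of Lemma \ref{lem : roughub} plus the elementary fact relating systole to injectivity radius. The only minor point requiring mention is the verification that $S$ fits the formal definition of a large component domain of $\rho$, so that the rough bounds lemma applies without modification.
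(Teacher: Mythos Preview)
Your proposal is correct and follows essentially the same approach as the paper's own proof: invoke the fellow-traveling constant $D=D(A)$ from Theorem~\ref{thm : fellowtr}, apply Lemma~\ref{lem : roughub} with $Z=S$ so that part~(2) bounds every curve length below by $\bar{\epsilon}$, and conclude the injectivity radius bound. The paper's argument is terser (three sentences) and does not spell out why $S$ qualifies as a large component domain or why conclusion~(1) is vacuous, but the substance is identical.
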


 \begin{proof}
 Let $D=D(A)$ be the fellow traveling distance of $g$ and the hierarchy path between $\nu^{-}$ and $\nu^{+}$ from Theorem \ref{thm : fellowtr}. By the assumption that $S$ is the subsurface with $(R,R')-$bounded combinatorics Lemma \ref{lem : roughub}(2) implies that for every $\gamma \in \mathcal{C}(S)$ we have $\ell_{\gamma}(g(t))\geq \bar{\epsilon}$ for every $t \in [a',b']$. So $\inj(g(t)) \geq \frac{\bar{\epsilon}}{2}$ on this interval.
\end{proof}

\begin{remark}
 Compare the above corollary with the main result of \cite{bmm2} which asserts that given $R>0$ there is an $\epsilon>0$ such that if the end invariant of a WP geodesic $g$ is $(R,R)-$bounded combinatorics then $g$ stays in the $\epsilon-$think part of the Teichm\"{u}ller space.
 \end{remark}

We are ready to prove the main result of this section. 
\begin{thm}\textnormal{ (Short Curve)} \label{thm : bdryzsh}
Given $A,R,R'>0$ and a sufficiently small $\epsilon>0$, there is a constant $\bar{w}=\bar{w}(A, R, R',\epsilon)$ with the following property. Let $g:[a,b] \to \Teich(S)$ be a WP geodesic segment with $A-$narrow end invariant $(\nu^{-},\nu^{+})$. Let $\rho:[m,n] \to P(S)$ be a hierarchy path between $\nu^{-}$ and $\nu^{+}$. Suppose that $Z$ a large component domain of $\rho$ has $(R,R')-$bounded combinatorics over $[m',n'] \subset J_{Z}$. 

If $m'-n' \geq 2\bar{w}$, then for every $\alpha \in \partial{Z}$ we have
$$\ell_{\alpha}(g(t)) \leq  \epsilon,$$
for every $t \in [a',b']$, where $a'\in N(m'+\bar{w})$ and $b' \in N(n'-\bar{w})$. Here $N=N_{\rho,g}$ is the parameter map from Proposition \ref{prop : parmap}.
\end{thm}
 \begin{proof}To get arbitrary short boundary curves we sharpen the rough upper bound obtained in Lemma \ref{lem : roughub}. This is done in the following lemma. 
  \begin{lem}\label{lem : bdryzsh}\textnormal{ (Sharpening the rough upper bound)}
 Given $l,\bar{\epsilon}>0$ and $0<\epsilon\leq \bar{\epsilon}$, there is a constant $\bar{s}=\bar{s}(l,\bar{\epsilon},\epsilon)>0$ with the following property. Let $g:[a',b']\to \overline{\Teich(S)}$ be a WP geodesic such that the length-function bounds
\begin{enumerate}
\item $\ell_{\alpha}(g(t))\leq l$ for every $\alpha \in \partial{Z}$, and
\item $\ell_{\gamma}(g(t))\geq\bar{\epsilon}$ for every $\gamma \notin \partial{Z}$
\end{enumerate}
hold for every $t\in [a',b']$. Furthermore, suppose that $b'-a'>2\bar{s}$. Then for every $\alpha \in \partial{Z}$ we have that
\begin{equation}\label{eq : bdryzsh}\ell_{\alpha}(g(t))\leq \epsilon\end{equation} 
for every $t \in [a'+\bar{s},b'-\bar{s}]$. 
\end{lem}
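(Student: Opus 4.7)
I propose a proof by contradiction combined with the extraction of a bi-infinite limit geodesic and an induction on the number of boundary components of $Z$.

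Suppose the conclusion fails for some $l,\bar\epsilon,\epsilon$. Then there exists a sequence of arc-length parametrized WP geodesic segments $g_n:[a_n',b_n']\to\overline{\Teich(S)}$ satisfying hypotheses (1) and (2), with $b_n'-a_n'\geq 2n$, together with curves $\alpha_n\in\partial Z$ and times $t_n\in[a_n'+n,b_n'-n]$ such that $\ell_{\alpha_n}(g_n(t_n))>\epsilon$. Since $\partial Z$ is finite, after passing to a subsequence we arrange $\alpha_n=\alpha$; after translating so that $t_n=0$, we have $g_n:[-s_n,s_n]\to\overline{\Teich(S)}$ with $s_n\to\infty$ and $\ell_\alpha(g_n(0))>\epsilon$.

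Apply the Geodesic Limit Theorem \ref{thm : geodlimit} to $g_n|_{[-T,T]}$ for each fixed $T>0$ and diagonalize as $T\to\infty$ to extract a bi-infinite piecewise WP geodesic $\hat g:\mathbb R\to\overline{\Teich(S)}$ obtained as the limit of $\varphi_{i,n}\circ g_n$ on each piece. Hypothesis (2) forces the limit's main stratum $\mathcal S(\hat\tau)$ and every kink stratum $\mathcal S(\sigma_i)$ to satisfy $\hat\tau,\sigma_i\subseteq\partial Z$; since the curves of $\partial Z$ are pairwise disjoint, each $\varphi_{i,n}$ is a composition of Dehn twists about such curves (preceded by a $\psi_n$ that can be chosen supported off $\partial Z$), and thus preserves the isotopy class of $\alpha$. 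By the continuity of length-functions (Theorem \ref{thm : continuitylf}), we get $\ell_\alpha(\hat g(t))\leq l$ for every $t\in\mathbb R$ and $\ell_\alpha(\hat g(0))\geq\epsilon>0$.

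Wolpert's convexity of $\ell_\alpha^{1/2}$ along WP geodesics extends continuously to the $\CAT(0)$ space $\overline{\Teich(S)}$, so $t\mapsto\ell_\alpha^{1/2}(\hat g(t))$ is convex on all of $\mathbb R$ (convex on each geodesic piece, and the derivative equation of Lemma \ref{lem : singlestratum} preserves convexity across the kinks). Being bounded above by $l^{1/2}$, it must be constant, equal to some value $c\geq\epsilon^{1/2}>0$. This forces $\alpha\notin\hat\tau$ and $\alpha\notin\sigma_i$ for every kink simplex $\sigma_i$, for otherwise $\ell_\alpha(\hat g)$ would vanish at some point, contradicting $c>0$.

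The final contradiction is obtained by induction on $|\partial Z|$. In the base case $|\partial Z|=1$, necessarily $\hat\tau=\emptyset$, so $\hat g$ is a bi-infinite WP geodesic in $\Teich(S)$; the strict convexity of $\ell_\alpha^{1/2}$ in the interior (Proposition \ref{prop : tangentcone}) directly contradicts constancy of $\ell_\alpha^{1/2}\equiv c>0$ along a non-trivial geodesic. For the inductive step, decompose $\mathcal S(\hat\tau)=\prod_j\Teich(S_j)$ and let $S_{j_0}$ be the component containing $\alpha$; the projection $\hat g_{j_0}$ is a WP geodesic in $\Teich(S_{j_0})$ along which $\ell_\alpha^{1/2}$ is constant, and Proposition \ref{prop : tangentcone} applied inside $\Teich(S_{j_0})$ forces $\hat g_{j_0}$ to be a single point. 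The remaining motion of $\hat g$ takes place in $\prod_{j\neq j_0}\Teich(S_j)$, where $\ell_\alpha$ is irrelevant and the bounds (1), (2) induce an analogous length constraint on the large subsurface $Z\cap\bigcup_{j\neq j_0}S_j$, whose boundary is a strict subset of $\partial Z$. The inductive hypothesis applied to this reduced data propagates the contradiction back to the original sequence. The main technical obstacle is the careful bookkeeping in the inductive step to verify that the length hypotheses transfer correctly to the reduced subsurface so that the inductive hypothesis is genuinely applicable.
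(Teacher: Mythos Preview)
Your limit-extraction approach is genuinely different from the paper's. The paper argues directly on $g$: the strict convexity estimate $\ddot\ell_{\hat\alpha}\ge c(\epsilon)\ell_{\hat\alpha}$ from Theorem~\ref{thm : convlf} combined with $\ell_{\hat\alpha}\le l$ forces \emph{some} $\hat\alpha\in\partial Z$ below $\epsilon$ on every subinterval of length $\ge T(l,\epsilon)$ (Claim~\ref{claim : ashc}); pigeonhole over $2|\partial Z|+1$ consecutive such subintervals yields one fixed $\alpha\in\partial Z$ with $\ell_\alpha\le\epsilon'$ on a long subinterval (Claim~\ref{claim : ashcl}); then the $\CAT(0)$ nearest-point projection of that portion of $g$ onto $\mathcal S(\alpha)$ is a genuine WP geodesic $g'$ in $\Teich(S\setminus\alpha)$, close to $g$, to which the inductive hypothesis (base case $\partial Z=\emptyset$) applies with $Z'=Z\cup A(\alpha)$. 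Theorem~\ref{thm : geodlimit} is never invoked, and $\bar s$ is in principle explicit.

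The gap in your argument is the inductive step. The inductive hypothesis is a statement about WP geodesic segments, and neither the piecewise geodesic $\hat g$ nor its projection to $\prod_{j\neq j_0}\Teich(S_j)$ is one; you give no mechanism to pass from the reduced limit back to a WP geodesic in a surface with fewer boundary curves. One repair is to return to the sequence: if $\hat\tau\neq\emptyset$, pick $\delta\in\hat\tau\subseteq\partial Z$, observe $\ell_\delta(g_n)\to 0$ on growing intervals (the $\varphi_{i,n}$ preserve $\delta$), and apply the $\CAT(0)$ projection of $g_n$ onto $\mathcal S(\delta)$ followed by the inductive hypothesis in $\Teich(S\setminus\delta)$---but this is exactly the paper's move. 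A cleaner repair avoids induction altogether: the same bounded-convex argument applied to \emph{every} $\alpha'\in\partial Z$ shows each $\ell_{\alpha'}^{1/2}$ is constant along $\hat g$, which forces $\sigma_i=\hat\tau$ for all $i$ (no kinks); since $Z$ is large the components of $S\setminus\hat\tau$ other than the one containing $Z$ are pairs of pants, so $\hat g$ is a genuine non-constant bi-infinite geodesic in a single factor $\Teich(S_{j_0})$, and Theorem~\ref{thm : convlf} (this is the correct reference, not Proposition~\ref{prop : tangentcone}) gives the contradiction. Finally, your justification of convexity of $\ell_\alpha^{1/2}$ across kinks is not right: $\hat g$ is not a geodesic in $\overline{\Teich(S)}$, and the derivative relation you cite from Lemma~\ref{lem : singlestratum} is for curves \emph{in} $\sigma_i$; for $\alpha\notin\sigma_i$ you instead need that $\grad\ell_\alpha$ is tangent to $\mathcal S(\sigma_i)$ there (via Lemma~\ref{lem : gradlf}, since $\alpha$ and the curves of $\sigma_i$ are disjoint) together with the tangential continuity of $\dot{\hat g}$ coming from the minimality property Theorem~\ref{thm : geodlimit}(\ref{gl : minlen}).
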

Having Lemma \ref{lem : bdryzsh} the proof of the theorem is completed as follows. Let $D=D(A)$ be the fellow traveling distance from Theorem \ref{thm : fellowtr}. Let $e\in N(m')$ and $f\in N(n')$. Then $Q(g|_{[e,f]})$ and $\rho|_{[m',n']}$, $D-$fellow travel.  Moreover, the subsurface $Z$ has $(R,R')-$bounded combinatorics over $[m',n']$. Then by Lemma \ref{lem : roughub} there are constants $l,\bar{\epsilon}$ and $w$ depending on $R,R'$ and $D$ with the property that if $m'-n'\geq 2w$, then the length-function bounds (1) and (2) of Lemma \ref{lem : bdryzsh} hold for every $t\in[a',b']$, where $a'\in N(m'+w)$ and $b'\in N(n'-w)$. Let the constants $l,\bar{\epsilon}$ be from Lemma \ref{lem : roughub} and $\epsilon$ be as in the statement of the theorem. Then let $\bar{s}$ be the corresponding constant from Lemma \ref{lem : bdryzsh}. Let $K,C$ be the constants for the quasi-isometry $N$ from Proposition \ref{prop : parmap}. Now let $\bar{w}=K\bar{s}+KC+w$. Then $m'-n'>2\bar{w}$ guarantees that $b'-a'>2\bar{s}$. Then Lemma \ref{lem : bdryzsh} gives us the asserted length-function bound of the theorem. 
\end{proof}
\begin{proof}[Proof of Lemma \ref{lem : bdryzsh}]
The proof is by induction on $|\partial{Z}|$ the number of boundary components of $Z$. Suppose that for a subsurface the bounds (1) and (2) on length-functions hold. In what follows we use convexity of length-functions along WP geodesics to get an arbitrary short curve in the boundary of the subsurface $Z$ over an arbitrary long interval. Then using the fact that over this interval the geodesic is close to the stratum of the short curve, we project this portion of the geodesic to the stratum. Then we are in the same set up in the Teichm\"{u}ller space of a surface with lower complexity where a subsurface with fewer number of boundary curves is considered.  

To begin note that: If $|\partial{Z}|=0$ ($\partial{Z}=\emptyset$), then the lemma holds vacuously and provides us with the base of the induction. 

\begin{claim}\label{claim : ashc} There is a constant $T=T(l,\epsilon)>0$ such that if $[c,d]\subseteq [a',b']$ is a subinterval with $d-c>2T$ then there is a curve $\alpha \in \partial{Z}$ such that 
$$\ell_{\alpha}(g(t))\leq \epsilon$$ 
for some $t \in [c+T,d-T]$.
\end{claim}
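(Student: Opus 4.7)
The plan is to argue by contradiction using Wolpert's strict convexity of length-functions along WP geodesics (Theorem \ref{thm : convlf}). Suppose the claim fails for some $\epsilon \leq \bar{\epsilon}$: then for every candidate value of $T$ we can find a subinterval $[c, d] \subseteq [a', b']$ with $d - c > 2T$ on which $\ell_{\alpha}(g(t)) > \epsilon$ for every $\alpha \in \partial Z$ and every $t \in [c + T, d - T]$. Combined with hypothesis (2) and $\epsilon \leq \bar{\epsilon}$, every simple closed geodesic at $g(t)$ has length at least $\epsilon$ on $[c+T, d-T]$, so $\inj g(t) \geq \epsilon/2$ throughout this subinterval.

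First I would invoke Theorem \ref{thm : convlf} to obtain a constant $c_0 = c(\epsilon/2) > 0$, depending only on $\epsilon$ and the topological type of $S$, such that $\ddot{\ell}_{\alpha}(g(t)) \geq c_0 \ell_\alpha(g(t)) \geq c_0 \epsilon$ on $[c+T, d-T]$ for every $\alpha \in \partial Z$. Fixing any $\alpha \in \partial Z$, I would let $t_{\alpha} \in [c+T, d-T]$ realize the minimum of $\ell_\alpha$ on this subinterval. Integrating the curvature bound twice, using $\dot{\ell}_{\alpha}(t_{\alpha}) = 0$ in the interior case (or the one-sided endpoint analogue), yields the parabolic lower bound $\ell_{\alpha}(t) \geq \ell_{\alpha}(t_{\alpha}) + \tfrac{c_0 \epsilon}{2}(t - t_{\alpha})^{2}$ on $[c+T, d-T]$. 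Combined with the upper bound $\ell_{\alpha}(t) \leq l$ from hypothesis (1), this forces $|t - t_\alpha| \leq \sqrt{2(l-\epsilon)/(c_0\epsilon)}$, bounding the length of $[c+T, d-T]$ by a constant determined only by $\epsilon$ and $l$.

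To upgrade this estimate into a contradiction with $d - c > 2T$, I would propagate the derivative bound $\dot{\ell}_{\alpha}(d-T) \geq c_0 \epsilon (d-T-t_{\alpha})$ to the outer interval $[d-T, d]$ via the global convexity of $\ell_\alpha$ on $[c, d]$ (which makes $\dot{\ell}_\alpha$ non-decreasing), and then use $\ell_{\alpha}(d) \leq l$ to obtain the quadratic constraint $\tfrac{c_0 \epsilon}{2}(d-T-t_{\alpha})^{2} + T c_0 \epsilon (d-T-t_{\alpha}) \leq l - \epsilon$. The same argument on the left yields a symmetric bound for $t_{\alpha} - c - T$. Adding these gives $d - c - 2T = O(1/T)$ for large $T$. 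For $T$ chosen sufficiently large in terms of $\epsilon, l, \bar{\epsilon}$ (and using $[c, d] \subseteq [a', b']$), the inequalities $d - c > 2T$ and $d - c - 2T = O(1/T)$ become jointly incompatible with the existence of such a subinterval, producing the desired contradiction and yielding the required $T$.

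The main obstacle will be tracking the dependencies of $T$ through this quantitative convexity argument, in particular coordinating the buffer width $T$ with the effective convexity scale $\sqrt{(l-\epsilon)/(c_0 \epsilon)}$ so that the resulting $T = T(\epsilon, l, \bar{\epsilon})$ is uniform enough to be plugged into the induction on $|\partial Z|$ that drives the rest of the proof of Lemma \ref{lem : bdryzsh}.
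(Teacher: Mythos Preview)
Your convexity computation in the first five steps is essentially the paper's argument: both show that any subinterval of $[a',b']$ on which $\inj g(t) \geq \epsilon/2$ has length bounded by a constant $C = C(\epsilon, l)$, via the parabolic lower bound $\ddot{\ell}_\alpha \geq c_0\epsilon$ together with the cap $\ell_\alpha \leq l$. The paper integrates from a point of small derivative found by the Mean Value Theorem; you integrate from the minimizer $t_\alpha$; the outputs are equivalent.

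The gap is in your contradiction framing. When you negate the claim, the counterexample interval $[c,d]$ is allowed to depend on $T$. What your analysis therefore establishes is: for each $T$, \emph{if} a bad interval exists, \emph{then} $0 < d-c-2T \leq K/T$. But these two bounds are perfectly compatible --- the adversary can simply pick $[c,d]$ with $d-c$ barely exceeding $2T$, say $d-c = 2T + K/(2T)$. No value of $T$ makes ``$d-c > 2T$'' and ``$d-c-2T = O(1/T)$'' jointly impossible, so the contradiction never materializes. The parenthetical ``using $[c,d]\subseteq[a',b']$'' does not help either, since $b'-a'$ is not bounded in terms of $\epsilon,l,\bar\epsilon$.

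The fix is to drop the contradiction setup and argue directly, exactly as the paper does. Your parabolic estimate already shows that any $\epsilon$-thick subinterval of $[a',b']$ has length at most a fixed $T_0 = T_0(\epsilon,l)$. Take this $T_0$ as the $T$ in the claim: an interval longer than $T_0$ cannot be entirely $\epsilon$-thick, so some curve drops to $\leq\epsilon$ there, and by hypothesis~(2) that curve must lie in $\partial Z$. Your step~6 --- propagating the derivative into the buffer zones to squeeze out an extra factor of $1/T$ --- is then unnecessary and can be deleted.
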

First we show that if $g|_{[c,d]}$ stays in the $\epsilon-$thick part of the Teichm\"{u}ller space then there is an upper bound for the length of the interval $[c,d]$. Suppose that the geodesic segment $g|_{[c,d]}$ is in the $\epsilon-$thick part. Fix $\hat{\alpha} \in \partial{Z}$. Then by Theorem \ref{thm : convlf}(\ref{eq : convlf}) there is $c(\epsilon)>0$ for which the differential inequality 
$$\ddot{\ell}_{\hat{\alpha}}(g(t)) \geq \epsilon c(\epsilon)$$ 
holds on the interval $[c,d]$. 

Suppose that $d-c>\frac{l}{\epsilon \sqrt{2c(\epsilon)}}$. By the upper bound $(1)$ in the lemma, $\ell_{\hat{\alpha}}(g(t))\leq l$ for every $t \in [c,d]$. Moreover, $\ell_{\hat{\alpha}}(g(t))\geq 0$ for any $t\in[c,d]$. Thus by the Mean-value Theorem there is $t^{*}\in [c,d]$ such that $|\dot{\ell}_{\hat{\alpha}}(g(t^{*}))| <\epsilon\sqrt{2c(\epsilon)}$. Let $t\in[c,d]$. Integrating the above differential inequality from $t^{*}$ to $t$ we get
\begin{equation}\label{eq : lquadf} \ell_{\hat{\alpha}}(g(t))\geq \ell_{\hat{\alpha}}(g(t^{*}))+\dot{\ell}_{\hat{\alpha}}(g(t^{*}))(t-t^{*})+\frac{1}{2}\epsilon c(\epsilon)(t-t^{*})^{2}.\end{equation}
We have $\ell_{\hat{\alpha}}(g(t^{*}))>\epsilon$ and $|\dot{\ell}_{\hat{\alpha}}(g(t^{*}))|<\epsilon\sqrt{2c(\epsilon)}$. Then computing $\Delta$ the discriminant of the quadratic function on the right hand side of (\ref{eq : lquadf}) we have 
$$\Delta=(\dot{\ell}_{\hat{\alpha}}(g(t^{*})))^{2}-4(\frac{\epsilon c(\epsilon)}{2})\ell_{\hat{\alpha}}(g(t^{*}))\leq (\epsilon\sqrt{2c(\epsilon)})^{2}-4(\frac{1}{2}\epsilon^{2}c(\epsilon))= 0.$$ 
This guarantees that the quadratic function is positive on $\mathbb{R}$. 

As before $\ell_{\hat{\alpha}}(g(t))<l$ for every $t\in[c,d]$. Then by completing the square we get $l\geq (\sqrt{\frac{\epsilon c(\epsilon)}{2}}(t-t^{*})+\frac{\dot{\ell}_{\hat{\alpha}}(g(t^{*})}{\sqrt{2\epsilon c(\epsilon)}})^{2}-\frac{\Delta}{2\epsilon c(\epsilon)}$. Then using the inequalities $|\dot{\ell}_{\hat{\alpha}}(g(t^{*}))|<\epsilon\sqrt{2c(\epsilon)}$ and $\Delta\leq 0$ we may conclude that 
$$|t-t^{*}|\leq \frac{\sqrt{2}}{\sqrt{c(\epsilon)}}+\frac{\sqrt{2l}}{\sqrt{\epsilon c(\epsilon)}}.$$
Note that for $\epsilon$ sufficiently small the right-hand side is positive. Then since $t\in[c,d]$ was arbitrary we have that $d-c\leq 2(\frac{\sqrt{2}}{\sqrt{c(\epsilon)}}+\frac{\sqrt{2l}}{\sqrt{\epsilon c(\epsilon)}})$. Now taking into account the assumption that $d-c>\frac{l}{\epsilon \sqrt{2c(\epsilon)}}$ we made above we conclude that for $T=\max\{\frac{l}{2\epsilon \sqrt{2c(\epsilon)}},\frac{\sqrt{2}}{\sqrt{c(\epsilon)}}+\frac{\sqrt{2l}}{\sqrt{\epsilon c(\epsilon)}}\}$ the upper bound $d-c\leq2T$ holds.

The contrapositive of what we just proved is that if $d-c>2T$, then there are curves which get shorter than $\epsilon$ at some time along $g|_{[c,d]}$. Moreover, $[c,d]\subseteq [a,b]$ so by the bound (2) component curves of $\partial{Z}$ are the only curves which can get shorter than $\epsilon<\bar{\epsilon}$ along $g|_{[c,d]}$. Thus we conclude that if $d-c>2T$, then there is a time $t \in [c,d]$ and a curve $\alpha \in \partial{Z}$ such that $\ell_{\alpha}(g(t)) \leq \epsilon$ as was desired. 
\begin{remark} 
For $\epsilon$ sufficiently small, $T=\frac{l}{2\epsilon \sqrt{2c(\epsilon)}}$.
\end{remark}
 
\begin{claim} \label{claim : ashcl}
Given $L>0$. If $b'-a'>(2|\partial{Z}|+1)L+2T$ then there is a curve $\alpha\in\partial{Z}$ such that $\ell_{\alpha}(g(t))\leq\epsilon$ on a subinterval of $[a',b']$ of length at least $L$.
\end{claim}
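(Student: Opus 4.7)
The plan is to combine Claim \ref{claim : ashc} (which gives short-curve times in any interval of length $>2T$) with a pigeonhole argument and the convexity of length-functions along WP geodesics (Theorem \ref{thm : convlf}) to upgrade finitely many isolated short-curve times for a single $\alpha\in\partial Z$ into an entire subinterval on which $\ell_\alpha(g(t))\leq \epsilon$.

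First I would set $a''=a'+T$ and $b''=b'-T$, so that the hypothesis $b'-a'>(2|\partial Z|+1)L+2T$ yields $b''-a''>(2|\partial Z|+1)L$. Partition $[a'',b'']$ into $2|\partial Z|+1$ consecutive closed subintervals $I_1,\ldots,I_{2|\partial Z|+1}$, each of length at least $L$. For each $j$, let $[c_j,d_j]$ be obtained from $I_j$ by enlarging by $T$ on each side; then $[c_j,d_j]\subseteq [a',b']$ and $d_j-c_j\geq L+2T>2T$. Applying Claim \ref{claim : ashc} to $[c_j,d_j]$, there exist a curve $\alpha_j\in \partial Z$ and a time $t_j\in [c_j+T,d_j-T]=I_j$ with $\ell_{\alpha_j}(g(t_j))\leq \epsilon$.

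Now I would invoke pigeonhole: we have $2|\partial Z|+1$ curves (with repetition) chosen from the set $\partial Z$ of size $|\partial Z|$, so some curve $\alpha\in \partial Z$ must be assigned to at least three indices, say $j_1<j_2<j_3$. In particular $j_3-j_1\geq 2$, so the times $t_{j_1}\in I_{j_1}$ and $t_{j_3}\in I_{j_3}$ are separated by the full subinterval $I_{j_1+1}$; hence $t_{j_3}-t_{j_1}\geq |I_{j_1+1}|\geq L$. Since both endpoints satisfy $\ell_\alpha\leq \epsilon$, the convexity of the length-function $\ell_\alpha$ along the WP geodesic $g$ (Theorem \ref{thm : convlf}) forces $\ell_\alpha(g(t))\leq \epsilon$ for every $t\in [t_{j_1},t_{j_3}]$, which is a subinterval of $[a',b']$ of length at least $L$.

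The conceptually subtle point -- really the only one -- is to make sure the pigeonhole count is tight: with $2|\partial Z|+1$ subintervals one guarantees at least three matching curves, which is exactly what is needed to force two of them to lie in non-adjacent subintervals and thus be separated by a gap of length $\geq L$. The rest of the argument is purely convexity together with Claim \ref{claim : ashc}; no new geometric input about the WP metric is required beyond what has already been established.
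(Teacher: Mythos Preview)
Your proof is correct and is essentially identical to the paper's own argument: partition $[a'+T,b'-T]$ into $2|\partial Z|+1$ subintervals of length $\geq L$, apply Claim \ref{claim : ashc} on each to get a short-curve time, pigeonhole to find some $\alpha\in\partial Z$ hit at three indices $j_1<j_2<j_3$, and use convexity of $\ell_\alpha$ on $[t_{j_1},t_{j_3}]$, whose length is at least $|I_{j_2}|\geq L$. The only cosmetic difference is that the paper cites the middle interval $I_{j_2}$ rather than $I_{j_1+1}$ for the gap estimate.
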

\begin{figure}
\centering
\scalebox{0.2}{\includegraphics{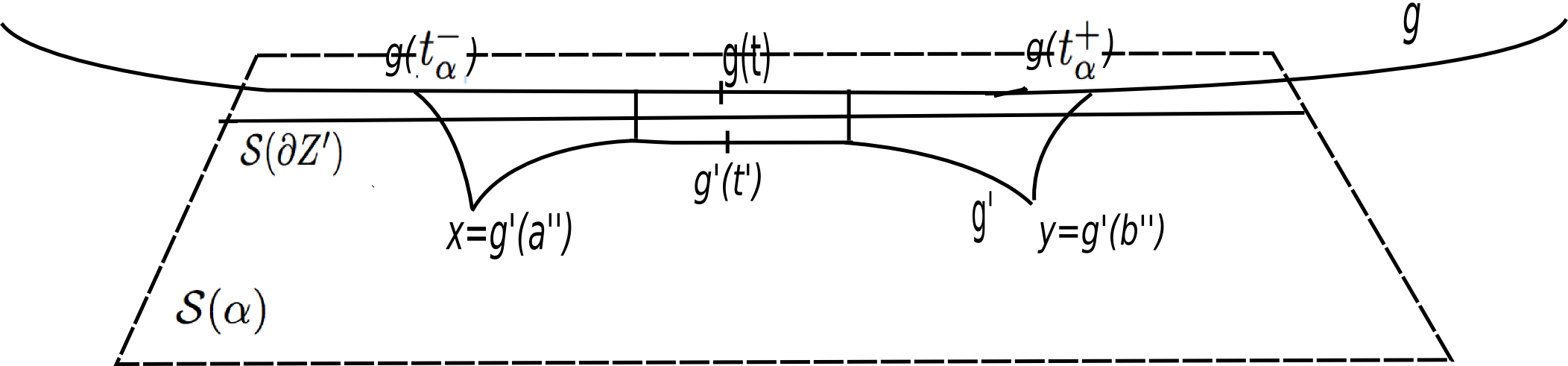}}
\caption{ The curve $\alpha \in \partial{Z}$ is shorter than $\epsilon'$ along $g|_{[t_{\alpha}^{-},t_{\alpha}^{+}]}$. The geodesic segment $g':[a'',b'']\to \mathcal{S}(\alpha)$ connects $x$ and $y$, the nearest points to $g(t_{\alpha}^{-})$ and $g(t_{\alpha}^{-})$ on $\mathcal{S}(\alpha)$, respectively. The length-function bounds ($1'$) and ($2'$) hold for the subsurface $Z'=Z\cup A(\alpha)$ along $g'$. Then by the assumption of the induction the length of every $\alpha' \in \partial{Z'}$ is shorter than $\frac{\epsilon}{2}$ along $g'$ over a suitably shrunk subinterval of $[a'',b'']$.}  
\label{fig : projstratum}
\end{figure}

By the assumption that $b'-a'>(2|\partial{Z}|+1)L+2T$ there is a partition of the interval $[a'+T, b'-T]$ into $2|\partial{Z}|+1$ subintervals $I_{1},...,I_{2|\partial{Z}|+1}$ where $|I_{i}|\geq L$ for $i=1,...,2|\partial{Z}|+1$. For each $i$ let $I_{i}=[r_{i},s_{i}]$. Claim \ref{claim : ashc} applied to each interval $[r_{i}-T,s_{i}+T]$ implies that there is a time $t_{i}\in I_{i}$ at which a component curve of $\partial{Z}$ is shorter than $\epsilon$. Now the pigeon-hole principle implies that there is a curve $\alpha \in \partial{Z}$ and indices $i_{1}, i_{2}$ and $i_{3}$ with $i_{1}< i_{2}< i_{3}$ such that $\ell_{\alpha}(g(t_{i_{1}})),\ell_{\alpha}(g(t_{i_{2}}))$ and $\ell_{\alpha}(g(t_{i_{3}}))$ are less than $\epsilon$. Then by the convexity of the $\alpha-$length-function along $g$, $\ell_{\alpha}(g(t))\leq \epsilon$ on the interval $[t_{i_{1}},t_{i_{3}}]$. Moreover $t_{i_{3}}-t_{i_{1}}\geq |I_{i_{2}}|\geq L$. So $[t_{i_{1}},t_{i_{3}}]$ is the claimed subinterval.
\medskip

Let $\epsilon'<\min\{\bar{\epsilon}, \epsilon\}$, which will be determined. Let $L=\bar{s}'+2\sqrt{2\pi\epsilon'}$, where $\bar{s}'$ will be determined.
 
Let $T=T(\epsilon',l)$ be the constant from Claim \ref{claim : ashc}. Then for $L$ as above by Claim \ref{claim : ashcl} if $b'-a'>(2|\partial{Z}|+1)L+2T$, then there is a curve $\alpha\in \partial{Z}$ such that 
\begin{equation}\label{eq : ag}\ell_{\alpha}(g(t))\leq \epsilon'\end{equation}
on an interval of length at least $L$. Denote this interval by $[t_{\alpha}^{-},t_{\alpha}^{+}]$. Let  $x$ and $y$ be the nearest points to $g(t_{\alpha}^{-})$ and $g(t_{\alpha}^{+})$ on the $\alpha-$stratum, respectively. Since the $\alpha-$stratum is geodesically convex, there is a WP geodesic segment $g':[a'',b''] \to \mathcal{S}(\alpha)$ parametrized by arc-length connecting $x$ to $y$ (see Figure \ref{fig : projstratum}).

Since $\ell_{\alpha}(g(t_{\alpha}^{-}))\leq \epsilon'$ and $\ell_{\alpha}(g(t_{\alpha}^{+})) \leq \epsilon'$, by Proposition \ref{prop : diststr} we get the upper bounds
$d_{\WP}(g(t_{\alpha}^{-}),x) \leq \sqrt{2 \pi \epsilon'}$ and $d_{\WP}(g(t_{\alpha}^{+}),y) \leq \sqrt{2 \pi \epsilon'}$, respectively. Moreover, $\overline{\Teich(S)}$ equipped with the WP metric is a $\CAT(0)$ space. Therefore by convexity of the distance function between two geodesics in a $\CAT(0)$ space (see e.g. Proposition 2.2 in Chapter II of \cite{bhnpc}) the distance between any point on $g([t_{\alpha}^{-}, t_{\alpha}^{+}])$ and its nearest point on $g'$ is less than $\sqrt{2 \pi \epsilon'}$. Here we choose $\epsilon'$ so that 
$$\sqrt{2\pi \epsilon'} \leq \min\{{\bf d}(\bar{\epsilon},\frac{\bar{\epsilon}}{2}),{\bf d}(2l,l),{\bf d}(\epsilon,\frac{\epsilon}{2})\},$$
 where ${\bf d}$ is the function from Corollary \ref{cor : gradlfestimate}.

 By the choice of $\epsilon'$ we have the following length-function bounds 
\begin{enumerate}[($1'$)]
\item $\ell_{\alpha}(g'(t)) \leq 2l$ for every $\alpha' \in \partial{Z'}$, and
\item $\ell_{\gamma}(g'(t))\geq\frac{\bar{\epsilon}}{2}$ for every  $\gamma \not\in \partial{Z'}$
\end{enumerate}
for every $t\in[a'',b'']$. Here $Z'=Z\cup A(\alpha)$.

Now $Z'$ is a large subsurface and $|\partial{Z}'|=|\partial{Z}|-1$. So by the above two length-function bounds, the assumption of the induction for the geodesic $g':[a'',b'']\to \overline{\Teich(S)}$ implies that there is $\bar{s}'>0$ such that if $b''-a''>2\bar{s}'$ then for every $\alpha' \in \partial{Z'}=\partial{Z}-\{\alpha\}$ we have
\begin{equation}\label{eq : a'g'}\ell_{\alpha'}(g'(t)) \leq \frac{\epsilon}{2}\end{equation} 
for every $t \in [a''+\bar{s}',b''-\bar{s}']$.  

Let $\bar{s}=\bar{s}'+2\sqrt{2\pi\epsilon}$. Given $t\in [t_{\alpha}^{-}+\bar{s},t_{\alpha}^{+}-\bar{s}]$, let $t'\in [a'',b'']$ be the nearest point to $g(t)$ on $g'|_{[a'',b'']}$. Since $d_{\WP}(g(t_{\alpha}^{-}),g'(a''))\leq \sqrt{2\pi\epsilon}$ and $d_{\WP}(g(t_{\alpha}^{+}),g'(b''))\leq \sqrt{2\pi\epsilon}$ the convexity of the distance function between two geodesics in a $\CAT(0)$ space implies that 
$$d_{\WP}(g(t),g'(t'))\leq \sqrt{2\pi\epsilon}.$$
Now by the triangle inequality 
$$d_{\WP}(g'(t'),g'(a''))\geq d_{\WP}(g(t),g(t_{\alpha}^{-}))-d_{\WP}(g(t),g'(t'))-d_{\WP}(g(t_{\alpha}^{-}),g'(a'')).$$
 Then we have that
 $$t'-a''\geq (t-t_{\alpha}^{-})-2\sqrt{2\pi\epsilon}\geq \bar{s}'.$$
  Similarly, we have 
  $$b''-t'\geq (t_{\alpha}^{+}-t)-2\sqrt{2\pi\epsilon}\geq \bar{s}'.$$
 Thus $t'\in [a''+\bar{s}',b''-\bar{s}']$. Then by the bound (\ref{eq : a'g'}) and the choice of $\epsilon'$, Corollary \ref{cor : gradlfestimate} implies that for every curve $\alpha' \in \partial{Z'}$, 
 $$\ell_{\alpha'}(g(t)) \leq \epsilon$$
  for every $t\in [a'+\bar{s},b'-\bar{s}]$. Moreover  $[a'+\bar{s},b'-\bar{s}] \subset [t_{\alpha}^{-},t_{\alpha}^{+}]$, so by (\ref{eq : ag}),
  $$\ell_{\alpha}(g(t))\leq \epsilon$$
 for every $t\in[a'+\bar{s},b'-\bar{s}]$. 
   
   We established the bound for the length of all of the component curves of $\partial{Z}$ on the interval $[a'+\bar{s},b'-\bar{s}]$. This finishes the step of the induction.
 
\end{proof}

\section{Laminations with prescribed subsurface coefficients}\label{sec : preendinv}

Our purpose in this section is to construct pairs of partial markings or laminations on a surface $S$ with a given list of subsurface coefficients. More precisely, given a sequence of integers $\{ e_{i} \}_{i}$ we will construct a pair of laminations or markings $(\mu_{I},\mu_{T})$ such that  there is a list of large subsurfaces $\{Z_{i}\}_{i}$ with 
$$d_{Z_{i}}(\mu_{I},\mu_{T})\asymp_{K_{1},C_{1}}|e_{i}|,$$
 where the constants $K_{1}$ and $C_{1}$ depend on certain initial choices. Moreover there are constants ${\bf m}$ and ${\bf m}'$ depending on the initial choices such that the subsurface coefficient of any proper non-annular subsurface of $S$ which is not in the list of subsurfaces $Z_{i}$ is bounded by ${\bf m}$, and all annular subsurface coefficients are bounded by ${\bf m}'$. This is a kind of symbolic coding for laminations using subsurface coefficients which can be thought of as continued fraction expansions with a specific pattern of coefficients. We will use these constructions in $\S$\ref{sec : examples} to construct examples of WP geodesics with certain behavior in the moduli space.
\medskip

The construction uses compositions of powers of (partial) pseudo-Anosov maps. A {\it partial pseudo-Anosov map} $f$ on a surface $S$ is a reducible element of $\Mod(S)$ which preserves the isotopy class of curves consisting a multi-curve $\sigma$ on $S$ and does not rearrange the connected components of $S\backslash\sigma$. Moreover, the restriction of $f$ to each of the connected components of $S\backslash\sigma$ is a pseudo-Anosov maps. We say that the partial pseudo-Anosov map $f$ is supported on $S\backslash \sigma$. 

We start with some background about the action of (partial) pseudo-Anosov maps on the curve complex of a surface and its subsurfaces also the space of projective measured laminations. 

The following proposition is a straightforward consequence of \cite[Proposition 4.6]{mm1}.

\begin{prop}\label{prop : palb}
Let $f$ be a (partial) pseudo-Anosov map supported on a subsurface $Y\subseteq S$. There is a constant $\tau_{f}>0$ such that for every $\alpha \pitchfork Y$ and every integer $e$ we have
$$d_{Y}(\alpha, f^{e}(\alpha)) \geq \tau_{f} |e|.$$
\end{prop}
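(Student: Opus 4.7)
The plan is to reduce the statement to the Masur--Minsky fact that a pseudo-Anosov map acts on the curve complex of its support as a hyperbolic isometry with positive asymptotic translation length, and then transfer this to subsurface projections via equivariance.

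First I would observe that since $\alpha \pitchfork Y$, the projection $\pi_{Y}(\alpha)$ is a nonempty subset of $\mathcal{C}_{0}(Y)$ of diameter at most $2$. Pick any vertex $\beta \in \pi_{Y}(\alpha)$. Because $f$ is supported on $Y$, it fixes (the isotopy class of) each component of $\partial Y$, so the subsurface projection is $f$-equivariant, giving $\pi_{Y}(f^{e}\alpha) = f^{e}\pi_{Y}(\alpha)$ and in particular $f^{e}\beta \in \pi_{Y}(f^{e}\alpha)$. In the partial pseudo-Anosov case, one argues on the connected component of the support containing (up to essential intersection with) $\alpha$, on which $f$ acts as an honest pseudo-Anosov of that subsurface.

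Next I would invoke Proposition 4.6 of \cite{mm1}: there exist constants $c_{f}>0$ and $b_{f}\geq 0$, depending only on $f$, such that for every vertex $\beta \in \mathcal{C}_{0}(Y)$ and every integer $e$,
$$d_{Y}(\beta, f^{e}\beta) \geq c_{f}|e| - b_{f}.$$
Combining this with the triangle inequality and the diameter bound $\diam_{Y}(\pi_{Y}(\alpha)) \leq 2$ (applied to $\pi_{Y}(\alpha)$ and to $\pi_{Y}(f^{e}\alpha) = f^{e}\pi_{Y}(\alpha)$) yields
$$d_{Y}(\alpha, f^{e}\alpha) \geq d_{Y}(\beta, f^{e}\beta) - 4 \geq c_{f}|e| - b_{f} - 4.$$

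Finally, to absorb the additive error and obtain the clean bound $d_{Y}(\alpha, f^{e}\alpha) \geq \tau_{f}|e|$, I would pick $\tau_{f} \in (0, c_{f})$ small enough that $\tau_{f}|e| \leq c_{f}|e| - b_{f} - 4$ for all integers $|e|$ exceeding the threshold $\lceil (b_{f}+4)/(c_{f}-\tau_{f}) \rceil$. For the finitely many remaining small values of $|e|$ (including $e=0$, where the inequality is trivial), shrinking $\tau_{f}$ further if necessary makes the bound hold, and this choice of $\tau_{f}$ depends only on $f$ (and not on $\alpha$). There is no substantive obstacle: the content is entirely in Masur--Minsky's hyperbolicity result for the action of $f$ on $\mathcal{C}(Y)$, and the remainder is a straightforward unpacking using equivariance of $\pi_{Y}$ under mapping classes supported on $Y$.
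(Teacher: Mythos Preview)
Your proposal is correct and matches the paper's approach: the paper offers no proof beyond declaring the proposition a straightforward consequence of Proposition~4.6 in \cite{mm1}, and you invoke the same result and supply the routine equivariance and bookkeeping. One minor point: your final step of shrinking $\tau_f$ to cover the small nonzero values of $|e|$ tacitly assumes $\inf_\alpha d_Y(\alpha, f^e\alpha) > 0$, which does not follow from the displayed inequality alone (the diameter correction $-4$ could in principle bring the bound to zero); the paper does not address this either, and since every application in the paper takes $|e|$ above a fixed threshold $E$, the point is immaterial in context.
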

 
 \begin{lem}\label{lem : limsup}
 Let $f$ be a (partial) pseudo-Anosov map supported on a subsurface $X$. There is a constant $\bar{\tau}_{f}>0$ such that for every $\alpha \in \mathcal{C}_{0}(X)$ we have
 $$\limsup_{n\to \infty} \frac{d_{X}(\alpha,f^{n}(\alpha))}{n}=\bar{\tau}_{f}.$$ 
 \end{lem}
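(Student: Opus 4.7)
The plan is to define $\bar{\tau}_{f}$ using a single orbit, exploit that $f$ acts by isometries on $\mathcal{C}(X)$ so that the relevant sequence of distances is subadditive, and then transfer the value to every other curve by the triangle inequality.

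First I would fix an arbitrary $\alpha_{0} \in \mathcal{C}(X)$ and set $a_{n} := d_{X}(\alpha_{0}, f^{n}\alpha_{0})$. Since $f$ induces a simplicial isometry of $\mathcal{C}(X)$, we have $d_{X}(f^{k}\beta, f^{k}\gamma) = d_{X}(\beta, \gamma)$ for every $k \in \mathbb{Z}$ and every pair of vertices. Applying the triangle inequality after inserting the intermediate point $f^{m}\alpha_{0}$ gives
$$a_{m+n} \leq d_{X}(\alpha_{0}, f^{m}\alpha_{0}) + d_{X}(f^{m}\alpha_{0}, f^{m+n}\alpha_{0}) = a_{m} + a_{n},$$
so $\{a_{n}\}$ is subadditive. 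Fekete's lemma then supplies that $\lim_{n\to\infty} a_{n}/n$ exists and equals $\inf_{n} a_{n}/n$; in particular the $\limsup$ along the $\alpha_{0}$ orbit is a genuine limit. Set $\bar{\tau}_{f}$ to be this common value. Positivity is then immediate from Proposition \ref{prop : palb} applied to $\alpha = \alpha_{0}$ (choosing $\alpha_{0}$ to overlap $X$), which yields $a_{n} \geq \tau_{f} n$ and hence $\bar{\tau}_{f} \geq \tau_{f} > 0$.

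Finally, to see that $\bar{\tau}_{f}$ does not depend on the auxiliary curve $\alpha_{0}$, let $\alpha \in \mathcal{C}(X)$ be arbitrary. Using the isometry property of $f^{n}$ once more and two triangle inequalities,
$$\bigl| d_{X}(\alpha, f^{n}\alpha) - d_{X}(\alpha_{0}, f^{n}\alpha_{0}) \bigr| \leq d_{X}(\alpha, \alpha_{0}) + d_{X}(f^{n}\alpha, f^{n}\alpha_{0}) = 2 d_{X}(\alpha, \alpha_{0}),$$
so dividing by $n$ and sending $n \to \infty$ shows $\limsup_{n\to\infty} d_{X}(\alpha, f^{n}\alpha)/n = \bar{\tau}_{f}$. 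There is no real obstacle here; the only point worth flagging is that subadditivity actually upgrades the $\limsup$ to a limit, so the lemma as stated is really the weaker half of this fact, and the quantity $\bar{\tau}_{f}$ is a well-defined translation length of $f$ on $\mathcal{C}(X)$.
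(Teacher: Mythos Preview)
Your proof is correct and follows essentially the same approach as the paper: both use that $f$ acts by isometries on $\mathcal{C}(X)$, the triangle inequality to show the quantity is finite and independent of $\alpha$, and Proposition~\ref{prop : palb} for positivity. Your use of Fekete's lemma is a mild sharpening---the paper only bounds $d_{X}(\alpha,f^{n}\alpha)/n$ above by $d_{X}(\alpha,f\alpha)$ to get finiteness of the $\limsup$, whereas you observe subadditivity and hence obtain that the $\limsup$ is in fact a limit, which is a cleaner statement and (as you note) identifies $\bar{\tau}_{f}$ as the translation length.
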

 \begin{proof}
 Let $\alpha \in \mathcal{C}_{0}(X)$. Using the triangle inequality and the fact that $f$ is an isometry of $\mathcal{C}(X)$, for any positive integer $n$ we have that  
 $$d_{X}(\alpha,f^{n}(\alpha))\leq \sum_{i=0}^{n-1}d_{X}(f^{i}(\alpha),f^{i+1}(\alpha))\leq n d_{X}(\alpha,f(\alpha)).$$
  Therefore $\frac{d_{X}(\alpha,f^{n}(\alpha))}{n}\leq d_{X}(\alpha,f(\alpha))$. Thus $\limsup_{n\to \infty} \frac{d_{X}(\alpha,f^{n}(\alpha))}{n}$ is a finite number. 
  
Now let $\beta\in\mathcal{C}_{0}(X)$ with $\alpha \neq \beta$. By the triangle inequality and the fact that $f$ and therefore any $f^{n}$ ($n\in\mathbb{N}$) is an isometry of $\mathcal{C}(X)$ we have that
  \begin{eqnarray*}
  d_{X}(\beta,f^{n}(\beta))&\leq&d_{X}(\beta,\alpha)+d_{X}(\alpha,f^{n}(\alpha))+d_{X}(f^{n}(\alpha),f^{n}(\beta))\\
  &\leq&d_{X}(\alpha,f^{n}(\alpha))+2d_{X}(\beta,\alpha).
   \end{eqnarray*}
  Thus
  $$d_{X}(\beta,f^{n}(\beta))\leq d_{X}(\alpha,f^{n}(\alpha))+2d_{X}(\beta,\alpha).$$
   Now dividing both sides of the above inequality by $n$ and taking limsup we see that 
   $$\limsup_{n\to \infty} \frac{d_{X}(\beta,f^{n}(\beta))}{n} \leq \limsup_{n\to \infty} \frac{d_{X}(\alpha,f^{n}(\alpha))}{n}.$$
   Similarly we may show that $\limsup_{n\to \infty} \frac{d_{X}(\alpha,f^{n}(\alpha))}{n} \leq \limsup_{n\to \infty} \frac{d_{X}(\beta,f^{n}(\beta))}{n}$.
  Therefore, the limsup does not depend on the choice of $\alpha$. Finally, by Proposition \ref{prop : palb} we have that $\bar{\tau}_{f}\geq \tau_{f}>0$.
 \end{proof}
 
 We continue by reviewing some facts about the action of (partial) pseudo-Anosov maps on the space of projective measured laminations $\mathcal{PML}(S)$. We essentially follow expos\'{e} 11 of \cite{FLP} and $\S 3$ and appendix A of the book by Ivanov \cite{ivanovsubgpmcg}. Here we replace measured geodesic laminations with measured geodesic foliations used in these two references. The correspondence of measured foliations and measured geodesic lamination is explained in \cite{fol=lam}. Given a reducible element of the mapping class group by Theorem 11.7 of \cite{FLP} there is a multi-curve $\sigma=\{\delta_{j}\}_{j=1}^{m}$ and subsurfaces $\{X_{a}\}_{a=1}^{n}$ such that $S\backslash\sigma=\bigsqcup_{a} X_{a}^{o}$, where $X_{a}^{o}$ denotes the interior of the subsurface $X_{i}$, and the restriction of $f$ to each $X_{a}$ is either pseudo-Anosov or periodic. For a partial pseudo-Anosov map we assume that the restriction of the map to each $X_{a}$ is a pseudo-Anosov map. Then as in expos\'{e} 11 of \cite{FLP} for each $a=1,...,n$, there are measured laminations on $S$, $\mathcal{L}_{a}^{\pm}=(\lambda_{a}^{\pm},m_{a}^{\pm})$ the attracting and repelling measured laminations of $f|_{X_{a}}$ and real numbers $s_{a}>1$ such that for each $a$,
 \begin{itemize}
 \item $f(\lambda_{a}^{+})= \lambda_{a}^{+}$ and $f_{*}(m_{a}^{+})\geq s_{a}m_{a}^{+}$,
 \item $f(\lambda_{a}^{-})= \lambda_{a}^{-}$ and $f_{*}(m_{a}^{-})\leq s_{a}^{-1}m_{a}^{-}$. 
 \end{itemize}
 Moreover, both $\lambda_{a}^{\pm}$ are uniquely ergodic laminations on $X_{a}$. In particular the support of $\lambda_{a}^{\pm}$ is minimal filling on $X_{a}$. Also note that $\lambda_{a}^{\pm}$ contain the curves in $\partial{X}_{a}$.
 
  Let $i:\mathcal{ML}(S)\times \mathcal{ML}(S)\to \mathbb{R}$ be the intersection number defined for any pair of measured laminations (see $\S 2.7$ of \cite{ivanovsubgpmcg}). Given a complete hyperbolic metric on $S$, let $\ell:\mathcal{ML}(S)\to \mathbb{R}^{\geq 0}$ be the length-function (for the definition see \cite{bonlam}). Note that both $i$ and $\ell$ are homogeneous function of degree one in each of their variables. For example, $i(s\mathcal{L},\mathcal{L}')=si(\mathcal{L},\mathcal{L}')$.

As in Appendix $A$ of  \cite{ivanovsubgpmcg}(see also $\S 3$ of the book) let  $\Delta_{f}^{+}$ be the set of projective classes of measured geodesic laminations 
$$\{\sum_{a=1}^{n}t_{a}\mathcal{L}^{+}_{a} : t_{a}\geq 0\;\text{for}\; a=1,...,n,\;\text{and}\; \sum_{a=1}^{n}t_{a}>0\}.$$ 
Also let $\Psi_{f}^{+}$ be the set of projective classes of measured geodesic laminations 
$$\{\mathcal{L}\neq 0:i(\mathcal{L},\mathcal{L}_{a}^{+})=0\;\text{for}\;a=1,...,n\}.$$
 Note that $\Delta_{f}^{+}\subseteq \Psi_{f}^{+}$. Similarly, define the sets $\Delta_{f}^{-}$ and $\Psi_{f}^{+}$ and note that $\Delta_{f}^{-}\subseteq \Psi_{f}^{-}$.
 
 Define the functions 
 $$L^{\pm}:\mathcal{PML}(S)\to \mathbb{R}^{\geq 0}$$
  by $L^{\pm}([\mathcal{L}])=\frac{1}{\ell(\mathcal{L})}(\sum_{a=1}^{n}i(\mathcal{L},\mathcal{L}_{a}^{\pm})+\sum_{j=1}^{m}i(\mathcal{L},\delta_{j}))$. Note that $(L^{+})^{-1}(0)=\Delta^{-}$ and  $(L^{-})^{-1}(0)=\Delta^{+}$.  
 
 In \cite[\S3]{ivanovsubgpmcg} is shown that given a compact subset $K\subset \mathcal{PML}(S)\backslash \Psi^{+}$, there are constants $c_{1},d_{1},d_{1}'$ and $c_{2},d_{2},d_{2}'$, depending only on $f$ and $K$ such that for any measured lamination $\mathcal{L}$ with $[\mathcal{L}]\in K$, $\delta_{j}\in\sigma$ and $n\in\mathbb{N}$, $\frac{i(f^{n}(\mathcal{L}),\delta_{j})}{\ell(f^{n}(\mathcal{L}))}\leq \frac{d_{1}'}{c_{1}n-d_{1}}$ and $\frac{i(f^{n}(\mathcal{L}),\mathcal{L}_{a}^{+})}{\ell(f^{n}(\mathcal{L}))}\leq \frac{s_{a}^{n}d_{2}'}{c_{2}n-d_{2}}$.  Similarly, given a compact subset $K\subset \mathcal{PML}(S)\backslash \Psi^{-}$ there are constants $c_{1},d_{1},d_{1}'$ and $c_{2},d_{2},d_{2}'$, depending only on $f$ and $K$ such that for any measured lamination $\mathcal{L}$ with $[\mathcal{L}]\in K$, $\delta_{j}\in\sigma$ and $n\in \mathbb{N}$, $\frac{i(f^{n}(\mathcal{L}),\delta_{j})}{\ell(f^{n}(\mathcal{L}))}\leq \frac{d_{1}'}{c_{1}n-d_{1}}$ and $\frac{i(f^{n}(\mathcal{L}),\mathcal{L}_{a}^{-})}{\ell(f^{n}(\mathcal{L}))}\leq \frac{s_{a}^{n}d_{2}'}{c_{2}n-d_{2}}$. 
  
Therefore, for any $n\in\mathbb{N}$, $L^{+}(f^{n}([\mathcal{L}]))\leq \frac{s_{a}^{-n}d'}{cn-d}+\frac{d'}{cn-d}$. Similarly, for any $n\in\mathbb{N}$, $L^{-}(f^{-n}([\mathcal{L}]))\leq \frac{s_{a}^{n}d'}{cn-d}+\frac{d'}{cn-d}$. 
 
Using the above bounds for the functions $L^{\pm}$ and the fact that $L^{\pm}$ are continuous on $\mathcal{PML}(S)$ one may easily verify that the action of $f$ on $\mathcal{PML}(S)\backslash \Psi_{f}^{-}\cup \Psi_{f}^{+}$ has a compact fundamental domain, denoted by $K_{f}$. Furthermore, Ivanov in Theorem A.2 of the Appendix of \cite{ivanovsubgpmcg}(see also Theorem 3.5 in $\S 3$ of the book) proves that
 
 \begin{thm}\label{thm : unifconv}
 Let $U$ be an open subset and $K$ be a compact subset of $\mathcal{PLM}(S)$. If $\Delta_{f}^{+}\subset U$ and $K\subset \mathcal{PML}(S)\backslash \Psi_{f}^{-}$, then there is $N\in\mathbb{N}$ such that $f^{n}(K)\subset U$ for any integer $n\geq N$. If $\Delta_{f}^{-}\subset U$ and $K\subset \mathcal{PML}(S)\backslash \Psi_{f}^{+}$, then there is $N\in\mathbb{N}$ such that $f^{-n}(K)\subset U$ for any integer $n\geq N$. 
 \end{thm}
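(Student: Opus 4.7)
The plan is to deduce the theorem from the uniform sublevel estimates on $L^\pm$ already recorded in the excerpt, by converting the open neighborhood $U$ of $\Delta_f^+$ into a sublevel set of a suitable continuous functional on $\mathcal{PML}(S)$. More precisely, I would use that the map $L^+:\mathcal{PML}(S)\to\mathbb{R}^{\geq 0}$ is continuous, that the zero locus of $L^+$ coincides with the attracting simplex $\Delta_f^+$ (reading the paper's normalization so that this holds, i.e.\ $L^+$ is built from intersections with the attracting laminations $\mathcal{L}_i^+$ and with the reducing curves $\delta_j$, all of which vanish against any element of $\Delta_f^+$), and that $\Delta_f^+$ is a compact subset of $\mathcal{PML}(S)$.

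First I would show that the open set $U$ contains a sublevel set of $L^+$: given $U\supset\Delta_f^+$ open, I claim there is $\epsilon>0$ such that
\[
\{[\mathcal{L}]\in\mathcal{PML}(S):L^{+}([\mathcal{L}])<\epsilon\}\subset U.
\]
If this failed, there would be a sequence $[\mathcal{M}_k]\notin U$ with $L^+([\mathcal{M}_k])<1/k$; by compactness of $\mathcal{PML}(S)$ a subsequence converges to some $[\mathcal{M}_\infty]$, and continuity of $L^+$ forces $L^+([\mathcal{M}_\infty])=0$, so $[\mathcal{M}_\infty]\in\Delta_f^+\subset U$, contradicting that $U$ is open and $[\mathcal{M}_k]\notin U$.

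Next I would invoke the uniform decay estimates established just before the theorem statement. For $K\subset\mathcal{PML}(S)\setminus\Psi_f^+$ compact, those estimates supply constants $c,d,d'>0$ (depending only on $f$ and $K$) such that for every $[\mathcal{L}]\in K$ and every $n\geq 1$ one has
\[
L^{+}(f^{n}([\mathcal{L}]))\;\leq\;\frac{s_{\min}^{-n}d'}{cn-d}+\frac{d'}{cn-d},
\]
where $s_{\min}=\min_i s_i>1$. In particular $L^+\circ f^n\to 0$ uniformly on $K$ as $n\to\infty$. Choose $N$ so large that for all $n\geq N$ and all $[\mathcal{L}]\in K$ the right-hand side above is less than the $\epsilon$ produced in the previous step; then $f^n(K)\subset\{L^+<\epsilon\}\subset U$, which is the desired conclusion.

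The second statement, about $f^{-n}(K)\subset U$ when $\Delta_f^-\subset U$ and $K\cap\Psi_f^-=\emptyset$, follows by the symmetric argument: $f^{-1}$ is again a (partial) pseudo-Anosov whose attracting and repelling data exchange the roles of $\mathcal{L}_i^+$ and $\mathcal{L}_i^-$, so one applies the analogous uniform estimate for $L^-\circ f^{-n}$ together with the same sublevel-set trick with $L^-$. I do not expect a major obstacle: the genuine work is the exponential decay of $L^\pm$ along forward/backward iterates (already done in the excerpt), and the only subtlety in the argument here is the elementary sublevel-set/compactness manipulation linking the quantitative decay to an arbitrary open neighborhood $U$.
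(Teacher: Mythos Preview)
The paper does not supply its own proof of this statement; it is quoted from Ivanov (Theorem~A.2 in the appendix of \emph{Subgroups of Teichm\"uller Modular Groups}), so there is no in-paper argument to compare against. Your outline is the right shape --- use the uniform decay of $L^{+}\circ f^{n}$ on $K$ and a sublevel-set/compactness trick --- but the step linking sublevel sets to $U$ breaks down precisely in the reducible case that the paper cares about.

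The gap is in the assertion that $(L^{+})^{-1}(0)=\Delta_{f}^{+}$. With $L^{+}([\mathcal L])=\bigl(\sum_{i}i(\mathcal L,\mathcal L_{i}^{+})+\sum_{j}i(\mathcal L,\delta_{j})\bigr)/\ell(\mathcal L)$, every reducing curve $\delta_{j}$ satisfies $i(\delta_{j},\mathcal L_{i}^{+})=0$ and $i(\delta_{j},\delta_{k})=0$, so $L^{+}([\delta_{j}])=0$. More generally $(L^{+})^{-1}(0)$ is the projective join of $\Delta_{f}^{+}$ with the simplex spanned by $\{\delta_{j}\}$, which strictly contains $\Delta_{f}^{+}$ whenever $f$ is a genuine \emph{partial} pseudo-Anosov. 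Hence for an open $U\supset\Delta_{f}^{+}$ chosen small enough to miss $[\delta_{1}]$, no sublevel set $\{L^{+}<\epsilon\}$ is contained in $U$: the constant sequence $[\mathcal M_{k}]=[\delta_{1}]$ witnesses the failure of your contradiction argument (it has $L^{+}=0$, lies outside $U$, and its limit $[\delta_{1}]$ is not in $\Delta_{f}^{+}$). So the uniform decay of $L^{+}$ along $f^{n}(K)$ only forces accumulation in the larger set $(L^{+})^{-1}(0)$, not in $\Delta_{f}^{+}$.

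What Ivanov's argument supplies beyond your sketch is control ruling out $\delta_{j}$--mass in the limit: one must show that the ratio of the $\delta_{j}$--component to the $\mathcal L_{i}^{+}$--components of $f^{n}\mathcal L$ tends to zero uniformly for $[\mathcal L]\in K$. This uses that $K\cap\Psi_{f}^{+}=\emptyset$ forces $i(\mathcal L,\mathcal L_{i_{0}}^{+})>0$ for some $i_{0}$, hence $\ell(f^{n}\mathcal L)$ grows exponentially while $i(f^{n}\mathcal L,\delta_{j})=i(\mathcal L,\delta_{j})$ stays bounded; but to conclude, one needs a functional whose zero set is \emph{exactly} $\Delta_{f}^{+}$ (for instance, comparing against $\sum_{i}i(\cdot,\mathcal L_{i}^{-})$ rather than $\ell(\cdot)$), or an equivalent direct estimate. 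Your plan is salvageable, but not with $L^{+}$ alone.
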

 We use the following lemma to obtain certain upper bounds for subsurface coefficients in Subsections \ref{subsec : pre1}, \ref{subsec : pre2}.

\begin{lem}\label{lem : paub}
Let $f$ be a (partial) pseudo-Anosov map supported on a large surface $X$. Given $\zeta,\eta\in \mathcal{C}_{0}(X)$, there is a constant $h$ depending on $f$ and $\zeta,\eta$, so that for any subsurface $W$ which is neither $X$ nor an annular subsurface with core curve a boundary curve of $X$ we have
\begin{equation}\label{eq : dWfg}d_{W}(f^{e_{1}}(\zeta),f^{e_{2}}(\eta))\leq h,\end{equation}
for any $e_{1},e_{2}\in \mathbb{Z}$ so that $f^{e_{1}}(\zeta)\pitchfork W$ and $f^{e_{2}}(\eta)\pitchfork W$. 
\end{lem}
\begin{proof} 

Suppose that $W$ is a non-annular subsurface. Since $X$ is a large subsurface either $\partial{W}\pitchfork X$ or $X\subsetneq W$. 
 
 First we establish the bound (\ref{eq : dWfg}) for non-annular subsurface $W$ so that $\partial{W}\pitchfork X$. 
  
Let $K:=K_{f}$ be the fundamental domain for the action of $f$ on $\mathcal{PML}(S)\backslash \Psi_{f}^{-}\cup \Psi_{f}^{+}$. Applying an appropriate power of $f$, let us say $e$, to $\partial{W}$ we may assume that the projective class of all of the component curves of $\partial{W}$ which overlap $X$ are in $K$. Applying $f^{-e}$ to the subsurface coefficient in (\ref{eq : dWfg}) we get the subsurface coefficient $d_{f^{-e}(W)}(f^{e_{1}-e}(\zeta),f^{e_{2}-e}(\eta))$. Note that since $\partial{W}\pitchfork X$ and $f$ is supported on $X$ we have that $f^{-e}(\partial{W})\pitchfork X$ and thus $\partial{f^{-e}(W)}\pitchfork X$. Moreover, since $f^{e_{1}}(\zeta)\pitchfork W$, $f^{e_{1}-e}(\zeta)\pitchfork f^{-e}(W)$, and since $f^{e_{2}}(\eta)\pitchfork W$, $f^{e_{2}-e}(\eta)\pitchfork f^{-e}(W)$. Thus it suffices to bound the subsurface coefficient in (\ref{eq : dWfg}) assuming that a component of $\partial{W}$ is in $K$.  

Fix a complete hyperbolic metric on $S$. Realize all curves and laminations geodesically in this metric. We claim that there is a constant $l_{1}>0$ and $N_{1}\in\mathbb{N}$ depending only on $\zeta$ and $f$ such that if for an integer $n\geq N_{1}$, $f^{n}(\zeta)\pitchfork W$ then the length of $f^{n}(\zeta)\cap W$ is bounded above by $l_{1}$. We essentially follow the compactness argument given by Minksy in \cite{mklcc}(see also \cite[Theorem 3.9]{convexccgp}). Suppose that the claim does not hold. Then there is a sequence $\{n_{r}\}_{r=1}^{\infty}$, subsurfaces $W_{r}$ so that $f^{n_{r}}(\zeta)\pitchfork W_{r}$, and arcs $a_{r}$ in $f^{n_{r}}(\zeta)\cap W_{r}$ for which the length of $a_{r}$ goes to $\infty$ as $r\to\infty$. Theorem \ref{thm : unifconv} applied to the pseudo-Anosov map $f$, the curve $\zeta$ and arbitrary open subsets $U$ so that $U\subset\mathcal{PML}(S)\backslash\Psi_{f}^{-}$ and $\Delta_{f}^{+}\subset U$ implies that the projective measured laminations $[f^{n_{r}}(\zeta)]$ converge into the subset $\Delta^{+}_{f}$. Moreover $X$ is a large subsurface, so $\Delta^{+}_{f}=[\mathcal{L}^{+}]$, where $\mathcal{L}^{+}$ is the attracting measured lamination of $f|_{X}$. Denote the support of $[\mathcal{L}^{+}]$ by $\lambda^{+}$. Then since $a_{r}\subset f^{n_{r}}(\zeta)$ and the length of $a_{r}$ goes to $\infty$, the arcs $a_{r}$ converge to a sublamination of $\lambda^{+}$. But $\lambda^{+}$ is minimal so the arcs $a_{r}$ converge to the lamination $\lambda^{+}$ itself. Let $\beta_{r}$ be a component curve of $\partial{W}_{r}$ which overlaps $X$ and consider the projective measured laminations $[\beta_{r}]\in K$. Note that $K$ is a compact subset of $\mathcal{PML}(S)\backslash \Psi_{f}^{-}\cup \Psi_{f}^{+}$, so after possibly passing to a subsequence $[\beta_{r}]$ converge to a projective measured lamination $[\mathcal{E}]$ in $K$. Denote the support of $\mathcal{E}$ by $\xi$. Furthermore, note that for each $r$, $a_{r}$ is disjoint from $\beta_{r}$. Thus $\xi$ is disjoint from $\lambda^{+}$. But then $i(\mathcal{E},\mathcal{L}^{+})=0$, and therefore $[\mathcal{E}]\in\Psi_{f}^{+}$. This contradicts the fact that $K$ is disjoint from $\Psi_{f}^{+}$. This contradiction implies that the claim holds.

A similar argument shows that there exist a constant $l_{2}>0$ and $N_{2}\in\mathbb{N}$, depending only on $\zeta$ and $f$, such that if  for an integer $n\geq N_{2}$, $f^{-n}(\zeta)\pitchfork W$, then the lenght of $f^{-n}(\zeta)\cap W$ is bounded above by $l_{2}$. 

There are finitely many curves $f^{e}(\zeta)$ so that $-N_{1}\leq e\leq N_{2}$ and $f^{e}(\zeta)\pitchfork W$, so the length of these curves is bounded above by some $l_{3}>0$. Then in particular the length of $f^{e}(\zeta)\cap W$ is bounded by $l_{3}$. 

By the bounds we established above, if for an $e\in\mathbb{Z}$, $f^{e}(\zeta)\pitchfork W$ then the length of $f^{e}(\zeta)\cap W$ is bounded above by $l=\max\{l_{i}:i=1,2,3\}$.

Similarly we can show that there is an $l'>0$ depending on $f$ and $\eta$, so that if for an $e\in\mathbb{Z}$, $f^{e}(\eta)\pitchfork W$ then the length of $f^{e}(\eta)\cap W$ is bounded above by $l'$.

Let $e_{1},e_{2}\in\mathbb{Z}$ be so that $f^{e_{1}}(\zeta)\pitchfork W$ and $f^{e_{2}}(\eta)\pitchfork W$, as we saw above $f^{e_{1}}(\zeta)\cap W$ and $f^{e_{2}}(\eta)\cap W$ have length bounded by $l$ and $l'$ respectively. This implies that the intersection number of $f^{e_{1}}(\zeta)\cap W$ and $f^{e_{2}}(\eta)\cap W$ is uniformly bounded above. Then Lemma \ref{lem : di} gives the upper bound for (\ref{eq : dWfg}).
\medskip

Now we establish the bound (\ref{eq : dWfg}) for non-annular subsurface $W$ with $X\subsetneq W$. In this situation $\partial{X}$ has at least one component curve $\beta\in\mathcal{C}_{0}(W)$. Then by the triangle inequality we have 
$$d_{W}(f^{e_{1}}(\zeta), f^{e_{2}}(\eta))\leq d_{W}(f^{e_{1}}(\zeta),\beta)+d_{W}(\beta,f^{e_{2}}(\eta)).$$
 Applying $f^{-e_{1}}$, $d_{W}(f^{e_{1}}(\zeta),\beta)=d_{f^{-e_{1}}W}(\zeta,\beta)$. Here we use the fact that since $\beta$ is a component curve of $\partial{X}$ and $f$ is supported on $X$ we have $f^{-e_{1}}(\beta)=\beta$. By Lemma \ref{lem : di}, we have $d_{f^{-e_{1}}(W)}(\zeta,\beta)\leq 2i(\zeta,\beta)+1$. So the first subsurface coefficient above is bounded by $2i(\zeta,\beta)+1$. Similarly the second subsurface coefficient above, using Lemma \ref{lem : di}, is bounded by $2i(\beta,\eta)+1$. There are finitely many $\beta\in \partial{X}$, and $\eta,\zeta$ are fixed. Thus the intersection numbers above are uniformly bounded. Thus we obtain the desired bound for (\ref{eq : dWfg}).
 \medskip
 
 Finally suppose that $W$ is an annular subsurface whose core curve $\beta$ is not a boundary curve of $X$. Applying an appropriate power of $f$, let us say $e$, to $\beta$ we may assume that the projective class of $\beta$ which overlaps $X$ is in $K$. Applying this power of $f$ to the subsurface coefficient in (\ref{eq : dWfg}) we obtain $d_{f^{-e}(\beta)}(f^{e_{1}-e}(\zeta),f^{e_{2}-e}(\eta))$, where $f^{-e}(\beta)$ is not a boundary curve of $X$ and the curves $f^{e_{1}-e}(\zeta)$ and $f^{e_{2}-e}(\eta)$ overlap $f^{-e}(\beta)$. Thus it suffices to bound the subsurface coefficient in (\ref{eq : dWfg}) assuming that $\beta\in K$. 

Let $\lambda,\lambda'$ be two curves or laminations that are realized geodesically in the fixed metric on $S$. Let $p\in\lambda\cap\lambda'$. Denote the smaller angle between the leaves $l\subseteq\lambda$ and $l'\subseteq\lambda$ passing through $p$ by $\vartheta_{p}$. Define the angle between $\lambda$ and $\lambda'$ to be the infimum of $\vartheta_{p}$ for all $p\in\lambda\cap\lambda'$. 

 We claim that there exist $N_{1}\in\mathbb{N}$ and a constant $\theta_{1}>0$ such that if for any integer $n\geq N_{1}$, $f^{n}(\zeta)\pitchfork W$ then the angle between the curves $f^{n}(\zeta)$ and $\beta$ is bounded from below by $\theta_{1}$. Suppose that the claim does not hold. Then there is a sequence $\{n_{r}\}_{r=1}^{\infty}$ and annular subsurfaces $W_{r}$ with core curves $\beta_{r}$ so that $f^{n_{r}}(\zeta)\pitchfork\beta_{r}$ and the angle between $\beta_{r}$ and $f^{n_{r}}(\zeta)$ goes to $0$ as $r\to\infty$. As we saw the discussion for non-annular subsurfaces Theorem \ref{thm : unifconv} implies that $[f^{n_{r}}(\zeta)]\to [\mathcal{L}^{+}]$ as $r\to\infty$ ($\mathcal{L}^{+}$ is the attracting lamination of $f|_{X}$). Furthermore, since $K$ is compact after possibly passing to a subsequence, $[\beta_{r}]\to [\mathcal{E}]$ for some $[\mathcal{E}]$ in $K$. Denote the support of $\mathcal{E}$ by $\xi$. Then since the angle between $\beta_{r}$ and $f^{n_{r}}(\zeta)$ goes to $0$, $\xi$ is a sub-lamination of $\lambda^{+}\cup \partial{X}$ ($\lambda^{+}$ is the support of $\mathcal{L}^{+}$). Therefore, $[\mathcal{E}]$ is in $\Psi_{f}^{+}$. But this contradicts the fact that $K$ is disjoint from $\Psi_{f}^{+}$. 
 
 Similarly we can show that there exist $N_{2}\in\mathbb{N}$ and $\theta_{2}>0$ so that if for an integer $n\geq N_{2}$, $f^{-n}(\zeta)\pitchfork\beta$ then the angle between $\beta$ and $f^{-n}(\zeta)$ is bounded below by $\theta_{2}$. 
 
 There are finitely many curves $f^{e}(\zeta)$ with $N_{1}\leq e\leq N_{2}$ and $f^{e}(\zeta)\pitchfork\beta$. Moreover,  the angle between any of them and $\beta$ is positive. Thus the angle between any of them and $\beta$ is bounded below by some $\theta_{3}>0$.
 
 Thus for an $e\in\mathbb{Z}$ if $f^{e}(\zeta)\pitchfork\beta$ then the angle between $f^{e}(\zeta)$ and $\beta$ is bounded below by $\theta:=\min\{\theta_{1},\theta_{2},\theta_{3}\}$.
 
 Similarly we can show that there is $\theta'>0$ depending on $f$ and $\eta$ such that for an $e\in\mathbb{Z}$ if $f^{e}(\eta)\pitchfork\beta$ then the angle between $f^{e}(\eta)$ and $\beta$ is bounded below by $\theta'$.
 
 Having the lower bounds $\theta$ and $\theta'$ for the angles, Lemma 2.6 of \cite{convexccgp} applied to the curves $f^{e_{1}}(\zeta)$ and $f^{e_{2}}(\eta)$ and the annular subsurface $W$ with core curve $\beta$ gives us an upper bound for $d_{\beta}(f^{e_{1}}(\zeta),f^{e_{2}}(\eta))$, depending only on $\theta,\theta'$ and the lower bound for the length of $\beta$. The length of $\beta$ is bounded below by twice of the injectivity radius of the hyperbolic metric which was fixed on the surface. This finishes the proof of the uniform upper bound in (\ref{eq : dWfg}) for annular subsurfaces whose core curve is not in the boundary of $X$. 
\end{proof}
\begin{prop}\label{prop : pAlam}
Let $f$ be a partial pseudo-Anosov map supported on a large subsurface $X$. Let $\mathcal{L}^{\pm}$ be the attracting/repelling measured laminations of $f$. Denote the support of $\mathcal{L}^{\pm}$ by $\lambda^{\pm}$.  Suppose that $Y\subsetneq X$ is an essential subsurface. We have the following:
\begin{itemize}
\item There are finitely many laminations containing $\lambda^{\pm}$.
\item Any leaf of $\lambda^{\pm}$ intersects $Y$ essentially. 
\item  $\pi_{Y}(\lambda^{\pm})$ consists of finitely many curves.
\end{itemize}
\end{prop}
\begin{proof}
We give the proofs of the three bullets for $\lambda^{+}$, for $\lambda^{-}$ the proofs are identical. 

Equip $S$ with a complete hyperbolic metric and realize curves and laminations geodeiscally in this metric. The complement $S\backslash\lambda^{+}$ consists of finite sided ideal polygons and crowns of the closed geodesics in the boundary of $X$ (see \cite[\S 4]{autsurf}). Therefore any lamination that contains $\lambda^{+}$ is obtained from $\lambda^{+}$ by adding some of the diagonal geodesics of the polygons and crowns which do not intersect each other. There are finitely many choices for these geodesics. Thus there are finitely many laminations that contain $\lambda^{+}$. The proof of the first bullet is complete.

Since $\lambda^{+}$ is filling $X$, there is at least one leaf $l$ of $\lambda^{+}$ which intersects $\partial{Y}\subset X$ essentially. Then in particular $l$ intersects $Y$ essentially. Let $p,q\in l$ be two consecutive intersection points of $l$ and $\partial{Y}$. Any other leaf $l'$ of $\lambda^{+}$ is dense in $\lambda^{+}$. Thus the points $p$ and $q$ are accumulation points of $l'$. Then considering the subarc of $l'$ between two points $p'$ and $q'$ on $l'$ which are arbitrary close to $p$ and $q$ respectively, one can see that $l'$ also intersects $\partial{Y}$ essentially. Thus in particular $l'$ intersects $Y$ essentially. The proof of the second bullet is complete.

Let $\mathcal{F}$ be the foliation corresponding to $\lambda^{+}$ (see \cite{fol=lam}). Note that $\mathcal{F}$ has finitely many $p-$prong singularities ($p\geq 1$) on $S$. The arcs in $\mathcal{F}\cap Y$ with end points on boundary curves of $Y$ can be divided into parallel arcs which consist finitely many rectangles inside $Y$ with two opposite sides on one or two boundary curves of $Y$. The number of rectangles is bounded above by the sum of the number of prongs at singularities of $\mathcal{F}$. Any leaf of $\lambda^{+}$ fellow travels a leaf of $\mathcal{F}$ or a concatenation of singular leaves of $\mathcal{F}$. Let $a$ be an arc in a rectangle as above with end points on boundary curves $\alpha$ and $\beta$ (a single boundary curve $\alpha$). Consider the set of curves in the boundary of a regular neighborhood of $a\cup\alpha\cup\beta$ ($a\cup\alpha$). Let $C$ be the set of all curves we obtain in this way for arcs $a$ (up to homotopy) in each one of the rectangles. There are finitely many rectangles and therefore finitely many arcs $a$ up to homotopy. Moreover in the boundary of any regular neighborhood of the union of $a$ and the boundary curves on which the end points of $a$ lie there are finitely many curves. Therefore, $C$ is a finite set. It follows from the definition that any curve in $\pi_{Y}(\lambda^{+})$ is in $C$. This finishes the proof of the third bullet.
\end{proof}
Denote the set of laminations containing $\lambda^{\pm}$ by $\Lambda^{\pm}$. By the first bullet of the Proposition \ref{prop : pAlam} $\Lambda^{\pm}$ is a finite set.
\begin{lem} \label{lem : Hnbhdl}
Given $\epsilon>0$, there is $V^{\pm}$ a neighborhood of $[\mathcal{L}^{\pm}]$ in $\mathcal{PML}(S)$, so that the support of every projective measured lamination in $V^{\pm}$ is within the $\epsilon$ Hausdorff distance of a lamination in $\Lambda^{\pm}$. 
 \end{lem}
 \begin{proof}
 Suppose that the lemma does not hold for $[\mathcal{L}^{+}]$ (The proof for $[\mathcal{L}^{-}]$ is similar). Then there is a sequence of neighborhood $V_{n}\subset \mathcal{PML}(S)$ containing $[\mathcal{L}^{+}]$, so that $\bigcap_{n=1}^{\infty} V_{n}=[\mathcal{L}^{+}]$ and for each $n\in\mathbb{N}$, there is a projective measured lamination $[\mathcal{L}_{n}]\in V_{n}$ so that the Hausdorff distance of $\lambda_{n}$ the support of $\mathcal{L}_{n}$ and all of the laminations in $\Lambda^{+}$ is at least $\epsilon$. Then $\{[\mathcal{L}_{n}]\}_{n=1}^{\infty}$ is a sequence of projective measured laminations that converges to $[\mathcal{L}^{+}]$. But any convergent subsequence of the supports $\lambda_{n}$ does not converge to a lamination containing $\lambda^{+}$. This is a contradiction to Proposition \ref{prop : hlimwk*lim}. 
 \end{proof}

 \subsection{Scheme I} \label{subsec : pre1} 
 
 The construction of this subsection will be used in $\S\ref{subsec : div}$ to construct examples of divergent WP geodesic rays and in $\S$\ref{subsec : closed} to construct examples of closed WP geodesics in the thin part of the moduli space.  
 
 Let $\alpha$ and $\beta$ be two disjoint curves on $S$ such that the subsurfaces $S\backslash \alpha$, $S\backslash \beta$ and $S\backslash \{\alpha,\beta\}$ are large subsurfaces. Consider indexed large subsurfaces $X_{0}=S\backslash\{\alpha,\beta\}$, $X_{1}=S\backslash\alpha$, $X_{2}=S\backslash\{\alpha,\beta\}$ and $X_{3}=S\backslash\beta$. Note that $X_{0}$ and $X_{2}$ are both the same subsurface $S\backslash\{\alpha,\beta\}$ with different indices. Let $f_{0},f_{1}, f_{2}$ and $f_{3}$ be partial pseudo-Anosov maps supported on $X_{0},X_{1},X_{2}$ and $X_{3}$, respectively, where $f_{0}$ and $f_{2}$ are the same partial pseudo-Anosov maps with different indices. Then in particular, $f_{a}$, $a=0,1,2,3$, preserves the homotopy class of each component of $\partial{X}_{a}$. Moreover suppose that the restriction of $f_{a}$ to a regular neighborhood of each curve in $\partial{X}_{a}$ is the identity map.

Let $q_{0}:\mathbb{N}\to \{0,1,2,3\}$ be the function $q_{0}(i)\equiv i$ (mod 4). Let $q_{1}(i)=q_{0}(i+1)$, $q_{2}(i)=q_{0}(i+2)$ and $q_{3}(i)=q_{0}(i+3)$. Let $q$ denote any of the functions $q_{0},q_{1},q_{2}$ and $q_{3}$ or the restriction of any of them to the set $\{1,...,k\}$, for some $k\in\mathbb{N}$. 

When the domain of $q$ is $\mathbb{N}$ let $\{e_{i}\}_{i=1}^{\infty}$ be an infinite sequence of integers and when the domain of $q$ is $\{1,...,k\}$ let $\{e_{i}\}_{i=1}^{k}$ be a sequence of integers with $k$ elements. For simplicity of notation some times we denote the sequence $\{e_{i}\}_{i}$ by $e$. 

\begin{notation}
Given $r,s\in \mathbb{N}$ with $s>r$, we denote the composition of powers of (partial) pseudo-Anosov maps $f_{q(j)}^{e_{j}}$, $j=r,...,s$, $f_{q(r)}^{e_{r}}\circ ...\circ f_{q(s)}^{e_{s}}$ by 
$$f_{q(r)}^{e_{r}}...f_{q(s)}^{e_{s}}.$$
\end{notation}
 For any $i$ in the domain of $q$ set the subsurface 
 \begin{equation}\label{eq : Zi}Z_{i}(q,e)=f_{q(1)}^{e_{1}}...f_{q(i-1)}^{e_{i-1}}(X_{q(i)}).\end{equation}
Let $\mu_{I}(q,e)$ be a marking so that $\base(\mu_{I})$ contains $\{\partial{X}_{a}\}_{a=0,1,2,3}=\{\alpha,\beta\}$. Throughout the following lemmas and propositions we assume that the domain of $q$ is $\{1,..,k\}$ for some $k\in\mathbb{N}$. We let $\mu_{T}(q,e)=f_{q(1)}^{e_{1}}...f_{q(k)}^{e_{k}}\mu_{I}(q,e)$ and establish several bounds for the subsurface coefficients of $\mu_{I}(q,e)$ and $\mu_{T}(q,e)$.  

When there is no ambiguity we drop the reference to $q$ and $e$. For example we denote $Z_{i}(q,e)$ by $Z_{i}$.

\begin{remark}
The construction of this subsection and the estimates for subsurface coefficients can be carried out in a more general setting. Here we restrict ourself to be able to provide detailed step by step estimates and complete arguments.
\end{remark}
 
\begin{lem} \label{lem : ldomain} There are constants $K'_{1}>0, C'_{1}\geq 0$ and $E_{1}>0$, depending only on the partial pseudo-Anosov maps $f_{0},f_{1},f_{2}$ and $f_{3}$, and $\mu_{I}$ with the following properties. Given $q$ and the sequence of integers $\{e_{i}\}_{i=1}^{k}$  such that $|e_{i}|> E_{1}$ for any $i\in\{1,...,k\}$, we have 
\begin{enumerate}[(i)]
\item\label{ldomain : ld} For any $i\in\{1,...,k\}$,
\begin{equation}\label{eq : ldomain1}d_{Z_{i}(q,e)}(\mu_{I}(q,e),\mu_{T}(q,e)) \geq K'_{1} |e_{i}|-C'_{1}\end{equation}
\item \label{ldomain : ord}Let $k\geq 3$. Let $i,j\in\{1,...,k\}$ and $j\geq i+2$. Then $Z_{i}(q,e)<Z_{j}(q,e)$ between $\mu_{I}(q,e)$ and $\mu_{T}(q,e)$.
\end{enumerate}
\end{lem}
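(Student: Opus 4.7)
My plan for (i) is to conjugate by the isometry $g_{i-1}^{-1}$, where $g_j := f_{q(1)}^{e_1}\cdots f_{q(j)}^{e_j}$, reducing the problem to estimating
\[
d_{Z_i}(\mu_I,\mu_T) \,=\, d_{X_{q(i)}}\!\bigl(\xi_i,\; f_{q(i)}^{e_i}\zeta_i\bigr),
\]
where $\xi_i := g_{i-1}^{-1}\mu_I = f_{q(i-1)}^{-e_{i-1}}\cdots f_{q(1)}^{-e_1}\mu_I$ and $\zeta_i := f_{q(i+1)}^{e_{i+1}}\cdots f_{q(k)}^{e_k}\mu_I$. I would then invoke the reverse triangle inequality to bound this below by $d_{X_{q(i)}}(\zeta_i, f_{q(i)}^{e_i}\zeta_i) - d_{X_{q(i)}}(\xi_i,\zeta_i)$ up to bounded diameter terms, and apply Proposition~\ref{prop : palb} to extract the main lower bound $\tau_{f_{q(i)}}|e_i|$ from the first term, since $f_{q(i)}$ is a partial pseudo-Anosov supported on $X_{q(i)}$.

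Next I would bound the correction term $d_{X_{q(i)}}(\xi_i,\zeta_i)$ by telescoping through $\mu_I$ and applying Lemma~\ref{lem : paub} to each individual factor $f_{q(j)}^{\pm e_j}$, which contributes only a bounded amount to the $X_{q(i)}$-projection whenever $X_{q(i)}\neq X_{q(j)}$ and $X_{q(i)}$ is not an annulus with core in $\partial X_{q(j)}$ (both hold since the $X_a$'s are distinct large subsurfaces). The hard part will be arranging that each intermediate lamination in the telescope lies in a single compact subset of $\mathcal{PML}(S)$ disjoint from $\Psi^-_{f_{q(j)}}\cup\Psi^+_{f_{q(j)}}$, so that the constants $m(f_{q(j)},K)$ of Lemma~\ref{lem : paub} are all uniform. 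I would do this inductively by choosing $E_1$ large and invoking Theorem~\ref{thm : unifconv}: for $|e_j|>E_1$, each iterate lies in a small neighborhood of $\Delta^+_{f_{q(j)}}$ (or $\Delta^-_{f_{q(j)}}$), and since consecutive indices in the cyclic scheme give distinct $X$'s, these attracting simplices avoid the bad set of the adjacent map, closing the induction.

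For part (ii), I would apply Proposition~\ref{prop : ordersubsurf}. The overlap $Z_i\pitchfork Z_j$ should follow because the $X_a$'s overlap pairwise and, for $j\geq i+2$, the conjugating product $g_{i-1}^{-1}g_{j-1}=f_{q(i)}^{e_i}\cdots f_{q(j-1)}^{e_{j-1}}$ contains at least one pseudo-Anosov power whose attracting lamination fills enough of $S$ to ensure transversality of the images of $\partial X_{q(i)}$ and $\partial X_{q(j)}$. The coefficient bounds $d_{Z_i}(\mu_I,\mu_T)>4M$ and $d_{Z_j}(\mu_I,\mu_T)>4M$ follow from (i) after enlarging $E_1$. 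The ordering inequalities $d_{Z_i}(\mu_I,\partial Z_j)\geq 2M$ and $d_{Z_j}(\mu_T,\partial Z_i)>2M$ are handled by exactly the same conjugation-and-telescoping strategy as in (i): after pulling back by $g_{i-1}^{-1}$, $\partial Z_j$ becomes $f_{q(i)}^{e_i}\cdots f_{q(j-1)}^{e_{j-1}}\partial X_{q(j)}$, and isolating the initial pseudo-Anosov power $f_{q(i)}^{e_i}$ via Proposition~\ref{prop : palb} yields a lower bound of order $|e_i|$ with error controlled by Lemma~\ref{lem : paub}; the symmetric argument with $g_{j-1}^{-1}$ gives the other inequality.
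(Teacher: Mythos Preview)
Your overall strategy for part (i)---conjugate by $g_{i-1}^{-1}$, apply the reverse triangle inequality, and extract the main term $\tau_{f_{q(i)}}|e_i|$ from Proposition~\ref{prop : palb}---matches the paper exactly. The gap is in how you bound the correction term $d_{X_{q(i)}}(\xi_i,\zeta_i)$. Telescoping through every factor $f_{q(j)}^{\pm e_j}$ and applying Lemma~\ref{lem : paub} to each one gives a sum with $O(k)$ terms, each bounded by some constant $m$; the total error is therefore of order $k$, not a fixed constant $C_1'$ depending only on $f_0,\dots,f_3,\mu_I$. (There is also a minor slip: you assert ``the $X_a$'s are distinct,'' but the paper sets $X_0=X_2$ and $f_0=f_2$, so Lemma~\ref{lem : paub} does not even apply to every term in your telescope.)

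The paper avoids this by arguing inductively on $k$ and using the Behrstock inequality rather than telescoping. Writing $h=f_{q(i+1)}^{e_{i+1}}\cdots f_{q(k)}^{e_k}$ and applying the inductive hypothesis to the shifted data $q'(j)=q(j+i)$, $e'(j)=e(j+i)$, one gets that $d_{f_{q(i+1)}^{e_{i+1}}X_{q(i+2)}}(\mu_I,h\mu_I)$ is large. After checking that $\partial X_{q(i)}\pitchfork f_{q(i+1)}^{e_{i+1}}X_{q(i+2)}$ (this is Claim~\ref{claim : bdXqi-1i+1} in the paper), Theorem~\ref{thm : behineq} collapses the entire tail at once: $d_{X_{q(i)}}(f_{q(i+1)}^{e_{i+1}}\partial X_{q(i+2)},\,h\mu_I)\le B_0$. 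Since $d_{X_{q(i)}}(\mu_I,\,f_{q(i+1)}^{e_{i+1}}\partial X_{q(i+2)})$ is bounded by a constant $\eta$ depending only on the maps and $\mu_I$ (here $X_{q(i)}\neq X_{q(i+1)}$), a single triangle inequality---not a telescope---gives $d_{X_{q(i)}}(\mu_I,h\mu_I)\le B_0+\eta+1$, uniformly in $k$. The same Behrstock trick handles $d_{X_{q(i)}}(g^{-1}\mu_I,\mu_I)$. For part (ii) the paper again inducts on $k$, handling the base case $k=3$ by a direct overlap check and deducing the general case from the truncated sequences $q_{\mathrm{init}},q_{\mathrm{term}}$; your proposed direct estimate of $d_{Z_i}(\mu_I,\partial Z_j)$ runs into the same unbounded-telescope problem.
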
  

\begin{proof} 
Our proof modifies the proof of Theorem 5.2 in \cite{clm}. There the authors assume that any two of the subsurfaces which support partial pseudo-Anosov maps either overlap or are disjoint. But here $X_{0}(=X_{2}$) is a subsurface of $X_{1}$ and $X_{3}$. As a result their argument does not go through completely to prove the lemma and needs some modification. Furthermore, our set up is different.

{\it Proof of part part (\ref{ldomain : ld}).} The proof is by induction on $k$.  Denote $\mu_{I}$ by $\mu$. Set the constant
 $$K'_{1}=\min\{\tau_{a} : a=0,1,2,3\}.$$
 Here $\tau_{a}=\tau_{f_{a}}$, $a=0,1,2,3$, is the constant from Proposition \ref{prop : palb} for the partial pseudo-Anosov map $f_{a}$ supported on $X_{a}$. 
 
 Let 
 $$\eta=\max\{d_{X_{a}}(\mu,f_{b}^{e}\mu) : a,b\in\{0,1,2,3\},\; X_{a}\neq X_{b} \;\text{and}\;  e\in \mathbb{Z}\}.$$
  Note that since $\mu$ is fixed and $f_{b}$ is not supported on $X_{a}$, by Lemma \ref{lem : paub} the above maximum exists and is finite. Set the constant
 \begin{center}$C'_{1}=2(B_{0}+\eta+2)$.\end{center}
Here $B_{0}$ is the constant in Theorem \ref{thm : behineq}(Behrstock Inequality).  

Let
$$\omega=\max\{d_{W}(\mu,\partial{X}_{a}) : W\subseteq S \;\text{and}\; a=0,1,2,3\}.$$
 Note that since the marking $\mu$ and subsurfaces $\{X_{a}\}_{a=0,1,2,3}$ are fixed the intersection numbers $i(\mu,\partial{X}_{a})$ are uniformly bounded. Then by Lemma \ref{lem : di} subsurface coefficients $d_{W}(\mu,\partial{X}_{a})$ ($W\subseteq S$) are uniformly bounded and therefore the above maximum exists and is finite. Set the constant
$$E_{1}=\frac{B_{0}+\omega+4M+4+C_{1}'}{K'_{1}}.$$ 
By Proposition \ref{prop : palb}, 
$$d_{X_{a}}(f_{a}^{e}\mu,\mu)\geq \tau_{a}|e|\geq K'_{1}|e|,$$
for $a=0,1,2,3$. So we have the base of the induction for $k=1$. 
 
 Suppose that for any function $q:\{1,...,k'\}\to \{0,1,2,3\}$ with $k'<k$ as the beginning of this section and any sequence of integers $\{e_{i}\}_{i=1}^{k'}$ with $|e_{i}|>E_{1}$ for $i\in\{1,...,k'\}$, part (\ref{ldomain : ld}) holds. Fix $i\in\{1,...,k\}$ and let $g=f_{q(1)}^{e_{1}}...f_{q(i-1)}^{e_{i-1}}$. Applying $g^{-1}$ to $d_{Z_{i}}(\mu_{I},\mu_{T})$ we get
 $$d_{Z_{i}}(\mu_{I},\mu_{T})=d_{X_{q(i)}}(g^{-1}\mu,f_{q(i)}^{e_{i}}h\mu)$$
 where $h=f_{q(i+1)}^{e_{i+1}}...f_{q(k)}^{e_{k}}$. By the triangle inequality the right hand side is bounded below by
 \begin{equation}\label{eq : dXqilb}d_{X_{q(i)}}(h\mu,f_{q(i)}^{e_{i}}h\mu)- d_{X_{q(i)}}(g^{-1}\mu,h\mu)-2.\end{equation}
 By Proposition \ref{prop : palb} the first term of (\ref{eq : dXqilb}) is bounded below by $K'_{1}|e_{i}|$. This gives us the multiplicative constant in (\ref{eq : ldomain1}). To get the additive constant in (\ref{eq : ldomain1}) we proceed to bound the second term of (\ref{eq : dXqilb}). By the triangle inequality it is bounded above by
 \begin{equation}\label{eq : dXqig-}d_{X_{q(i)}}(g^{-1}\mu,\mu)+d_{X_{q(i)}}(\mu,h\mu)+2.\end{equation}
 First we show that $d_{X_{q(i)}}(\mu,h\mu)\leq \frac{C'_{1}}{2}+1$. Let $q'(j)=q(j+i)$ for $j=1,...,k-i$ and $e'_{j}=e_{j+i}$ for $j=1,...,k-i$. Then by the set up of the subsurfaces for $q',e'$ (see (\ref{eq : Zi})), $Z_{2}(q',e')=f_{q(i+1)}^{e_{i+1}}(X_{q(i+2)})$ so
$$ d_{f_{q(i+1)}^{e_{i+1}}(X_{q(i+2)})}(\mu,h\mu)=d_{Z_{2}(q',e')}(\mu,h\mu).$$
The assumption of the induction applied to $q'$ implies that the right hand side subsurface coefficient is greater than or equal to $K'_{1}|e_{i+1}|-C'_{1}$. So we have
 \begin{equation}\label{eq : dfqeXlb} d_{f_{q(i+1)}^{e_{i+1}}(X_{q(i+2)})}(\mu,h\mu)\geq K'_{1}|e_{i+1}|-C'_{1}. \end{equation} 
 
  We claim that 
  \begin{claim} \label{claim : bdXqi-1i+1} 
 For any $i\in\{1,...,k\}$ we have that $X_{q(i)}\pitchfork f_{q(i+1)}^{e_{i+1}}(X_{q(i+2)})$. 
  \end{claim}
  
  To see this, first suppose that $q(i)=1$ then $X_{q(i)}=S\backslash \alpha$. Furthermore, since $q(i+2)=3$, $X_{q(i+2)}=S\backslash\beta$ and since $q(i+1)=2$, $X_{q(i+1)}=S\backslash\{\alpha, \beta\}$. The map $f_{q(i+1)}$ preserves each component of $\partial{X}_{q(i+1)}=\{\alpha, \beta\}$, so $f_{q(i+1)}^{e_{i+1}}(X_{q(i+2)})=S\backslash \beta$. Then since $\alpha\neq\beta$, $X_{q(i)}\pitchfork f_{q(i)}^{e_{i}}(X_{q(i+2)})$. 
  
  If $q(i)=3$, then $X_{q(i)}=S\backslash \beta$ and the claim follows from a similar argument exchanging $\alpha$ and $\beta$. 
  
  Now suppose that $q(i)=0 (2)$, then $X_{q(i)}=S\backslash\{\alpha,\beta\}$. Furthermore, since $q(i+2)=2(0)$, $X_{q(i+2)}=S\backslash \{\alpha,\beta\}$. So $f_{q(i+1)}^{e_{i+1}}(\alpha),f_{q(i+1)}^{e_{i+1}}(\beta)\in f_{q(i+1)}^{e_{i+1}}(\partial{X}_{q(i+2)})$. Moreover, $f_{q(i+1)}$ is supported on $X_{q(i+1)}=S\backslash \alpha$ or $S\backslash \beta$.
  \begin{itemize}
  \item If $f_{q(i+1)}$ is supported on $S\backslash \beta$, then by Proposition \ref{prop : palb}, 
  $$d_{X_{q(i+1)}}(f_{q(i+1)}^{e_{i+1}}(\partial{X}_{q(i)}),\partial{X}_{q(i)})>K'_{1}|e_{i+1}|>K'_{1}E_{1}>4.$$ 
  The above inequality implies that $f_{q(i+1)}^{e_{i+1}}(\alpha)\pitchfork\alpha$ (see $\S$\ref{sec : ccplx}). Then since $f_{q(i+1)}^{e_{i+1}}(\alpha)\in f_{q(i+1)}^{e_{i+1}}(\partial{X}_{q(i+2)})$ we have $X_{q(i)}\pitchfork f_{q(i+1)}^{e_{i+1}}(X_{q(i+2)})$. 
  \item If $f_{q(i+1)}$ is supported on $S\backslash \alpha$, then similar to the above bullet we have that $f_{q(i+1)}^{e_{i+1}}(\beta)\pitchfork\beta$. Then since $f_{q(i+1)}^{e_{i+1}}(\beta)\in f_{q(i+1)}^{e_{i+1}}(\partial{X}_{q(i+2)})$ we have $X_{q(i)}\pitchfork f_{q(i+1)}^{e_{i+1}}(X_{q(i+2)})$.
  \end{itemize}
   The proof of the claim is complete.
  
  By Claim \ref{claim : bdXqi-1i+1} in hand we may write
 \begin{equation}\label{eq : di+1Xi+2o}d_{f_{q(i+1)}^{e_{i+1}}(X_{q(i+2)})}(\partial{X}_{q(i)},h\mu).\end{equation}
This subsurface coefficient by the triangle inequality is bounded below by
 \begin{equation}\label{eq : di+1Xi+2}d_{f_{q(i+1)}^{e_{i+1}}(X_{q(i+2)})}(h\mu,\mu)- d_{f_{q(i+1)}^{e_{i+1}}(X_{q(i+2)})}(\mu,\partial{X}_{q(i)})-2.\end{equation}
By (\ref{eq : dfqeXlb}) the first term of (\ref{eq : di+1Xi+2}) is greater than or equal to $K'_{1}|e_{i+1}|$. Moreover,  the second term of (\ref{eq : di+1Xi+2}) is bounded above by $\omega$. So if $|e_{i+1}|> E_{1}$ then (\ref{eq : di+1Xi+2}) is greater than $ B_{0}$, and therefore (\ref{eq : di+1Xi+2o}) is greater than $B_{0}$. By Claim \ref{claim : bdXqi-1i+1}, $X_{q(i)}\pitchfork f_{q(i+1)}^{e_{i+1}}(X_{q(i+2)})$, so Theorem \ref{thm : behineq}(Behrstock Inequality) implies that
 \begin{equation}\label{eq : dXqifqi+1bdXqi+2hm}d_{X_{q(i)}}(f_{q(i+1)}^{e_{i+1}}(\partial{X}_{q(i+2)}), h\mu)\leq B_{0}.\end{equation} 
 Now by the triangle inequality
\begin{eqnarray}\label{eq : dXimuhmu}
d_{X_{q(i)}}(\mu,h\mu)&\leq& d_{X_{q(i)}}(\mu,f_{q(i+1)}^{e_{i+1}}(\partial{X}_{q(i+2)}))+d_{X_{q(i)}}(f_{q(i+1)}^{e_{i+1}}(\partial{X}_{q(i+2)}),h\mu)+2\nonumber\\
&\leq& \eta+B_{0}+2= \frac{C'_{1}}{2}.
\end{eqnarray}
The second inequality follows from the choice of $\eta$ and the inequality (\ref{eq : dXqifqi+1bdXqi+2hm}). The third one follows from the choice of $C'_{1}$.

Let $q'(j)=q(i+1-j)$ for $j=1,...,i$ and $e'_{j}=-e_{i+1-j}$ for $j=1,...,i$. Then the same argument we gave above using $q'$ and $e'$ instead of $q$ and $e$ implies that $d_{X_{q(i)}}(g^{-1}\mu,\mu)\leq \frac{C'_{1}}{2}$. This inequality and (\ref{eq : dXimuhmu}) give us the bound $C'_{1}+2$ for (\ref{eq : dXqig-}). Using this bound for the second term of (\ref{eq : dXqilb}) we obtain the bound $C'_{1}$ for the additive constant of (\ref{eq : ldomain1}). 
\medskip

{\it Proof of part (\ref{ldomain : ord}).} The proof is by induction on $k$. The base of the induction for $k=3$ is obtained as follows. 

We would like to show that $Z_{1}<Z_{3}$. Then we need to verify that the conditions of Definition \ref{def : ordersubsurf} hold for markings $\mu_{I},\mu_{T}$ and subsurfaces $Z_{1}$ and $Z_{3}$. First note that by part (\ref{ldomain : ld}), 
$$d_{Z_{1}}(\mu_{I},\mu_{T})>K'_{1}|e_{1}|-C_{1}'>K'_{1}E_{1}-C_{1}'>4M$$
 and similarly $d_{Z_{3}}(\mu_{I},\mu_{T})>4M$. 

We proceed to show that $Z_{1}\pitchfork Z_{3}$. If $q(1)=1$, then by the set up of subsurfaces in (\ref{eq : Zi}),  $Z_{1}(q,e)=S\backslash\alpha$, $Z_{2}(q,e)=f_{1}^{e_{1}}(S\backslash\{\alpha,\beta\})$ and $Z_{3}(q,e)=f_{1}^{e_{1}}f_{2}^{e_{2}}(S\backslash\beta)$. Moreover, $f_{1}$ preserves $\alpha$ and $f_{2}$ preserves both $\alpha$ and $\beta$. So $Z_{2}(q,e)=S\backslash\{\alpha,f_{1}^{e_{1}}(\beta)\}$ and $Z_{3}(q,e)=S\backslash f_{1}^{e_{1}}(\beta)$. Then applying $f_{1}^{-e_{1}}$ to $\partial{Z_{1}}=\alpha$ and $\partial{Z_{3}}=f_{1}^{e_{1}}(\beta)$, we get $\alpha$ and $\beta$. Moreover, $S\backslash \alpha \pitchfork S\backslash \beta$, so $Z_{1}\pitchfork Z_{3}$. 

If $q(1)=3$ a similar argument implies that $Z_{1}<Z_{3}$. If $q(1)=0 (2)$, then $Z_{1}(q,e)=S\backslash \{\alpha,\beta\}$, $Z_{2}(q,e)=f_{0}^{e_{1}}(S\backslash\alpha)$ ($f_{2}^{e_{1}}(S\backslash \beta)$) and $Z_{3}(q,e)=f_{0}^{e_{1}}f_{1}^{e_{2}}(S\backslash \{\alpha,\beta\})$ $(f_{2}^{e_{1}}f_{3}^{e_{2}}(S\backslash \{\alpha,\beta\}))$. Moreover $f_{q(1)}=f_{0}(f_{2})$ preserves $\alpha$ and $\beta$, and $f_{q(2)}=f_{1}(f_{3})$ preserves $\alpha (\beta)$. So  $Z_{2}(q,e)=S\backslash \alpha$ ($S\backslash \beta$) and $Z_{3}(q,e)=S\backslash\{\alpha,f_{0}^{e_{1}}f_{1}^{e_{2}}(\beta)\}$ $(S\backslash\{f_{2}^{e_{1}}f_{3}^{e_{3}}(\alpha),\beta\})$. Apply $f_{1}^{-e_{2}}f_{0}^{-e_{1}}(f_{3}^{-e_{2}}f_{2}^{-e_{1}})$ to $\partial{Z}_{1}$ and $\partial{Z}_{3}$. The resulting multi-curves contain the curves $\beta$ and $f_{1}^{-e_{2}}(\beta)$ ($\alpha$ and $f_{3}^{-e_{3}}(\alpha)$), respectively. Moreover, by Proposition \ref{prop : palb}, 
$$d_{S\backslash\alpha}(\beta,f_{1}^{e_{1}}(\beta))>\tau_{1}|e_{1}|>\tau_{1}E_{1}>4\;(d_{S\backslash\beta}(\alpha,f_{3}^{-e_{3}}(\alpha))>\tau_{3}E_{1}>4),$$
  so $\beta$ and $f_{1}^{e_{1}}(\beta)$ overlap ($\alpha$ and $f_{3}^{-e_{3}}(\alpha)$ overlap) (see $\S$\ref{sec : ccplx}). This implies that $Z_{1}\pitchfork Z_{3}$. 

Now we may write $d_{Z_{1}}(\mu_{I},\partial{Z}_{3})$. Let $q'(i)=q(i)$ for $i=1,2$ and $e'_{i}=e_{i}$ for $i=1,2$. Then since $Z_{1}(q,e)=Z_{1}(q',e')$, $\mu_{I}(q,e)=\mu_{I}(q',e')$ and $\partial{Z}_{3}\subset\mu_{T}(q',e')$, (\ref{eq : ldomain1}) for $q',e'$ gives us 
\begin{eqnarray*}
d_{Z_{1}}(\mu_{I},\partial{Z}_{3})&\geq& d_{Z_{1}}(\mu_{I},\mu_{T}(q',e'))\\
&\geq& K'_{1}|e_{1}|-C'_{1}>K'_{1}E_{1}-C'_{1}>2M.
\end{eqnarray*}
We showed that the conditions of Definition \ref{def : ordersubsurf} are satisfied and thus $Z_{1}<Z_{3}$. This finishes establishing of the base of the induction.
\medskip

Suppose that the assertion of part (\ref{ldomain : ord}) holds for every $q:\{1,...,k'\}\to\{0,1,2,3\}$, with $k'<k$. Let $q_{init}(l)=q(l)$ for $l=1,...,k-1$ and $e_{init,l}=e_{l}$ for $l=1,...,k-1$. Then $\mu_{I}(q_{init},e_{init}) = \mu_{I}(q,e)$ and $Z_{l}(q_{init},e_{init})=Z_{l}(q,e)$ for $l=1,...,k-1$.

Let $q_{term}(l)=q(l+1)$ for $l=1,...,k-1$ and $e_{term,l}=e_{l+1}$ for $l=1,...,k-1$. 

Suppose that $i,j\in \{1,...,k\}$ and $j\geq i+2$. By (\ref{eq : ldomain1}) (part (\ref{ldomain : ld}) of the lemma), assuming that $|e_{i}|>E_{1}$ (see the choice of $E_{1}$) we have 
$$d_{Z_{i}}(\mu_{I},\mu_{T})>4M\; \text{and}\; d_{Z_{j}}(\mu_{I},\mu_{T})>4M.$$
 If $j< k$, then the assumption of the induction applied to $q_{init}$ and $e_{init}$ implies that $Z_{i}(q_{init},e_{init})<Z_{j}(q_{init},e_{init})$ between $\mu_{I}(q_{init},e_{init})$ and $\mu_{T}(q_{init},e_{init})$. Thus by Definition \ref{def : ordersubsurf} in particular we have that $Z_{i}(q_{init},e_{init})\pitchfork Z_{j}(q_{init},e_{init})$ and 
$$d_{Z_{j}(q_{init},e_{init})}(\mu_{I}(q_{init},e_{init}),\partial{Z}_{i}(q_{init},e_{init}))>2M.$$
 Then since $\mu_{I}(q_{init},e_{init}) = \mu_{I}(q,e)$, $Z_{j}(q_{init},e_{init})=Z_{j}(q,e)$ and $Z_{i}(q_{init},e_{init})=Z_{i}(q,e)$, we have that $Z_{i}(q,e)<Z_{j}(q,e)$. 

If $i>1$, then by the assumption of the induction $Z_{i-1}(q_{term},e_{term})<Z_{j-1}(q_{term},e_{term})$ between $\mu_{T}(q_{term},e_{term})$ and $\mu_{T}(q_{term},e_{term})$. Thus in particular, $Z_{i-1}(q_{term},e_{term})\pitchfork Z_{j-1}(q_{term},e_{term})$ and 
$$d_{Z_{j-1}(q_{term},e_{term})}(\mu_{T}(q_{term},e_{term}),\partial{Z_{i-1}}(q_{term},e_{term}))>2M.$$
Now applying $f_{q(1)}^{e_{1}}$ to subsurfaces $Z_{i-1}(q_{term},e_{term})$ and $Z_{j-1}(q_{term},e_{term})$ and the marking $\mu_{T}(q_{term},e_{term})$ we get the subsurfaces $Z_{i}(q,e)$ and $Z_{j}(q,e)$ and the marking $\mu_{T}(q,e)$, respectively. Thus $Z_{i}(q,e)\pitchfork Z_{j}(q,e)$ and
$$d_{Z_{j}(q,e)}(\mu_{T}(q,e),\partial{Z_{i}}(q,e))>2M.$$
Then by Definition \ref{def : ordersubsurf},  $Z_{i}(q,e)< Z_{j}(q,e)$.

Now suppose that $i=1$ and $j=k$. When $k=4$, we may proceed as the base of the induction and directly prove that $Z_{1}\pitchfork Z_{4}$ and $d_{Z_{1}}(\mu_{I}(q,e),\partial{Z}_{4})>2M$, which implies that $Z_{1}<Z_{4}$. The details are similar so we skip them. When $k\geq 5$, let $l\in \{3,...,k-2\}$. As we saw above $Z_{1}<Z_{l}$ and $Z_{l}<Z_{k}$. Then by the transitivity of the relation $<$ on subsurfaces (Proposition \ref{prop : ordersubsurf}) we conclude that $Z_{1}<Z_{k}$.

\end{proof}

The second part of the above lemma does not say anything about the order of two consecutive subsurfaces $Z_{i}$ and $Z_{i+1}$. The following lemma gives an order for times in the two consecutive intervals $J_{Z_{i}}$ and $J_{Z_{i+1}}$. See Theorem \ref{thm : hrpath}(\ref{h : J}) for the definition of $J$ intervals corresponding to component domains of a hierarchy. 

\begin{lem}\label{lem : Jnext}
Given $q$ and $\{e\}_{i=1}^{k}$. Let $i\in\{1,...,k\}$ be such that $q(i)=1$ or $3$ ($Z_{i}(q,e)$ has one boundary curve). Let $i>1$. If $j\in J_{Z_{i}(q,e)}$ then $j\geq \min J_{Z_{i-1}(q,e)}$. If $j\in J_{Z_{i-1}(q,e)}$ then $j\leq \max  J_{Z_{i+1}(q,e)}$. 
\end{lem}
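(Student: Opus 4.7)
My plan is to treat the two assertions of the lemma separately, using Lemma \ref{lem : ldomain} together with the component-domain structure of $\rho$. First I would identify the three subsurfaces $Z_{i-1},Z_i,Z_{i+1}$ explicitly. Suppose $q(i)=1$ (the case $q(i)=3$ is symmetric), so $q(i-1)=0$ and $q(i+1)=2$. Since $f_{q(i-1)}=f_0$ is supported on $S\backslash\{\alpha,\beta\}$ and fixes both $\alpha$ and $\beta$, while $f_{q(i)}=f_1$ is supported on $S\backslash\alpha$ and fixes $\alpha$, writing $g=f_{q(1)}^{e_{1}}\cdots f_{q(i-2)}^{e_{i-2}}$ we obtain $Z_{i-1}=g\,X_0$, $Z_i=g\,X_1$, and $Z_{i+1}=g\,f_1^{e_i}X_2$, so that $Z_{i-1},Z_{i+1}\subsetneq Z_i$ and the extra boundary curves $\gamma_{i-1}:=\partial Z_{i-1}\setminus\partial Z_i$ and $\gamma_{i+1}:=\partial Z_{i+1}\setminus\partial Z_i$ are the $g$-images of $\beta$ and $f_1^{e_i}\beta$ respectively. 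Proposition \ref{prop : palb} gives $d_{Z_i}(\gamma_{i-1},\gamma_{i+1})>\tau_1|e_i|>4$ for $|e_i|>E_1$, so $\gamma_{i-1}$ and $\gamma_{i+1}$ overlap inside $Z_i$, and hence $Z_{i-1}\pitchfork Z_{i+1}$.

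The second assertion follows quickly from this. Lemma \ref{lem : ldomain}(ii) applied to the indices $i-1$ and $(i-1)+2=i+1$ gives $Z_{i-1}<Z_{i+1}$ between $\mu_I(q,e)$ and $\mu_T(q,e)$. Since $\gamma_{i-1}$ and $\gamma_{i+1}$ intersect, no single pants decomposition can contain both $\partial Z_{i-1}$ and $\partial Z_{i+1}$, so $J_{Z_{i-1}}\cap J_{Z_{i+1}}=\emptyset$. The last bullet of Proposition \ref{prop : ordersubsurf} then yields $j\leq j'$ for every $j\in J_{Z_{i-1}}$ and $j'\in J_{Z_{i+1}}$, giving $j\leq\max J_{Z_{i+1}}$.

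For the first assertion I would show that $\min J_{Z_i}\in J_{Z_{i-1}}$, from which $\min J_{Z_i}\geq\min J_{Z_{i-1}}$ follows, and hence any $j\in J_{Z_i}$ satisfies $j\geq\min J_{Z_{i-1}}$. One direction, $J_{Z_{i-1}}\subseteq J_{Z_i}$, is essentially definitional: for $j'\in J_{Z_{i-1}}$, the slice $\rho(j')$ contains $\partial Z_{i-1}=\partial Z_i\cup\{\gamma_{i-1}\}$, and in the hierarchy structure $\gamma_{i-1}$ is the vertex of $g_{Z_i}$ to which the component domain $Z_{i-1}$ is attached, so $\rho(j')\cap g_{Z_i}\supset\{\gamma_{i-1}\}$ and $j'\in J_{Z_i}$. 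For the reverse inequality I would establish the coarse bound
\begin{equation*}
d_{Z_i}(\gamma_{i-1},\mu_I(q,e))\leq C_*,
\end{equation*}
by pulling back through $g^{-1}$ (noting $g^{-1}\gamma_{i-1}$ projects to $\beta$ in $X_1$) and invoking the intermediate estimate $d_{X_{q(i)}}(g^{-1}\mu,\mu)\leq C_1'/2+1$ from the proof of Lemma \ref{lem : ldomain}(i). Combined with the large diameter $d_{Z_i}(\mu_I(q,e),\mu_T(q,e))\geq K_1'|e_i|-C_1'$, this forces $\gamma_{i-1}$ to lie within $C_*$ of the initial vertex of $g_{Z_i}$.

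The main technical obstacle is upgrading this coarse bound to the exact statement that $\rho(\min J_{Z_i})\in J_{Z_{i-1}}$, i.e.\ that the tower of component domains at the initial slice of $J_{Z_i}$ already descends into $Z_{i-1}$. I plan to choose the hierarchy resolution $\rho$ compatibly so that at time $\min J_{Z_i}$ the initial vertex of $g_{Z_i}$ chosen by the hierarchy is exactly $\gamma_{i-1}$; this uses the freedom available in constructing hierarchy resolutions from \S 5 of \cite{mm2} together with the coarse bound just established. Once $\rho(\min J_{Z_i})\supset\partial Z_i\cup\{\gamma_{i-1}\}=\partial Z_{i-1}$ and the slice extends into $Z_{i-1}$ with a vertex on $g_{Z_{i-1}}$ (forced by the tower-of-component-domains structure), we get $\min J_{Z_i}\in J_{Z_{i-1}}$, completing the proof.
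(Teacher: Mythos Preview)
Your treatment of the second assertion is fine and essentially what the paper does: once you know $Z_{i-1}<Z_{i+1}$ (from Lemma~\ref{lem : ldomain}(\ref{ldomain : ord})) and that $J_{Z_{i-1}}\cap J_{Z_{i+1}}=\emptyset$, the last bullet of Proposition~\ref{prop : ordersubsurf} gives the ordering.

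The first assertion, however, has a genuine gap. Your plan is to show the exact statement $\min J_{Z_i}\in J_{Z_{i-1}}$ by arranging that the initial vertex of $g_{Z_i}$ is \emph{equal} to $\gamma_{i-1}$, and you propose to do this by ``choosing the hierarchy resolution $\rho$ compatibly.'' But the lemma is about the fixed hierarchy path already under consideration; you are not free to re-choose $\rho$ here. Moreover, your coarse bound $d_{Z_i}(\gamma_{i-1},\mu_I)\le C_*$ only places $\gamma_{i-1}$ near the initial end of $g_{Z_i}$, not at its initial vertex, so even with the freedom in building resolutions this would not yield an equality without further argument. As you yourself flag, upgrading a coarse bound to an exact slice membership is the whole difficulty, and the proposal does not close it.

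The paper sidesteps this entirely with a short coarse contradiction that you already have all the ingredients for. The key observation is $\partial Z_i\subset\partial Z_{i+1}$ (since $\partial Z_i=\partial Z_{i-1}\cap\partial Z_{i+1}$). From $Z_{i-1}<Z_{i+1}$ one gets $d_{Z_{i+1}}(\mu_T,\partial Z_{i-1})>2M$, hence by the Behrstock inequality $d_{Z_{i-1}}(\mu_T,\partial Z_{i+1})\le M$, and therefore $d_{Z_{i-1}}(\mu_T,\partial Z_i)\le M+1$. For $j\in J_{Z_i}$ we have $\partial Z_i\subset\rho(j)$, so $d_{Z_{i-1}}(\mu_T,\rho(j))\le M+1$; combined with $d_{Z_{i-1}}(\mu_I,\mu_T)>4M$ this forces $d_{Z_{i-1}}(\mu_I,\rho(j))>3M-1$. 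If $j<\min J_{Z_{i-1}}$, Theorem~\ref{thm : hrpath}(\ref{h : lrbddproj}) would give $d_{Z_{i-1}}(\mu_I,\rho(j))\le M$, a contradiction. This argument works for any hierarchy path and requires no control over which vertex of $g_{Z_i}$ appears first.
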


\begin{proof}
By the set up of subsurfaces $Z_{i}(q,e)$ in (\ref{eq : Zi}) and the function $q$ we have $\partial{Z_{i}}=\partial{Z_{i-1}}\cap\partial{Z_{i+1}}$. 

By Lemma \ref{lem : ldomain}(\ref{ldomain : ord}), $Z_{i-1}<Z_{i+1}$, so by Definition \ref{def : ordersubsurf} in particular, $d_{Z_{i+1}}(\mu_{T},\partial{Z}_{i-1})> 2M$. Then Theorem \ref{thm : behineq} implies that $d_{Z_{i-1}}(\mu_{T},\partial{Z}_{i+1})\leq M$.  Since $j\in J_{Z_{i}}$, $\partial{Z}_{i}\subset\rho(j)$, then since $\partial{Z}_{i+1}\subset \partial{Z}_{i}$, we have that $\partial{Z}_{i+1}\subset\rho(j)$. Thus we obtain 
$$d_{Z_{i-1}}(\mu_{T},\rho(j))\leq M$$
This inequality and $d_{Z_{i-1}}(\mu_{I},\mu_{T})>4M$, combined by the triangle inequality imply that $d_{Z_{i-1}}(\mu_{I},\rho(j))> 3M-2$. Now assume that $j\leq \min J_{Z_{i-1}}$, then by Theorem \ref{thm : hrpath}(\ref{h : lrbddproj}), $d_{Z_{i-1}}(\mu_{I},\rho(j))\leq M$, which contradicts the lower bound we just proved. Thus $j\geq \min J_{Z_{i-1}}$. 

The proof of that $j\leq \max  J_{Z_{i+1}}$ is similar.
\end{proof}

\begin{lem}\label{lem : Wub1}
There is a constant $E_{2}>E_{1}$, depending on $f_{0},f_{1},f_{2}$ and $f_{3}$ and $\mu_{I}(q,e)$ with the following properties. Given $q$ and $\{e_{i}\}_{i=1}^{k}$  such that $|e_{i}|>E_{2}$ for any $i\in\{1,..., k\}$, we have 
\begin{enumerate}[(i)]
\item \label{Wub1 : bd} Suppose that $W$ is a non-annular subsurface which is neither $Z_{i}(q,e)$ for some $i\in\{1,...,k\}$ nor $S$. If $Z_{r}(q,e)<W<Z_{s}(q,e)$ for some $r,s\in\{1,...,k\}$ with $r<s$, then there is a constant $m>0$ depending on $s-r$ such that $d_{W}(\partial{Z}_{r}(q,e),\partial{Z}_{s}(q,e)) \leq m$. If $W<Z_{s}(q,e)$ for some $s\in\{1,...,k\}$, then there is a constant $m>0$ depending on $s$, so that $d_{W}(\mu_{I}(q,e),\partial{Z}_{s}(q,e))\leq m$. Also if $Z_{r}(q,e)<W$ for some $r\in\{1,...,k\}$, then there is a constant $m>0$ depending on $k-r$, so that $d_{W}(\partial{Z}_{r}(q,e),\mu_{T}(q,e))\leq m$.
\item \label{Wub1 : abd} Suppose that  $A(\gamma)$ is an annular subsurface. If $Z_{r}(q,e)<\gamma<Z_{s}(q,e)$ for some $r,s\in\{1,...,k\}$ with $r<s$, then there is a constant $m'>0$ depending on $s-r$, such that $d_{\gamma}(\partial{Z}_{r}(q,e),\partial{Z}_{s}(q,e))\leq m'$. If $\gamma<Z_{s}(q,e)$ for some $s\in\{1,...,k\}$, then there is a constant $m'>0$ depending on $s$, so that $d_{\gamma}(\mu_{I}(q,e),\partial{Z}_{s}(q,e))\leq m'$. Also if $Z_{r}(q,e)<W$ for some $r\in\{1,...,k\}$, then there is a constant $m'>0$ depending on $k-r$, so that $d_{\gamma}(\partial{Z}_{r}(q,e),\mu_{T}(q,e))\leq m'$.

\end{enumerate}
\end{lem}
\begin{proof}
{\it Proof of part (\ref{Wub1 : bd}):} 

Since $Z_{r}<W<Z_{s}$, the subsurface $W$ overlaps $\partial{Z}_{r}$ and $\partial{Z}_{s}$ (see Definition \ref{def : ordersubsurf}). First suppose that $W$ overlaps all of the boundary curves $\partial{Z}_{j}$, where $r< j< s$ and $q(j)=0$ or $2$ ($Z_{j}$ is a subsurface with two boundary curves). Let $j$ be so that $q(j)=0$ or $2$. Let $g=f_{q(1)}^{e_{1}}...f_{q(j-1)}^{e_{j-1}}$. Applying $g^{-1}$ to $d_{W}(\partial{Z}_{j},\partial{Z}_{j+2})$ we obtain
$$d_{g^{-1}(W)}(\partial{X}_{q(j)},f_{q(j)}^{e_{j}}f_{q(j+1)}^{e_{j+1}}(\partial{X}_{q(j+2)})).$$
Since $q(j)=0$ or $2$, $\partial{X}_{q(j)}=\partial{X}_{q(j+2)}=\{\alpha,\beta\}$, and $f_{g(j)}$ preserves each of $\alpha$ and $\beta$. Then applying $f_{q(j)}^{-e_{j}}$ to the above subsurface coefficient we get
$$d_{f_{q(j)}^{-e_{j}}\circ g^{-1}(W)}(\{\alpha,\beta\},f_{q(j+1)}^{e_{j+1}}(\{\alpha,\beta\})).$$
Moreover, $f_{q(j)}^{-e_{j}}\circ g^{-1}(W)\neq X_{q(j)}$, for otherwise $W=g(X_{q(j)})=Z_{j}$, which contradicts the assumption that $W$ is not in the list of subsurfaces $Z_{i}$. Then Lemma \ref{lem : paub} provides an upper bound $h$ depending only on $\alpha,\beta$ and partial pseudo-Anosov maps $f_{1}$ and $f_{3}$ (because $f_{q(j+1)}=f_{1}$ or $f_{3}$) for the above subsurface coefficient. Thus we have the upper bound 
\begin{equation}\label{eq : dWbdZjbdZj+2}d_{W}(\partial{Z}_{j},\partial{Z}_{j+2})\leq h.\end{equation}

Let $r\leq j_{1}\leq s$ be the smallest index with $q( j_{1})=0$ or $2$. Moreover let $r\leq j_{2}\leq s$ be the largest index so that $q(j_{2})=0$ or $2$.
\begin{claim}\label{claim : dWbdZrbdZj1}
We have  $d_{W}(\partial{Z}_{r},\partial{Z}_{j_{1}})\leq 2$ and $d_{W}(\partial{Z}_{j_{2}},\partial{Z}_{s})\leq 2$.
 \end{claim}
For the first inequality, note that $j_{1}=r$ or $r+1$. If $j_{1}=r$ then the bound follows immediately from Lemma \ref{lem : diamproj}. Suppose that $j_{1}=r+1$. Then since $\partial{Z}_{r}\subset\partial{Z}_{r+1}$ the bound again follows from Lemma \ref{lem : diamproj}. The proof of the second inequality is similar. 
 
 Using the two bounds from Claim \ref{claim : dWbdZrbdZj1} and the bound (\ref{eq : dWbdZjbdZj+2}) for all $r\leq j\leq s$ with $q(j)=0$ or $2$, by the triangle inequality we have that
\begin{eqnarray*}
d_{W}(\partial{Z}_{r},\partial{Z}_{s})&\leq& \sum_{\substack{ j:q(j)=0\;\text{or} 2\;\text{and}\\ j_{1}\leq j< j_{2}}} d_{W}(\partial{Z}_{j},\partial{Z}_{j+2})+d_{W}(\partial{Z}_{r},\partial{Z}_{j_{1}})+d_{W}(\partial{Z}_{j_{2}},\partial{Z}_{s})\\
&+&\sum_{\substack{ j:q(j)=0\;\text{or} 2\;\text{and}\\ j_{1}< j< j_{2}}}\diam_{W}(\partial{Z}_{j})\\
&\leq& h\lfloor\frac{s-r}{2}\rfloor+2\lfloor\frac{s-r}{2}\rfloor+4.
\end{eqnarray*}
 Which is the desired bound.
\medskip

Now suppose that $W$ does not overlap a boundary curve $\partial{Z}_{j}$, where $r< j< s$ and $q(j)=0$ or $2$. Then since $Z_{j}$ is a large subsurface, $W\subseteq Z_{j}$, and since $W$ is not in the list of subsurfaces $Z_{i}$, $W\subsetneq Z_{j}$.

Let $g=f_{q(1)}^{e_{1}}....f_{q(j-1)}^{e_{j-1}}$. Applying $g^{-1}$ to the subsurface coefficient $d_{W}(\partial{Z}_{r},\partial{Z}_{s})$ we get 
$$d_{g^{-1}(W)}(f_{q(j-1)}^{-e_{j-1}}...f_{q(r)}^{-e_{r}}(\partial{X_{q(r)}}),f_{q(j)}^{e_{j}}...f_{q(s-1)}^{e_{s-1}}(\partial{X_{q(s)}})).$$
Denote the multi-curves $\partial^{-}=f_{q(j-1)}^{-e_{j-1}}...f_{q(r)}^{-e_{r}}(\partial{X_{q(r)}})$ and $\partial^{+}=f_{q(j+1)}^{e_{j+1}}...f_{q(s-1)}^{e_{s-1}}(\partial{X_{q(s)}})$ and the subsurface $Y=g^{-1}(W)$. Then the above subsurface coefficient may be written as
\begin{equation}\label{eq : dg-W}d_{Y}(\partial^{-},f_{q(j)}^{e_{j}}(\partial^{+})).\end{equation}
The subsurface coefficient $d_{W}(\partial{Z}_{r},\partial{Z}_{s})$ is equal to the subsurface coefficient (\ref{eq : dg-W}), so it suffices to bound the subsurface coefficient (\ref{eq : dg-W}). 

Since $Y\subset X_{q(j)}$, and $\partial^{-},f^{e_{j}}_{q(j)}(\partial^{+})$ both intersect $X_{q(j)}$ and $Y$ essentially, the second part of Lemma \ref{lem : diamproj} guarantees that $\diam_{Y}(\pi_{Y}(\pi_{X_{q(j)}}(\partial^{-}))\cup\pi_{Y}(\partial^{-}))$ and $\diam_{Y}(\pi_{Y}(\pi_{X_{q(j)}}(f_{q(j)}^{e_{j}}(\partial^{+})))\cup\pi_{Y}(f_{q(j)}^{e_{j}}(\partial^{+})))$ are uniformly bounded. Therefore,
\begin{equation}\label{eq : dYd-fd+=dYXd-Xfd+}d_{Y}(\partial^{-},f^{e_{j}}_{q(j)}(\partial^{+}))\asymp d_{Y}(\pi_{X_{q(j)}}(\partial^{-}),\pi_{X_{q(j)}}(f_{q(j)}^{e_{j}}(\partial^{+}))).\end{equation}

 For partial pseudo-Anosov maps $f_{a}$, $a=0,1,2,3$, let $\Psi^{\pm}_{a}$ and $\Delta^{\pm}_{a}$ be the subsets of $\mathcal{PML}(S)$ defined at the beginning of $\S$\ref{sec : preendinv}. Since the support of each $f_{a}$ is a large subsurface we have that $\Delta^{\pm}_{a}=[\mathcal{L}^{\pm}_{a}]$, where $\mathcal{L}^{\pm}_{a}$ is the attracting/repelling measured lamination of $f_{a}$. Denote the support of $\mathcal{L}_{a}^{\pm}$ by $\lambda_{a}^{\pm}$. By Proposition \ref{prop : pAlam} there are finitely many laminations that contain $\lambda_{a}^{\pm}$. Denote the set of laminations containing $\lambda_{a}^{\pm}$ by $\Lambda_{a}^{\pm}$. 
 
We have that $q(j)=0$ or $2$, so $X_{q(j)}\subsetneq X_{q(j+1)}$. Also $Y\subsetneq X_{q(j)}$. Thus any leaf of $\lambda_{q(j+1)}^{+}$ intersects both $X_{q(j)}$ and $Y$ essentially (Proposition \ref{prop : pAlam}). Any lamination $\lambda\in\Lambda_{q(j+1)}^{+}$ contains $\lambda^{+}_{q(j+1)}$ so there are leaves of $\lambda$ that intersect both $X_{q(j)}$ and $Y$ essentially. Similarly, any lamination $\lambda'\in\Lambda_{q(j-1)}^{-}$ contains leaves which intersect both $X_{q(j)}$ and $Y$ essentially. Thus the following subsurface projections are well-defined and are non-empty: $\pi_{X_{q(j)}}(\lambda)$, $\pi_{X_{q(j)}}(\lambda')$, $\pi_{Y}(\pi_{X_{q(j)}}(\lambda))$, $\pi_{Y}(\pi_{X_{q(j)}}(\lambda'))$, $\pi_{Y}(\lambda)$ and $\pi_{Y}(\lambda')$.
 
There are finitely many laminations in $\bigcup_{a=0,1,2,3}\Lambda_{a}^{\pm}$. Applying Lemma \ref{lem : Hdistdprojbd} to each lamination $\lambda\in\bigcup_{a=0,1,2,3}\Lambda_{a}^{\pm}$ and subsurface $X_{b}$, $b=0,1,2,3$ and taking minimum of the constants we obtain there is an $\epsilon>0$ so that: For any $a,b\in\{0,1,2,3\}$ if a lamination $\lambda\in\Lambda_{a}^{\pm}$ intersects the subsurface $X_{b}$ essentially and a curve $\gamma$ is within the $\epsilon$ Hausdorff distance of  $\lambda$, then $\diam_{X_{b}}(\pi_{X_{b}}(\gamma)\cup\pi_{X_{b}}(\lambda))\leq 4$.
  
 For each $a\in\{0,1,2,3\}$, let $U^{\pm}_{a}$ be a neighborhood of $[\mathcal{L}_{a}^{\pm}]$ in $\mathcal{PML}(S)$, so that 
 \begin{itemize}
 \item$\overline{U_{a}^{-}}$ (the closure of $U_{a}^{-}$) and $\overline{U_{a}^{+}}$ (the closure of $U_{a}^{+}$) are disjoint from $\Psi_{b}^{-}\cup \Psi_{b}^{+}$ for any $b\in \{0,1,2,3\}$ with $X_{a}\neq X_{b}$. Note that since $\mathcal{PML}(S)$ is compact each subset $\overline{U^{\pm}_{a}}$ is compact. 
 \item Every lamination in $U^{+}_{a}$ is in the $\epsilon$ Hausdorff distance of a lamination $\lambda\in \Lambda^{+}_{a}$ and every lamination in $U^{-}_{a}$ is in the $\epsilon$ Hausdorff distance of a lamination $\lambda'\in \Lambda^{-}_{a}$.
 \end{itemize}
 Lemma \ref{lem : Hnbhdl} guarantees that for $\epsilon$ as above we may choose the neighborhoods $U^{\pm}_{a}$ so that the second bullet holds. 

 Applying Theorem \ref{thm : unifconv} to the pseudo-Anosov map $f_{a}$ and compact sets $\partial{X}_{b}$, $\overline{U_{b}^{-}}$ and $\overline{U_{b}^{+}}$ for any $a,b\in\{0,1,2,3\}$ with $X_{a}\neq X_{b}$ and taking the maximum of the constants we obtain, there exists a constant  $E_{2}>E_{1}$ ($E_{1}$ is the constant from Lemma \ref{lem : ldomain}) such that 
\begin{itemize}
\item if $\delta\in \partial{X}_{b}$ and $\delta\pitchfork X_{a}$, then $f_{a}^{n}(\delta) \subset U_{a}^{+}$ for all $n\geq E_{2}$ and $f_{a}^{-n}(\delta)\subset U_{a}^{-}$ for all $n\geq E_{2}$, and
\item $f_{a}^{n}(\overline{U_{b}^{\pm}}) \subset U_{a}^{+}$ for all $n\geq E_{2}$ and $f_{a}^{-n}(\overline{U_{b}^{\pm}})\subset U_{a}^{-}$ for all $n\geq E_{2}$.
\end{itemize}
 Then assuming that $|e_{i}|>E_{2}$ for all $i\in\{1,...,k\}$, we have that $\partial^{+}\subset U^{+}_{q(j+1)}\cup\{\alpha,\beta\}$ and $\partial^{-}\subset U_{q(j-1)}^{-}\cup\{\alpha,\beta\}$. 
 
The multi-curve $\partial^{\pm}$ consists of one or two curves. Let $b^{-}=\{\zeta\in\partial^{-}:\zeta\pitchfork Y\}$ and $b^{+}=\{\eta\in\partial^{+}:f_{q(j)}^{e_{j}}(\eta)\pitchfork Y\}$. 
 
 Suppose that $b^{+}\cap \{\alpha,\beta\}\neq\emptyset$ and $b^{-}\cap \{\alpha,\beta\}\neq\emptyset$ ($b^{+}\cap \{\alpha,\beta\}\neq\emptyset$ occurs when $j-r\leq 2$, and $b^{-}\cap \{\alpha,\beta\}\neq\emptyset$ occurs when $s-j\leq 2$). Then (\ref{eq : dg-W}) is bounded above by
 $$\max\{d_{Y}(\zeta,f_{q(j)}^{e_{j}}(\eta)):\zeta,\eta\in\{\alpha,\beta\}\}.$$
Lemma \ref{lem : paub} provides an upper bound depending only on $\alpha,\beta$ and $f_{q(j)}=f_{0}$ for all of the subsurface coefficients above. Thus the above maximum is a finite number and the bound for (\ref{eq : dg-W}) follows.
 
So in the rest of the proof we assume that either $b^{+}\subset U^{+}_{q(j+1)}$ or $b^{-}\subset U_{q(j-1)}$. 

First suppose that $b^{+}\subset U_{q(j+1)}^{+}$ and $b^{-}\subset U_{q(j-1)}^{-}$. Then there is a lamination $\lambda'\in \Lambda_{q(j-1)}^{-}$ so that $b^{-}$ is within the $\epsilon$ Hausdorff distance of $\lambda'$. Then by the choice of $\epsilon$ 
 $$\diam_{X_{q(j)}}(\pi_{X_{q(j)}}(b^{-})\cup\pi_{X_{q(j)}}(\lambda'))\leq 4.$$
 Similarly there is a lamination $\lambda\in \Lambda_{q(j)}^{+}$, so that $b^{+}$ is within the $\epsilon$ Hausdorff distance of $\lambda$. Then by the choice of $\epsilon$ we have that $\diam_{X_{q(j)}}(\pi_{X_{q(j)}}(b^{+})\cup \pi_{X_{q(j)}}(\lambda))\leq 4$. Then since $f_{q(j)}$ acts as isometry on $\mathcal{C}(X_{q(j)})$ we have 
$$\diam_{X_{q(j)}}(f_{q(j)}^{e_{j}}(\pi_{X_{q(j)}}(b^{+}))\cup f_{q(j)}^{e_{j}}(\pi_{X_{q(j)}}(\lambda)))\leq 4.$$
By the above two bounds we have 
\begin{equation}\label{eq : dYXd-fXd+=dYXl'fXl}d_{Y}(\pi_{X_{q(j)}}(\partial^{-}),f_{q(j)}^{e_{j}}(\pi_{X_{q(j)}}(\partial^{+})))\asymp d_{Y}(\pi_{X_{q(j)}}(\lambda'),f_{q(j)}^{e_{j}}(\pi_{X_{q(j)}}(\lambda))).\end{equation}
 Then since $f_{q(j)}$ is supported on $X_{q(j)}$, $f_{q(j)}^{e_{j}}(\pi_{X_{q(j)}}(\partial^{+}))=\pi_{X_{q(j)}}(f_{q(j)}^{e_{j}}(\partial^{+}))$. Thus from (\ref{eq : dYXd-fXd+=dYXl'fXl}) we get
 \begin{equation}\label{eq : dYXd-Xfd+=dYXl'fXl}d_{Y}(\pi_{X_{q(j)}}(\partial^{-}),\pi_{X_{q(j)}}(f_{q(j)}^{e_{j}}(\partial^{+})))\asymp d_{Y}(\pi_{X_{q(j)}}(\lambda'),f_{q(j)}^{e_{j}}(\pi_{X_{q(j)}}(\lambda))).\end{equation}
 Putting the quasi-equalities (\ref{eq : dYd-fd+=dYXd-Xfd+}) and (\ref{eq : dYXd-Xfd+=dYXl'fXl}) together we have
\begin{equation}\label{eq : dYd-fd+=dYXl'fXl}d_{Y}(\partial^{-},f_{q(j)}^{e_{j}}(\partial^{+}))\asymp d_{Y}(\pi_{X_{q(j)}}(\lambda'),f_{q(j)}^{e_{j}}(\pi_{X_{q(j)}}(\lambda))).\end{equation}
Lemma \ref{lem : paub}, gives a uniform bound for 
$$d_{Y}(\zeta,f_{q(j)}^{e_{j}}(\eta))$$
 for any $\zeta\in \pi_{X_{q(j-1)}}(\lambda')$ and $\eta\in\pi_{X_{q(j+1)}}(\lambda)$, depending on $f_{q(j)}$ and $\zeta,\eta$. Moreover  there are finitely many laminations in $\bigcup_{a=0,1,2,3}\Lambda_{a}^{\pm}$, and finitely many curves in the projection of each one of these laminations onto $X_{q(j)}$ (Proposiiton \ref{prop : pAlam}). Thus the right hand side of (\ref{eq : dYd-fd+=dYXl'fXl}) is uniformly bounded. The upper bound for (\ref{eq : dg-W}) follows.
 \medskip
 
 Assuming that $b^{+}\subset U_{q(j+1)}^{+}$ and $b^{-}\cap\{\alpha,\beta\}\neq\emptyset$ similar to above we can get 
 $$d_{Y}(\partial^{-},f_{q(j)}^{e_{j}}(\partial^{+}))\asymp d_{Y}(\delta,f_{q(j)}^{e_{j}}(\pi_{X_{q(j)}}(\lambda)))$$
 where $\lambda\in\Lambda_{q(j+1)}^{+}$ and $\delta\in\{\alpha,\beta\}$. As above we can show that the right hand side of the above quasi-equality is uniformly bounded. The upper bound for (\ref{eq : dg-W}) follows.
 
 The only case left is that $b^{-}\subset U_{q(j-1)}^{-}$ and $b^{+}\cap\{\alpha,\beta\}\neq\emptyset$, which can be treated similarly.
 
 Now let's bound $d_{W}(\mu_{I},\partial{Z}_{s})$. Note that $W$ overlaps $\base(\mu_{I})$ and $\partial{Z}_{s}$. Suppose that $W$ overlaps all of the boundary curves $\partial{Z}_{j}$ where $1\leq j< s$ and $q(j)=0$ or $2$. Let $1\leq j_{1}\leq s$ be the smallest index with $q(j_{1})=0$ or $2$. Note that $j_{1}=1$ or $2$, then $\partial{Z}_{j_{1}}\subset \mu_{I}$. Then by Lemma \ref{lem : diamproj}, $d_{W}(\mu_{I},\partial{Z}_{j_{1}})\leq 2$.
 
  Let $1\leq j_{2}\leq s$ be the largest index so that $q(j_{2})=0$ or $2$. Then similar to Claim \ref{claim : dWbdZrbdZj1} we have that $d_{W}(\partial{Z}_{j_{2}},\partial{Z}_{s})\leq 2$.
  
   The above two bounds and the bound (\ref{eq : dWbdZjbdZj+2}) for all $1\leq j\leq s$ with $q(j)=0$ or $2$ give us
 \begin{eqnarray*}
 d_{W}(\mu_{I},\partial{Z}_{s})&\leq&\sum_{\substack{ j:q(j)=0\;\text{or} 2\;\text{and}\\ j_{1}\leq j<s}} d_{W}(\partial{Z}_{j},\partial{Z}_{j+2})+d_{W}(\mu_{I},\partial{Z}_{j_{1}})+d_{W}(\partial{Z}_{j_{2}},\partial{Z}_{s})\\
 &+&\sum_{\substack{ j:q(j)=0\;\text{or} 2\;\text{and}\\ j_{1}< j< s}}\diam_{W}(\partial{Z}_{j})\\
 &\leq& h\lfloor\frac{s-1}{2}\rfloor+2\lfloor\frac{s-1}{2}\rfloor+2.
  \end{eqnarray*}
  Which is the desired bound.
  
  If $W$ does not overlap one of the boundary curves $\partial{Z}_{j}$, $1\leq j< s$. Then the bound for $d_{W}(\mu_{I},\partial{Z}_{s})$ follows from an argument similar to the one we gave above to bound $d_{W}(\partial{Z}_{r},\partial{Z}_{s})$. 
  
  Define the function $q'(i)=q(k-i+1)$ for $i=1,...,k$ and the sequence $e'_{i}=e_{k-i+1}$ for $i=1,...,k$. The bound for $d_{W}(\partial{Z}_{r},\mu_{T})$ can be obtained similar to the bound for $d_{W}(\mu_{I},\partial{Z}_{s})$ considering $q'$ and $e'$ instead of $q$ and $e$.
 \medskip

{\it Proof of part (\ref{Wub1 : abd}).} Suppose that $\gamma$ overlaps all boundary curves $\partial{Z}_{j}$, where $r<  j< s$ and $q(j)=0$ or $2$. Then as in the beginning of the proof of part (\ref{Wub1 : bd}) we have that 
$$d_{\gamma}(\partial{Z}_{r},\partial{Z}_{s})\leq h\lfloor\frac{r-s}{2}\rfloor+2\lfloor\frac{r-s}{2}\rfloor+4.$$

Now suppose that $\gamma$ does not overlap one of the boundary curves $\partial{Z}_{j}$ where $r< j< s$ and $q(j)=0$ or $2$.

Let $g=f_{q(1)}^{e_{1}}...f_{q(j-1)}^{e_{j-1}}$, applying $g^{-1}$ to $d_{\gamma}(\partial{Z}_{r},\partial{Z}_{s})$ we obtain
$$d_{g^{-1}(\gamma)}(f_{q(j-1)}^{-e_{j-1}}...f_{q(r)}^{-e_{r}}(\partial{X_{q(r)}}),f_{q(j)}^{e_{j}}...f_{q(s-1)}^{e_{s-1}}(\partial{X_{q(s)}})).$$
Denote multi-curves $\partial^{-}=f_{q(j-1)}^{-e_{j-1}}...f_{q(r)}^{-e_{r}}(\partial{X_{q(r)}})$ and $\partial^{+}=f_{q(j+1)}^{e_{j+1}}...f_{q(s-1)}^{e_{s-1}}(\partial{X_{q(s)}})$ and the subsurface $\delta=g^{-1}(\gamma)$. The above subsurface coefficient may written as
\begin{equation}\label{eq : dg-gamma}d_{\delta}(\partial^{-},f_{q(j)}^{e_{j}}(\partial^{+}).\end{equation}
Since $\delta\subset X_{q(j)}$, and $\partial^{-},f_{q(j)}^{e_{j}}(\partial^{+})$ both intersect $X_{q(j)}$ and $\delta$ essentially, the second part of Lemma \ref{lem : diamproj} guarantees that $\diam_{\delta}(\pi_{\delta}(\pi_{X_{q(j)}}(\partial^{-}))\cup\pi_{\delta}(\partial^{-}))$ and $\diam_{\delta}(\pi_{\delta}(\pi_{X_{q(j)}}(f_{q(j)}^{e_{j}}(\partial^{+})))\cup\pi_{\delta}(f_{q(j)}^{e_{j}}(\partial^{+})))$ are uniformly bounded. Therefore,
\begin{equation}\label{eq : ddeld-fd+=ddelXd-fXd+} d_{\delta}(\partial^{-},f_{q(j)}^{e_{j}}(\partial^{+}))\asymp d_{\delta}(\pi_{X_{q(j)}}(\partial^{-}),\pi_{X_{q(j)}}(f_{q(j)}^{e_{j}}(\partial^{+})))\end{equation}

First suppose that $\gamma\not\in\partial{Z}_{j}$. Then $\gamma\in\mathcal{C}_{0}(Z_{j})$ and $\delta\in\mathcal{C}_{0}(X_{q(j)})$. 

Let the finite sets of laminations $\Lambda_{a}^{\pm}$, $a=0,1,2,3$, be as in the proof of part (\ref{Wub1 : bd}).
 
We have that $q(j)=0$ or $2$, so $X_{q(j)}\subsetneq X_{q(j+1)}$. Also $\delta\subsetneq X_{q(j)}$. Thus any leaf of $\lambda_{q(j+1)}^{+}$ intersects both $X_{q(j)}$ and $\delta$ essentially (Proposition \ref{prop : pAlam}).  Any lamination $\lambda\in\Lambda_{q(j+1)}^{+}$ contains $\lambda^{+}_{q(j+1)}$, so there are leaves of $\lambda$ that intersect both $X_{q(j)}$ and $\delta$ essentially. Similarly,  there are leaves of any lamination $\lambda'\in\Lambda_{q(j-1)}^{-}$ which intersect $X_{q(j)}$ and $\delta$ essentially. Thus the following subsurface projections are well-defined and are non-empty: $\pi_{X_{q(j)}}(\lambda)$, $\pi_{X_{q(j)}}(\lambda')$, $\pi_{\delta}(\pi_{X_{q(j)}}(\lambda))$, $\pi_{\delta}(\pi_{X_{q(j)}}(\lambda'))$, $\pi_{\delta}(\lambda)$ and $\pi_{\delta}(\lambda')$.
 
 Let $\epsilon>0$, the neighborhoods $U_{a}^{\pm}\subset\mathcal{PML}(S)$ of $[\mathcal{L}_{a}^{\pm}]$ ($a=0,1,2,3$) and the constant $E_{2}$ be as in the proof of part (\ref{Wub1 : bd}).

 Then by the assumption that $|e_{i}|>E_{2}$ for any $i\in\{1,...,k\}$, we have that $\partial^{+}\subset U_{q(j+1)}^{+}\cup\{\alpha,\beta\}$ and $\partial^{-}\subset U_{q(j-1)}^{-}\cup\{\alpha,\beta\}$. 
  
The multi-curve $\partial^{\pm}$ consists of one or two curves. Let $b^{-}=\{\zeta\in\partial^{-}:\zeta\pitchfork Y\}$ and $b^{+}=\{\eta\in\partial^{+}:f_{q(j)}^{e_{j}}(\eta)\pitchfork Y\}$.
  
   Suppose that $b^{+}\cap \{\alpha,\beta\}\neq\emptyset$ and $b^{-}\cap \{\alpha,\beta\}\neq\emptyset$. Then (\ref{eq : dg-gamma}) is bounded above by
 $$\max\{d_{\delta}(\zeta,f_{q(j)}^{e_{j}}(\eta)):\zeta,\eta\in\{\alpha,\beta\}\}.$$
 Lemma \ref{lem : paub} provides an upper bound depending only on $\alpha,\beta$ and $f_{q(j)}$ for all of the subsurface coefficients above. Thus the above maximum is a finite number and the bound for (\ref{eq : dg-gamma}) follows.
 
So in the rest of the proof we assume that either $b^{+}\subset U^{+}_{q(j+1)}$ or $b^{-}\subset U_{q(j-1)}$. 

First suppose that $b^{+}\subset U_{q(j+1)}^{+}$ and $b^{-}\subset U_{q(j-1)}^{-}$. Then there is a lamination $\lambda'\in \Lambda_{q(j-1)}^{-}$ so that $b^{-}$ is within the $\epsilon$ Hausdorff distance of $\lambda'$. Then by the choice of $\epsilon$ we have 
 $$\diam_{X_{q(j)}}(\pi_{X_{q(j)}}(b^{-})\cup\pi_{X_{q(j)}}(\lambda'))\leq 4.$$ 
 Similarly there is a lamination $\lambda\in \Lambda_{q(j+1)}^{+}$ so that $b^{+}$ is within the $\epsilon$ Hausdorff distance of $\lambda$. Then by the choice of $\epsilon$ we have $\diam_{X_{q(j)}}(\pi_{X_{q(j)}}(b^{+})\cup \pi_{X_{q(j)}}(\lambda))\leq 4$. Then by the fact that $f_{q(j)}$ acts as an isometry on $\mathcal{C}(X_{q(j)})$ we have 
 $$\diam_{X_{q(j)}}(f_{q(j)}^{e_{j}}(\pi_{X_{q(j)}}(b^{+}))\cup f_{q(j)}^{e_{j}}(\pi_{X_{q(j)}}(\lambda)))\leq 4.$$ 
 By the above bounds on the diameters we have that
\begin{equation}\label{eq : ddelXd-fd+=dYXl'fl}d_{\delta}(\pi_{X_{q(j)}}(\partial^{-}),f_{q(j)}^{e_{j}}(\pi_{X_{q(j)}}(\partial^{+})))\asymp d_{\delta}(\pi_{X_{q(j)}}(\lambda'),f_{q(j)}^{e_{j}}(\pi_{X_{q(j)}}(\lambda))).\end{equation}
Moreover since $f_{q(j)}$ is supported on $X_{q(j)}$, $f_{q(j)}^{e_{j}}(\pi_{X_{q(j)}}(\partial^{+}))=\pi_{X_{q(j)}}(f_{q(j)}^{e_{j}}(\partial^{+}))$. Thus
\begin{equation}\label{eq : ddelXd-fd+=dYXl'fl}d_{\delta}(\pi_{X_{q(j)}}(\partial^{-}),\pi_{X_{q(j)}}(f_{q(j)}^{e_{j}}(\partial^{+})))\asymp d_{\delta}(\pi_{X_{q(j)}}(\lambda'),f_{q(j)}^{e_{j}}(\pi_{X_{q(j)}}(\lambda))).\end{equation}
Putting the quasi-equalities (\ref{eq : ddelXd-fd+=dYXl'fl}) and (\ref{eq : ddeld-fd+=ddelXd-fXd+}) together we have that
\begin{equation}\label{eq : ddeld-fd+=ddelXl'fl}d_{\delta}(\partial^{-},f_{q(j)}^{e_{j}}(\partial^{+}))\asymp d_{\delta}(\pi_{X_{q(j)}}(\lambda'),\pi_{X_{q(j)}}(f_{q(j)}^{e_{j}}(\lambda))).\end{equation}
Lemma \ref{lem : paub}, gives a uniform bound for $d_{\delta}(\zeta,f_{q(j)}^{e_{j}}(\eta))$ for any $\zeta\in \pi_{X_{q(j)}}(\lambda')$ and $\eta\in\pi_{X_{q(j)}}(\lambda)$, depending on $\zeta,\eta$ and $f_{q(j)}$. Moreover there are finitely many laminations in $\bigcup_{a=0,1,2,3}\Lambda_{a}^{\pm}$, and finitely many curves in the projection of each one of these laminations onto $X_{q(j)}$ (Proposition \ref{prop : pAlam}). Thus the right hand side of (\ref{eq : ddeld-fd+=ddelXl'fl}) is uniformly bounded. The upper bound for (\ref{eq : dg-gamma}) follows.

Assuming that 
\begin{itemize}
\item $b^{+}\subset U_{q(j+1)}^{+}$ and $b^{-}\cap\{\alpha,\beta\}\neq\emptyset$, or
\item $b^{-}\subset U_{q(j-1)}^{-}$ and $b^{-}\cap\{\alpha,\beta\}\neq\emptyset$
\end{itemize}
 a similar argument provides us with the upper bound for (\ref{eq : dg-gamma}).
 \medskip

Now suppose that $\gamma\in \partial{Z}_{j}$, then $\delta\in\partial{X}_{q(j)}$. Then by the assumption that the restriction of $f_{q(j)}$ to a neighborhood of $\delta\in\partial{X}_{a}$ is the identity map we have
$$d_{\delta}(\lambda',f_{q(j)}^{e_{j}}(\lambda))\asymp d_{\delta}(\lambda',\lambda).$$
 The subsurface coefficient $d_{\delta}(\lambda',\lambda)$ is bounded by 
$$\max\{d_{\delta}(\lambda',\lambda):\lambda'\in \Lambda_{q(j-1)}^{-},\lambda\in\Lambda_{q(j+1)}^{+},\delta\in\partial{X}_{q(j)}\}.$$
Since there are finitely many laminations in $\bigcup_{a=0,1,2,3}\Lambda_{a}^{\pm}$, the above maximum exists and is finite. So we obtain the upper bound for (\ref{eq : dg-gamma}).
 \medskip

 The bound for $d_{\gamma}(\mu_{I},\partial{Z}_{s})$ and $d_{\gamma}(\partial{Z}_{r},\mu_{T})$ may be obtained similarly.


\end{proof}

\begin{prop}\label{prop : subsurfub}
There are constants ${\bf m},{\bf m}'>4M$, depending on $f_{0},f_{1},f_{2}$ and $f_{3}$ and $\mu_{I}$ with the following properties. Given $q$ and $\{e_{i}\}_{i=1}^{k}$ such that $|e_{i}|>E_{2}$ for any $i\in\{1,..., k\}$, we have
\begin{enumerate}[(i)]
\item \label{subsurfub : bd}For any non-annular subsurface $W$ which is neither $Z_{i}(q,e)$ for some $i\in\{1,...,k\}$ nor $S$ we have $d_{W}(\mu_{I}(q,e),\mu_{T}(q,e)) \leq {\bf m}$.
\item \label{subsurfub : abd} Given $\gamma\in \mathcal{C}_{0}(S)$ we have $d_{\gamma}(\mu_{I}(q,e),\mu_{T}(q,e))\leq {\bf m}'$.
\end{enumerate}
\end{prop}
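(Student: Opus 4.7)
The plan is to bound $d_W(\mu_I,\mu_T)$ via the telescoping sequence $g_i := f_{q(1)}^{e_1}\cdots f_{q(i)}^{e_i}\mu_I$, writing each step as
\[
d_W(g_{i-1},g_i)=d_{h_{i-1}^{-1}W}(\mu_I,f_{q(i)}^{e_i}\mu_I),\qquad h_{i-1}=f_{q(1)}^{e_1}\cdots f_{q(i-1)}^{e_{i-1}},
\]
using equivariance of subsurface projection under the mapping class group. The exclusion $W\neq Z_i(q,e)=h_{i-1}X_{q(i)}$ translates precisely to $h_{i-1}^{-1}W\neq X_{q(i)}$ for every $i$, so Lemma \ref{lem : paub} applied with a compact set $K$ containing the projective class of $\mu_I$ yields a uniform bound $d_W(g_{i-1},g_i)\leq m_0$ for part (i): indeed $W$ non-annular forces $h_{i-1}^{-1}W$ non-annular, hence not an annulus around a component of $\partial X_{q(i)}$. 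For part (ii) with $\gamma\notin\{\alpha,\beta\}$, the annular case of Lemma \ref{lem : paub} applies analogously, since $h_{i-1}$ fixes $\alpha,\beta$ setwise and so the core of the annulus in question is never a component of $\partial X_{q(i)}$; for $\gamma\in\{\alpha,\beta\}$, hypothesis (\ref{eq : pabdrybdd}) bounds each step by $2$.

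Naively summing these contributions gives a bound linear in $k$, which fails the uniform-in-$k$ requirement. To fix this, I plan to upgrade each step bound to a statement that the sequence $\pi_W(g_0),\ldots,\pi_W(g_k)$ in $\mathcal{C}(W)$ has bounded diameter. Choose $E_2\geq E_1$ large enough that Lemma \ref{lem : ldomain}(\ref{ldomain : ld}) forces $d_{Z_i}(\mu_I,\mu_T)>4M$ for every $i$, and apply the Behrstock inequality (Theorem \ref{thm : behineq}) to each pair $(W,Z_i)$ with $W\pitchfork Z_i$: either $d_W(\partial Z_i,\mu_I)\leq B_0$, or the complementary bound $d_{Z_i}(\partial W,\mu_I)\leq B_0$ together with Lemma \ref{lem : ldomain}(\ref{ldomain : ld}) pins $\partial W$ to a fixed position in $\mathcal{C}(Z_i)$. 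Combining this with the order $Z_i<Z_{i+2}$ from Lemma \ref{lem : ldomain}(\ref{ldomain : ord}) and treating the three cases $W\pitchfork Z_i$, $W\subsetneq Z_i$, and $W\supsetneq Z_i$ separately, one should conclude that $\pi_W(g_j)$ stays within a bounded-diameter set of $\mathcal{C}(W)$ outside a window of boundedly many consecutive indices, so the telescoping sum collapses to a uniform bound $\mathbf{m}$.

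The main obstacle will be making the case analysis genuinely uniform in $k$. The case $W\supsetneq Z_i$ is the most delicate, because a large subsurface $W$ can overlap many $Z_i$'s simultaneously; here the assumption $W\neq S$ combined with the structure that every $X_a$ either equals $X_0=S\setminus\{\alpha,\beta\}$ or restores exactly one of $\alpha,\beta$ should localize $W$ within the pattern and prevent unbounded interaction. For part (ii) with $\gamma\in\{\alpha,\beta\}$, analogous reasoning using an annular variant of the Behrstock argument plus the step bound (\ref{eq : pabdrybdd}) should localize the nontrivial indices and yield $\mathbf{m}'$; the case $\gamma\notin\{\alpha,\beta\}$ reduces to the non-annular analysis with the annular version of Lemma \ref{lem : paub}. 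In both parts one finally takes $E_2$ large enough to also absorb the additive constants appearing in the Behrstock and no-backtracking inequalities invoked along the way.
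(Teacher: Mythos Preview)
Your high-level plan --- localize $W$ among the $Z_i$'s via Behrstock and the order from Lemma~\ref{lem : ldomain}(\ref{ldomain : ord}), then bound boundedly many terms by Lemma~\ref{lem : paub} --- is precisely the paper's approach; the telescoping is a detour that you yourself abandon once you see the sum is linear in $k$. There is however a factual error in your treatment of part~(ii): you assert that $h_{i-1}$ fixes $\{\alpha,\beta\}$ setwise, but $f_1$ is supported on $S\setminus\alpha$ and hence moves $\beta$, while $f_3$ is supported on $S\setminus\beta$ and moves $\alpha$; already $h_1=f_{q(1)}^{e_1}$ fails to preserve $\{\alpha,\beta\}$. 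Your case split on whether $\gamma\in\{\alpha,\beta\}$ therefore does not correspond to the condition actually needed to decide between Lemma~\ref{lem : paub} and~(\ref{eq : pabdrybdd}), namely whether $h_{i-1}^{-1}\gamma\in\partial X_{q(i)}$.

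The more substantial gap is that the localization step is only asserted, and your sketch omits the geometric input that drives it. The paper proves that for $q(l)\in\{1,3\}$ the multicurves $\partial Z_{l-1}$ and $\partial Z_{l+1}$ \emph{fill} $Z_l$ (using $d_{X_{q(l)}}(\beta,f_{q(l)}^{e_l}\beta)\geq 3$); this is what forces any non-annular $W$ to overlap one of each adjacent pair, after which Proposition~\ref{prop : ordersubsurf} sandwiches $W$ as $Z_{i-2}<W<Z_{i+2}$ (or places it at an end). One then needs that $g^{-1}W$ equals none of the $X_a$ for the relevant $a$ (Claim~\ref{claim : g-W} in the paper), and this uses the overlap conditions $W\pitchfork Z_{i\pm2}$ coming from the sandwich, not merely $W\neq Z_i$ as you suggest. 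For part~(ii) the filling argument must be run with $\partial Z_{l\pm3}$ and $\partial Z_{l\pm5}$ rather than $\partial Z_{l\pm1}$, since the annular core $\gamma$ can coincide with a curve in $\partial Z_{l\pm1}$; this widening of the window is a genuine extra step that your sketch does not address.
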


\begin{proof} 
{\it Proof of part (\ref{subsurfub : bd}).} If $d_{W}(\mu_{I},\mu_{T})\leq 4M$, then we already have the upper bound. So we may assume that $d_{W}(\mu_{I},\mu_{T})> 4M$. 

Let $l\in\{1,...,k\}$ with $q(l)=1$ or $3$. Then $Z_{l}$ is a subsurface with one boundary curve. We claim that $\partial{Z}_{l-1}$ and $\partial{Z}_{l+1}$ fill $Z_{l}$. To see this, suppose that $q(l)=1$, then $X_{q(l)}=S\backslash \alpha$ ( Assuming that $q(l)=3$ the proof is similar). Let $g=f_{q(1)}^{e_{1}}...f_{q(l-1)}^{e_{l-1}}$. We have $g^{-1}(Z_{l})=X_{q(l)}=S\backslash \alpha$. Since $q(l-1)=0$ or $2$, $X_{q(l-1)}=S\backslash\{\alpha,\beta\}$. Then $g^{-1}(\partial{Z}_{l-1})=f_{q(l-1)}^{-e_{l-1}}(\partial{X}_{q(l-1)})=\{f_{q(l-1)}^{-e_{l-1}}(\alpha),f_{q(l-1)}^{-e_{l-1}}(\beta)\}$. Moreover, $f_{q(l-1)}$ is supported on $X_{q(l-1)}$, so preserves each component of $\partial{X}_{q(l-1)}= \{\alpha,\beta\}$ i.e. $f_{q(l-1)}(\alpha)=\alpha$ and $f_{q(l-1)}(\beta)=\beta$. Thus $g^{-1}(\partial{Z}_{l-1})=\{\alpha,\beta\}$. Since $q(l+1)=0$ or $2$, $X_{q(l+1)}=S\backslash\{\alpha,\beta\}$, so $g^{-1}(\partial{Z}_{l+1})=\{\alpha, f_{q(l)}^{e_{l}}(\beta)\}$. Now $\beta \pitchfork S\backslash \alpha$, so by Proposition \ref{prop : palb}, for $|e_{l}|>E_{1}$ ($E_{1}$ is the constant from Lemma \ref{lem : ldomain}),
$$d_{S\backslash \alpha}(\beta,f_{q(l)}^{e_{l}}(\beta))\geq \tau_{1}|e_{l}|>4.$$
 So the curves $\beta$ and $f_{q(l)}^{e_{l}}(\beta)$ fill $S\backslash\alpha$. Then $g^{-1}(\partial{Z}_{l-1})$ and $g^{-1}(\partial{Z}_{l+1})$ fill $g^{-1}(Z_{l})$, and therefore $\partial{Z}_{l-1}$ and $\partial{Z}_{l+1}$ fill $Z_{l}$. 

We make the following observations.
\begin{claim}\label{claim : WZi}
Given $i\in\{1,...,k\}$, suppose that $W\pitchfork Z_{i}$. Then $W$ and $Z_{i}$ are ordered. 
\end{claim}
By Lemma \ref{lem : ldomain}(\ref{ldomain : ld}) and since $|e_{i}|>E_{1}$, $d_{Z_{i}}(\mu_{I},\mu_{T})> 4M$. Moreover by our assumption $d_{W}(\mu_{I},\mu_{T})>4M$. Then by Definition \ref{def : ordersubsurf}, $W$ and $Z_{i}$ are ordered.

\begin{claim}\label{claim : WZl+-} 
Let $l\in\{1,...,k\}$ with $q(l)=1$ or $3$. Then either $W$ and $Z_{l-1}$ or $W$ and $Z_{l+1}$ are ordered. 
\end{claim}
Since $Z_{l}$ is a large subsurface and $W$ is a non-annular subsurface, $W\subseteq Z_{l}$. $W=Z_{l}$ is excluded by the assumption that $W$ is not a subsurface in the list of subsurfaces $Z_{i}$. So we may assume that $W\subsetneq Z_{l}$. As we saw above $\partial{Z}_{l-1}$ and $\partial{Z}_{l+1}$ fill $Z_{l}$, so $W$ overlaps either $Z_{l-1}$ or $Z_{l+1}$. If $W\pitchfork Z_{l-1}$ then by Claim \ref{claim : WZi} $W$ and $Z_{l-1}$ are ordered. Similarly if $W\pitchfork Z_{l+1}$ then $W$ and $Z_{l+1}$ are ordered.

\begin{claim} \label{claim : trans}
 Suppose that $i,j\in\{1,...,k\}$ with $i+2\leq j$. If $Z_{j}<W$ then $Z_{i}<W$.  If $W<Z_{i}$ then $W<Z_{j}$. 
  \end{claim}
  By Lemma \ref{lem : ldomain}(\ref{ldomain : ord}) $Z_{i}<Z_{j}$. If $Z_{j}<W$  then by transitivity of $<$ (Proposition \ref{prop : ordersubsurf}), $Z_{i}<W$. Similarly if $W<Z_{i}$ then by transitivity of $<$, $W<Z_{j}$. 
  \medskip

 Let $l\in\{1,...,k\}$ with $q(l)=1$ or $3$. We proceed to place the subsurface $W$ in the list of subsurfaces $Z_{i}$.

 If $W\pitchfork Z_{l}$ then by Claim \ref{claim : WZi}, either $W<Z_{l}$ or $Z_{l}<W$. Suppose that $W$ does not overlap $Z_{l}$. Then by Claim \ref{claim : WZl+-}, either $W$ and $Z_{l-1}$, or $W$ and $Z_{l+1}$ are ordered. Suppose that $W$ and $Z_{l+1}$ are oredered. Then either $Z_{l+1}<W$ or $W<Z_{l+1}$.
 \medskip

{\bf Suppose that $Z_{l+1}<W$.} If $W\pitchfork Z_{l+2}$, then by Claim \ref{claim : WZi} either $W<Z_{l+2}$ or $Z_{l+2}<W$. If $W<Z_{l+2}$, then $Z_{l+1}<W<Z_{l+2}$ and we are in Case (1) below. If $Z_{l+2}<W$ then by Claim \ref{claim : trans}, $Z_{l}<W$. Suppose that $W$ does not overlap $Z_{l+2}$. By Claim \ref{claim : WZl+-}, either $W$ and $Z_{l+1}$, or $W$ and $Z_{l+3}$ are ordered. If $W$ and $Z_{l+3}$ are ordered, then either $W<Z_{l+3}\Longrightarrow Z_{l+1}<W<Z_{l+3}$ and we are in Case (1) below, or $Z_{l+3}<W\Longrightarrow Z_{l}<W$ (Claim \ref{claim : trans}). 

Suppose that $W$ and $Z_{l+3}$ are not ordered. Observe that either $W\pitchfork Z_{l+4}$ or not. If $W\pitchfork Z_{l+4}$ then by Claim \ref{claim : WZi} either $W<Z_{l+4}$ or $Z_{l+4}<W$. If $W<Z_{l+4}$ then $Z_{l+1}<W<Z_{l+4}$ and we are in Case (1) below. If $Z_{l+4}<W$ then by Claim \ref{claim : trans}, $Z_{l}<W$. Suppose that $W$ does not overlap $Z_{l+4}$. By Claim \ref{claim : WZi}, either $W$ and $Z_{l+3}$ or $W$ and $Z_{l+5}$ are ordered. Since we assumed that $W$ and $Z_{l+3}$ are not ordered, $W$ and $Z_{l+5}$ are ordered. Then either $W<Z_{l+5} \Longrightarrow Z_{l+1}<W<Z_{l+5}$ and we are in Case (1) below, or $Z_{l+5}<W \Longrightarrow Z_{l}<W$ (Claim \ref{claim : trans}).
\medskip

{\bf Suppose that $W<Z_{l+1}$.} If $W\pitchfork Z_{l-2}$, then by Claim \ref{claim : WZi} either $W<Z_{l-2}\Longrightarrow W<Z_{l}$ (Claim \ref{claim : trans}), or $Z_{l-2}<W\Longrightarrow Z_{l-2}<W<Z_{l+1}$ and we are in Case (1) below. Suppose that $W$ does not overlap $Z_{l-2}$. By Claim \ref{claim : WZl+-}, either $W$ and $Z_{l-3}$ or $W$ and $Z_{l-1}$ are ordered. If $W$ and $Z_{l-1}$ are ordered, then by Claim \ref{claim : WZi}, either $W<Z_{l-1}$ or $Z_{l-1}<W$. If $Z_{l-1}<W$ then $Z_{l-1}<W<Z_{l-1}$ and we are in Case (1). Suppose that $W<Z_{l-1}$. We assumed that $W$ does not overlap $Z_{l-2}$. By Claim \ref{claim : WZl+-}, either $W$ and $Z_{l-1}$ or $W$ and $Z_{l-3}$ are ordered. If $W$ and $Z_{l-3}$ are ordered, then either $W<Z_{l-3}$ and thus by Claim \ref{claim : trans}, $W<Z_{l}$, or $Z_{l-3}<W \Longrightarrow Z_{l-3}<W<Z_{l-1}$ and we are in Case (1) below. Suppose that $W$ and $Z_{l-3}$ are not ordered. If $W\pitchfork Z_{l-4}$, then by Claim \ref{claim : WZi}, either $W<Z_{l-4}\Longrightarrow W<Z_{l}$ (Claim \ref{claim : trans}), or $Z_{l-4}<W\Longrightarrow Z_{l-4}<W<Z_{l-1}$ and we are in Case (1) below. Suppose that $W$ does not overlap $Z_{l-4}$. By Claim \ref{claim : WZl+-}, either $W$ and $Z_{l-5}$ or $W$ and $Z_{l-3}$ are ordered. We supposed that $W$ and $Z_{l-3}$ are not ordered, so $W$ and $Z_{l-5}$ are ordered. Thus either $W<Z_{l-5}$ or $Z_{l-5}<W$. If $Z_{l-5}<W$ then $Z_{l-5}<W<Z_{l-1}$ and we are in Case (1). If $W<Z_{l-5}$ then by Claim \ref{claim : trans}, $W<Z_{l}$.
 
We showed that either $W<Z_{l}$, $Z_{l}<W$ or $W$ is ordered in the list of subsurfaces $Z_{i}$ as Case (1) below. Applying this result to every $l\in\{1,...,k\}$ with $q(l)=1$ or $3$ we conclude that $W$ is ordered in the list of subsurfaces $Z_{i}$ as one of the following cases:  
\begin{enumerate}
\item $Z_{r}<W<Z_{s}$, where $s-r\leq 4$,
\item $W<Z_{s}$, where $s\leq 4$,
\item $Z_{r}<W$, where $k-r\leq 4$. 
\end{enumerate}
We proceed to establish the upper bound for $d_{W}(\mu_{I},\mu_{T})$ in each of these cases.
\medskip

\noindent{\bf Case (1).} Since $Z_{r}<W$, by Definition \ref{def : ordersubsurf}, $d_{W}(\partial{Z_{r}},\mu_{I})\leq M$. Similarly, since $W<Z_{s}$, $d_{W}(\partial{Z_{s}},\mu_{T})\leq M$. These two inequalities and the triangle inequality give us 
 \begin{eqnarray*}
 d_{W}(\mu_{I},\mu_{T}) &\leq& d_{W}(\mu_{I},\partial{Z}_{r})+d_{W}(\partial{Z_{r}},\partial{Z_{s}})+d_{W}(\partial{Z}_{s},\mu_{T})+4\\
 &\leq& d_{W}(\partial{Z_{r}},\partial{Z_{s}})+2M+4.
 \end{eqnarray*}
 The subsurface coefficient $d_{W}(\partial{Z_{r}},\partial{Z_{s}})$ is bounded above by Lemma \ref{lem : Wub1}(\ref{Wub1 : bd}). Thus the desired bound follows.
  \medskip
 

\noindent{\bf Case (2).} By Definition \ref{def : ordersubsurf}, $d_{W}(\partial{Z}_{s},\mu_{T})\leq M$. Then the triangle inequality and this bound give us
$$d_{W}(\mu_{I},\mu_{T}) \leq d_{W}(\mu_{I},\partial{Z_{s}})+M+2.$$
The subsurface coefficient $d_{W}(\mu_{I},\partial{Z_{s}})$ is bounded by Lemma \ref{lem : Wub1}(\ref{Wub1 : bd}). Thus the desired bound follows.
 \medskip
 
\noindent{\bf Case (3).} We have
$$d_{W}(\mu_{I},\mu_{T}) \leq d_{W}(\partial{Z_{r}},\mu_{T})+M+2.$$
The subsurface coefficient $d_{W}(\partial{Z_{r}},\mu_{T})$ is bounded by Lemma \ref{lem : Wub1}(\ref{Wub1 : bd}). Thus the desired bound follows.
 \medskip
 
 The upper bounds in Cases (1), (2) and (3) depend on the upper bound for $s-r$, $s$, $k-r$ respectively, the marking $\mu_{I}$ and pseudo-Anosov maps $f_{0},f_{1},f_{2},f_{3}$. Having the bounds there is a constant ${\bf m}$ so that
  $$d_{W}(\mu_{I},\mu_{T})\leq{\bf m}$$
  for any proper, non-annular subsurfaces $W$ which is not in the list of subsurfaces $Z_{i}$. The proof of part (\ref{subsurfub : bd}) is complete.
\medskip

{\it Proof of part (\ref{subsurfub : abd})}. If $d_{\gamma}(\mu_{I},\mu_{T})\leq 4M$ we already have the bound. So we may assume that $d_{\gamma}(\mu_{I},\mu_{T})> 4M$.

We claim that if $q(l)=1$ or $3$ ($\partial{Z}_{l}$ consists of one curve), then $\partial{Z}_{l-3}$ and $\partial{Z}_{l-7}$ fill $S$. To see this, suppose that $q(l)=1$, then $X_{q(l)}=S\backslash \alpha$ (Assuming that $q(l)=3$ the proof is similar).  Let $g=f_{q(1)}^{e_{1}}...f_{q(l-4)}^{e_{l-4}}$. Since $q(l-3)=0$ or $2$, $X_{q(l-3)}=S\backslash\{\alpha,\beta\}$. Then $g^{-1}(\partial{Z}_{l-3})=\{\alpha,\beta\}$. 

We have $q(l-4)=1$ so $f_{q(l-4)}$ preserves $\partial{X}_{q(l-4)}=\alpha$, $q(l-5)=0$ so $f_{q(l-5)}$ preserves each component of $\partial{X}_{q(l-5)}=\{\alpha,\beta\}$, $q(l-6)=3$ so $f_{q(l-6)}$ preserves $\partial{X}_{q(l-6)}=\beta$, and $q(l-7)=2$ so $f_{q(l-7)}$ preserves each component of $\partial{X}_{q(l-7)}=\{\alpha,\beta\}$. Then we have that
$$g^{-1}(\partial{Z}_{l-7})=\{f_{q(l-4)}^{-e_{l-4}}f_{q(l-5)}^{-e_{l-5}}f_{q(l-6)}^{-e_{l-6}}(\alpha),f_{q(l-4)}^{-e_{l-4}}(\beta)\}.$$
First note that  $\beta \pitchfork S\backslash\alpha$. Moreover $f_{q(l-4)}$ is supported on $S\backslash \alpha$. So by Lemma \ref{lem : ldomain} and since $|e_{l}|>E_{2}$ (note that $E_{2}>E_{1}$),
 $$d_{S\backslash \alpha}(\beta,f_{q(l-4)}^{-e_{l-4}}(\beta))\geq K_{1}'|e_{l-4}|-C_{1}'\geq K_{1}'E_{2}-C_{1}'>4.$$ 
 The above inequality implies that $\beta$ and $f_{q(l-4)}^{-e_{l-4}}(\beta)$ fill $S\backslash \alpha$ (see $\S$\ref{sec : ccplx}). 
 
 Second define $q'(i)=q(l-4-i+1)$ for $i=1,2,3$ and $e'_{i}=-e_{l-4-i+1}$ for $i=1,2,3$. Then by Lemma \ref{lem : ldomain} we have that 
$$d_{Z_{3}(q',e')}(f_{q(l-4)}^{-e_{l-4}}f_{q(l-5)}^{-e_{l-5}}f_{q(l-6)}^{-e_{l-6}}(\alpha),\alpha)\geq K'|e_{l-6}|-C'\geq K'E_{2}-C'>4.$$
 Thus $f_{q(l-4)}^{-e_{l-4}}f_{q(l-5)}^{-e_{l-5}}f_{q(l-6)}^{-e_{l-6}}(\alpha)\pitchfork\alpha$. 
 
So we may conclude that $g^{-1}(\partial{Z_{l-3}})$ and $g^{-1}(\partial{Z_{l-7}})$ fill $S$. Therefore $\partial{Z_{l-3}}$ and $\partial{Z_{l-5}}$ fill $S$. Then given $\gamma\in\mathcal{C}_{0}(S)$ we have
\begin{itemize}
\item $\gamma\pitchfork \partial{Z_{l-3}}$ or $\gamma\pitchfork \partial{Z_{l-7}}$.
\end{itemize} 
 Similarly we can prove that $\partial{Z}_{l+3}$ and $\partial{Z}_{l+7}$ fill $Z_{l}$. Thus given $\gamma\in\mathcal{C}_{0}(S)$ we have that
\begin{itemize}
\item $\gamma\pitchfork \partial{Z_{l+3}}$ or $\gamma\pitchfork \partial{Z_{l+7}}$.
\end{itemize}

\begin{claim} \label{claim : gZi}
If $\gamma\pitchfork Z_{i}$ then $\gamma$ and $Z_{i}$ are ordered.
\end{claim}
By Lemma \ref{lem : ldomain}, $d_{Z_{i}}(\mu_{I},\mu_{T})\geq K'_{1}|e_{i}|-C'_{1}$ for any $i\in\{1,...,k\}$. Then since $|e_{i}|>E_{2}$,  
$$d_{Z_{i}}(\mu_{I},\mu_{T})>4M.$$
 Moreover, by our assumption, $d_{\gamma}(\mu_{I},\mu_{T})>4M$. These two inequalities imply $\gamma$ and $Z_{i}$ are ordered (see Definition \ref{def : ordersubsurf}).
 \medskip

Claim \ref{claim : gZi} and the above two bullets imply that given $l\in\{1,...,k\}$ with $q(l)=1$ or $3$, $\gamma$ and subsurfaces $Z_{r}$ and $Z_{s}$ are ordered, where $r=l-3$ or $l-7$, and $s=l+3$ or $l+7$. If $Z_{r}<\gamma<Z_{s}$ we are in Case (1) below. Otherwise, either $Z_{s}<\gamma$ (by transitivity of $<$, $Z_{r}<\gamma$), or $\gamma<Z_{r}$ (by transitivity of $<$, $\gamma<Z_{r}$). Repeating the comparison for all $l\in\{1,...,k\}$ with $q(l)=1$ or $3$ we end up in one of the following cases  
\begin{enumerate}
\item  $Z_{r}<\gamma<Z_{s}$, where $s-r\leq 14$,
\item $\mu_{I}<\gamma<Z_{s}$, where $s\leq 7$,
\item $Z_{r}<\gamma<\mu_{T}$, where $k-r\leq 7$.
\end{enumerate}
We proceed to establish the bound for $d_{\gamma}(\mu_{I},\mu_{T})$ in each of these cases.
\medskip

\noindent{\bf Case (1).} Since $Z_{r}<\gamma$, by Definition \ref{def : ordersubsurf}, $d_{\gamma}(\mu_{I},\partial{Z_{r}})\leq M$. Similarly, since $\gamma<Z_{s}$, $d_{\gamma}(\mu_{T},\partial{Z}_{s})\leq M$. Having these bounds, by the triangle inequality we get
\begin{eqnarray*}
d_{\gamma}(\mu_{I},\mu_{T})&\leq&d_{\gamma}(\partial{Z}_{r},\partial{Z}_{s})+2M+\diam_{\gamma}(\partial{Z}_{r})+\diam_{\gamma}(\partial{Z}_{s})\\
&\leq&d_{\gamma}(\partial{Z}_{r},\partial{Z}_{s})+2M+4
\end{eqnarray*}
The subsurface coefficient $d_{\gamma}(\partial{Z}_{r},\partial{Z}_{s})$ is bounded by Lemma \ref{lem : Wub1}(\ref{Wub1 : abd}). The desired bound follows.
\medskip

\noindent{\bf Case (2).} Since $\gamma<Z_{s}$, by Definition \ref{def : ordersubsurf}, $d_{\gamma}(\mu_{T},\partial{Z}_{s})\leq M$. Then by the triangle inequality 
 $$d_{\gamma}(\mu_{I},\mu_{T})\leq d_{\gamma}(\mu_{I},\partial{Z}_{s})+M+2.$$
 The subsurface coefficient $d_{\gamma}(\mu_{I},\partial{Z}_{s})$ is bounded by Lemma \ref{lem : Wub1}(\ref{Wub1 : abd}). The desired bound follows.
\medskip

\noindent{\bf Case (3).} Since $Z_{r}<\gamma$, by Definition \ref{def : ordersubsurf}, $d_{\gamma}(\mu_{I},\partial{Z}_{r})\leq M$. Then by the triangle inequality 
 $$d_{\gamma}(\mu_{I},\mu_{T})\leq d_{\gamma}(\partial{Z}_{r},\mu_{T})+M+2.$$
 The subsurface coefficient $d_{\gamma}(\partial{Z}_{r},\mu_{T})$ is bounded by Lemma \ref{lem : Wub1}(\ref{Wub1 : abd}). The desired bound follows.
\medskip

The bounds in Cases (1), (2) and (3) depend on the upper bound for $s-r$,$s$, $k-r$, respectively, the marking $\mu_{I}$ and pseudo-Anosov maps $f_{0},f_{1},f_{2},f_{3}$ 
Establishing the bounds there is a constant ${\bf m}'$ so that 
$$d_{\gamma}(\mu_{I},\mu_{T})\leq {\bf m}'$$
 for any $\gamma\in\mathcal{C}_{0}(S)$.
 \end{proof}
 
 \begin{prop} \label{prop : ldomain} There are constants $K_{1}\geq 1$, $C_{1}\geq 0$ and $E>E_{2}$, depending on the partial pseudo-Anosov maps $f_{0},f_{1},f_{2}$ and $f_{3}$ and $\mu_{I}$ with the following properties. Given $q$ and $\{e_{i}\}_{i=1}^{k}$  such that $|e_{i}|>E$ for any $i\in\{1,...,k\}$, we have 
\begin{equation}\label{eq : ldomain}d_{Z_{i}(q,e)}(\mu_{I}(q,e),\mu_{T}(q,e)) \asymp_{K_{1},C_{1}}|e_{i}|.\end{equation}
\end{prop}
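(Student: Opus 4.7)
The lower bound $d_{Z_i(q,e)}(\mu_I(q,e),\mu_T(q,e)) \geq K_1' |e_i| - C_1'$ is already given by Lemma~\ref{lem : ldomain}(\ref{ldomain : ld}), so it suffices to establish a matching upper bound of the form $d_{Z_i(q,e)}(\mu_I(q,e),\mu_T(q,e)) \leq K |e_i| + C$. Once we have both bounds we take $K_1 = \max\{K,1/K_1'\}$ and $C_1 = \max\{C, C_1'\}$, and $E$ at least $E_2$ from Proposition~\ref{prop : subsurfub} so both bounds apply. The core of the argument mirrors the proof of part (\ref{ldomain : ld}) of Lemma~\ref{lem : ldomain}: pull back by an appropriate element of the mapping class group and decompose the resulting subsurface coefficient with the triangle inequality.

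More precisely, fix $i \in \{1,\dots,k\}$ and set $g = f_{q(1)}^{e_1}\cdots f_{q(i-1)}^{e_{i-1}}$ and $h = f_{q(i+1)}^{e_{i+1}}\cdots f_{q(k)}^{e_k}$. Since $f_{q(i)}$ is an isometry of $\mathcal{C}(X_{q(i)})$ and $g$ carries $X_{q(i)}$ to $Z_i(q,e)$, applying $g^{-1}$ gives
\[
d_{Z_i(q,e)}(\mu_I,\mu_T) = d_{X_{q(i)}}(g^{-1}\mu,\, f_{q(i)}^{e_i} h\mu),
\]
where $\mu := \mu_I$. By the triangle inequality this is at most
\[
d_{X_{q(i)}}(g^{-1}\mu,\mu) + d_{X_{q(i)}}(\mu, f_{q(i)}^{e_i}\mu) + d_{X_{q(i)}}(f_{q(i)}^{e_i}\mu, f_{q(i)}^{e_i} h\mu) + 4.
\]
The middle term is bounded linearly in $|e_i|$ by a direct triangle-inequality chain: since $f_{q(i)}$ is an isometry of $\mathcal{C}(X_{q(i)})$,
\[
d_{X_{q(i)}}(\mu, f_{q(i)}^{e_i}\mu) \leq \sum_{j=0}^{|e_i|-1} d_{X_{q(i)}}(f_{q(i)}^j \mu, f_{q(i)}^{j+1}\mu) = |e_i|\cdot d_{X_{q(i)}}(\mu, f_{q(i)}\mu) \leq \kappa |e_i|,
\]
where $\kappa := \max\{d_{X_a}(\mu, f_a \mu): a=0,1,2,3\}$. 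For the first and third cross terms we repeat verbatim the bookkeeping used to bound (\ref{eq : dXqig-}) in the proof of Lemma~\ref{lem : ldomain}: using $\mu \supset \{\partial X_a\}_{a=0,1,2,3}$ together with Behrstock's inequality (Theorem~\ref{thm : behineq}) and Lemma~\ref{lem : paub} applied to the pseudo-Anosov maps $f_{q(i\pm 1)}$ (whose support differs from $X_{q(i)}$), each cross term is bounded by a constant $C_0 := \tfrac{C_1'}{2}+1$ once $|e_{i\pm 1}|$ exceeds the threshold $E_2$ from Proposition~\ref{prop : subsurfub}. Combining yields $d_{Z_i(q,e)}(\mu_I,\mu_T) \leq \kappa |e_i| + (2C_0 + 4)$.

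Setting $K_1 = \max\{\kappa, 1/K_1'\}$, $C_1 = \max\{2C_0 + 4, C_1'\}$, and $E = E_2$, both inequalities give the claimed comparison (\ref{eq : ldomain}). The only non-routine step is the bound on the cross terms $d_{X_{q(i)}}(g^{-1}\mu,\mu)$ and $d_{X_{q(i)}}(\mu, h\mu)$; one must verify that in each of these the partial pseudo-Anosov whose power appears next to $\mu$ is not supported on $X_{q(i)}$, so that Lemma~\ref{lem : paub} applies. This verification was already carried out in the proof of Lemma~\ref{lem : ldomain} (via Claim~\ref{claim : bdXqi-1i+1}), and since the same $q$ and $e$ sequences govern both proofs, it can be invoked here without modification.
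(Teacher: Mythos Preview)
Your proof is correct, and in fact more elementary than the paper's. Both arguments begin identically: the lower bound comes from Lemma~\ref{lem : ldomain}(\ref{ldomain : ld}), and for the upper bound one applies $g^{-1}$ to reduce to estimating $d_{X_{q(i)}}(g^{-1}\mu,\, f_{q(i)}^{e_i}h\mu)$. The divergence is in how this is decomposed. The paper splits it as $d_{X_{q(i)}}(f_{q(i)}^{e_i}h\mu, h\mu) + d_{X_{q(i)}}(h\mu, g^{-1}\mu)$; the second term is the same $C_1'$ bound from Lemma~\ref{lem : ldomain}, but to bound the first term linearly in $|e_i|$ the paper invokes Proposition~\ref{prop : subsurfub} to place $\pi_{X_{q(i)}}(h\mu)$ in the $\mathbf{m}$-neighborhood of $\pi_{X_{q(i)}}(\mu)$, and then appeals to Lemma~\ref{lem : limsup} to get $d_{X_{q(i)}}(h\mu, f_{q(i)}^{e_i}h\mu)\leq (\bar\tau_{q(i)}+1)|e_i|$ uniformly over that neighborhood. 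This is why the paper genuinely needs a new threshold $E>E_2$.

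Your three-term split inserts $\mu$ and $f_{q(i)}^{e_i}\mu$ instead, so the ``linear'' piece is $d_{X_{q(i)}}(\mu, f_{q(i)}^{e_i}\mu)$, which telescopes trivially to $\leq \kappa|e_i|$ (strictly speaking $(\kappa+2)|e_i|$ once one accounts for the diameter terms in the paper's triangle inequality, but this is immaterial). The two cross terms are exactly the quantities $d_{X_{q(i)}}(g^{-1}\mu,\mu)$ and $d_{X_{q(i)}}(\mu,h\mu)$ already bounded by $C_1'/2$ in the proof of Lemma~\ref{lem : ldomain} (equation~(\ref{eq : dXimuhmu}) and its companion), under the threshold $E_1$. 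So your route bypasses Lemma~\ref{lem : limsup} and Proposition~\ref{prop : subsurfub} entirely and needs no threshold beyond $E_1$; the price is a cruder multiplicative constant $\kappa=\max_a d_{X_a}(\mu,f_a\mu)$ in place of the paper's $\max_a(\bar\tau_a+1)$, which is irrelevant for the $\asymp$ statement.
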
  
 \begin{proof}
Let the constants $K'_{1},C'_{1}$ and $E_{1}$ be from Lemma \ref{lem : ldomain}. Then by the lemma: If $|e_{i}|>E_{1}$ for all $i\in\{1,...,k\}$, then the lower bound in (\ref{eq : ldomain}) holds for $K'_{1}$ and $C'_{1}$. We proceed to obtain the upper bound in (\ref{eq : ldomain}). Denote $\mu=\mu_{I}(q,e)$. Let $i\in\{1,...,k\}$ and $g=f_{q(1)}^{e_{1}}...f_{q(i-1)}^{e_{i-1}}$. Applying $g^{-1}$ to the subsurface coefficient $d_{Z_{i}}(\mu,\mu_{T})$ we obtain 
$$d_{X_{q(i)}}(g^{-1}\mu,f_{q(i)}^{e_{i}}h\mu),$$
 where $h=f_{q(i+1)}^{e_{i+1}}...f_{q(k)}^{e_{k}}$. By the triangle inequality
\begin{equation}\label{eq : dXqi}d_{X_{q(i)}}(g^{-1}\mu,f_{q(i)}^{e_{i}}h\mu)\leq d_{X_{q(i)}}(f_{q(i)}^{e_{i}}h\mu,h\mu)+d_{X_{q(i)}}(h\mu, g^{-1}\mu)+2.\end{equation}
In Lemma \ref{lem : ldomain} we proved the second term on the right hand side of (\ref{eq : dXqi}) is bounded by $C'_{1}$. Thus we only need to show that for some $K_{1}\geq 1$, 
$$d_{X_{q(i)}}(f_{q(i)}^{e_{i}}h\mu,h\mu)\leq K_{1}|e_{i}|.$$
Define $q'(j)=q(j+i)$ for $j=1,...,k-i$ and $e'_{j}=e_{j+i}$ for $j=1,...,k-i$. The subsurface $X_{q(i)}$ is not in the list of subsurfaces $Z_{j}(q',e')$. By assumption of the proposition there $|e'_{j}|=|e_{j+i}|>E_{2}$ for $j=1,...,k-i$. So by Proposition \ref{prop : subsurfub}(\ref{subsurfub : bd}) for $q',e'$, there is a constant ${\bf m}$ such that:  
\begin{equation}\label{eq : dXqimhm}d_{X_{q(i)}}(\mu,h\mu)\leq {\bf m}.\end{equation}
Let $\bar{\tau}_{a}:=\bar{\tau}_{f_{a}}$, $a=0,1,2,3$, be the constant from Lemma \ref{lem : limsup}. Then
$$\limsup_{n\to\infty}\frac{d_{X_{a}}(\delta,f_{a}^{n}(\delta))}{n}=\bar{\tau}_{a}$$
 for every $\delta \in \mathcal{C}_{0}(X_{a})$. Therefore for any $\delta\in\mathcal{C}_{0}(X_{a})$, there is a positive integer $N(\delta)$ such that for any integer $n\geq N$ we have 
\begin{equation}\label{eq : dXa}d_{X_{a}}(\delta,f_{a}^{n}(\delta))\leq (\bar{\tau}_{a}+1)n.\end{equation}
 Moreover, it follows from the proof of Lemma \ref{lem : limsup} that for $\delta'\neq \delta$, $|N(\delta)-N(\delta')|$ is bounded by a constant depending only on the distance of $\delta$ and $\delta'$ in $\mathcal{C}(X_{a})$. Thus we may choose $E>E_{2}$ such that for any integer $n\geq E$ and any curve $\delta$ in the ${\bf m}-$neighborhood of $\pi_{X_{a}}(\mu)$ in $\mathcal{C}(X_{a})$, (\ref{eq : dXa}) holds. By (\ref{eq : dXqimhm}), $\pi_{X_{q(i)}}(h\mu)$ is in the ${\bf m}-$neighborhood of $\pi_{X_{q(i)}}(\mu)$. Thus if $|e_{i}|> E$, then we have 
 $$d_{X_{q(i)}}(h\mu,f_{q(i)}^{e_{i}}h\mu)\leq (\bar{\tau}_{a}+1)|e_{i}|.$$ 
Let $K_{1}=\max\{\bar{\tau}_{a}+1,\frac{1}{\tau_{a}} : a=0,1,2,3\}$, then 
$$d_{X_{q(i)}}(\mu,h\mu)\leq K_{1}|e_{i}|.$$
The lower bound in (\ref{eq : ldomain}) is established. 

Recall from Lemma \ref{lem : ldomain} that $K'_{1}=\min\{\tau_{a} : a=0,1,2,3\}$, so $K'_{1}>\frac{1}{K_{1}}$. Thus we may conclude that (\ref{eq : ldomain}) holds for constants $K_{1},C_{1}=C'_{1}$ and $E$. 
 \end{proof}

Given $q$ and $e$ so that the domain of  $q$ is a finite set  $\{1,...,k\}$ ($k\in\mathbb{N}$) and $e$ is a sequence of integers $\{e_{i}\}_{i=1}^{k}$. Let the markings $\mu_{I}(q,e)$ and $\mu_{T}(q,e)$ be as before. Lemma \ref{lem : ldomain} and Propositions \ref{prop : subsurfub} and \ref{prop : ldomain} together prescribe the list of all subsurface coefficients of the pair of markings $\mu_{I}$ and $\mu_{T}$. 

We proceed to generalize this construction to the case that the domain of $q$ is $\mathbb{N}$. Let the curves $\alpha$ and $\beta$, the indexed subsurfaces $X_{0},X_{1},X_{2}$ and $X_{3}$, and indexed partial pseudo-Anosov maps $f_{0},f_{1}, f_{2}$ and $f_{3}$ supported on indexed subsurface $X_{0},X_{1},X_{2}$ and $X_{3}$ respectively be as the beginning of this subsection. Let $q:\mathbb{N}\to \{0,1,2,3\}$ be as the beginning of this subsection and let $\{e_{i}\}_{i=1}^{\infty}$ be an infinite sequence of integers. For each $i\in\mathbb{N}$ let the subsurface $Z_{i}(q,e)$ be as in (\ref{eq : Zi}).

 For any $k\in\mathbb{N}$, define $q^{k} :\{1,...,k\}\to\{0,1,2,3\}$ by $q^{k}(i)=q(i)$ for $i=1,...,k$, and define the sequence $e^{k}_{i}=e_{i}$ for $i=1,...,k$. Let $\mu_{I}\equiv \mu_{I}(q^{k} ,e^{k})$ be a marking so that $\base(\mu_{I})$ contains $\{\partial{X}_{a}\}_{a=0,1,2,3}=\{\alpha,\beta\}$ and let $\mu_{k}=\mu_{T}(q^{k} ,e^{k})$ be the marking defined as before. Note that $Z_{i}(q^{k},e^{k})=Z_{i}(q,e)$. Let $\delta_{k}$ be the multi-curve $f_{q(1)}^{e_{1}}...f_{q(k)}^{e_{k}}(\{\alpha,\beta\})$. Observe that $\delta_{k}\subset \base(\mu_{k})$ and $\partial{Z}_{k+1}(q,e)\subseteq \delta_{k}$. Let $\delta_{k_{m}}$ be a convergent subsequence of $\delta_{k}$ in the Hausdorff topology of $M_{\infty}(S)$. Denote the limit lamination by $\lambda$. Remove isolated leaves of $\lambda$ (if any) and denote the resulting lamination by $\mu'$. Adding simple closed curves disjoint from $\mu'$ so that each curve has bounded intersection number with the isolated leaves of $\lambda$ we obtain a lamination $\mu_{T}(q,e)$ with the property that for any subsurface $Y$ intersecting $\lambda$ and $\mu_{T}(q,e)$,
 \begin{equation}\label{eq : dZmTl} |d_{Y}(\mu_{T}(q,e),\lambda)|\leq d, \end{equation}
 where $d$ depends on the upper bound for the intersection numbers.

  \begin{prop}\label{prop : pre1} There are constants $K_{1}\geq 1$, $C_{1}\geq 0$, $E>0$ and ${\bf m},{\bf m}'>4M$, depending only on $f_{0},f_{1},f_{2}$ and $f_{3}$, and $\mu_{I}$ with the following properties. Given $q$ and $\{e_{i}\}_{i}$  such that $|e_{i}|\geq E$ for any $i$ in the domain of $q$, we have
  \begin{enumerate}[(i)]
  \item\label{pre1 : ld} For any integer $i$ in the domain of $q$, 
  $$d_{Z_{i}(q,e)}(\mu_{I}(q,e),\mu_{T}(q,e)) \asymp_{K_{1},C_{1}} |e_{i}|.$$
  \item\label{pre1 : bd} For any non-annular subsurface $W$ which is neither $Z_{i}(q,e)$ for some $i$ nor $S$ we have $d_{W}(\mu_{I}(q,e),\mu_{T}(q,e)) \leq {\bf m}$.
\item\label{pre1 : bda} For any $\gamma\in \mathcal{C}_{0}(S)$ we have $d_{\gamma}(\mu_{I}(q,e),\mu_{T}(q,e))\leq {\bf m}'$.
\item \label{pre1 : ord} Given $i,j$ in the domain of $q$, if $j\geq i+2$, then $Z_{i}(q,e)<Z_{j}(q,e)$.
 \item \label{pre1 : ordnext} Let $i$ in the domain of $q$ be such that $q(i)=1$ or $3$. If $j\in J_{Z_{i}(q,e)}$ then $j\geq \min J_{Z_{i-1}(q,e)}$ and $j\leq \max  J_{Z_{i+1}(q,e)}$.
\item\label{pre1 : minfill} If the domain of $q$ is $\mathbb{N}$, then $\mu_{T}(q,e)$ is a minimal filling lamination.
  \end{enumerate}
 \end{prop}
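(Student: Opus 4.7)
The plan is to reduce parts (i)--(v) to the finite-domain results already proved, and to derive (vi) from Klarreich's theorem together with a divergence estimate in $\mathcal{C}(S)$. Fix $K_1, C_1, \mathbf{m}, \mathbf{m}'$ as supplied by Propositions \ref{prop : ldomain} and \ref{prop : subsurfub}, and choose $E > \max\{E_1, E_2, K_1(M_1+C_1+1)\}$. When $q$ has finite domain, items (i)--(v) are then immediate restatements of Propositions \ref{prop : ldomain}, \ref{prop : subsurfub}(\ref{subsurfub : bd})--(\ref{subsurfub : abd}), Lemma \ref{lem : ldomain}(\ref{ldomain : ord}), and Lemma \ref{lem : Jnext}, respectively.

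For infinite $q$, fix $i$. For every $k > i$ we have $Z_i(q,e) = Z_i(q^k, e^k)$, so the finite-domain bounds apply to the pair $(\mu_I, \mu_k)$. To transfer them to $\mu_T = \lim \delta_k$, observe that since $\base(\mu_k)$ is a pants decomposition, the diameter $\diam_Y(\mu_k)$ is bounded by a constant depending only on the topological type of $S$; hence $d_Y(\mu_I, \delta_k)$ and $d_Y(\mu_I, \mu_k)$ agree up to a uniform additive constant for any $\delta_k \in \base(\mu_k)$ and any subsurface $Y$. Continuity of subsurface coefficients under Hausdorff convergence (the discussion preceding Proposition \ref{prop : hlimwk*lim}) then yields $d_Y(\mu_I, \mu_T)$ within a uniform additive constant of $d_Y(\mu_I, \delta_k)$ for $k$ sufficiently large. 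Combining these gives (i)--(iii) with the additive constants slightly enlarged. The order statements (iv)--(v) follow from Proposition \ref{prop : ordersubsurf} applied to these transferred bounds.

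For (vi), the first step is to show $d_S(\mu_I, \delta_k) \to \infty$ as $k \to \infty$. By our choice of $E$, for every $j \leq k$ we have $d_{Z_j}(\mu_I, \mu_k) > M_1$, so Theorem \ref{thm : hrpath}(\ref{h : largelink}) makes $Z_j$ a component domain of any hierarchy between $\mu_I$ and $\mu_k$, with $\partial Z_j$ appearing as a simplex on the main geodesic $g_S \subset \mathcal{C}(S)$. Since the $Z_j$'s are totally $<$-ordered (part (iv)) and their boundary multicurves are distinct for large twists (compare the proof of Lemma \ref{lem : ldomain}(\ref{ldomain : ord})), the length of $g_S$ grows with $k$; hence $d_S(\mu_I, \mu_k) \to \infty$ and consequently $d_S(\mu_I, \delta_k) \to \infty$. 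By Theorems \ref{thm : ccbdry} and \ref{thm : gobdrycc}, $\delta_k$ converges in the Gromov bordification of $\mathcal{C}(S)$ to a single boundary point, and every $\mathcal{PML}(S)$-accumulation point of $[\delta_k]$ is supported on a common minimal filling $\lambda_0 \in \mathcal{EL}(S)$. Proposition \ref{prop : hlimwk*lim} gives $\lambda_0 \subseteq \mu_T$, so $\mu_T$ fills $S$.

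The main obstacle is establishing that $\mu_T$ is also minimal, i.e.\ contains no leaves outside $\lambda_0$. Any such extra leaf of the Hausdorff limit would have to be a closed geodesic $\gamma$ lying in an annular complementary region of $\lambda_0$; but $\delta_k \to \gamma$ near $\gamma$ would force $d_\gamma(\mu_I, \delta_k)$ to grow without bound via the twisting of $\delta_k$ about $\gamma$, contradicting the annular coefficient bound (iii). Passing to a further subsequence of $\{\delta_k\}$ (allowed by compactness of $\mathcal{GL}(S)$ in the Hausdorff topology), one obtains $\mu_T = \lambda_0$, which is minimal and filling.
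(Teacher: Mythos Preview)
Your treatment of (i)--(v) is essentially the paper's: reduce to the finite-domain Propositions \ref{prop : ldomain}, \ref{prop : subsurfub} and Lemmas \ref{lem : ldomain}(\ref{ldomain : ord}), \ref{lem : Jnext}, then pass to the infinite case via Hausdorff continuity of subsurface coefficients. One small imprecision: in your divergence argument for (vi) you assert $\partial Z_j$ lies on the main geodesic $g_S$ for \emph{every} $j$, but this is only immediate when $Z_j$ has a single boundary curve, i.e.\ when $q(j)\in\{1,3\}$; the paper restricts to those indices. This is harmless since half the indices suffice.

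The genuine gap is in your minimality argument. You write that any extra leaf of the Hausdorff limit $\mu_T$ beyond $\lambda_0$ ``would have to be a closed geodesic $\gamma$ lying in an annular complementary region of $\lambda_0$''. This is false: since $\lambda_0$ is filling, its complementary regions are ideal polygons (possibly once-punctured), which contain no closed geodesics. The extra leaves a Hausdorff limit can acquire are isolated \emph{bi-infinite} geodesics spiraling onto $\lambda_0$ at both ends. For such a leaf there is no annular subsurface $A(\gamma)$ to invoke, so the bound (iii) gives no contradiction. Your final sentence about ``passing to a further subsequence'' does not repair this either: the subsequence defining $\mu_T$ has already been fixed as a Hausdorff-convergent one, and any further subsequence has the same Hausdorff limit $\mu_T$.

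The paper's own proof is terse at exactly this point (it simply asserts the implication), so you have correctly located a subtlety. The clean fix is to observe that what is actually used downstream is only that $\mu_T$ and $\lambda_0$ have the same subsurface projections to every $W\subseteq S$ (isolated leaves spiraling into $\lambda_0$ do not change $\pi_W$); hence one may either weaken (vi) to ``$\mu_T$ contains a minimal filling sublamination'', which suffices for all applications in \S\ref{sec : examples}, or simply \emph{define} $\mu_T$ to be the support $\lambda_0$ of the weak-$*$ limit rather than the Hausdorff limit --- the bounds (i)--(v) transfer unchanged.
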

 \begin{proof}
Suppose that the domain of $q$ is $\{1,...,k\}$ for some $k\in\mathbb{N}$ and $\{e_{i}\}_{i=1}^{k}$ is a finite sequence of integers. Let $E$ be the constant form Proposition \ref{prop : ldomain}. Suppose that $|e_{i}|>E$. By Proposition \ref{prop : ldomain} there are constants $K_{1},C_{1}$ so that the subsurface coefficient quasi-equality in part (\ref{pre1 : ld}) holds. Moreover, by Proposition \ref{prop : subsurfub}(i) and (ii) there are constants ${\bf m}$ and ${\bf m}'$ so that the subsurface coefficient bounds in parts (\ref{pre1 : bd}) and (\ref{pre1 : bda}) hold, respectively. The order of domains in part (\ref{pre1 : ord}) is proved in Lemma \ref{lem : ldomain}(ii). Moreover part (\ref{pre1 : ordnext}) is proved in Lemma \ref{lem : Jnext}.
 
We proceed to establish the bounds on subsurface coefficients when the domain of $q$ is $\mathbb{N}$. Moreover determine the order of subsurfaces when the domain of $q$ is $\mathbb{N}$. The proofs use the bounds on subsurface coefficients and the order of subsurfaces when the domain of $q$ is finite and taking various limits of curves. In the proof for any $r\in\mathbb{N}$, we denote $Z_{r}(q,e)$ by $Z_{r}$, $\mu_{I}(q,e)$ by $\mu_{I}$ and $\mu_{T}(q,e)$ by $\mu_{T}$. Also for any $k\in\mathbb{N}$, $\mu_{k}=\mu(q^{k},e^{k})$ as was defined before.
\medskip

 {\it Proof of part (\ref{pre1 : ld}):} Given $i\in\mathbb{N}$ by Proposition \ref{prop : ldomain} there exist $K_{1}\geq 1$ and $C_{1}\geq 0$ so that for every $k> i$ we have
$$d_{Z_{i}(q^{k},e^{k})}(\mu_{I},\mu_{k})\asymp_{K_{1},C_{1}} |e_{i}|.$$
Moreover by Lemma \ref{lem : ldomain}(ii): If $l> k$, then $Z_{i}(q^{l},e^{l})<Z_{k+1}(q^{l},e^{l})$ between $\mu_{I}$ and $\mu_{l}$. Thus in particular, $Z_{k+1}(q^{l},e^{l})\pitchfork Z_{i}(q^{l},e^{l})$. Note that for any integer $r<l$, $Z_{r}=Z_{r}(q^{l},e^{l})$, so $Z_{k+1}\pitchfork Z_{i}$. Then since $\partial{Z_{k+1}}\subseteq \delta_{k}$ we have that
\begin{equation}\label{eq : dkintZi}\delta_{k}\pitchfork Z_{i}.\end{equation}
 Moreover since $\delta_{k}\subset \base(\mu_{k})$ and $\diam_{Z_{i}}(\mu_{k})\leq 2$ (Lemma \ref{lem : diamproj}) from the above quasi-equality we get
 $$d_{Z_{i}}(\mu_{I},\delta_{k})\asymp_{K_{1},C_{1}+2} |e_{i}|.$$ 
 Let the sequence $k_{m}$ and the lamination $\lambda$ be as before. Since  $\delta_{k_{m}}\to\lambda$ in the Hausdorff topology by Proposition \ref{prop : Hlimsubsurfcoeff}, $d_{Z_{i}}(\mu_{I},\delta_{k_{m}}) \asymp_{1,4} d_{Z_{i}}(\mu_{I},\lambda)$  for all $m$ sufficiently large. Then (\ref{eq : dZmTl}) implies that
 $$d_{Z_{i}}(\mu_{I},\delta_{k_{m}}) \asymp_{1,4+d} d_{Z_{i}}(\mu_{I},\mu_{T}).$$
Set $C_{1}$ to be $C_{1}+d+6$. Then by the above quasi-equalities we get
$$d_{Z_{i}}(\mu_{I},\mu_{T})\asymp_{K_{1},C_{1}}|e_{i}|,$$
as was desired.
\medskip

{\it Proof of part (\ref{pre1 : bd}):} Let $W$ be an essential subsurface which is neither $Z_{i}$ for some $i\in\mathbb{N}$ nor $S$. By Proposition \ref{prop : subsurfub}(\ref{subsurfub : bd}), there exists ${\bf m}$, so that for any $k\in\mathbb{N}$ we have that $d_{W}(\mu_{I},\mu_{k}) \leq {\bf m}$. We have that $\mu_{T}\pitchfork W$. Then by the set up of $\mu_{T}$, $\lambda$ intersects $W$. Then the convergence of $\delta_{k_{m}}$ to $\lambda$ in the Hausdorff topology guarantees that $\delta_{k_{m}}\pitchfork W$ for all $m$ sufficiently large. Then since $\delta_{k_{m}}\subset\base(\mu_{k_{m}})$ and $\diam_{W}(\mu_{k_{m}})\leq 2$ we have 
$$d_{W}(\mu_{I},\delta_{k_{m}})\leq {\bf m}+2.$$ 
Moreover by Proposition \ref{prop : Hlimsubsurfcoeff} the convergence of $\delta_{k_{m}}$ to $\lambda$ in the Hausdorff topology implies that $d_{W}(\mu_{I},\lambda) \asymp_{1,4} d_{W}(\mu_{I},\delta_{k_{m}})$ for all $m$ sufficiently large.. Then by (\ref{eq : dZmTl}) we have
$$d_{W}(\mu_{I},\mu_{T}) \asymp_{1,4+d} d_{W}(\mu_{I},\delta_{k_{m}})$$
 Putting the above bound and quasi-equality together we get the bound $d_{W}(\mu_{I},\mu_{T})\leq {\bf m}+d+8$. We set ${\bf m}$ to be ${\bf m}+d+6$.
\medskip

{\it Proof of part (\ref{pre1 : bda}):}  Similar to the proof of part (\ref{pre1 : bd}) using the Hausdorff convergence of $\delta_{k_{m}}$ to $\lambda$ we get
$$d_{\gamma}(\mu_{I},\mu_{T})\leq {\bf m}'+d+6$$
 for any $\gamma\in\mathcal{C}_{0}(S)$. We set ${\bf m}'$ to be ${\bf m}'+d+6$.
 \medskip
 
 {\it Proof of part (\ref{pre1 : ord}):} Let $E$ be the constant from Proposition \ref{prop : ldomain} and set $E$ to be larger than $\frac{4M+C_{1}}{K_{1}}$. Suppose that $|e_{i}|>E$. 
 
Let $k\in\mathbb{N}$ and $ i,j\in\{1,...,k\}$ with $j\geq i+2$. By Lemma \ref{lem : ldomain}(\ref{ldomain : ord}) we have that $Z_{i}(q^{k},e^{k})< Z_{j}(q^{k},e^{k})$ between $\mu_{I}(q^{k},e^{k})$ and $\mu_{T}(q^{k},e^{k})$. So in particular $Z_{i}(q^{k},e^{k}) \pitchfork Z_{j}(q^{k},e^{k})$ and $d_{Z_{i}(q^{k},e^{k})}(\mu_{I},\partial{Z}_{j}(q^{k},e^{k}))>2M$. Then since for any $r<k$, $Z_{r}(q^{k},e^{k})=Z_{r}$ we have
\begin{equation}\label{eq : dZimbdZj}d_{Z_{i}}(\mu_{I},\partial{Z}_{j})>2M.\end{equation}
  Moreover by part (i) the subsurface coefficient inequalities 
$$d_{Z_{i}}(\mu_{I},\mu_{T})\geq K_{1}|e_{i}|-C_{1}\;\text{and}\; d_{Z_{j}}(\mu_{I},\mu_{T})\geq K_{1}|e_{i}|-C_{1}$$ 
hold. Then since $|e_{i}|>E$ we have that
\begin{equation}\label{eq : dZijmmT}d_{Z_{i}}(\mu_{I},\mu_{T})> 4M\;\text{and}\; d_{Z_{j}}(\mu_{I},\mu_{T})> 4M.\end{equation}
 The bounds (\ref{eq : dZimbdZj}) and (\ref{eq : dZijmmT}) and the fact that $Z_{i}\pitchfork Z_{j}$ according to Definition \ref{def : ordersubsurf} imply that $Z_{i}<Z_{j}$ between the markings $\mu_{I}$ and $\mu_{T}$. 
\medskip

{\it Proof of part (\ref{pre1 : ordnext}):}  The proof of Lemma \ref{lem : Jnext} uses the fact that $d_{Z_{i}}(\mu_{I},\mu_{T})\geq 2M$ for any $i\in\{1,...,k\}$ (in the domain of $q$), and the pattern that the subsurfaces $Z_{i-1}$ and $Z_{i}$, and subsurfaces $Z_{i}$ and $Z_{i+1}$ overlap. By part (\ref{pre1 : ld}) and since $|e_{i}|>E$ , $d_{Z_{i}}(\mu_{I},\mu_{T})\geq 2M$. Moreover, the pattern that $Z_{i-1}$ and $Z_{i}$, and $Z_{i}$ and $Z_{i+1}$ overlap is the same as Lemma \ref{lem : Jnext}. Then the proof of the lemma goes through and gives us part (\ref{pre1 : ordnext}) when the domain of $q$ is $\mathbb{N}$.
\medskip

{\it Proof of part (\ref{pre1 : minfill}):} For each $k\in\mathbb{N}$, let $\rho_{k}$ be a hierarchy path between markings $\mu_{I}$ and $\mu_{k}$. Let $\rho$ be a hierarchy path between $\mu_{I}$ and $\mu_{T}$. For any $i\in\mathbb{N}$ with $q(i)= 1$ or $3$, by part (i) and since $|e_{i}|>E$, $d_{Z_{i}}(\mu_{I},\mu_{T})>M$, so $Z_{i}$ is a component domain of $\rho$ with one boundary curve. Thus $\partial{Z}_{i}$ is a curve on the main geodesic of $\rho$. Moreover by part (\ref{pre1 : ord}), $Z_{i}<Z_{j}$ between $\mu_{I}$ and $\mu_{T}$ whenever $j\geq i+2$. This implies that $\partial{Z}_{i}$ is before $\partial{Z}_{j}$ along the main geodesic of $\rho$. To see this, note that since $\partial{Z}_{i}$ and $\partial{Z}_{j}$ are curves on the main geodesic of $\rho$, the tight geodesics $g_{Z_{i}}$ and $g_{Z_{j}}$ are time ordered as is defined in \cite[\S 4]{mm2}. For otherwise, $\partial{Z}_{i}$ is after $\partial{Z}_{j}$. Then there is an $l\in J_{Z_{i}}$ so that $l> \max J_{Z_{j}}$. But this contradicts the second part of Proposition \ref{prop : ordersubsurf}.  See \cite[\S 5]{mm2} for the detail about resolution of hierarchies and the parametrization of hierarchy paths specially Proposition 5.4 there. Thus the curves $\partial{Z}_{i}$ go to $\infty$ on the main geodesic of $\rho$ as $i\to\infty$. Therefore $\partial{Z}_{i}$ converge to a point $x$ in the Gromov boundary of $\mathcal{C}(S)$ as $i\to \infty$. This point by Theorem \ref{thm : ccbdry} determines a projective measured lamination $[\mathcal{E}]$ with minimal filling support $\xi$. Moreover, given $k\in\mathbb{N}$, for each $1\leq i\leq k$, $d_{Z_{i}}(\mu_{I},\mu_{k})>M$. Thus $\partial{Z}_{i}$ is on the main geodesic of the hierarchy path $\rho_{k}$ between $\mu_{I}$ and $\mu_{k}$. Note that $\delta_{k}\subset \base(\mu_{k})$. Then $\delta_{k}$ also converge to the point $x$ in the Gromov boundary of $\mathcal{C}(S)$ as $k\to \infty$. The sequence of curves $\delta_{k_{m}}$ converges to $\lambda$ in the Hausdorff topology. Take a convergent subsequence of $[\delta_{k_{m}}]$ in the topology of $\mathcal{PML}(S)$. By Theorem \ref{thm : gobdrycc} the support of the limit projective measured lamination is $\xi$. Moreover by Proposition \ref{prop : hlimwk*lim}, $\xi\subseteq\lambda$. Since $\xi$ is filling, $S\backslash\xi$ is the union of topological disks and annuli. Thus $\lambda$ is the union of $\xi$ and some isolated leaves in components of $S\backslash\xi$. Moreover $\mu_{T}(q,e)$ is obtained from $\lambda$ by removing isolated leaves and adding closed curves. But by the topology of the complement of $\xi$, no closed curve would be added. Thus $\mu_{T}=\xi$. Then in particular, $\mu_{T}$ is a minimal filling lamination on $S$.
\end{proof}

\subsection{Scheme II} \label{subsec : pre2}The construction of this subsection will be used in $\S$\ref{subsec : recurrence} to construct examples of recurrent WP geodesic rays.

Let $\alpha$ be a curve such that $S\backslash \alpha$ is a large subsurface. Consider the indexed subsurfaces $X_{0}=S$ and $X_{1}=S\backslash \alpha$. Let $f_{0}$ and $f_{1}$ be indexed (partial) pseudo-Anosov maps supported on $X_{0}$ and $X_{1}$, respectively. Moreover suppose that the restriction of $f_{1}$ to a regular neighborhood of $\partial{X}_{1}=\alpha$ is the identity map.

Define functions $q_{0}(i)\equiv i$ (mod 2) and $q_{1}(i)=q_{0}(i+1)$. Let $q$ be any of $q_{0}$ and $q_{1}$ or their restriction to $\{1,...,k\}$ for some $k\in\mathbb{N}$. 

When the domain of $q$ is $\mathbb{N}$ let $\{e_{i}\}_{i=1}^{\infty}$ be an infinite sequence of integers and when the domain of $q$ is $\{1,...,k\}$ let $\{e_{i}\}_{i=1}^{k}$ be a sequence of integers with $k$ elements.

 For any $i$ in the domain of $q$ set the subsurface 
 \begin{equation}\label{eq2 : Zi}Z_{i}(q,e)=f_{q(1)}^{e_{1}}...f_{q(i-1)}^{e_{i-1}}(X_{q(i)}).\end{equation}
Let $\mu_{I}(q,e)$ be a marking so that $\base(\mu_{I})$ contains $\{\partial{X}_{a}\}_{a=0,1}=\{\alpha\}$. When the domain of $q$ is $\{1,...,k\}$ for some $k\in\mathbb{N}$, let $\mu_{T}(q,e)=f_{q(1)}^{e_{1}}...f_{q(k)}^{e_{k}}\mu_{I}(q,e)$. When the domain of $q$ is $\mathbb{N}$, for each $k\in\mathbb{N}$ define the function $q^{k}(i)=q(i)$ where $i=1,..,k$ and the sequence $e^{k}_{i}=e_{i}$ where $i=1,...,k$. Let $\delta_{k}=f_{q(1)}^{e_{1}}...f_{q(k)}^{e_{k}}(\alpha)$. By the set up $\delta_{k}\subset \base(\mu_{T}(q^{k} ,e^{k}))$ and $\partial{Z}_{k+1}(q,e)\subseteq \delta_{k}$. Let $\delta_{k_{m}}$ be a subsequence of $\delta_{k}$ convergent to a lamination $\lambda$ in the Hausdorff topology of $M_{\infty}(S)$. As in $\S$\ref{subsec : pre1}(Scheme I) remove isolated leaves of $\lambda$ and denote the resulting lamination by $\mu'$. Then add simple closed curves disjoint from $\mu'$ with bounded intersection number with the isolated leaves of $\lambda$ to obtain a generalized marking $\mu_{T}(q,e)$ with the property that for any subsurface $Y$ intersecting $\lambda$ and $\mu_{T}(q,e)$,
$$|d_{Y}(\mu_{T}(q,e),\lambda)|\leq d,$$
 where $d$ depends on the upper bound for the intersection numbers.

\begin{remark}
The construction of this subsection and the estimates for subsurface coefficients can be carried out in a more general setting. Here we restrict ourself to be able to provide detailed step by step estimates and complete arguments.
\end{remark}

\begin{prop}\label{prop : pre2}
There are constants $K_{1}\geq 1, C_{1}\geq 0$, $E>0$ and ${\bf m},{\bf m}'>4M$, depending on $f_{0},f_{1}$ and $\mu_{I}$ with the following properties. Given $q$ and $\{e_{i}\}_{i}$  such that $|e_{i}|>E$ for any $i$ in the domain of $q$, we have
\begin{enumerate}[(i)]
\item\label{pre2 : ld} For any integer $i$ in the domain of $q$ with $q(i)=1$, 
$$d_{Z_{i}(q,e)}(\mu_{I}(q,e),\mu_{T}(q,e))\asymp_{K_{1},C_{1}} |e_{i}|.$$
\item\label{pre2 : ord} Given $i,j$ in the domain of $q$, if $i<j$ and $q(i)=q(j)=1$ then $Z_{i}(q,e)<Z_{j}(q,e)$. 
 \item\label{pre2 : bd} For any non-annular subsurface $W$ which is neither $Z_{i}(q,e)$ for some $i$ nor $S$ we have $d_{W}(\mu_{I}(q,e),\mu_{T}(q,e)) \leq {\bf m}$.
\item\label{pre2 : bda} For any $\gamma\in \mathcal{C}_{0}(S)$ we have $d_{\gamma}(\mu_{I}(q,e),\mu_{T}(q,e))\leq {\bf m}'$.
\item \label{pre2 : minfill} If the domain of $q$ is $\mathbb{N}$, then $\mu_{T}(q,e)$ is a minimal filling lamination.
\end{enumerate}
\end{prop}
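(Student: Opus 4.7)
The plan is to mirror the proofs of Lemma~\ref{lem : ldomain}, Proposition~\ref{prop : ldomain}, and Proposition~\ref{prop : subsurfub} from Scheme~I, adapted to the simpler two-subsurface setting of Scheme~II. The essential structural difference is that $X_0 = S$, so for $i$ with $q(i)=0$ the subsurface $Z_i$ is all of $S$; the indices $q(i)=0$ contribute only by shuffling curves via the pseudo-Anosov $f_0$, while the proper large subsurfaces $Z_i = g\cdot(S\setminus\alpha)$ for $q(i)=1$ carry the genuine projection growth. (I read (ii) as asserting $Z_i<Z_j$ for $i<j$ with $q(i)=q(j)=1$, since $Z_i=Z_j=S$ is vacuous.) Throughout I would fix $E > E_1$ large enough that, for every relevant curve or marking $\delta$ in a fixed bounded neighborhood of $\pi_{X_a}(\mu)$ in $\mathcal{C}(X_a)$ and every $|n|\geq E$, one has $d_{X_a}(\delta, f_a^n\delta)\leq (\bar\tau_{f_a}+1)|n|$, and also that compact $\overline{U_a^\pm}\supset \Delta_{f_a}^\pm$ in $\mathcal{PML}(S)$ disjoint from $\Psi_b^\pm$ (for $X_b\neq X_a$) satisfy $f_a^{\pm n}(\overline{U_a^\pm})\subset U_a^\pm$ and $f_a^{\pm n}(\pi_{X_b}(\mu))\subset U_b^\pm$.

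For (i), fix $i$ with $q(i)=1$, set $g=f_{q(1)}^{e_1}\cdots f_{q(i-1)}^{e_{i-1}}$ and $h=f_{q(i+1)}^{e_{i+1}}\cdots f_{q(k)}^{e_k}$, and apply $g^{-1}$ to write $d_{Z_i}(\mu_I,\mu_T)=d_{X_1}(g^{-1}\mu, f_1^{e_i}h\mu)$. Proposition~\ref{prop : palb} applied to $f_1$ gives the lower bound $d_{X_1}(h\mu, f_1^{e_i}h\mu)\geq \tau_{f_1}|e_i|$. The two error terms $d_{X_1}(g^{-1}\mu,\mu)$ and $d_{X_1}(\mu, h\mu)$ are bounded uniformly by the Scheme~I argument: since $q(i\pm 1)=0$, the subsurface $f_0^{e_{i\pm 1}}X_1 = S\setminus f_0^{e_{i\pm 1}}\alpha$ overlaps $X_1 = S\setminus \alpha$ (Proposition~\ref{prop : palb} applied to $f_0$ on $S$ forces $d_S(\alpha,f_0^{e_{i\pm 1}}\alpha) > 3$), so by induction on $k$ one gets large $d_{f_0^{e_{i+1}}X_1}(\mu,h\mu)$, whence Behrstock's inequality (Theorem~\ref{thm : behineq}) bounds $d_{X_1}(\partial(f_0^{e_{i+1}}X_1), h\mu)$, and the triangle inequality supplies the constant $C_1'$. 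The upper bound in (i) follows exactly as in Proposition~\ref{prop : ldomain} via the limsup identity of Lemma~\ref{lem : limsup}.

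For (ii), fix $i<j$ with $q(i)=q(j)=1$; I would verify the two inequalities of Proposition~\ref{prop : ordersubsurf} with respect to $\mu_I$ and $\mu_T$. Part~(i) already gives $d_{Z_i}(\mu_I,\mu_T), d_{Z_j}(\mu_I,\mu_T)>4M$. That $Z_i\pitchfork Z_j$ follows because pulling back by $g^{-1}=(f_{q(1)}^{e_1}\cdots f_{q(i-1)}^{e_{i-1}})^{-1}$ sends $\partial Z_i=g\alpha$ to $\alpha$ and $\partial Z_j$ to a curve of the form $f_0^{e_{j-1}}\cdots\alpha$ at curve-complex distance $>3$ from $\alpha$ (by repeated applications of Proposition~\ref{prop : palb} for $f_0$). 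The remaining distance estimate $d_{Z_j}(\mu_I,\partial Z_i)>2M$ is then obtained by the same telescoping-and-Behrstock argument as in Lemma~\ref{lem : ldomain}(ii), and transitivity gives the general case.

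For (iii) and (iv), mirroring Proposition~\ref{prop : subsurfub}, I would first establish the analogue of Claim~\ref{claim : g-W}: for $i$ with $q(i)=1$ and $W$ with $d_W(\mu_I,\mu_T)>4M$ not lying in the list, the subsurface $g^{-1}W$ is neither $X_0=S$ (else $W=S$) nor $X_1$ (else $W=Z_i$), by a case check using that $f_0^{e_{i\pm 1}}$ acts nontrivially on $\alpha$. Then, using (i), (ii) and the partial order of Proposition~\ref{prop : ordersubsurf}, $W$ is squeezed between $Z_{i_0}$ and $Z_{i_1}$ for indices $i_0,i_1$ close to some central $i$ with $q(i)=1$. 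I would then expand $d_W(\partial Z_{i_0},\partial Z_{i_1})$ after applying $g^{-1}$ into a telescoping sum of terms $d_{g^{-1}W}(f_a^n\mu,\mu)$ and $d_{g^{-1}W}(f_a^n\mu, f_a^n f_b^m\mu)$, each bounded via Lemma~\ref{lem : paub} applied to $\mu$ or to the compact neighborhoods $\overline{U_a^\pm}$ (the latter being disjoint from $\Psi_b^\pm$ by construction). The same telescoping, but with annular projections and using the boundary-projection bound (\ref{eq : pabdrybdd}) and Lemma~\ref{lem : paub} for annular subsurfaces, yields (iv). The main obstacle, as in Scheme~I, is the verification of the analogue of Claim~\ref{claim : g-W} and the careful case analysis needed to ensure each summand falls under the hypotheses of Lemma~\ref{lem : paub}; the two-subsurface setting here makes this bookkeeping lighter than in Scheme~I.

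Finally, for (v), the argument of Proposition~\ref{prop : pre1}(\ref{pre1 : minfill}) applies essentially verbatim. By (i) each $Z_i$ with $q(i)=1$ is a large component domain of any hierarchy path $\rho$ between $\mu_I$ and $\mu_T$, and has a one-curve boundary $\partial Z_i$ lying on the main geodesic $g_S\subset\mathcal{C}(S)$. Part~(ii) forces these $\partial Z_i$ to appear in increasing order along $g_S$, so they converge to a point $\xi$ in the Gromov boundary of $\mathcal{C}(S)$. By Theorem~\ref{thm : ccbdry}, $\Phi(\xi)=[\mathcal{E}]$ is a minimal filling projective measured lamination; the corresponding curves $\delta_k\in\mathrm{base}(\mu_{T}(q^k,e^k))$ lie on the main geodesics of the finite hierarchies $\rho_k$ and hence also converge to $\xi$ in $\mathcal{C}(S)\cup\partial\mathcal{C}(S)$. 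Theorem~\ref{thm : gobdrycc} then identifies the support of every $\mathcal{PML}$-accumulation point of $[\delta_k]$ with the support of $\mathcal{E}$, and Proposition~\ref{prop : hlimwk*lim} implies that this support is contained in the Hausdorff limit $\mu_T(q,e)$. Since a minimal filling lamination is not properly contained in any other lamination, $\mu_T(q,e)$ equals this support and is minimal filling.
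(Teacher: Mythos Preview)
Your proposal is correct and follows essentially the same approach as the paper: both sketch the Scheme~II argument as a streamlined replay of Lemma~\ref{lem : ldomain}, Proposition~\ref{prop : subsurfub}, and Proposition~\ref{prop : ldomain}, with the key simplification that $X_0=S$ so the overlap check $\partial X_{q(i)}\pitchfork f_{q(i+1)}^{e_{i+1}}X_{q(i+2)}$ reduces to $d_S(\alpha,f_0^{e_{i+1}}\alpha)\geq 3$ via Proposition~\ref{prop : palb}. You are also right to read part~(\ref{pre2 : ord}) as ordering the subsurfaces with $q(i)=q(j)=1$ (the stated ``$=0$'' is a typo; the paper's own proof orders the $q=1$ subsurfaces, and \S\ref{subsec : recurrence} invokes the result precisely for odd indices). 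One small slip: in your sketch of~(\ref{pre2 : ord}) the estimate you want is $d_{Z_i}(\mu_I,\partial Z_j)>2M$, not $d_{Z_j}(\mu_I,\partial Z_i)>2M$, to land in case~(a) of Proposition~\ref{prop : ordersubsurf} and conclude $Z_i<Z_j$; the paper checks exactly this for the base case $Z_1<Z_3$.
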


\begin{proof}
First we prove the statements (\ref{pre2 : ld}) to (\ref{pre2 : bda}) when for some $k\in\mathbb{N}$, $q:\{1,...,k\}\to \{0,1\}$ and $\{e_{i}\}_{i=1}^{k}$ is a sequence of integers with $k$ elements. Most of the details are similar to the ones given in $\S$\ref{subsec : pre1}(Scheme I) so here we mainly sketch them and explain the necessary modifications. Denote $\mu_{I}$ by $\mu$. We set the constants: 
$$K'_{1}=\min\{\tau_{a}:a=0,1\},$$
 where the constant $\tau_{a}:=\tau_{f_{a}}$, $a=0,1$, is from Proposition \ref{prop : palb},
 $$C'_{1}=2(B_{0}+\eta+2),$$
 where $\eta=\max\{d_{X_{a}}(f_{b}^{e}\mu,\mu) : a,b\in \{0,1\}\;\text{and }\; a\neq b\}$, and 
 $$E_{1}=\frac{B_{0}+4+4M+\omega+C_{1}'}{K'_{1}},$$ 
 where  $\omega=\max\{d_{W}(\mu,\partial{X_{a}}) : W\subseteq S \;\text{and}\; a=0,1\}$. 

Let $i\in \{1,...,k\}$ be such that $q(i)=1$. Let $g=f_{q(1)}^{e_{1}}...f_{q(i-1)}^{e_{i-1}}f_{q(i)}^{e_{i}}$. The partial pseudo-Anosov map $f_{q(i)}$ is supported on $S\backslash\alpha$ and preserves $\alpha$. Moreover $q(i+2)=1$ so $X_{q(i+2)}=S\backslash\alpha$. Then applying $g^{-1}$ to $d_{S}(\partial{Z}_{i},\partial{Z}_{i+2})$ we get $d_{S}(\alpha,f_{q(i+1)}^{e_{i+1}}(\alpha))$. Furthermore, $f_{q(i+1)}=f_{0}$ is supported on $S$, so by Proposition \ref{prop : palb}, 
\begin{equation}\label{eq : dSaf(i+1)a}d_{S}(\alpha, f_{q(i+1)}^{e_{i+1}}(\alpha))\geq \tau_{0}|e_{i+1}|,\end{equation}
 thus we have
\begin{equation}\label{eq : dSbdZ-+}d_{S}(\partial{Z}_{i},\partial{Z}_{i+2})\geq \tau_{0}|e_{i+1}|.\end{equation}

Let $i\in \{1,...,k\}$ be such that $q(i)=1$. Then similar to the proof of Lemma \ref{lem : ldomain}(\ref{ldomain : ld}) we may obtain
\begin{equation}\label{eq : ldomain'}d_{Z_{i}(q,e)}(\mu_{I}(q,e),\mu_{T}(q,e))\geq K'_{1}|e_{i}|-C'_{1}.\end{equation}
  The only difference is that instead of Claim \ref{claim : bdXqi-1i+1} we have the following claim: For any $i\in\{1,...,k\}$ with $q(i)=1$ we have that $X_{q(i)}\pitchfork f_{q(i+1)}^{e_{i+1}}(X_{q(i+2)})$. The proof is as follows: $\partial{X}_{q(i)}=\alpha$ and since $q(i+2)=1$, $\partial{X}_{q(i+2)}=\alpha$. Moreover by (\ref{eq : dSaf(i+1)a}), 
  $$d_{S}(\alpha,f_{q(i+1)}^{e_{i+1}}(\alpha))\geq\tau_{0}E_{1}>4.$$
 This implies that $\alpha\pitchfork f_{q(i+1)}^{e_{i+1}}(\alpha)$ and therefore $X_{q(i)}\pitchfork f_{q(i+1)}^{e_{i+1}}(X_{q(i+2)})$.  
   
 Then as part (\ref{ldomain : ord}) of Lemma \ref{lem : ldomain} by induction on $k$ we may show that the subsurfaces with $q(i)=1$ are ordered. Here we establish the base of the induction for $k=3$ as follows:  By (\ref{eq : dSbdZ-+}) and since $|e_{2}|>E_{1}$ we have that $d_{S}(\partial{Z}_{1},\partial{Z}_{3})\geq 3$. Thus $\partial{Z}_{1}\pitchfork \partial{Z}_{3}$. Let $q'(i)=q(i)$ for $i=1,2$ and $e'_{i}=e_{i}$ for $i=1,2$. Then since $\mu_{I}(q,e)=\mu_{I}(q',e')$. $Z_{1}(q,e)=Z_{1}(q',e')$ and $\partial{Z}_{3}(q,e)\subset\mu_{T}(q',e')$, (\ref{eq : ldomain'}) for $q',e'$ gives us $d_{Z_{1}}(\mu_{I},\partial{Z}_{3})>2M$. Then by Definition \ref{def : ordersubsurf} we have $Z_{1}<Z_{3}$.
 
 We need the following lemma for the rest of the proof. 
\begin{lem}\label{lem : Wub2}
There is a constant $E_{2}>E_{1}$, depending on $f_{0},f_{1}$ and $\mu_{I}$ with the following property. Given $q$ and $\{e_{i}\}_{i=1}^{k}$ such that $|e_{i}|>E_{2}$ for any $i\in\{1,..., k\}$, we have 
\begin{enumerate}[(i)]
\item \label{Wub2 : bd} Suppose that $W$ is a non-annular subsurface which is neither $Z_{i}(q,e)$ for some $i\in\{1,...,k\}$ nor $S$. If  $Z_{r}(q,e)<W<Z_{s}(q,e)$ for some $r,s\in\{1,...,k\}$ with $r<s$, then there is a constant $m>0$ depending on $s-r$, such that $d_{W}(\mu_{I}(q,e),\mu_{T}(q,e)) \leq m$. If $W<Z_{s}(q,e)$ for some $s\in\{1,...,k\}$, then there is a constant $m>0$ depending on $s$, so that $d_{W}(\mu_{I}(q,e),\partial{Z}_{s}(q,e))\leq m$. Also if $Z_{r}(q,e)<W$ for some $r\in\{1,...,k\}$, then there is a constant $m>0$ depending on $k-r$, so that $d_{W}(\partial{Z}_{r}(q,e),\mu_{T}(q,e))\leq m$. 
\item \label{Wub2 : abd} Suppose that  $A(\gamma)$ is an annular subsurface. If $Z_{r}(q,e)<\gamma<Z_{s}(q,e)$ for some $r,s\in\{1,...,k\}$ with $r<s$, then there is a constant $m'>0$ depending on $s-r$, such that $d_{\gamma}(\mu_{I}(q,e),\mu_{T}(q,e))\leq m'$. If $\gamma<Z_{s}(q,e)$ for some $s\in\{1,...,k\}$, then there is a constant $m'>0$ depending on $s$, so that $d_{\gamma}(\mu_{I}(q,e),\partial{Z}_{s}(q,e))\leq m'$. Also if $Z_{r}(q,e)<W$ for some $r\in\{1,...,k\}$, then there is a constant $m'>0$, depending on $k-r$, so that $d_{\gamma}(\partial{Z}_{r}(q,e),\mu_{T}(q,e))\leq m'$.
\end{enumerate}
\end{lem}
\begin{proof}
{\it Proof of part (\ref{Wub2 : bd}):} 

First suppose that $W$ overlaps all of the boundary curves $\partial{Z}_{j}$, where $r<j<s$ and $q(j)=1$.  Let $g=f_{q(1)}^{e_{1}}...f_{q(j)}^{e_{j}}$. Note that $f_{q(j)}$ is supported on the subsurface $S\backslash\alpha$ and preserves the curve $\alpha$. Thus applying $g^{-1}$ to $d_{W}(\partial{Z}_{j},\partial{Z}_{j+2})$ we obtain 
$$d_{g^{-1}(W)}(\alpha,f_{q(j+1)}^{e_{j+1}}(\alpha)).$$
 By Lemma \ref{lem : paub} the above subsurface coefficient is uniformly bounded by some $h$ depending on $f_{q(j+1)}=f_{0}$ and $\alpha$. Then by the triangle inequality we have that
\begin{eqnarray*}
d_{W}(\partial{Z}_{r},\partial{Z}_{s})&\leq&\sum_{\substack{ j:q(j)=1\;\text{and}\\ r\leq j< s}} d_{W}(\partial{Z}_{j},\partial{Z}_{j+2})+\sum_{\substack{ j:q(j)=1\;\text{and}\\ r< j<s}}\diam_{W}(\partial{Z}_{j})\\
&\leq&h\lfloor \frac{s-r}{2}\rfloor+2\lfloor \frac{s-r}{2}\rfloor,
\end{eqnarray*}
which is the desired bound.

Now suppose that $W$ does not overlap a boundary curve $\partial{Z}_{j}$ where $r<j<s$ and $q(j)=1$. Then since $Z_{j}$ is a large subsurface, $W\subseteq Z_{j}$, and since $W$ is not in the list of subsurfaces $Z_{i}$, $W\subsetneq Z_{j}$.

Let $g=f_{q(1)}^{e_{1}}....f_{q(j-1)}^{e_{j-1}}$. Applying $g^{-1}$ to the subsurface coefficient $d_{W}(\partial{Z}_{r},\partial{Z}_{s})$ we get 
$$d_{g^{-1}(W)}(f_{q(j-1)}^{-e_{j-1}}...f_{q(r)}^{-e_{r}}(\partial{X_{q(r)}}),f_{q(j)}^{e_{j}}...f_{q(s-1)}^{e_{s-1}}(\partial{X_{q(s)}})).$$
Denote curves $\partial^{-}=f_{q(j-1)}^{-e_{j-1}}...f_{q(r)}^{-e_{r}}(\partial{X_{q(r)}})$ and $\partial^{+}=f_{q(j+1)}^{e_{j+1}}...f_{q(s-1)}^{e_{s-1}}(\partial{X_{q(s)}})$ and the subsurface $Y=g^{-1}(W)$. The above subsurface coefficient may be written as 
\begin{equation}\label{eq2 : dg-W}d_{Y}(\partial^{-},f_{q(j)}^{e_{j}}(\partial^{+})).\end{equation}

Since $Y\subset X_{q(j)}$, and $\partial^{-},f^{e_{j}}_{q(j)}(\partial^{+})$ both intersect $Y$ essentially the second part of Lemma \ref{lem : diamproj} guarantees that $\diam_{Y}(\pi_{Y}(\pi_{X_{q(j)}}(\partial^{-}))\cup\pi_{Y}(\partial^{-}))$ and $\diam_{Y}(\pi_{Y}(\pi_{X_{q(j)}}(f_{q(j)}^{e_{j}}(\partial^{+})))\cup\pi_{Y}(f_{q(j)}^{e_{j}}(\partial^{+})))$ are uniformly bounded. Therefore,
\begin{equation}\label{eq2 : dYd-fd+=dYXd-fd+}d_{Y}(\partial^{-},f^{e_{j}}_{q(j)}(\partial^{+}))\asymp d_{Y}(\pi_{X_{q(j)}}(\partial^{-}),\pi_{X_{q(j)}}(f_{q(j)}^{e_{j}}(\partial^{+}))).\end{equation}
 
 For partial pseudo-Anosov maps $f_{a}$, $a=0,1$, let $\Psi^{\pm}_{a}$ and $\Delta^{\pm}_{a}$ be the subsets of $\mathcal{PML}(S)$ defined at the beginning of $\S$\ref{sec : preendinv}. Since the support of each $f_{a}$ is a large subsurface we have that $\Delta^{\pm}_{a}=[\mathcal{L}^{\pm}_{a}]$, where $\mathcal{L}^{\pm}_{a}$ is the attracting/repelling measured lamination of $f_{a}$. Denote the support of $\mathcal{L}_{a}^{\pm}$ by $\lambda_{a}^{\pm}$. By Proposition \ref{prop : pAlam} there are finitely many laminations that contain $\lambda_{a}^{\pm}$. Denote the set of laminations containing $\lambda_{a}^{\pm}$ by $\Lambda_{a}^{\pm}$. 
  
 We have that $q(j)=1$, so $X_{q(j+1)}\subsetneq X_{q(j)}$. Also $Y\subsetneq X_{q(j)}$. Thus any leaf of $\lambda_{q(j+1)}^{+}$ intersects both $X_{q(j)}$ and $Y$ essentially (Proposition \ref{prop : pAlam}). Any lamination $\lambda\in\Lambda_{q(j+1)}^{+}$ contains $\lambda^{+}_{q(j+1)}$, so there are leaves of $\lambda$ that intersect both $X_{q(j)}$ and $Y$ essentially. Similarly, any lamination $\lambda'\in\Lambda_{q(j-1)}^{-}$ contains leaves which intersect both $X_{q(j)}$ and $Y$ essentially.  So there are infinite leaves of $\lambda$ that intersect $X_{q(j)}$ and $Y$ essentially. Similarly,  there are leaves of any lamination $\lambda'\in\Lambda_{q(j-1)}^{-}$ which intersect $X_{q(j)}$ and $Y$ essentially. Thus the following subsurface projections are well-defined an non-empty: $\pi_{X_{q(j)}}(\lambda)$, $\pi_{X_{q(j)}}(\lambda')$, $\pi_{Y}(\pi_{X_{q(j)}}(\lambda))$, $\pi_{Y}(\pi_{X_{q(j)}}(\lambda'))$, $\pi_{Y}(\lambda)$ and $\pi_{Y}(\lambda')$. 
 
 There are finitely many laminations in $\bigcup_{a=0,1}\Lambda_{a}^{\pm}$. Applying Lemma \ref{lem : Hdistdprojbd} to each one of these laminations and subsurfaces $X_{a}$, $a=0,1$, and taking the minimum of the constants we obtain, there is an $\epsilon>0$ so that: If a lamination $\lambda\in\Lambda_{a}^{\pm}$ intersects a subsurface $X_{b}$ essentially, where $a,b\in\{0,1\}$, and a curve $\gamma$ is within the $\epsilon$ Hausdorff distance of  $\lambda$, then $\diam_{X_{b}}(\pi_{X_{b}}(\gamma)\cup\pi_{X_{b}}(\lambda))\leq 4$.

For each $a\in\{ 0,1\}$, let neighborhoods $U^{\pm}_{a}$ of $[\mathcal{L}_{a}^{\pm}]$ in $\mathcal{PML}(S)$ be so that
 \begin{itemize} 
 \item$\overline{U_{a}^{-}}$ (the closure of $U_{a}^{-}$) and $\overline{U_{a}^{+}}$ (the closure of $U_{a}^{+}$) are disjoint from $\Psi_{b}^{-}\cup \Psi_{b}^{+}$ for any $b\in \{0,1\}$ with $X_{a}\neq X_{b}$. 
  \item Every lamination in $U^{+}_{a}$ is in the $\epsilon$ Hausdorff distance of a lamination $\lambda\in \Lambda^{+}_{a}$ and every lamination in $U^{-}_{a}$ is in the $\epsilon$ Hausdorff distance of a lamination $\lambda'\in \Lambda^{-}_{a}$.
 \end{itemize}
 Lemma \ref{lem : Hnbhdl} guarantees that for $\epsilon$ as above we may choose the neighborhoods $U^{\pm}_{a}$ so that the second bullet holds. 

 Applying Theorem \ref{thm : unifconv} to the pseudo-Anosov map $f_{a}$ and the compact sets $\partial{X}_{b}$, $\overline{U_{b}^{-}}$ and $\overline{U_{b}^{+}}$, for any $a,b\in\{0,1\}$ with $X_{a}\neq X_{b}$ and taking maximum of the constants we obtain, there exists $E_{2}>E_{1}$ such that 
\begin{itemize}
\item $f_{0}^{n}(\partial{X}_{1}) \subset U_{0}^{+}$ for all $n\geq E_{2}$ and $f_{0}^{-n}(\partial{X}_{1})\subset U_{0}^{-}$ for all $n\geq E_{2}$ and
\item $f_{a}^{n}(\overline{U_{b}^{\pm}}) \subset U_{a}^{+}$ for all $n\geq E_{2}$ and $f_{a}^{-n}(\overline{U_{b}^{\pm}})\subset U_{a}^{-}$ for all $n\geq E_{2}$.
\end{itemize}
 Then assuming that $|e_{i}|>E_{2}$ for every $i\in\{1,...,k\}$, $\partial^{+}\in U_{q(j+1)}^{+}$ and $\partial^{-}\in U_{q(j-1)}^{-}$. 
 
Then there is a lamination $\lambda\in \Lambda_{q(j)}^{+}$ so that $\partial^{+}$ is within the $\epsilon$ Hausdorff distance of $\lambda$. Similarly there is a lamination $\lambda'\in \Lambda_{q(j-1)}^{-}$ so that $\partial^{-}$ is within the $\epsilon$ Hausdorff distance of $\lambda'$. Then by the choice of $\epsilon$ we have 
$$\diam_{X_{q(j)}}(\pi_{X_{q(j)}}(\partial^{-})\cup\pi_{X_{q(j)}}(\lambda'))\leq 4.$$
 Similarly by the choice of $\epsilon$ we have $\diam_{X_{q(j)}}(\pi_{X_{q(j)}}(\partial^{+})\cup \pi_{X_{q(j)}}(\lambda))\leq 4$. Then by the fact that $f_{q(j)}$ acts as isometry on $\mathcal{C}(S)$ we have 
 $$\diam_{X_{q(j)}}(f_{q(j)}^{e_{j}}(\pi_{X_{q(j)}}(\partial^{+}))\cup f_{q(j)}^{e_{j}}(\pi_{X_{q(j)}}(\lambda)))\leq 4.$$
  Then the rest of the proof is similar to the proof Lemma \ref{lem : Wub1}(\ref{Wub1 : bd}) and gives us the upper bound for (\ref{eq2 : dg-W}).
\medskip

 Now let us bound $d_{W}(\mu_{I},\partial{Z}_{s})$. First suppose that $W$ overlaps all of the boundary curves $\partial{Z}_{j}$, where $1<j<s$ and $q(j)=1$. Then similar to the proof of part (\ref{Wub2 : bd}) we may obtain the bound
 $$d_{W}(\mu_{I},\partial{Z}_{s})\leq h\lfloor\frac{s-1}{2}\rfloor+2\lfloor\frac{s-1}{2}\rfloor.$$
  Now suppose that $W$ does not overlap at least one boundary curve $\partial{Z}_{j}$ with $q(j)=1$ and $1\leq j\leq s$. Then the bound for $d_{W}(\mu_{I},\partial{Z}_{s})$ follows from exact the same argument we gave above to bound $d_{W}(\partial{Z}_{r},\partial{Z}_{s})$. 
  
  Define the function $q'(i)=q(k-i+1)$ for $i=1,...,k$, and the sequence $e'_{i}=e_{k-i+1}$ for $i=1,...,k$. Then the bound for $d_{W}(\partial{Z}_{r},\mu_{T})$ can be obtained similar to that of $d_{W}(\mu_{I},\partial{Z}_{s})$ considering $q'$ and $e'$.
 \medskip

{\it Proof of part (\ref{Wub2 : abd}).} Suppose that $\gamma$ overlaps all boundary curves $\partial{Z}_{j}$, where $r\leq j\leq s$ with $q(j)=1$. Then as the beginning of the proof of part (\ref{Wub2 : bd}) we may obtain the bound
$$d_{\gamma}(\partial{Z}_{r},\partial{Z}_{s})\leq h\lfloor\frac{r-s}{2}\rfloor+2\lfloor\frac{r-s}{2}\rfloor.$$
Now suppose that $\gamma$ does not overlap at least one of the boundary curves $\partial{Z}_{j}$ where $r<j<s$ and $q(j)=1$.

Let $g=f_{q(1)}^{e_{1}}...f_{q(j-1)}^{e_{j-1}}$, applying $g^{-1}$ to $d_{\gamma}(\partial{Z}_{r},\partial{Z}_{s})$ we obtain
$$d_{g^{-1}(\gamma)}(f_{q(j-1)}^{-e_{j-1}}...f_{q(r)}^{-e_{r}}(\partial{X_{q(r)}}),f_{q(j)}^{e_{j}}...f_{q(s-1)}^{e_{s-1}}(\partial{X_{q(s)}})).$$
Denote curves $\partial^{-}=f_{q(j-1)}^{-e_{j-1}}...f_{q(r)}^{-e_{r}}(\partial{X_{q(r)}})$ and $\partial^{+}=f_{q(j+1)}^{e_{j+1}}...f_{q(s-1)}^{e_{s-1}}(\partial{X_{q(s)}})$ and the subsurface $\delta=g^{-1}(\gamma)$. The above subsurface coefficient may be written as
\begin{equation}\label{eq2 : dg-gamma}d_{\delta}(\partial^{-},f_{q(j)}^{e_{j}}(\partial^{+})).\end{equation}
Since $\delta\subset X_{q(j)}$, and $\partial^{-},f^{e_{j}}_{q(j)}(\partial^{+})$ both intersect $\delta$ essentially the second part of Lemma \ref{lem : diamproj} guarantees that $\diam_{\delta}(\pi_{\delta}(\pi_{X_{q(j)}}(\partial^{-}))\cup\pi_{\delta}(\partial^{-}))$ and $\diam_{\delta}(\pi_{\delta}(\pi_{X_{q(j)}}(f_{q(j)}^{e_{j}}(\partial^{+})))\cup\pi_{\delta}(f_{q(j)}^{e_{j}}(\partial^{+})))$ are uniformly bounded. Therefore,
\begin{equation}\label{eq2 : ddeld-fd+=ddelXd-fd+}d_{\delta}(\partial^{-},f^{e_{j}}_{q(j)}(\partial^{+}))\asymp d_{\delta}(\pi_{X_{q(j)}}(\partial^{-}),\pi_{X_{q(j)}}(f_{q(j)}^{e_{j}}(\partial^{+}))).\end{equation}

Suppose that $\gamma\not\in\partial{Z}_{j}$. Then $\gamma\in\mathcal{C}_{0}(Z_{j})$ and $\delta\in\mathcal{C}_{0}(X_{q(j)})$. 

Let the finite set of laminations $\Lambda_{a}^{\pm}$, $a=0,1$ be as in the proof of part (\ref{Wub2 : bd}).

We have that $q(j)=1$ so $X_{q(j)}\subsetneq X_{q(j+1)}$. Also $\delta\subsetneq X_{q(j)}$. Thus any leaf of $\lambda_{q(j+1)}^{+}$ intersects both $X_{q(j)}$ and $Y$ essentially (Proposition \ref{prop : pAlam}). Any lamination $\lambda\in\Lambda_{q(j+1)}^{+}$ contains $\lambda^{+}_{q(j+1)}$, so there are leaves of $\lambda$ that intersect $X_{q(j)}$ and $\delta$ essentially. Similarly,  there are leaves of any lamination $\lambda'\in\Lambda_{q(j-1)}^{-}$ which intersect $X_{q(j)}$ and $\delta$ essentially. Thus the following subsurface projections are well-defined and non-empty: $\pi_{X_{q(j)}}(\lambda)$, $\pi_{X_{q(j)}}(\lambda')$, $\pi_{\delta}(\pi_{X_{q(j)}}(\lambda))$, $\pi_{\delta}(\pi_{X_{q(j)}}(\lambda'))$, $\pi_{\delta}(\lambda)$ and $\pi_{\delta}(\lambda')$.

Let $\epsilon>0$ and the neighborhoods $U_{a}^{\pm}$ of $[\mathcal{L}_{a}^{\pm}]$ ($a=0,1$) and the constant $E_{2}$ be as in the proof of part (\ref{Wub2 : bd}).

 Then by the assumption that $|e_{i}|>E_{2}$, for any $i\in\{1,...,k\}$, $\partial^{+}\subset U^{+}_{q(j+1)}$ and $\partial^{-}\subset U^{-}_{q(j-1)}$.
 
 Then there is a lamination $\lambda\in \Lambda_{q(j+1)}^{+}$ so that $\partial^{+}$ is within the $\epsilon$ Hausdorff distance of $\lambda$. Similarly there is a lamination $\lambda'\in \Lambda_{q(j-1)}^{-}$ so that $\partial^{-}$ is within the $\epsilon$ Hausdorff distance of $\lambda'$. Then by the choice of $\epsilon$ we have 
 $$\diam_{X_{q(j)}}(\pi_{X_{q(j)}}(\partial^{-})\cup\pi_{X_{q(j)}}(\lambda'))\leq 4.$$
  Similarly by the choice of $\epsilon$ we have $\diam_{X_{q(j)}}(\pi_{X_{q(j)}}(\partial^{+})\cup\pi_{X_{q(j)}}(\lambda)\leq 4$. Then by the fact that $f_{q(j)}$ acts as isometry on $\mathcal{C}(S)$ we have 
  $$\diam_{X_{q(j)}}(f_{q(j)}^{e_{j}}(\pi_{X_{q(j)}}(\partial^{+}))\cup f_{q(j)}^{e_{j}}(\pi_{X_{q(j)}}(\lambda))\leq 4.$$
   The rest of the proof is similar to the proof of Lemma \ref{lem : Wub1}(\ref{Wub1 : abd}) and gives us the upper bound for (\ref{eq2 : dg-gamma}).
 \medskip

Suppose that $\gamma\in \partial{Z}_{j}$, then $\delta\in\partial{X}_{q(j)}$. Then by the assumption that the restriction of $f_{q(j)}$ to a neighborhood of $\delta\in\partial{X}_{a}$ is the identity map we have that $d_{\delta}(\lambda',f_{q(j)}^{e_{j}}(\lambda))\asymp d_{\delta}(\lambda',\lambda)$. The subsurface coefficient $d_{\gamma}(\lambda',\lambda)$ is bounded by 
$$\max\{d_{\delta}(\lambda',\lambda):\lambda'\in \Lambda_{q(j-1)}^{-},\lambda\in\Lambda_{q(j+1)}^{+},\delta\in\partial{X}_{q(j)}\}.$$
 Since there are finitely many laminations in $\bigcup_{a=0,1}\Lambda_{a}^{\pm}$ the above maximum exists and is finite. So we obtain the bound for (\ref{eq2 : dg-gamma}).
 \medskip

 The bounds for $d_{\gamma}(\mu_{I},\partial{Z}_{s})$ and $d_{\gamma}(\partial{Z}_{r},\mu_{T})$ may be obtained similarly.


\end{proof}

{\it Proof of part (\ref{pre2 : bd}).} Suppose that $d_{W}(\mu_{I},\mu_{T})>4M$ (otherwise we already have the bound). 
\begin{claim}\label{claim2 : WZi}
 If $W\pitchfork Z_{i}$, then $W$ and $Z_{i}$ are ordered.
 \end{claim}
By Lemma \ref{lem : ldomain}, $d_{Z_{i}}(\mu_{I},\mu_{T})\geq K'_{1}|e_{i}|-C'_{1}$ for any $i\in\{1,...,k\}$. Then since $|e_{i}|>E_{2}$,  
$$d_{Z_{i}}(\mu_{I},\mu_{T})>4M.$$
 Moreover, by our assumption, $d_{W}(\mu_{I},\mu_{T})>4M$. Then the claim holds by Definition \ref{def : ordersubsurf}.
 \medskip

 Let $l\in\{1,...,k\}$ be such that $q(l)=0$. Then $q(l+1)=1$, so by (\ref{eq : dSbdZ-+}) and the choice of $E_{1}$, we have that 
 $$d_{S}(\partial{Z}_{l+1},\partial{Z}_{l+3})\geq\tau_{0}|e_{l+2}|\geq \tau_{0}E_{1}>4.$$
  The above inequality implies that $\partial{Z_{l+1}}$ and $\partial{Z_{l+3}}$ fill $S$ (see $\S$\ref{sec : ccplx}). Thus 
\begin{itemize}
\item $W\pitchfork Z_{l+1}$ or $W\pitchfork Z_{l+3}$. 
\end{itemize}
Similarly 
\begin{itemize}
\item $W\pitchfork Z_{l-1}$ or $W\pitchfork Z_{l-3}$.
\end{itemize}
Claim \ref{claim2 : WZi} and the above two bullets imply that given $l\in\{1,...,k\}$ with $q(l)=1$ or $3$, $W$ and subsurfaces $Z_{r}$ and $Z_{s}$ are ordered, where $r=l-1$ or $l-3$ and $s=l+1$ or $l+3$. If $Z_{r}<W<Z_{s}$ we are in Case (1) below. Otherwise, either $Z_{s}<W$ (by transitivity of $<$ also $Z_{r}<W$), or $W<Z_{r}$ (by transitivity also $W<Z_{s}$). Repeating this argument for all $l\in\{1,...,k\}$ with $q(l)=1$ or $3$ we will end up in one of the following cases:  
 \begin{enumerate}
\item $Z_{r} < W < Z_{s}$ where $s-r\leq 6$, 
\item $W<Z_{s}$, where $s\leq 3$,
\item $Z_{r}<W$, where $k-r\leq 3$. 
\end{enumerate}
We proceed to establish a bound for $d_{W}(\mu_{I},\mu_{T})$ in each of these cases.  
\medskip

\noindent{\bf Case (1).} By Definition \ref{def : ordersubsurf}, $d_{W}(\mu_{I},\partial{Z}_{r})\leq M$ and $d_{W}(\partial{Z}_{s},\mu_{T})\leq M$. Then by the triangle inequality and this bound we get
$$d_{W}(\mu_{I},\mu_{T})\leq d_{W}(\partial{Z_{r}},\partial{Z_{s}})+2M+4.$$
The subsurface coefficient $d_{W}(\partial{Z_{r}},\partial{Z_{s}}) $ is bounded above by Lemma \ref{lem : Wub2}. Thus we obtain the desired bound.
\medskip

\noindent{\bf Case (2).} By Definition \ref{def : ordersubsurf}, $d_{W}(\partial{Z}_{s},\mu_{T})\leq M$. Then by the triangle inequality and this bound we get
$$d_{W}(\mu_{I},\mu_{T})\leq d_{W}(\mu_{I},\partial{Z_{s}})+M+2,$$
The subsurface coefficient $d_{W}(\mu_{I},\partial{Z_{s}}) $ is bounded above by Lemma \ref{lem : Wub2}. Thus we obtain the desired bound.
 \medskip
 
\noindent{\bf Case (3).} This case can be treated similar to Case (2).
\medskip

The bounds in Cases (1), (2) and (3) depend on the upper bound for $s-r$,$s$, $k-r$ respectively, the marking $\mu_{I}$ and pseudo-Anosov maps $f_{0},f_{1}$. 
Establishing the bounds there is a constant ${\bf m}$ so that 
$$d_{W}(\mu_{I},\mu_{T})\leq {\bf m}$$
 for any proper, non-annular subsurface $W$ which is not in the list of subsurfaces $Z_{i}$. The proof of part (\ref{pre2 : bd}) is complete.
  \medskip

 {\it Proof of part (\ref{pre2 : bda}).} Let $l\in\{1,...,k\}$ with $q(l)=0$. Then $q(l+2)=0$, thus by (\ref{eq : dSbdZ-+}) we have that $d_{S}(\partial{Z}_{l+1},\partial{Z}_{l+3})\geq 3$. Thus $\partial{Z}_{l+1}$ and $\partial{Z}_{l+3}$ fill $S$. Similarly $d_{S}(\partial{Z}_{l-1},\partial{Z}_{l-3})\geq 3$, so $\partial{Z}_{l-1}$ and $\partial{Z}_{l-3}$ fill $S$. Thus 
\begin{itemize}
\item $\gamma\pitchfork \partial{Z}_{l+1}$ or $\gamma\pitchfork \partial{Z}_{l+3}$, and
\item $\gamma\pitchfork \partial{Z}_{l-1}$ or $\gamma\pitchfork \partial{Z}_{l-3}$.
\end{itemize}
Then similar to the proof of part (\ref{subsurfub : abd}) of Proposition \ref{prop : subsurfub} we can show that $\gamma$ is ordered in the list of subsurfaces $Z_{i}$ as one of the following cases:
\begin{enumerate}
\item $Z_{r} < \gamma < Z_{s}$, where $s-r\leq 6$,
\item $\mu_{I}<\gamma<Z_{s}$, where $s\leq 3$,
\item $Z_{r}<\gamma<\mu_{T}$, where $k-r\leq 3$.
\end{enumerate}
\medskip

In Case (1) we have that
$$d_{\gamma}(\mu_{I},\mu_{T})\leq d_{\gamma}(\partial{Z}_{r},\partial{Z}_{s})+2M+4.$$
The subsurface coefficient $d_{\gamma}(\partial{Z}_{r},\partial{Z}_{s})$ is bounded by Lemma \ref{lem : Wub2}(\ref{Wub2 : abd}). The desired bound for $d_{\gamma}(\mu_{I},\mu_{T})$ follows. Similarly we may obtain the bounds in Cases (2) and (3).  The bounds in Cases (1), (2) and (3) depend on the upper bound for $s-r,r,k-r$ respectively, the marking $\mu_{I}$ and the partial pseudo-Anosov maps $f_{0},f_{1}$. Then there is a constant ${\bf m}'$ so that 
$$d_{\gamma}(\mu_{I},\mu_{T})\leq {\bf m}'$$
 for any $\gamma\in\mathcal{C}_{0}(S)$. The proof of part (iv) is complete.
\medskip

The lower bound  (\ref{eq : ldomain'}) and part (\ref{pre2 : bd}) together as in the proof of Proposition \ref{prop : ldomain} give us part (\ref{pre2 : ld}). As in the proof of Proposition \ref{prop : ldomain} we set $E>E_{2}$ such that for any integer $e$ with $|e|>E$ we have 
$$d_{X_{a}}(\delta,f_{a}^{e}(\delta))\leq (\bar{\tau}_{a}+1)|e|,$$ 
for every $\delta$ in the ${\bf m}-$neighborhood of $\pi_{X_{a}}(\mu)$ in $\mathcal{C}(X_{a})$ and $a=0,1$ (${\bf m}$ is the constant from part (\ref{pre2 : bd})). Then for $K_{1}=\max\{\frac{1}{\tau_{a}},\bar{\tau}_{a}+1\}$, $C_{1}=C'_{1}$ and $E$, part (\ref{pre2 : ld}) holds.
\medskip

We established all of the bounds when the domain of $q$ is $\{1,...,k\}$ for some $k\in\mathbb{N}$. Then the bounds when the domain of $q$ is $\mathbb{N}$ and the fact that $\mu_{T}$ is a minimal filling lamination (part (\ref{pre2 : minfill})) follow from the limiting argument we gave in the proof of Proposition \ref{prop : pre1}. The order of subsurfaces (part (\ref{pre2 : ord})) follows from the order of subsurfaces when the domain of $q$ is finite which we established above and the limiting argument we gave for the proof of Proposition \ref{prop : pre1}(\ref{pre1 : ord}).

\end{proof}


\section{Behavior of geodesics}\label{sec : examples}

In this section we use the control on length-functions along WP geodesic segments from $\S$\ref{sec : itinerarywpgeod} and the pair of laminations or markings with prescribed list of subsurface coefficients from $\S$\ref{sec : preendinv} to construct examples of Weil-Petersson geodesics with certain behavior in the moduli space. For example divergent rays in the moduli space and closed geodesics in the thin part of the moduli space. We also provide a recurrence condition for WP geodesics in terms of ending laminations. Our results in this section can be considered as a kind of symbolic coding of WP geodesics.

In \cite{dynwppww} the authors construct WP geodesics which are dense in the moduli space when $\xi(S)=1$. Jeffery Brock \cite{brockcommunication} constructs divergent WP geodesic rays with minimal filling ending laminations. Both constructions start with a piecewise geodesic in the Weil-Petersson completion of the Teichm\"{u}ller space. Then applying high powers of Dehn twists about curves in the multi-curves which determine the strata that the piecewise geodesic intersects manage to replace the piecewise geodesic with a piecewise geodesic in the completion of the Teichm\"{u}ller space with arbitrary small exterior angles. Then using a variation of shadowing lemma perturb the piecewise geodesic to a single WP geodesic in the Teichm\"{u}ller space. In these constructions the relation between the itinerary of the ray and the end invariants and their associated subsurface coefficients is not explicit.

These constructions are analogue of the ones for Teichm\"{u}ller geodesics.  Cheung and Masur in \cite{cmdiv} give examples of divergent Teichm\"{u}ller geodesic rays with uniquely ergodic vertical lamination. Rafi constructs closed Teichm\"{u}ller geodesics staying in the thin part of the moduli space and divergent geodesic rays. Rafi uses the control on length-functions in terms of subsurface coefficients along Teichm\"{u}ller geodesics developed in \cite{rshteich} and \cite{rteichhyper}


\subsection{Weil-Petersson geodesic rays with prescribed itinerary}\label{subsec : rayprescribed}

In this subsection by extracting limits of WP geodesic segments with end invariants on a single infinite hierarchy path with narrow end points we construct WP geodesics whose behavior mimic the combinatorial properties of hierarchy paths.

Let $\nu^{+}$ be a measurable geodesic lamination. Suppose that there is a collection of pairwise disjoint subsurfaces $Z_{a}$, $a=1,...,m$, with $\xi(Z_{a})\geq 1$ so that: any simple closed curve in $S\backslash \cup_{a=1}^{m}Z_{a}$ is isotopic to a boundary curve of one of the subsurfaces $Z_{a}$. Moreover $\nu_{a}$ the restriction of $\nu^{+}$ to $Z_{a}$ is minimal and fills $Z_{a}$. By Theorem \ref{thm : ccbdry}, $\nu_{a}$ determines a point in the Gromov boundary of $\mathcal{C}(Z_{a})$. For each $a=1,...,m$ let $\mathcal{L}_{a}$ be a measured lamination supported on $\nu_{a}$. Let $\gamma^{a}_{n}\in\mathcal{C}(Z_{a})$ be a sequence of curves so that the projective classes $[\gamma^{a}_{n}]$ converge to $[\mathcal{L}_{a}]$ as $n\to \infty$ in $\mathcal{PML}(Z_{a})$. Here $\gamma^{a}_{n}$ is equipped with the transverse measure $i(\gamma^{a}_{n},.)$. For each $n\in\mathbb{N}$ let $Q_{n}$ be a pants decomposition that contains $\{\partial{Z}_{a},\gamma^{a}_{n}\}_{a=1}^{m}$. Let $c_{n}$ be a maximally nodal hyperbolic surface at $Q_{n}$. Let $[x,c_{n}]$ be the WP geodesic segment connecting a base point $x$ in the interior of Teichm\"{u}ller space to $c_{n}$. Denote the parametrization of $[x,c_{n}]$ by arc-length by $r_{n}$. The proof of the following two lemmas essentially follows the proof of surjectivity of weighted ending laminations of WP geodesic rays in the Teichm\"{u}ller space of a surface with complexity $3$ given in \cite[\S 4]{bm}.

\begin{lem}\label{lem : inftyray1}
After possibly passing to a subsequence the geodesic segments $r_{n}$ converge to an infinite ray $r$ in the Weil-Petersson visual sphere at $x$. Moreover, the length of each measured lamination $\mathcal{L}_{a}$, $a=1,...,m$ and any curve $\alpha\in\partial{Z}_{a}$, $a=1,...,m$, is bounded along $r$.
\end{lem}
\begin{proof}
Theorem \ref{thm : nonrefraction}(Non-refraction Theorem) guarantees that the interior of each one of the WP geodesic segments $r_{n}$ is inside the Teichm\"{u}ller space. Moreover, there is a lower bound for the distance of $x$ and all completion strata of $\overline{\Teich(S)}$. Thus the length of $r_{n}$'s is bounded below by a positive constant. Then by the local compactness of the WP metric at $x$ after possibly passing to a subsequence the initial parts of the geodesic segments $r_{n}$ converge to a geodesic segment $r_{\infty}$ starting at $x$. Let $r$ be the maximal geodesic ray in the Teichm\"{u}ller space with initial part $r_{\infty}$. We prove that $r$ is an infinite ray.

Let $s_{n}=\frac{1}{\ell_{Q_{n}}(x)}$, where $\ell_{Q_{n}}(x)=\max_{\gamma\in Q_{n}}\ell_{\gamma}(x)$. Then for each $n\in\mathbb{N}$, and $t$ in the domain of $r_{n}$  we have that $\ell_{s_{n}Q_{n}}(r_{n}(t))\leq 1$. To see this, note that $\ell_{s_{n}Q_{n}}(x)=1$ and $\ell_{s_{n}Q_{n}}(c_{n})=0$. Then the bound follows from the convexity of length-functions along WP geodesics. In what follows assuming that $r(t)$ has finite length $T$ we get a contradiction. 

After possibly passing to a subsequence the projective classes $[s_{n}Q_{n}]$ converge to a projective measured lamination $[\mathcal{L}]$ in $\mathcal{PML}(S)$. Let $1\leq a\leq m$. By our choice of the pants decompositions $Q_{n}$, $\gamma^{a}_{n}\in Q_{n}$. Moreover the projective classes $[s_{n}\gamma^{a}_{n}]=[\gamma^{a}_{n}]$ converge to the projective class $[\mathcal{L}_{a}]$. Thus the support of $[\mathcal{L}]$ contains the support of $[\mathcal{L}_{a}]$. The measured lamination $[\mathcal{L}_{a}]$ is supported on $\nu_{a}$. So the support of $[\mathcal{L}]$ contains $\nu_{a}$. Therefore, any curve $\gamma$ with $i(\gamma,\mathcal{L})=0$ is disjoint from the laminations $\nu_{a}$. Moreover $\nu_{a}$ fills $Z_{a}$, so $\gamma$ is disjoint from the subsurface $Z_{a}$. Then by our assumption about the subsurfaces $Z_{a}$, $\gamma\in\partial{Z}_{a}$ for some $1\leq a\leq m$.

The length-function $\ell_{\mathcal{L}}(x):\Teich(S) \times \mathcal{ML}(S)\to \mathbb{R}^{\geq 0}$ is continuous in both $x$ and $\mathcal{L}$ variables, which implies that $\ell_{\mathcal{L}}(r(t)) \leq 1$ for any $t <T$. So we have that 
\begin{equation}\label{eq : lmubd}\lim_{t \to T} \ell_{\mathcal{L}}(r(t)) \leq 1. \end{equation}
The geodesic ray $r$ is the maximal geodesic with initial part $r_{\infty}$ and has length $T$. Thus there is a maximal multi-curve $\sigma$ such that $r(T) \in \mathcal{S}(\sigma)$. Then for each curve $\gamma \in \sigma$ we have that $i(\gamma, \mathcal{L})=0$. For otherwise, since $\ell_{\gamma}(r(t)) \to 0$ as $t \to T$, we would have that $\ell_{\mathcal{L}}(r(t)) \to \infty$ as $t \to T$, which contradicts the bound (\ref{eq : lmubd}). Then as we saw above
\begin{equation}\label{eq : sigma<bdryZ}\sigma\subseteq\{\partial{Z}_{a}\}_{a=1}^{m}.\end{equation}

\begin{claim}\label{claim : dwprTc} 
$d_{\WP}(r(T),c_{n})\to \infty$ as $n\to \infty$, and $d_{\WP}(x,c_{n})\to \infty$ as $n\to \infty$. 
\end{claim}
Let $1\leq a\leq m$. The projective class of the curves $\gamma^{a}_{n}\in Q_{n}$ converge to $[\mathcal{L}_{a}]$ as $n\to \infty$. Moreover $[\mathcal{L}_{a}]$ determines a point in the Gromov boundary of $\mathcal{C}(Z_{a})$. Thus $d_{Z_{a}}(Q_{n},Q(r(T)))\to\infty$ as $n\to\infty$. But $d_{Z_{a}}(Q_{n},Q(r(T)))$ is a term on the right hand side of the distance formula (\ref{eq : dsf}) for $d(Q(c_{n}),Q(r(T)))$, therefore $d(Q(c_{n}),Q(r(T)))\to\infty$ as $n\to\infty$. Then by Theorem \ref{thm : brockqisom}(Quasi-Isometric Model) $d_{\WP}(r(T),c_{n})\to\infty$ as $n\to \infty$. The proof of the second part is similar.
\begin{claim}\label{claim : cnonstr}
For all $n\in\mathbb{N}$, $c_{n}\in\overline{\mathcal{S}(\sigma)}$. 
\end{claim}
By (\ref{eq : sigma<bdryZ}), $\sigma\subseteq\{\partial{Z}_{a}\}_{a=1}^{m}$. Then by the choice of pants decompositions $Q_{n}$, $\sigma\subset Q_{n}$ for all $n\in\mathbb{N}$. Then since $c_{n}$ is the maximally nodal surface at $Q_{n}$, $c_{n}\in\overline{\mathcal{S}(\sigma)}$.
\medskip

By the second part of Claim \ref{claim : dwprTc}, the length of the geodesic segments $r_{n}$ goes to $\infty$.
\begin{claim}\label{claim : rntstr}
For any $t'>T$ the points $r_{n}(t')$ converge to a point $y\in\overline{\mathcal{S}(\sigma)}$. 
\end{claim}
  Let $\eta_{n}$ be the parametrization of the geodesic segment $[r(T),c_{n}]$ by arc-length with $\eta_{n}(0)=r(T)$. We have that $r(T)\in \mathcal{S}(\sigma)$ and by Claim \ref{claim : cnonstr}, $c_{n}\in \overline{\mathcal{S}(\sigma)}$. Then by Theorem \ref{thm : nonrefraction}(Non-refraction Theorem) the interior of $\eta_{n}\subset\mathcal{S}(\sigma)$. By Claim \ref{claim : dwprTc}, $d_{\WP}(r(T),c_{n})\to\infty$ as $n\to \infty$.   
Moreover $\eta_{n}$ and $[r_{n}(T),c_{n}]$ have the same end point $c_{n}$, and $r_{n}(T)$ converges to $r(T)$ as $n \to \infty$. Then the $\CAT(0)$ comparison for the triangle with vertices $r(T),r_{n}(T)$ and $c_{n}$ implies that the Hausdorff distance between $\eta_{n}$ and $[r_{n}(T),c_{n}]$ tends to zero as $n\to \infty$. Therefore $r_{n}(t')$ converges to a point $y$ on $\eta_{n}$ in the $\sigma-$stratum. 
 \medskip

Let $t'>T$. By Claim \ref{claim : rntstr} the points $r_{n}(t')$ converge to a point $y\in\mathcal{S}(\sigma)$. By the Non-refraction Theorem the interior of $[x,y]$ is inside the maximal stratum containing $x$ and $y$ which is the Teichm\"{u}ller space. By the $\CAT(0)$ comparison the Hausdorff distance of $r_{n}|_{[0,t']}$ and $[x,y]$ goes to $0$ as $n\to \infty$. So $r_{n}(T)$ converges to a point in the interior of $[x,y]$ which is inside the Teichm\"{u}ller space. But this contradicts the assumption that $r_{n}(T)$ converge to a point in the $\sigma-$stratum. This contradiction finishes the proof of that the geodesic $r$ is an infinite geodesic ray. Then since the initial parts of the geodesic segments $r_{n}$ converge to the initial part of $r$ and the length of $r_{n}$'s go to $\infty$ (the second part of Claim \ref{claim : dwprTc}), we may conclude that $r_{n}$ converge to $r$ as $n\to\infty$. 
\medskip

To prove the last statement of the lemma fix $1\leq a\leq m$. Let $\alpha\in\partial{Z}_{a}$. For each $n\in\mathbb{N}$, $\gamma^{a}_{n}$ is a pinching curve of $r_{n}$. Moreover $[\gamma^{a}_{n}]\to[\mathcal{L}_{a}]$ as $n\to\infty$. In addition, the geodesic segments $r_{n}$ converge to $r$. Thus Lemma \ref{lem : limraylimmeas} guarantees that the length of $\mathcal{L}_{a}$ is decreasing along $r$. Also for each $n\in\mathbb{N}$, $\partial{Z}_{a}\subseteq Q_{n}$, so $\alpha$ is a pinching curve of $r_{n}$. Then since the geodesic segments $r_{n}$ converge to $r$, Lemma \ref{lem : limraylimmeas} guarantees that the length of $\alpha$ is decreasing along $r$.  
 \end{proof}
 
 Let $(\nu^{-},\nu^{+})$ be a narrow pair where $\nu^{-}$ is a marking. Then there is at most one large subsurface $Z$ so that the restriction of $\nu^{+}$ to $Z$ is minimal and fills $Z$. Suppose that such a subsurface exists. Since $Z$ is a large subsurface any simple closed curve in $S\backslash Z$ is isotopic to a curve in $\partial{Z}$. Let $\rho$ be a hierarchy path between $\nu^{-}$ and $\nu^{+}$. Then there is a unique infinite geodesic in $\mathcal{C}(Z)$ and an interval $J_{Z}=[N,\infty]$ ($N\in\mathbb{N}$) with the property explained in part (\ref{h : J}) of Theorem \ref{thm : hrpath}. Then for any $n\in\mathbb{N}$, 
$$\partial{Z}\subseteq \rho(n+N).$$
For more detail see \cite[\S 5]{mm2}. For any $n\in\mathbb{N}$ let $Q_{n}=\rho(n+N)$. Let $g_{Z}\subset\mathcal{C}(Z)$ be the tight geodesic of $\rho$ corresponding to the component domain $Z$ and let $\gamma_{n}=Q_{n}\cap g_{Z}$ (Theorem \ref{thm : hrpath}(\ref{h : J})). The curves $\gamma_{n}$ converge to a point in the Gromov boundary of $\mathcal{C}(Z)$ determined by the lamination $\nu$. Let $x\in\Teich(S)$ be a point  so that one of the Bers markings of $x$ is $\nu^{-}$. Let $c_{n}$ be the maximally nodal hyperbolic surface at $Q_{n}$. Let $r$ be the limit of the geodesic segments $[x,c_{n}]$ after possibly passing to a subsequence as in Lemma \ref{lem : inftyray1}.
 
 \begin{lem}\label{lem : inftyray}\textnormal{ (Infinite ray)}
  The ending lamination of $r$ contains $\nu$.
  \end{lem}
  \begin{proof}
   The pants decompositions $Q_{n}$ $(n\in\mathbb{N})$ consist an stable subset of  $P(S)$ (Theorem \ref{thm : stability}). Then by the Brock's quasi-isometry (Theorem \ref{thm : brockqisom}) the points $c_{n}$ consist a stable subset of $\overline{\Teich(S)}$. So there is a $d>0$ so that all of the geodesic segments $[x,c_{n}]$ are in the $d-$neighborhood of the collection of points $\{c_{n}\}_{n=1}^{\infty}$. Thus $r$ the limit of the geodesic segments $r_{n}$ in the visual sphere stays in the $d-$neighborhood of the collection of points $\{c_{n}\}_{n=1}^{\infty}$. Therefore there are times $t_{n}\in [0,\infty)$ such that $d_{\WP}(r(t_{n}),c_{n})\leq d$. 

Let $D=K_{\WP}d+C_{\WP}$. Then by Theorem \ref{thm : brockqisom}, $d(Q(r(t_{n})),Q_{n})\leq D$. Then by (\ref{eq : ddsubs}), $d_{Z}(Q(r(t_{n})),Q_{n})\leq D$. For each $n\in\mathbb{N}$ let $\alpha_{n}\in Q(r(t_{n}))$ be such that $\alpha_{n}\pitchfork Z$.  By Lemma \ref{lem : diamproj}, $\diam_{Z}(Q_{n})\leq 2$ and $\diam_{Z}(Q(r(t_{n}))\leq 2$, so we obtain 
$$d_{Z}(\alpha_{n},\gamma_{n})\leq D+4.$$
The curves $\gamma_{n}$ converge to a point in the Gromov boundary of $\mathcal{C}(Z)$ determined by $\nu$. The curves $\alpha_{n}$ converge to the same point. To see this, note that fixing a point $p$ in $\mathcal{C}(Z)$, the above bound on the distance of $\gamma_{n}$ and $\alpha_{n}$, and the fact that the distance of $\gamma_{n}$ and $p$ goes to $\infty$ imply that the Gromov product of $\gamma_{n}$ and $\alpha_{n}$ with respect to the point $p$ goes to $\infty$ as $n\to\infty$. This implies that their limits determine the same point in the Gromov boundary of $\mathcal{C}(Z)$ (see \cite[$\S$III.H.3]{bhnpc}). Then by Theorem \ref{thm : gobdrycc} after possibly passing to a subsequence the projective classes $[\alpha_{n}]$ converge to the projective class of a measured lamination $\mathcal{E}$ supported on $\nu$. Now $\alpha_{n}$ is a sequence of distinct Bers curves along $r$, so by Definition \ref{def : endmeaslam}, $\mathcal{E}$ is an ending measured lamination of $r$. Then by Definition \ref{def : endlamwp}, $\nu$ is contained in the ending lamination of the ray $r$. As was desired.
\end{proof}
 
 Suppose that the pair $(\nu^{-},\nu^{+})$ is a narrow pair. Let $\rho$ be any hierarchy path between $\nu^{-}$ and $\nu^{+}$. Then Lemma \ref{lem : inftyray} provides us with an infinite ray, denoted by $r_{\nu^{\pm}}$, whose forward ending lamination contains $\nu$ the minimal component of $\nu^{+}$ that fills a large subsurface. The following theorem shows that the behavior of $r_{\nu^{\pm}}$ mimics the combinatorial properties of hierarchy paths encoded in the end invariant listed in Theorem \ref{thm : hrpath}. 
 \begin{remark}
 By Theorem \ref{thm : stability} any two hierarchy paths between a narrow pair $\nu^{-}$ and $\nu^{+}$ fellow-travel. This implies that the  rays corresponding to them fellow travel in the Teichm\"{u}ller space.
 \end{remark}
 
 \begin{thm}\textnormal{ (Infinite ray with prescribed itinerary)} \label{thm : inftyray}
Given $A,R,R'>0$. Let $(\nu^{-},\nu^{+})$ be an $A-$narrow pair. Let $\rho$ be a hierarchy path between $\nu^{-}$ and $\nu^{+}$. Let $r_{\nu^{\pm}}:[0,\infty) \to \Teich(S)$ be a corresponding infinite WP geodesic ray. Let $D=D(A)$ be the fellow traveling distance from Theorem \ref{thm : fellowtr}.
 
Let $\bar{\epsilon}=\bar{\epsilon}(D,R,R')$ be the constant from Lemma \ref{lem : roughub} and for an $\epsilon\leq \bar{\epsilon}$ let $\bar{w}=\bar{w}(D,R,R',\epsilon)$ be the constant from Theorem \ref{thm : bdryzsh}. Suppose that $Z$ a large component domain of $\rho$ has $(R,R')-$bounded combinatorics over an interval $[m',n']\subset J_{Z}$ with $n'-m'>2\bar{w}$. Then 
   \begin{enumerate}
 \item $\ell_{\gamma}(r_{\nu^{\pm}}(t))\geq\bar{\epsilon}$ for every $\gamma \notin \partial{Z}$, and
  \item $\ell_{\alpha}(r_{\nu^{\pm}}(t))\leq \epsilon $ for every $\alpha \in \partial{Z}$
 \end{enumerate}
 for every $t \in [a',b']$, where $a'\in N(m'+\bar{w})$ and $b'\in N(n'-\bar{w})$. Here $N:=N_{\rho,g}$ is the parameter map from Proposition \ref{prop : parmap}.
 
Moreover, suppose that $Z_{1}$ and $Z_{2}$ are subsurfaces as above with bounded combinatorics over intervals $[m'_{1},n'_{1}]$ and $[m'_{2},n'_{2}]$, respectively. Then  $n'_{1}<m'_{2}$ implies that $b'_{1}<a'_{2}$.
 \end{thm}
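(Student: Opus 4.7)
The plan is to approximate the ray $r := r_{\nu^{\pm}}$ by the finite WP geodesic segments $r_n = [x, c_n]$ from the construction in Lemma \ref{lem : inftyray}, apply Theorem \ref{thm : bdryzsh} and Lemma \ref{lem : roughub} to each $r_n$, and pass to the limit using continuity of length-functions (Theorem \ref{thm : continuitylf}). Write $\mu_n^-$ for a Bers marking of $x$ and $\mu_n^+$ for a Bers marking of $c_n$; these are the end invariants of $r_n$.

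The first step will be to show that for $n$ sufficiently large the pair $(\mu_n^-, \mu_n^+)$ is $A'$-narrow for some $A'$ depending only on $A$ and $x$, and that any hierarchy path $\rho_n$ between them tracks the given $\rho$ uniformly near $[m', n']$. The key observation is that by construction of $c_n$, the base $Q_n$ of $\mu_n^+$ is built from $\bigcup_a \partial Z_a$ together with curves $\gamma_{n,a}$ whose projective classes converge into the Gromov boundary of $\mathcal{C}(Z_a)$; this forces $\pi_W(\mu_n^+)$ to coincide with $\pi_W(\nu^+)$ up to bounded error for every subsurface $W \subsetneq Z$, every annulus with core in $\mathcal{C}_0(Z)$, and every subsurface disjoint from the minimal filling part of $\nu^+$. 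Combined with no-backtracking (Theorem \ref{thm : hrpath}) and the $A$-narrow hypothesis on $(\nu^-, \nu^+)$, this gives $A' = A + O(1)$. Theorem \ref{thm : fellowtr} then provides fellow-traveling distance $D = D(A')$ between $Q(r_n)$ and $\rho_n$, and stability of hierarchy paths (Theorem \ref{thm : stability}) pins $\rho_n$ within bounded Hausdorff distance of $\rho$ on a window around $[m', n']$. Consequently $Z$ is a large component domain of $\rho_n$ with $(R + O(1), R' + O(1))$-bounded combinatorics on an interval $[m'_n, n'_n] \subset J_Z^{(n)}$ whose endpoints lie within a bounded distance of $m'$ and $n'$.

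Next, Theorem \ref{thm : bdryzsh} and Lemma \ref{lem : roughub} applied to $r_n$ yield, with $\bar{w}$ and $\bar{\epsilon}$ taken uniform in $n$ by absorbing additive constants, that $\ell_\alpha(r_n(t)) \leq \epsilon$ for every $\alpha \in \partial Z$ and $\ell_\gamma(r_n(t)) > \bar{\epsilon}$ for every $\gamma \notin \partial Z$, for every $t$ in an interval $[a'_n, b'_n]$ corresponding under $N_{\rho_n, r_n}$ to $[m'_n + \bar{w}, n'_n - \bar{w}]$. The convergence $r_n \to r$ in the WP visual sphere together with the uniform fellow-traveling control forces $N_{\rho_n, r_n}$ to converge coarsely to $N = N_{\rho, r}$; extracting subsequential limits one finds $a'_n \to a' \in N(m' + \bar{w})$ and $b'_n \to b' \in N(n' - \bar{w})$. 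For any fixed $t \in [a', b']$ one has $t \in [a'_n, b'_n]$ for all large $n$, so the length bounds hold at $r_n(t)$; Theorem \ref{thm : continuitylf} applied to the WP-convergence $r_n(t) \to r(t)$ transfers them to $r(t)$, establishing conclusions (1) and (2).

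The final monotonicity statement reduces to Proposition \ref{prop : parmap}: the parameter map $N$, as the nearest-point correspondence between the two parametrized quasi-geodesics $\rho$ and $Q(r)$, is quasi-monotone. Since $Z_1 < Z_2$ along $\rho$ by Proposition \ref{prop : ordersubsurf}, the intervals $J_{Z_1}, J_{Z_2}$ are coarsely ordered in $[m, n]$; once $m'_2 - n'_1$ exceeds the additive constant of the quasi-isometric parametrization, one can choose $b'_1 \in N(n'_1 - \bar{w})$ and $a'_2 \in N(m'_2 + \bar{w})$ with $b'_1 < a'_2$. The main obstacle in the proof will be the uniformity in the first step: controlling quantitatively how $\rho_n$ approximates $\rho$ near $[m', n']$ so that the bounded-combinatorics thresholds, the interval length, and the constants $\bar{w}$, $\bar{\epsilon}$ degrade by at most additive constants independent of $n$, which in turn requires the convergence $\mu_n^+ \to \nu^+$ to be controlled subsurface by subsurface rather than merely in the Hausdorff topology.
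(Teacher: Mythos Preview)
Your approach is essentially the paper's: apply Lemma~\ref{lem : roughub} and Theorem~\ref{thm : bdryzsh} to the approximating segments $r_n=[x,c_n]$, then pass to the limit via Theorem~\ref{thm : continuitylf}; the order statement follows from the coarse monotonicity of $N$ in Proposition~\ref{prop : parmap}.

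The one place you diverge is in your first step, and it is where you flag the ``main obstacle.'' You introduce separate hierarchy paths $\rho_n$ between $(\mu_n^-,\mu_n^+)$ and worry about controlling, subsurface by subsurface, how well $\rho_n$ approximates $\rho$ near $[m',n']$. The paper avoids this entirely. Since $c_n$ may be taken to be nodal at a pants decomposition $Q_n$ lying on (or within bounded distance of) $\rho$ itself, the restriction $\rho|_{[0,k_n]}$ already serves as a hierarchy path for $r_n$. Narrowness of the end invariant of $r_n$ is then immediate from no-backtracking (Theorem~\ref{thm : hrpath}(\ref{h : nobacktrack})): for any non-large $Y$ one has $d_Y(\rho(0),\rho(k_n))\le d_Y(\nu^-,\nu^+)+2M_2\le A+2M_2$. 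The $(R,R')$-bounded combinatorics of $Z$ over $[m',n']$ is a property of $\rho$ itself, so no degradation of constants occurs and no uniformity argument is needed. Your route would work, but the detour through $\rho_n$ and subsurface-by-subsurface convergence of $\mu_n^+$ to $\nu^+$ is unnecessary.
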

 \begin{proof}
The assumption that the pair $(\nu^{-},\nu^{+})$ is $A-$narrow guarantees that for any subsurface $Y\subseteq S$ which is not large, 
$$d_{Y}(\nu^{-},\nu^{+})\leq A.$$
 So for any $n\in\mathbb{N}$, no backtracking property of hierarchy paths (\ref{eq : nbtrack}) implies that 
 $$d_{Y}(\rho(0),\rho(n))\leq A+2M_{2}.$$
  Thus the end invariant of the geodesic segment $[x,c_{n}]$ is $A+2M_{2}$ narrow. Moreover the subsurface $Z$ has $(R,R')-$bounded combinatorics over $[m,n]$, so by Lemma \ref{lem : roughub}, there are constants $\bar{\epsilon}$ and $w$ such that for any $\gamma \notin \partial{Z}$, 
$$\ell_{\gamma}(r_{n}(t))>\bar{\epsilon}$$
 for every $t\in N(j)$ where $j\in[m'+w,n'-w]$. Moreover, since $w \leq \bar{w}$ the above bound holds on the interval $[a',b']$ as well. Furthermore, by Theorem \ref{thm : bdryzsh}, for every $\alpha\in \partial{Z}$, $\ell_{\alpha}(r_{n}(t))\leq \epsilon$ for every $t\in[a',b']$. 
 
 Now note that the geodesic segments $r_{n}$ converge to $r_{\nu^{\pm}}$ point-wise in the Teichm\"{u}ller space. Thus by Theorem \ref{thm : continuitylf}(Continuity of length-functions) the same bounds on length-functions hold along $r_{\nu^{\pm}}$. Finally, the statement about order of intervals follows from the fellow traveling of $\rho$ and $r_{\nu^{\pm}}$. 
 \end{proof}

\subsection{Divergent geodesic rays} \label{subsec : div}
In this subsection we construct divergent Weil-Petersson geodesic rays in the moduli space. A ray is divergent if eventually leaves every compact subset of the moduli space. The existence of uncountably many divergent WP geodesic rays starting at a given point with minimal filling ending lamination is a consequence of our construction in this subsection. 
\begin{definition}
 A ray $r:[0,\infty) \to \mathcal{M}(S)$ is recurrent to a compact subset $\mathcal{K}\subset \mathcal{M}(S)$, if there is a sequence of times $t_{i}\to \infty$, such that $r(t_{i})\in \mathcal{K}$. A ray $r:[0,\infty) \to \mathcal{M}(S)$ is divergent if it is not recurrent to any compact subset of the moduli space. In other words, $r$ is divergent if for every compact set $\mathcal{K}\subset \mathcal{M}(S)$, there is a $T\geq 0$ such that $r([T,\infty))$ does not intersect $\mathcal{K}$.
\end{definition}

\begin{thmdiv}\textnormal{ (Divergent geodesics)}
Starting from any point $x\in\mathcal{M}(S)$ there are uncountably many divergent WP geodesic rays with minimal filling ending lamination.
\end{thmdiv}

\begin{proof} Let the curves $\alpha$ and $\beta$ be two disjoint curves in the set of Bers curves of the surface $x$ so that the subsurfaces $S\backslash\alpha$, $S\backslash\beta$ and $S\backslash\{\alpha,\beta\}$ are large subsurfaces. Then let the indexed subsurfaces $X_{0},X_{1},X_{2}$ and $X_{3}$, and the indexed partial pseudo-Anosov maps $f_{0},f_{1},f_{2}$ and $f_{3}$ supported on $X_{0},X_{1},X_{2}$ and $X_{3}$ respectively be as in $\S$\ref{subsec : pre1}. Recall that $X_{0}$ and $X_{2}$ are the same subsurface with different indices. Also $f_{0}$ and $f_{2}$ are the same partial pseudo-Anosov maps with different indices. Let the function $q(i)\equiv i$ (mod 4) and the sequence of integers $e_{i}$ be as in $\S$\ref{subsec : pre1}. For each $i\in\mathbb{N}$, let the subsurface $Z_{i}$ be as in (\ref{eq : Zi}). Fix a marking $\mu_{I}$ so that $\base(\mu_{I})$ contains $\{\partial{X}\}_{a=0,1,2,3}=\{\alpha,\beta\}$ and let $\mu_{T}$ be as in $\S$\ref{subsec : pre1}. 

Let the constants $K_{1},C_{1}$ and $E>0$ be as in Proposition \ref{prop : pre1}. Suppose that $|e_{i}|>E$ for all $i\in\mathbb{N}$. Then by Proposition \ref{prop : pre1}(\ref{pre1 : ld}) we have that
\begin{equation}\label{eq : dZiITe}d_{Z_{i}}(\mu_{I},\mu_{T})\asymp_{K_{1},C_{1}}|e_{i}|.\end{equation}
Then by the choice of $E$,
$$d_{Z_{i}}(\mu_{I},\mu_{T})> 4M.$$  
By Proposition \ref{prop : pre1}(\ref{pre1 : bd}), every subsurface $Z$ with 
$$d_{Z}(\mu_{I}(q,e),\mu_{T}(q,e))> {\bf m}$$
 is in the list of the subsurfaces $Z_{i}$ and therefore is a large subsurface. So the pair $(\mu_{I},\mu_{T})$ is ${\bf m}-$narrow. Let $\rho:[0,\infty]\to P(S)$ be a hierarchy path between $\mu_{I}$ and $\mu_{T}$. Let $r:[0,\infty)\to \Teich(S)$ be the corresponding WP geodesic ray with the end invariant $(\mu_{I},\mu_{T})$ and prescribed itinerary as in Theorem \ref{thm : inftyray}. By Proposition \ref{prop : parmap} we have the parameter map $N$ from the parameters of $\rho$ to the parameters of $r$.

Recall the $J$ intervals from Theorem \ref{thm : hrpath}(\ref{h : J}) corresponding to each component domain of a hierarchy. For each $i\in\mathbb{N}$ let $J_{Z_{i}}=[s_{i}^{-},s_{i}^{+}]$.

For each $i\in\mathbb{N}$ odd let 
\begin{center}$k_{i}=\max  J_{Z_{i}}\cap J_{Z_{i-1}}$ and  $l_{i}=\min  J_{Z_{i}}\cap J_{Z_{i+1}},$\end{center}
 and suppose that $l_{i}>k_{i}$. For each $i\in\mathbb{N}$ even let 
  \begin{center}$k_{i}=\min  J_{Z_{i}}$ and $l_{i}=\max  J_{Z_{i}}$.\end{center} 
\medskip

\noindent{\bf Subsurface coefficient bounds:} Here we collect some bounds on subsurface coefficients.
\medskip

\noindent{\bf $i$ is even.}  Since $i$ is even, $k_{i}=s_{i}^{-}$ and $l_{i}=s_{i}^{+}$. By Proposition \ref{prop : pre1}(\ref{pre1 : bd}) and no backtracking property of hierarchy paths (\ref{eq : nbtrack}), for every proper subsurface $W\subsetneq S$ which is not in the list of subsurfaces $Z_{i}$ we have 
\begin{eqnarray}\label{eq : dWJ2}
d_{W}(\rho(s_{i}^{-}),\rho(s_{i}^{+}))&\leq&{\bf m}+2M,\;\text{or equivalently}\\
d_{W}(\rho(k_{i}),\rho(l_{i}))&\leq& {\bf m}+2M.\nonumber
\end{eqnarray}
By Proposition \ref{prop : pre1}(\ref{pre1 : ord}), for all $j\in\mathbb{N}$ with $j-i \geq 2$, $Z_{i}<Z_{j}$ and for all  $j\in\mathbb{N}$ with $i-j\geq 2$, $Z_{j}<Z_{i}$. Suppose that $Z_{i}<Z_{j}$. Then by Definition \ref{def : ordersubsurf}, $d_{Z_{j}}(\mu_{I},\partial{Z}_{i})\leq M.$ Moreover $\partial{Z}_{i}\subset \rho(s_{i}^{-})$ and $\partial{Z}_{i}\subset\rho(s_{i}^{+})$. So 
$$d_{Z_{j}}(\mu_{I},\rho(s_{i}^{-}))\leq M$$
 and similarly $d_{Z_{j}}(\mu_{I},\rho(s_{i}^{+}))\leq M$. Then by the triangle inequality 
$$d_{Z_{j}}(\rho(s_{i}^{-}),\rho(s_{i}^{+}))\leq d_{Z_{j}}(\rho(s_{i}^{-}),\mu_{I})+d_{Z_{j}}(\mu_{I},\rho(s_{i}^{+}))+\diam_{Z_{i}}(\mu_{I})\leq 2M+2.$$
Assuming that $Z_{j}<Z_{i}$, by a similar argument, using the inequality $d_{Z_{j}}(\mu_{T},\partial{Z}_{i})\leq M$ from Definition \ref{def : ordersubsurf}, we can get the above bound. We recored these bounds
\begin{eqnarray}\label{eq : dZjJ2}
d_{Z_{j}}(\rho(s_{i}^{-}),\rho(s_{i}^{+}))&\leq& 2M+2,\;\text{or equivalently}\\
d_{Z_{j}}(\rho(k_{i}),\rho(l_{i}))&\leq&2M+2.\nonumber
\end{eqnarray}
By the set up of the subsurfaces $Z_{i}$ in (\ref{eq : Zi}),  $\partial{Z}_{i}\pitchfork Z_{i-1}$.  By Theorem \ref{thm : hrpath}(\ref{h : J}), $\partial{Z}_{i}\subset\rho(s_{i}^{-})$ and $\partial{Z}_{i}\subset\rho(s_{i}^{+})$.  Then we have
$$d_{Z_{i-1}}(\rho(s_{i}^{-}),\rho(s_{i}^{+}))\leq \diam_{Z_{i-1}}(\rho(s_{i}^{-}))+ \diam_{Z_{i-1}}(\rho(s_{i}^{+}))\leq 4.$$
We record the above inequality
\begin{eqnarray}\label{eq : dZi-1rs-+}
d_{Z_{i-1}}(\rho(s_{i}^{-}),\rho(s_{i}^{+}))&\leq&4,\;\text{or equivalently}\\
d_{Z_{i-1}}(\rho(k_{i}),\rho(l_{i}))&\leq& 4\nonumber
\end{eqnarray}
Similarly since $\partial{Z}_{i}\pitchfork Z_{i+1}$ and $\partial{Z}_{i}\subset\rho(s_{i}^{-})$ and $\partial{Z}_{i}\subset\rho(s_{i}^{+})$ we have that
\begin{eqnarray}\label{eq : dZi+1rs-+}
d_{Z_{i+1}}(\rho(s_{i}^{-}),\rho(s_{i}^{+}))&\leq&4,\;\text{or equivalently}\\
d_{Z_{i+1}}(\rho(k_{i}),\rho(l_{i}))&\leq& 4\nonumber
\end{eqnarray}
Also since $\partial{Z}_{i}\pitchfork S$ and $\partial{Z}_{i}\subset\rho(s_{i}^{-})$ and $\partial{Z}_{i}\subset\rho(s_{i}^{+})$ we have
\begin{eqnarray}\label{eq : dSrs-+}
d_{S}(\rho(s_{i}^{-}),\rho(s_{i}^{+}))&\leq& 4,\;\text{or equivalently}\\
d_{S}(\rho(k_{i}),\rho(l_{i}))&\leq& 4\nonumber
\end{eqnarray}

\noindent{\bf $i$ is odd.}  By Proposition \ref{prop : pre1}(\ref{pre1 : bd}) and no backtracking property of hierarchy paths (\ref{eq : nbtrack}) for every proper subsurface $W\subsetneq S$ which is not in the list of subsurfaces $Z_{i}$ we have 
\begin{equation}\label{eq : dWJ1}d_{W}(\rho(s_{i}^{-}),\rho(s_{i}^{+}))\leq {\bf m}+2M,\end{equation}
and
 \begin{equation}\label{eq : dWkl1}d_{W}(\rho(k_{i}),\rho(l_{i}))\leq {\bf m}+2M.\end{equation}

By Proposition \ref{prop : pre1}(\ref{pre1 : ord}), for all  $j\in\mathbb{N}$ with $j\geq i+2$, $Z_{i}<Z_{j}$ and for all  $j\in\mathbb{N}$ with $j\geq i+2$, $Z_{j}<Z_{i}$. Then by an argument similar to the proof of (\ref{eq : dZjJ2}) we can get 
 \begin{equation}\label{eq : dZjJ1}d_{Z_{j}}(\rho(s_{i}^{-}),\rho(s_{i}^{+}))\leq 2M+2,\end{equation}
 Moreover $k_{i},l_{i}\in J_{Z_{i}}$ so $\partial{Z}_{i}\subset \rho(k_{i})$ and $\partial{Z}_{i}\subset\rho(l_{i})$. Thus again by an argument similar to the proof of (\ref{eq : dZjJ2}) we can get  
  \begin{equation}\label{eq : dZjkl1}d_{Z_{j}}(\rho(k_{i}),\rho(l_{i}))\leq 2M+2.\end{equation}

By Theorem \ref{thm : hrpath}(\ref{h : nobacktrack}) (No backtracking) we have
$$d_{Z_{i-1}}(\rho(k_{i}),\rho(l_{i}))+d_{Z_{i-1}}(\rho(l_{i}),\mu_{T})\leq d_{Z_{i-1}}(\rho(k_{i}),\mu_{T})+M.$$
Now by Theorem \ref{thm : hrpath}(\ref{h : lrbddproj}), $d_{Z_{i-1}}(\mu_{T},\rho(k_{i}))\leq M$. Then by the above inequality we have
 \begin{equation}\label{eq : dZi-1kl1}d_{Z_{i-1}}(\rho(k_{i}),\rho(l_{i}))\leq 2M.\end{equation}
 Similarly, by the no backtracking we have 
 $$d_{Z_{i+1}}(\rho(k_{i}),\mu_{I})+d_{Z_{i+1}}(\rho(k_{i}),\rho(l_{i}))\leq d_{Z_{i+1}}(\rho(l_{i}),\mu_{I})+M,$$
 then since $d_{Z_{i+1}}(\mu_{I},\rho(l_{i}))\leq M$ we get
  \begin{equation}\label{eq : dZi+1kl1}d_{Z_{i+1}}(\rho(k_{i}),\rho(l_{i}))\leq 2M.\end{equation} 
  Also similar to (\ref{eq : dSrs-+}) we can get
    \begin{equation}\label{eq : dSrs-+1}d_{S}(\rho(s_{i}^{-}),\rho(s_{i}^{+}))\leq 4,\end{equation}
  and
  \begin{equation}\label{eq : dSkl1}d_{S}(\rho(k_{i}),\rho(l_{i}))\leq 4.\end{equation}
 \medskip

Finally, by Proposition \ref{prop : pre1}(\ref{pre1 : bda}) and no backtracking for any $i\in\mathbb{N}$ and any $\gamma \in \mathcal{C}_{0}(S)$ we have  
 \begin{equation}\label{eq : dakl}d_{\gamma}(\rho(k_{i}),\rho(l_{i}))\leq {\bf m}'+2M.\end{equation}
 \medskip

\noindent{\bf The length of $J$ intervals:}  Now we proceed to estimate the length of $J$ intervals using the above subsurface coefficient bounds. For each $i$ even by the inequalities (\ref{eq : dWJ2}), (\ref{eq : dZjJ2}), (\ref{eq : dZi-1rs-+}), (\ref{eq : dZi+1rs-+}) and (\ref{eq : dSrs-+}), all of the subsurface coefficients of $\rho(s_{i}^{-})$ and $\rho(s_{i}^{+})$ except that of $Z_{i}$ are bounded above by ${\bf m}+2M$ (note that ${\bf m}>2$). Let the threshold constant in the distance formula (\ref{eq : dsf}) be ${\bf m}+2M$ (note that ${\bf m}+2M>M_{1}$) and let $K_{2},C_{2}$ be the constants corresponding to this threshold constant. Then we have  
\begin{equation}\label{eq : dsi-+dZisi-+}d(\rho(s_{i}^{-}),\rho(s_{i}^{+}))\asymp_{K_{2},C_{2}}d_{Z_{i}}(\rho(s_{i}^{-}),\rho(s_{i}^{+})).\end{equation}
 Similarly for each $i$ odd by the inequalities (\ref{eq : dWJ1}), (\ref{eq : dZjJ1}) and (\ref{eq : dSrs-+1}) and the distance formula we have
 \begin{eqnarray}\label{eq : dsi-+dZi1-+}
 d(\rho(s_{i}^{-}),\rho(s_{i}^{+})))\asymp_{K_{2},C_{2}} d_{Z_{i-1}}(\rho(s_{i}^{-}),\rho(s_{i}^{+}))\\
 +d_{Z_{i}}(\rho(s_{i}^{-}),\rho(s_{i}^{+}))+d_{Z_{i+1}}(\rho(s_{i}^{-}),\rho(s_{i}^{+})).
 \end{eqnarray}
 By the no backtracking property of hierarchy paths (\ref{eq : nbtrack}) and Theorem \ref{thm : hrpath}(\ref{h : lrbddproj}) each subsurface coefficient on the right hand side of (\ref{eq : dsi-+dZisi-+}) and (\ref{eq : dsi-+dZi1-+}) is quasi-equal with constants $1$ and $2M$ to the corresponding subsurface coefficient of $\mu_{I}$ and $\mu_{T}$. For example 
 $$d_{Z_{i-1}}(\rho(s_{i}^{-}),\rho(s_{i}^{+}))\asymp_{1,2M}d_{Z_{i-1}}(\mu_{I},\mu_{T}).$$
  Moreover, the hierarchy path $\rho$ is a $(k,c)-$quasi-geodesic where $k\geq 1$ and $c\geq 0$ depend only on the topological type of $S$. Thus for each $i\in\mathbb{N}$ we have
  \begin{equation}\label{eq : JZi=dsi-+}|J_{Z_{i}}|\asymp_{k,c}d(\rho(s_{i}^{-}),\rho(s_{i}^{+})).\end{equation}
  Let $K_{3}=kK_{2}$ and $C_{3}=kC_{2}+2kK_{2}M+c$. Then by (\ref{eq : dsi-+dZisi-+}) and (\ref{eq : JZi=dsi-+}) for each $i$ even we have
\begin{equation}\label{eq : Jiebd}|J_{Z_{i}}|\asymp_{K_{3},C_{3}} d_{Z_{i}}(\mu_{I},\mu_{T}),\end{equation}
and by (\ref{eq : dsi-+dZi1-+}) and (\ref{eq : JZi=dsi-+}) for each $i$ odd we have
\begin{equation}\label{eq : Jiobd}|J_{Z_{i}}|\asymp_{K_{3},C_{3}} d_{Z_{i-1}}(\mu_{I},\mu_{T})+d_{Z_{i}}(\mu_{I},\mu_{T})+d_{Z_{i+1}}(\mu_{I},\mu_{T}).\end{equation}
\medskip

We proceed to set the sequence $\{e_{i}\}_{i=1}^{\infty}$ and using the estimates for the length of $J$ intervals study the intersection pattern of the $J$ intervals and estimate the length of intervals $[k_{i},l_{i}]$.

 Let $\epsilon_{n} \to 0$ be a decreasing sequence. Let $R={\bf m}+2M$ and $R'={\bf m}'+2M$. For each $n\in\mathbb{N}$ let 
\begin{equation}\label{eq : ei>div}y_{n}\geq 2\bar{w}({\bf m},R,R',\epsilon_{n})\end{equation}
where $\bar{w}$ is the constant from Theorem \ref{thm : inftyray}. 
\medskip

Let $K=K_{3}K_{1}$ and $C=C_{1}+C_{3}$. Define the sequence $\{e_{i}\}_{i=1}^{\infty}$ as follows: For $i$ even, $e_{i}=Ky_{\frac{i}{2}}+KC$, and for $i$ odd, $e_{i}=Ky_{\frac{i-1}{2}}+(K^{3}y_{\frac{i-1}{2}}+K^{3}C+KC)+(K^{3}y_{\frac{i+1}{2}}+K^{3}C+KC)$. 
\medskip

\noindent{\bf The length of intervals $[k_{i},l_{i}]$:} We investigate the pattern that the $J$ intervals overlap each other and estimate the length of each interval $[k_{i},l_{i}]$.  
\medskip

\noindent{\bf $i$ is even.} Recall that when $i$ is an even integer $[k_{i},l_{i}]=J_{Z_{i}}$. Then (\ref{eq : Jiebd}) implies that $l_{i}-k_{i}\geq \frac{1}{K_{3}}d_{Z_{i}}(\mu_{I},\mu_{T})-C_{3}$. Then by (\ref{eq : dZiITe}), $l_{i}-k_{i}\geq \frac{1}{K_{3}K_{1}}|e_{i}|-\frac{C_{1}}{K_{3}}-C_{3}$. Therefore, $l_{i}-k_{i}\geq y_{\frac{i}{2}}$. 
\medskip

\noindent{\bf $i$ is odd.} By (\ref{eq : Jiebd}) and (\ref{eq : dZiITe}), $|J_{Z_{i-1}}|\leq K|e_{i-1}|+C$. This implies that 
\begin{equation}\label{eq : JZiJZi-1}|J_{Z_{i}}\cap J_{Z_{i-1}}|\leq K|e_{i-1}|+C.\end{equation}
 Similarly,  $|J_{Z_{i+1}}|\leq K|e_{i+1}|+C$, which implies that 
 \begin{equation}\label{eq : JZiJZi+1}|J_{Z_{i}}\cap J_{Z_{i+1}}|\leq K|e_{i+1}|+C.\end{equation}
  Moreover by (\ref{eq : Jiobd}) and (\ref{eq : dZiITe}), 
  \begin{equation}\label{eq : JZi}|J_{Z_{i}}|\geq\frac{1}{K}(|e_{i-1}|+|e_{i}|+|e_{i+1}|)-C.\end{equation} 

By the construction of slices of hierarchies (see \cite[\S 5]{mm2}) since $Z_{i-1}\subset Z_{i}$ we have $J_{Z_{i}}\cap J_{Z_{i-1}}\neq\emptyset$. Similarly since $Z_{i+1}\subset Z_{i}$ we have $J_{Z_{i}}\cap J_{Z_{i+1}}\neq\emptyset$. By Proposition \ref{prop : pre1}(\ref{pre1 : ordnext}) for every $s\in J_{Z_{i}}$, $s\geq \min J_{Z_{i-1}}$. Similarly by Proposition \ref{prop : pre1}(\ref{pre1 : ordnext}) for every $s\in J_{Z_{i}}$, $s\leq \max J_{Z_{i+1}}$. Furthermore by the bounds (\ref{eq : JZi}), (\ref{eq : JZiJZi-1}) and (\ref{eq : JZiJZi+1}) and the set up of the powers $e_{i}$ we have 
$$|J_{Z_{i}}-(J_{Z_{i-1}}\cup J_{Z_{i+1}})|=|J_{Z_{i}}|-|J_{Z_{i}}\cap J_{Z_{i-1}}|-|J_{Z_{i}}\cap J_{Z_{i+1}}|\geq y_{\lfloor\frac{i}{2}\rfloor}.$$
 Therefore there are times in $J_{Z_{i}}$ that are not in the intervals $J_{Z_{i-1}}$ and $J_{Z_{i+1}}$. Putting these facts together the intervals $J_{Z_{i-1}}$, $J_{Z_{i}}$ and $J_{Z_{i+1}}$ intersect each other as in Figure \ref{fig : bddcombdiv}. Moreover by the set up of the parameters $k_{i}$ and $l_{i}$ we have $l_{i}-k_{i}\geq y_{\lfloor\frac{i}{2}\rfloor}$ and in particular $l_{i}>k_{i}$. 
 \medskip
 
 Also we can conclude that for each $i\in\mathbb{N}$ we have $l_{i}\leq k_{i+1}$. 
\medskip 

Now we are in the position to finish the construction of divergent WP geodesic rays.

Let $R={\bf m}+2M$ and $R'={\bf m}'+2M$. For each $i$ even by the subsurface coefficient bounds (\ref{eq : dWJ2}), (\ref{eq : dZjJ2}), (\ref{eq : dZi-1rs-+}), (\ref{eq : dZi+1rs-+}) and (\ref{eq : dakl}) the subsurface $Z_{i}$ has $(R,R')-$bounded combinatorics over the interval $[k_{i},l_{i}]$. For each $i$ odd by the bounds (\ref{eq : dWkl1}), (\ref{eq : dZjkl1}), (\ref{eq : dZi-1kl1}), (\ref{eq : dZi+1kl1}) and (\ref{eq : dakl}) the subsurface $Z_{i}$ has $(R,R')-$bounded combinatorics over the interval $[k_{i},l_{i}]$.

For each $i\in\mathbb{N}$ let $t_{i}\in N(s)$ for some $s\in [k_{i}+\bar{w}, l_{i}-\bar{w}]$.  As we verified above the subsurface $Z_{i}$ has $(R,R')-$bounded combinatorics over $[k_{i},l_{i}]$. Moreover $l_{i}-k_{i}\geq y_{\lfloor\frac{i}{2}\rfloor}$. Then by the set up of the constants $y_{n}$ in (\ref{eq : ei>div}) Theorem \ref{thm : inftyray} applied to the interval $[k_{i},l_{i}]$ guarantees that
$$\ell_{\partial{Z_{i}}}(r(t_{i}))\leq\epsilon_{\lfloor \frac{i}{2}\rfloor},$$
 ($\ell_{\partial{Z}}(x)=\max\{\ell_{\alpha}(x) : \alpha \in \partial{Z}\}$). 

For any $i\in\mathbb{N}$ as we saw above $l_{i}\leq k_{i+1}$ so we may choose the times $t_{i}$ and $t_{i+1}$ so that $t_{i}<t_{i+1}$. By the set up of subsurfaces $Z_{i}$ in (\ref{eq : Zi}) and the function $q$ any two consecutive subsurfaces $Z_{i}$ and $Z_{i+1}$ have a boundary curve in common. Denote $\delta_{i}=\partial{Z_{i}}\cap \partial{Z_{i+1}}$. Then by the above bound $\ell_{\delta_{i}}(r(t_{i}))\leq \epsilon_{\lfloor \frac{i}{2}\rfloor}$ and $\ell_{\delta_{i}}(r(t_{i+1}))\leq \epsilon_{\lfloor \frac{i+1}{2}\rfloor}$. Then by the convexity of length-functions and  since $\epsilon_{i}$ is decreasing we have
 $$\ell_{\delta_{i}}(r(t)) \leq \epsilon_{\lfloor \frac{i}{2}\rfloor}$$
for every $t \in [t_{i},t_{i+1}]$.

Now since the intervals $[t_{i},t_{i+1}]$ ($i\in\mathbb{N}$) cover the interval $[0,\infty)$ the domain of $r$ and $\epsilon_{\lfloor\frac{i}{2}\rfloor} \to 0$, the systole of the surfaces along the WP geodesic ray $r$ decreases and goes to 0. 

By Mumford's compactness criterion (see \cite{mumfordcpct}) every compact subset of the moduli space is contained in some $\epsilon-$thick part of the moduli space. So $\hat{r}$ the projection of $r$ to the moduli space is a divergent geodesic ray. Moreover by Proposition \ref{prop : pre1}(\ref{pre1 : minfill}), $\mu_{T}(q,e)$ is a minimal filling lamination, so the forward ending lamination of $\hat{r}$ is minimal filling. 

Note that there are uncountably many sequences $\{y_{n}\}_{n=1}^{\infty}$ which satisfy the inequality (\ref{eq : ei>div}). The corresponding WP geodesic rays are distinct because have different forward ending laminations. Therefore there are uncountably many divergent WP geodesic rays starting from a given point in the moduli space. 
\end{proof}

  \begin{figure}
\centering
\scalebox{0.2}{\includegraphics{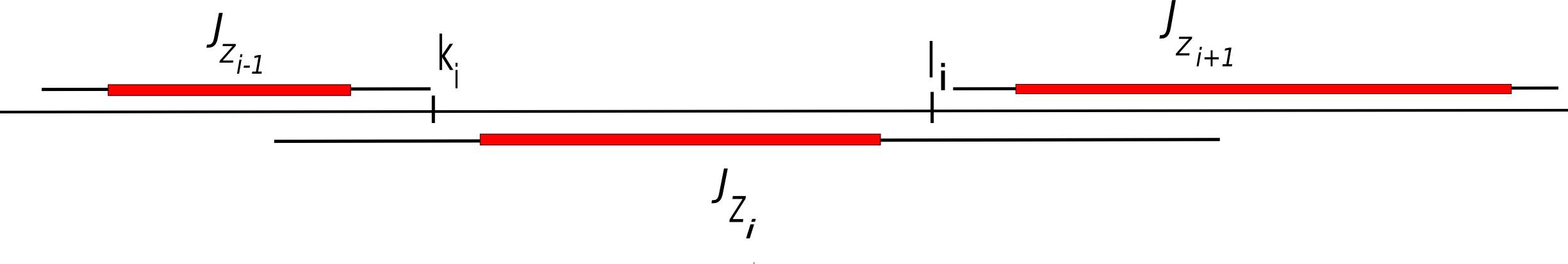}}
\caption{The intersection pattern of the intervals $J_{Z_{i-1}}$, $J_{Z_{i}}$ and $J_{Z_{i+1}}$ for $i$ odd. Over the red subinterval of each $J$ interval the corresponding domain has bounded combinatorics. The parameters $k_{i}=\max J_{Z_{i-1}}\cap J_{Z_{i}}$ and $l_{i}=\min J_{Z_{i+1}}\cap J_{Z_{i}}$.}  
\label{fig : bddcombdiv}
\end{figure}


\subsection{Closed geodesics in the thin part of the moduli space}  \label{subsec : closed}

In this subsection we construct examples of closed Weil-Petersson geodesics which stay in the thin part of the moduli space. 

\begin{thmclosed}\textnormal{ (Closed geodesics in the thin part)}
Given any compact subset of the moduli space $\mathcal{K}$, there are infinitely many closed Weil-Petersson geodesics which do not intersect $\mathcal{K}$. 
\end{thmclosed}

 \begin{proof} Let the indexed subsurfaces $X_{0},X_{1},X_{2}$ and $X_{3}$, and the indexed partial pseudo-Anosov maps $f_{0},f_{1},f_{2}$ and $f_{3}$ supported on $X_{0},X_{1},X_{2}$ and $X_{3}$ respectively be as in $\S$\ref{subsec : pre1}. Recall that $X_{0}$ and $X_{2}$ are the same subsurfaces with different indices. Also $f_{0}$ and $f_{2}$ are the same partial pseudo-Anosov maps with different indices. Let the function $q(i)\equiv i$ (mod 4) and the sequence of integers $\{e_{i}\}_{i=1}^{\infty}$ be as in $\S$\ref{subsec : pre1}. For each $i\in\mathbb{N}$ let the subsurface $Z_{i}$ be as in (\ref{eq : Zi}). Fix a marking $\mu_{I}$ so that $\base(\mu_{I})$ contains $\{\partial{X}\}_{a=0,1,2,3}=\{\alpha,\beta\}$ and let $\mu_{T}$ be as in $\S$\ref{subsec : pre1}.

Let the constants $K_{1},C_{1}$ and $E$ be as in Proposition \ref{prop : pre1}. Suppose that $|e_{i}|>E$ for all $i\in\mathbb{N}$. Then we are in the set up of the proof of Theorem \ref{thm : div} in $\S$\ref{subsec : div}. As we saw there the pair $(\mu_{I},\mu_{T})$ is ${\bf m}-$narrow where ${\bf m}$ is the constant from Proposition \ref{prop : pre1}. Let $\rho:[0,\infty]\to P(S)$ be a hierarchy path between $\mu_{I}$ and $\mu_{T}$. Let $r:[0,\infty)\to \Teich(S)$ be the corresponding WP geodesic ray with end invariant $(\mu_{I},\mu_{T})$ and prescribed itinerary as in Theorem \ref{thm : inftyray}. Let $N$ be the parameter map from Proposition \ref{prop : parmap}.
 
As in $\S$\ref{subsec : div} for each $i\in\mathbb{N}$ let $J_{Z_{i}}=[s^{-}_{i},s^{+}_{i}]$. Also for each $i$ odd let 
\begin{center}$k_{i}=\max  J_{Z_{i-1}}\cap J_{Z_{i}}$ and $l_{i}=\min J_{Z_{i+1}}\cap J_{Z_{i}}$.\end{center}
and for each $i\in\mathbb{N}$ even let 
 \begin{center}$k_{i}=\min J_{Z_{i}}$ and $l_{i}=\max  J_{Z_{i}}$.\end{center}

Then all of the subsurface coefficient bounds (\ref{eq : dWJ2}) to (\ref{eq : dakl}) in $\S$\ref{subsec : div} hold. 
 
 Let $R={\bf m}+2M$ and $R'={\bf m}'+2M$. Given $\epsilon> 0$ let
\begin{equation}\label{eq : ei>closed}y\geq 2\bar{w}({\bf m},R,R',\epsilon).\end{equation}
where $\bar{w}$ is the constant from Theorem \ref{thm : inftyray}. 

Let the constants $K\geq 1$ and $C\geq 0$ be as in $\S$\ref{subsec : div}. Define the periodic sequence $\{e_{i}\}_{i=1}^{\infty}$ with the first four terms $e_{1}=Ky+2(K^{3}y+K^{3}C+KC), e_{2}=Ky+KC, e_{3}=Ky+2(K^{3}y+K^{3}C+KC)$ and $e_{4}=Ky+KC$, satisfying $e_{i}=e_{i'}$ whenever $i\equiv i'$ (mod 4). 

The estimates  (\ref{eq : Jiebd}) and (\ref{eq : Jiobd}) for the length of $J_{Z_{i}}$ intervals hold. Then similar to $\S$\ref{subsec : div} we can show that: For each $i\in\mathbb{N}$, $l_{i}-k_{i}\geq y$ (in particular $l_{i}>k_{i}$) and $l_{i}<k_{i+1}$ (see Figure \ref{fig : bddcombdiv}). 

For any $i$ even by the subsurface coefficient bounds (\ref{eq : dWJ2}), (\ref{eq : dZjJ2}), (\ref{eq : dZi-1rs-+}), (\ref{eq : dZi+1rs-+}) and (\ref{eq : dakl}) the subsurface $Z_{i}$ has $(R,R')-$bounded combinatorics over the interval $[k_{i},l_{i}]$. For any $i$ odd by the bounds (\ref{eq : dWkl1}), (\ref{eq : dZjkl1}), (\ref{eq : dZi-1kl1}), (\ref{eq : dZi+1kl1}) and (\ref{eq : dakl}) the subsurface $Z_{i}$ has $(R,R')-$bounded combinatorics over the interval $[k_{i},l_{i}]$.

For each $i\in\mathbb{N}$, let $t_{i}\in N(s)$ where $s\in [k_{i}+\bar{w},l_{i}-\bar{w}]$. Then Theorem \ref{thm : inftyray} applied to the interval $[k_{i},l_{i}]$ guarantees that
$$\ell_{\partial{Z_{i}}}(r(t_{i})) \leq \epsilon.$$
Moreover since $l_{i}\leq k_{i+1}$ we may choose $t_{i}$ and $t_{i+1}$ so that $t_{i+1}>t_{i}$. By the set up of the subsurfaces $Z_{i}$ in (\ref{eq : Zi}) and the function $q$ any two consecutive domains $Z_{i}$ and $Z_{i+1}$ have a boundary curve in common. Denote $\delta_{i}=\partial{Z_{i}}\cap\partial{Z_{i+1}}$. Then by the above bound $\ell_{\delta_{i}}(r(t_{i}))\leq \epsilon$ and $\ell_{\delta_{i}}(r(t_{i+1}))\leq \epsilon$. Then by the convexity of length-functions we have that
 $$\ell_{\delta_{i}}(r(t)) \leq \epsilon$$
  for every $t \in [t_{i},t_{i+1}]$.
  
Now since the intervals $[t_{i},t_{i+1}]$ ($i\in\mathbb{N}$) cover $[0,\infty)$, at any time the systole of the surface along $r$ is less than $\epsilon$. Therefore, $r$ stays in the $\epsilon-$thin part of the Teichm\"{u}ller space. Consequently, $\hat{r}$ the projection of $r$ to the moduli space stays in the $\epsilon-$thin part of the moduli space. Furthermore, since $q$ is a period function and $\{e_{i}\}_{i=1}^{\infty}$ is a periodic sequence $\hat{r}$ is a closed geodesic.

By Mumford's compactness criterion (see \cite{mumfordcpct}) there is an $\epsilon_{0}>0$ such that the compact subset $\mathcal{K}\subset\mathcal{M}(S)$ is contained in the $\epsilon_{0}-$thick part of the moduli space. Therefore $\mathcal{K}$ is disjoint from the $\epsilon_{0}-$thin part of the moduli space. Let $y\in\mathbb{N}$ be such that (\ref{eq : ei>closed}) holds for $\epsilon=\epsilon_{0}$. Then our construction produces closed WP geodesics which do not intersect $\mathcal{K}$. There are infinitely many integers $y$ which satisfy the inequality (\ref{eq : ei>closed}) and consequently there are infinitely many closed geodesics which do not intersect $\mathcal{K}$. These geodesics are distinct because have different forward ending laminations.
\end{proof}

\subsection{A recurrence condition}\label{subsec : recurrence}
The following theorem is a straightforward consequence of Proposition \ref{prop : rec}.  Given $A,R$ and $R'$ positive. Let the constants $w=w(A,R)$ and $\bar{\epsilon}=\bar{\epsilon}(A,R,R')$ be from the proposition.

\begin{thm}\textnormal{ (Recurrence condition)}\label{thm : recurrence}
Let $(\mu^{-},\mu^{+})$ be an $A-$narrow pair. Let $\rho$ be a hierarchy path between $\mu^{-}$ and $\mu^{+}$. Let $[k_{i},l_{i}]$, $i\in\mathbb{N}$, be a sequence of intervals with $l_{i}-k_{i}\geq 2w$ and $l_{i}<k_{i+1}$. Furthermore suppose that over each interval $[k_{i},l_{i}]$, $S$ has $(R,R')-$bounded combinatorics. Let $r$ be a WP geodesic ray with prescribed itinerary with end invariant $(\mu^{-},\mu^{+}$). Then $\inj (r(t_{i}))\geq \frac{\bar{\epsilon}}{2}$, where $t_{i}\in N(s)$ and $s\in [k_{i}+w,l_{i}-w]$.
\end{thm}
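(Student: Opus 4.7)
The plan is to apply Proposition~\ref{prop : rec} on each interval $[k_i,l_i]$ separately and transfer the resulting lower bound on the injectivity radius from the approximating bounded segments to the limit ray $r$. The key observation is that in the statement of the recurrence theorem, $S$ itself plays the role of the ``large subsurface'' appearing in Proposition~\ref{prop : rec}: its boundary is empty, so the interval $J_S$ is vacuously all of the domain of $\rho$, and the $(R,R')$-bounded combinatorics hypothesis over $[k_i,l_i]$ is exactly what Proposition~\ref{prop : rec} requires.

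First I would recall that the ray $r$ is produced in Lemma~\ref{lem : inftyray} as a pointwise limit (in the WP visual sphere at the basepoint) of WP geodesic segments $r_n:[0,T_n]\to\overline{\Teich(S)}$ whose end invariants are $(\mu^-,\mu_n)$ with $\mu_n\to\mu^+$. By the no-backtracking property (Theorem~\ref{thm : hrpath}(\ref{h : nobacktrack})) applied to any hierarchy $\rho$ realizing $(\mu^-,\mu^+)$, the pair $(\mu^-,\mu_n)$ is $(A+2M_2)$-narrow, uniformly in $n$, and a hierarchy path $\rho_n$ between $(\mu^-,\mu_n)$ shares its initial portion with $\rho$ up to bounded distance in the pants graph. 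Consequently, for each fixed $i$ and every sufficiently large $n$, the interval $[k_i,l_i]$ lies in the coincident portion, so $S$ also has $(R,R')$-bounded combinatorics over $[k_i,l_i]$ along $\rho_n$ (after a harmless adjustment of $R,R'$ by the bounded Hausdorff constant from Theorem~\ref{thm : narrowhull}).

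Next I would apply Proposition~\ref{prop : rec} to the segment $r_n$: since $l_i-k_i\ge 2w$, the proposition yields $\inj(r_n(s))\ge \bar\epsilon/2$ for every $s\in [a_n,b_n]$, where $a_n\in N_n(k_i+w)$ and $b_n\in N_n(l_i-w)$, and $N_n=N_{\rho_n,r_n}$ is the parameter map from Proposition~\ref{prop : parmap}. The key point is that the fellow-traveling constant producing $N_n$, and hence the constants $w$ and $\bar\epsilon$, depend only on the uniform narrowness constant $A+2M_2$ (via Theorem~\ref{thm : fellowtr} and Theorem~\ref{thm : stability}), not on $n$.

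Finally I would pass to the limit. For any $j\in [k_i+w,l_i-w]$ and $t_i\in N(j)$, the stability of the hierarchy path and the uniform fellow-traveling constants let me select $t_i^{(n)}\in N_n(j)$ with $t_i^{(n)}\to t_i$, and then $r_n(t_i^{(n)})\to r(t_i)$ in the WP metric by the convergence $r_n\to r$. Continuity of length-functions (Theorem~\ref{thm : continuitylf}) then gives $\ell_\gamma(r(t_i))\ge \bar\epsilon$ for every simple closed curve $\gamma$, and hence $\inj(r(t_i))\ge \bar\epsilon/2$. The only real obstacle is the bookkeeping involved in reconciling the coarsely-defined parameter maps $N_n$ with $N$ in the limit, but this is controlled by the uniform fellow-traveling bound and is why the author calls the result a ``straightforward consequence'' of Proposition~\ref{prop : rec}.
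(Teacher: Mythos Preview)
Your proposal is correct and follows essentially the same route the paper has in mind. The paper does not give an explicit proof of this theorem; it simply declares it a ``straightforward consequence'' of Proposition~\ref{prop : rec}, and what you have written is precisely the unpacking of that claim: apply Proposition~\ref{prop : rec} to each approximating segment $r_n$ (whose end invariant is $(A+2M_2)$-narrow by no-backtracking, exactly as in the proof of Theorem~\ref{thm : inftyray}), then pass to the limit via continuity of length-functions. An even shorter phrasing, which you might prefer, is to observe that the conclusion is literally the special case $Z=S$ of Theorem~\ref{thm : inftyray}(1): since $\partial S=\emptyset$, part~(1) says $\ell_\gamma(r(t))\ge\bar\epsilon$ for \emph{every} curve $\gamma$, whence $\inj r(t)\ge\bar\epsilon/2$; the limiting argument is then already absorbed into the proof of Theorem~\ref{thm : inftyray}.
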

\begin{remark}
The WP volume of $\mathcal{M}(S)$ is finite. This follows from the fact that the WP metric extends to the Deligne-Mumford compactification of the moduli space \cite{maswp}. Thus the volume of the unit tangent bundle of the moduli space $\mathcal{M}^{1}(S)$ is finite. Then the fact that the geodesic flow on $\mathcal{M}^{1}(S)$ is volume preserving and Poincar\'{e} Recurrence Theorem (see e.g. Theorem 4.1.19 in \cite[Part 4]{KatokDyn}) imply that almost every WP geodesic ray is recurrent to the $\frac{\bar{\epsilon}}{2}-$thick part of the moduli space. But our construction of WP geodesics that are recurrent to the $\frac{\bar{\epsilon}}{2}-$thick part of $\mathcal{M}(S)$ only uses the combinatorial control we developed in this paper.
\end{remark}
\begin{thm}
There are WP geodesic rays that are recurrent to the $\frac{\bar{\epsilon}}{2}-$thick part of the moduli space.
\end{thm}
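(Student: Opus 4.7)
The plan is to build the end invariant of the desired ray using Scheme II from Section \ref{subsec : pre2}. Take $q=q_{0}$ (so $q(i)\equiv i \pmod{2}$), and a sequence of integers $\{e_{i}\}_{i\geq 1}$ with $|e_{i}|>E$ to be determined. By Proposition \ref{prop : pre2}, this produces a pair $(\mu_{I},\mu_{T})$ with $\mu_{T}$ minimal filling, whose only subsurface coefficients exceeding $\max\{\textbf{m},\textbf{m}'\}$ are those of the proper large subsurfaces $Z_{i}=f_{q(1)}^{e_{1}}\cdots f_{q(i-1)}^{e_{i-1}}(S\backslash\alpha)$ for $i$ odd; in particular, the pair is $A$-narrow for $A:=\max\{\textbf{m},\textbf{m}'\}$. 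Fix a hierarchy path $\rho:[m,n]\to P(S)$ between $\mu_{I}$ and $\mu_{T}$, and let $r:[0,\infty)\to\Teich(S)$ be the ray with prescribed itinerary obtained from Theorem \ref{thm : inftyray}.

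The crux is to locate long subintervals of $[m,n]$ on which the whole surface $S$ has bounded combinatorics. The natural candidates are the gaps $[k_{j},l_{j}]:=[\max J_{Z_{2j-1}},\min J_{Z_{2j+1}}]$ between consecutive proper domains in the ordered list (ordering is given by Proposition \ref{prop : pre2}(\ref{pre2 : ord}) combined with Proposition \ref{prop : ordersubsurf}). Over such a gap, the only component domains of $\rho$ with arbitrarily large subsurface coefficient are the $Z_{2j\pm 1}$, which lie outside $(k_{j},l_{j})$; for every other subsurface $W\subsetneq S$ the bounds $d_{W}(\mu_{I},\mu_{T})\leq \textbf{m}$ (and $d_{\gamma}(\mu_{I},\mu_{T})\leq \textbf{m}'$ for annuli) of Proposition \ref{prop : pre2}(\ref{pre2 : bd})–(\ref{pre2 : bda}) combined with no backtracking (\ref{eq : nbtrack}) and Theorem \ref{thm : hrpath}(\ref{h : lrbddproj}) yield $d_{W}(\rho(k_{j}),\rho(l_{j}))\leq R$ and $d_{\gamma}(\rho(k_{j}),\rho(l_{j}))\leq R'$ for $R=\textbf{m}+2M$, $R'=\textbf{m}'+2M$. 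Thus $S$ has $(R,R')$-bounded combinatorics over $[k_{j},l_{j}]$.

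Next I would estimate $l_{j}-k_{j}$ in terms of $|e_{2j}|$. The proof of Proposition \ref{prop : pre2} establishes $d_{S}(\partial Z_{2j-1},\partial Z_{2j+1})\geq \tau_{0}|e_{2j}|$, and the distance formula applied using only the controlled subsurface coefficients gives a quasi-comparison between the pants distance $d(\rho(k_{j}),\rho(l_{j}))$ and this $\mathcal{C}(S)$-distance. Since $\rho$ is a quasi-geodesic with constants depending only on topology, this forces $l_{j}-k_{j}$ to be comparable (up to universal constants) to $|e_{2j}|$. Choosing $|e_{2j}|$ large enough, uniformly in $j$, we can arrange $l_{j}-k_{j}\geq 2w$, where $w=w(A,R)$ is the constant in Proposition \ref{prop : rec}. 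Applying Theorem \ref{thm : recurrence} (i.e.\ Proposition \ref{prop : rec}) to each gap then produces times $t_{j}\in N_{\rho,r}([k_{j}+w,l_{j}-w])$ with $\inj(r(t_{j}))\geq \bar{\epsilon}/2$. Since the $Z_{2j-1}$ are totally ordered and the $J$-intervals march off to $+\infty$ by Theorem \ref{thm : hrpath}(\ref{h : monoton}), we have $t_{j}\to\infty$, and the projection of $r$ to $\mathcal{M}(S)$ returns infinitely often to the $\bar{\epsilon}$-thick part.

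The main obstacle is the second paragraph: verifying that no proper subsurface $W\neq S$ contributes a large coefficient over the gap $[k_{j},l_{j}]$. The subsurfaces $Z_{2j-1}$ and $Z_{2j+1}$ themselves are forbidden because their $J$-intervals are outside $(k_{j},l_{j})$ by construction, but one must confirm via Theorem \ref{thm : hrpath}(\ref{h : lrbddproj}) that the $\rho$-projection to these domains is $O(M)$-bounded on the complement of $J_{Z_{2j\pm 1}}$; subsurfaces $W$ not in the list $\{Z_{i}\}$ are handled directly by Proposition \ref{prop : pre2}(\ref{pre2 : bd})–(\ref{pre2 : bda}) together with no backtracking. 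The only remaining possibility — a non-annular $W$ equal to some $Z_{2l-1}$ with $l\neq j,j+1$ — is ruled out by the order $Z_{2l-1}<Z_{2j-1}$ or $Z_{2j+1}<Z_{2l-1}$ and Proposition \ref{prop : ordersubsurf}, which forces the corresponding $J$-intervals to be disjoint from $[k_{j},l_{j}]$ and hence the projection to $W$ to be bounded there.
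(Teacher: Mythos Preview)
Your proposal is correct and follows essentially the same route as the paper: both use Scheme~II with $q=q_{0}$, both isolate the gaps between consecutive proper domains $Z_{2j-1}$ and $Z_{2j+1}$ (the paper indexes these gaps by even $i$ with $k_i=\max J_{Z_{i-1}}$, $l_i=\min J_{Z_{i+1}}$), both bound all proper subsurface coefficients over the gap by $R=\textbf{m}+2M$, $R'=\textbf{m}'+2M$ via Proposition~\ref{prop : pre2} plus no backtracking and Theorem~\ref{thm : hrpath}(\ref{h : lrbddproj}), and both compare the gap length to $d_S(\partial Z_{2j-1},\partial Z_{2j+1})\geq\tau_0|e_{2j}|$ through the distance formula with threshold $\textbf{m}+2M$, then invoke Proposition~\ref{prop : rec}. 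The paper goes on to also tune the odd $e_i$'s so that the ray exits every thick part, but that is beyond the stated theorem.
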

\begin{proof}
Let the indexed subsurfaces $X_{0}=S$ and $X_{1}$ and the pseudo-Anosov maps $f_{0}$ and $f_{1}$ supported on $X_{0}$ and $X_{1}$, respectively be as in $\S$\ref{subsec : pre2}. Let the function $q(i)\equiv i$ (mod 2) and the sequence $\{e_{i}\}_{i=1}^{\infty}$ be as in $\S$\ref{subsec : pre2}. For each $i\in\mathbb{N}$, let the subsurface $Z_{i}$ be as in (\ref{eq2 : Zi}). Let $\mu_{I}$ be a marking so that $\base(\mu_{I})$ contains $\{\partial{X}_{a}\}_{a=0,1}=\alpha$ and $\mu_{T}$ be as in $\S$\ref{subsec : pre2}.

Let the constants $K_{1},C_{1}$ and $E$ be as in Proposition \ref{prop : pre2}. Suppose that $|e_{i}|>E$ for all $i\in\mathbb{N}$. By Proposition \ref{prop : pre2}(\ref{pre2 : ld}) we have 
\begin{equation}\label{eq : dZiITer}d_{Z_{i}}(\mu_{I},\mu_{T})\asymp_{K_{1},C_{1}} |e_{i}|.\end{equation}

By Proposition \ref{prop : pre2}(\ref{pre2 : bd}) and (\ref{pre2 : bda}) every subsurface $Z$ with $d_{Z}(\mu_{I}(q,e),\mu_{T}(q,e))> {\bf m}$ is a large subsurface. So the pair $(\mu_{I},\mu_{T})$ is ${\bf m}-$narrow.  

Let $r$ be a WP geodesic ray with the end invariant $\mu_{I}$ and $\mu_{T}$ as in Theorem \ref{thm : inftyray}. Let $N$ be the parameter map from Proposition \ref{prop : parmap}. 

For each $i\in\mathbb{N}$ even let 
\begin{center}$k_{i}=\max  J_{Z_{i-1}}$ and $l_{i}=\min J_{Z_{i+1}}$.\end{center}
 For each $i\in\mathbb{N}$ odd let 
 \begin{center}$k_{i}=\min J_{Z_{i}}$ and $l_{i}=\max  J_{Z_{i}}$. \end{center}
 \medskip

\noindent{\bf Subsurface coefficient bounds:} Here we collect some subsurface coefficient bounds.
\medskip

\noindent{\bf $i$ is odd:} 
Proposition \ref{prop : pre2}(\ref{pre2 : bd}) and (\ref{eq : nbtrack}) (no backtracking) imply that for every proper non-annular subsurface $W$ which is not in the list of subsurfaces $Z_{i}$ we have 
\begin{equation}\label{eq : rbd1}d_{W}(\rho(k_{i}),\rho(l_{i}))\leq {\bf m}+2M.\end{equation} 

By Proposition \ref{prop : pre2}(\ref{pre2 : ord}) for every odd integer $j>i$, $Z_{i}<Z_{j}$, also for every odd integer $j<i$, $Z_{j}<Z_{i}$. Then an argument similar to the one we gave to prove (\ref{eq : dZjJ2}) in $\S$\ref{subsec : div} gives us
\begin{equation}\label{eq : dZjkl1r}d_{Z_{j}}(\rho(k_{i}),\rho(l_{i}))\leq 2M+2.\end{equation}
Also since $\partial{Z}_{i}\subset \rho(k_{i})$ and $\partial{Z}_{i}\subset \rho(l_{i})$, similar to (\ref{eq : dSkl1}) we can get 
\begin{equation}\label{eq : dSklr}d_{S}(\rho(k_{i}),\rho(l_{i}))\leq 4.\end{equation}

\noindent{\bf $i$ is even:}  Proposition \ref{prop : pre2}(\ref{pre2 : bd}) and (\ref{eq : nbtrack}) (no backtracking) imply that for every non-annular subsurface $W$ which is not in the list of subsurfaces $Z_{i}$, 
\begin{equation}\label{eq : rbd2}d_{W}(\rho(k_{i}),\rho(l_{i}))\leq {\bf m}+2M.\end{equation} 

Note that $k_{i}=\max J_{Z_{i-1}}$  so $J_{Z_{i-1}}\cap [k_{i},l_{i}]=\emptyset$. Then by Theorem \ref{thm : hrpath}(\ref{h : lrbddproj}) we have the first inequality below. Moreover $l_{i}=\min J_{Z_{i+1}}$ so $J_{Z_{i+1}}\cap [k_{i},l_{i}]=\emptyset$. Then again by Theorem \ref{thm : hrpath}(\ref{h : lrbddproj}) we get the second inequality below.
\begin{eqnarray}\label{eq : dZ-+kl}
d_{Z_{i-1}}(\rho(k_{i}),\rho(l_{i}))&\leq& 2M, \;\text{and}\label{eq : dZi+-kl}\\ 
d_{Z_{i+1}}(\rho(k_{i}),\rho(l_{i}))&\leq & 2M. \nonumber
\end{eqnarray}
 For any odd integer $j$ with $j< i-1$ by Proposition \ref{prop : pre2}(\ref{pre2 : ord}), $Z_{j}<Z_{i-1}$ then by Proposition \ref{prop : ordersubsurf}, $\max J_{Z_{j}}\leq \max J_{Z_{i-1}}$ and therefore, $J_{Z_{j}}\cap[k_{i},l_{i}]=\emptyset$. Similarly for any $j$ odd with $j>i+1$ we have that $J_{Z_{j}}\cap[k_{i},l_{i}]=\emptyset$. Then by Theorem \ref{thm : hrpath}(4) we get
\begin{equation}\label{eq : dZjkl2}d_{Z_{j}}(\rho(k_{i}),\rho(l_{i}))\leq 2M.\end{equation}
\medskip

Finally for any $i\in\mathbb{N}$, by Propositions \ref{prop : pre2}(\ref{pre2 : bda}) and no backtracking property of hierarchy paths  (\ref{eq : nbtrack}), for any $\gamma\in\mathcal{C}_{0}(S)$ we have that
\begin{equation}\label{eq : rabd}d_{\gamma}(\rho(k_{i}),\rho(l_{i}))\leq {\bf m}'+2M.\end{equation} 
\medskip

\noindent{\bf The length of intervals $[k_{i},l_{i}]$:} We proceed to estimate the length of the intervals $[k_{i},l_{i}]$.
\medskip

\noindent{\bf $i$ is odd:} By the bounds (\ref{eq : rbd1}), (\ref{eq : dZjkl1r}) and (\ref{eq : dSklr}) all of the subsurface coefficients of $\rho(k_{i})$ and $\rho(l_{i})$ except that of $Z_{i}$ are bounded above by ${\bf m}+2M$ (note that for any $j\in\mathbb{N}$ even $Z_{j}=S$). Let the threshold constant in the distance formula (\ref{eq : dsf}) be ${\bf m}+2M$ and let $K_{2},C_{2}$ be the constants corresponding to this threshold constant. Then we have 
$$d(\rho(k_{i}),\rho(l_{i}))\asymp_{K_{2},C_{2}} d_{Z_{i}}(\rho(k_{i}),\rho(l_{i}))$$
Note that when $i$ is odd $J_{Z_{i}}=[k_{i},l_{i}]$. Then by (\ref{eq : nbtrack}) we have
$$d_{Z_{i}}(\rho(k_{i}),\rho(l_{i}))\asymp_{1,2M}d_{Z_{i}}(\mu_{I},\mu_{T}).$$
 Moreover, $\rho$ is a $(k,c)-$quasi-geodesic where $k\geq 1$ and $c\geq 0$ only depend on the topological type of the surface. Let $K_{3}=kK_{2}$ and $C_{3}=kC_{2}+2kK_{2}M+kc$. Then 
\begin{equation}\label{eq : klZi}l_{i}-k_{i}\asymp_{K_{3},C_{3}}d_{Z_{i}}(\mu_{I},\mu_{T}).\end{equation}

\noindent{\bf $i$ is even:} By the bound (\ref{eq : dSbdZ-+}) in the proof of Proposition \ref{prop : pre2}  we have
$$d_{S}(\partial{Z}_{i-1},\partial{Z}_{i+1})\geq \frac{1}{K'_{1}}|e_{i}|,$$
where $K'_{1}=\min\{\tau_{a}:a=0,1\}$. Then since $\partial{Z}_{i-1}\subset\rho(k_{i})$ and $\partial{Z}_{i+1}\subset\rho(l_{i})$ and $\diam_{S}(\rho(k_{i}))\leq 2$ and $\diam_{S}(\rho(l_{i}))\leq 2$ we have
\begin{equation}\label{eq : dSkl}d_{S}(\rho(k_{i}),\rho(l_{i}))\geq \frac{1}{K'_{1}}|e_{i}|-4.\end{equation}

 By the bounds (\ref{eq : rbd2}), (\ref{eq : dZi+-kl}) and (\ref{eq : dZjkl2}) all of the subsurface coefficients of the pair $\rho(k_{i})$ and $\rho(l_{i})$ except that of $S$ are bounded above by ${\bf m}+2M$. Let $K_{2},C_{2}$ be the constants corresponding to the threshold constant ${\bf m}+2M$ in the distance formula (\ref{eq : dsf}). Then we have
$$d(\rho(k_{i}),\rho(l_{i}))\asymp_{K_{2},C_{2}}d_{S}(\rho(k_{i}),\rho(l_{i})).$$
Note that $\rho$ is a $(k,c)-$quasi-geodesic where $k\geq 1$ and $c\geq 0$. Let $K'_{3}=kK_{2}$ and  $C'_{3}=kC_{2}+kc$. Then we have 
\begin{equation}\label{eq : kl>e}l_{i}-k_{i}\asymp_{K'_{3},C'_{3}} d_{S}(\rho(k_{i}),\rho(l_{i})).\end{equation}
\medskip

We proceed to set the sequence $\{e_{i}\}_{i=1}^{\infty}$ and using the above estimates finish the construction of recurrent WP geodesic rays.

Let $R={\bf m}+2M$ and $R'={\bf m}'+2M$. Let
$$y\geq 2w({\bf m},R).$$ 
Let $K'=K'_{1}K'_{3}$ and $C'=K'_{1}K'_{3}C'_{3}+4K'_{1}$. Set the sequence $\{e_{i}\}_{i=1}^{\infty}$ such that for any $i$ even $e_{i}=K'y+C'$. In the next paragraph for $i$ odd, $e_{i}$ could be any integer.

Let $i\in\mathbb{N}$ be even. By the bounds (\ref{eq : dSkl}) and (\ref{eq : kl>e}) and the set up of the powers $e_{i}$ we have $l_{i}-k_{i}\geq 2w$. Moreover by the bounds (\ref{eq : rbd2}), (\ref{eq : dZ-+kl}), (\ref{eq : dZjkl2}) and (\ref{eq : rabd}) the surface $S$ has $(R,R')-$bounded combinatorics over the interval $[k_{i},l_{i}]$. Let $t_{i}\in N(s)$ where $s\in [k_{i},l_{i}]$. Then Theorem \ref{thm : recurrence}(Recurrence Condition) guarantees that $\inj((r(t_{i})))\geq\frac{1}{2}\bar{\epsilon}({\bf m},R,R')$. Therefore $\hat{r}$ the projection of $r$ to the moduli space is recurrent to the $\frac{\bar{\epsilon}}{2}-$thick part of the moduli space. 
\medskip

Let $R={\bf m}+2M$ and $R'={\bf m}'+2M$, as before. Let $\epsilon_{n}\to 0$ as $n\to \infty$. For each $n\in\mathbb{N}$ let
$$y_{n}\geq 2\bar{w}({\bf m},R,R',\epsilon_{n}).$$
Let $K=K_{1}K_{3}$ and $C=K_{1}C_{1}+K_{1}K_{3}C_{3}$. Now suppose that in the sequence $\{e_{i}\}_{i=1}^{\infty}$ we set above for each $i$ odd, $e_{i} \geq Ky_{\lfloor\frac{i}{2}\rfloor}+C$.

Let $i\in\mathbb{N}$ be odd. The bounds (\ref{eq : klZi}) and (\ref{eq : dZiITer}) and the set up of the powers $e_{i}$ imply that $l_{i}-k_{i}>2\bar{w}$. Moreover we observe that: by the bounds (\ref{eq : rbd1}), (\ref{eq : dZjkl1r}), (\ref{eq : dSklr}) and (\ref{eq : rabd}) the subsurface $Z_{i}$ has $(R,R')-$bounded combinatorics over the interval $[k_{i},l_{i}]$ (note that for any $j\in\mathbb{N}$ even we have $Z_{j}=S$). Let $t_{i}\in N(s)$ where $s\in [k_{i}+\bar{w},l_{i}-\bar{w}]$. Then Theorem \ref{thm : inftyray} applied to the interval $[k_{i},l_{i}]$ guarantees that 
$$\ell_{\partial{Z}_{i}}(r(t_{i}))\leq \epsilon_{\lfloor\frac{i}{2}\rfloor}.$$
 Then since $\epsilon_{\lfloor\frac{i}{2}\rfloor}\to 0$ as $i\to \infty$ the ray $\hat{r}$ is not contained in any thick part of the moduli space. 
\end{proof}

\bibliographystyle{amsalpha}
\bibliography{reference}

\end{document}